\def\printindex#1#2{
  \chapter*{#2}
  \addcontentsline{toc}{chapter}{#2}
  \begin{multicols}{2}
  \@input{#1.ind}
  \end{multicols}
}
\setlist{noitemsep}
\newcommand{\td}{\,\mathrm{d}}
\newcommand{\Lie}{\textup{Lie}}
\newcommand{\const}{\textup{const}}
\newcommand{\Ad}{\textup{Ad}}
\newcommand{\ad}{\textup{ad}}
\renewcommand{\Re}{\textup{Re}}
\renewcommand{\det}{\textup{det}}
\newcommand{\id}{\textup{id}}
\newcommand{\tr}{\textup{tr}}
\newcommand{\rk}{\textup{rk}}
\newcommand{\Tr}{\textup{Tr}}
\newcommand{\Sym}{\textup{Sym}}
\newcommand{\Skew}{\textup{Skew}}
\newcommand{\Herm}{\textup{Herm}}
\newcommand{\Str}{\textup{Str}}
\newcommand{\str}{\mathfrak{str}}
\newcommand{\Aut}{\textup{Aut}}
\newcommand{\der}{\mathfrak{der}}
\newcommand{\Co}{\textup{Co}}
\newcommand{\co}{\mathfrak{co}}
\newcommand{\GL}{\textup{GL}}
\newcommand{\gl}{\mathfrak{gl}}
\newcommand{\SL}{\textup{SL}}
\renewcommand{\sl}{\mathfrak{sl}}
\newcommand{\Sp}{\textup{Sp}}
\renewcommand{\sp}{\mathfrak{sp}}
\newcommand{\Mp}{\textup{Mp}}
\newcommand{\upO}{\textup{O}}
\newcommand{\SO}{\textup{SO}}
\newcommand{\so}{\mathfrak{so}}
\newcommand{\su}{\mathfrak{su}}
\newcommand{\End}{\textup{End}}
\newcommand{\RR}{\mathbb{R}}
\newcommand{\KK}{\mathbb{K}}
\newcommand{\CC}{\mathbb{C}}
\newcommand{\ZZ}{\mathbb{Z}}
\newcommand{\NN}{\mathbb{N}}
\renewcommand{\SS}{\mathbb{S}}
\newcommand{\HH}{\mathbb{H}}
\newcommand{\OO}{\mathbb{O}}
\newcommand{\lf}{\mathfrak{l}}
\newcommand{\nf}{\mathfrak{n}}
\newcommand{\nfo}{\overline{\mathfrak{n}}}
\renewcommand{\1}{\mathbf{1}}
\newcommand{\Ind}{\textup{Ind}}
\newcommand{\calF}{\mathcal{F}}
\newcommand{\calO}{\mathcal{O}}
\newcommand{\calS}{\mathcal{S}}
\newcommand{\calW}{\mathcal{W}}
\newcommand{\calB}{\mathcal{B}}
\newcommand{\calH}{\mathcal{H}}
\newcommand{\calP}{\mathcal{P}}
\newcommand{\calU}{\mathcal{U}}
\newcommand{\calD}{\mathcal{D}}
\newcommand{\calT}{\mathcal{T}}
\newcommand{\calV}{\mathcal{V}}
\newcommand{\calZ}{\mathcal{Z}}
\newcommand{\frakg}{\mathfrak{g}}
\newcommand{\frakk}{\mathfrak{k}}
\newcommand{\frakp}{\mathfrak{p}}
\newcommand{\frakn}{\mathfrak{n}}
\newcommand{\fraka}{\mathfrak{a}}
\newcommand{\frakq}{\mathfrak{q}}
\newcommand{\frakm}{\mathfrak{m}}
\newcommand{\frakl}{\mathfrak{l}}
\newcommand{\frakt}{\mathfrak{t}}
\newcommand{\frakh}{\mathfrak{h}}
\newcommand{\Det}{\textup{Det}}
\renewcommand{\mod}{\textup{mod}}
\newcommand{\rad}{\textup{rad}}
\newcommand{\res}{\textup{res}}
\newcommand{\Int}{\textup{Int}}
\newcommand{\vol}{\textup{vol}}
\newcommand{\pr}{\textup{pr}}
\newcommand{\Ann}{\textup{Ann}}
\theoremstyle{plain}
\newtheorem{theorem}{Theorem}[section]
\newtheorem{proposition}[theorem]{Proposition}
\newtheorem{lemma}[theorem]{Lemma}
\newtheorem{corollary}[theorem]{Corollary}
\newtheorem{fact}[theorem]{Fact}
\newtheorem*{questions}{Questions}
\theoremstyle{definition}
\newtheorem{definition}[theorem]{Definition}
\newtheorem*{definitionIntro}{Definition}
\newtheorem{example}[theorem]{Example}
\newtheorem{remark}[theorem]{Remark}
\begin{document}

\titlehead{\small Universit\"at Paderborn\\Fakult\"at f\"ur Elektrotechnik, Informatik und Mathematik\\Institut f\"ur Mathematik}
\subject{Dissertation}
\title{Minimal representations of\\conformal groups and\\generalized Laguerre functions}
\author{\large{Jan M\"ollers}}
\date{\small{\textit{13. September 2010}}}
\publishers{
 \small{1. Gutachter: Prof. Dr. Joachim Hilgert} \\
 \small{2. Gutachter: Prof. Dr. T. Kobayashi}\\\vspace{.5cm}
 \small{Betreuer: Prof. Dr. Joachim Hilgert}
}

\maketitle

\cleardoublepage

\frontmatter

\section*{Abstract}
We give a unified construction of the minimal representation of a finite cover $G$ of the conformal group of a (non necessarily euclidean) Jordan algebra $V$. This representation is realized on the $L^2$-space of the minimal orbit $\calO$ of the structure group $L$ of $V$. We construct its corresponding $(\frakg,\frakk)$-module and show that it can be integrated to a unitary irreducible representation of $G$ on $L^2(\calO)$.

In particular, we obtain a unified approach to the two most prominent minimal representations, namely the Segal--Shale--Weil representation of the metaplectic group $\Mp(n,\RR)$ and the minimal representation of $\upO(p+1,q+1)$ which was recently studied by T. Kobayashi, G. Mano and B. {\O}rsted.

In the second part we investigate special functions which give rise to $\frakk$-finite vectors in the representation. Various properties of these special functions such as differential equations, recurrence relations and integral formulas connect to the representation theory involved.

Finally, we define the conformal inversion operator $\calF_\calO$ by the action of the longest Weyl group element. $\calF_\calO$ is a unitary operator on $L^2(\calO)$ of order $2$. We show that the action of $\calF_\calO$ on radial functions is given by a special case of Meijer's $G$-transform.

\section*{Zusammenfassung}
Wir konstruieren einheitlich die minimale Darstellung einer endlichen \"{U}berlagerung $G$ der konformen Gruppe einer (nicht notwendigerweise euklidischen) Jordanalgebra $V$. Diese Darstellung l\"{a}sst sich auf dem $L^2$-Raum der minimalen Bahn der Strukturgruppe $L$ von $V$ realisieren. Wir konstruieren den zugeh\"{o}rigen $(\frakg,\frakk)$-Modul und zeigen, dass er sich zu einer unit\"{a}ren irreduziblen Darstellung von $G$ auf $L^2(\calO)$ integrieren l\"{a}sst.

Insbesondere liefert dies eine einheitliche Sichtweise auf die beiden bekanntesten minimalen Darstellungen: Die Segal--Shale--Weil Darstellung der metaplektischen Gruppe $\Mp(n,\RR)$ und die minimale Darstellung von $\upO(p+1,q+1)$, die k\"{u}rzlich von T. Kobayashi, G. Mano und B. {\O}rsted studiert wurde.

Im zweiten Teil untersuchen wir spezielle Funktionen, die explizite $\frakk$-endliche Vektoren in der Darstellung liefern. Verschiedene Eigenschaften dieser speziellen Funktionen wie Differentialgleichungen, Rekursions- und Integralformeln werden bewiesen und in Bezug zur Darstellungstheorie gesetzt.

Zuletzt definieren wir den konformen Inversionsoperator $\calF_\calO$ durch die Wirkung des l\"{a}ngsten Weylgruppenelements. $\calF_\calO$ ist ein unit\"{a}rer Operator der Ordnung $2$ auf $L^2(\calO)$. Wir zeigen, dass die Wirkung von $\calF_\calO$ auf radialen Funktionen durch eine spezielle Form von Meijer's $G$-Transformation gegeben ist.

\cleardoublepage
\section*{Acknowledgements}

First and foremost I would like to thank my advisor Prof. Dr. Joachim Hilgert. He had the right intuition concerning the topic of my thesis and his office door was always open for me.

I am also happy to have been invited to Tokyo three times by Prof. Dr. T. Kobayashi. During my stays I always received great scientific support and I am grateful for the warm hospitality.

Not to forget is Prof. Dr. S\"{o}nke Hansen who helped me see things clearer from time to time.

For the financial support of my dissertation project I thank the IRTG ``Geometry and Analysis of Symmetries'' and the GCOE program of the University of Tokyo.

Finally I would like to thank my family, friends and collegues for supporting me during my whole studies.

\cleardoublepage

\tableofcontents
%\listoftables
\cleardoublepage

\mainmatter

\afterpage{\fancyhead[LE,RO]{Introduction}}
\addcontentsline{toc}{chapter}{Introduction}
\chapter*{Introduction}

We explain the results of this thesis from two different points of view:\vspace{.2cm}
\begin{itemize}
\item The study of minimal representations is motivated from unitary representation theory. Minimal representations are thought to correspond to the minimal nilpotent coadjoint orbit via the orbit philosophy.\vspace{.2cm}
\item On the other hand, the \lq smallness\rq\ of the minimal representation results in large symmetries in its geometric realizations. We investigate an interesting relashionship between minimal representations and certain special functions that solve a fourth order ordinary differential equation. The special functions appear as $\frakk$-finite vectors in the $L^2$-model of the minimal representation.
\end{itemize}

\section*{Minimal representations}

In the theory of unitary representations it is an unsolved problem to determine the unitary irreducible representations of all simple real Lie groups. For simply-connected nilpotent groups $G$, Kirillov's orbit method establishes a correspondence between the unitary irreducible representations of $G$ and its coadjoint orbits. Unfortunately, this methods does not work for arbitrary simple real Lie groups. Nevertheless, Kirillov's method suggests an intimate relation between coadjoint orbits and unitary irreducible representations.

On the one hand, to every unitary irreducible representation $\pi$ of a simple real Lie group $G$ one can associate the annihilator $\Ann(\pi)$ of the derived representation $\td\pi$ in the universal enveloping algebra $\calU(\frakg)$ of $\frakg=\Lie(G)$. Its associated variety $\calV(\Ann(\pi))\subseteq\frakg_\CC^*$ is the closure of a nilpotent coadjoint orbit. On the other hand, there are quantization procedures which associate unitary representations to certain coadjoint orbits. For the nilpotent coadjoint orbits such a quantization procedure is least understood. To gain a better understanding of the relation between unitary representations and nilpotent coadjoint orbits one studies representations which correspond to the minimal nilpotent coadjoint orbit.

\begin{definitionIntro}[{\cite[Definition 4.6]{GS05}}]
A unitary irreducible representation $\pi$ of a simple real Lie group $G$ is called \textit{minimal}\index{subject}{minimal representation} if its annihilator $\Ann(\pi)$ is equal to the Joseph ideal.
\end{definitionIntro}

The Joseph ideal is the unique completely prime two-sided ideal in $\calU(\frakg)$ whose associated variety is the closure of the minimal nilpotent coadjoint orbit (see \cite[Section 4.4]{GS05}). Therefore, minimal representations are thought to correspond to the minimal nilpotent coadjoint orbit. For simple real Lie groups the number of isomorphism classes of minimal representations is always finite. In many cases there is either one or no minimal representation. A self-contained exposition of minimal representations can be found in \cite{GS05}.\\

The most prominent minimal representation is probably the (holomorphic part of the) metaplectic representation. The metaplectic representation (also called oscillator representation or Segal--Shale--Weil representation) is a unitary representation of the metaplectic group $\Mp(n,\RR)$ (the double cover of the symplectic group $\Sp(n,\RR)$) on $L^2(\RR^n)$. There are various connections between the metaplectic representation and other fields of mathematics such as symplectic geometry or number theory. An overview of the metaplectic representation can be found in \cite[Chapter 4]{Fol89} whereas the original papers of I. E. Segal, D. Shale and A. Weil are \cite{Seg63,Sha62,Wei64}.

Another example for a minimal representation has recently attracted more and more attention: the minimal representation of the indefinite orthogonal group $\upO(p+1,q+1)$ with $p+q$ even. A realization on $L^2(C)$, where $C\subseteq\RR^{p+q}$ is an isotropic cone, was constructed by T. Kobayashi and B. {\O}rsted in \cite{KO03c}.\\

There are several results about the construction of minimal representations:\vspace{.2cm}
\begin{itemize}
\item In \cite{BK94} and \cite{Bry98} R. Brylinski and B. Kostant construct the minimal representation of a certain class of simple real Lie groups $G$ on the space of sections of a particular half-form bundle. However, in their construction the case where the corresponding symmetric space $G/K$ is hermitean is excluded.\vspace{.2cm}
\item For the hermitean case, S. Sahi constructs the minimal representation in \cite{Sah92}. Together with A. Dvorsky he also gives a construction for another class of groups in \cite{DS99}. The same is done in \cite{BSZ06}. They all exclude the case $\frakg=\so(p+1,q+1)$ from their considerations.\vspace{.2cm}
\item In the case of the group $\upO(p+1,q+1)$, $p+q$ even, there are several results. For the group $\upO(4,4)$ the minimal representation was first constructed by B. Kostant in \cite{Kos90}. Later B. Binegar and R. Zierau generalized the construction to arbitrary parameters $p$ and $q$ with $p+q$ even (see \cite{BZ91}). Two different geometric models including the $L^2$-model are constructed in \cite{KO03a,KO03b,KO03c}.\vspace{.2cm}
\end{itemize}

However, what is missing is a unified construction of an $L^2$-model of the minimal representation. The right framework for this construction seems to be the framework of Jordan algebras. In the two examples $G=\Mp(n,\RR)$ and $G=\SO(p+1,q+1)_0$ the group $G$ is a finite cover of the identity component $\Co(V)_0$ of the conformal group $\Co(V)$ of a certain simple real Jordan algebra $V$. Therefore, one may ask the following questions:

\begin{questions}
\begin{enumerate}
\item[\textup{(1)}] For which simple real Jordan algebra $V$ does a finite cover $G$ of $\Co(V)_0$ admit a minimal representation?
\item[\textup{(2)}] Is there a natural realization of the minimal representation of $G$ on a certain $L^2$-space?
\end{enumerate}
\end{questions}

By a result of D. A. Vogan, no covering group of $SO(p+1,q+1)_0$ admits a minimal representation if $p+q$ is odd and $p,q\geq3$ (see \cite[Theorem 2.13]{Vog81}). In all other cases one can show that there is a finite cover of the conformal group which admits a minimal representation. The minimal representation can be realized on the $L^2$-space of the minimal non-zero orbit $\calO$ of the structure group $\Str(V)$ of $V$. This is proved in \cite{Sah92} for euclidean Jordan algebras, in \cite{DS99}, \cite{DS03} and \cite[Section 8]{BSZ06} for non-euclidean Jordan algebras of rank $\geq3$, and in \cite{KO03c} for the remaining case $G=\upO(p+1,q+1)$. In Section \ref{sec:MinRepConstruction} we give a unified construction which works for the most general class of Jordan algebras. Our construction can be described as follows:\\

We start with a simple real Jordan algebra $V$ of split rank $r_0\geq2$ with simple maximal euclidean subalgebra $V^+$. Its structure group $\Str(V)$ acts linearly on $V$ and has finitely many orbits. The minimal non-zero orbit $\calO$ of the identity component $\Str(V)_0$ carries a unique $\Str(V)_0$-equivariant measure $\td\mu$. This gives the representation space $L^2(\calO,\td\mu)$.

Let $\frakg$ be the Lie algebra of the conformal group $\Co(V)$ of $V$. First, we construct a Lie algebra representation $\td\pi$ of $\frakg$ on $C^\infty(\calO)$ (see Section \ref{sec:InfinitesimalRep}). Further, we define a function $\psi_0\in C^\infty(\calO)$ by
\begin{align*}
 \psi_0(x) &:= \widetilde{K}_{\frac{\nu}{2}}(|x|), & x\in\calO,
\end{align*}
where $\widetilde{K}_\alpha(z)$ denotes the normalized $K$-Bessel function (see Appendix \ref{app:BesselFunctions}), $|\!\!-\!\!|$ is a certain norm on $V$ and $\nu$ is a structure constant of the Jordan algebra $V$. The subrepresentation of $C^\infty(\calO)$ generated by $\psi_0$ is a $(\frakg,\frakk)$-module if and only if $\frakg\ncong\so(p+1,q+1)$ with $p+q$ odd (see Proposition \ref{prop:Kfinite}). (This is exactly the case for which no minimal representation exists.) Excluding this case, the $(\frakg,\frakk)$-module integrates to a unitary irreducible representation $\pi$ of a finite cover $G$ of $\Co(V)_0$ on $L^2(\calO,\td\mu)$ (see Theorem \ref{thm:IntgkModule}). This representation is in fact a minimal representation (see Remark \ref{rem:MinRep}).

The Jordan algebras corresponding to the groups $\upO(p+1,q+1)$ are those of rank $2$. From a representation theoretic point of view, this case is most difficult to handle, because the corresponding minimal representation is in general neither a highest weight representation nor spherical. In all other cases the representation theory is simpler:
\begin{itemize}
\item For a euclidean Jordan algebra the minimal representation is a highest weight representation.
\item For a non-euclidean Jordan algebra of rank $\geq3$ the function $\psi_0$ is a $K$-spherical vector and hence the minimal representation is spherical.
\end{itemize}
Therefore, in the case of rank $2$ Jordan algebras the calculations are more involved and are treated separately in Appendix \ref{app:Rank2}.

\section*{Generalized Laguerre functions}

The constructed $L^2$-model of the minimal representation allows a wide range of applications, in particular to the theory of special functions.\\

In general, it is quite hard to find explicit expressions for $\frakk$-finite vectors in unitary representations. However, for the minimal representation we determine an explicit $\frakk$-finite vector in every $\frakk$-type. In order to do so, we first compute the action of the $\frakk$-Casimir on radial functions in Section \ref{sec:CasimirAction}. It turns out that the $\frakk$-Casimir essentially acts on the radial parameter $x\in\RR_+$ by the ordinary fourth order differential operator
\begin{align*}
 \calD_{\mu,\nu} &= \frac{1}{x^2}\left((\theta+\mu+\nu)(\theta+\mu)-x^2\right)\left(\theta(\theta+\nu)-x^2\right),
\end{align*}
where $\theta=x\frac{\td}{\td x}$ and $\mu$ and $\nu$ are certain structure constants of the Jordan algebra. We show that this operator extends to a self-adjoint operator on the Hilbert space $L^2(\RR_+,x^{\mu+\nu+1}\td x)$ and compute its spectrum (see Corollary \ref{cor:DmunuSASpectrum}). The $L^2$-eigenfunctions are constructed in terms of their generating function
\begin{align*}
 G_2^{\mu,\nu}(t,x) &= \frac{1}{(1-t)^{\frac{\mu+\nu+2}2}}\widetilde{I}_{\frac{\mu}{2}}\left(\frac{tx}{1-t}\right)\widetilde{K}_{\frac{\nu}{2}}\left(\frac{x}{1-t}\right).
\end{align*}
The generating function $G_2^{\mu,\nu}(t,x)$ is analytic near $t=0$ and hence defines a sequence $(\Lambda_{2,j}^{\mu,\nu}(x))_j$ of functions on $\RR_+$ by
\begin{align*}
 G_2^{\mu,\nu}(t,x) &= \sum_{j=0}^\infty{\Lambda_{2,j}^{\mu,\nu}(x)t^j}.
\end{align*}
We show that for every $j$ the function $\Lambda_{2,j}^{\mu,\nu}(x)$ is an $L^2$-eigenfunction of $\calD_{\mu,\nu}$ for the eigenvalue $4j(j+\mu+1)$ (see Theorem \ref{thm:EigFct}). This implies that the radial functions
\begin{align*}
 \psi_j(x) &:= \Lambda_{2,j}^{\mu,\nu}(|x|), & x\in\calO,
\end{align*}
are explicit $\frakk$-finite vectors in the minimal representation.\\

The parameters $\mu$ and $\nu$ are structure constants of the Jordan algebra. However, the formula for the operator $\calD_{\mu,\nu}$ as well as the construction of the eigenfunctions $\Lambda_{2,j}^{\mu,\nu}(x)$ makes sense also for general complex parameters $\mu,\nu\in\CC$. In Section \ref{ch:GenLagFct} we study properties of $\calD_{\mu,\nu}$ for arbitrary $\mu$ and $\nu$. We further construct a generic fundamental system $\Lambda_{i,j}^{\mu,\nu}(x)$, $i=1,2,3,4$, of the differential equation
\begin{align*}
 \calD_{\mu,\nu}u &= 4j(j+\mu+1)u.
\end{align*}
For $i=2$ the function $\Lambda_{2,j}^{\mu,\nu}(x)$ is an $L^2$-eigenfunction of $\calD_{\mu,\nu}$. Various properties of the special functions $\Lambda_{i,j}^{\mu,\nu}(x)$, $i=1,2,3,4$, such as recurrence relations and integral formulas are derived.\\

Now, if one assumes that the parameters $\mu$ and $\nu$ appear as structure constants of a simple real Jordan algebra for which the minimal representation exists, then representation theory can be used to give short proofs for statements on the $L^2$-eigenfunctions $\Lambda_{2,j}^{\mu,\nu}(x)$:\vspace{.2cm}
\begin{enumerate}
\item[\textup{(1)}] The\ \ functions\ \ $\Lambda_{2,j}^{\mu,\nu}(x)$\ \ ($j=0,1,2,\ldots$)\ \ form\ \ an\ \ orthogonal\ \ basis\ \ of $L^2(\RR_+,x^{\mu+\nu+1}\td x)$ (see Corollary \ref{cor:completeness}). A closed expression for their norms is given in Corollary \ref{cor:Norms}.\vspace{.2cm}
\item[\textup{(2)}] The Lie algebra action predicts various recurrence relations (see Section \ref{sec:RepTh}). These recurrence relations are stated in Section \ref{sec:RecRel}.\vspace{.2cm}
\end{enumerate}
In the case that $\frakg=\so(p+1,q+1)$, these results are already proved in \cite{HKMM09a}. There, only the minimal representation of $\upO(p+1,q+1)$ is used. Hence, the set of parameters $(\mu,\nu)$ which appear in \cite{HKMM09a} is strictly smaller than the set of parameters for which the statements are proved in Chapter \ref{ch:GenLagFct}.

If the Jordan algebra $V$ with which we start is euclidean, then the parameter $\nu$ is equal to $-1$. In this case the functions $\Lambda_{2,j}^{\mu,\nu}(x)$ simplify to Laguerre functions (see Corollary \ref{cor:SpecialValue}):
\begin{align*}
 \Lambda_{2,j}^{\mu,-1}(x) &= \const\cdot e^{-x}L_j^\mu(2x),
\end{align*}
where $L_n^\alpha(z)$ denote the Laguerre polynomials as introduced in Appendix \ref{app:Laguerre}. For this case, the differential equation and the recurrence relations are a reformulation of \cite[Theorem 6.3]{ADO07}.\\

Another type of special functions occurs if one studies the \textit{unitary inversion operator} $\calF_\calO$. This operator is defined using the group action of the minimal representation $\pi$ (see Section \ref{sec:UnitInvOp}). $\calF_\calO$ is a unitary involutive operator on $L^2(\calO,\td\mu)$ which resembles the euclidean Fourier transform. Various properties of $\calF_\calO$ are proved in Theorem \ref{prop:FOProperties}. Together with the action of a maximal parabolic subgroup of $G$ (which can be written down explicitly) the operator $\calF_\calO$ determines the whole representation $\pi$. Therefore, to gain a better understanding of the minimal representation, it might help to find an explicit formula for the action of $\calF_\calO$. For the case $\frakg=\so(p+1,q+1)$ the full integral kernel of $\calF_\calO$ was computed by T. Kobayashi and G. Mano in \cite{KM07a,KM08}. To generalize this result, we determine, as a first step into this direction, the action of $\calF_\calO$ on radial functions. It turns out that $\calF_\calO$ preserves the space of radial functions and acts on a radial function $\psi(x)=f(|x|)$ as the integral transform (see Theorem \ref{thm:RadialUnitaryInversion})
\begin{align*}
 \calT^{\mu,\nu}f(x) &= \int_0^\infty{K^{\mu,\nu}(xy)f(y)y^{\mu+\nu+1}\td y}
\end{align*}
with integral kernel
\begin{align*}
 K^{\mu,\nu}(x) = \frac{1}{2^{\mu+\nu+1}}G^{20}_{04}\left(\left(\frac{x}{4}\right)^2\left|0,-\frac{\nu}{2},-\frac{\mu}{2},-\frac{\mu+\nu}{2}\right.\right)
\end{align*}
given in terms of Meijer's $G$-function (see Appendix \ref{app:GFct}). The operator $\calT^{\mu,\nu}$ is a special case of the more general $G$-transform as studied in \cite{Fox61}. As a corollary of these observations we obtain that the functions $\Lambda_{2,j}^{\mu,\nu}(x)$ are eigefunctions of the $G$-transform $\calT^{\mu,\nu}$ for the eigenvalues $(-1)^j$.\\

All in all, we observe an intimate relation between the special functions $\Lambda_{2,j}^{\mu,\nu}(x)$ and minimal representations. On the one hand, results about the special functions $\Lambda_{2,j}^{\mu,\nu}(x)$ are used to obtain explicit expressions for $\frakk$-finite vectors in the minimal representation. But on the other hand, representation theory also provides proofs of statements on the special functions $\Lambda_{2,j}^{\mu,\nu}(x)$ such as orthogonality relations, completeness or integral formulas.

\section*{Outline of the thesis}

In the first chapter we introduce the concept of Jordan algebras. The basic structure theory is explained and the structure constants $\mu$ and $\nu$ are defined. We further describe the structure group and its orbits as well as equivariant measures on the orbits. The conformal group and its Lie algebra are discussed in detail. Finally, we define the Bessel operators which are needed for the Lie algebra action of the minimal representation.

Chapter \ref{ch:MinRep} is concerned with the minimal representation. We first give a detailed contruction of the representation. In the second part we explain the relation of the minimal representation to generalized principal series representations. We further show that the Casimir element $C_\frakk$ acts on radial functions as the fourth order differential operator $\calD_{\mu,\nu}$. In the fourth section we define the unitary inversion operator $\calF_\calO$ and prove several properties for it. We also prove that $\calF_\calO$ acts on radial functions by the $G$-transform $\calT^{\mu,\nu}$.

The third chapter deals with the differential operator $\calD_{\mu,\nu}$ and its eigenfunctions. Here we do in general not assume that $\mu$ and $\nu$ are the structure constants of a certain Jordan algebra $V$. We construct eigenfunctions $\Lambda_{i,j}^{\mu,\nu}(x)$, $i=1,2,3,4$, of $\calD_{\mu,\nu}$ in terms of their generating functions and investigate main properties such as asymptotic behavior, recurrence relations or integral representations. Now suppose, $\mu$ and $\nu$ are the structure constants of a Jordan algebra $V$ for which the minimal representation exists. For the $L^2$-eigenfunctions $\Lambda_{2,j}^{\mu,\nu}(x)$ we derive orthogonality relations, expressions for the norms, a completeness statement and simplification formulas. In the last section we interpret the functions $\Lambda_{2,j}^{\mu,\nu}(x)$ as radial parts of $\frakk$-finite vectors in the minimal representation associated to the Jordan algebra $V$.

\section*{Outlook}

\begin{enumerate}
\item[\textup{(1)}] Our construction of the minimal representation uses the rich structure of Jordan algebras. A generalization of the concept of Jordan algebras leads to the notion of Jordan triple systems. Many objects that are needed in our construction still exist in the theory of Jordan triple systems. Therefore, it is an interesting question whether the construction of the minimal representation can also be carried out in the more general framework of Jordan triple systems.\vspace{.2cm}
\item[\textup{(2)}] In the special case where $\frakg=\so(p+1,q+1)$, T. Kobayashi and G. Mano computed the action of the unitary inversion operator $\calF_\calO$ not only on radial functions, but on every $K_L$-isotypic component, where $K_L=K\cap\Str(V)=\SO(p)\times\SO(q)$ (see \cite[Theorem 4.1.1]{KM08}). Using these results they determined the full integral kernel $K(x,y)\in\calD'(\calO\times\calO)$ of $\calF_\calO$ (see \cite[Theorem 5.1.1]{KM08}). The same method might work also in the general case.\vspace{.2cm}
\item[\textup{(3)}] A big advantage of the $L^2$-realization of the minimal representation is that it is well-suited for tensor product computations. The decomposition of tensor powers of the minimal representation is studied in \cite[Theorem 0.2]{DS99} for non-euclidean Jordan algebras of rank $\geq3$ and in \cite{Dvo07} for the case $\frakg=\so(p+1,q+1)$. It should be possible to prove these results in the general framework.\vspace{.2cm}
\item[\textup{(4)}] The same might be possible for branching laws for the restriction to a symmetric subgroup. In \cite{Sep07a,Sep07b,Sep08,MS10} the branching laws for restriction to the structure group $\Str(V)$ are studied in the case of euclidean Jordan algebras. Some ideas might also apply in the general case.\\
\end{enumerate}

Notation: $\NN=\{1,2,3,\ldots\}$.

\cleardoublepage
\fancyhead[LE]{\nouppercase{\leftmark}}
\fancyhead[RO]{\nouppercase{\rightmark}}
\chapter{Jordan theory}

In this chapter we introduce the main concepts in the theory of Jordan algebras. We first define Jordan algebras and analyze their algebraic structure. To every Jordan algebra we associate two important groups which are needed in Chapter \ref{ch:MinRep} to construct representations:
\begin{itemize}
\item The structure group which acts linearly on the Jordan algebra. Its orbits provide the geometry of the representation space.
\item The conformal group which acts on the Jordan algebra by rational transformations. The minimal representation will be a unitary irreducible representation of a finite cover of it.
\end{itemize}
The stated results are either known or simple computations which are needed in the subsequent chapters. The notation is mostly as in \cite{FK94} where most results of this chapter can be found, although only for the special case of euclidean Jordan algebras.

\section{Jordan algebras}

The algebraic framework for the construction of the minimal representation will be the framework of Jordan algebras. Jordan algebras can be defined over general fields, but for our purpose it suffices to consider either $\KK=\RR$ or $\KK=\CC$.

\begin{definition}
A vectorspace $V$ together with a bilinear multiplication $V\times V\rightarrow V,\,(x,y)\mapsto x\cdot y=xy,$ and a unit element $e$ (i.e. $x\cdot e=x=e\cdot x$ for every $x\in V$) is called \textit{Jordan algebra}\index{subject}{Jordan algebra} if the following two properties hold for any $x,y\in V$:
\begin{align}
 x\cdot y &= y\cdot x,\label{eq:J1}\tag{J1}\\
 x\cdot(x^2\cdot y) &= x^2\cdot(x\cdot y).\label{eq:J2}\tag{J2}
\end{align}
\end{definition}

Let us fix some notation:

\begin{itemize}
\item We denote by $L(x)\in\End(V)$\index{notation}{Lx@$L(x)$} the multiplication by $x\in V$. With this the axiom \eqref{eq:J2} can be written as
\begin{align*}
 [L(x),L(x^2)] &= 0 & \forall\,x\in V.
\end{align*}
\item Write
\begin{align*}
 P(x) &= 2L(x)^2-L(x^2)\index{notation}{Px@$P(x)$}
\end{align*}
for the quadratic representation and
\begin{align*}
 P(x,y) &= L(x)L(y)+L(y)L(x)-L(xy)\index{notation}{Pxy@$P(x,y)$}
\end{align*}
for its polarized version.
\item Define
\begin{align}
 x\Box y &:= L(xy)+[L(x),L(y)].\label{eq:DefBoxOp}\index{notation}{xBoxy@$x\Box y$}
\end{align}
Then $(x\Box y)z=P(x,z)y$.
\end{itemize}

Let $V$ be a finite-dimensional Jordan algebra of dimension $n$\index{notation}{n@$n$}. To $x\in V$ one can associate a generic minimal polynomial (see e.g. \cite[Section II.2]{FK94})
\begin{align}
 f_x(\lambda) &= \lambda^r-a_1(x)\lambda^{r-1}+\ldots+(-1)^ra_r(x).\label{eq:MinPoly}
\end{align}
Its degree $r$\index{notation}{r@$r$} is called the \textit{rank of $V$}\index{subject}{Jordan algebra!rank}. For $1\leq j\leq r$ the function $a_j(x)$ is a homogeneous polynomial on $V$ of degree $j$. Every such polynomial $a_j(x)$ is invariant under automorphisms of $V$, i.e. invertible linear transformations $g\in\GL(V)$ which preserve the Jordan product:
\begin{align*}
 g(x\cdot y) &= gx\cdot gy &\forall\, x,y\in V.
\end{align*}
In particular, the \textit{Jordan trace}\index{subject}{Jordan trace}
\begin{align*}
 \tr(x) &:= a_1(x)\index{notation}{trx@$\tr(x)$}
\end{align*}
and the \textit{Jordan determinant}\index{subject}{Jordan determinant}
\begin{align*}
 \det(x) &:= \Delta(x) := a_r(x)\index{notation}{detx@$\det(x)$}\index{notation}{Deltax@$\Delta(x)$}
\end{align*}
are invariant under automorphisms. (To avoid confusion, we write $\Tr$\index{notation}{Tr@$\Tr$} and $\Det$\index{notation}{Det@$\Det$} for the usual trace and determinant of an endomorphism.) For $x=e$ the identity element we have (cf. \cite[Proposition II.2.2]{FK94}):
\begin{align}
 \tr(e) &= r, & \det(e) &= 1.\label{eq:trdetofe}
\end{align}
An element $x\in V$ is called \textit{invertible}\index{subject}{invertible element} if there exists $y\in\KK[x]$ such that $xy=e=yx$. The inverse $y$ is unique and we write $x^{-1}:=y$\index{notation}{xinverse@$x^{-1}$}. An element $x$ is invertible if and only if $\Delta(x)\neq0$, and in this case $\Delta(x)x^{-1}$ is polynomial in $x$ (see \cite[Proposition II.2.4]{FK94}). The differential of the map $x\mapsto x^{-1}$ is given in terms of the quadratic representation:
\begin{align*}
 D_u(x^{-1}) &= -P(x)^{-1}u.
\end{align*}

The symmetric bilinear form
\begin{align*}
 \tau(x,y) &:= \tr(xy), & x,y\in V,\index{notation}{tauxy@$\tau(x,y)$}
\end{align*}
is called the \textit{trace form}\index{subject}{trace form} of $V$. The trace form is associative, i.e.
\begin{align*}
 \tau(xy,z) &= \tau(x,yz) & \forall\, x,y,z\in V.
\end{align*}
If $\tau$ is non-degenerate, we call $V$ \textit{semisimple}\index{subject}{Jordan algebra!semisimple}, and if $\KK=\RR$ and $\tau$ is positive definite, we call $V$ \textit{euclidean}\index{subject}{Jordan algebra!euclidean}. Further, $V$ is called \textit{simple}\index{subject}{Jordan algebra!simple} if $V$ is semisimple and has no non-trivial ideal. From now on we assume that $\KK=\RR$.

An involutive automorphism $\alpha$ of $V$ such that
\begin{align*}
 (x|y) &:= \tau(x,\alpha y)\index{notation}{1bracket@$(-"|-)$}
\end{align*}
is positive definite, is called \textit{Cartan involution of $V$}\index{subject}{Cartan involution}\index{notation}{alpha@$\alpha$}. Such a Cartan involution always exists and two Cartan involutions are conjugate by an automorphism of $V$ (see \cite[Satz 4.1, Satz 5.2]{Hel69}). We have the decomposition
\begin{align*}
 V=V^+\oplus V^-\index{notation}{V1@$V^+$}\index{notation}{V2@$V^-$}
\end{align*}
into $\pm1$ eigenspaces of $V$. It is further easy to see that
\begin{align*}
 V^\pm\cdot V^\pm &\subseteq V^+,\\
 V^+\cdot V^- &\subseteq V^-.
\end{align*}
Hence, the $+1$ eigenspace $V^+$ is a euclidean Jordan subalgebra of $V$ with the same identity element $e$. Note that if $V$ itself is already euclidean, then the identity $\alpha=\id_V$ is a Cartan involution and since two Cartan involutions are conjugate, it is also the only Cartan involution. In this case clearly $V^+=V$ and $V^-=0$.

We denote by $n_0$\index{notation}{nnull@$n_0$} and $r_0$\index{notation}{rnull@$r_0$} dimension and rank of $V^+$ and call $r_0$ the \textit{split rank}\index{subject}{Jordan algebra!split rank} of $V$. The constants $n_0$ and $r_0$ only depend on the isomophism class of the Jordan algebra $V$, not on the choice of $\alpha$. In fact, if $\beta$ is another Cartan involution, then $\beta=g\alpha g^{-1}$ for an automorphism $g$. Hence, $gV^+$ is the $+1$ eigenspace of $\beta$ which is clearly isomorphic to $V^+$ as Jordan algebra. Therefore, dimension and rank of $V^+$ and $gV^+$ have to coincide.

One can use the Cartan involution to show that the Jordan trace can be written as the trace of an endomorphism on $V$.

\begin{lemma}\label{lem:Trtr}
Let $V$ be a simple real Jordan algebra such that $V^+$ is also simple. Then
\begin{align}
 \Tr(L(x)) &= \frac{n}{r}\tr(x), & x&\in V.\label{eq:Trtr}
\end{align}
\end{lemma}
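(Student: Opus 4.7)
The strategy is to show that both sides of \eqref{eq:Trtr} are $\Aut(V)$-invariant linear functionals on $V$, that under the hypothesis the space of such functionals is one-dimensional, and then to fix the proportionality constant by setting $x = e$.

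Both $\phi_1(x) := \Tr(L(x))$ and $\phi_2(x) := \tr(x)$ are linear in $x$. For $g \in \Aut(V)$ the relation $L(gx) = g L(x) g^{-1}$ yields $\phi_1(gx) = \phi_1(x)$, and invariance of $\phi_2$ is built into the construction of the generic minimal polynomial. Since the Cartan involution $\alpha \in \Aut(V)$ is itself an automorphism and an involution, both $\phi_1$ and $\phi_2$ vanish on $V^-$: if $x \in V^-$ then $\phi_i(x) = \phi_i(\alpha x) = \phi_i(-x) = -\phi_i(x)$, so $\phi_i(x) = 0$. Thus it suffices to compare the two functionals on $V^+$.

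On $V^+$ the restrictions are invariant under the subgroup $K = \{g \in \Aut(V) : g\alpha = \alpha g\}$, which preserves $V^+$ and maps naturally to $\Aut(V^+)$. By a direct lifting of derivations of $V^+$ to derivations of $V$ commuting with $\alpha$ (using the compatibility of $V^-$ as a $V^+$-module with $\alpha$), this map covers the identity component $\Aut(V^+)_0$. Since $V^+$ is simple euclidean, one has $(V^+)^{\Aut(V^+)_0} = \RR e$, and hence the space of $\Aut(V^+)_0$-invariant linear functionals on $V^+$ is one-dimensional and spanned by $\tr|_{V^+}$. Therefore $\phi_1|_{V^+}$ and $\phi_2|_{V^+}$ are proportional, and combined with their vanishing on $V^-$, we obtain $\phi_1 = c \phi_2$ on all of $V$.

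Evaluating at $x = e$: $\phi_1(e) = \Tr(\id_V) = n$ and $\phi_2(e) = r$ by \eqref{eq:trdetofe}, so $c = n/r$ and the lemma follows. The main technical hurdle is the lifting of derivations from $V^+$ to $V$; an alternative that bypasses this issue is complexification: the complexified Jordan algebra $V_\CC$ is either simple or of the form $A \oplus A$ for a complex simple Jordan algebra $A$, and in either case the Peirce-frame computation over $\CC$ (where the diagonal Peirce components $V_{ii}$ are one-dimensional and the off-diagonal components all have a common dimension $d$) gives $\Tr(L(c_i)) = 1 + \tfrac{d(r-1)}{2} = \tfrac{n}{r}$, from which \eqref{eq:Trtr} descends to $V$.
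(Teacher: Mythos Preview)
Your argument is correct and reaches the same conclusion by the same evaluation at $x=e$ and the same vanishing-on-$V^-$ observation, but the core step on $V^+$ differs from the paper's. The paper does not use group invariance at all: it observes that $(x,y)\mapsto\Tr(L(xy))$ and $(x,y)\mapsto\tr(xy)$ are both \emph{associative} symmetric bilinear forms on $V^+$, and then invokes the algebraic fact (\cite[Proposition III.4.1]{FK94}) that on a simple (euclidean) Jordan algebra any two such forms are proportional. This is a one-line reduction with no need to discuss $\Aut(V^+)_0$, fixed vectors, or lifting of derivations.

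Your route instead passes through the invariant-theory statement $(V^+)^{\Aut(V^+)_0}=\RR e$, which in turn requires knowing that inner derivations of $V^+$ lift to derivations of $V$ commuting with $\alpha$. That lifting is indeed easy (inner derivations $[L(a),L(b)]$ with $a,b\in V^+$ extend verbatim to $V$ and commute with $\alpha$), and the paper in fact uses exactly this observation later (Lemma~\ref{lem:AutTransitiveJordanFrames}); so your ``technical hurdle'' is minor. Still, the paper's bilinear-form argument is more self-contained here. Your complexification alternative is also valid and gives a third proof; it has the virtue of making the constant $n/r$ visible as $1+\tfrac{(r-1)d}{2}$ via the Peirce eigenvalues of $L(c_i)$, at the cost of a short case analysis of $V_\CC$.
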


\begin{proof}
By\ \ \cite[Proposition II.4.3]{FK94}\ \ the\ \ symmetric\ \ bilinear\ \ forms\ \ $\tr(xy)$\ \ and $\Tr(L(xy))$ are associative. Since $V^+$ is simple, by \cite[Proposition III.4.1]{FK94} every two symmetric associative bilinear forms on $V^+$ are scalar multiples of each other. Hence, there has to be a constant $\lambda\in\RR$ such that $\Tr(L(x))=\lambda\,\tr(x)$ for all $x\in V^+$. Putting $x=e$ we find with \eqref{eq:trdetofe} that $\lambda=\frac{n}{r}$. It remains to show \eqref{eq:Trtr} for $x\in V^-$. But in this case $\tr(x)=\tr(\alpha x)=-\tr(x)$ and hence, $\tr(x)=0$. On the other hand, $\Tr(L(x))=\Tr(\alpha L(x)\alpha)=\Tr(L(\alpha x))=-\Tr(L(x))$ and therefore also $\Tr(L(x))=0$ which shows the claim.
\end{proof}

\begin{example}\label{ex:JordanAlgebras}
\begin{enumerate}
 \item[\textup{(1)}] Let $V=\Sym(n,\RR)$\index{notation}{SymnR@$\Sym(n,\RR)$} be the space of symmetric $n\times n$ matrices with real entries. Endowed with the multiplication
  \begin{equation*}
   x\cdot y :=\frac{1}{2}(xy+yx)
  \end{equation*}
  $V$ becomes a simple euclidean Jordan algebra of dimension $\frac{n(n-1)}{2}$ and rank $n$. Trace and determinant are the usual ones for matrices:
  \begin{align*}
   \tr(x) &= \Tr(x), & \det(x) &= \Det(x).
  \end{align*}
  Hence, the trace form is given by $\tau(x,y)=\Tr(xy)$. The inverse $x^{-1}$ of $x\in V$ exists if and only if $\Det(x)\neq0$ and in this case $x^{-1}$ is the usual inverse of the matrix $x$.
 \item[\textup{(2)}] Let $V=\RR\times W$ where $W$ is a real vector space of dimension $n-1$ with a symmetric bilinear form $\beta:W\times W\rightarrow\RR$. Then $V$ turns into a Jordan algebra with multiplication given by
  \begin{equation*}
   (\lambda,u)\cdot(\mu,v) := (\lambda\mu+\beta(u,v),\lambda v+\mu u).
  \end{equation*}
  $V$ is of dimension $n$ and rank $2$. Trace and determinant are given by
  \begin{align*}
   \tr(\lambda,u) &= 2\lambda, & \det(\lambda,u) &= \lambda^2-\beta(u,u),
  \end{align*}
  and an element $(\lambda,u)\in V$ is invertible if and only if $\det(\lambda,u)=\lambda^2-\beta(u,u)\neq0$. In this case the inverse is given by $(\lambda,u)^{-1}=\frac{1}{\det(\lambda,u)}(\lambda,-u)$. The trace form can be written as
  \begin{equation*}
   \tau((\lambda,u),(\mu,v)) = 2(\lambda\mu+\beta(u,v)).
  \end{equation*}
  Hence, $V$ is semisimple if and only if $\beta$ is non-degenerate and $V$ is euclidean if and only if $\beta$ is positive definite. For $W=\RR^{p-1,q}=\RR^{p+q-1}$ with bilinear form $\beta$ given by the matrix
  \begin{align*}
   \left(\begin{array}{cc}-\1_{p-1}&\\&\1_q\end{array}\right)
  \end{align*}
  we put $\RR^{p,q}:=\RR\times\RR^{p-1,q}$\index{notation}{Rpq@$\RR^{p,q}$}, $p\geq1$, $q\geq0$. Then
  \begin{align*}
   \tau(x,y) &= 2(x_1y_1-x_2y_2-\ldots-x_py_p+x_{p+1}y_{p+1}+\ldots+x_{p+q}y_{p+q}),\\
   \Delta(x) &= x_1^2+\ldots+x_p^2-x_{p+1}^2-\ldots-x_{p+q}^2.
  \end{align*}
  Thus, $\RR^{p,q}$ is euclidean if and only if $p=1$. In any case, a Cartan involution of $\RR^{p,q}$ is given by
  \begin{align}
   \alpha &= \left(\begin{array}{ccc}1&&\\&-\1_{p-1}&\\&&\1_q\end{array}\right).\label{eq:VpqCartanInv}
  \end{align}
  With this choice the euclidean subalgebra $(\RR^{p,q})^+$ amounts to
  \begin{align*}
   (\RR^{p,q})^+ &= \RR e_1\oplus\RR e_{p+1}\oplus\ldots\RR e_n \cong \RR^{1,q}.
  \end{align*}
\end{enumerate}
\end{example}

\section{Peirce decomposition}

From now on let $V$ be a simple real Jordan algebra, $\alpha$ a Cartan involution and assume that $V^+$ is also simple. We introduce the Peirce decomposition of $V$ which describes the structure with the use of idempotents. From the Peirce decomposition we derive some formulas that are needed later.

\subsection{Peirce decomposition for one idempotent}

Let $c\in V^+$ be any \textit{idempotent}\index{subject}{idempotent}, i.e. $c^2=c$. By \cite[Chapter VI.1]{FK94} the only possible eigenvalues of the operator $L(c)$ are $0$, $\frac{1}{2}$ and $1$. Since $L(c)$ is symmetric with respect to the inner product $(-|-)$, this gives the following orthogonal decomposition:
\begin{align}
 V &= \textstyle V(c,1) \oplus V(c,\frac{1}{2}) \oplus V(c,0),\label{eq:PeirceDecompSingle}
\end{align}
where
\begin{align*}
 V(c,\lambda) &= \{x\in V:L(c)x=\lambda x\}.\index{notation}{Vclambda@$V(c,\lambda)$}
\end{align*}
\eqref{eq:PeirceDecompSingle} is called \textit{Peirce decomposition}\index{subject}{Peirce decomposition} corresponding to $c$. Since $L(c)$ is also symmetric with respect to the trace form $\tau$, the decomposition in \eqref{eq:PeirceDecompSingle} is also orthogonal with respect to $\tau$. The subspaces $V(c,1)$ and $V(c,0)$ are subalgebras of $V$ with unit elements $c$ and $e-c$, respectively. Hence $V(c,1)\cdot V(c,1)\subseteq V(c,1)$ and similarly for $V(c,0)$. We have the following additional inclusions (cf. \cite[Proposition IV.1.1]{FK94}):
\begin{align*}
 V(c,1)\cdot V(c,0) &= 0,\\
 \textstyle (V(c,1)+V(c,0))\cdot V(c,\frac{1}{2}) &\subseteq \textstyle V(c,\frac{1}{2}),\\
 \textstyle V(c,\frac{1}{2})\cdot V(c,\frac{1}{2}) &\subseteq V(c,1)+V(c,0).
\end{align*}
The projection onto $V(c,1)$ in the Peirce decomposition \eqref{eq:PeirceDecompSingle} is given by $P(c)$ (see \cite[Chapter IV, Section 1]{FK94}).

\subsection{Peirce decomposition for a Jordan frame}\label{sec:PeirceJordanFrame}

An idempotent is called \textit{primitive}\index{subject}{idempotent!primitive} if it is non-zero and cannot be written as the sum of two non-zero idempotents. Further, two idempotents $c_1$ and $c_2$ are called \textit{orthogonal}\index{subject}{idempotent!orthogonal} if $c_1c_2=0$. A collection $c_1,\ldots,c_k$ of orthogonal primitive idempotents in $V^+$ with $c_1+\ldots+c_k=e$ is called a \textit{Jordan frame}\index{subject}{Jordan frame}. By \cite[Theorem III.1.2]{FK94} the number $k$ of idempotents in a Jordan frame is always equal to the rank $r_0$ of $V^+$. For every two Jordan frames $c_1,\ldots,c_{r_0}$ and $d_1,\ldots,d_{r_0}$ there exists an automorphism $g$ of $V$ such that $gc_i=d_i$, $1\leq i\leq r_0$ (see \cite[Satz 8.3]{Hel69}).

Now choose a Jordan frame $c_1,\ldots,c_{r_0}$ in $V^+$. Then the operators $L(c_1),\ldots,L(c_{r_0})$ commute by \cite[Proposition II.1.1 (1)]{FK94} and hence are simultaneously diagonalizable. Since each $L(c_i)$ has possible eigenvalues $0$, $\frac{1}{2}$ and $1$ and $\sum_{i=1}^{r_0}{L(c_i)}=L(e)=\id_V$, this yields the \textit{Peirce decomposition}\index{subject}{Peirce decomposition}
\begin{align}
 V &= \bigoplus_{1\leq i\leq j\leq r_0}{V_{ij}},\label{eq:PeirceDecomp}\\
\intertext{where}
 V_{ii} &= V(c_i,1) && \mbox{for }1\leq i\leq r_0,\notag\index{notation}{Vij@$V_{ij}$}\\
 V_{ij} &= \textstyle V(c_i,\frac{1}{2})\cap V(c_j,\frac{1}{2}) && \mbox{for $1\leq i\neq j\leq r_0$.}\notag
\end{align}
Since the endomorphisms $L(c_i)$, $1\leq i\leq r_0$, are all symmetric with respect to the inner product $(-|-)$, the direct sum in \eqref{eq:PeirceDecomp} is orthogonal. As previously remarked, the group of automorphisms contains all possible permutations of the idempotents $c_1,\ldots,c_{r_0}$. Therefore, the subalgebras $V_{ii}$ have a common dimension $e+1$\index{notation}{e@$e$} and the subspaces $V_{ij}$ ($i<j$) have a common dimension $d$\index{notation}{d@$d$}. Then clearly
\begin{align}
 \frac{n}{r_0}=e+1+(r_0-1)\frac{d}{2}.\label{eq:noverr0}
\end{align}
We call a Jordan algebra $V$ \textit{reduced}\index{subject}{Jordan algebra!reduced} if $V_{ii}=\RR c_i$ for every $i=1,\ldots,r_0$, or equivalently if $e=0$. From \cite[\S 8, Korollar 2]{Hel69} it follows that if $V$ is reduced, then $r=r_0$, and if $V$ is non-reduced, then $r=2r_0$. We can write $\rk(V_{ii})=\frac{r}{r_0}$. Euclidean Jordan algebras are always reduced (see \cite[Theorem III.1.1]{FK94}). Hence $V^+_{ii}:=V_{ii}\cap V^+=\RR c_i$\index{notation}{Vijplus@$V_{ij}^+$}. If we denote by $d_0$\index{notation}{dnull@$d_0$} the dimension of $V^+_{ij}:=V_{ij}\cap V^+$ ($i<j$), then equation \eqref{eq:noverr0} for the euclidean subalgebra $V^+$ reads
\begin{align*}
 \frac{n_0}{r_0} &= 1+(r_0-1)\frac{d_0}{2}.
\end{align*}
Tables \ref{tb:ConstantsV} and \ref{tb:ConstantsVplus} list all simple real Jordan algebras with simple $V^+$ and their corresponding structure constants. A close look at the table allows the following observation: If $V$ is non-euclidean, then $d=2d_0$ except in the case where $V=\RR^{p,q}$ with $p\neq q$. We state and prove this observation without using a classification result.

\begin{proposition}\label{prop:ClassificationEuclSph}
Let $V$ be a simple real Jordan algebra of split rank $r_0\geq2$, $\alpha$ a Cartan involution and assume that $V^+$ is also simple. Then exactly one of the following three statements holds:
\begin{enumerate}
\item[\textup{(1)}] $V$ is euclidean and in particular $d=d_0$,
\item[\textup{(2)}] $V$ is non-euclidean of rank $r\geq3$ and $d=2d_0$,
\item[\textup{(3)}] $V\cong\RR^{p,q}$, $p,q\geq2$.
\end{enumerate}
\end{proposition}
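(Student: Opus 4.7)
Case (1) is tautological: if $V$ is euclidean then $\alpha=\id_V$, so $V^-=0$, $e=0$, and $d=d_0$. The three cases are pairwise exclusive (by $r\geq 3$ vs.\ $r=r_0=2$), so it remains to show that every non-euclidean $V$ is covered by (2) or (3), split according to the value of $r_0$.

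\textbf{Step 1 ($\alpha$-refined Peirce decomposition).} Since every $c_i\in V^+$, $\alpha$ commutes with $L(c_i)$ and therefore preserves every Peirce block. Intersecting with the $\pm 1$-eigenspaces of $\alpha$ gives an orthogonal decomposition
\begin{align*}
 V_{ij} &= V^+_{ij}\oplus V^-_{ij},
\end{align*}
with $V^+_{ii}=\RR c_i$ (since $V^+$ is reduced), $\dim V^-_{ii}=e$, $\dim V^+_{ij}=d_0$ and $\dim V^-_{ij}=d-d_0$. The refined product is sign-graded: $V^\pm\cdot V^\pm\subseteq V^+$ and $V^+\cdot V^-\subseteq V^-$.

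\textbf{Step 2 (the case $r_0\geq 3$).} The target is $d=2d_0$. Fix three distinct indices $i,j,k$; the plan is to realise a linear isomorphism $V^+_{ij}\xrightarrow{\sim}V^-_{ik}$ by Jordan multiplication with a non-zero element of $V^-_{jk}$. \emph{(a) Producing such an element.} Since $V^-\neq 0$, some block $V^-_{ab}$ is non-zero. If $(a,b)$ is off-diagonal, transitivity of $\Aut(V)$ on Jordan frames (\cite[Satz~8.3]{Hel69}) transports the non-vanishing to $(j,k)$. If only diagonal blocks $V^-_{aa}$ are non-zero, multiply a non-zero element of $V^-_{aa}\subseteq V^-\cap V_{aa}$ by any non-zero element of $V^+_{ab}$ ($b\neq a$); by the sign-graded Peirce rules the product lies in $V^-_{ab}$ and is non-zero, reducing to the off-diagonal case. \emph{(b) The isomorphism.} Set $\phi_w(v)=2\,v\cdot w$. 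By Step~1 this sends $V^+_{ij}$ into $V^-_{ik}$. Using the polarised Jordan identity applied to triples in three disjoint Peirce blocks, combined with the observation that $w^2\in\RR c_j\oplus\RR c_k$ is a negative combination of $c_j,c_k$ (since $\tr(w^2)=-(w|w)<0$), I would construct an inverse of the form $u\mapsto 2\,u\cdot w'$ for a suitably rescaled $w'\in V^-_{jk}$. This gives $d_0\leq d-d_0$; swapping the roles of $+$ and $-$ in the argument gives the reverse inequality, hence $d=2d_0$.

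\textbf{Step 3 (the case $r_0=2$).} First I would rule out the non-reduced subcase: if $V$ were non-reduced, each $V_{ii}$ would be a rank-$2$ Jordan subalgebra with $V^+_{ii}=\RR c_i$ of rank $1$; the structure theorem for rank-$2$ Jordan algebras then forces $V_{ii}\cong\RR^{p',0}$, which admits no non-trivial non-identity idempotent, contradicting the fact that $c_i$ is not primitive in $V$. Hence $V$ is reduced, $V_{ii}=\RR c_i$, and $V=\RR c_1\oplus V_{12}\oplus\RR c_2$. Putting $W=\RR(c_1-c_2)\oplus V_{12}$ and $\beta=\tfrac{1}{2}\tau|_{W\times W}$, the Jordan product takes exactly the form of Example~\ref{ex:JordanAlgebras}(2), so $V\cong\RR^{p,q}$. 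Non-euclidicity forces $\beta$ to have at least one negative direction, hence $p\geq 2$, and simplicity of $V^+$ at rank~$2$ forces $V^+\not\cong\RR^{1,1}$, so at least two positive directions, giving $q\geq 2$.

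\textbf{Main obstacle.} Step~2 is the crux of the proof. Constructing the explicit inverse $\phi_{w'}$ via the polarised Jordan identity across three distinct Peirce indices, together with the sign control provided by the Cartan involution on $w^2$, is the technically delicate point. The preliminary reduction from the global non-vanishing $V^-\neq 0$ to the non-vanishing of the specific off-diagonal block $V^-_{jk}$ is a secondary subtlety that depends in an essential way on $r_0\geq 3$, which is precisely why Step~3 cannot proceed in the same manner.
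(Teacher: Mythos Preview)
There is a genuine gap in Step~3. Your attempt to rule out the non-reduced subcase when $r_0=2$ rests on the claim that ``$c_i$ is not primitive in $V$'', but this is false: non-reduced means only that $\dim V_{ii}=e+1>1$, not that $c_i$ splits as a sum of orthogonal idempotents. Indeed, in a Jordan algebra of the form $\RR^{p',0}$ (your own identification of $V_{ii}$), the equations for idempotents $(\lambda,u)^2=(\lambda,u)$ force $\beta(u,u)=\tfrac14$, which is impossible since $\beta$ is negative definite; so $c_i$ \emph{is} primitive in $V$ and there is no contradiction. The non-reduced case with $r_0=2$ is therefore not excluded --- and it does occur, for instance $V=\CC^n$ (case III.4 in the tables) has $r_0=2$, $r=4$, $d=2(n-2)$, $d_0=n-2$. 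Such $V$ must land in statement~(2) of the proposition, so you owe a proof that $d=2d_0$ there, and your Step~2 does not apply because it needs three distinct Peirce indices.

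The paper organises the dichotomy by $r$ rather than $r_0$: for $r\geq 3$ it separates the reduced subcase $r=r_0$ (where $r_0\geq 3$ and your three-index multiplication argument is the right idea; this is Helwig, end of \S6) from the non-reduced subcase $r=2r_0$ (Helwig, Lemma~6.3). In the latter, $e>0$ guarantees a non-zero $w\in V^-_{ii}$, and Jordan multiplication by $w$ maps $V^+_{ij}$ into $V^-_{ij}$ without needing a third index; one then argues invertibility much as you sketched in Step~2(b). So your overall strategy is sound, but you need this second mechanism to cover the non-reduced case --- in particular the $r_0=2$, $r=4$ algebras that your Step~3 mistakenly tries to eliminate.
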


\begin{proof}
Since $r\geq r_0$, we have $r\geq2$. If $V$ is of rank $r=r_0=2$, then $V\cong\RR^{p,q}$, $p,q\geq1$, by \cite[Chapter VI, Satz 7.1]{BK66}. In the case where $p=1$ the algebra $V$ is euclidean. The case $q=1$ cannot occur, because then $V^+\cong\RR^{1,1}$ which is not simple.\\
Now, if $V$ is non-euclidean of rank $r\geq3$, then either $r=r_0$ or $r=2r_0$. If $r=r_0$, then $r_0\geq3$ and by \cite[end of \S 6]{Hel69} we have $d_0=\dim V_{ij}^+=\dim V_{ij}^-=d-d_0$ for $i<j$ and hence $d=2d_0$. If $r=2r_0$, then $V_{ii}\neq\RR c_i$ for $i=1,\ldots,r_0$ and by \cite[Lemma 6.3]{Hel69} we obtain the same conclusion. This finishes the proof.
\end{proof}

\begin{example}
\begin{enumerate}
\item[\textup{(1)}] For $V=\Sym(n,\RR)$ the matrices $c_i:=E_{ii}$, $1\leq i\leq n$, form a Jordan frame. The Peirce spaces are given by
\begin{align*}
 V_{ii} &= \RR c_i && \mbox{for }1\leq i\leq n,\\
 V_{ij} &= \RR(E_{ij}+E_{ji}) && \mbox{for }1\leq i<j\leq n.
\end{align*}
Hence, $d=1$.
\item[\textup{(2)}] For $V=\RR^{p,q}$, $p,q\geq1$, a Jordan frame is given by $c_1=\frac{1}{2}(e_1+e_n)$, $c_2=\frac{1}{2}(e_1-e_n)$, $n=\dim(V)=p+q$. The Peirce spaces are
\begin{align*}
 V_{11} &= \RR c_1,\\
 V_{12} &= \RR e_2\oplus\ldots\oplus\RR e_{n-1},\\
 V_{22} &= \RR c_2.
\end{align*}
Therefore $V$ is reduced, i.e. $e=0$, and $d=p+q-2$, $d_0=q-1$.
\end{enumerate}
\end{example}

\subsection{Applications}

Using the Peirce decomposition we do some calculations that we need later on. First, to use inductive arguments, we calculate the trace of the lower rank subalgebras $V(c,1)$ for $c\in V$ an idempotent, and also on $V^+$

\begin{lemma}\label{lem:TrOnSubspace}
Let $V$ be a simple Jordan algebra such that $V^+$ is also simple.
\begin{enumerate}
\item[\textup{(1)}] For $x\in V^+$ we have
\begin{align*}
 \tr_{V^+}(x) &= \frac{r_0}{r}\tr_V(x).
\end{align*}
\item[\textup{(2)}] Let $c$ be an idempotent in $V^+$. Then for $x\in V(c,1)$:
\begin{align}
 \tr_V(x) &= \tr_{V(c,1)}(x).\label{eq:TrOnSubspace}
\end{align}
\end{enumerate}
\end{lemma}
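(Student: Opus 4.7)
Both parts can be handled by a direct computation using the Peirce decomposition associated with a Jordan frame of $V^+$. For part (1), I will use that both $\tr_{V^+}$ and the restriction $\tr_V|_{V^+}$ are linear functionals on $V^+$ and verify the identity on a basis adapted to the Peirce decomposition.

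\emph{Part (1).} Fix a Jordan frame $c_1,\ldots,c_{r_0}$ of $V^+$, giving $V^+=\bigoplus_{i\leq j}V^+_{ij}$ with $V^+_{ii}=\RR c_i$. Since $V^+$ is simple euclidean (so it coincides with its own euclidean subalgebra), Lemma~\ref{lem:Trtr} applies to both $V$ and $V^+$, yielding $\tr_V(x)=(r/n)\Tr(L_V(x))$ for $x\in V$ and $\tr_{V^+}(x)=(r_0/n_0)\Tr(L_{V^+}(x))$ for $x\in V^+$. Counting the eigenvalues of $L(c_k)$ on the Peirce summands of $V$ and of $V^+$ gives $\Tr(L_V(c_k))=(e+1)+(r_0-1)d/2=n/r_0$ and $\Tr(L_{V^+}(c_k))=1+(r_0-1)d_0/2=n_0/r_0$, so $\tr_V(c_k)=r/r_0$ and $\tr_{V^+}(c_k)=1$; the asserted identity then reduces to $1=(r_0/r)(r/r_0)$. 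For $y\in V^+_{ij}$ with $i<j$, the Peirce product rules force $L(y)$ to shift Peirce indices, so $\Tr(L_V(y))=0=\Tr(L_{V^+}(y))$ and both sides vanish. Linearity in $x$ finishes part (1).

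\emph{Part (2).} Use the spectral theorem in $V^+$ to write $c=c_1+\cdots+c_s$ as a sum of mutually orthogonal primitive idempotents of $V^+$, and extend to a Jordan frame $c_1,\ldots,c_{r_0}$ of $V^+$. A direct calculation with the Peirce eigenvalues of $L(c)=\sum_{k\leq s}L(c_k)$ yields the sub-Peirce decomposition $V(c,1)=\bigoplus_{1\leq i\leq j\leq s}V_{ij}$. I then repeat the off-diagonal vanishing argument of part (1) inside $V(c,1)$: for $y\in V_{ij}$ with $i<j$ both $\tr_V(y)$ and $\tr_{V(c,1)}(y)$ vanish. What remains is to check $\tr_V(c_k)=\tr_{V(c,1)}(c_k)$ for $k\leq s$; the left side equals $r/r_0$ by (1), and the right side equals $\rk V(c,1)/s=r/r_0$ using that $\rk V(c,1)=s\cdot r/r_0$ (each $c_k$ is primitive in $V$ when $V$ is reduced, and decomposes into $r/r_0$ primitive idempotents of $V$ in the non-reduced case) together with the symmetry provided by the permutations of $c_1,\ldots,c_s$, which are restrictions of automorphisms of $V$.

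\emph{Main obstacle.} The delicate point in (2) is that $V(c,1)$ need not be simple, so Lemma~\ref{lem:Trtr} is not directly available inside it, and some care is needed in invoking the Peirce-diagonal argument for $\tr_{V(c,1)}$. I would circumvent this in one of two ways: either by decomposing $V(c,1)^+$ into simple ideals (it is semisimple as it is preserved by the Cartan involution) and applying Lemma~\ref{lem:Trtr} componentwise, or more slickly by invoking the factorization $f_x^V(\lambda)=\lambda^{r-r'}f_x^{V(c,1)}(\lambda)$ of the generic minimal polynomials for $x\in V(c,1)$, where $r'=\rk V(c,1)$. This factorization reflects the fact that $x\in V(c,1)$ annihilates $V(c,0)$, and comparison of the coefficients of $\lambda^{r-1}$ immediately yields $\tr_V(x)=\tr_{V(c,1)}(x)$.
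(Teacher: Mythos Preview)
Your approach is workable but diverges from the paper's, and part~(2) has a genuine gap you haven't identified.

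\textbf{How the paper argues.} For part~(2), the paper does not work Peirce-block by Peirce-block. It simply observes that the restrictions of $\tr_V(xy)$ and $\tr_{V(c,1)}(xy)$ to the \emph{simple euclidean} subalgebra $V^+(c,1)$ are both associative symmetric bilinear forms, hence proportional by \cite[Proposition~III.4.1]{FK94}. Evaluating at $x=c$ (using $\tr_V(c)=k\cdot r/r_0$ via transitivity of $\Aut(V)$ on Jordan frames, and $\tr_{V(c,1)}(c)=\rk V(c,1)=k\cdot r/r_0$) pins down the constant as $1$. For $x\in V^-(c,1)$ both traces vanish because $\alpha$ restricts to an automorphism of $V(c,1)$ and negates $x$. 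Part~(1) is the same argument with $V^+$ in place of $V^+(c,1)$: evaluating at $e$ gives $\tr_{V^+}(e)=r_0$ and $\tr_V(e)=r$, whence the ratio $r_0/r$.

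\textbf{Your part~(1)} is correct, but the uniqueness argument above is shorter than eigenvalue counting through Lemma~\ref{lem:Trtr}.

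\textbf{The gap in your part~(2).} Your main line checks the identity on (a) the off-diagonal blocks $V_{ij}$ with $i<j\leq s$, and (b) the elements $c_k$. In the non-reduced case ($e>0$) you have $\dim V_{kk}=e+1>1$, and you never treat $V_{kk}^-=V_{kk}\cap V^-$. This is easy to fix by the Cartan-involution argument the paper uses, but it is not the obstacle you flagged. Your ``main obstacle'' paragraph worries only about justifying $\tr_{V(c,1)}(y)=0$ on off-diagonal blocks, which incidentally follows already from the orthogonality of the Peirce decomposition with respect to the trace form $\tau_{V(c,1)}$ (no need for Lemma~\ref{lem:Trtr} inside $V(c,1)$).

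\textbf{On the factorization workaround.} The identity $f_x^V(\lambda)=\lambda^{r-r'}f_x^{V(c,1)}(\lambda)$ for $x\in V(c,1)$ is true and, once established, gives the whole of part~(2) in one stroke. But your justification (``$x$ annihilates $V(c,0)$'') conflates the Jordan minimal polynomial with the operator minimal polynomial of $L(x)$. What you actually need is the multiplicativity $\Delta_V(u+v)=\Delta_{V(c,1)}(u)\,\Delta_{V(c,0)}(v)$ for $u\in V(c,1)$, $v\in V(c,0)$, applied to $u=\lambda c-x$ and $v=\lambda(e-c)$; this requires its own short argument.
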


\begin{proof}
We only prove the second statement. The first statement follows by the same arguments.\\
Both $\tr_V(xy)$ and $\tr_{V(c,1)}(xy)$ are associative symmetric bilinear forms on the euclidean simple Jordan algebra $V^+(c,1)$. By \cite[Proposition III.4.1]{FK94} they are scalar multiples of each other and hence $\tr_V(x)=\lambda\,\tr_{V(c,1)}(x)$ for $x\in V^+(c,1)$. We claim that $\lambda=1$. In fact, write $c=c_1+\ldots+c_k$, where $c_i\in V^+(c,1)$ are orthogonal idempotents which are primitive in $V^+(c,1)$. Then $k=\rk(V^+(c,1))=\frac{r_0}{r}\rk(V(c,1))$. The idempotents $c_i$ are also primitive in $V^+$ and we can extend the system to a Jordan frame $c_1,\ldots,c_{r_0}$ in $V^+$. Since the group of automorphisms contains all possible permutations of $c_1,\ldots,c_{r_0}$ and leaves the trace invariant, we find with \eqref{eq:trdetofe} that
\begin{align*}
 \tr_V(c) &= k\,\tr_V(c_1) = \frac{k}{r_0}\tr_V(c_1+\ldots+c_{r_0}) = \frac{k}{r_0}\tr_V(e) = k\frac{r}{r_0}.
\end{align*}
On the other hand,
\begin{align*}
 \tr_{V(c,1)}(c) &= \rk(V(c,1)) = k\frac{r}{r_0}
\end{align*}
and hence $\lambda=1$. It remains to show \eqref{eq:TrOnSubspace} for $x\in V^-(c,1)$. In this case $\tr_V(x)=\tr_V(\alpha x)=-\tr_V(x)$ and therefore $\tr_V(x)=0$. The same argument works for $\tr_{V(c,1)}(x)$ since $\alpha c=c$ and hence $\alpha$ restricts to an automorphism of $V(c,1)$, leaving $\tr_{V(c,1)}$ invariant. This finishes the proof.
\end{proof}

The next statements are needed to calculate the action of the Bessel operator in Section \ref{sec:BesselOp} and the Casimir operator in Section \ref{sec:CasimirAction}.

\begin{lemma}\label{lem:SquareSums}
Let $(e_\alpha)_\alpha$ be an orthonormal basis of $V$ with respect to $(-|-)$ and $(\overline{e}_\alpha)_\alpha$ the dual basis with respect to the trace form $\tau(-,-)$, i.e. $\overline{e}_\alpha=\alpha(e_\alpha)$. Then
\begin{align*}
 \sum_\alpha{e_\alpha^2} &= \frac{2n_0-n}{r}e &\mbox{and} && \sum_\alpha{e_\alpha\cdot\overline{e}_\alpha} &= \frac{n}{r}e.
\end{align*}
\end{lemma}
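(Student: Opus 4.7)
The plan is to prove the second identity first and then deduce the first one from it. Write $S_1 := \sum_\alpha e_\alpha^2$ and $S_2 := \sum_\alpha e_\alpha \cdot \overline{e}_\alpha$. For $S_2$, I would use the associativity of the trace form together with the defining property $\tau(e_\alpha, \overline{e}_\beta) = \delta_{\alpha\beta}$ of the dual basis: for any $y \in V$,
\begin{align*}
\tau(S_2, y) = \sum_\alpha \tau(e_\alpha \overline{e}_\alpha, y) = \sum_\alpha \tau(\overline{e}_\alpha, L(y) e_\alpha) = \Tr(L(y)),
\end{align*}
which by Lemma \ref{lem:Trtr} equals $\frac{n}{r}\tr(y) = \tau(\frac{n}{r}e, y)$. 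Non-degeneracy of $\tau$ on the simple algebra $V$ then yields $S_2 = \frac{n}{r}e$.

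For the first identity, I would pick the basis $(e_\alpha)$ adapted to the orthogonal decomposition $V = V^+ \oplus V^-$, so that each $e_\alpha$ lies in exactly one summand and hence $\overline{e}_\alpha = \pm e_\alpha$ according to the sign. Consequently $e_\alpha^2 = \pm e_\alpha \cdot \overline{e}_\alpha$, so if one sets $A := \sum_{e_\alpha \in V^+} e_\alpha \cdot \overline{e}_\alpha$ and $B := \sum_{e_\alpha \in V^-} e_\alpha \cdot \overline{e}_\alpha$, then $S_2 = A + B$ and $S_1 = A - B$. It therefore suffices to determine $A$.

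To compute $A$, I would rerun the previous argument inside $V^+$. Note $A \in V^+$ by the inclusion $V^+ \cdot V^+ \subseteq V^+$, and since the Cartan involution acts trivially on $V^+$, the restriction of $\tau$ to $V^+$ coincides with $(-|-)|_{V^+}$; in particular $(e_\alpha)_{e_\alpha \in V^+}$ is self-dual with respect to $\tau|_{V^+}$. For $y \in V^+$ this gives
\begin{align*}
\tau(A, y) = \sum_{e_\alpha \in V^+} \tau(e_\alpha, L_{V^+}(y) e_\alpha) = \Tr_{V^+}(L_{V^+}(y)).
\end{align*}
Applying Lemma \ref{lem:Trtr} to the simple euclidean algebra $V^+$ yields $\Tr_{V^+}(L_{V^+}(y)) = \frac{n_0}{r_0}\tr_{V^+}(y)$, and Lemma \ref{lem:TrOnSubspace}(1) converts $\tr_{V^+}(y)$ into $\frac{r_0}{r}\tr_V(y) = \frac{r_0}{r}\tau(y, e)$. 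Since $\tau|_{V^+}$ is positive definite ($V^+$ is euclidean), non-degeneracy gives $A = \frac{n_0}{r}e$, whence $B = \frac{n - n_0}{r}e$ and $S_1 = A - B = \frac{2n_0 - n}{r}e$.

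The main subtlety is careful bookkeeping between three different ``trace'' functions: the endomorphism trace $\Tr$, the Jordan trace $\tr_V$, and the Jordan trace $\tr_{V^+}$. The reduction to $V^+$ is justified precisely because the standing hypothesis that $V^+$ is simple allows us to invoke Lemma \ref{lem:Trtr} there, and the euclidean property of $V^+$ supplies the non-degeneracy needed to pass from the equation $\tau(A, y) = \tau(\frac{n_0}{r}e, y)$ (valid for $y \in V^+$ only) to the identity $A = \frac{n_0}{r}e$.
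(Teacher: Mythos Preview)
Your proof is correct and takes a genuinely different route from the paper's. The paper proves the first identity \emph{first}, by choosing an orthonormal basis adapted to the full Peirce decomposition $V=\bigoplus_{i\leq j}(V_{ij}^+\oplus V_{ij}^-)$, computing $e_\alpha^2$ explicitly in each of the four cases $e_\alpha\in V_{ii}^+$, $V_{ii}^-$, $V_{ij}^+$, $V_{ij}^-$ (each time $e_\alpha^2$ is a multiple of $c_i$ or $c_i+c_j$), and summing using the dimensions $1$, $e$, $d_0$, $d-d_0$. Only afterwards does the paper deduce the second identity, essentially by applying the first formula to $V^+$ to evaluate $\sum_{e_\alpha\in V^+}e_\alpha^2$.

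Your argument reverses the logical order and bypasses the Peirce decomposition entirely: you get $S_2$ immediately from the trace identity $\tau(S_2,y)=\Tr L(y)$ and Lemma~\ref{lem:Trtr}, and then evaluate $A=\sum_{e_\alpha\in V^+}e_\alpha^2$ by the same trace trick inside $V^+$, invoking Lemma~\ref{lem:Trtr} for $V^+$ and Lemma~\ref{lem:TrOnSubspace}(1). This is shorter and more conceptual. What the paper's approach buys in exchange is the explicit value of each individual $e_\alpha^2$ in the Peirce basis (e.g.\ $e_\alpha^2=\pm\frac{r_0}{r}c_i$ or $\pm\frac{r_0}{2r}(c_i+c_j)$), information that is reused verbatim in the proof of the next lemma (Lemma~\ref{lem:QRepC}); your proof establishes the aggregate sums without producing these intermediate values.
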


\begin{proof}
It is easily seen that the elements $\sum_\alpha{e_\alpha^2}$ and $\sum_\alpha{e_\alpha\cdot\overline{e}_\alpha}$ are independent of the choice of the orthonormal basis. Since the Peirce decomposition $V=\oplus_{i\leq j}{V_{ij}}$ is orthogonal with respect to $(-|-)$, we can choose an orthonormal basis $(e_\alpha)_\alpha$ such that each $e_\alpha$ is contained in one of the $V_{ij}$, $i\leq j$. Furthermore, since the Cartan involution $\alpha$ leaves each $V_{ij}$, $i\leq j$, invariant, we can even choose the $e_\alpha$ to be either in $V_{ij}^+:=V_{ij}\cap V^+$ or in $V_{ij}^-:=V_{ij}\cap V^-$.
\begin{enumerate}
 \item[\textup{(a)}] Let $e_\alpha\in V_{ii}^+$. Then $e_\alpha=\lambda c_i$ with $\lambda=\|c_i\|^{-1}=\left(\frac{r_0}{r}\right)^{\frac{1}{2}}$ by Lemma \ref{lem:TrOnSubspace}~(2). Hence
 \begin{align*}
  e_\alpha^2 &= \frac{r_0}{r}c_i & \mbox{and } & & \dim V_{ii}^+ &= 1.
 \end{align*}
 \item[\textup{(b)}] Let $e_\alpha\in V_{ii}^-$. Then $e_\alpha^2\in V_{ii}^+=\RR c_i$ and therefore $e_\alpha^2=\lambda c_i$. Since
 \begin{align*}
  1 &= \|e_\alpha\|^2 = (e_\alpha|e_\alpha) = -\tau(e_\alpha,e_\alpha)\\
  &= -\tau(e_\alpha^2,e) = -\lambda\tau(c_i,e) = -\lambda\|c_i\|^2 = -\lambda\frac{r}{r_0},
 \end{align*}
 we obtain
 \begin{align*}
  e_\alpha^2 &= -\frac{r_0}{r}c_i & \mbox{and } & & \dim V_{ii}^- &= e.
 \end{align*}
 \item[\textup{(c)}] Let $e_\alpha\in V_{ij}^+$, $i<j$. Then $e_\alpha^2\in V_{ii}^++V_{jj}^+$ and hence $e_\alpha^2=\lambda c_i+\mu c_j$. Similar to the calculation in (b) one obtains $\lambda=\mu=\sqrt{\frac{r_0}{2r}}$ and hence
 \begin{align*}
  e_\alpha^2 &= \frac{r_0}{2r}(c_i+c_j) & \mbox{and } & & \dim V_{ij}^+ &= d_0.
 \end{align*}
 \item[\textup{(d)}] Let $e_\alpha\in V_{ij}^-$, $i<j$. Applying the same arguments as in (b) and (c) yields
 \begin{align*}
  e_\alpha^2 &= -\frac{r_0}{2r}(c_i+c_j) & \mbox{and } & & \dim V_{ij}^- &= d-d_0.
 \end{align*}
\end{enumerate}
Putting everything together gives
\begin{align*}
 \sum_\alpha{e_\alpha^2} &= \sum_{i=1}^{r_0}{\sum_{e_\alpha\in V_{ii}}{e_\alpha^2}} + \sum_{1\leq i<j\leq r_0}{\sum_{e_\alpha\in V_{ij}}{e_\alpha^2}}\\
 &= \frac{r_0}{r}\sum_{i=1}^{r_0}{(1-e)c_i} + \frac{r_0}{r}\sum_{1\leq i<j\leq r_0}{\frac{1}{2}(d_0-(d-d_0))(c_i+c_j)}\\
 &= \frac{r_0}{r}\left(2\left(1+(r_0-1)\frac{d_0}{2}\right)-\left(e+1+(r_0-1)\frac{d}{2}\right)\right)\sum_{i=1}^{r_0}{c_i}\\
 &= \frac{2n_0-n}{r}e.
\end{align*}
The second formula follows from the first as follows. Choose an orthonormal basis $(e_\alpha)_\alpha$ of $V$ with $e_\alpha\in V^+\cup V^-$. By Lemma \ref{lem:TrOnSubspace}~(1) the elements
$\sqrt{r/r_0}\,e_\alpha$ with $e_\alpha\in V^+$ form an orthonormal basis of $V^+$. Further, $\overline{e}_\alpha=\alpha e_\alpha$ and we calculate, using the first formula:
\begin{align*}
 \sum_\alpha{e_\alpha\cdot\overline{e}_\alpha} &= \sum_{\begin{subarray}{c}\alpha\\e_\alpha\in V^+\end{subarray}}{e_\alpha^2}-\sum_{\begin{subarray}{c}\alpha\\e_\alpha\in V^-\end{subarray}}{e_\alpha^2} = 2\sum_{\begin{subarray}{c}\alpha\\e_\alpha\in V^+\end{subarray}}{e_\alpha^2}-\sum_{\begin{subarray}{c}\alpha\\e_\alpha\in V\end{subarray}}{e_\alpha^2}\\
 &= \left(2\frac{r_0}{r}\cdot\frac{n_0}{r_0} - \frac{2n_0-n}{r}\right)e = \frac{n}{r}e.\qedhere
\end{align*}
\end{proof}

\begin{lemma}\label{lem:QRepC}
Let $c\in V^+$ be a primitive idempotent and $(e_\alpha)_\alpha$ an orthonormal basis of $V$ with respect to $(-|-)$. Then
\begin{align*}
 \sum_\alpha{P(e_\alpha)c} &= \frac{r_0}{r}\left(\frac{d}{2}-d_0-e+1\right)c + \frac{r_0}{r}\left(d_0-\frac{d}{2}\right)e.
\end{align*}
\end{lemma}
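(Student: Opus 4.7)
The plan is a case-by-case computation along the Peirce decomposition, in the same spirit as the proof of Lemma~\ref{lem:SquareSums}. First I observe that $\sum_\alpha P(e_\alpha) c$ is independent of the chosen orthonormal basis of $V$ with respect to $(-|-)$: the polarisation $P(x,y)$ is bilinear symmetric and any two such bases are related by an orthogonal transformation, so the quadratic sum $\sum_\alpha P(e_\alpha) = \sum_\alpha P(e_\alpha, e_\alpha)$ is invariant. Hence I may extend the primitive idempotent $c$ to a Jordan frame $c_1 := c, c_2, \ldots, c_{r_0}$ of $V^+$ and choose the $e_\alpha$ compatibly with both the Peirce decomposition $V = \bigoplus_{i \leq j} V_{ij}$ and the $\pm 1$-eigenspace splitting $V = V^+ \oplus V^-$ of the Cartan involution $\alpha$, so that each basis vector lies in a definite $V_{ij}^\pm := V_{ij} \cap V^\pm$.

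For each such $e_\alpha$ I would compute
\begin{align*}
 P(e_\alpha) c = 2 L(e_\alpha)^2 c - L(e_\alpha^2) c = 2 e_\alpha(e_\alpha c) - e_\alpha^2 \cdot c
\end{align*}
using the Peirce multiplication rules, the $L(c_1)$-eigenvalue of $e_\alpha$ (one of $0$, $\tfrac{1}{2}$, $1$ according to whether $1 \notin \{i,j\}$, $1 \in \{i,j\}$ with $i \neq j$, or $(i,j) = (1,1)$), and the explicit squares $e_\alpha^2 = \pm \tfrac{r_0}{r} c_i$ or $\pm \tfrac{r_0}{2r}(c_i + c_j)$ already recorded in the proof of Lemma~\ref{lem:SquareSums}. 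The outcome is that $P(e_\alpha) c$ vanishes whenever the index $1$ does not appear in $(i,j)$, while the surviving contributions are $+\tfrac{r_0}{r} c$ from the single basis vector of $V_{11}^+ = \RR c$, $-\tfrac{r_0}{r} c$ from each of the $e$ basis vectors of $V_{11}^-$, $+\tfrac{r_0}{2r} c_j$ from each of the $d_0$ basis vectors of $V_{1j}^+$ ($j > 1$), and $-\tfrac{r_0}{2r} c_j$ from each of the $d - d_0$ basis vectors of $V_{1j}^-$.

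Summing, the off-diagonal contributions combine to $\tfrac{r_0}{2r}(2 d_0 - d) \sum_{j=2}^{r_0} c_j = \tfrac{r_0}{2r}(2 d_0 - d)(e - c)$, and adding the two diagonal contributions $(1 - e) \tfrac{r_0}{r} c$ yields
\begin{align*}
 \sum_\alpha P(e_\alpha) c = \frac{r_0}{r}(1 - e) c + \frac{r_0}{2r}(2 d_0 - d)(e - c),
\end{align*}
which collects to the claimed formula after grouping the coefficients of $c$ and $e$. I expect no conceptual difficulty beyond this: the main obstacle is purely the bookkeeping of the four-times-two Peirce cases and of the structure constants $d$, $d_0$, $e$, $r_0/r$, all of which are already accessible through Lemmas~\ref{lem:TrOnSubspace} and~\ref{lem:SquareSums}.
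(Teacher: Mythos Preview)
Your argument is correct and produces the right answer. It differs from the paper's proof only in the level of granularity of the Peirce decomposition used. The paper works with the coarser decomposition $V=V(c,1)\oplus V(c,\tfrac12)\oplus V(c,0)$ relative to the single idempotent $c$, observes abstractly that $P(e_\alpha)c$ equals $e_\alpha^2$, $(e-c)e_\alpha^2$, or $0$ in the three respective blocks, and then evaluates the resulting sums of squares by applying the \emph{statement} of Lemma~\ref{lem:SquareSums} to the subalgebras $V$, $V(c,1)$ and $V(c,0)$ (invoking Lemma~\ref{lem:TrOnSubspace} to see that the restricted $e_\alpha$ form orthonormal bases there). Your approach instead refines all the way down to the individual $V_{ij}^\pm$ and reads off the explicit values of $e_\alpha^2$ recorded \emph{inside} the proof of Lemma~\ref{lem:SquareSums}. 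The paper's route is slightly more structural and reuses a clean statement; yours is more hands-on and avoids the small digression about orthonormal bases of subalgebras. Either way the bookkeeping is the only work, and you have done it correctly.
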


\begin{proof}
Again it is easily seen that the expression $\sum_\alpha{P(e_\alpha)c}$ is independent of the chosen orthonormal basis. Since the Peirce decomposition $V=V(c,1)\oplus V(c,\frac{1}{2})\oplus V(c,0)$ is orthogonal, we can choose an orthonormal basis $(e_\alpha)_\alpha$ such that $e_\alpha\in V(c,1)\cup V(c,\frac{1}{2})\cup V(c,0)$ for every $\alpha$. Then, by Lemma \ref{lem:TrOnSubspace}~(2), the $e_\alpha$ in $V(c,1)$ form an orthonormal basis of $V(c,1)$ and those $e_\alpha$ in $V(c,0)=V(e-c,1)$ form an orthonormal basis of $V(c,0)$. Now let us determine the action of $P(e_\alpha)$ on $c$ for the following three cases:
\begin{enumerate}
 \item[\textup{(a)}] $e_\alpha\in V(c,1)$. Since $V(c,1)$ is a subalgebra, also $e_\alpha^2\in V(c,1)$. Hence
 \begin{align*}
  P(e_\alpha)c &= 2e_\alpha(e_\alpha c)-e_\alpha^2c = 2e_\alpha^2-e_\alpha^2=e_\alpha^2.
 \end{align*}
 \item[\textup{(b)}] $e_\alpha\in V(c,\frac{1}{2})$. In this case
 \begin{align*}
  P(e_\alpha)c &= 2e_\alpha(e_\alpha c)-e_\alpha^2c = e_\alpha^2 - e_\alpha^2c = (e-c)e_\alpha^2.
 \end{align*}
 \item[\textup{(c)}] $e_\alpha\in V(c,0)$. Also $V(c,0)$ is a subalgebra and hence $e_\alpha^2\in V(c,0)$. Then clearly
 \begin{align*}
  P(e_\alpha)c &= 2e_\alpha(e_\alpha c)-e_\alpha^2c = 0.
 \end{align*}
\end{enumerate}
Altogether we obtain
\begin{align*}
 \sum_\alpha{P(e_\alpha)c} &= \sum_{\begin{subarray}{c}\alpha\\e_\alpha\in V(c,1)\end{subarray}}{e_\alpha^2} + (e-c)\sum_{\begin{subarray}{c}\alpha\\e_\alpha\in V(c,\frac{1}{2})\end{subarray}}{e_\alpha^2}\\
 &= (e-c)\sum_\alpha{e_\alpha^2} + \sum_{\begin{subarray}{c}\alpha\\e_\alpha\in V(c,1)\end{subarray}}{e_\alpha^2} - \sum_{\begin{subarray}{c}\alpha\\e_\alpha\in V(c,0)\end{subarray}}{e_\alpha^2}.
\end{align*}
Put $n_c:=\dim\,V(c,1)$, $n_{c,0}:=\dim\,V^+(c,1)$, $r_c:=\rk\,V(c,1)$ and similarly for $e-c$. We have
\begin{align*}
 n_c &= e+1, & n_{e-c} &= (r_0-1)(e+1)+(r_0-1)(r_0-2)\frac{d}{2},\\
 n_{c,0} &= 1, & n_{e-c,0} &= (r_0-1)+(r_0-1)(r_0-2)\frac{d_0}{2},\\
 r_c &= \frac{r}{r_0}, & r_{e-c} &= (r_0-1)\frac{r}{r_0},
\end{align*}
and hence, using Lemma \ref{lem:SquareSums}:
\begin{align*}
 \sum_\alpha{P(e_\alpha)c} &= \frac{2n_0-n}{r}(e-c) + \frac{2n_{c,0}-n_c}{r_c}c - \frac{2n_{e-c,0}-n_{e-c}}{r_{e-c}}(e-c)\\
 &= \frac{r_0}{r}\left(\frac{d}{2}-d_0-e+1\right)c + \frac{r_0}{r}\left(d_0-\frac{d}{2}\right)e.\qedhere
\end{align*}
\end{proof}

\begin{lemma}
Let $(e_j)_j\subseteq V$ be an orthonormal basis of $V$ with respect to the inner product $(-|-)$. Then for $x\in V$:
\begin{align}
 \sum_{j=1}^n{\tau(P(\alpha x,\alpha e_j)x,e_j)} &= \frac{n}{r}(x|x).\label{eq:SumQuadRep}
\end{align}
\end{lemma}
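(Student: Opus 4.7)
The plan is to rewrite the left-hand side as a trace and apply Lemma~\ref{lem:Trtr} to identify it with $\frac{n}{r}(x|x)$. The point is that the sum is a bilinear trace pairing, and the box operator construction reduces it to the trace of a multiplication operator.

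First I would use the elementary identity $P(a,b)c = (a\Box c)b$ recalled just after the definition of the box operator, applied with $a = \alpha x$, $b = \alpha e_j$, $c = x$, to rewrite each summand as
\[
 \tau(P(\alpha x,\alpha e_j)x,e_j) = \tau\!\bigl((\alpha x\Box x)(\alpha e_j),e_j\bigr).
\]
Next I would translate $\tau$ into $(-|-)$: since $(u|v)=\tau(u,\alpha v)$ and $\alpha^2=\id$, one has $\tau(u,v)=(u|\alpha v)$, so the summand equals $\bigl((\alpha x\Box x)(\alpha e_j)\,\big|\,\alpha e_j\bigr)$. Because $\alpha$ is an automorphism it preserves $\tau$, and hence also the inner product $(-|-)$; thus $(\alpha e_j)_j$ is again an orthonormal basis. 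Setting $T := \alpha x\Box x$, this identifies the sum with the usual trace (independent of basis) of $T$ computed in the orthonormal basis $(\alpha e_j)_j$:
\[
 \sum_{j=1}^n\tau(P(\alpha x,\alpha e_j)x,e_j) = \Tr(T) = \Tr(\alpha x\Box x).
\]

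Finally, I would compute this trace using the definition $\alpha x\Box x = L(\alpha x\cdot x) + [L(\alpha x),L(x)]$. The commutator contributes nothing to the trace, so $\Tr(\alpha x\Box x) = \Tr L(\alpha x\cdot x)$. Lemma~\ref{lem:Trtr} gives $\Tr L(z) = \frac{n}{r}\tr(z)$, so
\[
 \Tr L(\alpha x\cdot x) = \frac{n}{r}\tr(\alpha x\cdot x) = \frac{n}{r}\tau(\alpha x,x) = \frac{n}{r}(x|x),
\]
which is the desired identity. There is no real obstacle here: every ingredient (the $P/\Box$ identity, invariance of $\tau$ under $\alpha$, vanishing of commutator traces, Lemma~\ref{lem:Trtr}) is already at hand. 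The only mild care required is the interchange between $\tau$ and $(-|-)$ and the verification that $\alpha$ is a $(-|-)$-isometry, both of which follow immediately from $\alpha^2=\id$ together with the fact that the Jordan trace is automorphism-invariant.
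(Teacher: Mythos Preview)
Your proof is correct and follows essentially the same route as the paper: rewrite $P(\alpha x,\alpha e_j)x=(\alpha x\Box x)(\alpha e_j)$, recognize the sum as a trace, drop the commutator, and apply Lemma~\ref{lem:Trtr}. The only cosmetic difference is that the paper passes through the $\tau$-adjoint identity $(a\Box b)^\#=b\Box a$ to arrive at $\Tr(x\Box\alpha x)$ in the original basis $(e_j)$, whereas you directly note that $(\alpha e_j)_j$ is orthonormal and obtain $\Tr(\alpha x\Box x)$; the two traces are manifestly equal.
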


\begin{proof}
Using Lemma \ref{lem:Trtr} we obtain
\begin{align*}
 \sum_{j=1}^n{\tau(P(\alpha x,\alpha e_j)x,e_j)} &= \sum_{j=1}^n{\tau((\alpha x\Box x)(\alpha e_j),e_j)}\\
 &= \sum_{j=1}^n{((x\Box\alpha x)e_j|e_j)} = \Tr(x\Box\alpha x)\\
 &= \Tr(L(x\cdot\alpha x)+[L(x),L(\alpha x)]) = \Tr(L(x\cdot\alpha x))\\
 &= \frac{n}{r}\tr(x\cdot\alpha x) = \frac{n}{r}(x|x).\qedhere
\end{align*}
\end{proof}

\section{The constants $\mu$ and $\nu$}\label{sec:MuNu}

For every Jordan algebra $V$ we introduce another two constants $\mu$ and $\nu$ by
\begin{align}
 \mu &= \mu(V) := \frac{n}{r_0}+\left|d_0-\frac{d}{2}\right|-2, & \nu &= \nu(V) := \frac{d}{2}-\left|d_0-\frac{d}{2}\right|-e-1.\label{eq:DefMuNu}\index{notation}{mu@$\mu$}\index{notation}{muV@$\mu(V)$}\index{notation}{nu@$\nu$}\index{notation}{nuV@$\nu(V)$}
\end{align}
These constants will appear as parameters of certain special functions in the minimal representation. Using Proposition \ref{prop:ClassificationEuclSph} we can calculate $\mu$ and $\nu$ explicitly:
\begin{align*}
 (\mu,\nu) &= \begin{cases}(\frac{rd}{2}-1,-1) & \mbox{if $V$ is euclidean,}\\(\frac{n}{r_0}-2,\frac{d}{2}-e-1) & \mbox{if $V$ is non-euclidean of rank $r\geq3$}\\(\max(p,q)-2,\min(p,q)-2) & \mbox{if $V\cong\RR^{p,q}$, $p,q\geq2$.}\end{cases}
\end{align*}

Let us collect some basic inequalities for $\mu$ and $\nu$ here.

\begin{lemma}\label{lem:MuNuProperties}
If $V$ is a simple Jordan algebra of split rank $r_0\geq2$, then
\begin{enumerate}
\item[\textup{(1)}] $\mu+\nu\geq-1$,
\item[\textup{(2)}] $\mu-\nu\geq0$,
\item[\textup{(3)}] $\mu\geq-\frac{1}{2}$.
\end{enumerate}
\end{lemma}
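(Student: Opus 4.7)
My approach would be to reduce all three inequalities to elementary bounds on the Peirce constants $r_0,d,d_0,e$, avoiding a case analysis based on Proposition \ref{prop:ClassificationEuclSph}. The only non-trivial ingredient is the dimension identity \eqref{eq:noverr0}, i.e.\ $\frac{n}{r_0}=e+1+(r_0-1)\frac{d}{2}$. Substituting this into the definitions \eqref{eq:DefMuNu} eliminates $\frac{n}{r_0}$ and yields the clean formulas
\begin{align*}
 \mu+\nu &= \tfrac{r_0 d}{2}-2,\\
 \mu-\nu &= 2e+(r_0-2)\tfrac{d}{2}+2\bigl|d_0-\tfrac{d}{2}\bigr|,\\
 \mu &= e+(r_0-1)\tfrac{d}{2}+\bigl|d_0-\tfrac{d}{2}\bigr|-1.
\end{align*}
The key technical simplification is that $|d_0-d/2|$ cancels in $\mu+\nu$ and contributes non-negatively to both $\mu-\nu$ and $\mu$.

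From here, (2) is immediate: $e\geq 0$, $r_0\geq 2$ and $d\geq 0$ make every term of $\mu-\nu$ non-negative. Statements (1) and (3) both reduce to the single inequality $d\geq 1$: indeed, if $d\geq 1$ then $\frac{r_0 d}{2}\geq 1$ and $(r_0-1)\frac{d}{2}\geq\frac{1}{2}$, giving $\mu+\nu\geq-1$ and $\mu\geq e+|d_0-d/2|-\frac{1}{2}\geq-\frac{1}{2}$ respectively.

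The only point requiring an argument is therefore $d\geq 1$, and this is where the simplicity of $V$ and the assumption $r_0\geq 2$ enter. The hard (though still easy) part of the proof is to see this: if $d=0$, then all off-diagonal Peirce spaces $V_{ij}$, $i\neq j$, vanish, so \eqref{eq:PeirceDecomp} reduces to $V=\bigoplus_{i=1}^{r_0}V_{ii}$. The inclusion $V_{ii}\subseteq V(c_j,0)$ for $i\neq j$, together with the Peirce relation $V(c_j,1)\cdot V(c_j,0)=0$ from the single-idempotent decomposition in Section 1.2.1, forces $V_{ii}\cdot V_{jj}=0$ for $i\neq j$. Since each $V_{ii}$ contains the non-zero idempotent $c_i$ and the sum has at least $r_0\geq 2$ summands, each $V_{ii}$ would then be a proper non-zero two-sided ideal of $V$, contradicting simplicity. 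As a sanity check, one may also verify $d\geq 1$ directly from Proposition \ref{prop:ClassificationEuclSph} and the tables of structure constants.
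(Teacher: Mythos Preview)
Your proof is correct and follows essentially the same approach as the paper: substitute the dimension identity \eqref{eq:noverr0} into the definitions of $\mu$ and $\nu$, observe that $d\geq 1$ is forced by simplicity of $V$ (via the same ideal argument you give), and read off the inequalities. The only cosmetic difference is that the paper derives (3) by averaging (1) and (2), whereas you compute $\mu$ directly; both are one-liners.
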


\begin{proof}
First note that $r_0\geq2$ by assumption and $d\geq1$. (If $d=0$, then $V$ is be the direct sum of the ideals $V(c_i,1)$, $1\leq i\leq r_0$, and hence not simple.) Together with \eqref{eq:noverr0} we obtain:
\begin{align*}
 \mu+\nu &= \frac{n}{r_0}+\frac{d}{2}-e-3\\
 &= \frac{r_0d}{2}-2 \geq -1,\\
 \mu-\nu &= \frac{n}{r_0}-\frac{d}{2}+2\left|d_0-\frac{d}{2}\right|+e-1\\
 &= (r_0-2)\frac{d}{2}+2\left|d_0-\frac{d}{2}\right|+2e \geq 0.
\end{align*}
Finally (3) is a direct consequence of (1) and (2) which finishes the proof.
\end{proof}

Denote by $\Xi$ the set of all possible values of $(\mu,\nu)$, excluding the cases for which it will turn out that there is no minimal representation:
\begin{multline*}
 \Xi := \{(\mu(V),\nu(V)):\mbox{$V$ is a simple real Jordan algebra}\\
 \mbox{of split rank $r_0\geq2$, $V^+$ is simple and if $r=2$, then $n$ is even}\}.\index{notation}{Xi@$\Xi$}
\end{multline*}
The classification of all simple real Jordan algebras (see Table \ref{tb:Parameters}) allows us to compute the set $\Xi$ explicitly:
\begin{multline*}
 \Xi = \textstyle\{(\mu,-1):\mu\in\frac{1}{2}\NN_0\}\cup\{(\mu,0):\mu\in\NN_0\}\cup\{(\mu,\nu):\mu,\nu\in\NN_0,\mu+\nu\in2\ZZ\}.
\end{multline*}
Note that $\nu$ is always an integer since
\begin{align*}
 \nu &= \min(d,2d_0)-d_0-e-1,
\end{align*}
whereas $\mu$ is in general only a half-integer (e.g. for $V=\Sym(n,\RR)$).

\section{The structure group and its Lie algebra}

We define the structure group of a Jordan algebra. Further, we give a root space decomposition of its Lie algebra which is adapted to the structure of the Jordan algebra.

\subsection{The structure group}

Denote  by $g^\#$\index{notation}{ghash@$g^\#$} the adjoint of $g\in\GL(V)$ with respect to the trace form $\tau$.

\subsubsection{Definition of the structure group}

The \textit{structure group}\index{subject}{structure group} $\Str(V)$\index{notation}{StrV@$\Str(V)$} of $V$ is the group of invertible linear transformations $g\in\GL(V)$ such that for every invertible $x\in V$ the element $gx$ is also invertible and
\begin{align*}
 (gx)^{-1} &= g^{-\#}x^{-1}.
\end{align*}
$\Str(V)$ is a real reductive group (see \cite[Corollary 8.8]{Loo77}). By \cite[Proposition VIII.2.5]{FK94} an equivalent description of the structure group is given in terms of the quadratic representation: $g\in\GL(V)$ is in the structure group if and only if
\begin{align}
 P(gx) &= gP(x)g^\#, & \forall\, x\in V.\label{eq:QRepStrGrp}
\end{align}
There is yet another equivalent description of $\Str(V)$ in terms of the Jordan determinant. Namely, it is easy to see (cf. \cite[Chapter VIII, Exercise 5]{FK94}) that $g\in\GL(V)$ belongs to the structure group if and only if there exists a constant $\chi(g)\in\KK^\times$ with
\begin{align}
 \det(gx) &= \chi(g)\det(x) & \forall\,x\in V.\label{eq:DetEquiv}\index{notation}{chig@$\chi(g)$}
\end{align}
The map $\chi:\Str(V)\rightarrow\RR^\times$ is given by $\chi(g)=\Delta(ge)$ and defines a character of $\Str(V)$. Using this equivariance property we can now calculate derivatives of the Jordan determinant $\Delta(x)$. For the proof we denote by $\ell$ the left-regular representation of the structure group $\Str(V)$ on functions $f$ which are defined on $V$:
\begin{align}
 (\ell(g)f)(x) &:= f(g^{-1}x).\label{eq:DefEll}\index{notation}{lg@$\ell(g)$}
\end{align}

\begin{lemma}\label{lem:DerivDelta}
The derivative of $\Delta$ in a point $x\in V$ in direction $u\in V$ is given by
\begin{align}
 D_u\Delta(x) &= \Delta(x)\tau(x^{-1},u).\label{eq:DDelta}
\end{align}
\end{lemma}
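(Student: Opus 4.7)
The proof plan is to differentiate the equivariance \eqref{eq:DetEquiv}, which reads $\Delta(gx)=\chi(g)\Delta(x)$, along a carefully chosen curve in $\Str(V)$. As a preliminary step I would first compute $D_v\Delta(e)=\tr(v)$: since $\{e,v\}$ generates an associative subalgebra, the generic minimal polynomial gives $\Delta(e+tv)=\prod(1+t\lambda_i)=1+t\tr(v)+O(t^2)$. Combined with $\chi(g)=\Delta(ge)$, this yields the clean formula $\chi'(A)=\tr(Ae)$ for any $A\in\str(V)$. Differentiating $\Delta(gx)=\chi(g)\Delta(x)$ along $g_t=\exp(tA)$ at $t=0$ therefore produces the master identity $D_{Ax}\Delta(x)=\tr(Ae)\cdot\Delta(x)$.

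The task then reduces to choosing $A$ so that $Ax$ equals a prescribed vector $u$, while keeping $\tr(Ae)$ tractable. For $x$ invertible I would take $A:=u\Box x^{-1}$, which lies in $\str(V)$ by general Jordan theory. Using the definition \eqref{eq:DefBoxOp} together with the commutativity of the Jordan product, one finds $Ae=(u\Box x^{-1})e=ux^{-1}+[L(u),L(x^{-1})]e=ux^{-1}$, so that $\tr(Ae)=\tr(ux^{-1})=\tau(x^{-1},u)$, which is exactly the scalar appearing on the right-hand side of \eqref{eq:DDelta}.

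It remains to verify that this choice of $A$ satisfies $Ax=u$, equivalently, since $(u\Box x^{-1})x=P(u,x)x^{-1}$, that $P(u,x)x^{-1}=u$. Writing out $P(u,x)=L(u)L(x)+L(x)L(u)-L(ux)$ and applying to $x^{-1}$, a short manipulation reduces this to the commutation $[L(x),L(x^{-1})]=0$. This is the principal obstacle and the step requiring the most care: it follows from power-associativity, specifically from the fact (derived from the Jordan axiom \eqref{eq:J2}) that $L(x^n)$ and $L(x^m)$ commute for all $n,m\in\ZZ$. With $Ax=u$ established, substituting into the master identity gives $D_u\Delta(x)=\Delta(x)\tau(x^{-1},u)$, which is precisely \eqref{eq:DDelta}.
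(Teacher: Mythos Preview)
Your argument is correct, but it takes a different route from the paper. The paper uses the \emph{group} action directly: having computed $D_u\Delta(e)=\tr(u)$, it writes an arbitrary invertible $x$ as $x=ge$ with $g\in\Str(V)$, applies the chain rule to $\ell(g^{-1})\Delta=\Delta(x)\Delta$, and obtains
\[
 D_u\Delta(x)=\Delta(x)\tr(g^{-1}u)=\Delta(x)\tau(g^{-\#}e,u)=\Delta(x)\tau(x^{-1},u),
\]
using only that $(ge)^{-1}=g^{-\#}e$. Your approach is genuinely infinitesimal: you differentiate the equivariance along a one-parameter subgroup $\exp(tA)$ and then have to manufacture, for each pair $(x,u)$, an element $A\in\str(V)$ with $Ax=u$ and computable $\tr(Ae)$. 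The choice $A=u\Box x^{-1}$ works, but checking $Ax=u$ costs you the extra Jordan identity $[L(x),L(x^{-1})]=0$, which the paper's argument avoids entirely. So the paper's proof is shorter and uses less Jordan machinery; yours trades the global transitivity statement for a pointwise Lie-algebra construction.

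One small omission: you only establish \eqref{eq:DDelta} for invertible $x$. The paper closes by observing that both sides are polynomials in $x$ (since $\Delta(x)x^{-1}$ is polynomial), so equality on the open $\Str(V)$-orbit forces equality on all of $V$.
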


\begin{proof}
By \cite[Section II.2]{FK94} the generic minimal polynomial of $x$ is given by $f_x(\lambda)=\Delta(\lambda e-x)$. Hence
\begin{align*}
 D_u\Delta(e) &= \left.\frac{\td}{\td t}\right|_{t=0}\Delta(e+tu) = \left.\frac{\td}{\td t}\right|_{t=0}(1+\tr(u)t+\textup{higher order terms}) = \tr(u).
\end{align*}
Now let $x=ge$ with $g\in\Str(V)$. Then $\ell(g^{-1})\Delta=\Delta(ge)\Delta=\Delta(x)\Delta$ and hence, by the chain rule,
\begin{align*}
 D_u\Delta(x) &= D_{g^{-1}u}(\ell(g^{-1})\Delta)(e) = \Delta(x)\tr(g^{-1}u)\\
 &= \Delta(x)\tau(g^{-\#}e,u) = \Delta(x)\tau(x^{-1},u).
\end{align*}
Now, the orbit of $\Str(V)$ containing $e$ is open and both sides of \eqref{eq:DDelta} are polynomials in $x$. Therefore \eqref{eq:DDelta} must hold for every $x\in V$.
\end{proof}

\subsubsection{The automorphism group}

The group $\Aut(V)$\index{notation}{AutV@$\Aut(V)$} of \textit{automorphisms}\index{subject}{automorphism group} of $V$ is a subgroup of $\Str(V)$. In fact, it is exactly the subgroup of $\Str(V)$ stabilizing the identity element $e$ (see \cite[Proposition VIII.2.4~(ii)]{FK94}). Moreover, $(\Str(V),\Aut(V))$ is a symmetric pair: The map
\begin{align*}
\sigma:\Str(V)\rightarrow\Str(V),\,g\mapsto g^{-\#}:=(g^{-1})^\#=(g^\#)^{-1},\index{notation}{sigma@$\sigma$}
\end{align*}
defines an involution of the structure group and with \cite[Proposition VIII.2.6]{FK94} it is easy to see that
\begin{align}
 \Str(V)_0^\sigma &\subseteq \Aut(V) \subseteq \Str(V)^\sigma.\label{eq:StrAutSymmPair}
\end{align}
If $V$ is euclidean, then $\sigma$ is a Cartan involution and hence, $\Aut(V)$ is compact. However, this is not true in general. Corresponding to the involution $\sigma$, the Lie algebra $\mathfrak{str}(V)=\Lie(\Str(V))$\index{notation}{strV@$\str(V)$} splits into the direct sum of the $\pm1$-eigenspaces of $\sigma$ (see \cite[Proposition VIII.2.6]{FK94}):
\begin{equation}
 \str(V) = \frakh + \frakq,\label{eq:strderPlusL}
\end{equation}
where
\begin{align}
 \frakh :={}& \{X\in\str(V):\sigma(X)=X\}\notag\index{notation}{h@$\frakh$}\\
 ={}& \mathfrak{der}(V) := \{D\in\End(V):D(x\cdot y)=Dx\cdot y+x\cdot Dy\ \forall\, x,y\in V\},\label{eq:LieAlgH}\index{notation}{derV@$\der(V)$}\\
 \frakq :={}& \{X\in\str(V):\sigma(X)=-X\}\notag\index{notation}{q@$\frakq$}\\
 ={}& L(V) = \{L(x):x\in V\}.\label{eq:LieAlgQ}\index{notation}{LV@$L(V)$}
\end{align}
The Lie algebra $\mathfrak{der}(V)$ of \textit{derivations}\index{subject}{derivation} is the Lie algebra of $\Aut(V)$. The defining property for a derivation can be equivalently written as $[D,L(x)]=L(Dx)$ for all $x\in V$. Hence, in the decomposition \eqref{eq:strderPlusL} the Lie bracket is given by
\begin{align}
 [L(x)+D,L(x')+D'] &= L(Dx'-D'x) + \left([L(x),L(x')]+[D,D']\right)\label{eq:LieStrBracket}
\end{align}
for $x,x'\in V$ and $D,D'\in\der(V)$. Note that for $x,y\in V$ the commutator $[L(x),L(y)]$ is a derivation. Finite sums of derivations of this type are called inner derivations. Since $V$ is semisimple, every derivation is inner (see \cite[Theorem 2]{Jac49}). A direct consequence of this fact is that the trace of every derivation vanishes, since the trace of every commutator vanishes:
\begin{align}
 \Tr(D) &= 0 & \forall\, D\in\mathfrak{der}(V).\label{eq:TrOfDerivation}
\end{align}
Using Lemma \ref{lem:Trtr} we also obtain
\begin{align}
 \tr(Dx) &= \frac{r}{n}\Tr(L(Dx)) = \frac{r}{n}\Tr([D,L(x)]) = 0 & \forall\, D\in\mathfrak{der}(V),x\in V.\label{eq:JordanTrOfDerivation}
\end{align}

In Section \ref{sec:PeirceJordanFrame} we remarked that the automorphism group acts transitively on the set of Jordan frames in $V^+$. We can now give a more precise version of this statement.

\begin{lemma}\label{lem:AutTransitiveJordanFrames}
Let $V$ be a simple Jordan algebra such that $V^+$ is also simple. Then for any two Jordan frames $c_1,\ldots,c_{r_0}$ and $d_1,\ldots,d_{r_0}$ in $V^+$ there exists a derivation $D\in\mathfrak{der}(V)$ with $\alpha D=D\alpha$ such that
\begin{align*}
 e^Dc_i &= d_i & \forall\, i=1,\ldots,r_0.
\end{align*}
\end{lemma}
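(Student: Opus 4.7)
The plan is to descend to the simple euclidean subalgebra $V^+$, find an element of $\Aut(V^+)_0$ mapping the frame $(c_i)$ to $(d_i)$, express it as the exponential of a derivation $D_0\in\der(V^+)$, and then lift $D_0$ to a derivation $D\in\der(V)$ which commutes with $\alpha$.

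First I would use that, since $V^+$ is a simple euclidean Jordan algebra, the automorphism group $\Aut(V^+)$ is compact (it preserves the positive definite trace form on $V^+$), and its identity component $\Aut(V^+)_0$ already acts transitively on the set of \emph{ordered} Jordan frames of $V^+$; the point is that each transposition of the idempotents in a frame is realised by a rotation inside a Peirce component $V^+_{ij}$, which lies in the identity component. Pick $g_0\in\Aut(V^+)_0$ with $g_0c_i=d_i$ for all $i$. Since $\Aut(V^+)_0$ is a compact connected Lie group, its exponential map is surjective, so we may write $g_0=e^{D_0}$ for some $D_0\in\der(V^+)$.

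Next I would lift $D_0$ to a derivation of $V$ commuting with $\alpha$. Because $V^+$ is semisimple, every derivation of $V^+$ is inner, so $D_0=\sum_i[L^+(x_i),L^+(y_i)]$ with $x_i,y_i\in V^+$ (here $L^+$ denotes multiplication in $V^+$). Set
\[
 D := \sum_i[L(x_i),L(y_i)] \in \der(V),
\]
using that each such commutator of multiplication operators in $V$ is a derivation of $V$. For any $x\in V^+$ one has $\alpha L(x)\alpha=L(\alpha x)=L(x)$, hence $\alpha D=D\alpha$; and the inclusion $V^+\cdot V^+\subseteq V^+$ gives $L(x)|_{V^+}=L^+(x)$ for $x\in V^+$, so $D|_{V^+}=D_0$.

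Finally, since $\alpha D=D\alpha$ the automorphism $e^D$ preserves the decomposition $V=V^+\oplus V^-$, and
\[
 (e^D)|_{V^+} \,=\, e^{D|_{V^+}} \,=\, e^{D_0} \,=\, g_0,
\]
whence $e^Dc_i=d_i$ for $1\le i\le r_0$. The step that requires the most care is the transitivity of the \emph{identity component} $\Aut(V^+)_0$ on ordered Jordan frames (as opposed to the full $\Aut(V^+)$ supplied by \cite[Satz 8.3]{Hel69}); everything else reduces to the observation that inner derivations of $V^+$ lift to inner derivations of $V$ by the very same formula, and that such lifts automatically commute with $\alpha$ because their ingredients $L(x)$, $x\in V^+$, do.
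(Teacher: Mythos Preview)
Your proof is correct and follows essentially the same route as the paper's own argument: descend to the euclidean subalgebra $V^+$, use transitivity of $\Aut(V^+)_0$ on ordered Jordan frames together with surjectivity of the exponential map on the compact connected group $\Aut(V^+)_0$ to write the required automorphism as $e^{D_0}$, and then lift $D_0$ to an inner derivation of $V$ commuting with $\alpha$. You spell out the lifting step in more detail than the paper (which just says ``$D$ extends to $V$ with the property that $\alpha D=D\alpha$''), and your caution about the identity component is unnecessary here: the paper reads \cite[Satz 8.3]{Hel69} as already providing an element of $\Aut(V^+)_0$ when applied to the euclidean algebra $V^+$.
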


\begin{proof}
By \cite[Satz 8.3]{Hel69}, applied to $V^+$, there exists an element $h\in\Aut(V^+)_0$ such that $hc_i=d_i$ for all $i=1,\ldots,r_0$. Since $\Aut(V^+)_0$ is compact, it is the image of its Lie algebra under the exponential map. Therefore, there exists a derivation $D\in\mathfrak{der}(V^+)$ such that $h=e^D$. All derivations in $\mathfrak{der}(V^+)$ are inner and hence $D$ extends to $V$ with the property that $\alpha D=D\alpha$ which shows the claim.
\end{proof}

\subsubsection{A Cartan involution}

The involution $\sigma$ is in general not a Cartan involution of $\str(V)$ (only if $V$ is euclidean). To obtain a Cartan involution we have to conjugate with the Cartan involution $\alpha$ of $V$. In fact, the involution
\begin{align*}
 \theta:\Str(V)\rightarrow\Str(V),\,g\mapsto g^{-*}=\alpha g^{-\#}\alpha,\index{notation}{theta@$\theta$}
\end{align*}
where $^*$\index{notation}{gstar@$g^*$} denotes the adjoint with respect to the inner product $(-|-)$, is a Cartan involution of $\Str(V)$. The fixed point set $\Str(V)^\theta$ of $\theta$ is therefore a maximal compact subgroup of $\Str(V)$. By definition $\Str(V)^\theta$ is also the intersection of $\Str(V)$ with the orthogonal group of the inner product $(-|-)$. Note that if $V$ is euclidean, then $\alpha=\1$ and hence $\theta=\sigma$. As previously remarked, in this case the automorphism group $\Aut(V)$ is compact.

Returning to the general case, it is easy to see that $\sigma$ commutes with the Cartan involution $\theta$. Hence, $\theta(\frakh)=\frakh$ and also $\theta(\Aut(V))=\Aut(V)$. Then by \cite[Corollary 1.1.5.4]{War72} $\Aut(V)$ is a real reductive group.

The Cartan decomposition of the Lie algebra $\str(V)$ with respect to $\theta$ is given by
\begin{align*}
 \str(V) &= \frakk_\frakl+\frakp_\frakl,
\end{align*}
where
\begin{align}
 \frakk_\frakl :={}& \{X\in\str(V):\theta(X)=X\}\notag\index{notation}{kl@$\frakk_\frakl$}\\
 ={}& \{L(x)+D:x\in V^-,\alpha D=D\alpha\},\label{eq:LieAlgKL}\\
 \frakp_\frakl ={}& \{X\in\str(V):\theta(X)=-X\}\notag\index{notation}{pl@$\frakp_\frakl$}\\
 ={}& \{L(x)+D:x\in V^+,\alpha D=-D\alpha\}.\label{eq:LieAlgPL}
\end{align}

Now, let $L$\index{notation}{L@$L$} be the subgroup of $\GL(V)$ generated by the identity component $\Str(V)_0$ of the structure group and the Cartan involution $\alpha$. Clearly
\begin{align*}
 \Str(V)_0 &\subseteq L \subseteq \Str(V).
\end{align*}
$L$ has at most two connected components, namely $\Str(V)_0$ and $\alpha\Str(V)_0$. Denote by $\frakl=\str(V)$\index{notation}{l@$\frakl$} its Lie algebra. The involutions $\theta$ and $\sigma$ leave $L$ invariant since $\theta(\alpha)=\sigma(\alpha)=\alpha$. Then $K_L:=L^\theta$\index{notation}{KL@$K_L$} is a maximal compact subgroup of $L$.

\begin{example}\label{ex:StrGrp}
\begin{enumerate}
\item[\textup{(1)}] The structure algebra of $V=\Sym(n,\RR)$ is easily seen to be $\str(V)=\gl(n,\RR)=\sl(n,\RR)\oplus\RR$, acting by
\begin{align*}
 X\cdot a &= Xa+aX^t & \mbox{for }X\in\gl(n,\RR),a\in V.
\end{align*}
Integrating this action to the universal cover and factoring out the elements that act trivially shows that the identity component of $\Str(V)$ is isomorphic to $\RR_+\SL(n,\RR)$, acting by
\begin{align*}
 g\cdot x &= gxg^t & \mbox{for }g\in\RR_+\SL(n,\RR),x\in V.
\end{align*}
Since $V$ is euclidean, $L=\Str(V)_0$. The maximal compact subgroup of $L$ is $K_L=\SO(n)$, acting by conjugation.
\item[\textup{(2)}] For $V=\RR^{p,q}$ the characterization \eqref{eq:DetEquiv} can be used to show that the structure group is given by
\begin{align*}
 \Str(V) &= \begin{cases}\RR_+\upO(p,q) & \mbox{if }p\neq q,\\\RR_+\upO(p,q)\cup g_0\cdot\RR_+\upO(p,q) & \mbox{if }p=q,\end{cases}
\end{align*}
where
\begin{align*}
 g_0 &= \left(\begin{array}{cc}0&\1_p\\\1_p&0\end{array}\right).
\end{align*}
Then clearly $\Str(V)_0=\RR_+\SO(p,q)_0$. By \eqref{eq:VpqCartanInv} the Cartan involution $\alpha$ is contained in $\Str(V)_0$ if and only if $p$ is odd. In this case $L=\Str(V)_0$. If $p$ is even, $L=\Str(V)_0\cup\alpha\Str(V)_0$. The maximal compact subgroup of $L_0$ is $(K_L)_0=\SO(p)\times\SO(q)$.
\end{enumerate}
\end{example}

\subsection{Root space decomposition}

As a preparation for the root space decomposition we prove a simple lemma concerning derivations of the form $[L(c),L(u)]$ with $c$ an idempotent.

\begin{lemma}\label{lem:BracketZero}
\begin{enumerate}
\item[\textup{(1)}] Let $c\in V$ be an idempotent. Then for any $u\in V(c,1)+V(c,0)$:
\begin{align*}
 [L(c),L(u)] &= 0.
\end{align*}
\item[\textup{(2)}] Let $c_1,c_2\in V$ be orthogonal idempotents. Then for any $u\in V(c_1,\frac{1}{2})\cap V(c_2,\frac{1}{2})$:
\begin{align*}
 [L(c_1),L(u)] &= -[L(c_2),L(u)].
\end{align*}
\end{enumerate}
\end{lemma}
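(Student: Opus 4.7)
\medskip

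\noindent\textbf{Proof plan.}

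For part (1), my plan is to verify $[L(c),L(u)]=0$ directly on each summand of the Peirce decomposition $V=V(c,1)\oplus V(c,\tfrac12)\oplus V(c,0)$ using the multiplication rules recalled earlier in the excerpt. First I would treat the case $u\in V(c,1)$: on $V(c,1)$ one has $L(c)=\id$, and $L(u)$ preserves $V(c,1)$ since $V(c,1)\cdot V(c,1)\subseteq V(c,1)$, so the commutator vanishes on this summand; on $V(c,\tfrac12)$ both $L(c)$ and $L(u)$ act as scalar $\tfrac12$ composed with $L(u)$ (using $V(c,1)\cdot V(c,\tfrac12)\subseteq V(c,\tfrac12)$), so they commute; and on $V(c,0)$ both sides are zero since $V(c,1)\cdot V(c,0)=0$. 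This disposes of $u\in V(c,1)$.

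For $u\in V(c,0)$ I would reduce to the previous case by observing that $c':=e-c$ is also an idempotent, that $V(c,0)=V(c',1)$, and that $L(c')=\id-L(c)$, so
\[
 [L(c),L(u)] \;=\; -[L(c'),L(u)] \;=\; 0
\]
by the case already handled. By linearity the claim extends to all $u\in V(c,1)+V(c,0)$.

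For part (2), the key observation is that since $c_1c_2=0$ the element $c:=c_1+c_2$ is again an idempotent, and for $u\in V(c_1,\tfrac12)\cap V(c_2,\tfrac12)$ one computes
\[
 L(c)u \;=\; L(c_1)u+L(c_2)u \;=\; \tfrac12 u+\tfrac12 u \;=\; u,
\]
so $u\in V(c,1)$. Part~(1) applied to $c$ then gives $[L(c),L(u)]=0$, and expanding $L(c)=L(c_1)+L(c_2)$ by linearity of the bracket immediately yields $[L(c_1),L(u)]=-[L(c_2),L(u)]$.

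There is essentially no obstacle here, as both statements reduce to bookkeeping with the Peirce multiplication rules already established; the only point that requires a moment's care is the identification $V(c,0)=V(e-c,1)$ used in the second half of part~(1), which lets us avoid a separate Peirce analysis for $u\in V(c,0)$.
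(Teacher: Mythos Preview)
Your argument is correct. For part~(2) it coincides exactly with the paper's: set $c:=c_1+c_2$, observe $u\in V(c,1)$, and apply part~(1).

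For part~(1) the routes differ. You verify the vanishing of $[L(c),L(u)]$ by checking its action on each Peirce summand $V(c,1)$, $V(c,\tfrac12)$, $V(c,0)$ using only the multiplication rules $V(c,1)\cdot V(c,\tfrac12)\subseteq V(c,\tfrac12)$, $V(c,1)\cdot V(c,0)=0$, etc., and then reduce $u\in V(c,0)$ to the case $u\in V(c',1)$ via $c'=e-c$. The paper instead invokes the polarized Jordan identity $[L(y),L(x^2)]=-2[L(x),L(xy)]$ (this is \cite[Proposition~II.1.1~(i)]{FK94}) with $x=c$: since $c^2=c$, for $u_1\in V(c,1)$ one gets $[L(u_1),L(c)]=-2[L(c),L(u_1)]$, forcing the bracket to vanish; and for $u_2\in V(c,0)$ one gets $[L(u_2),L(c)]=-2[L(c),L(0)]=0$ directly. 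The paper's argument is a one-line algebraic trick and avoids any case analysis on the target Peirce component; your argument is more elementary in that it uses nothing beyond the Peirce multiplication table already recorded in the text, at the cost of a small amount of bookkeeping.
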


\begin{proof}
(2) follows directly from (1) with $c:=c_1+c_2$. For (1) we let $u_1\in V(c,1)$ and $u_2\in V(c,0)$. Using \cite[Proposition II.1.1~(i)]{FK94} we find that
\begin{align*}
 [L(u_1),L(c)] &= [L(u_1),L(c^2)] = -2[L(c),L(u_1c)]\\
 &= -2[L(c),L(u_1)] = 2[L(u_1),L(c)]
\end{align*}
and
\begin{align*}
 [L(u_2),L(c)] &= [L(u_2),L(c^2)] = -2[L(c),L(u_2c)] = 0.
\end{align*}
This finishes the proof.
\end{proof}

Now, the subalgebra
\begin{align}
 \fraka &:= \sum_{i=1}^{r_0}{\RR L(c_i)} \subseteq \frakl\label{eq:DefLiea}\index{notation}{a@$\fraka$}
\end{align}
is abelian by \cite[Proposition II.1.1~(1)]{FK94}. We even have the following lemma:

\begin{lemma}\label{lem:aMaxAbelian}
$\fraka$ is maximal abelian in $L(V^+)$.
\end{lemma}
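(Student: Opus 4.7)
The plan is to argue directly from the Peirce decomposition associated to the Jordan frame $c_1,\ldots,c_{r_0}$. Since $\fraka$ is abelian by \cite[Proposition II.1.1~(1)]{FK94}, the claim reduces to showing: if $x\in V^+$ satisfies $[L(x),L(c_k)]=0$ for every $k$, then $x\in\sum_i\RR c_i$, so that $L(x)\in\fraka$.

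The key observation is that the simultaneous eigenspaces of the commuting family $L(c_1),\ldots,L(c_{r_0})$ are exactly the Peirce subspaces $V_{ij}$, and their eigenvalue tuples are pairwise distinct: $V_{ii}$ carries the tuple with a $1$ in the $i$-th slot and zeros elsewhere, while $V_{ij}$ (for $i<j$) carries $\tfrac{1}{2}$ in slots $i$ and $j$ and zeros elsewhere. Consequently, $T\in\End(V)$ commutes with every element of $\fraka$ if and only if $T$ stabilizes each $V_{ij}$.

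Assume $[L(x),L(c_k)]=0$ for all $k$ and decompose $x=\sum_{i\leq j}x_{ij}$ along $V^+=\bigoplus_{i\leq j}V_{ij}^+$. Since $c_k\in V_{kk}$, the stabilization principle forces $L(x)c_k=x\cdot c_k\in V_{kk}$. Using the eigenvalue characterization of the Peirce spaces one computes
\begin{align*}
 x\cdot c_k = x_{kk}+\tfrac{1}{2}\sum_{j\neq k}x_{\{k,j\}},
\end{align*}
where $x_{\{k,j\}}$ denotes whichever of $x_{kj}$ or $x_{jk}$ is indexed. Because the Peirce sum is direct and $V_{kk}$ meets the off-diagonal pieces trivially, each $x_{\{k,j\}}$ must vanish. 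Letting $k$ range over $1,\ldots,r_0$ eliminates every off-diagonal Peirce component, so $x=\sum_i x_{ii}$ with $x_{ii}\in V_{ii}^+$.

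The final step is to invoke $V_{ii}^+=\RR c_i$, which was recorded right after \eqref{eq:noverr0} and rests on $c_i$ being primitive in the euclidean simple algebra $V^+$. Writing $x_{ii}=\lambda_i c_i$ then yields $L(x)=\sum_i\lambda_i L(c_i)\in\fraka$, as required. There is no real obstacle; the argument is essentially bookkeeping on the Peirce grading, the only non-trivial ingredient being the equality $V_{ii}^+=\RR c_i$ which is reserved for the euclidean situation and is exactly what lets the maximality pass from $\fraka\subseteq L(V^+)$ inside $L(V)$ down to maximality inside $L(V^+)$.
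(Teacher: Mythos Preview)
Your proof is correct and follows essentially the same approach as the paper: use the Peirce decomposition, show that commuting with all $L(c_k)$ kills the off-diagonal components, and invoke $V_{ii}^+=\RR c_i$. The paper phrases the vanishing step as a direct commutator evaluation $[L(x),L(c_i)]c_j=\tfrac{1}{4}x_{ij}$, whereas you route it through the eigenspace-stabilization principle and then read off $L(x)c_k\in V_{kk}$; these are the same computation in different packaging.
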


\begin{proof}
By the Peirce decomposition of $V^+$ it suffices to show that there exists no non-zero element $x=\sum_{i<j}{x_{ij}}$, $x_{ij}\in V^+_{ij}$, with $[L(x),L(c_i)]=0$ for all $i=1,\ldots,r_0$. If $x$ is such an element, then for $i<j$ we have
\begin{align*}
 0 &= [L(x),L(c_i)]c_j = c_i(c_jx) = \frac{1}{4}x_{ij}
\end{align*}
and hence $x=0$.
\end{proof}

A basis of the dual space $\fraka^*$ of $\fraka$ is given by the functionals $\varepsilon_1,\ldots,\varepsilon_{r_0}$, where for $i=1,\ldots,r_0$:
\begin{align}
 \varepsilon_i\left(\sum_{j=1}^{r_0}{t_jL(c_j)}\right) &:= t_i.\label{eq:DefEpsilonj}\index{notation}{epsiloni@$\varepsilon_i$}
\end{align}
Denote by $\Sigma(\frakl,\fraka)\subseteq\fraka^*$ the set of non-zero weights of $\frakl$ with respect to $\fraka$.

\begin{proposition}\label{prop:StrRootDecomp}
The set $\Sigma(\frakl,\fraka)$ is a root system of type $A_{r_0-1}$ and given by
\begin{align}
 \Sigma(\frakl,\fraka) &= \left\{\frac{\varepsilon_i-\varepsilon_j}{2}:i\neq j\right\}.\label{eq:LRoots}
\end{align}
The corresponding root spaces amount to
\begin{align}
 \frakl_{ij} &:= \frakl_{\frac{\varepsilon_i-\varepsilon_j}{2}} = \textstyle\{c_i\Box x=\frac{1}{2}L(x)+[L(c_i),L(x)]:x\in V_{ij}\}\label{eq:LRootSpaces}\index{notation}{lij@$\frakl_{ij}$}
\end{align}
for $i\neq j$ and
\begin{align}
 \frakl_0 &= \left\{L(x)+D:x\in\bigoplus_{i=1}^{r_0}{V_{ii}},Dc_i=0\,\forall\, i=1,\ldots,r_0\right\}.\label{eq:LRootSpace0}\index{notation}{lnull@$\frakl_0$}
\end{align}
\end{proposition}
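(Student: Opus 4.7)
The plan has three stages: (i) show $\fraka$ acts semisimply on $\frakl$ so that a joint weight space decomposition exists; (ii) exhibit vectors in each proposed root space; (iii) verify these exhaust $\frakl$. Type $A_{r_0-1}$ will be immediate from the resulting list of weights.

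For (i), each $c_i$ lies in $V^+$, so $L(c_i)\in\frakp_\frakl$ by \eqref{eq:LieAlgPL}. Standard reductive Lie algebra theory gives that $\ad X$ for $X\in\frakp_\frakl$ is self-adjoint with respect to the positive-definite Cartan form $-B(\cdot,\theta\cdot)$ and hence diagonalizable; since $\fraka$ is abelian (Lemma \ref{lem:aMaxAbelian}) the $\ad L(c_i)$ are simultaneously diagonalizable.

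For (ii), splitting $X=L(x)+D$ as in \eqref{eq:strderPlusL} and applying the bracket rule \eqref{eq:LieStrBracket} gives
\[
 [L(c_k),L(x)+D] = [L(c_k),L(x)] - L(Dc_k),
\]
a sum of a derivation and an element of $L(V)$; the two summands can be tracked independently since $\der(V)\cap L(V)=0$. For $x\in V_{ij}$ with $i\neq j$, Lemma \ref{lem:BracketZero} together with the Peirce relations $c_ix=c_jx=\tfrac{1}{2}x$ yields $[L(c_i),L(x)]c_k=\tfrac{1}{4}(\delta_{kj}-\delta_{ki})x$; a short substitution verifies $[L(c_k),X_x]=\tfrac{1}{2}(\delta_{ki}-\delta_{kj})X_x$ where $X_x:=c_i\Box x=\tfrac{1}{2}L(x)+[L(c_i),L(x)]$, placing $X_x$ in the root space for $(\varepsilon_i-\varepsilon_j)/2$. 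For weight zero, $L(V_{ii})\subseteq\frakl_0$ by Lemma \ref{lem:BracketZero}(1), and any $D\in\der(V)$ with $Dc_k=0$ for all $k$ sits in $\frakl_0$ by the displayed bracket formula.

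For (iii), given $X=L(x)+D$, decompose $x=\sum x_{ab}$ via Peirce. The identity $c_i\Box x+c_j\Box x=L(x)$ for $x\in V_{ij}$, $i<j$ (the derivation summands cancel by Lemma \ref{lem:BracketZero}(2)), expresses $L(x_{ab})$ as a sum of root vectors in $\frakl_{ab}\oplus\frakl_{ba}$; each $L(x_{aa})$ already lies in $\frakl_0$. To handle the derivation $D$, applying $D$ to $c_k=c_k^2$ forces $Dc_k\in V(c_k,\tfrac{1}{2})$, and applying it to $c_kc_\ell=0$ for $k\neq\ell$ yields antisymmetry of the $V_{k\ell}$-components, so $Dc_k=2\sum_{\ell\neq k}v_{k\ell}$ with $v_{\ell k}=-v_{k\ell}\in V_{k\ell}$. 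A direct computation using the values of $[L(c_i),L(y_{ij})]c_k$ obtained above then produces $y_{ij}\in V_{ij}$ (explicit scalar multiples of the $v_{ij}$) for which $D-\sum_{i<j}[L(c_i),L(y_{ij})]$ annihilates every $c_k$ and thus sits in $\frakl_0$; each correction $[L(c_i),L(y_{ij})]=\tfrac{1}{2}(c_i\Box y_{ij}-c_j\Box y_{ij})$ lives in $\frakl_{ij}\oplus\frakl_{ji}$. Injectivity of $x\mapsto c_i\Box x$ on $V_{ij}$, visible from $(c_i\Box x)c_j=P(c_i,c_j)x=\tfrac{1}{2}x$, identifies $\frakl_{ij}$ with $V_{ij}$; the root system $\{(\varepsilon_i-\varepsilon_j)/2:i\neq j\}$ is manifestly of type $A_{r_0-1}$.

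The main obstacle is the derivation part in step (iii): because the root vectors $c_i\Box y$ mix $L(V)$ and $\der(V)$ contributions, one cannot decompose $L(x)$ and $D$ separately, and the argument succeeds only because the Peirce constraints $Dc_k\in V(c_k,\tfrac{1}{2})$ and antisymmetry match exactly the adjustments on the frame that are achievable via the derivation parts of the $c_i\Box y_{ij}$.
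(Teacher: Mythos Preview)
Your proof is correct and complete, but it proceeds by a different route than the paper's. The paper argues \emph{top-down}: it fixes an arbitrary nonzero weight $\gamma=\sum_i\gamma_i\varepsilon_i$ and a weight vector $X=L(x)+D$, then uses $[L(c_i),X]=\gamma_i X$ for some $i$ with $\gamma_i\neq 0$ to force $D=\gamma_i^{-1}[L(c_i),L(x)]$ and $x\in V(c_i,\tfrac12)$ with $\gamma_i=\pm\tfrac12$; repeating with a second index $j$ pins down $x\in V_{ij}$ and $\gamma=\pm\tfrac{\varepsilon_i-\varepsilon_j}{2}$. The zero weight space is then determined by the same bracket identity. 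Your argument is \emph{bottom-up}: you exhibit the candidate root vectors, verify their weights directly, and then show spanning by peeling off the Peirce components of $L(x)$ and correcting $D$ by inner derivations $[L(c_i),L(y_{ij})]$ until it annihilates the frame. The paper's approach is shorter and avoids the explicit analysis of $Dc_k$ (the constraints $Dc_k\in V(c_k,\tfrac12)$ and the antisymmetry $v_{k\ell}=-v_{\ell k}$), while yours is more constructive and makes the identification $\frakl_{ij}\cong V_{ij}$ transparent via the injectivity computation $(c_i\Box x)c_j=\tfrac12 x$.
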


\begin{proof}
Let $\gamma=\sum_{i=1}^{r_0}{\gamma_i\varepsilon_i}\in\Sigma(\frakl,\fraka)$ and $0\neq X=L(x)+D\in\frakl_\gamma$. Since $\gamma\neq0$, there is $1\leq i\leq r_0$ such that $\gamma_i\neq0$. Hence, by \eqref{eq:LieStrBracket}:
\begin{align*}
 \gamma_i(L(x)+D) &= \gamma_iX = [L(c_i),X] = L(-Dc_i) + [L(c_i),L(x)].
\end{align*}
Therefore, $D=\gamma_i^{-1}[L(c_i),L(x)]$ and
\begin{align*}
 x &= -\gamma_i^{-1}Dx = -\gamma_i^{-2}[L(c_i),L(x)]c_i = -\gamma_i^{-2}(c_i(c_ix)-c_ix).
\end{align*}
Write $x=x_1+x_{\frac{1}{2}}+x_0$ in the Peirce decomposition relative to $c_i$, i.e. $x_\lambda\in V(c_i,\lambda)$, $\lambda=0,\frac{1}{2},1$. Then
\begin{align*}
 c_i(c_ix)-c_ix &= -\frac{1}{4}x_{\frac{1}{2}}
\end{align*}
and hence $x=(2\gamma_i)^{-2}x_{\frac{1}{2}}\in V(c_i,\frac{1}{2})$. This implies $x=x_{\frac{1}{2}}$ and $\gamma_i=\pm\frac{1}{2}$. Altogether we obtain
\begin{align*}
 X &= L(x)\pm2[L(c_i),L(x)], & \mbox{with }x\in V(c_i,\textstyle\frac{1}{2}).
\end{align*}
Now, if $\gamma_j=0$ for every $j\neq i$, then for every $j\neq i$ we have
\begin{align*}
 0 &= [L(c_i),L(x)]c_j=c_i(c_jx)=c_j(c_ix)=\frac{1}{2}c_jx.
\end{align*}
But this is only possible if $x=0$ since $x\in V(c_i,\frac{1}{2})=\sum_{j\neq i}{V_{ij}}$. But $x=0$ implies $X=0$ which contradicts our assumption. Therefore, there has to be $j\neq i$ such that $\gamma_j\neq0$. The same argument as above shows that $x\in V(c_j,\frac{1}{2})$ and hence $x\in V_{ij}$. Thus, by Lemma \ref{lem:BracketZero}~(2):
\begin{align*}
 X &= L(x)\pm2[L(c_i),L(x)] = L(x)\mp2[L(c_j),L(x)],
\end{align*}
and a direct computation shows that $\gamma=\pm\frac{\varepsilon_i-\varepsilon_j}{2}$. It remains to compute $\frakl_0$.
An element $X=L(x)+D\in\frakl$ is in $\frakl_0$ if and only if for every $i=1,\ldots,r_0$ we have
\begin{align*}
 0 &= [L(c_i),X] = L(-Dc_i)+[L(c_i),L(x)].
\end{align*}
This is equivalent to $Dc_i=0$ and $[L(c_i),L(x)]=0$ for all $i=1,\ldots,r_0$. Clearly $x\in\sum_{i=1}^{r_0}{V_{ii}}$ has this property by Lemma \ref{lem:BracketZero}~(1). Conversely, let $[L(c_i),L(x)]=0$ for every $i=1,\ldots,r_0$. Write $x=\sum_{1\leq i\leq j\leq r_0}{x_{ij}}$ in the Peirce decomposition, i.e. $x_{ij}\in V_{ij}$, $1\leq i\leq j\leq r_0$. Then by Lemma \ref{lem:BracketZero}~(1) we find that for all $i<j$:
\begin{align*}
 0 = [L(c_i),L(x)]c_j &= \frac{1}{4}x_{ij}
\end{align*}
and hence $x=\sum_{i=1}^{r_0}{x_{ii}}\in\sum_{i=1}^{r_0}{V_{ii}}$ which finishes the proof.
\end{proof}

We choose the positive system
\begin{align}
 \Sigma^+(\frakl,\fraka) &= \left\{\frac{\varepsilon_i-\varepsilon_j}{2}:1\leq i<j\leq r_0\right\}.\label{eq:PositiveLRoots}
\end{align}

Later we also need some information about the center of $\frakl$.

\begin{lemma}
For the center $Z(\frakl)$\index{notation}{Zl@$Z(\frakl)$} of $\frakl$ we have the following inclusions
\begin{align}
 \RR L(e) &\subseteq Z(\frakl) \subseteq \sum_{i=1}^{r_0}{L(V_{ii})}.\label{eq:CenterL}
\end{align}
\end{lemma}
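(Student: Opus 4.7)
The first inclusion is immediate: $L(e)=\id_V$ lies in $L(V)\subseteq\frakl$ and commutes with every endomorphism of $V$, hence belongs to $Z(\frakl)$.

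For the second inclusion, my plan is to combine two pieces of information about a central element $X\in Z(\frakl)$. First, since $\fraka\subseteq\frakl$, the element $X$ commutes with $\fraka$ and therefore lies in the zero weight space $\frakl_0$. By Proposition \ref{prop:StrRootDecomp}, this already forces
\begin{align*}
 X &= L(x)+D, & x&\in\bigoplus_{i=1}^{r_0}V_{ii},\quad D\in\der(V),\quad Dc_i=0\ \forall\,i.
\end{align*}
So the only thing left to show is that the derivation part $D$ vanishes.

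To kill $D$, I will use that $X$ also commutes with all of $L(V)\subseteq\frakl$. Using the derivation property $[D,L(v)]=L(Dv)$ and the formula \eqref{eq:LieStrBracket}, for every $v\in V$ one has
\begin{align*}
 0 &= [X,L(v)] = [L(x),L(v)] + L(Dv).
\end{align*}
Now here is the key point: the decomposition $\str(V)=\frakh+\frakq=\der(V)\oplus L(V)$ from \eqref{eq:strderPlusL}--\eqref{eq:LieAlgQ} is a direct sum, and $[L(x),L(v)]\in\der(V)$ while $L(Dv)\in L(V)$. Therefore both summands vanish separately, and in particular $L(Dv)=0$ for every $v\in V$. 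Since the map $L:V\to L(V)$ is injective (as $L(y)e=y$), this forces $Dv=0$ for all $v$, i.e.\ $D=0$. Hence $X=L(x)$ with $x\in\bigoplus_i V_{ii}$, which gives the second inclusion $Z(\frakl)\subseteq\sum_{i=1}^{r_0}L(V_{ii})$.

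There is no real obstacle here; the only subtlety worth emphasizing is that we exploit the direct sum decomposition of $\str(V)$ to peel off the derivation and multiplication components of the bracket independently. The zero-weight condition from $\fraka$ does the bookkeeping on the $L(x)$ part, and commutation with $L(V)$ disposes of the derivation part.
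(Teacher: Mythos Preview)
Your proof is correct and uses essentially the same key step as the paper: writing $[X,L(v)]=[L(x),L(v)]+L(Dv)$ and invoking the direct sum $\str(V)=\der(V)\oplus L(V)$ to conclude $D=0$. The only organizational difference is that you first appeal to Proposition~\ref{prop:StrRootDecomp} (via $X\in\frakl_0$) to obtain $x\in\bigoplus_i V_{ii}$, whereas the paper extracts this afterwards from the remaining condition $[L(x),L(x')]=0$ by the direct computation $[L(c_i),L(x)]c_j=\tfrac{1}{4}x_{ij}$; both routes amount to the same information.
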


\begin{proof}
The first inclusion of \eqref{eq:CenterL} is clear since $L(e)=\id_V$. For the second inclusion let $L(x)+D\in Z(\frakl)$. Then by \eqref{eq:LieStrBracket} for every $x'\in V$:
\begin{align*}
 0 &= [L(x)+D,L(x')] = L(Dx') + [L(x),L(x')]
\end{align*}
Hence, $D=0$ and $[L(x),L(x')]=0$ for every $x'\in V$. Write $x=\sum_{i\leq j}{x_{ij}}$ in its Peirce decomposition, i.e. $x_{ij}\in V_{ij}$. Then for $i<j$:
\begin{align*}
 0 &= [L(c_i),L(x)]c_j = \frac{x_{ij}}{4}.
\end{align*}
Therefore, $x\in\sum_{i=1}^{r_0}{V_{ii}}$ and the proof is complete.
\end{proof}

\begin{remark}
In general one does not have $Z(\frakl)=\RR L(e)$. For instance, let $V$ is a simple real euclidean Jordan algebra and view its complexification $V_\CC$ also as real Jordan algebra. Then $V_\CC$ is also simple and the center $Z(\str(V_\CC))$ contains at least $\CC L(e)$.
\end{remark}

\section{Orbits of the structure group and equivariant measures}\label{sec:OrbitsMeasures}

There are only finitely many orbits under the action of $\Str(V)_0$ on $V$. An explicit description of these orbits can be found in \cite{Kan98}. We will merely be interested in the open orbit of $\Str(V)_0$ containing the unit element and the orbits which are contained in its boundary. On these orbits we construct equivariant measures. This yields $L^2$-spaces on which we later construct unitary representations.

\subsection{The open cone $\Omega$}

Let $\Omega=\Str(V)_0\cdot e$\index{notation}{Omega@$\Omega$} be the open orbit of $\Str(V)_0$ containing the identity element of the Jordan algebra. $\Omega$ is an open cone in $V$. Since $\alpha e=e$, we also have $\Omega=L\cdot e$. If we denote by $H$\index{notation}{H@$H$} the stabilizer subgroup of $e$ in $L$, then the cone $\Omega\cong L/H$ is a reductive symmetric space. In fact, $H=L\cap\Aut(V)$ and therefore, by \eqref{eq:StrAutSymmPair} we have $L^\sigma_0\subseteq H\subseteq L^\sigma$ which shows that $(L,H)$ is a symmetric pair.

\begin{example}
\begin{enumerate}
\item[\textup{(1)}] For $V=\Sym(n,\RR)$ the cone $\Omega$ is the convex cone of symmetric positive definite matrices:
\begin{align*}
 \Omega &= \{x\in\Sym(n,\RR):\mbox{all eigenvalues of $x$ are positive}\}.
\end{align*}
\item[\textup{(2)}] For $V=\RR^{p,q}$ we have to distinguish between two cases. If $p=1$, $q\geq2$, then $\Omega$ is the convex cone given by
\begin{align*}
 \Omega &= \{x\in\RR^{1,q}:x_1>0,x_1^2-x_2^2-\ldots-x_n^2>0\}.
\end{align*}
For $p,q\geq2$ we have
\begin{align*}
 \Omega &= \{x\in\RR^{p,q}:x_1^2+\ldots+x_p^2-x_{p+1}^2-\ldots-x_n^2>0\},
\end{align*}
which is not convex.
\end{enumerate}
\end{example}

In the case where $V$ is euclidean, $H=K_L$ and hence $\Omega$ is a Riemannian symmetric space. In this case a polar decomposition for $\Omega$ is given in \cite[Chapter VI, Section 2]{FK94}.

To derive a polar decomposition also in the general case, we follow \cite[Chapter 3]{vdB05}. Consider the involutions $\sigma$ and $\theta$ on the Lie algebra level. In the decomposition \eqref{eq:strderPlusL} they are given by
\begin{align*}
 \sigma(L(x)+D) &= -(L(x)+D)^\# = -L(x)+D,\\
 \theta(L(x)+D) &= -(L(x)+D)^* = -L(\alpha x)+\alpha D\alpha
\end{align*}
for $x\in V$ and $D\in\mathfrak{der}(V)$. The decomposition of $\frakl$ into $\pm1$-eigenspaces of $\sigma$ and $\theta$ is written as
\begin{align*}
 \frakl &= \frakk_\frakl+\frakp_\frakl = \frakh+\frakq
\end{align*}
with the notation of \eqref{eq:LieAlgH}, \eqref{eq:LieAlgQ}, \eqref{eq:LieAlgKL} and \eqref{eq:LieAlgPL}. Since $\sigma$ and $\theta$ commute, we obtain the decomposition in simultaneous eigenspaces
\begin{align*}
 \frakl &= (\frakk_\frakl\cap\frakh)+(\frakk_\frakl\cap\frakq)+(\frakp_\frakl\cap\frakh)+(\frakp_\frakl\cap\frakq).
\end{align*}
Put
\begin{align}
 \frakl^+ &:= (\frakk_\frakl\cap\frakh)+(\frakp_\frakl\cap\frakq) = \{L(x)+D:x\in V^+,\alpha D=D\alpha\}\cong\str(V^+),\label{eq:LieAlgL+}\index{notation}{l1@$\frakl^+$}\\
 \frakl^- &:= (\frakk_\frakl\cap\frakq)+(\frakp_\frakl\cap\frakh) = \{L(x)+D:x\in V^-,\alpha D=-D\alpha\}.\label{eq:LieAlgL-}\index{notation}{l2@$\frakl^-$}
\end{align}
Since $\frakp_\frakl\cap\frakq=L(V^+)$, the subspace $\fraka\subseteq\frakp_\frakl\cap\frakq$ as defined in \eqref{eq:DefLiea} is maximal abelian by Lemma \ref{lem:aMaxAbelian}. Let $A:=\exp(\fraka)$ be the corresponding analytic subgroup of $L$ and put
\begin{align}
 \fraka^+ &:= \left\{\sum_{i=1}^{r_0}{t_iL(c_i)}:t_1>\ldots>t_{r_0}\right\}.\label{eq:aPositiveChamber}\index{notation}{aplus@$\fraka^+$}
\end{align}

\begin{proposition}\label{prop:KAH}
\begin{enumerate}
\item[\textup{(1)}] The group $L$ decomposes as $L=K_LAH$ and the orbit $\Omega$ has the polar decomposition $\Omega=K_LA\cdot e$.
\item[\textup{(2)}] Up to scalar multiples there is precisely one invariant measure on $\Omega\cong L/H$ which is given by
\begin{align*}
 \int_{\Omega}{f(x)\td\nu(x)} &= \int_{K_L}{\int_{\fraka^+}{f(k\exp(X)\cdot e)J(X)\td X}\td k},\index{notation}{dnux@$\td\nu(x)$}
\end{align*}
where $\td k$ is a Haar measure on $K_L$, $\td X$ a Lebesgue measure on $\fraka$ and
\begin{align}
 J\left(\sum_{i=1}^{r_0}{t_iL(c_i)}\right) &= \prod_{1\leq i<j\leq r_0}{\sinh^{d_0}\left(\frac{t_i-t_j}{2}\right)\cosh^{d-d_0}\left(\frac{t_i-t_j}{2}\right)}.\label{eq:JacobiDetInvMeasure}\index{notation}{JX@$J(X)$}
\end{align}
\end{enumerate}
\end{proposition}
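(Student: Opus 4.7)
Part (1) is an instance of the generalized Cartan ($KAH$) decomposition for reductive symmetric pairs, as developed by Flensted-Jensen and van den Ban \cite{vdB05}. My plan is to verify the hypotheses of that general theorem: $L$ is a real reductive group with Cartan involution $\theta$ and maximal compact subgroup $K_L=L^\theta$; the commuting involution $\sigma$ makes $(L,H)$ a reductive symmetric pair thanks to \eqref{eq:StrAutSymmPair}; and $\fraka\subseteq\frakp_\frakl\cap\frakq$ is maximal abelian by Lemma \ref{lem:aMaxAbelian}. The theorem then yields $L=K_LAH$. Combining this with the identification $H=L\cap\Aut(V)=\Stab_L(e)$ (using \cite[Proposition VIII.2.4]{FK94}) gives $\Omega=L\cdot e\cong L/H=K_LA\cdot e$.

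For (2), uniqueness of the invariant measure up to a positive scalar follows at once: $L$ and $H$ are both reductive, hence unimodular, so $L/H$ carries a unique $L$-invariant measure up to scalars. For the explicit formula I would compute the Jacobian of the smooth map
$$\Phi:K_L\times\fraka^+\to\Omega,\qquad (k,X)\mapsto k\exp(X)\cdot e.$$
By the left $K_L$-equivariance it suffices to evaluate the Jacobian at $(e,X)$. Splitting the source tangent space as $\frakk_\frakl\oplus\fraka$ and identifying $T_e\Omega\cong\frakq$, the $\fraka$-directions map isometrically, while the $\frakk_\frakl$-directions contribute through $e^{\textup{ad}\,X}$ followed by the projection onto $\frakq$. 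The eigenvalues of $\textup{ad}\,X$ on $\frakl_{ij}\oplus\frakl_{ji}$ are $\pm\frac{t_i-t_j}{2}$, so each root contributes a product of hyperbolic factors whose type depends on how $\frakl_{ij}$ interacts with both $\sigma$ and $\theta$.

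The main bookkeeping step is to match these contributions with the Peirce decomposition. Using the description $\frakl_{ij}=\{c_i\Box x:x\in V_{ij}\}$ from Proposition \ref{prop:StrRootDecomp} together with $\alpha L(c_i)\alpha=L(c_i)$, a direct check shows: for $x\in V_{ij}^+$ one has $L(x)\in\frakp_\frakl\cap\frakq$ and $[L(c_i),L(x)]\in\frakk_\frakl\cap\frakh$, while for $x\in V_{ij}^-$ one has $L(x)\in\frakk_\frakl\cap\frakq$ and $[L(c_i),L(x)]\in\frakp_\frakl\cap\frakh$. Consequently each basis vector of $V_{ij}^+$ contributes a factor $\sinh\bigl(\frac{t_i-t_j}{2}\bigr)$ (the $\frakp_\frakl\cap\frakq$ mode) and each basis vector of $V_{ij}^-$ a factor $\cosh\bigl(\frac{t_i-t_j}{2}\bigr)$ (the $\frakk_\frakl\cap\frakq$ mode); the product over $1\le i<j\le r_0$ yields \eqref{eq:JacobiDetInvMeasure}. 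The principal obstacle is precisely this last piece of bookkeeping: pinning down which type of root-space contribution produces a $\sinh$ versus a $\cosh$, with the correct multiplicities $d_0$ and $d-d_0$, requires simultaneously tracking both commuting involutions $\sigma$ and $\theta$ on each $V_{ij}^\pm$, and this is the step where sign errors and dimension miscounts are easiest to make.
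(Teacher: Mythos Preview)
Your approach is essentially the same as the paper's. For part (1) the paper also cites \cite[Lemma 3.6]{vdB05}. For part (2) the paper does not compute the Jacobian of $\Phi$ from scratch but instead invokes the general integration formula \cite[Theorem 3.9]{vdB05}, which already gives the density as $\prod_{\alpha\in\Sigma^+}\sinh^{m_\alpha^+}\!\alpha(X)\,\cosh^{m_\alpha^-}\!\alpha(X)$ with $m_\alpha^\pm=\dim\frakl_\alpha^\pm$; the bookkeeping you describe (identifying $\frakl_{ij}^\pm=c_i\Box V_{ij}^\pm$ so that $m_{\frac{\varepsilon_i-\varepsilon_j}{2}}^+=d_0$ and $m_{\frac{\varepsilon_i-\varepsilon_j}{2}}^-=d-d_0$) is then exactly what the paper does.

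One ingredient you do not mention, and which the paper handles explicitly, is the Weyl-group count. The formula in \cite[Theorem 3.9]{vdB05} is a sum over $W/W_{K_L\cap H}$, and in your direct-Jacobian approach the same issue appears as the question of whether $\Phi:K_L\times\fraka^+\to\Omega$ is generically one-to-one. The paper shows $W_{K_L\cap H}=W$ by appealing to Lemma \ref{lem:AutTransitiveJordanFrames}: every permutation of the frame $c_1,\ldots,c_{r_0}$ is realized by some $e^D$ with $D\in\frakk_\frakl\cap\frakh$, hence the full Weyl group of type $A_{r_0-1}$ is represented in $N_{K_L\cap H}(\fraka)$ and the sum collapses to a single term. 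You would need the same fact to close your argument.
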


\begin{proof}
\begin{enumerate}
\item[\textup{(1)}] This is \cite[Lemma 3.6]{vdB05}.
\item[\textup{(2)}] By Proposition \ref{prop:StrRootDecomp} the root system $\Sigma(\frakl,\fraka)$ is of type $A_{r_0-1}$. Hence, the positive Weyl chamber corresponding to the positive system \eqref{eq:PositiveLRoots} is precisely $\fraka^+$. Let $W:=N_{K_L}(\fraka)/Z_{K_L}(\fraka)$ be the Weyl group of the root system $\Sigma(\frakl,\fraka)$ and $W_{K_L\cap H}:=N_{K_L\cap H}(\fraka)/Z_{K_L\cap H}(\fraka)$, viewed as a subgroup of $W$. Then, by \cite[Theorem 3.9]{vdB05} the unique invariant measure on $L/H$ (up to scalar multiples) is given by
 \begin{align*}
  \int_{\Omega}{f(x)\td\nu(x)} &= \sum_{[w]\in W/W_{K_L\cap H}}{\int_{K_L}{\int_{\fraka^+}{f(k\exp(wX)\cdot e)\widetilde{J}(X)\td X}\td k}},
 \end{align*}
 where
 \begin{align*}
  \widetilde{J}(X) &= \prod_{\alpha\in\Sigma^+(\frakl,\fraka)}{\sinh^{m_\alpha^+}\alpha(X)\cosh^{m_\alpha^-}\alpha(X)}
 \end{align*}
 with\ \ $m_\alpha^\pm=\dim\,\frakl_\alpha^\pm$.\ \ Observe\ \ that\ \ $W$\ \ is\ \ the\ \ symmetric\ \ group\ \ on $\{L(c_1), \ldots, L(c_{r_0})\}$, as it is the Weyl group of the root system $\Sigma(\frakl,\fraka)$ of type $A_{r_0-1}$. We claim that $W_{K_L\cap H}=W$. In fact, let $\pi$ be any permutation on $\{1,\ldots,r_0\}$. By Lemma \ref{lem:AutTransitiveJordanFrames} there is a derivation $D\in\der(V)$ such that $\alpha D=D\alpha$ and $e^Dc_i=c_{\pi(i)}$. Thus, by \eqref{eq:LieAlgKL} we have $D\in\frakk_\frakl$. Hence, $e^D\in K_L\cap H$ and it follows that $W_{K_L\cap H}=W$.\\
 It remains to show that $\widetilde{J}(X)=J(X)$. Using \eqref{eq:LRootSpaces}, \eqref{eq:LieAlgL+} and \eqref{eq:LieAlgL-} we find that for $i\neq j$:
 \begin{align*}
  \frakl_{\frac{\varepsilon_i-\varepsilon_j}{2}}^+ &= \{c_i\Box x:x\in V_{ij}^+\} & \mbox{and} && \frakl_{\frac{\varepsilon_i-\varepsilon_j}{2}}^- &= \{c_i\Box x:x\in V_{ij}^-\}.\\
  \intertext{It follows that}
  m_{\frac{\varepsilon_i-\varepsilon_j}{2}}^+ &= \dim\,V_{ij}^+=d_0 & \mbox{and} && m_{\frac{\varepsilon_i-\varepsilon_j}{2}}^- &= \dim\,V_{ij}^-=d-d_0
 \end{align*}
 and hence $\widetilde{J}(X)=J(X)$. This finishes the proof.\qedhere
\end{enumerate}
\end{proof}

As a consequence of the decomposition $L=K_LAH$ we can now calculate the character $\chi$ of $\Str(V)$ on $L$:

\begin{proposition}\label{prop:CharLambda}
Let $V$ be a simple real Jordan algebra with Cartan involution $\alpha$ such that $V^+$ is also simple. Then
\begin{align}
 \chi(g) &= |\Det\,g|^{\frac{r}{n}} & \forall\, g\in L\label{eq:ChiDet}
\end{align}
and
\begin{align}
 \Delta\left(\sum_{i=1}^{r_0}{e^{t_i}c_i}\right) &= e^{\frac{r}{r_0}\sum_{i=1}^{r_0}{t_i}}.\label{eq:DeltaOnCi}
\end{align}
\end{proposition}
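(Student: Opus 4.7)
My plan is to establish \eqref{eq:ChiDet} first by differentiating at the identity of $\Str(V)_0$, and then to deduce \eqref{eq:DeltaOnCi} by applying \eqref{eq:ChiDet} to $a = \exp(\sum_i t_i L(c_i))$. The main ingredient is Lemma \ref{lem:DerivDelta}.

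For \eqref{eq:ChiDet}, I would compute the differential $d\chi: \str(V) \to \RR$. Writing an arbitrary element of $\str(V)$ via the decomposition \eqref{eq:strderPlusL} as $X = L(x) + D$ with $D \in \der(V)$, one has $Xe = x$ because every derivation annihilates $e$. Lemma \ref{lem:DerivDelta}, together with $\Delta(e) = 1$ and $e^{-1} = e$, then yields
\begin{align*}
d\chi(X) = \left.\tfrac{\td}{\td t}\right|_{t=0}\Delta(\exp(tX)\cdot e) = D_{x}\Delta(e) = \tau(e,x) = \tr(x),
\end{align*}
while Lemma \ref{lem:Trtr} combined with \eqref{eq:TrOfDerivation} gives $\Tr X = \Tr L(x) + \Tr D = \tfrac{n}{r}\tr(x)$. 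Hence $d\chi = \tfrac{r}{n}\Tr$ on $\str(V)$. Since $\Str(V)_0$ is connected and $\chi$ is continuous with $\chi(\id)=1$, the image $\chi(\Str(V)_0)$ lies in $\RR_+$, so both $\chi$ and $|\Det|^{r/n}$ are positive continuous characters of $\Str(V)_0$ with the same differential at the identity and therefore agree on all of $\Str(V)_0$. Extending to $L = \Str(V)_0 \cup \alpha\Str(V)_0$ is straightforward: $\alpha \in \Aut(V)$ fixes $e$, so $\chi(\alpha) = \Delta(e) = 1$ by \eqref{eq:trdetofe}, and $\alpha^2 = \id_V$ forces $|\Det\alpha| = 1$.

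For \eqref{eq:DeltaOnCi}, I would set $X = \sum_{i=1}^{r_0} t_i L(c_i)$. Since the operators $L(c_i)$ mutually commute and satisfy $L(c_i)c_j = \delta_{ij} c_i$, we get $\exp(X)\cdot e = \sum_i e^{t_i} c_i$. Applying \eqref{eq:ChiDet} gives
\begin{align*}
\Delta\Bigl(\sum_i e^{t_i} c_i\Bigr) = \chi(\exp X) = \exp\bigl(\tfrac{r}{n}\Tr X\bigr),
\end{align*}
and Lemma \ref{lem:Trtr} together with the identity $\tr(c_i) = r/r_0$ (which follows from $\sum_i c_i = e$, $\tr(e)=r$ by \eqref{eq:trdetofe}, and the fact that the $c_i$ are permuted by automorphisms of $V$ via Lemma \ref{lem:AutTransitiveJordanFrames}, all of which preserve $\tr$) yields $\Tr X = (n/r_0)\sum_i t_i$, producing the claimed formula.

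There is no real obstacle here: the entire argument reduces to Lemma \ref{lem:DerivDelta} and bookkeeping via Lemma \ref{lem:Trtr}. The only mild subtlety is noting that $\chi$ is positive on $\Str(V)_0$, which is needed to ensure that the equality of the two characters follows from the equality of their differentials at the identity.
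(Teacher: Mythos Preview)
Your proof is correct and takes a genuinely different route from the paper's. The paper argues via the decomposition $L_0 = (K_L)_0 A H_0$ from Proposition~\ref{prop:KAH}: both sides of \eqref{eq:ChiDet} are positive characters, hence trivial on the compact factor $(K_L)_0$; they are shown to be trivial on $H$ by separate ad hoc arguments; and on $A$ the permutation symmetry of the $L(c_i)$ under $H_0$ reduces the check to the single one-parameter group $t\mapsto e^{tL(e)}$. Your argument bypasses this structure theory entirely by computing the differential $d\chi = \tfrac{r}{n}\Tr$ directly from Lemma~\ref{lem:DerivDelta}, Lemma~\ref{lem:Trtr}, and \eqref{eq:TrOfDerivation}, then invoking the uniqueness of a Lie group homomorphism on a connected group with prescribed differential. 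This is cleaner and more self-contained: it does not depend on the $K_LAH$ decomposition, and the same differentiation idea immediately feeds into \eqref{eq:DeltaOnCi}. The paper's approach, by contrast, makes the role of the Weyl-group symmetry among the $c_i$ more visible, which is thematically relevant later but not logically necessary here.
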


\begin{proof}
We first show \eqref{eq:ChiDet} for $g\in\Str(V)_0=L_0=(K_L)_0A(H\cap L_0)$. Both left and right side of \eqref{eq:ChiDet} define a positive Character $L_0\rightarrow\RR_+$ since $L_0$ is connected. On $(K_L)_0$ both sides are $\equiv1$, because $(K_L)_0$ is compact. Further, the left side is $\equiv1$ on $H\cap L_0$ since $\chi(g)=\Delta(ge)$ and $ge=e$ for $g\in H$. But also the determinant is $\equiv1$ on $H$: $g\in H$ implies $gg^\#=\id$ and $\Det(g^\#)=\Det(g)$. Therefore, it remains to show that \eqref{eq:ChiDet} holds on $A=\exp(\fraka)$.\\
By Lemma \ref{lem:AutTransitiveJordanFrames} the group $H_0\subseteq H\cap L_0$ contains all possible permutations of the elements $L(c_i)\in\fraka$, $1\leq i\leq r_0$ (acting by the adjoint representation). Therefore, each character of $L_0$ takes the same values on the elements $\exp(L(c_i))$, $1\leq i\leq r_0$. Thus, it suffices to show \eqref{eq:ChiDet} for $g=\exp(X)$ with $X=t(L(c_1)+\ldots+L(c_{r_0}))=tL(e)=t\,\id_V\in\fraka$, $t\in\RR$. For this we have
\begin{align*}
 \chi(\exp(X)) &= \chi(e^t\id_V) = \Delta(e^t\cdot e) = e^{rt} = (\Det(e^t\id_V))^{\frac{r}{n}} = (\Det(\exp(X)))^{\frac{r}{n}}.
\end{align*}
Hence, \eqref{eq:ChiDet} holds for $g\in L_0$. Now, $L=L_0\cup\alpha L_0$. Both sides of \eqref{eq:ChiDet} define characters of $L$ which agree on $L_0$. Further, $\chi(\alpha)=\Delta(\alpha e)=\Delta(e)=1$ and $\Det(\alpha)=\pm1$ since $\alpha\in K_L$ and $K_L$ is compact. Thus, \eqref{eq:ChiDet} follows and it remains to prove \eqref{eq:DeltaOnCi}.\\
The left side of \eqref{eq:DeltaOnCi} can be written as
\begin{align*}
 \Delta\left(\sum_{i=1}^{r_0}{e^{t_i}c_i}\right)=\Delta(\exp(X)\cdot e)=\chi(\exp(X))
\end{align*}
with $X=\sum_{i=1}^{r_0}{t_iL(c_i)}$. But since $\chi$ takes the same values on $\exp(L(c_i))$, $1\leq i\leq r_0$, we obtain
\begin{align*}
 \chi(\exp(X)) &= \chi\left(\exp\left(\frac{1}{r_0}\left(\sum_{i=1}^{r_0}{t_i}\right)L(e)\right)\right) = \Delta\left(e^{\frac{1}{r_0}\sum_{i=1}^{r_0}{t_i}}\cdot e\right) = e^{\frac{r}{r_0}\sum_{i=1}^{r_0}{t_i}}
\end{align*}
which proves \eqref{eq:DeltaOnCi}.
\end{proof}

Using the previous lemma together with \eqref{eq:DetEquiv} we find that $\Delta(x)^{-\frac{n}{r}}\td x$, $\td x$ denoting a Lebesgue measure on $\Omega\subseteq V$, is an $L$-invariant measure on $\Omega$. Therefore, by Proposition \ref{prop:KAH} it follows that the measure $\td\nu$ is absolutely continuous with respect to the Lebesgue measure $\td x$ on $\Omega\subseteq V$ and
\begin{align*}
 \td\nu(x) &= \const\cdot\Delta(x)^{-\frac{n}{r}}\td x.
\end{align*}

\subsection{Orbits in the boundary of $\Omega$}

The boundary $\partial\Omega$ is the union of orbits of lower rank. We have the following stratification:
\begin{align*}
 \overline{\Omega} &= \calO_0\cup\ldots\cup\calO_{r_0},
\end{align*}
where $\calO_k=L_0\cdot e_k$\index{notation}{Ok@$\calO_k$} with
\begin{align*}
 e_k &:= c_1+\ldots+c_k, & 0\leq k\leq r_0.\index{notation}{ek@$e_k$}
\end{align*}
Since $\alpha e_k=e_k$, we also have $\calO_k=L\cdot e_k$. Every orbit is a homogeneous space $\calO_k=L/H_k$, where $H_k$\index{notation}{Hk@$H_k$} denotes the stabilizer of $e_k$ in $L$. In general these homogeneous spaces are not symmetric.

\begin{example}\label{ex:MinimalOrbit}
We will mostly be interested in the non-zero orbit $\calO_1$ of minimal rank. Let us compute this orbit for our two main examples.
\begin{enumerate}
\item[\textup{(1)}] For $V=\Sym(n,\RR)$ we have
\begin{align*}
 \calO_1 &= \{xx^t:x\in\RR^n\setminus\{0\}\}.
\end{align*}
Moreover, the map
\begin{align}
 \RR^n\setminus\{0\}\rightarrow\calO_1,\,x\mapsto xx^t,\label{eq:SymTwoFoldCovering}
\end{align}
is a surjective two-fold covering.
\item[\textup{(2)}] For $V=\RR^{p,q}$ we again have to distinguish between two cases. If $p=1$, $q\geq2$, then
\begin{align*}
 \calO_1 &= \{x\in\RR^{1,q}:x_1>0,x_1^2-x_2^2-\ldots-x_n^2=0\}
\end{align*}
is the forward light cone in $\RR^{1,q}$. For $p,q\geq2$ we have
\begin{align*}
 \calO_1 &= \{x\in\RR^{p,q}:x_1^2+\ldots+x_p^2-x_{p+1}^2-\ldots-x_n^2=0\}\setminus\{0\}.
\end{align*}
In both cases, $\calO_1$ can be parameterized by bipolar coordinates:
\begin{align}
 \RR_+\times\SS^{p-1}_0\times\SS^{q-1}\stackrel{\sim}{\rightarrow}\calO_1,\,(t,\omega,\eta)\mapsto(t\omega,s\eta),\label{eq:VpqPolarCoordinates}
\end{align}
where $\SS^{n-1}$\index{notation}{Sn@$\SS^{n-1}$} denotes the unit sphere in $\RR^n$ and $\SS^{n-1}_0$\index{notation}{Snnull@$\SS^{n-1}_0$} is the identity component of $\SS^{n-1}$:
\begin{align*}
 \SS^{n-1}_0 &= \begin{cases}\{1\} & \mbox{if }n=1,\\\SS^{n-1} & \mbox{if }n>1.\end{cases}
\end{align*}
\end{enumerate}
\end{example}

To construct equivariant measures on the orbits $\calO_k$ we need to compute the modular functions of the stabilizers $H_k$. The crucial point for this is to show that $H_k$ is contained in a certain parabolic subgroup. The following results are basically the statements in \cite[Lemma 3.6 and Corollary 3.7]{BSZ06}:

\begin{proposition}\label{prop:QkParabolic}
The group $Q_k:=\{g\in L:g V(e_k,1)\subseteq V(e_k,1)\}$\index{notation}{Qk@$Q_k$} is a parabolic subgroup of $L$. The Langlands decomposition of its Lie algebra $\frakq_k$\index{notation}{qk@$\frakq_k$} is given by $\frakq_k=\frakm_k\oplus\frakn_k$, where
\begin{align*}
 \frakm_k &= \frakl_0\oplus\bigoplus_{1\leq i,j\leq k}{\frakl_{ij}}\oplus\bigoplus_{k<i,j\leq r_0}{\frakl_{ij}},\index{notation}{mk@$\frakm_k$}\\
 \frakn_k &= \bigoplus_{1\leq i\leq k<j\leq r_0}{\frakl_{ij}},\index{notation}{nk@$\frakn_k$}
\end{align*}
and the corresponding Langlands decomposition of $Q_k$ is given by $Q_k=M_kN_k$, where
\begin{align*}
 M_k &= \{g\in L:gL(e_k)=L(e_k)g\},\index{notation}{Mk@$M_k$}\\
 N_k &= \exp(\frakn_k).\index{notation}{Nk@$N_k$}
\end{align*}
\end{proposition}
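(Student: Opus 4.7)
The strategy is to pass to the Lie algebra level and identify $\frakq_k$ as a standard parabolic subalgebra of $\frakl$ with respect to the root decomposition of Proposition \ref{prop:StrRootDecomp} and the positive system \eqref{eq:PositiveLRoots}. Once this is done, parabolicity of $Q_k$ and its Langlands decomposition follow from general structure theory of reductive groups, so the heart of the proof is the explicit identification of $\frakq_k$.

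The first observation is that the Peirce decomposition $V=\bigoplus_{a\leq b}V_{ab}$ is exactly the weight-space decomposition of $V$ with respect to $\fraka$: indeed, for $H=\sum_i t_iL(c_i)\in\fraka$ one has $Hv=t_av$ on $V_{aa}$ and $Hv=\tfrac{t_a+t_b}{2}v$ on $V_{ab}$ ($a<b$). Since these weights are pairwise distinct, $V(e_k,1)=\bigoplus_{a\leq b\leq k}V_{ab}$ is a sum of $\fraka$-weight spaces, hence preserved by $\frakl_0$ (the zero weight space), and an element $X=X_0+\sum_{i\neq j}X_{ij}\in\frakl$ preserves $V(e_k,1)$ if and only if each component $X_{ij}\in\frakl_{ij}$ does. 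Using the weight shift rule, the image $X_{ij}V_{ab}\subseteq V$ sits in the weight space of weight $\tfrac{\varepsilon_a+\varepsilon_b+\varepsilon_i-\varepsilon_j}{2}$ (taking $2\varepsilon_a$ for $a=b$), which is a weight of $V$ only if $j\in\{a,b\}$, and in that case the image lies in some $V_{ic}$ with $c\in\{a,b\}\setminus\{j\}$ (or $V_{ii}$ if $a=b=j$).

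Running through the four cases gives: $\frakl_{ij}$ with $i,j\leq k$ or $i,j>k$ sends each $V_{ab}\subseteq V(e_k,1)$ either to zero or into some $V_{i'j'}$ with $i',j'\leq k$, hence preserves $V(e_k,1)$; $\frakl_{ij}$ with $i\leq k<j$ annihilates $V(e_k,1)$ entirely, because $j\notin\{a,b\}$ for $a,b\leq k$; and $\frakl_{ij}$ with $j\leq k<i$ fails to preserve $V(e_k,1)$, which one verifies by picking $v\in V_{jj'}\subseteq V(e_k,1)$ with $j'\leq k$, $j'\neq j$, and computing $(c_j\Box x)v$ via $(c_j\Box x)v=P(c_j,v)x\in V_{ij'}\not\subseteq V(e_k,1)$, using that the Peirce spaces $V_{ij'}$ have nonzero common dimension. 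This yields $\frakq_k=\frakm_k\oplus\frakn_k$ with $\frakm_k$ and $\frakn_k$ as stated. Since $\frakq_k$ contains $\fraka\subseteq\frakl_0$ together with all positive root spaces $\frakl_{ij}$ ($i<j$), it contains the Borel subalgebra associated to \eqref{eq:PositiveLRoots}, so $Q_k$ is parabolic.

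It remains to identify the groups $M_k$ and $N_k$. For the Levi factor, writing $L(e_k)=\sum_{i\leq k}L(c_i)\in\fraka$, one computes $[L(e_k),X_{ij}]=\tfrac12\bigl(\mathbf 1_{i\leq k}-\mathbf 1_{j\leq k}\bigr)X_{ij}$, so the centralizer $\frakz_\frakl(L(e_k))$ is exactly $\frakm_k$; hence the group $\{g\in L:gL(e_k)=L(e_k)g\}$ has Lie algebra $\frakm_k$, is $\theta$-stable, contains $\fraka$, and therefore equals the standard Levi $M_k$. For $N_k$, the subalgebra $\frakn_k$ is a sum of positive root spaces and is $\ad(\fraka)$-stable; an easy check with the Peirce product shows $[\frakn_k,\frakn_k]\subseteq\frakn_k$ and that $\frakn_k$ is nilpotent, so $N_k=\exp(\frakn_k)$ is a well-defined closed unipotent subgroup. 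The Langlands decomposition $Q_k=M_kN_k$ with $M_k\cap N_k=\{1\}$ then follows from the standard argument for reductive groups together with $\frakq_k=\frakm_k\oplus\frakn_k$.

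The main technical obstacle is the one-sided computation of the last paragraph of step two, namely showing that $\frakl_{ij}$ with $j\leq k<i$ actually \emph{fails} to preserve $V(e_k,1)$: the generic weight-shift argument only shows that the image could lie outside $V(e_k,1)$, and one must exhibit a specific $v\in V(e_k,1)$ and $X\in\frakl_{ij}$ with $Xv\neq 0$ and $Xv\notin V(e_k,1)$. This is where the explicit Peirce multiplication rules, and the fact that $d=\dim V_{ij}\geq 1$ for all $i\neq j$ (by simplicity of $V$), enter critically; once this is in hand, the rest of the proof is a bookkeeping exercise in the root space decomposition already established in Proposition \ref{prop:StrRootDecomp}.
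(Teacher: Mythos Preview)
Your approach is essentially the paper's: compute $\frakq_k$ via the root decomposition of Proposition~\ref{prop:StrRootDecomp}, identify it as a standard parabolic, then pass to the group level. Two points in the execution need fixing.

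First, in the case $j\leq k<i$ you write $(c_j\Box x)v$, but an element of $\frakl_{ij}$ is $c_i\Box x$ (the operator $c_j\Box x$ lies in $\frakl_{ji}$). More seriously, your test vector $v\in V_{jj'}$ with $j'\leq k$, $j'\neq j$ does not exist when $k=1$, and for $k\geq 2$ the computation only yields $(c_i\Box x)v=xv\in V_{ij'}$, whose nonvanishing is not automatic from $d\geq 1$. The paper instead evaluates at $e_k\in V(e_k,1)$: writing $X_{ji}=c_j\Box x_{ji}$ for $i\leq k<j$, one computes $X_{ji}e_k=\tfrac12 x_{ji}\in V_{ij}\subseteq V(e_k,\tfrac12)$, which is manifestly nonzero whenever $x_{ji}\neq 0$ and lies outside $V(e_k,1)$. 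This single evaluation works uniformly for all $k$ and avoids any auxiliary nonvanishing argument.

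Second, the passage from $\frakq_k=\frakm_k\oplus\frakn_k$ to $Q_k=M_kN_k$ is too quick: $L$ need not be connected, so equality of Lie algebras does not suffice. The paper first checks $M_kN_k\subseteq Q_k$ directly (both factors preserve $V(e_k,1)$) and then uses that a parabolic subgroup equals the normalizer of its own Lie algebra: since $Q_k$ is a group with Lie algebra $\frakq_k=\Lie(M_kN_k)$, it normalizes $\frakq_k$, whence $Q_k\subseteq M_kN_k$. Your ``standard argument'' should be exactly this normalizer step.
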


\begin{proof}
In view of Theorem \ref{thm:Parabolics}~(1) we claim that $Q_k=P_{F_k}$ (in the notation of Appendix \ref{app:Parabolics}), where
\begin{align*}
 F_k :={}& \left\{\frac{\varepsilon_1-\varepsilon_2}{2},\ldots,\frac{\varepsilon_{k-1}-\varepsilon_k}{2},\frac{\varepsilon_{k+2}-\varepsilon_{k+1}}{2},\ldots,\frac{\varepsilon_{r_0-1}-\varepsilon_{r_0}}{2}\right\}\\
 ={}& \Pi\setminus\left\{\frac{\varepsilon_k-\varepsilon_{k+1}}{2}\right\}\subseteq\Pi.
\end{align*}
In fact, we have
\begin{align*}
 \fraka_{F_k} &= \RR L(e_k)+\RR L(e-e_k),\\
\intertext{and hence}
 M_k &:= M_{F_k} = \{g\in L:gL(e_k)=L(e_k)g\},\\
 \frakm_k &:= \frakm_{F_k} = \{X\in\frakl:[L(e_k),X]=0\}.
\end{align*}
Since for $X\in\frakl_{ij}$ we have $[L(e_k),X]=\frac{\delta_{ik}-\delta_{jk}}{2}X$, it follows that
\begin{align*}
 \frakm_k &= \frakl_0\oplus\bigoplus_{1\leq i,j\leq k}{\frakl_{ij}}\oplus\bigoplus_{k<i,j\leq r_0}{\frakl_{ij}}.
\end{align*}
Further,
\begin{align*}
 \Sigma_k^+ &:= \Sigma_{F_k}^+(\frakl,\fraka_{F_k}) = \left\{\frac{\varepsilon_i-\varepsilon_j}{2}:1\leq i\leq k<j\leq r_0\right\},\\
 \frakn_k &:= \frakn_{F_k} = \bigoplus_{1\leq i\leq k<j\leq r_0}{\frakl_{ij}},\\
 N_k &:= N_{F_k} = \exp(\frakn_k),
\end{align*}
and $P_k:=P_{F_k}=M_kN_k$. Clearly $P_k\subseteq Q_k$ since both $M_k$ and $N_k$ are contained in $Q_k$. Since $P_k$ is, as a parabolic subgroup, equal to the normalizer of its Lie algebra, the inclusion $Q_k\subseteq P_k$ follows if we show that $Q_k$ has the same Lie algebra as $P_k$.\\
Let $X\in\frakl$ be an element of the Lie algebra of $Q_k$. Then $XV(e_k,1)\subseteq V(e_k,1)$. Write $X=X_0+\sum_{i\neq j}{X_{ij}}$, where $X_0\in\frakl_0$ and $X_{ij}\in\frakl_{ij}$. In particular $e_k\in V(e_k,1)$ and hence $Xe_k\in V(e_k,1)$. It is easy to see that $X_{ij}e_k\in V_{ij}$ for $i\neq j$ and $X_0e_k\in\bigoplus_{i=1}^k{V_{ii}}$. Therefore, we must have $(X_{ij}+X_{ji})e_k=0$ if $1\leq i\leq k<j\leq r_0$. Write $X_{ij}=c_i\Box x_{ij}$ with $x_{ij}\in V_{ij}$. Then for $1\leq i\leq k<j\leq r_0$ we obtain
\begin{align*}
 0 &= (X_{ij}+X_{ji})e_k = \left(\frac{x_{ij}}{4}+\frac{x_{ij}}{4}-\frac{x_{ij}}{2}\right)+\left(\frac{x_{ji}}{4}+\frac{x_{ji}}{4}\right) = \frac{x_{ji}}{2}.
\end{align*}
This yields $X_{ji}=0$. It is further easily seen that if $X_{ji}=0$ for $1\leq i\leq k<j\leq r_0$, then $XV(e_k,1)\subseteq V(e_k,1)$ and therefore the Lie algebra of $Q_k$ coincides with $\frakm_k+\frakn_k$. This finishes the proof.
\end{proof}

\begin{proposition}\label{prop:VariousIntFormulas}
\begin{enumerate}
\item[\textup{(1)}] $L=K_LQ_k$ and for a suitably normalized Haar measure $\td g$ on $L$ we have the following integral formula:
\begin{align*}
 \int_L{f(g)\td g} &= \int_{K_L}{\int_{M_k}{\int_{N_k}{f(kmn)e^{2\rho_k}(m)\td n}\td m}\td k},
\end{align*}
where
\begin{align*}
 \rho_k &:= (r_0-k)\frac{d}{4}\sum_{i=1}^k{\varepsilon_i}-k\frac{d}{4}\sum_{i=k+1}^{r_0}{\varepsilon_i}.\index{notation}{rhok@$\rho_k$}
\end{align*}
\item[\textup{(2)}] $H_k\subseteq Q_k$ and $H_k=(M_k\cap H_k)N_k$. Moreover we have the following integral formula for the Haar measure $\td h$ on $H_k$:
\begin{align}
 \int_{H_k}{f(h)\td h} &= \int_{M_k\cap H_k}{\int_{N_k}{f(mn)\td n}\td m}.\label{eq:IntFormulaHk}
\end{align}
\item[\textup{(3)}] $M_k=(M_k\cap K_L)\exp(\fraka_k)(M_k\cap H_k)$, where
\begin{align*}
 \fraka_k &= \sum_{i=1}^k{\RR L(c_i)}.\index{notation}{ak@$\fraka_k$}
\end{align*}
Further, $M_k/(M_k\cap H_k)$ is a symmetric space with invariant measure $\td\nu_k$ given by
\begin{align*}
 \int_{M_k/(M_k\cap H_k)}{f(x)\td\nu_k(x)} &= \int_{M_k\cap K_L}{\int_{\fraka_k^+}{f(k\exp(X)\cdot e_k)J_k(X)\td X}\td k},\index{notation}{dnuxk@$\td\nu_k(x)$}
\end{align*}
where $\td k$ is a Haar measure on $M_k\cap K_L$, $\td X$ is a Lebesgue measure on
\begin{align}
 \fraka_k^+ &= \left\{\sum_{i=1}^k{t_iL(c_i)}:t_1>\ldots>t_k\right\}\label{eq:akPositiveChamber}\index{notation}{akplus@$\fraka_k^+$}\\
\intertext{and}
 J_k\left(\sum_{i=1}^k{t_iL(c_i)}\right) &= \prod_{1\leq i<j\leq k}{\sinh^{d_0}\left(\frac{t_i-t_j}{2}\right)\cosh^{d-d_0}\left(\frac{t_i-t_j}{2}\right)}.\label{eq:JacobiDetEquivMeasure}\index{notation}{JkX@$J_k(X)$}
\end{align}
\end{enumerate}
\end{proposition}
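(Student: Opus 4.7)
The three statements will be proved sequentially, progressively reducing to facts about reductive Lie groups and reductive symmetric spaces.

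For (1), the equality $L = K_L Q_k$ is standard: by Proposition \ref{prop:QkParabolic}, $Q_k$ is a parabolic subgroup of the reductive group $L$ and hence contains a minimal parabolic, so the Iwasawa decomposition $L = K_L \cdot P_{\min}$ immediately yields $L = K_L Q_k$. For the integral formula, $Q_k = M_k N_k$ is a Langlands decomposition, so the standard integration formula for a reductive group with respect to a parabolic subgroup applies, with the exponential weight being the modular character of $Q_k$. This character is $e^{2\rho_k}$ with $2\rho_k = \sum_{\alpha \in \Sigma_k^+} m_\alpha \alpha$; since $\Sigma_k^+ = \{(\varepsilon_i - \varepsilon_j)/2 : 1 \leq i \leq k < j \leq r_0\}$ and each such root occurs with multiplicity $d = \dim V_{ij}$ in $\frakn_k$ by \eqref{eq:LRootSpaces}, the stated expression for $\rho_k$ drops out.

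For (2), first I would show $H_k \subseteq Q_k$. Since $h \in H_k \subseteq \Str(V)$ satisfies $h e_k = e_k$, the identity \eqref{eq:QRepStrGrp} gives $P(e_k) = P(h e_k) = h P(e_k) h^{\#}$, so $h P(e_k) = P(e_k) h^{-\#}$. The image of $P(e_k)$ is $V(e_k, 1)$, hence $h V(e_k, 1) = h P(e_k) V = P(e_k) h^{-\#} V = V(e_k, 1)$, so $h \in Q_k$. Next, $N_k \subseteq H_k$: for a generator $c_i \Box x \in \frakl_{ij}$ with $x \in V_{ij}$ and $1 \leq i \leq k < j \leq r_0$, a direct Peirce calculation using $x \in V(c_i, \tfrac{1}{2}) \cap V(c_j, \tfrac{1}{2})$ and $c_l x = 0$ for $l \notin \{i,j\}$ gives $(c_i \Box x) e_k = 0$; hence $\frakn_k$ annihilates $e_k$ and $N_k \subseteq H_k$. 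For $h \in H_k \subseteq Q_k$, the unique Langlands decomposition $h = mn$ now forces $m = h n^{-1} \in M_k \cap H_k$, and the reverse inclusion is immediate. The integral formula then follows from the semidirect-product structure $H_k = (M_k \cap H_k) \ltimes N_k$ with suitably normalized Haar measures.

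For (3), observe that $M_k$ is reductive (as the Levi factor of $Q_k$) and that the involutions $\sigma$ and $\theta$ preserve $\frakm_k$, since they act by $\pm 1$ on each root space $\frakl_{ij}$, which are the building blocks of $\frakm_k$. Thus $(M_k, M_k \cap H_k)$ is a symmetric pair. The subspace $\fraka_k$ is maximal abelian in $\frakm_k \cap \frakp_\frakl \cap \frakq$: the complementary directions $\sum_{i > k} \RR L(c_i)$ annihilate $e_k$ and therefore lie in the Lie algebra of $M_k \cap H_k$. The restricted root system on $\fraka_k$ is again of type $A_{k-1}$ with the same multiplicities $d_0$ and $d - d_0$ for the $\pm 1$-eigenspaces of $\sigma$. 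Applying \cite[Lemma 3.6]{vdB05} and \cite[Theorem 3.9]{vdB05} to $(M_k, M_k \cap H_k)$ then gives both the decomposition $M_k = (M_k \cap K_L)\exp(\fraka_k)(M_k \cap H_k)$ and the invariant-measure formula with Jacobian $J_k(X)$, by literally the same argument used in Proposition \ref{prop:KAH}.

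The main obstacle will be the careful bookkeeping of the measures in (1) and (2), in particular justifying the absence of a modular weight in \eqref{eq:IntFormulaHk}. One has to pin down the Haar measures on $M_k \cap H_k$ and $N_k$ so that the Jacobian of conjugation on $\frakn_k$ by the $\fraka$-component of $M_k \cap H_k$ is absorbed into the chosen normalization. Once this normalization is fixed, the remaining statements are formal consequences of parabolic and symmetric-space decomposition theory.
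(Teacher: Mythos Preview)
Your treatment of parts (1) and (2) matches the paper's approach. One correction on (2): the potential modular weight in \eqref{eq:IntFormulaHk} is not a matter of normalization. The paper invokes \cite[Chapter I, Proposition 1.12]{Hel84}, which gives a Jacobian $\Det(\Ad_{N_k}(n))/\Det(\Ad_{H_k}(n))$ depending on $n\in N_k$, not on $m$; this factor is identically $1$ because $\frakn_k$ is a sum of root spaces for \emph{nonzero} roots, so $\Tr(\ad_{\frakn_k}(X))=\Tr(\ad_{\frakh_k}(X))=0$ for every $X\in\frakn_k$. No normalization absorption is needed.

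There is a genuine gap in your argument for (3). You assert that $(M_k, M_k\cap H_k)$ is a symmetric pair, implicitly with $\sigma$ as the defining involution. But this is false: the element $L(c_i)$ for $i>k$ lies in $\frakq=\frakl^{-\sigma}$ (it is a multiplication operator) and, as you yourself observe, also lies in $\frakh_k$ (it annihilates $e_k$). Hence $\frakm_k\cap\frakh_k$ is \emph{not} the $\sigma$-fixed part of $\frakm_k$; the latter is $\frakm_k\cap\der(V)$, the stabilizer of $e$ rather than of $e_k$. (Incidentally, $\sigma$ and $\theta$ do not act by $\pm1$ on each $\frakl_{ij}$; they swap $\frakl_{ij}\leftrightarrow\frakl_{ji}$, though your conclusion that they preserve $\frakm_k$ is still correct.) So you cannot directly apply van den Ban's results to the pair $(M_k,M_k\cap H_k)$.

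The paper circumvents this by passing to the Jordan subalgebra $V(e_k,1)$: restriction $g\mapsto g|_{V(e_k,1)}$ gives a homomorphism $M_k\to\Str(V(e_k,1))$ whose kernel fixes $e_k$ and hence lies in $M_k\cap H_k$; thus $M_k/(M_k\cap H_k)\cong M_k\cdot e_k$ is identified with an open piece of the honest symmetric space $\Str(V(e_k,1))/\Aut(V(e_k,1))$. The polar decomposition and the Jacobian $J_k$ then come from Proposition~\ref{prop:KAH} applied to the rank-$k$ Jordan algebra $V(e_k,1)$, whose Peirce multiplicities are again $d_0$ and $d-d_0$. This is the missing ingredient in your argument.
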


\begin{proof}
\begin{enumerate}
\item[\textup{(1)}] The decomposition $L=K_LQ_k$ holds by Theorem \ref{thm:Parabolics}~(2). Further, by Theorem \ref{thm:Parabolics}~(3) we have the integral formula
\begin{align*}
 \int_L{f(x)\td g} &= \int_{K_L}{\int_{M_k}{\int_{N_k}{f(kmn)e^{2\rho_{F_k}}(m)\td n}\td m}\td k},
\end{align*}
with $F_k$ as in the proof of Proposition \ref{prop:QkParabolic} and $\rho_{F_k}$ as in Appendix \ref{app:Parabolics}. It remains to show $\rho_{F_k}=\rho_k$. In fact,
\begin{align*}
 \rho_{F_k} &= \frac{1}{2}\sum_{\alpha\in\Sigma_k^+}{\dim(\frakl_\alpha)\alpha} = \frac{d}{2}\sum_{1\leq i\leq k<j\leq r_0}{\frac{\varepsilon_i-\varepsilon_j}{2}} = (r_0-k)\frac{d}{4}\sum_{i=1}^k{\varepsilon_i}-k\frac{d}{4}\sum_{i=k+1}^{r_0}{\varepsilon_i}.
\end{align*}
\item[\textup{(2)}] If $ge_k=e_k$, then by \eqref{eq:QRepStrGrp} we have $gP(e_k)g^\#=P(ge_k)=P(e_k)$. Since $P(e_k)$ is the orthogonal projection onto $V(e_k,1)$ we obtain for $x\in V(e_k,1)$:
\begin{align*}
 gx &= gP(e_k)x = P(e_k)g^{-\#}x \in V(e_k,1)
\end{align*}
and hence $gV(e_k,1)\subseteq V(e_k,1)$. Therefore, $H_k\subseteq Q_k$. Since $N_k\subseteq H_k$, we clearly have $H_k=(M_k\cap H_k)N_k$. It remains to show the integral formula.\\
By \cite[Chapter I, Proposition 1.12]{Hel84}:
\begin{align*}
 \int_{H_k}{f(h)\td h} &= \int_{M_k\cap H_k}{\int_{N_k}{f(mn)\frac{\Det(\Ad_{N_k}(n))}{\Det(\Ad_{H_k}(n))}\td n}\td m}.
\end{align*}
For $n=\exp(X)\in\frakn_k$ we have $\Det(\Ad(n))=e^{\Tr(\ad(X))}$. But $\frakn_k$ consists of root spaces $\frakl_\alpha$ with $\alpha\neq0$. Since $[\frakl_\alpha,\frakl_\beta]\subseteq\frakl_{\alpha+\beta}$, both $\Tr(\ad_{\frakn_k}(X))$ and $\Tr(\ad_{\frakh_k}(X))$ vanish and the determinants in the integral formula are $\equiv1$. Hence, \eqref{eq:IntFormulaHk} follows.
\item[\textup{(3)}] $(M_k\cap H_k)$ is the stabilizer of $e_k$ in $M_k$. Therefore, $M_k/(M_k\cap H_k)\cong M_k\cdot e_k$. The restriction $g\mapsto g|_{V(e_k,1)}$ defines a group homomorphism $M_k\mapsto\Str(V(e_k,1))$. Since the Lie algebra $\frakm_k$ of $M_k$ contains the Lie algebra $\str(V(e_k,1))$ of $\Str(V(e_k,1))$, the image of this map is the union of connected components of $\Str(V(e_k,1))$. The integral formula then follows essentially from Proposition \ref{prop:KAH}.\qedhere
\end{enumerate}
\end{proof}

Now we can finally compute the modular function of the stabilizer $H_k$.

\begin{corollary}
The modular function $\chi_{H_k}$\index{notation}{chiHk@$\chi_{H_k}$} of $H_k$ is on the Lie algebra $\frakh_k$ of $H_k$ given by
\begin{align}
 \td\chi_{H_k} &= \frac{1}{2}kd\sum_{i=k+1}^{r_0}{\varepsilon_i}.\label{eq:HkModFct}
\end{align}
\end{corollary}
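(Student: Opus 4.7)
The plan is to compute $d\chi_{H_k}$ using the standard identity $d\chi_{H_k}(X) = -\Tr(\ad_{\frakh_k}(X))$ together with the Langlands-type decomposition $\frakh_k = (\frakm_k \cap \frakh_k) \oplus \frakn_k$ from Proposition \ref{prop:VariousIntFormulas}~(2). The right-hand side of \eqref{eq:HkModFct} is a priori a functional on $\fraka$ defined via \eqref{eq:DefEpsilonj}; I would read the statement as saying that its restriction to $\fraka \cap \frakh_k = \sum_{l > k} \RR L(c_l)$ equals $d\chi_{H_k}|_{\fraka \cap \frakh_k}$, with the extension to the rest of $\frakh_k$ being forced by the character property. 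In particular, $d\chi_{H_k}$ must vanish on $\frakn_k$: this is exactly the fact already proved in the last paragraph of the proof of Proposition \ref{prop:VariousIntFormulas}~(2), and is also consistent with the right-hand side since every root space $\frakl_{ij}$ in $\frakn_k$ ($1 \leq i \leq k < j \leq r_0$) carries the non-zero $\fraka \cap \frakh_k$-weight $-\varepsilon_j/2$, so $\frakn_k \subseteq [\frakh_k, \frakh_k]$.

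It remains to compute $\Tr(\ad_{\frakh_k}(Y))$ for $Y = \sum_{j > k} t_j L(c_j) \in \fraka \cap \frakh_k$. Since $\ad(Y)$ preserves each summand of $\frakh_k = (\frakm_k \cap \frakh_k) \oplus \frakn_k$, the trace splits. The contribution from $\frakn_k$ is a direct computation using $\dim \frakl_{ij} = d$ and weight $(\varepsilon_i - \varepsilon_j)/2$ evaluating to $-t_j/2$ on $Y$:
\[
 \Tr(\ad_{\frakn_k}(Y)) = d \sum_{i=1}^{k} \sum_{j=k+1}^{r_0} \left(-\tfrac{t_j}{2}\right) = -\tfrac{kd}{2}\sum_{j=k+1}^{r_0} t_j.
\]

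For the Levi piece I would use the short exact sequence of $\frakm_k \cap \frakh_k$-modules
\[
 0 \longrightarrow \frakm_k \cap \frakh_k \longrightarrow \frakm_k \xrightarrow{\,Z \mapsto Z \cdot e_k\,} V(e_k, 1) \longrightarrow 0,
\]
where the surjectivity of the right arrow follows from the fact, observed in the proof of Proposition \ref{prop:VariousIntFormulas}~(3), that $M_k \cdot e_k$ is open in $V(e_k, 1)$. The key observation is then that $\fraka \cap \frakh_k$ acts trivially on $V(e_k, 1)$: for $l > k$ and $v \in V_{ij}$ with $i, j \leq k$ one has $L(c_l) v = 0$, so the operator $L(c_l)$ annihilates $V(e_k, 1) = \bigoplus_{1 \leq i \leq j \leq k} V_{ij}$. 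Hence $\Tr(\ad_{\frakm_k \cap \frakh_k}(Y)) = \Tr(\ad_{\frakm_k}(Y))$, and this last trace vanishes because the $\fraka$-roots appearing in $\frakm_k = \frakl_0 \oplus \bigoplus_{i \neq j, \{i,j\}\subseteq\{1,\ldots,k\}} \frakl_{ij} \oplus \bigoplus_{i \neq j, \{i,j\}\subseteq\{k+1,\ldots,r_0\}} \frakl_{ij}$ come in opposite pairs $\pm(\varepsilon_i - \varepsilon_j)/2$.

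Combining the two contributions gives $\Tr(\ad_{\frakh_k}(Y)) = -\tfrac{kd}{2}\sum_{j > k} t_j$, and therefore
\[
 d\chi_{H_k}(Y) = \tfrac{1}{2} k d \sum_{i=k+1}^{r_0} \varepsilon_i(Y),
\]
which is \eqref{eq:HkModFct}. The main obstacle is the equivariant identification $\frakm_k / (\frakm_k \cap \frakh_k) \cong V(e_k, 1)$ together with the triviality of the $\fraka \cap \frakh_k$-action on the quotient; once that is in place, the remaining work is weight-space bookkeeping and the standard observation that the trace of $\ad$ on a nilradical coming from non-zero roots is zero on torus elements of the Levi.
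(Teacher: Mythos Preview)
Your argument is correct and takes a genuinely different route from the paper for the Levi contribution. Both proofs reduce to the same weight computation on $\frakn_k$, but the paper first establishes that $M_k\cap H_k$ is reductive (hence unimodular) by checking that $\frakm_k\cap\frakh_k$ is $\theta$-stable; this requires a somewhat delicate argument showing $X^*e_k=0$ whenever $X\in\frakm_k$ satisfies $Xe_k=0$, invoking that $X|_{V(e_k,1)}\in\der(V(e_k,1))$ and \eqref{eq:JordanTrOfDerivation}. Once unimodularity is in hand, the paper computes $d\chi_{H_k}(X)=-\Tr(\ad(X)|_{\frakn_k})$ case by case on all of $\frakl_0\cap\frakh_k$ (derivations, $L(V_{\ell\ell}^-)$, and finally $L(c_\ell)$). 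Your exact-sequence trick bypasses the $\theta$-stability step entirely: from $0\to\frakm_k\cap\frakh_k\to\frakm_k\to V(e_k,1)\to0$ you read off $\Tr(\ad_{\frakm_k\cap\frakh_k}(Y))=\Tr(\ad_{\frakm_k}(Y))-\Tr(Y|_{V(e_k,1)})$, and both terms vanish for your $Y\in\fraka\cap\frakh_k$. This is cleaner and more conceptual. One remark: your claim that the extension to all of $\frakh_k$ is ``forced by the character property'' is not quite justified as stated, but your own method easily gives it---for arbitrary $Y\in\frakm_k\cap\frakh_k$ one has $Y|_{V(e_k,1)}\in\der(V(e_k,1))$, so $\Tr(Y|_{V(e_k,1)})=0$ by \eqref{eq:TrOfDerivation}, while $\Tr(\ad_{\frakm_k}(Y))=0$ since $\frakm_k$ is a Levi factor and hence reductive. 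This in fact reproves the unimodularity of $M_k\cap H_k$ without any reference to $\theta$.
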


\begin{proof}
By Proposition \ref{prop:VariousIntFormulas} the group $H_k$ decomposes as $H_k=(M_k\cap H_k)\ltimes N_k$. We first show that $M_k\cap H_k$ is reductive and hence unimodular. This will follow from \cite[Corollary 1.1.5.4]{War72} if we prove that $\theta(\frakm_k\cap\frakh_k)=\frakm_k\cap\frakh_k$. For this let $X\in\frakm_k\cap\frakh_k$. Then $[L(e_k),X]=0$ and $Xe_k=0$. Taking adjoints it follows that $[L(e_k),X^*]=[X,L(e_k)]^*=0$ and hence $X^*\in\frakm_k$. It remains to show that $X^*e_k=0$. Note that since $X$ preserves $V(e_k,1)$, we have $X|_{V(e_k,1)}\in\str(V(e_k,1))$. The condition $Xe_k=0$ even implies that $X|_{V(e_k,1)}\in\der(V(e_k,1))$, because $e_k$ is the unit element in $V(e_k,1)$. Therefore, by Lemma \ref{lem:TrOnSubspace}~(2) and \eqref{eq:JordanTrOfDerivation} we obtain for $a\in V(e_k,1)$:
\begin{align*}
 (X^*e_k|a) &= (e_k|Xa) = \tr_V(Xa) = \tr_{V(e_k,1)}(Xa) = 0.
\end{align*}
Hence, $X\in\frakh_k$ and it follows that $\theta$ preserves $\frakm_k\cap\frakh_k$. Thus, $\frakm_k\cap\frakh_k$ is reductive and $M_k\cap H_k$ is unimodular.\\
Since $H_k=(M_k\cap H_k)\ltimes N_k$ with $M_k\cap H_k$ unimodular, the modular function $\chi_{H_k}$ can be calculated on the Lie algebra level as the trace of the adjoint action on $\frakn_k$:
\begin{align*}
 \td\chi_{H_k}(X) &= -\Tr(\ad(X)|_{\frakn_k}).
\end{align*}
Since $[\frakl_\alpha,\frakl_\beta]\subseteq\frakl_{\alpha+\beta}$ for $\alpha,\beta\in\Sigma(\frakl,\fraka)$, this trace can only be non-zero if $X\in\frakl_0\cap\frakh_k$. By \eqref{eq:LRootSpace0} we have
\begin{align*}
 \frakl_0\cap\frakh_k &= \left\{L(x)+D:x\in\bigoplus_{i=k+1}^{r_0}{V_{ii}},Dc_j=0\ \forall\,1\leq j\leq r_0\right\}.
\end{align*}
Let $X=D\in\mathfrak{der}(V)$ with $Dc_i=0$ for all $1\leq i\leq r_0$. Then for $x\in V(c_i,\lambda)$ we have $c_i\cdot Dx=D(c_i\cdot x)-Dc_i\cdot x=\lambda Dx$ and hence $DV_{ij}\subseteq V_{ij}$ for all $1\leq i,j\leq r_0$. Further, it is easy to show that for $x\in V_{ij}$ we have $\ad(X)(c_i\Box x)=c_i\Box Dx$. Hence, for $i\neq j$ we obtain $\Tr(\ad(D)|_{\frakl_{ij}})=\Tr(D|_{\frakl_{ij}})$ and 
\begin{align*}
 \Tr(\ad(D)|_{\frakn_k}) &= \Tr(D|_{V(e_k,\frac{1}{2})}).
\end{align*}
Now by \eqref{eq:TrOfDerivation} the trace of a derivation vanishes. Since $D|_{V(e_k,1)}\in\mathfrak{der}(V(e_k,1))$ and $D|_{V(e_k,0)}\in\mathfrak{der}(V(e_k,0))$ we have
\begin{align*}
 \Tr(D) &= \Tr(D|_{V(e_k,1)}) = \Tr(D|_{V(e_k,0)}) = 0.
\end{align*}
Since $V=V(e_k,1)\oplus V(e_k,\frac{1}{2})\oplus V(e_k,0)$, we obtain $\Tr(D|_{V(e_k,\frac{1}{2})})=0$ and hence $\Tr(\ad(X))=0$.\\
It remains to consider the case $X=L(a)$ for some $a\in V_{\ell\ell}$, $k<\ell\leq r_0$. Let us first assume that $a\in V_{\ell\ell}^-$. It we denote by $\frakn_k^\pm=c_i\Box V_{ij}^\pm$, then it is easily checked that $\ad(X)\frakn_k^\pm\subseteq\frakn_k^\mp$ and hence $\Tr(\ad(X)|_{\frakn_k})=0$. It remains to calculate the modular function on $L(V_{\ell\ell}^+)=\RR L(c_\ell)$. For $a=c_\ell$ and $c_i\Box x\in\frakn_k$, $x\in V_{ij}$, $1\leq i\leq k<j\leq r_0$, we obtain with \eqref{eq:LieStrBracket} and Lemma \ref{lem:BracketZero}~(2):
\begin{align*}
 \ad(X)(c_i\Box x) &= \left[L(c_\ell),\frac{1}{2}L(x)+[L(c_i),L(x)]\right]\\
 &= L(-[L(c_i),L(x)]c_\ell) + \frac{1}{2}[L(c_\ell),L(x)]\\
 &= \begin{cases}-\frac{1}{2}c_i\Box x & \mbox{if $\ell=j$,}\\0 & \mbox{else.}\end{cases}
\end{align*}
Since $\dim(c_i\Box V_{ij})=d$, this yields $\Tr(\ad(X)|_{\frakn_k})=-\frac{1}{2}kd$ and \eqref{eq:HkModFct} holds.
\end{proof}

As another corollary from the previous decomposition theorems we obtain a polar decomposition for the orbits $\calO_k$, $0\leq k\leq r_0-1$.

\begin{corollary}\label{cor:PolarOk}
For every $0\leq k\leq r_0-1$ the group $L$ decomposes as $L=K_L\exp(\fraka_k)H_k$ and the orbit $\calO_k$ has the polar decomposition $\calO_k=K_L\exp(\fraka_k)\cdot e_k$.
\end{corollary}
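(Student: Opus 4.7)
The plan is to chain together the three decompositions from Proposition \ref{prop:VariousIntFormulas}. The corollary is essentially a formal consequence once those pieces are in hand.

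First, I would invoke Proposition \ref{prop:VariousIntFormulas}~(1) to write $L = K_L Q_k$. Since $Q_k$ admits the Langlands decomposition $Q_k = M_k N_k$ established in Proposition \ref{prop:QkParabolic}, this gives $L = K_L M_k N_k$. Next, I would substitute the decomposition from Proposition \ref{prop:VariousIntFormulas}~(3), namely $M_k = (M_k \cap K_L)\exp(\fraka_k)(M_k \cap H_k)$, into the middle factor. Absorbing the compact piece $M_k \cap K_L \subseteq K_L$ into the leftmost $K_L$ yields
\begin{align*}
 L &= K_L \exp(\fraka_k)(M_k \cap H_k) N_k.
\end{align*}
Finally, using Proposition \ref{prop:VariousIntFormulas}~(2), which asserts $H_k = (M_k \cap H_k) N_k$, the two rightmost factors collapse to $H_k$, giving the desired decomposition $L = K_L \exp(\fraka_k) H_k$.

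For the polar decomposition of the orbit, recall that by definition $\calO_k = L \cdot e_k$ and that $H_k$ is the stabilizer of $e_k$. Applying the group decomposition just obtained,
\begin{align*}
 \calO_k &= L \cdot e_k = K_L \exp(\fraka_k) H_k \cdot e_k = K_L \exp(\fraka_k) \cdot e_k,
\end{align*}
which is the claim.

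There is really no main obstacle here: all the substantive work has already been carried out in Proposition \ref{prop:QkParabolic} (identifying $Q_k$ as a parabolic and computing its Langlands data) and in Proposition \ref{prop:VariousIntFormulas} (the three integral/decomposition formulas). The corollary is a bookkeeping step. The only minor point to be careful about is the order of factors when absorbing $M_k \cap K_L$ into $K_L$, which is legitimate since $M_k \cap K_L$ is a subgroup of $K_L$ and we are taking a set-theoretic product.
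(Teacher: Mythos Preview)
Your proof is correct and follows exactly the same approach as the paper: chain $L = K_L Q_k = K_L M_k N_k$, insert the decomposition of $M_k$ from Proposition~\ref{prop:VariousIntFormulas}~(3), absorb $M_k\cap K_L$ into $K_L$, and then collapse $(M_k\cap H_k)N_k$ into $H_k$ via Proposition~\ref{prop:VariousIntFormulas}~(2). The orbit statement is then immediate from $H_k$ being the stabilizer of $e_k$, just as you say.
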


\begin{proof}
The polar decomposition for the orbit $\calO_k$ clearly follows from the decomposition $L=K_L\exp(\fraka_k)H_k$ on the group level. To show the group decomposition we use the decompositions of Propositions \ref{prop:QkParabolic} and \ref{prop:VariousIntFormulas} to calculate:
\begin{align*}
 L &= K_LQ_k = K_LM_kN_k = K_L(K_L\cap M_k)\exp(\fraka_k)(H_k\cap M_k)N_k\\
 &= K_L\exp(\fraka_k)H_k.\qedhere
\end{align*}
\end{proof}

\subsection{Equivariant measures}\label{sec:EquivMeasures}

A measure $\td\mu$ on a $G$-space $X$ is called \textit{$\delta$-equivariant}\index{subject}{equivariant measure}, $\delta$ a positive character of $G$, if $\td\mu(gx)=\delta(g)\td\mu(x)$ for $g\in G$, i.e. for every $f\in L^1(X,\td\mu)$ and $g\in G$:
\begin{align*}
 \int_X{f(g^{-1}x)\td\mu(x)} &= \delta(g)\int_X{f(x)\td\mu(x)}.
\end{align*}
$\delta$ is called the \textit{modular function}\index{subject}{equivariant measure!modular function} of the equivariant measure $\td\mu$. For the existence and uniqueness of equivariant measures we have the following fact:

\begin{fact}[{\cite[Section 33D]{Loo53}}]
In order that a real character $\delta$ be the modular function for an equivariant measure $\td(gH)$ on the quotient space $G/H$, where $H$ is a closed subgroup of the locally compact group $G$, it is necessary and sufficient that $\delta(h)=\frac{\chi_H(h)}{\chi_G(h)}$ for all $h\in H$, where $\chi_G$ and $\chi_H$ denote the modular functions of $G$ and $H$, repectively. In this case, the measure is uniquely determined by the following formula:
\begin{align}
 \int_{G/H}{\int_H{f(gh)\td h}\td(gH)} &= \int_G{\delta(g)f(g)\td g}.\label{eq:IdentityEquivMeasures}
\end{align}
\end{fact}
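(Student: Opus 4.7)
The plan is to split the proof into necessity and sufficiency. For \emph{necessity}, assume a $\delta$-equivariant measure $\td(gH)$ on $G/H$ satisfies \eqref{eq:IdentityEquivMeasures} and fix $h_0\in H$. I would substitute $f$ by $f(\cdot h_0)$ and compare both sides: on the left, the change of variable $h\mapsto hh_0^{-1}$ in the inner integral over $H$ pulls out a factor $\chi_H(h_0)$; on the right, the change of variable $g\mapsto gh_0^{-1}$ pulls out a factor $\chi_G(h_0)\delta(h_0)^{-1}$ (using $\delta(gh_0^{-1})=\delta(g)\delta(h_0)^{-1}$ and left-invariance of $\td g$). Since both sides must still coincide with $\int_G\delta(g)f(g)\,\td g$, comparing factors forces $\delta(h_0)=\chi_H(h_0)/\chi_G(h_0)$.

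For \emph{sufficiency and existence}, I would introduce the averaging map $\flat:C_c(G)\to C_c(G/H)$, $f^\flat(gH):=\int_H f(gh)\,\td h$. The program has three steps. First, verify that $\flat$ is surjective by a standard Bruhat-type partition-of-unity argument. Second, show that the linear functional $I(f):=\int_G\delta(g)f(g)\,\td g$ descends via $\flat$, i.e.\ $f^\flat=0$ implies $I(f)=0$. This is precisely where the hypothesis $\delta(h)\chi_G(h)=\chi_H(h)$ is used: the same computation as in the necessity step shows $I(f(\cdot h_0))=I(f)$ for every $h_0\in H$, and $\ker(\flat)$ is spanned (in the inductive-limit topology on $C_c(G)$) by differences $f-f(\cdot h_0)$, so $I$ annihilates $\ker(\flat)$. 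Third, the Riesz--Markov theorem applied to the positive linear functional $\varphi\mapsto I(\flat^{-1}\varphi)$ on $C_c(G/H)$ produces a unique Radon measure $\td(gH)$ satisfying \eqref{eq:IdentityEquivMeasures}.

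Finally, to verify $\delta$-equivariance, I would substitute $f$ by its left-translate $g\mapsto f(g_0^{-1}g)$ in \eqref{eq:IdentityEquivMeasures}: the right side transforms by a factor $\delta(g_0)$ using left-invariance of $\td g$ together with the character property of $\delta$, while the left side transforms the measure $\td(gH)$ by $g_0$, giving exactly the equivariance $\td\mu(g_0\cdot)=\delta(g_0)\td\mu(\cdot)$. The chief obstacle is step two of the sufficiency argument, namely rigorously identifying $\ker(\flat)$ with the closed span of the differences $\{f-f(\cdot h_0):h_0\in H\}$; this requires a uniform-continuity and partition-of-unity argument to approximate an arbitrary element of $\ker(\flat)$, but it is standard once the correct formulation is in place. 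Uniqueness is then automatic from the Riesz--Markov step.
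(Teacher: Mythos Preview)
The paper does not prove this statement at all: it is recorded as a \texttt{Fact} with a citation to Loomis \cite[Section 33D]{Loo53} and is used as a black box in the subsequent construction of equivariant measures on the orbits $\calO_k$. So there is no proof in the paper to compare your proposal against.

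Your sketch is essentially the standard textbook argument and is correct in outline. One small comment: your proposed identification of $\ker(\flat)$ with the closed span of differences $f-f(\cdot\,h_0)$ is not the usual route and, as you note yourself, is the most delicate step. The cleaner classical argument bypasses this: if $f^\flat=0$, choose $\varphi\in C_c(G)$ with $\varphi^\flat\equiv 1$ on the image of $\mathrm{supp}(f)$, write $I(f)=\int_G\delta(g)f(g)\varphi^\flat(gH)\td g$, expand $\varphi^\flat$, interchange the order of integration via Fubini, and use the hypothesis $\delta(h)\chi_G(h)=\chi_H(h)$ in the change of variables to symmetrize the roles of $f$ and $\varphi$; one arrives at $\int_G\delta(g)\varphi(g)f^\flat(gH)\td g=0$. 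This avoids any density or approximation argument entirely. Also, for necessity you tacitly assume the integral formula \eqref{eq:IdentityEquivMeasures}, whereas strictly speaking only the existence of a $\delta$-equivariant measure is given; but this gap is easily closed by first noting that $\int_{G/H}\int_H f(gh)\td h\,\td\mu(gH)$ defines a left-quasi-invariant measure on $G$ with multiplier $\delta$, hence equals $\delta(g)\td g$ up to a positive constant.
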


We apply this to the orbits $\calO_k$, $0\leq k\leq r_0$.

\begin{theorem}\label{thm:EquivMeasures}
\begin{enumerate}
\item[\textup{(1)}] On $\calO_{r_0}=\Omega$ the $L$-equivariant measures which are locally finite near $0$ are (up to positive scalars) exactly the measures\index{notation}{dmulambdax@$\td\mu_\lambda(x)$}
 \begin{align*}
  \int_\Omega{f(x)\td\mu_\lambda(x)} &:= \int_{K_L}{\int_{\fraka^+}{f(k\exp(X)e)e^{\lambda\frac{r}{r_0}\sum_{i=1}^{r_0}{t_i}}J(X)\td X}\td k},
 \end{align*}
 where $X=\sum_{i=1}^{r_0}{t_iL(c_i)}$ and $\fraka^+$ and $J(X)$ as in \eqref{eq:aPositiveChamber} and \eqref{eq:JacobiDetInvMeasure}. $\td\mu_\lambda$ transforms by
 \begin{align*}
  \td\mu_\lambda(gx) &= \chi(g)^\lambda\td\mu(x) & \mbox{for $g\in L$.}
 \end{align*}
 Moreover, $\td\mu_\lambda$ is absolutely continuous with respect to the Lebesgue measure $\td x$ on $\Omega$ and we have
 \begin{align*}
  \td\mu_\lambda(x) &= \const\cdot\Delta(x)^{\lambda-\frac{n}{r}}(x)\td x & \mbox{for $\lambda>(r_0-1)\frac{r_0d}{2r}$.}
 \end{align*}
\item[\textup{(2)}] For $k=0,\ldots,r_0-1$ there is (up to positive scalars) exactly one $L$-equivariant measure $\td\mu_k$ on $\calO_k$ given by
 \begin{align*}
  \int_{\calO_k}{f(x)\td\mu_k(x)} &:= \int_{K_L}{\int_{\fraka_k^+}{f(k\exp(X)e)e^{\frac{r_0d}{2}\sum_{i=1}^k{t_i}}J_k(X)\td X}\td k},\index{notation}{dmukx@$\td\mu_k(x)$}
 \end{align*}
 where $X=\sum_{i=1}^{r_0}{t_iL(c_i)}$ and $\fraka_k^+$ and $J_k(X)$ as in \eqref{eq:akPositiveChamber} and \eqref{eq:JacobiDetEquivMeasure}. $\td\mu_k$ transforms by
 \begin{align*}
  \td\mu_k(gx) &= \chi(g)^{k\frac{r_0d}{2r}}\td\mu_k(x) & \mbox{for $g\in L$.}
 \end{align*}
\end{enumerate}
\end{theorem}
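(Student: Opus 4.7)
The plan is to apply the quoted existence/uniqueness fact for equivariant measures to $\calO_k = L/H_k$. Since $L$ is reductive and hence unimodular, the existence condition for an $L$-equivariant measure with modular function $\delta$ reduces to $\delta|_{H_k} = \chi_{H_k}$, and the explicit density will come from unfolding \eqref{eq:IdentityEquivMeasures} via the decompositions in Propositions \ref{prop:KAH}, \ref{prop:VariousIntFormulas} and Corollary \ref{cor:PolarOk}.

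For part (2), set $\alpha_k := kr_0d/(2r)$; I claim $\delta = \chi^{\alpha_k}$ is the required character. Writing $\chi(g) = |\Det g|^{r/n}$ (Proposition \ref{prop:CharLambda}) and using Lemma \ref{lem:Trtr} together with the vanishing $\Tr D = 0$ for derivations \eqref{eq:TrOfDerivation}, one gets $d\chi(L(x) + D) = \tr(x)$. On $\frakl_0 \cap \frakh_k$ this functional is supported on the span of $L(c_\ell)$ for $\ell > k$, where $d\chi(L(c_\ell)) = r/r_0$; multiplying by $\alpha_k$ gives $kd/2$, matching \eqref{eq:HkModFct}. On $\frakn_k$ and on the off-diagonal pieces of $\frakm_k \cap \frakh_k$ both $d\chi$ and $d\chi_{H_k}$ vanish, the former by Peirce orthogonality $\tau(V_{ij}, e) = 0$ for $i \neq j$, the latter by unimodularity of $M_k \cap H_k$ and the nilpotency argument used in the preceding corollary. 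Uniqueness of the extending character follows exactly as in the proof of Proposition \ref{prop:CharLambda}: any positive character of $L$ is trivial on the compact $K_L$, takes equal values on $\exp(L(c_1)), \ldots, \exp(L(c_{r_0}))$ by Lemma \ref{lem:AutTransitiveJordanFrames}, and is therefore a power of $\chi$.

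To extract the explicit density, I would unfold \eqref{eq:IdentityEquivMeasures} with $\delta = \chi^{\alpha_k}$ via Proposition \ref{prop:VariousIntFormulas}(1), absorb the $N_k$-integration into $H_k$ through \eqref{eq:IntFormulaHk}, and replace the $M_k/(M_k \cap H_k)$ integration by its polar decomposition from Proposition \ref{prop:VariousIntFormulas}(3). On $\fraka_k$ the factor $e^{2\rho_k}$ reduces to $e^{(r_0 - k)(d/2) \sum_{i \le k} t_i}$ (the $\varepsilon_{k+1}, \ldots, \varepsilon_{r_0}$-part of $\rho_k$ vanishes there), while $\chi^{\alpha_k}(\exp X) = e^{(kd/2) \sum_{i \le k} t_i}$ by \eqref{eq:DeltaOnCi}; their product gives the stated exponent $(r_0 d/2) \sum_{i=1}^k t_i$.

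For part (1), the modular function of $H = H_{r_0}$ is trivial since the sum in \eqref{eq:HkModFct} is empty, so every character $\chi^\lambda$ produces an equivariant measure on $\Omega$; the uniqueness argument above shows the family $\{\chi^\lambda\}_\lambda$ exhausts all positive characters of $L$. The claimed polar formula follows from Proposition \ref{prop:KAH}(2) twisted by $\chi^\lambda(\exp X) = e^{\lambda (r/r_0) \sum t_i}$. Since the $L$-invariant measure on $\Omega$ is $\Delta(x)^{-n/r} \td x$ (as remarked below Proposition \ref{prop:CharLambda}) and $\Delta^\lambda$ is a $\chi^\lambda$-semi-invariant of $L$, the measure $\Delta(x)^{\lambda - n/r} \td x$ is $\chi^\lambda$-equivariant and must agree with $\td\mu_\lambda$ up to a positive scalar whenever it is locally finite near $0$. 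The main obstacle I anticipate is precisely the convergence threshold $\lambda > (r_0 - 1) r_0 d / (2r)$, which requires analyzing the asymptotics of the Jacobi density $J(X)$ in \eqref{eq:JacobiDetInvMeasure} as $t_1, \ldots, t_{r_0} \to -\infty$ within $\fraka^+$.
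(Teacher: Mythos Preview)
Your proposal is correct and follows essentially the same route as the paper. The only presentational difference is that in part~(2) you verify directly that $\chi^{\alpha_k}$ restricts to $\chi_{H_k}$ on $H_k$ (computing $d\chi(L(x)+D)=\tr(x)$ and matching against \eqref{eq:HkModFct}), whereas the paper derives the unique extending character by noting that $d\delta$ vanishes on $[\frakl,\frakl]$ and is then pinned down on $Z(\frakl)\subseteq\bigoplus_i L(V_{ii})$ by its values on the $L(c_\ell)$ with $\ell>k$ (which lie in $\frakh_k$ since $k<r_0$); the unfolding of the integral formula and the treatment of part~(1) are identical, and the convergence threshold is handled in the paper exactly as you anticipate, by reading off the $t_i\to-\infty$ asymptotics of $e^{\lambda\frac{r}{r_0}\sum t_i}J(X)$.
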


\begin{proof}
\begin{enumerate}
\item[\textup{(1)}] $\Omega\cong L/H$ as homogeneous spaces. Both $L$ and $H$ are reductive and hence unimodular (see e.g. \cite[Corollary 8.31~(d)]{Kna02}). Therefore, a character $\delta$ is the modular function for an equivariant measure on $L/H$ if and only if $\delta|_H\equiv1$. Let $\delta$ be such a positive character of $L$.\\
According to Proposition \ref{prop:KAH}~(1) the group $L$ decomposes as $L=K_LAH$. Since $K_L$ is compact, $\delta|_{K_L}\equiv1$. Therefore, we only have to determine the values of $\delta$ on $A$. Since $A=\exp(\fraka)$, it suffices to calculate the possible derived homomorphisms $\td\delta:\frakl\rightarrow\RR$. By Lemma \ref{lem:AutTransitiveJordanFrames} the group $H$ contains all possible permutations of the elements $L(c_i)\in\fraka$, $1\leq i\leq r_0$. Therefore,
\begin{align*}
 \td\delta\left(\sum_{i=1}^{r_0}{t_iL(c_i)}\right) &= \lambda\frac{r}{r_0}\sum_{i=1}^{r_0}{t_i}
\end{align*}
for some $\lambda\in\RR$. Then by \eqref{eq:DeltaOnCi} we obtain $\delta=\chi^\lambda$. Hence, the functions $\chi^\lambda$ are the only possible modular functions for $L$-equivariant measures on $\Omega$.\\
The measures $\Delta(x)^{\lambda-\frac{n}{r}}\td x$ are clearly $\chi^\lambda$-equivariant by Proposition \ref{prop:CharLambda}. Since $\Delta(x)^{-\frac{n}{r}}\td x$ is an invariant measure on $\Omega$, the stated integral formula follows from Proposition \ref{prop:KAH} (with $f(x)\Delta(x)^\lambda$ instead of $f(x)$) and \eqref{eq:DeltaOnCi}. From the integral formula one can also easily see that $\td\mu_\lambda$ is locally finite near $0$ if and only if $\lambda>(r_0-1)\frac{r_0d}{2r}$.
\item[\textup{(2)}] $\calO_k\cong L/H_k$ as homogeneous spaces. As remarked in (1) the modular function $\chi_L$ is equal to $1$ and by \eqref{eq:HkModFct} the derived modular function on $\frakh_k$ is given by
\begin{align*}
 \td\chi_{H_k} &= \frac{1}{2}kd\sum_{i=k+1}^{r_0}{\varepsilon_i}.
\end{align*}
Let $\delta$ be a positive character of $L$ with $\td\delta|_{\frakh_k}=\td\chi_{H_k}$. The Lie algebra $\frakl$ is reductive and hence $\frakl=Z(\frakl)+[\frakl,\frakl]$. On the semisimple part $[\frakl,\frakl]$ the character $\td\delta$ clearly vanishes. Therefore it suffices to compute the values of $\td\delta$ on $\bigoplus_{i=1}^{r_0}{L(V_{ii})}$ by \eqref{eq:CenterL}. On $\bigoplus_{j=k+1}^{r_0}{L(V_{ii})}\subseteq\frakh_k$ the character $\td\delta$ is given by $\td\chi_{H_k}$. Since $k<r_0$ this subspace of $\frakh_k$ is non-zero and using the $\Ad$-invariance of $\td\delta$ and Lemma \ref{lem:AutTransitiveJordanFrames} one shows that
\begin{align*}
 \td\delta &= \frac{1}{2}kd\sum_{i=1}^{r_0}{\varepsilon_i}.
\end{align*}
By Proposition \ref{prop:CharLambda} we obtain $\delta=\chi^{k\frac{r_0d}{2r}}$. For the integral formula we calculate:
\begin{align*}
 & \int_L{f(g)\chi^{k\frac{r_0d}{2r}}(g)\td g}\\
 ={}& \int_{K_L}{\int_{M_k}{\int_{N_k}{f(kmn)e^{k\frac{d}{2}\sum_{i=1}^{r_0}{\varepsilon_i}}(m)e^{2\rho_k}(m)\td n}\td m}\td k}\\
 ={}& \int_{K_L}{\int_{M_k/(M_k\cap H_k)}{\int_{M_k\cap H_k}{\int_{N_k}{f(kmhn)e^{\frac{r_0d}{2}\sum_{i=1}^k{\varepsilon_i}}(m)}}}}\\
 & \ \ \ \ \ \ \ \ \ \ \ \ \ \ \ \ \ \ \ \ \ \ \ \ \ \ \ \ \ \ \ \ \ \ \ \ \ \ \ \ \ \ \ \ \ \ \ \ \ \ \ \ \ \ \ \ \ \td n\td h\td\nu_k(m(M_k\cap H_k))\td k\\
 ={}& \int_{K_L}{\int_{M_k/(M_k\cap H_k)}{\int_{H_k}{f(kmh)e^{\frac{r_0d}{2}\sum_{i=1}^k{\varepsilon_i}}(m)\td h}\nu_k(m(M_k\cap H_k))}\td k}\\
 ={}& \int_{K_L}{\int_{M_k\cap K_L}{\int_{\fraka_k^+}{\int_{H_k}{f(kk'\exp(X)h)e^{\frac{r_0d}{2}\sum_{i=1}^k{\varepsilon_i}(X)}J_k(X)\td h}\td X}\td k'}\td k}\\
 ={}& \int_{K_L}{\int_{\fraka_k^+}{\int_{H_k}{f(k\exp(X)h)e^{\frac{r_0d}{2}\sum_{i=1}^k{t_i}}J_k(X)\td h}\td X}\td k}.
\end{align*}
Now the desired integral formula follows from \eqref{eq:IdentityEquivMeasures}.\qedhere
\end{enumerate}
\end{proof}

For convenience we denote for $\lambda>(r_0-1)\frac{r_0d}{2r}$ the open orbit $\calO_{r_0}=\Omega$ by $\calO_\lambda$\index{notation}{Olambda@$\calO_\lambda$}. Similarly, for $\lambda=k\frac{r_0d}{2r}$, $k=0,\ldots,r_0-1$, we put $\calO_\lambda:=\calO_k$\index{notation}{Olambda@$\calO_\lambda$} and $\td\mu_\lambda:=\td\mu_k$\index{notation}{dmulambdax@$\td\mu_\lambda(x)$}. This yields Hilbert spaces $L^2(\calO_\lambda,\td\mu_\lambda)$ exactly for $\lambda$ in the \textit{Wallach set}\index{subject}{Wallach set}
\begin{align*}
 \calW &:= \left\{0,\frac{r_0d}{2r},\ldots,(r_0-1)\frac{r_0d}{2r}\right\}\cup\left((r_0-1)\frac{r_0d}{2r},\infty\right),\index{notation}{W@$\calW$}
\end{align*}
and the measures transform by
\begin{align}
 \td\mu_\lambda(gx) &= \chi(g)^\lambda\td\mu_\lambda(x) & \mbox{for }g\in L.\label{eq:dmulambdaEquivariance}
\end{align}

For the minimal non-trivial orbit $\calO_1$ the polar decomposition of Corollary \ref{cor:PolarOk} simplifies to $\calO_1=K_L\RR_+c_1$. Further, the integral formula in Theorem \ref{thm:EquivMeasures}~(2) amounts to
\begin{align}
 \int_{\calO_1}{f(x)\td\mu_1(x)} &= \int_{K_L}{\int_0^\infty{f(ktc_1)t^{\mu+\nu+1}\td t}\td k},\label{eq:dmuIntFormula}
\end{align}
since $\mu+\nu+1=\frac{r_0d}{2}-1$. Hence, the space of radial (or equivalently $K_L$-invariant) functions in $L^2(\calO_1,\td\mu_1)$ is given by $L^2(\calO_1,\td\mu_1)_{\textup{rad}}\cong L^2(\RR_+,t^{\mu+\nu+1}\td t)$\index{notation}{L2O1dmu1rad@$L^2(\calO_1,\td\mu_1)_{\textup{rad}}$}.

\begin{example}\label{ex:EquivMeasures}
\begin{enumerate}
\item[\textup{(1)}] For $V=\Sym(n,\RR)$ the two-fold covering \eqref{eq:SymTwoFoldCovering} induces a unitary (up to a scalar) isomorphism
\begin{align}
 \calU:L^2(\calO_1,\td\mu_1)\rightarrow L^2_{\textup{even}}(\RR^n),\ \calU\psi(x) := \psi(xx^t),\label{eq:DefCalU}\index{notation}{U@$\calU$}
\end{align}
where $L^2_{\textup{even}}(\RR^n)$\index{notation}{L2evenRn@$L^2_{\textup{even}}(\RR^n)$} denotes the space of even $L^2$-functions on $\RR^n$. In fact, for $\psi\in L^2(\calO_1,\td\mu_1)$:
\begin{align*}
 \int_{\RR^n}{|\calU\psi(x)|^2\td x} &= \vol(\SS^{n-1})\int_{\SO(n)}{\int_0^\infty{|\calU\psi(kte_1)|^2t^{n-1}\td t}\td k}\\
 &= \vol(\SS^{n-1})\int_{\SO(n)}{\int_0^\infty{|\psi(t^2kc_1k^t)|^2t^{n-1}\td t}\td k}\\
 &= \frac{\vol(\SS^{n-1})}{2}\int_{\SO(n)}{\int_0^\infty{|\psi(k\cdot sc_1)|^2s^{\frac{n}{2}-1}\td s}\td k}\\
 &= \frac{\vol(\SS^{n-1})}{2}\int_\calO{|\psi(x)|^2\td\mu(x)},
\end{align*}
where $\td k$ is the normalized Haar measure on $\SO(n)$. Hence $\calU$ is unitary (up to a scalar).
\item[\textup{(2)}] For $V=\RR^{p,q}$ the measure $\td\mu_1$ can be expressed in polar coordinates \eqref{eq:VpqPolarCoordinates}. Using \eqref{eq:dmuIntFormula} we obtain
\begin{align*}
 \td\mu_1 &= \const\cdot t^{p+q-3}\td t\td\omega\td\eta,
\end{align*}
where $\td\omega$ and $\td\eta$ denote the normalized euclidean measures on $\SS^{p-1}_0$ and $\SS^{q-1}$, respectively.
\end{enumerate}
\end{example}

To be able to deal with the measures $\td\mu_k$, $0\leq k\leq r_0-1$, we interpret them as residues of zeta functions.

\begin{proposition}\label{prop:ZetaFunctions}
\begin{enumerate}
\item[\textup{(1)}] Let $V$ be euclidean. Then the measure $\td\mu_k$, $0\leq k\leq r_0-1$, is a constant multiple of the residue of the \textit{zeta function}\index{subject}{zeta function}
 \begin{align*}
  Z(f,\lambda) &:= \int_\Omega{f(x)\Delta(x)^{\lambda-\frac{n}{r}}\td x}\index{notation}{Zflambda@$Z(f,\lambda)$}
 \end{align*}
 at the value $\lambda=k\frac{r_0d}{2r}=k\frac{d}{2}$.
\item[\textup{(2)}] Let $V$ be non-euclidean and $\ncong\RR^{p,q}$. Then the measure $\td\mu_k$, $0\leq k\leq r_0-1$, is a constant multiple of the residue of the \textit{zeta function}\index{subject}{zeta function}
 \begin{align*}
  Z(f,\lambda) &:= \int_V{f(x)|\Delta(x)|^{\lambda-\frac{n}{r}}\td x}\index{notation}{Zflambda@$Z(f,\lambda)$}
 \end{align*}
 at the value $\lambda=k\frac{r_0d}{2r}$.
\item[\textup{(3)}] Let $V=\RR^{p,q}$. Then $r_0=2$ and the orbits $\calO_k$ are given by $\calO_0=\{0\}$, $\calO_1=\{x\in\RR^{p,q}\setminus\{0\}:\Delta(x)=0\}$ and $\calO_2=\Omega=\{x\in\RR^{p,q}\setminus\{0\}:\Delta(x)>0\}$ where $\Delta(x)$ is the standard quadratic form of signature $(p,q)$. In this case the measure $\td\mu_0$ is just a scalar multiple of the Dirac delta distribution at $0$ and the measure $\td\mu_1$ is again a constant multiple of the residue of the \textit{zeta function}\index{subject}{zeta function}
 \begin{align*}
  Z(f,\lambda) &:= \int_\Omega{f(x)\Delta(x)^{\lambda-\frac{n}{r}}\td x}\index{notation}{Zflambda@$Z(f,\lambda)$}
 \end{align*}
 at the value $\lambda=\frac{r_0d}{2r}=\frac{p+q-2}{2}$.
\end{enumerate}
\end{proposition}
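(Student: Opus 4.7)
The overall strategy is the same in all three cases: for $\Re\lambda$ sufficiently large the integral $Z(f,\lambda)$ converges for $f\in C_c^\infty(V)$ and defines an $L$-equivariant distribution on $V$ (resp.\ on $\Omega$) transforming under the character $\chi^\lambda$. One then invokes the classical fact that $\lambda\mapsto Z(f,\lambda)$ admits a meromorphic continuation to all of $\CC$, and identifies the residues at the distinguished values $\lambda=k\frac{r_0d}{2r}$ as distributions supported on $\overline{\calO_k}$. The residue is automatically $L$-semi-invariant with character $\chi^{kr_0d/(2r)}$, so by the uniqueness statement in Theorem~\ref{thm:EquivMeasures}(2) it must be a scalar multiple of $\td\mu_k$.

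For (1), I would first recall that in the euclidean setting one has $r=r_0$, so $\frac{r_0d}{2r}=\frac{d}{2}$, and $\Omega$ is an irreducible symmetric cone on which the Gindikin theory of generalized gamma functions applies. The function $Z(f,\lambda)$ admits the meromorphic continuation described in Faraut--Koranyi, with simple poles precisely on the arithmetic progression $\lambda=k\frac{d}{2}$, $0\leq k\leq r_0-1$, and the corresponding residue distributions $\textup{Res}_{\lambda=k d/2}Z(f,\lambda)$ are supported on $\overline{\calO_k}$. Equivariance of $\Delta(x)^{\lambda-n/r}\td x$ under the $\chi^\lambda$-action (Proposition~\ref{prop:CharLambda} and \eqref{eq:DetEquiv}) is preserved under taking residues, so the residue has modular function $\chi^{kd/2}=\chi^{kr_0d/(2r)}$. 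Part (1) of Theorem~\ref{thm:EquivMeasures} identifies such an equivariant measure on $\overline{\calO_k}$, up to a positive scalar, with $\td\mu_k$.

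For (2), the non-euclidean case with $V\not\cong\RR^{p,q}$, I would appeal to the theory of prehomogeneous vector spaces of Sato--Shintani, applied to the triple $(L,V,\Delta)$: since $\Omega$ is the $L$-open orbit and $\Delta$ is a relative invariant, the local zeta function $Z(f,\lambda)=\int_V f(x)|\Delta(x)|^{\lambda-n/r}\td x$ extends meromorphically in $\lambda$, with poles along finitely many arithmetic progressions, and the residues are supported on the closures of the non-open orbits. The equivariance argument is identical, and again uniqueness of $\chi^{kr_0d/(2r)}$-equivariant measures on $\calO_k$ forces the residue at $\lambda=k\frac{r_0d}{2r}$ to be proportional to $\td\mu_k$.

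For (3), the rank-two case $V\cong\RR^{p,q}$ with $p,q\geq 2$, the orbit description is direct from Example~\ref{ex:MinimalOrbit} and the quadratic form $\Delta$. The measure $\td\mu_0$ is concentrated at the single point $\{0\}=\calO_0$, and must therefore be a multiple of the Dirac mass; the transformation law \eqref{eq:dmulambdaEquivariance} with $\lambda=0$ is consistent. For $\calO_1$, the Riesz distribution $\Delta(x)_+^{\lambda-n/r}$ extends meromorphically, and its residue at $\lambda=\frac{p+q-2}{2}=\frac{r_0d}{2r}$ is the standard invariant measure on the null cone $\{\Delta=0\}\setminus\{0\}$, with the correct equivariance. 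Uniqueness again identifies it with $\td\mu_1$ up to scalar. The principal technical obstacle, in (1) and (2), is the meromorphic continuation itself; but since the proposition only asserts proportionality, and not any explicit value of the residue, I can black-box these classical continuation theorems and let Theorem~\ref{thm:EquivMeasures} do the identification.
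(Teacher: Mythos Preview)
The paper's own proof of this proposition is pure citation: part~(1) is \cite[Proposition~VII.2.3]{FK94}, part~(2) is \cite[Theorem~6.2]{BSZ06}, and part~(3) is \cite[Section~III.2.2]{GS64}. So your proposal is not really a different route to the same destination; it is an attempt to explain \emph{why} those cited results say what the proposition claims, and in that sense it is more informative than the paper's proof.

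Your high-level strategy (equivariance of $|\Delta|^{\lambda-n/r}\td x$ under $\chi^\lambda$, persistence under residues, then uniqueness from Theorem~\ref{thm:EquivMeasures}) is exactly the right heuristic. But there is a real gap in your last paragraph. You say the only technical obstacle is meromorphic continuation, and that once you have it, Theorem~\ref{thm:EquivMeasures} ``does the identification.'' That is not quite true: Theorem~\ref{thm:EquivMeasures} classifies $L$-equivariant \emph{measures} on $\calO_k$, not arbitrary equivariant distributions on $\overline{\calO_k}$. To invoke it you need to know that the residue is (i)~nonzero, (ii)~a positive measure, and (iii)~supported on $\calO_k$ rather than on some lower stratum of its closure. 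None of these follows from equivariance alone. Indeed, the Remark immediately following the proposition in the paper points out that for $V=\RR^{p,q}$ with $p$ even the zeta function actually \emph{vanishes} at $\lambda=0$, so nonvanishing of residues at the claimed points is a genuine issue, and is precisely what the cited references (Faraut--Kor\'anyi for~(1), Barchini--Sepanski--Zierau for~(2), Gel'fand--Shilov for~(3)) establish. Your appeal to Sato--Shintani for~(2) gives continuation and an equivariant distribution, but the positivity and support statements are the substantive content of \cite[Theorem~6.2]{BSZ06}, not a formal consequence. So the black box you need is larger than you suggest: it is not just continuation, but continuation together with the identification of the residues as positive measures on the correct orbits.
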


For details on the meromorphic extension of the zeta functions involved here see \cite[Chapter VII, Section 2]{FK94} for the euclidean case, \cite[Theorem 6.2~(2)]{BSZ06} for the non-euclidean case $\ncong\RR^{p,q}$ and \cite[Chapter III.2]{GS64} for $V=\RR^{p,q}$.

\begin{proof}
Part (1) is \cite[Proposition VII.2.3]{FK94}, part (2) is proved in \cite[Theorem 6.2]{BSZ06} and part (3) can be found in \cite[Section III.2.2]{GS64}.
\end{proof}

\begin{remark}
The case differentiation in Proposition \ref{prop:ZetaFunctions} is necessary. Firstly, the zeta function $Z(f,\lambda)$ of $V=\RR^{p,q}$ vanishes at the value $\lambda=0$ if $p$ is even (see \cite[Section III.2.2, Equation (21)]{GS64}). And secondly, the only two results on the positivity of zeta functions for Jordan algebras the author could find, are stated in \cite[Proposition VII.2.3]{FK94} and \cite[Theorem 6.2~(2)]{BSZ06}, where different definitions for the zeta functions are used. It is an interesting question whether there exists a theory of zeta functions which works for arbitrary simple real Jordan algebras and gives the equivariant measures on the orbits $\calO_k$ as residues of the zeta functions. Nevertheless, for the purpose of this article it will only be important, that the measures $\td\mu_k$ appear as residues of zeta functions which are for $\lambda>(r_0-1)\frac{r_0d}{2r}$ supported on the union of open $L$-orbits and given by $|\Delta(x)|^{\lambda-\frac{n}{r}}$.
\end{remark}

\section{The conformal group}

For a semisimple Jordan algebra $V$ we construct the conformal group $\Co(V)$ which acts on $V$ by rational transformations. We define a certain open subgroup $G$ of $\Co(V)$ and construct its universal covering group. The Lie algebra $\frakg=\co(V)$ of $\Co(V)$, also known as Kantor--Koecher--Tits algebra, is given by quadratic vector fields on $V$. We describe $\frakg$ in detail. For a maximal compact subalgebra $\frakk$ of $\frakg$ we also characterize the highest weights of $\frakk_\frakl$-spherical $\frakk$-representations via the Cartan--Helgason theorem. These representations will appear as $\frakk$-types in the minimal representation.

\subsection{The Kantor--Koecher--Tits construction}\label{sec:KKT}

Let $V$ be a semisimple real Jordan algebra.

\subsubsection{Definition of the conformal group}

The conformal group will be built up from three different rational transformations.
\begin{enumerate}
\item[\textup{(1)}] First, $V$ acts on itself by translations
\begin{align*}
 n_a(x) &:= x+a & \forall\,x\in V\index{notation}{na@$n_a$}
\end{align*}
with $a\in V$. Denote by $N:=\{n_a:a\in V\}$\index{notation}{N@$N$} the abelian group of translations which is isomorphic to $V$.
\item[\textup{(2)}] The structure group $\Str(V)$ of $V$ acts on $V$ by linear transformations.
\item[\textup{(3)}] Finally, we define the \textit{conformal inversion element}\index{subject}{conformal inversion element} $j$ by
\begin{align*}
 j(x) &= -x^{-1} & \forall\,x\in V^\times=\{y\in V:y\mbox{ invertible}\}.\index{notation}{j@$j$}
\end{align*}
In view of the minimal polynomial \eqref{eq:MinPoly} $j$ is a rational transformation of $V$.
\end{enumerate}
The \textit{conformal group}\index{subject}{conformal group} $\Co(V)$\index{notation}{CoV@$\Co(V)$} is defined as the subgroup of the group of rational transformations of $V$ which is spanned by $N$, $\Str(V)$ and $j$:
\begin{align*}
 \Co(V) := \langle N,\Str(V),j\rangle_{\textup{grp}}.
\end{align*}
$\Co(V)$ is a semisimple Lie group which is simple if and only if $V$ is simple (see \cite[Chapter VIII, Section 6]{Jac68}). The center of $\Co(V)$ and even of its identity component $\Co(V)_0$ is trivial (see \cite[Theorem VIII.1.3]{Ber00} and its proof). The semidirect product $\Str(V)\ltimes N$ is a maximal parabolic subgroup of $\Co(V)$ (see e.g. \cite[Section X.6.3]{Ber00}).\\

We let $G$\index{notation}{G@$G$} be the group generated by $\Co(V)_0$ and the Cartan involution $\alpha$. The group $G$ has at most two connected components, namely $\Co(V)_0$ and $\alpha\Co(V)_0$. If $V$ is euclidean, then clearly $G=\Co(V)_0$, but in general this is not true (e.g. for $V=\RR^{p,q}$ with $p$ even). We also have the inclusions
\begin{align*}
 \Co(V)_0 &\subseteq G \subseteq \Co(V).
\end{align*}
Further, we have $L\subseteq G\cap\Str(V)$ and we put $P:=L\ltimes N$\index{notation}{P@$P$}. Here similar inclusions hold. In general, $P$ is not maximal parabolic in $G$, but an open subgroup of the maximal parabolic subgroup $P^{\textup{max}}:=G\cap(\Str(V)\ltimes N)$\index{notation}{Pmax@$P^{\textup{max}}$}. The parabolic $P^{\textup{max}}$ has a Langlands decomposition $P^{\textup{max}}=L^{\textup{max}}\ltimes N$ with $L^{\textup{max}}:=G\cap\Str(V)$\index{notation}{Lmax@$L^{\textup{max}}$}.

\subsubsection{The conformal algebra}

Now let us examine the structure of the Lie algebra $\frakg=\co(V)$\index{notation}{g@$\frakg$}\index{notation}{coV@$\co(V)$} of $\Co(V)$, the so-called \textit{Kantor--Koecher--Tits algebra}\index{subject}{Kantor--Koecher--Tits algebra}. An element $X\in\frakg$ corresponds to a quadratic vector field on $V$ of the form
\begin{align*}
 X(z) &= u + Tz - P(z)v, & z\in V
\end{align*}
with $u,v\in V$ and $T\in\frakl$. We use the notation $X=(u,T,v)$ for short. In view of this, we have the decomposition
\begin{equation}
 \frakg = \nf + \frakl + \nfo,\label{eq:GelfandNaimark}
\end{equation}
where\index{notation}{l@$\frakl$}
\begin{alignat*}{3}
 \frakn &= \{(u,0,0):u\in V\} &&\cong V,\index{notation}{n@$\frakn$}\\
 \frakl &= \{(0,T,0):T\in\str(V)\} &&\cong \str(V),\\
 \overline{\frakn} &= \{(0,0,v):v\in V\} &&\cong V.\index{notation}{nbar@$\overline{\frakn}$}
\end{alignat*}
In this decomposition the Lie algebra $\frakp^{\textup{max}}$\index{notation}{pmax@$\frakp^{\textup{max}}$} of $P^{\textup{max}}$ (and $P$) is given by
\begin{align*}
 \frakp^{\textup{max}} &= \frakn + \frakl.
\end{align*}
If $X_j=(u_j,T_j,v_j)$, ($j=1,2$), then the Lie bracket is given by 
\begin{equation}
 [X_1,X_2] = (T_1u_2-T_2u_1,[T_1,T_2]+2(u_1\Box v_2)-2(u_2\Box v_1),-T_1^\# v_2+T_2^\# v_1),\label{eq:LieBracket}
\end{equation}
with the box operator $\Box$ as in \eqref{eq:DefBoxOp}. From this formula it is easy to see that the decomposition \eqref{eq:GelfandNaimark} actually defines a grading on $\frakg$:
\begin{align*}
 \frakg=\frakg_{-1}+\frakg_0+\frakg_1,
\end{align*}
where $\frakg_{-1}=\frakn$, $\frakg_0=\frakl$ and $\frakg_1=\overline{\frakn}$. Further, since the box operators $u\Box v$, $u,v\in V$, generate the structure algebra $\frakl$, the conformal algebra $\frakg$ is generated by $\frakn$ and $\overline{\frakn}$.

\begin{example}\label{ex:ConfGrp}
Since $G_0$ has trivial center we can calculate it by factoring out the center from the universal covering: $G_0=\widetilde{G_0}/Z(\widetilde{G_0})$. Here the universal covering $\widetilde{G_0}$ of $G_0$ is uniquely determined by the Lie algebra $\frakg$ which was described in detail.
\begin{enumerate}
\item[\textup{(1)}] Let $V=\Sym(n,\RR)$. Then $\frakg\cong\sp(n,\RR)$ via the isomorphism
\begin{align*}
 \frakg \rightarrow \sp(n,\RR),\,(u,T,v)\mapsto\left(\begin{array}{cc}T&u\\v&-T^t\end{array}\right).
\end{align*}
Then $G=G_0\cong\Sp(n,\RR)/\{\pm\1\}$, where $\Sp(n,\RR)/\{\pm\1\}$ acts on $x\in V$ by fractional linear transformations:
\begin{align*}
 \left(\begin{array}{cc}A&B\\C&D\end{array}\right)\cdot x &= (Ax+B)(Cx+D)^{-1}.
\end{align*}
\item[\textup{(2)}] Let $V=\RR^{p,q}$. Then an explicit isomorphism $\frakg\stackrel{\sim}{\rightarrow}\so(p+1,q+1)$ is given by
\begin{align*}
 (u,0,0) &\mapsto \left(\begin{array}{c|cc|c}&-(u')^t&(u'')^t&\\\hline u'&&&u'\\u''&&&u''\\\hline&(u')^t&-(u'')^t&\end{array}\right), && u\in V,\\
 (0,sT,0) &\mapsto \left(\begin{array}{c|c|c}&&-s\\\hline&T&\\\hline-s&&\end{array}\right), && T\in\so(p,q), s\in\RR,\\
 (0,0,\alpha v) &\mapsto \left(\begin{array}{c|cc|c}&(v')^t&(v'')^t&\\\hline-v'&&&v'\\v''&&&-v''\\\hline&(v')^t&(v'')^t&\end{array}\right), && v\in V.
\end{align*}
Hence, $G_0\cong\SO(p+1,q+1)_0/Z(\SO(p+1,q+1)_0)$. The center $Z(\SO(p+1,q+1)_0)$ is equal to $\{\pm\1\}$ if $p$ and $q$ are both even, and it is trivial else. By \eqref{eq:VpqCartanInv} we have $G=G_0$ if $p$ is odd and $G=G_0\cup\alpha G_0$ if $p$ is even. 
\end{enumerate}
\end{example}

\subsubsection{A Cartan involution}

The involution $\theta$ of $\Str(V)$ extends to an involution of $\Co(V)$ by
\begin{align*}
 \theta:\Co(V)\rightarrow\Co(V),\,g\mapsto w_0\circ g\circ w_0^{-1},\index{notation}{theta@$\theta$}
\end{align*}
where $w_0:=\alpha\circ j\in\Co(V)$. The map $\theta$ is a Cartan involution of $\Co(V)$ which restricts to Cartan involutions of $G$ and $G_0$. The corresponding involution $\theta$ of the Lie algebra $\frakg$ is given by (see \cite[Proposition 1.1]{Pev02})
\begin{align}
 \theta(u,T,v) &:= (-\alpha(v),-T^*,-\alpha(u)), & (u,T,v)\in\frakg.\label{eq:ThetaOng}\index{notation}{theta@$\theta$}
\end{align}
In the above notation $\nfo=\theta(\nf)$. Let $\frakg=\frakk+\frakp$\index{notation}{k@$\frakk$}\index{notation}{p@$\frakp$} be the corresponding Cartan decomposition of $\frakg$. Then
\begin{align}
 \frakk &= \{(u,T,-\alpha(u)):u\in V,T\in\lf,T+T^*=0\}.\label{eq:frakk}
\end{align}
The fixed point group $K:=G^\theta$\index{notation}{K@$K$} of $\theta$ is a maximal compact subgroup of $G$ with Lie algebra $\frakk$. Then clearly $K_L=K\cap L$. The subgroup $K_L\subseteq K$ is symmetric, the corresponding involution being $g\mapsto(-\1)\circ g\circ(-\1)$.

\begin{lemma}\label{lem:CenterK}
Assume that $V$ and $V^+$ are simple. Then the center $Z(\frakk)$\index{notation}{Zk@$Z(\frakk)$} of $\frakk$ is non-trivial only if $V$ is euclidean. In this case it is given by $Z(\frakk)=\RR(e,0,-e)$.
\end{lemma}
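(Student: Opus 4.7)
The plan is to parameterize $X \in Z(\frakk)$ via the explicit description \eqref{eq:frakk} as $X = (u, T, -\alpha u)$ and bracket $X$ against two families of test elements of $\frakk$ using \eqref{eq:LieBracket} to pin down $T$ and $u$ successively. First I take $Y = (u', 0, -\alpha u') \in \frakk$ for arbitrary $u' \in V$: the $\frakn$-component of $[X,Y]$ equals $Tu'$ and must vanish for every $u'$, forcing $T = 0$, while the $\frakl$-component then yields
\begin{equation*}
 u' \Box \alpha u \;=\; u \Box \alpha u' \qquad \forall\, u' \in V.
\end{equation*}
Evaluating this identity at $e$ and using $(x \Box y)e = xy$ gives the pointwise relation $u' \cdot \alpha u = u \cdot \alpha u'$; the choice $u' = e$ shows $\alpha u = u$, hence $u \in V^+$.

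Next I bracket $X = (u, 0, -u)$ with $Y = (0, T', 0)$, $T' \in \frakk_\frakl$; its first coordinate is $-T'u$, so $T' u = 0$ for every $T' \in \frakk_\frakl$. By the description \eqref{eq:LieAlgKL} this splits into $v \cdot u = 0$ for all $v \in V^-$ together with $Du = 0$ for every $D \in \der(V)$ satisfying $\alpha D = D \alpha$. The second family contains the lift of every derivation of $V^+$, since all derivations of the simple Jordan algebra $V^+$ are inner and their inner extensions to $V$ commute with $\alpha$. Consequently $u$ is fixed by $\der(V^+)$ on $V^+$; because $\Aut(V^+)_0$ acts transitively on Jordan frames of the simple $V^+$ (Lemma \ref{lem:AutTransitiveJordanFrames}), the corresponding fixed-point set is $\RR e$, and therefore $u = \lambda e$ for some $\lambda \in \RR$.

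Substituting $u = \lambda e$ back into $v \cdot u = 0$ for $v \in V^-$ forces $\lambda V^- = 0$: either $\lambda = 0$, so $X = 0$, or $V^- = 0$, i.e., $V$ is euclidean. This gives $Z(\frakk) = 0$ whenever $V$ is non-euclidean. Conversely, when $V$ is euclidean one has $\alpha = \id$, the element $(e, 0, -e)$ visibly lies in $\frakk$, and its centrality is immediate: the identities $u' \Box e = e \Box u' = L(u')$ kill the $\frakn$-valued test elements and $De = 0$ for every $D \in \der(V)$ handles the $\frakk_\frakl$-valued ones; the displayed box-operator identity is then trivially satisfied. The main technical hurdle is the fixed-point step in the second paragraph, where simplicity of $V^+$ is genuinely used; the remainder is routine unwinding of \eqref{eq:LieBracket}.
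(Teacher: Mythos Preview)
Your proof is correct and follows essentially the same strategy as the paper's: parameterize $X=(u,T,-\alpha u)\in Z(\frakk)$ and bracket against test elements of $\frakk$ to force first $T=0$, then $u\in V^+$, then $u\in\RR e$. The execution differs only in the step reducing $u$ to a multiple of $e$. The paper exploits the commutator identity $[L(u),L(\alpha v)]=[L(v),L(u)]$ (which you also derived but did not use further) with $v=c_i$ applied to $c_j$ to kill the off-diagonal Peirce components $u_{ij}$, and then uses the permutation automorphisms of Lemma~\ref{lem:AutTransitiveJordanFrames} only to equalize the diagonal coefficients. You instead pass directly from $\der(V^+)$-invariance of $u$ to $u\in\RR e$; this works, but it implicitly invokes the spectral theorem for the simple euclidean algebra $V^+$ (every element is $\sum\lambda_i c_i$ for some Jordan frame), which you should cite explicitly---transitivity on frames alone does not give the fixed-point statement without it. Your exclusion of the non-euclidean case via $\lambda V^-=0$ is a pleasant shortcut compared to the paper's direct computation of $[(e,0,-e),(u,T,-\alpha u)]$.
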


\begin{proof}
First, let $X=(u,T,-\alpha u)\in Z(\frakk)$. Then by \eqref{eq:LieBracket} we have for any $v\in V$ and $S\in\frakk_\frakl$:
\begin{align*}
 (Tv-Su,[T,S]-2u\Box\alpha v+2v\Box\alpha u,T^\#\alpha v-S^\#\alpha u) &= 0.
\end{align*}
Putting $S=0$ we obtain $T=0$ and this simplifies to
\begin{align}
 Su &= 0 & \mbox{and} && L(u\cdot\alpha v)+[L(u),L(\alpha v)] &= L(\alpha u\cdot v)+[L(v),L(\alpha u)].\label{eq:CenterK1}
\end{align}
In particular, one has
\begin{align*}
 u\cdot\alpha v &= \alpha u\cdot v & \forall\, v\in V.
\end{align*}
For $v=e$ this gives $\alpha u=u$ and hence $u\in V^+$. Then, from \eqref{eq:CenterK1} it also follows that
\begin{align}
 [L(u),L(\alpha v)] &= [L(v),L(u)]. & \forall\, v\in V.\label{eq:CenterK2}
\end{align}
Write $u$ in the Peirce decomposition $u=\sum_{i\leq j}{u_{ij}}$, $u_{ij}\in V_{ij}^+$. For $i<j$ we put $v=c_i$ in \eqref{eq:CenterK2} and apply both sides to $c_j$ which gives $-\frac{1}{4}u_{ij}=\frac{1}{4}u_{ij}$ and hence $u_{ij}=0$. Since $V_{ii}^+=\RR c_i$, we then have $u=\sum_{i=1}^{r_0}{\lambda_ic_i}$ with $\lambda_i\in\RR$. Using \eqref{eq:CenterK1} once more, we know that $Su=0$ for $S\in\frakk_\frakl$. For every permutation $\pi$ of $\{1,\ldots,r_0\}$ there is by Lemma \ref{lem:AutTransitiveJordanFrames} a derivation $D\in\mathfrak{der}(V)$ with $\alpha D=D\alpha$ such that $e^Dc_i=c_{\pi(i)}$ for every $i=1,\ldots,r_0$. By \eqref{eq:LieAlgKL} we have $D\in\frakk_\frakl$ and hence $Du=0$. Altogether this gives
\begin{align*}
 \sum_{i=1}^{r_0}{\lambda_ic_i} &= u = e^Du = \sum_{i=1}^{r_0}{\lambda_ic_{\pi(i)}}
\end{align*}
for all permutations $\pi$. Therefore, $\lambda_i=\lambda_j$ for all $i,j=1,\ldots,r_0$ and $u=\lambda e$, $\lambda\in\RR$.\\
It remains to show that $X=(e,0,-e)$ is actually contained in the center if and only if $V$ is euclidean. With \eqref{eq:LieBracket} we find that
\begin{align*}
 [(e,0,-e),(u,T,-\alpha u)] &= (-Te,2L(u-\alpha u),-T^\# e).
\end{align*}
If $V$ is euclidean, then $\frakk_\frakl=\mathfrak{der}(V)$ and hence $Te=T^\#e=0$ for any $T\in\frakk_\frakl$. Further, $\alpha=\1$ and $L(u-\alpha u)=0$ for all $u\in V$. Therefore, $(e,0,-e)\in Z(\frakk)$. If $V$ is non-euclidean, then there exists a non-zero element $u\in V^-$ and $L(u-\alpha u)=L(2u)\neq0$. Hence, $(e,0,-e)\notin Z(\frakk)$ which finishes the proof.
\end{proof}

\subsubsection{Exponential function and adjoint action}

The exponential function $\exp:\frakg\longrightarrow G_0$ is on the subspaces $\frakn$, $\frakl$ and $\overline{\frakn}$ given as follows:
\begin{align*}
 \exp(X) &= n_u & \mbox{for }X &= (u,0,0),\\
 \exp(X) &= e^T & \mbox{for }X &= (0,T,0),\\
 \exp(X) &= \overline{n}_v & \mbox{for }X &= (0,0,-v),
\end{align*}
where $\overline{n}_a:=jn_aj^{-1}\in\overline{N}=\theta(N)=\exp(\overline{\frakn})$\index{notation}{Nbar@$\overline{N}$}, $\overline{n}_a(x)=(x^{-1}-a)^{-1}$\index{notation}{nbara@$\overline{n}_a$}. Furthermore the adjoint action of $g\in L$ on $N$, $L$ and $\overline{N}$ is given by
\begin{align}
 \Ad(g)n_u &= n_{gu} & \mbox{for }u\in V,\label{eq:Adgn}\\
 \Ad(g)h &= ghg^{-1} &\mbox{for }h\in L,\label{eq:Adgl}\\
 \Ad(g)\overline{n}_v &= \overline{n}_{g^{-\#}v} & \mbox{for }v\in V.\label{eq:Adgnbar}
\end{align}
It follows that the adjoint action of $g\in L$ on the Lie algebra $\frakg$ writes as
\begin{align}
 \Ad(g)(u,T,v) &= (gu,gTg^{-1},g^{-\#}v) & \mbox{for }(u,T,v)\in\frakg.\label{eq:AdLieg}
\end{align}
We also need the adjoint action of $\exp(u,0,0)\in N$ on $(0,0,v)\in\overline{\frakn}$, $u,v\in V$:
\begin{align}
 \Ad(\exp(u,0,0))(0,0,v) &= e^{\ad(u,0,0)}(0,0,v)\notag\\
 &= (0,0,v)+(0,2u\Box v,0)+\frac{1}{2}(-2(u\Box v)u,0,0)\notag\\
 &= (-P(u)v,2u\Box v,v).\label{eq:AdNnbar}
\end{align}

\subsubsection{An $\sl(2)$-triple}

There is a natural homomorphism of $\SL(2,\RR)$ into the conformal group given by (cf. \cite[Proposition XI.2.1]{Ber00})
\begin{align}
 \phi: \SL(2,\RR)\rightarrow G_0,\ \phi\left(\begin{array}{cc}a&b\\c&d\end{array}\right)(x):=(ax+be)(cx+de)^{-1}.\label{eq:HomSL2Co}
\end{align}
Denote the corresponding homomorphism of Lie algebras by $\td\phi:\sl(2,\RR)\rightarrow\co(V)$. As usual, let
\begin{align}
 e :={}& \left(\begin{array}{cc}0&1\\0&0\end{array}\right), & f :={}& \left(\begin{array}{cc}0&0\\1&0\end{array}\right), & h :={}& \left(\begin{array}{cc}1&0\\0&-1\end{array}\right).\notag
\intertext{Put $E:=\td\phi(e)$, $F:=\td\phi(f)$ and $H:=\td\phi(h)$. Then}
 E ={}& (e,0,0), & F ={}& (0,0,e), & H ={}& (0,2\,\id,0)\label{eq:DefEF}\index{notation}{E@$E$}\index{notation}{F@$F$}
\end{align}
forms an $\sl(2)$-triple in $\frakg$. Further, we have
\begin{align}
\begin{split}
 e^{\frac{\pi}{2}(E-F)} &= e^{\frac{\pi}{2}\td\phi(e-f)} = \phi(e^{\frac{\pi}{2}(e-f)})\\
 &= \phi\left(\begin{array}{cc}0&1\\-1&0\end{array}\right) = j.
\end{split}\label{eq:jasexp}\\
\intertext{In particular, $j\in G_0$. Similarly}
 e^{\pi(E-F)} &= \phi\left(\begin{array}{cc}-1&0\\0&-1\end{array}\right)=\1.\label{eq:1asexp}
\end{align}

\subsubsection{The Killing form}

The Killing form on $\frakg$ is for $X_i=(u_i,T_i,v_i)$, $i=1,2$, given by (see e.g. \cite[Chapter I, proof of Proposition 7.1]{Sat80}):
\begin{align*}
 B(X_1,X_2) &= B_\frakl(T_1,T_2)+2\Tr(T_1T_2)+4\Tr\left(L(u_1v_2)\right)+4\Tr\left(L(u_2v_1)\right),
\end{align*}
where $B_\frakl$ denotes the Killing form on $\frakl$. Since $B$ is negative definite on the maximal compact subalgebra $\frakk$, we define an $\Ad$-invariant inner product on $\frakk$ by
\begin{align*}
 \langle X_1,X_2\rangle &:= -B(X_1,X_2).\index{notation}{1langlerangle@\textlangle$-,-$\textrangle}
\end{align*}
for $X_i=(u_i,T_i,-\alpha u_i)\in\frakk$, $i=1,2$. By Lemma \ref{lem:Trtr}:
\begin{align}
 \langle X_1,X_2\rangle &= B_\frakl(T_1,T_2^*)+2\Tr(T_1T_2^*)+\frac{8n}{r}(u_1|u_2).\label{eq:KillingForm}
\end{align}

\subsection{The universal covering}\label{sec:Guniversalcover}

For the connected group $G_0$ the universal covering $\widetilde{G_0}$ carries a natural Lie group structure which turns the covering map $\widetilde{G_0}\rightarrow G_0$ into a homomorphism. For the universal covering $\widetilde{G}$ of the (in general disconnected) group $G$ this is not clear. We now construct a group structure on $\widetilde{G}$.\\

If $\alpha\in G_0$, then $G=G_0$ and clearly $\widetilde{G}:=\widetilde{G_0}$\index{notation}{Gtilde@$\widetilde{G}$} is the universal cover of $G$. Denote by $\widetilde{\pr}:\widetilde{G}\rightarrow G$\index{notation}{prtilde@$\widetilde{\pr}$} the covering map. For later use we choose any $\widetilde{\alpha}\in\widetilde{G}$\index{notation}{alphatilde@$\widetilde{\alpha}$} which projects onto $\alpha$ under the covering map $\widetilde{\pr}$.

Now assume that $\alpha\in G\setminus G_0$. Then $\Ad(\alpha)$ defines isomorphisms $G_0\rightarrow G_0$ and $\frakg\rightarrow\frakg$ and by integration also $\widetilde{G_0}\rightarrow\widetilde{G_0}$. We put $\widetilde{G}:=\widetilde{G_0}\cup\widetilde{\alpha}\widetilde{G_0}$\index{notation}{Gtilde@$\widetilde{G}$}, where $\widetilde{\alpha}\widetilde{G_0}$ denotes the set of all formal products $\widetilde{\alpha}g$ with $g\in\widetilde{G_0}$. Then a product $\otimes$ on $\widetilde{G}$ can be defined as follows. For $g_1,g_2\in\widetilde{G_0}$ put
\begin{alignat*}{3}
 g_1\otimes g_2 &:= g_1g_2 &&\in \widetilde{G_0},\\
 \widetilde{\alpha}g_1\otimes g_2 &:= \widetilde{\alpha}(g_1g_2) &&\in \widetilde{\alpha}\widetilde{G_0},\\
 g_1\otimes\widetilde{\alpha}g_2 &:= \widetilde{\alpha}((\Ad(\alpha)g_1)g_2) &&\in \widetilde{\alpha}\widetilde{G_0},\\
 \widetilde{\alpha}g_1\otimes\widetilde{\alpha}g_2 &:= (\Ad(\alpha)g_1)g_2 &&\in \widetilde{G_0}.
\end{alignat*}
It is easy to see that $\widetilde{G}$ turns into a Lie group with two connected components. We also use the notation $\widetilde{\alpha}$ for the formal product $\widetilde{\alpha}\1\in\widetilde{\alpha}\widetilde{G_0}$\index{notation}{alphatilde@$\widetilde{\alpha}$}. The element $\widetilde{\alpha}$ is contained in the connected component of $\widetilde{G}$ which does not contain the identity. If $\widetilde{\pr_0}:\widetilde{G_0}\rightarrow G_0$ denotes the universal covering map of $G_0$, then the homomorphism\index{notation}{prtilde@$\widetilde{\pr}$}
\begin{align*}
 \widetilde{\pr}:\widetilde{G} &\rightarrow G,\\
 g &\mapsto \widetilde{\pr_0}(g), & \forall\, g\in\widetilde{G_0},\\
 \widetilde{\alpha}g &\mapsto \alpha\cdot\widetilde{\pr_0}(g), & \forall\, g\in\widetilde{G_0},
\end{align*}
is a universal covering of $G$.

In both cases we obtain a universal covering group $\widetilde{G}$ of $G$ with covering map $\widetilde{\pr}:\widetilde{G}\rightarrow G$ and an element $\widetilde{\alpha}\in\widetilde{G}$ which projects onto the Cartan involution $\alpha\in G$. Note that if one identifies the Lie algebra of $\widetilde{G}$ with $\frakg$, then
\begin{align}
 \Ad_\frakg(\widetilde{\alpha})=\Ad_\frakg(\alpha).\label{eq:Adgtildealpha}
\end{align}
We further observe that the group $\widetilde{K}:=\widetilde{\pr}^{-1}(K)$\index{notation}{Ktilde@$\widetilde{K}$} is a universal cover of $K$ since $K$ is a maximal compact subgroup of $G$. Note that in the euclidean case, the Lie algebra $\frakk$ of $K$ has non-trivial center by Lemma \ref{lem:CenterK} and hence $\widetilde{K}$ is not compact.

We further define\index{notation}{jtilde@$\widetilde{j}$}
\begin{align*}
 \widetilde{j} &:= \exp_{\widetilde{G}}\left(\frac{\pi}{2}(e,0,-e)\right)\in\widetilde{K}_0.
\end{align*}
By \eqref{eq:jasexp} we have $\widetilde{\pr}(\widetilde{j})=j$. Then the element define $\widetilde{w_0}:=\widetilde{\alpha}\widetilde{j}=\widetilde{j}\widetilde{\alpha}\in\widetilde{K}$\index{notation}{w0tilde@$\widetilde{w_0}$} projects onto $w_0=\alpha j$. (That $\widetilde{\alpha}$ and $\widetilde{j}$ commute follows from the definition of the multiplication on $\widetilde{G}$.)

\begin{lemma}\label{lem:Tildew0Central}
$\widetilde{w_0}\in Z(\widetilde{K})$.
\end{lemma}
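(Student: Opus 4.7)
The plan is to prove $\widetilde{w_0}\in Z(\widetilde{K})$ by checking commutation separately on two subsets whose union generates $\widetilde{K}$, namely the identity component $\widetilde{K}_0$ and the element $\widetilde{\alpha}$. The key observation is that, since $\widetilde{\pr}:\widetilde{G}\to G$ induces an isomorphism of Lie algebras, $\Ad(\widetilde{w_0})$ on $\frakg$ coincides with $\Ad(w_0)$, and the latter equals the Cartan involution $\theta$ by the very definition of $\theta$ as conjugation by $w_0$.

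First I would show that $\widetilde{w_0}$ centralizes $\widetilde{K}_0$. Since $\frakk$ is the $+1$-eigenspace of $\theta$, one has $\Ad(\widetilde{w_0})|_\frakk=\id_\frakk$, and therefore $\widetilde{w_0}\exp_{\widetilde{G}}(X)\widetilde{w_0}^{-1}=\exp_{\widetilde{G}}(X)$ for every $X\in\frakk$. Because $\widetilde{K}_0$ is generated by $\exp_{\widetilde{G}}(\frakk)$, this gives the claim.

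Next I would verify that $\widetilde{w_0}$ commutes with $\widetilde{\alpha}$ by a direct use of the defining identity $\widetilde{w_0}=\widetilde{\alpha}\widetilde{j}=\widetilde{j}\widetilde{\alpha}$:
\[
\widetilde{w_0}\widetilde{\alpha}=(\widetilde{\alpha}\widetilde{j})\widetilde{\alpha}=\widetilde{\alpha}(\widetilde{j}\widetilde{\alpha})=\widetilde{\alpha}\widetilde{w_0}.
\]

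The final and most subtle step is to justify that every element of $\widetilde{K}$ lies in $\widetilde{K}_0\cup\widetilde{\alpha}\widetilde{K}_0$. By construction $G\subseteq G_0\cup\alpha G_0$, so $K=G^\theta$ decomposes as $(K\cap G_0)\cup\alpha(K\cap G_0)$. The intersection $K\cap G_0=G_0^\theta$ is the maximal compact subgroup of the connected centerless semisimple Lie group $G_0=\Co(V)_0$, and hence is connected by a classical theorem of Helgason; thus $K\cap G_0=K_0$ and $K=K_0\cup\alpha K_0$. Passing to the universal cover, the inclusion $K_0\hookrightarrow G_0$ induces an isomorphism of fundamental groups, whence $\widetilde{\pr}^{-1}(K_0)=\widetilde{K}_0$ and $\widetilde{\pr}^{-1}(\alpha K_0)=\widetilde{\alpha}\widetilde{K}_0$. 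Combined with the two previous steps, this yields $\widetilde{w_0}\in Z(\widetilde{K})$. The hard part will be this structural step, since it relies on the classical connectedness result for the fixed-point group of a Cartan involution on a centerless connected semisimple Lie group; the first two steps are essentially formal manipulations of the defining identities.
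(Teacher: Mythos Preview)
Your proposal is correct and follows essentially the same approach as the paper: use $\Ad(\widetilde{w_0})|_\frakk=\theta|_\frakk=\id$ to obtain commutation with $\widetilde{K}_0$, then the identity $\widetilde{w_0}=\widetilde{\alpha}\widetilde{j}=\widetilde{j}\widetilde{\alpha}$ to handle the possible second component $\widetilde{\alpha}\widetilde{K}_0$. You provide more detail on why $\widetilde{K}=\widetilde{K}_0\cup\widetilde{\alpha}\widetilde{K}_0$ (via connectedness of $G_0^\theta$ and the $\pi_1$-isomorphism $K_0\hookrightarrow G_0$), whereas the paper treats this as immediate from its explicit construction of $\widetilde{G}$ together with the earlier observation that $\widetilde{K}=\widetilde{\pr}^{-1}(K)$ is the universal cover of $K$.
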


\begin{proof}
First note that by \eqref{eq:Adgtildealpha} we have $\Ad_\frakk(\widetilde{w_0})=\Ad_\frakk(w_0)=\1$ and hence $w_0$ commutes with all elements in the identity component $\widetilde{K}_0$. If $\widetilde{K}=\widetilde{K}_0$ we are done. In the remaining case where $\widetilde{\alpha}\in\widetilde{K}\setminus\widetilde{K}_0$ we have
\begin{align*}
 \widetilde{w_0}(\widetilde{\alpha}k) &= \widetilde{\alpha}\widetilde{w_0}k = (\widetilde{\alpha}k)\widetilde{w_0} & \forall\, k\in\widetilde{K}_0,
\end{align*}
since both $\widetilde{w_0}$ and $\widetilde{\alpha}$ commute with all elements in $\widetilde{K}_0$. Hence $\widetilde{w_0}$ also commutes with all elements in $\widetilde{\alpha}\widetilde{K}_0$. Altogether, $\widetilde{w_0}$ commutes with every $k\in\widetilde{K}$ and is therefore central.
\end{proof}

\subsection{Root space decomposition}\label{subsec:ConfGrpRoots}

From now on assume that $V$ and $V^+$ are simple. Recall the definition \eqref{eq:DefLiea} of the abelian subalgebra
\begin{align*}
 \fraka = \left\{\sum_{i=1}^{r_0}{t_iL(c_i)}:t_i\in\RR\right\}\subseteq\frakl\subseteq\frakg.
\end{align*}
The set $\Sigma(\frakg,\fraka)$ of non-zero $\fraka$-weights of $\frakg$ forms a root system of type $C_{r_0}$. In fact,
\begin{align*}
 \Sigma(\frakg,\fraka) &= \left\{\frac{\pm\varepsilon_i\pm\varepsilon_j}{2}\right\},
\end{align*}
with $\varepsilon_j$ as in \eqref{eq:DefEpsilonj}. The root spaces are given by
\begin{alignat*}{2}
 \frakg_{\frac{\varepsilon_i+\varepsilon_j}{2}} &= (V_{ij},0,0) &&\subseteq \frakn,\\
 \frakg_\alpha &= (0,\frakl_\alpha ,0) &&\subseteq \frakl\ \ \ \ \ \ \ \mbox{for $\alpha\in\left\{\frac{\varepsilon_i-\varepsilon_j}{2}:i\neq j\right\}\cup\{0\}$,}\\
 \frakg_{-\frac{\varepsilon_i+\varepsilon_j}{2}} &= (0,0,V_{ij}) &&\subseteq \overline{\frakn}.
\end{alignat*}
We see that the constants $d$ and $e+1$ are exactly the multiplicities of the short and the long roots, respectively.

We also choose the maximal toral subalgebra
\begin{align*}
 \frakt &:= \left\{\left(\sum_{i=1}^{r_0}{t_ic_i},0,-\sum_{i=1}^{r_0}{t_ic_i}\right):t_i\in\RR\right\}\subseteq\frakk_\frakl^\perp\subseteq\frakk\index{notation}{t@$\frakt$}
\end{align*}
in the orthogonal complement of $\frakk_\frakl$ in $\frakk$. The corresponding root system of $(\frakg_\CC,\frakt_\CC)$ is again of type $C_{r_0}$ and given by
\begin{align*}
 \Sigma(\frakg_\CC,\frakt_\CC) &= \left\{\frac{\pm\gamma_i\pm\gamma_j}{2}\right\},
\end{align*}
where
\begin{align*}
 \gamma_j\left(\sum_{k=1}^{r_0}{t_kc_k},0,-\sum_{k=1}^{r_0}{t_kc_k}\right) &:= 2it_j.\index{notation}{gammai@$\gamma_i$}
\end{align*}
In fact, $i\frakt$ is the image of $\fraka$ under the Cayley transform $c=\exp(i\frac{\pi}{4}\ad(e,0,-e))$ of $\frakg$ (see e.g. \cite[\S0]{Sah93} for the euclidean case and \cite[Section 1.2]{DS99} for the non-euclidean case). Using Lemma \ref{lem:BracketZero}~(2) we find the root spaces
\begin{align*}
 (\frakg_\CC)_{\pm\frac{\gamma_i+\gamma_j}{2}} &= \{(u,\mp2iL(u),u):u\in(V_{ij})_\CC\},\\
 (\frakg_\CC)_{\pm\frac{\gamma_i-\gamma_j}{2}} &= \{(u,\pm4i[L(c_i),L(u)],-u):u\in(V_{ij})_\CC\}.
\end{align*}
Therefore the root spaces of $\frakt_\CC$ in $\frakk_\CC$ are given by
\begin{align*}
 (\frakk_\CC)_{\pm\frac{\gamma_i+\gamma_j}{2}} &= \{(u,\mp2iL(u),u):u\in (V^-_{ij})_\CC\},\\
 (\frakk_\CC)_{\pm\frac{\gamma_i-\gamma_j}{2}} &= \{(u,\pm4i[L(c_i),L(u)],-u):u\in (V^+_{ij})_\CC\}.
\end{align*}
From this one immediately obtains that the root system $\Sigma(\frakk_\CC,\frakt_\CC)$ is of type
\begin{align*}
 \begin{cases}
  A_{r_0-1} & \mbox{ if $V$ is euclidean,}\\
  C_{r_0} & \mbox{ if $V$ is non-euclidean non-reduced,}\\
  D_{r_0} & \mbox{ if $V$ is non-euclidean reduced.}
 \end{cases}
\end{align*}
We refer to these cases as case $A$, $C$ and $D$. For the half sum of all positive roots (with multiplicities $m_\gamma$) we find
\begin{align}
 \rho &= \frac{1}{2}\sum_{i<j}{m_{\frac{\gamma_i-\gamma_j}{2}}\frac{\gamma_i-\gamma_j}{2}}+\frac{1}{2}\sum_{i<j}{m_{\frac{\gamma_i+\gamma_j}{2}}\frac{\gamma_i+\gamma_j}{2}}+\frac{1}{2}\sum_i{m_{\gamma_i}\gamma_i}\notag\\
 &= \frac{d_0}{4}\sum_{i=1}^{r_0}{(r_0-2i+1)\gamma_i}+\frac{d-d_0}{4}(r_0-1)\sum_{i=1}^{r_0}{\gamma_i}+\frac{e}{2}\sum_{i=1}^{r_0}{\gamma_i}\notag\\
 &= \sum_{i=1}^{r_0}{\rho_i\gamma_i},\index{notation}{rho@$\rho$}\label{eq:DefRho}
\end{align}
where
\begin{align}
 \rho_i &= \frac{d_0}{4}(r_0-2i+1)+\frac{d-d_0}{4}(r_0-1)+\frac{e}{2} = \frac{1}{2}\left(\frac{n}{r_0}-1\right)-\frac{d_0}{2}(i-1).\label{eq:Rhoi}\index{notation}{rhoi@$\rho_i$}
\end{align}

\subsection{$\frakk$-representations with a $\frakk_\frakl$-spherical vector}

As previously remarked, $(\frakk,\frakk_\frakl)$ is a symmetric pair. Using the Cartan--Helgason theorem we can describe the highest weights of all unitary irreducible $\frakk$-representations which have a $\frakk_\frakl$-spherical vector.

\begin{proposition}\label{prop:klSphericalsReps}
The highest weight of an irreducible $\frakk$-representation with a $\frakk_\frakl$-spherical vector vanishes on the orthogonal complement of $\frakt$ in any maximal torus of $\frakk$ containing $\frakt$. The possible highest weights which give unitary irreducible $\frakk_\frakl$-spherical representations are precisely given by
\begin{align*}
 \Lambda_{\frakk_\frakl}^+(\frakk) = \begin{cases}\displaystyle\left\{\sum_{i=1}^{r_0}{t_i\gamma_i}:t_i\in\RR,\,t_i-t_j\in\ZZ,\,t_1\geq\ldots\geq t_{r_0}\right\} & \mbox{in case $A$,}\\\displaystyle\left\{\sum_{i=1}^{r_0}{t_i\gamma_i}:t_i\in\ZZ,\,t_1\geq\ldots\geq t_{r_0}\geq0\right\} & \mbox{in case $C$,}\\\displaystyle\left\{\sum_{i=1}^{r_0}{t_i\gamma_i}:t_i\in\frac{1}{2}\ZZ,\,t_i-t_j\in\ZZ,\,t_1\geq\ldots\geq t_{r_0-1}\geq|t_{r_0}|\right\} & \mbox{in case $D$.}\end{cases}\index{notation}{Lambdakklplus@$\Lambda_{\frakk_\frakl}^+(\frakk)$}
\end{align*}
Further, in each irreducible $\frakk_\frakl$-spherical $\frakk$-representation the space of $\frakk_\frakl$-spherical vectors is one-dimensional.
\end{proposition}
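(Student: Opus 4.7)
The plan is to apply the Cartan--Helgason theorem to the symmetric pair $(\frakk,\frakk_\frakl)$, whose defining involution is $\Ad(-\1)$ for $-\1\in\Str(V)\subseteq L$. Its $+1$-eigenspace in $\frakk$ is $\frakk_\frakl$, and its $(-1)$-eigenspace is the $n$-dimensional subspace $\{(u,0,-\alpha u):u\in V\}$.

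First I would verify that $\frakt$ is maximal abelian in this $(-1)$-eigenspace. The inclusion $\frakt\subseteq\{(u,0,-\alpha u):u\in V\}$ is immediate from $\alpha c_i=c_i$, and maximality is a direct computation from the bracket formula \eqref{eq:LieBracket}, closely parallel to Lemma \ref{lem:aMaxAbelian}. Next I would extend $\frakt$ to a Cartan subalgebra $\frakh$ of $\frakk$ stable under $\Ad(-\1)$. By maximality of $\frakt$, the intersection of $\frakh$ with the $(-1)$-eigenspace equals $\frakt$, so the orthogonal complement $\frakb$ of $\frakt$ in $\frakh$ is a maximal abelian subalgebra of the centralizer of $\frakt$ in $\frakk_\frakl$, and $\frakh=\frakt\oplus\frakb$ with $\frakh\cap\frakk_\frakl=\frakb$ (the dimension of $\frakb$ can be read off from the root space description in Section \ref{subsec:ConfGrpRoots}).

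By the Cartan--Helgason theorem, applied to the semisimple part of $\frakk$ and combined with a separate unconstrained real central character in the euclidean case (where by Lemma \ref{lem:CenterK} the one-dimensional centre $\RR(e,0,-e)\subseteq\frakt$ makes $\widetilde{K}$ non-compact), the highest weights $\lambda\in\frakh^*$ of irreducible $\frakk_\frakl$-spherical unitary representations of $\frakk$ are exactly those satisfying
\begin{equation*}
 \lambda|_\frakb=0\qquad\text{and}\qquad\frac{\langle\lambda,\alpha\rangle}{\langle\alpha,\alpha\rangle}\in\NN_0\text{ for all }\alpha\in\Sigma^+(\frakk_\CC,\frakt_\CC),
\end{equation*}
and in each such representation the $\frakk_\frakl$-fixed subspace is one-dimensional. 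The first condition is precisely the first assertion of the proposition and shows that $\lambda$ is determined by its restriction to $\frakt$; the one-dimensionality of fixed vectors is the last assertion.

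It then remains to translate the integrality condition into the explicit description, which is a routine case analysis on the type of $\Sigma(\frakk_\CC,\frakt_\CC)$ as determined in Section \ref{subsec:ConfGrpRoots}. Writing $\lambda=\sum_{i=1}^{r_0}t_i\gamma_i$ with $\gamma_i$ pairwise orthogonal of common length (with respect to a Weyl group invariant multiple of the inner product), the quotient $\frac{\langle\lambda,\alpha\rangle}{\langle\alpha,\alpha\rangle}$ evaluates to $t_i\pm t_j$ on the short roots $(\gamma_i\pm\gamma_j)/2$ and to $t_i$ on the long roots $\gamma_i$. In case $A$ this forces $t_i-t_j\in\NN_0$ for $i<j$, with the one-dimensional centre of $\frakk$ giving a free real shift; in case $C$ the additional positive roots $\gamma_i$ and $(\gamma_i+\gamma_j)/2$ yield $t_i\in\NN_0$ and $t_1\geq\cdots\geq t_{r_0}\geq 0$; in case $D$ the two families $(\gamma_i\pm\gamma_j)/2$ give $t_i\pm t_j\in\NN_0$ for $i<j$, equivalent to the half-integral chain $t_1\geq\cdots\geq t_{r_0-1}\geq|t_{r_0}|$ with all $t_i$ simultaneously in $\ZZ$ or in $\frac{1}{2}+\ZZ$. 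I expect the main obstacle to be the careful handling of case $A$: because of the non-compact centre of $\widetilde{K}$ one cannot apply Cartan--Helgason off the shelf but has to isolate the central character from the compact semisimple part. A secondary technical point is to check that the restricted root system of $(\frakk,\frakk_\frakl)$ on $\frakt$ --- obtained by restricting $\Sigma(\frakk_\CC,\frakh_\CC)$ to $\frakt_\CC$ --- coincides (up to multiplicities, which play no role in the integrality condition) with the root system $\Sigma(\frakk_\CC,\frakt_\CC)$ displayed in Section \ref{subsec:ConfGrpRoots}.
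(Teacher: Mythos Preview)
Your proposal is correct and follows essentially the same approach as the paper: apply Cartan--Helgason to the symmetric pair $(\frakk,\frakk_\frakl)$, split off the center $\RR(e,0,-e)$ in the euclidean case to reduce to the semisimple part $[\frakk,\frakk]$, and then read off the explicit integrality conditions from the restricted root system $\Sigma(\frakk_\CC,\frakt_\CC)$ computed in Section~\ref{subsec:ConfGrpRoots} together with the orthogonality relation $\langle\gamma_i,\gamma_j\rangle=-\frac{r_0}{2n}\delta_{ij}$. Your ``secondary technical point'' is not an extra verification to perform: what the paper calls $\Sigma(\frakk_\CC,\frakt_\CC)$ \emph{is} already the restricted root system (since $\frakt$ is only maximal abelian in $\frakk_\frakl^\perp$, not a full Cartan of $\frakk$), so the root spaces displayed in Section~\ref{subsec:ConfGrpRoots} are exactly the ones entering the Cartan--Helgason condition.
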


\begin{proof}
\begin{enumerate}
\item[\textup{(a)}] First, let $V$ be non-euclidean. Then $\frakk$ is semisimple by Lemma \ref{lem:CenterK}. By the Cartan--Helgason theorem (see e.g. \cite[Chapter V, Theorem 4.1]{Hel84}) the highest weights of all irreducible $\frakk$-representations with a $\frakk_\frakl$-spherical vector vanish on the orthogonal complement of $\frakt$ in any maximal torus of $\frakk$ containing $\frakt$ and are given by
\begin{align*}
 \Lambda_{\frakk_\frakl}^+(\frakk) = \left\{\gamma\in i\frakt^*:\frac{\langle\gamma,\alpha\rangle}{\langle\alpha,\alpha\rangle}\in\NN_0\ \forall\,\alpha\in\Sigma^+(\frakk_\CC,\frakt_\CC)\right\}.
\end{align*}
Here we have identified $\frakt_\CC^*$ with $\frakt_\CC$ via the bilinear form $\langle-,-\rangle$. Under this identification $\gamma_j$ corresponds to $\frac{r_0}{4n}i(c_j,0,-c_j)$. By Lemma \ref{lem:TrOnSubspace}~(2) we have
\begin{align*}
 (c_i|c_j) &= \delta_{ij}\tr(c_i) = \delta_{ij}\tr_{V_{ii}}(c_i)\\
 &= \delta_{ij}\rk(V_{ii}) = \delta_{ij}\frac{r}{r_0}
\end{align*}
and with \eqref{eq:KillingForm} we obtain
\begin{align}
 \langle\gamma_i,\gamma_j\rangle &= -\frac{r_0}{2n}\delta_{ij}.\label{eq:KappaGamma}
\end{align}
Thus, the previous observations imply the claim for the cases $C$ and $D$.
\item[\textup{(b)}] Now suppose $V$ is euclidean. We have $\frakk=Z(\frakk)+[\frakk,\frakk]$ with $Z(\frakk)=\RR(e,0,-e)$ by Lemma \ref{lem:CenterK} and $[\frakk,\frakk]$ semisimple. Clearly $\frakk_\frakl\subseteq[\frakk,\frakk]$ and the torus
\begin{align*}
 \left\{\sum_{i=1}^{r_0}{t_i(c_i,0,-c_i)}:\sum_{i=1}^{r_0}{t_i}=0\right\} \subseteq \frakk_\frakl^\perp \subseteq [\frakk,\frakk]
\end{align*}
is maximal in the orthogonal complement of $\frakk_\frakl$ in $[\frakk,\frakk]$. As in (a) it follows from the Cartan--Helgason theorem that
\begin{align*}
 \Lambda_{\frakk_\frakl}^+([\frakk,\frakk]) &= \left\{\sum_{i=1}^{r_0}{t_i\gamma_i}:\sum_{i=1}^{r_0}{t_i}=0,t_i-t_j\in\ZZ,t_1\geq\ldots\geq t_{r_0}\right\}
\end{align*}
Now, by Schur's Lemma, the irreducible representations of $\frakk=Z(\frakk)+[\frakk,\frakk]$ are irreducible $[\frakk,\frakk]$-representations where the center $Z(\frakk)$ acts by scalars. Therefore,
\begin{align*}
 \Lambda_{\frakk_\frakl}^+(\frakk) &= \Lambda_{\frakk_\frakl}^+([\frakk,\frakk])+\RR(\gamma_1+\ldots+\gamma_{r_0})
\end{align*}
which shows the claim for the case $A$.
\end{enumerate}
That in each irreducible $\frakk$-representation the space of $\frakk_\frakl$-spherical vectors is at most one-dimensional follows from \cite[remark at the beginning of Chapter V, \S 4.2]{Hel84}). This finishes the proof.
\end{proof}

For $\alpha\in\Lambda_{\frakk_\frakl}^+(\frakk)$ we denote by $E^\alpha$\index{notation}{Ealpha@$E^\alpha$} the irreducible $\frakk_\frakl$-spherical representation of $\frakk$ with highest weight $\alpha$.

\section{The Bessel operators}\label{sec:BesselOp}

In this section we introduce second order differential operators $\calB_\lambda$ ($\lambda\in\CC$) on $V$. These operators are needed later to describe the Lie algebra action of the minimal representation. We show that for $\lambda\in\calW$ the operator $\calB_\lambda$ is tangential to the orbit $\calO_\lambda$ and defines a symmetric operator on $L^2(\calO_\lambda,\td\mu_\lambda)$.

\subsection{Definition and Properties}

For any complex parameter $\lambda\in\CC$ we define a second order differential operator $\calB_\lambda:C^\infty(V)\longrightarrow C^\infty(V)\otimes V$ called the \textit{Bessel operator}\index{subject}{Bessel operator}, mapping complex-valued functions to vector-valued functions, by
\begin{align}
 \calB_\lambda := P\left(\frac{\partial}{\partial x}\right)x+\lambda\frac{\partial}{\partial x}.\label{eq:BesselOp}\index{notation}{Blambda@$\calB_\lambda$}
\end{align}
Here $\frac{\partial}{\partial x}:C^\infty(V)\longrightarrow C^\infty(V)\otimes V$\index{notation}{ddx@$\frac{\partial}{\partial x}$} denotes the gradient with respect to the non-degenerate trace form $\tau$ on $V$. This means that
\begin{align*}
 \tau\left(u,\frac{\partial f}{\partial x}\right) &= D_uf(x) = \left.\frac{\td}{\td t}\right|_{t=0}f(x+tu) & \forall\, u\in V.
\end{align*}
Therefore, if $(e_\alpha)_\alpha$ is a basis of $V$ with dual basis $(\overline{e}_\alpha)_\alpha$ with respect to the trace form $\tau$, then for $f\in C^\infty(V)$ we have
\begin{align*}
 \frac{\partial f}{\partial x} &= \sum_\alpha{\frac{\partial f}{\partial x_\alpha}\overline{e}_\alpha}.
\end{align*}
Inserting this in \eqref{eq:BesselOp} yields the following expression of $\calB_\lambda$ in coordinates:
\begin{align*}
 \calB_\lambda f(x) &= \sum_{\alpha,\beta}{\frac{\partial^2f}{\partial x_\alpha\partial x_\beta}P(\overline{e}_\alpha,\overline{e}_\beta)x}+\lambda\sum_\alpha{\frac{\partial f}{\partial x_\alpha}\overline{e}_\alpha}, & x\in V.
\end{align*}

First, we prove the following product rule for the Bessel operators which is an easy consequence of the definition.

\begin{lemma}\label{lem:BnuProdRule}
For $f,g\in C^\infty(V)$ we have
\begin{align}
 \calB_\lambda(f\cdot g) &= \calB_\lambda f\cdot g+2P\left(\frac{\partial f}{\partial x},\frac{\partial g}{\partial x}\right)x+f\cdot\calB_\lambda g.\label{eq:BnuProdRule}
\end{align}
\end{lemma}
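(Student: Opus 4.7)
The plan is a direct computation in coordinates using the Leibniz rule applied to the first- and second-order parts of $\calB_\lambda$ separately. Fix a basis $(e_\alpha)_\alpha$ of $V$ with trace-dual basis $(\overline{e}_\alpha)_\alpha$; then by definition
\begin{align*}
 \calB_\lambda h(x) &= \sum_{\alpha,\beta}{\frac{\partial^2 h}{\partial x_\alpha\partial x_\beta}P(\overline{e}_\alpha,\overline{e}_\beta)x} + \lambda\sum_\alpha{\frac{\partial h}{\partial x_\alpha}\overline{e}_\alpha},
\end{align*}
so we just have to differentiate $fg$ and collect terms.

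First I would handle the lower-order part: the ordinary Leibniz rule gives $\partial(fg)/\partial x=(\partial f/\partial x)\,g+f\,(\partial g/\partial x)$, whence the $\lambda$-contribution to $\calB_\lambda(fg)$ is $\lambda(\partial f/\partial x)\,g+\lambda f\,(\partial g/\partial x)$, which is exactly the $\lambda$-piece of $\calB_\lambda f\cdot g+f\cdot\calB_\lambda g$ on the right-hand side. For the second-order part I would apply the product rule for second derivatives,
\begin{align*}
 \frac{\partial^2(fg)}{\partial x_\alpha\partial x_\beta} &= \frac{\partial^2 f}{\partial x_\alpha\partial x_\beta}g + \frac{\partial f}{\partial x_\alpha}\frac{\partial g}{\partial x_\beta} + \frac{\partial f}{\partial x_\beta}\frac{\partial g}{\partial x_\alpha} + f\frac{\partial^2 g}{\partial x_\alpha\partial x_\beta},
\end{align*}
multiply by $P(\overline{e}_\alpha,\overline{e}_\beta)x$ and sum. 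The pure $ff$- and $gg$-terms give $(P(\partial/\partial x)x)f\cdot g$ and $f\cdot(P(\partial/\partial x)x)g$ respectively. The two mixed terms combine, using the symmetry $P(u,v)=P(v,u)$, into
\begin{align*}
 2\sum_{\alpha,\beta}\frac{\partial f}{\partial x_\alpha}\frac{\partial g}{\partial x_\beta}\,P(\overline{e}_\alpha,\overline{e}_\beta)x,
\end{align*}
which by bilinearity of $P(\cdot,\cdot)$ is exactly $2P(\partial f/\partial x,\partial g/\partial x)x$. Adding this to the lower-order contribution gives \eqref{eq:BnuProdRule}.

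There is no real obstacle here: the proof is a bookkeeping exercise once the coordinate expression of $\calB_\lambda$ is written down. The only point that needs a brief justification is that the cross terms really reassemble into $2P(\partial f/\partial x,\partial g/\partial x)x$; this uses only the symmetry and bilinearity of the polarized quadratic representation $P(u,v)$, which are built into its definition $P(u,v)=L(u)L(v)+L(v)L(u)-L(uv)$ and ensure independence of the chosen basis.
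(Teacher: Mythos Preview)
Your proof is correct and is exactly the computation the paper has in mind: the paper gives no detailed proof of this lemma, merely noting that it is ``an easy consequence of the definition,'' which your coordinate Leibniz-rule argument makes explicit.
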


The Bessel operator $\calB_\lambda$ satisfies an equivariance property with respect to the action of $\Str(V)$. Recall that $\ell$ denotes the left-regular representation of $\Str(V)$ on functions which are defined on $V$ (see \eqref{eq:DefEll}).

\begin{lemma}[{\cite[Proposition XV.2.3~(i)]{FK94}}]\label{lem:BnuEquiv}
For any $g\in\Str(V)$:
\begin{align*}
 \ell(g^{-1})\calB_\lambda\ell(g) &= g^{-\#}\calB_\lambda.
\end{align*}
\end{lemma}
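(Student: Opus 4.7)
The plan is to prove the equivariance separately for the two summands of $\calB_\lambda=P(\frac{\partial}{\partial x})x+\lambda\frac{\partial}{\partial x}$, and then add. The key structural fact we exploit is that the defining property \eqref{eq:QRepStrGrp} of the structure group polarizes to
\begin{align*}
 P(gx,gy) &= g\,P(x,y)\,g^\# & \forall\,x,y\in V,\ g\in\Str(V),
\end{align*}
and that $\Str(V)$ is stable under $g\mapsto g^\#$, so in particular $g^{-\#}\in\Str(V)$ whenever $g\in\Str(V)$.

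For the first-order part, let $F:=\ell(g)f$, i.e. $F(x)=f(g^{-1}x)$, and compute the directional derivative along $u\in V$ by the chain rule and the defining property $\tau(gx,y)=\tau(x,g^\# y)$ of the adjoint:
\begin{align*}
 \tau\Big(u,\tfrac{\partial F}{\partial x}(x)\Big) &= D_uF(x) = D_{g^{-1}u}f(g^{-1}x) = \tau\Big(g^{-1}u,\tfrac{\partial f}{\partial x}(g^{-1}x)\Big) = \tau\Big(u,g^{-\#}\tfrac{\partial f}{\partial x}(g^{-1}x)\Big).
\end{align*}
Since $\tau$ is non-degenerate, this gives $\frac{\partial}{\partial x}\ell(g)=g^{-\#}\ell(g)\frac{\partial}{\partial x}$, and hence $\ell(g^{-1})(\lambda\frac{\partial}{\partial x})\ell(g)=g^{-\#}(\lambda\frac{\partial}{\partial x})$.

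For the second-order part, fix a basis $(e_\alpha)_\alpha$ of $V$ with $\tau$-dual basis $(\bar e_\alpha)_\alpha$ and expand in coordinates
\begin{align*}
 \bigl(P(\tfrac{\partial}{\partial x})x\bigr)F(x) &= \sum_{\alpha,\beta}(D_{e_\alpha}D_{e_\beta}F)(x)\,P(\bar e_\alpha,\bar e_\beta)x.
\end{align*}
Iterating the chain rule as above yields $(D_{e_\alpha}D_{e_\beta}F)(x)=(D_{g^{-1}e_\alpha}D_{g^{-1}e_\beta}f)(g^{-1}x)$. Now change basis by setting $e'_\alpha:=g^{-1}e_\alpha$; the corresponding $\tau$-dual basis is $\bar e'_\alpha=g^\#\bar e_\alpha$, so that $\bar e_\alpha=g^{-\#}\bar e'_\alpha$. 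Since $g^{-\#}\in\Str(V)$, the polarized equivariance gives
\begin{align*}
 P(\bar e_\alpha,\bar e_\beta) &= P(g^{-\#}\bar e'_\alpha,g^{-\#}\bar e'_\beta) = g^{-\#}P(\bar e'_\alpha,\bar e'_\beta)(g^{-\#})^\# = g^{-\#}P(\bar e'_\alpha,\bar e'_\beta)g^{-1}.
\end{align*}
Plugging in and writing $y=g^{-1}x$, one finds
\begin{align*}
 \bigl(P(\tfrac{\partial}{\partial x})x\bigr)F(x) &= g^{-\#}\sum_{\alpha,\beta}(D_{e'_\alpha}D_{e'_\beta}f)(y)\,P(\bar e'_\alpha,\bar e'_\beta)y = g^{-\#}\bigl(P(\tfrac{\partial}{\partial x})x\bigr)f(y),
\end{align*}
where in the last step we used that the expression defining $P(\frac{\partial}{\partial x})x$ is independent of the chosen basis. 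Applying $\ell(g^{-1})$ on both sides then gives the desired identity on the second-order part, and adding the two contributions completes the proof.

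The only real obstacle is the bookkeeping with the adjoint $^\#$ when transforming the dual basis; once one recognises that $\Str(V)$ is closed under $^\#$ so that the polarized identity $P(gx,gy)=gP(x,y)g^\#$ is available with $g$ replaced by $g^{-\#}$, the computation is forced by the chain rule.
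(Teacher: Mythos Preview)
Your proof is correct and follows essentially the same approach as the paper: both compute the gradient of $F=\ell(g)f$ via the chain rule to obtain $\frac{\partial F}{\partial x}=g^{-\#}\frac{\partial f}{\partial x}(g^{-1}x)$, and then invoke the structure-group equivariance \eqref{eq:QRepStrGrp} of $P$. The only cosmetic difference is that the paper applies the quadratic identity $P(g^{-\#}\frac{\partial}{\partial x})=g^{-\#}P(\frac{\partial}{\partial x})g^{-1}$ directly at the operator level, whereas you expand in coordinates and perform an explicit change of dual basis using the polarized form $P(gx,gy)=gP(x,y)g^\#$.
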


\begin{proof}
If $F=\ell(g)f$, then by the chain rule
\begin{align*}
 \frac{\partial F}{\partial x} &= g^{-\#}\frac{\partial f}{\partial x}(g^{-1}x).
\end{align*}
Therefore,
\begin{align*}
 \calB_\lambda F(x) &= \left(P\left(g^{-\#}\frac{\partial}{\partial x}\right)f\right)(g^{-1}x)x+\lambda g^{-\#}\frac{\partial f}{\partial x}(g^{-1}x).
\end{align*}
By \eqref{eq:QRepStrGrp},
\begin{align*}
 P\left(g^{-\#}\frac{\partial}{\partial x}\right)=g^{-\#}P\left(\frac{\partial}{\partial x}\right)g^{-1},
\end{align*}
and the result follows.
\end{proof}

We also need the action of $\calB_\lambda$ on powers of the Jordan determinant.

\begin{lemma}[{\cite[Proposition XV.2.4]{FK94}}]\label{lem:BnuDelta}
\begin{align}
 \calB_\lambda\Delta(x)^\mu &= \mu\left(\mu+\lambda-\frac{n}{r}\right)\Delta(x)^\mu x^{-1}.\label{eq:BnuDelta}
\end{align}
\end{lemma}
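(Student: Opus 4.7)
The plan is to reduce the identity to an evaluation at $x=e$ via the $\Str(V)$-equivariance of $\calB_\lambda$, then identify the constant by a direct computation using Lemma \ref{lem:SquareSums}.

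First I would treat the first-order part of $\calB_\lambda$: Lemma \ref{lem:DerivDelta} together with the chain rule gives $\frac{\partial\Delta^\mu}{\partial x}=\mu\Delta^\mu x^{-1}$, so the $\lambda\frac{\partial}{\partial x}$-contribution is $\lambda\mu\Delta^\mu x^{-1}$. For the full identity, combining Lemma \ref{lem:BnuEquiv} with the transformation rule $\ell(g)\Delta^\mu=\chi(g)^{-\mu}\Delta^\mu$ that follows from \eqref{eq:DetEquiv} yields
\begin{align*}
 (\calB_\lambda\Delta^\mu)(gx) &= \chi(g)^\mu\, g^{-\#}\,(\calB_\lambda\Delta^\mu)(x), & g\in\Str(V).
\end{align*}
The right-hand side of the desired identity transforms in exactly the same way under $x\mapsto gx$, since $\Delta(gx)=\chi(g)\Delta(x)$ and $(gx)^{-1}=g^{-\#}x^{-1}$ by the definition of $\Str(V)$. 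Hence it suffices to verify $(\calB_\lambda\Delta^\mu)(e)=\mu(\mu+\lambda-\frac{n}{r})e$; this pointwise identity then propagates over the open orbit $\Omega=\Str(V)_0\cdot e$, and extends to arbitrary $x\in V$ either by running the same argument on each open $\Str(V)_0$-orbit in the set of invertible elements, or by treating $\mu\in\NN$ as a polynomial identity and then continuing analytically in $\mu$.

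For the evaluation at $e$ I would work in an orthonormal basis $(e_\alpha)_\alpha$ of $V$ for $(-|-)$ adapted to the decomposition $V=V^+\oplus V^-$. Then the $\tau$-dual basis is $\bar{e}_\alpha=\epsilon_\alpha e_\alpha$, where $\epsilon_\alpha=+1$ or $-1$ according to whether $e_\alpha\in V^+$ or $V^-$, and $\tau(e_\alpha,e_\beta)=\epsilon_\alpha\delta_{\alpha\beta}$. Differentiating $\Delta^\mu$ twice via $D_u\Delta^\mu=\mu\Delta^\mu\tau(x^{-1},u)$ and $D_v(x^{-1})=-P(x)^{-1}v$, and evaluating at $e$ where $P(e)=\id$, gives
\begin{align*}
 D_{e_\alpha}D_{e_\beta}\Delta^\mu(e) = \mu^2\tr(e_\alpha)\tr(e_\beta)-\mu\tau(e_\alpha,e_\beta).
\end{align*}
Summing these against $P(\bar{e}_\alpha,\bar{e}_\beta)e$, the first piece collapses to $\mu^2 P(e,e)e=\mu^2 e$ by bilinearity of $P$ together with $\sum_\alpha\tr(e_\alpha)\bar{e}_\alpha=e$, while the second reduces to $-\mu\sum_\alpha e_\alpha\cdot\bar{e}_\alpha=-\mu\frac{n}{r}e$ by Lemma \ref{lem:SquareSums}. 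Adding the first-order contribution $\lambda\mu e$ yields exactly $\mu(\mu+\lambda-\frac{n}{r})e$.

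The main obstacle is the double-sum evaluation of $P(\partial/\partial x)x$ at $e$: the careful bookkeeping between the two bases (dual with respect to $\tau$ versus orthonormal with respect to $(-|-)$) is delicate, and it is precisely the adapted choice of basis compatible with the splitting $V^+\oplus V^-$ that makes the sum against $P(\bar{e}_\alpha,\bar{e}_\beta)e$ collapse to the quantity $\sum_\alpha e_\alpha\bar{e}_\alpha$ computed in Lemma \ref{lem:SquareSums}.
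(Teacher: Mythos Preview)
Your proof is correct and follows essentially the same route as the paper: evaluate $\calB_\lambda\Delta^\mu$ at $e$ via the derivative formulas $D_u\Delta^\mu=\mu\Delta^\mu\tau(x^{-1},u)$ and $D_u(x^{-1})=-P(x)^{-1}u$, reduce the two sums with $\sum_\alpha\tr(e_\alpha)\bar e_\alpha=e$ and $\sum_\alpha e_\alpha\bar e_\alpha=\frac{n}{r}e$ from Lemma~\ref{lem:SquareSums}, and then propagate to the open orbit by the $\Str(V)$-equivariance of Lemma~\ref{lem:BnuEquiv}. The only cosmetic difference is that the paper does not insist on a basis adapted to $V^+\oplus V^-$ (the identity $\sum_\beta\tau(e_\alpha,e_\beta)\bar e_\beta=e_\alpha$ already collapses the double sum to $\sum_\alpha P(\bar e_\alpha,e_\alpha)e=\sum_\alpha e_\alpha\bar e_\alpha$), and for the final extension it simply notes that both sides are polynomial in $x$---which amounts to your second alternative of taking $\mu\in\NN$ and continuing analytically.
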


\begin{proof}
The first and second derivatives of $\Delta(x)^\mu$ are given by
\begin{align*}
 \frac{\partial\Delta(x)^\mu}{\partial x_\alpha}(x) &= \mu\Delta(x)^\mu\tau(x^{-1},e_\alpha),\\
 \frac{\partial^2\Delta(x)^\mu}{\partial x_\alpha\partial x_\beta}(x) &= \mu^2\Delta(x)^\mu\tau(x^{-1},e_\alpha)\tau(x^{-1},e_\beta)-\mu\Delta(x)^\mu\tau(P(x)^{-1}e_\alpha,e_\beta).
\end{align*}
Since
\begin{align*}
 &\sum_\alpha{\tr(e_\alpha)\overline{e}_\alpha} = e,\\
 &\sum_\alpha{e_\alpha\cdot\overline{e}_\alpha} = \frac{n}{r}e,
\end{align*}
by Lemma \ref{lem:SquareSums}, it follows that
\begin{align*}
 \calB_\lambda\Delta^\mu(e) &= \mu\left(\mu+\lambda-\frac{n}{r}\right)e.
\end{align*}
In order to obtain the value of $\calB_\lambda\Delta^\mu$ at $x=ge$, we use the equivariance property of $\Delta$:
\begin{align*}
 \ell(g^{-1})\Delta(x) &= \Delta(ge)\Delta(x),
\end{align*}
and the equivariance property of $\calB_\lambda$ in Lemma \ref{lem:BnuEquiv}:
\begin{align*}
 \calB_\lambda\Delta^\mu(ge) &= \left(\ell(g^{-1})\calB_\lambda\Delta^\mu\right)(e)\\
 &= g^{-\#}\left(\calB_\lambda\ell(g^{-1})\Delta^\mu\right)(e)\\
 &= \Delta(ge)^\mu g^{-\#}\calB_\lambda\Delta^\mu(e)\\
 &= \mu\left(\mu+\lambda-\frac{n}{r}\right)\Delta(ge)^\mu(ge)^{-1}.
\end{align*}
This proves \eqref{eq:BnuDelta} for every $x=ge$ in the open orbit of the structure group containing the identity $e$. Since both sides of \eqref{eq:BnuDelta} are polynomials in $x$ the claim follows.
\end{proof}

\subsection{Symmetric operators}

The crucial part in proving that $\calB_\lambda$ is tangential to $\calO_\lambda$ and defines a symmetric operator on $L^2(\calO_\lambda,\td\mu_\lambda)$, is the following proposition for the corresponding zeta functions:

\begin{proposition}[{\cite[Proposition XV.2.4]{FK94}}]\label{prop:BnuSA}
Let $Z(f,\lambda)$ denote the zeta function corresponding to $V$ as in Proposition \ref{prop:ZetaFunctions}. Then, for $f,g\in\calS(V)$ and $\lambda\in\CC$ we have
\begin{align*}
 Z((\calB_\lambda f)\cdot g,\lambda) &= Z(f\cdot(\calB_\lambda g),\lambda),
\end{align*}
as identity of meromorphic functions in $\lambda$.
\end{proposition}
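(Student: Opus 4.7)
Both sides of the asserted identity are, by construction, meromorphic functions of $\lambda$ (cf.\ Proposition \ref{prop:ZetaFunctions} and the references cited there): for $\Re\lambda>\frac{n}{r}-1$ the integrand $|\Delta(x)|^{\lambda-\frac{n}{r}}$ is locally integrable on $V$, the $\calS(V)$-integrals converge absolutely on the union of open $L$-orbits, and both are entire in $\lambda$ when multiplied by the usual gamma-factors used to meromorphically extend $Z$. By the identity principle for meromorphic functions it therefore suffices to prove the equality in a right half-plane $\Re\lambda\gg0$ where both integrals converge absolutely and $|\Delta(x)|^{\lambda-\frac{n}{r}}$ vanishes to high enough order at the boundary $\{\Delta=0\}$ to justify integration by parts without boundary contributions.

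In this range I would compute
\begin{align*}
 Z((\calB_\lambda f)\cdot g,\lambda)-Z(f\cdot(\calB_\lambda g),\lambda) &= \int_V\bigl[(\calB_\lambda f)g-f(\calB_\lambda g)\bigr]\,|\Delta(x)|^{\lambda-\frac{n}{r}}\,\td x
\end{align*}
by expanding $\calB_\lambda=\sum_{\alpha,\beta}\partial_\alpha\partial_\beta\cdot P(\bar e_\alpha,\bar e_\beta)x+\lambda\sum_\alpha\partial_\alpha\bar e_\alpha$ in coordinates, then transferring both derivatives from $f$ onto the product $g\cdot|\Delta|^{\lambda-\frac{n}{r}}$. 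The purely second-order contribution reproduces $\sum_{\alpha,\beta}(\partial_\alpha\partial_\beta g)\cdot P(\bar e_\alpha,\bar e_\beta)x$, i.e.\ the $P(\partial/\partial x)x$ part of $\calB_\lambda g$, so it cancels with the corresponding term on the right-hand side. What remains is a collection of lower-order terms: the first-order piece $\lambda\sum_\alpha\partial_\alpha \bar e_\alpha$ (and its integration-by-parts transfer), together with the cross-terms where one derivative falls on $g$ and the other on $|\Delta|^{\lambda-\frac{n}{r}}$, as well as the pure zeroth-order term where both derivatives hit $|\Delta|^{\lambda-\frac{n}{r}}$. Using Lemma \ref{lem:DerivDelta} one has $\partial_\alpha|\Delta|^{\lambda-\frac{n}{r}}=(\lambda-\frac{n}{r})\tau(x^{-1},e_\alpha)|\Delta|^{\lambda-\frac{n}{r}}$, and combining this with Lemma \ref{lem:BnuProdRule} and Lemma \ref{lem:BnuDelta} (applied to $\mu=\lambda-\frac{n}{r}$) will display the remaining terms as a Jordan-algebraic expression in $P(u,x^{-1})x$-type quantities that collapses to zero.

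The key lemma in disguise is Lemma \ref{lem:BnuDelta}: the identity
\begin{align*}
 \calB_\lambda|\Delta|^\mu(x)=\mu\!\left(\mu+\lambda-\tfrac{n}{r}\right)|\Delta|^\mu\, x^{-1}
\end{align*}
vanishes precisely when $\mu=-\lambda+\frac{n}{r}$ is paired with the dual weight $\mu=\lambda-\frac{n}{r}$, which is exactly the exponent dictated by the zeta measure. This matching is what makes the cross terms cancel; the role of the product rule \eqref{eq:BnuProdRule} is to package the symmetric bilinear term $2P\!\left(\frac{\partial g}{\partial x},\frac{\partial|\Delta|^\mu}{\partial x}\right)x$ so that it is visibly symmetric in $f$ and $g$ after a further integration by parts.

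The main obstacle is bookkeeping: one must carefully identify, after integration by parts, that the three types of remainder terms (two derivatives on $|\Delta|^{\lambda-\frac{n}{r}}$; one each on $g$ and on $|\Delta|^{\lambda-\frac{n}{r}}$; the single-derivative $\lambda$-term) assemble, via the Jordan-algebraic identities $P(e_\alpha,\bar e_\alpha)x=\frac{n}{r}x$ and $(u\Box v)x=P(u,x)v$, into an expression that is manifestly symmetric under $f\leftrightarrow g$. Everything else is analytic continuation and an appeal to dominated convergence to justify differentiation under the integral sign on the strip of absolute convergence.
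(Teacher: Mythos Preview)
Your approach is correct and uses the same ingredients as the paper: meromorphic continuation from a half-plane where integration by parts is justified, the product rule Lemma~\ref{lem:BnuProdRule}, the derivative formula Lemma~\ref{lem:DerivDelta}, and the key identity Lemma~\ref{lem:BnuDelta}. The paper, however, organizes the computation much more cleanly by factoring it into two separate statements. First (step (a)) it shows by integration by parts against Lebesgue measure that the formal transpose of $\calB_\lambda$ is $\calB_{2n/r-\lambda}$, i.e.\ $\int_{\Omega_j}(\calB_\lambda f)\,h\,\td x=\int_{\Omega_j} f\,\calB_{2n/r-\lambda}h\,\td x$ whenever the boundary terms vanish; this is where Lemma~\ref{lem:SquareSums} enters, via $\sum_\alpha e_\alpha\cdot\bar e_\alpha=\tfrac{n}{r}e$. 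Second (step (b)) it proves the intertwining relation $\calB_{2n/r-\lambda}\bigl(|\Delta|^{\lambda-n/r}g\bigr)=|\Delta|^{\lambda-n/r}\,\calB_\lambda g$, which is exactly Lemma~\ref{lem:BnuProdRule} combined with Lemma~\ref{lem:BnuDelta} for $\mu=\lambda-\tfrac{n}{r}$: the parameter becomes $(2n/r-\lambda)+2\mu=\lambda$, and the zero-order term carries the coefficient $\mu\bigl(\mu+(2n/r-\lambda)-\tfrac{n}{r}\bigr)=\mu\cdot 0=0$. Composing (a) and (b) gives the result in one line.

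Your direct ``symmetrize the remainder'' plan would work, but the bookkeeping is heavier and your description of the cancellation is slightly off: it is not that $\calB_\lambda|\Delta|^{\lambda-n/r}$ vanishes (it does not), but rather that after the adjoint shift $\lambda\mapsto 2n/r-\lambda$ the coefficient $\mu(\mu+\lambda'-\tfrac{n}{r})$ in Lemma~\ref{lem:BnuDelta} vanishes with $\lambda'=2n/r-\lambda$ and $\mu=\lambda-\tfrac{n}{r}$. Recasting your computation in the paper's two-step form will make this transparent and save you from chasing individual cross-terms.
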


\begin{proof}
It suffices to prove the statement for $\lambda>\frac{n}{r}+2>(r_0-1)\frac{r_0d}{2r}$, then the general statement follows by meromorphic continuation. In this case
\begin{align*}
 Z(f,\lambda) &= \sum_{j=1}^s{\int_{\Omega_j}{f(x)|\Delta(x)|^{\lambda-\frac{n}{r}}\td x}},
\end{align*}
where $\Omega_j$ denote certain open orbits of $L_0$. (If $V$ is euclidean or $V=\RR^{p,q}$, then $s=1$ and $\Omega_1=\Omega$, and if $V$ is non-euclidean $\ncong\RR^{p,q}$, then $(\Omega_j)_j$ is the set of all open $L_0$-orbits.) Therefore, it is enough to show that for any open orbit $\Omega_j$ of $L_0$ we have
\begin{align*}
 \int_{\Omega_j}{(\calB_\lambda f(x))g(x)|\Delta(x)|^{\lambda-\frac{n}{r}}\td x} &= \int_{\Omega_j}{f(x)(\calB_\lambda g(x))|\Delta(x)|^{\lambda-\frac{n}{r}}\td x}
\end{align*}
On every orbit $\Omega_j$ the Jordan determinant $\Delta(x)$ is either positive or negative. Since $\lambda>\frac{n}{r}+2$, we have $|\Delta|^{\lambda-\frac{n}{r}}\in C^2(\overline{\Omega_j})$ and all derivatives of $|\Delta|^{\lambda-\frac{n}{r}}$ up to second order vanish on $\partial\Omega_j$ (use Lemma \ref{lem:DerivDelta}). This means that all boundary terms, which occur when integrating by parts twice, vanish.
\begin{enumerate}
\item[\textup{(a)}] Using integration by parts, we first prove that if all derivatives of $g$ up to second order vanish on $\partial\Omega_j$, then
 \begin{align*}
  \int_{\Omega_j}{(\calB_\lambda f(x))g(x)\td x} &= \int_{\Omega_j}{f(x)(\calB_{\frac{2n}{r}-\lambda}g(x))\td x}.
 \end{align*}
 For this we choose an orthonormal basis $(e_\alpha)_\alpha$ with $e_\alpha\in V^+\cup V^-$. Observe that
 \begin{align*}
  \frac{\partial^2}{\partial x_\alpha\partial x_\beta}(xg(x)) &= x\frac{\partial^2g}{\partial x_\alpha\partial x_\beta}(x) + e_\alpha\frac{\partial g}{\partial x_\beta}(x) + e_\beta\frac{\partial g}{\partial x_\alpha}(x)
 \end{align*}
 and hence, by Lemma \ref{lem:SquareSums}:
 \begin{align*}
  & \sum_{\alpha,\beta}{P(\overline{e}_\alpha,\overline{e}_\beta)\frac{\partial^2}{\partial x_\alpha\partial x_\beta}(xg(x))}\\
  ={}& \calB_0g(x) + 2\sum_{\alpha,\beta}{P(\overline{e}_\alpha,\overline{e}_\beta)e_\alpha\frac{\partial g}{\partial x_\beta}(x)}\\
  ={}& \calB_0g(x) + 2\sum_\beta{\left(\sum_{e_\alpha\in V^+}{P(e_\alpha,\overline{e}_\beta)e_\alpha}-\sum_{e_\alpha\in V^-}{P(e_\alpha,\overline{e}_\beta)e_\alpha}\right)\frac{\partial g}{\partial x_\beta}(x)}\\
  ={}& \calB_0g(x) + 2\sum_\beta{\left(\sum_{e_\alpha\in V^+}{e_\alpha^2}-\sum_{e_\alpha\in V^-}{e_\alpha^2}\right)\overline{e}_\beta\frac{\partial g}{\partial x_\beta}(x)}\\
  ={}& \calB_0g(x) + 2\sum_\beta{\left(\sum_\alpha{e_\alpha\cdot\overline{e}_\alpha}\right)\overline{e}_\beta\frac{\partial g}{\partial x_\beta}(x)}\\
  ={}& \calB_0g(x) + \frac{2n}{r}\frac{\partial g}{\partial x}(x).
 \end{align*}
 Therefore, integration by parts gives
 \begin{align*}
  & \int_{\Omega_j}{\calB_\lambda f(x)\cdot g(x)\td x}\\
  ={}& \int_{\Omega_j}{f(x)\cdot\left(\sum_{\alpha,\beta}{P(\overline{e}_\alpha,\overline{e}_\beta)\frac{\partial^2}{\partial x_\alpha\partial x_\beta}(xg(x))}-\lambda\sum_{\alpha}{\frac{\partial g}{\partial x_\alpha}\overline{e}_\alpha}\right)\td x}\\
  ={}& \int_{\Omega_j}{f(x)\cdot\left(\calB_0g(x)+\frac{2n}{r}\frac{\partial g}{\partial x}(x)-\lambda\frac{\partial g}{\partial x}(x)\right)\td x}\\
  ={}& \int_{\Omega_j}{f(x)\cdot\calB_{\frac{2n}{r}-\lambda}g(x)\td x}.
 \end{align*}
\item[\textup{(b)}] Now we prove that
 \begin{align*}
  \calB_\lambda(|\Delta(x)|^\mu f(x)) &= |\Delta(x)|^\mu\left(\calB_{\lambda+2\mu}f(x)+\mu\left(\mu+\lambda-\frac{n}{r}\right)x^{-1}f(x)\right).
 \end{align*}
  First assume that $\Omega_j$ is an orbit with $\Delta(x)>0$ for all $x\in\Omega_j$. Then $|\Delta(x)|^\mu=\Delta(x)^\mu$ and with the Lemmas \ref{lem:BnuProdRule}, \ref{lem:DerivDelta} and \ref{lem:BnuDelta} we have
 \begin{align*}
  \calB_\lambda(\Delta(x)^\mu f(x)) &= \Delta(x)^\mu\calB_\lambda f(x) + 2P\left(\frac{\partial\Delta^\mu}{\partial x}(x),\frac{\partial f}{\partial x}(x)\right)x\\
  & \ \ \ \ \ \ \ \ \ \ \ \ \ \ \ \ \ \ \ \ \ \ \ \ \ \ \ \ \ \ \ \ \ \ \ \ \ \ \ \ \ \ \ \ \ \ \ \ \ \ \ \ \ + \calB_\lambda(\Delta(x)^\mu)f(x)\\
  &= \Delta(x)^\mu\calB_\lambda f(x) + 2\mu\Delta(x)^\mu P\left(x^{-1},\frac{\partial f}{\partial x}(x)\right)x\\
  & \ \ \ \ \ \ \ \ \ \ \ \ \ \ \ \ \ \ \ \ \ \ \ \ \ \ \ \ \ \ \ \ \ \ +\mu\left(\mu+\lambda-\frac{n}{r}\right)\Delta(x)^\mu x^{-1}f(x)\\
  &= \Delta(x)^\mu\left(\calB_{\lambda+2\mu}f(x)+\mu\left(\mu+\lambda-\frac{n}{r}\right)x^{-1}f(x)\right).
 \end{align*}
 Now, if $\Delta(x)<0$ for all $x\in\Omega_j$, then $|\Delta(x)|^\mu=(-\Delta(x))^\mu$ and the same calculation can be carried out.
\item[\textup{(c)}] We now prove the main statement. By (a)
 \begin{align*}
  \int_{\Omega_j}{(\calB_\lambda f(x))g(x)|\Delta(x)|^{\lambda-\frac{n}{r}}\td x} &= \int_{\Omega_j}{f(x)\calB_{\frac{2n}{r}-\lambda}\left(g(x)|\Delta(x)|^{\lambda-\frac{n}{r}}(x)\right)\td x}\\
 \intertext{and by (b)}
  &= \int_{\Omega_j}{f(x)(\calB_\lambda g(x))|\Delta(x)|^{\lambda-\frac{n}{r}}(x)\td x}.
 \end{align*}
\end{enumerate}
Summing over $j=1,\ldots,s$ shows the claim.
\end{proof}

Using the previous proposition we can now prove the main result of this section:

\begin{theorem}\label{thm:BlambdaTangential}
For every $\lambda\in\calW$ the differential operator $\calB_\lambda$ is tangential to the orbit $\calO_\lambda$ and defines a symmetric operator on $L^2(\calO_\lambda,\td\mu_\lambda)$.
\end{theorem}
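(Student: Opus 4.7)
The plan is to split the argument according to whether $\lambda$ lies in the continuous half-line or in the discrete part of the Wallach set $\calW$. For $\lambda>(r_0-1)\frac{r_0d}{2r}$ the orbit $\calO_\lambda=\Omega$ is open in $V$, so tangentiality is immediate, and by Theorem \ref{thm:EquivMeasures}~(1) the measure $\td\mu_\lambda$ equals a positive constant times $|\Delta(x)|^{\lambda-\frac{n}{r}}\td x$. Since at this fixed value of $\lambda$ the zeta function $Z(\cdot,\lambda)$ is holomorphic and reduces to the integral over $\Omega$ against this weight, Proposition \ref{prop:BnuSA} specializes directly to the desired symmetry relation for $f,g\in\calS(V)$, and a standard density argument transfers it to a natural dense subspace of $L^2(\Omega,\td\mu_\lambda)$.

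For $\lambda=\lambda_k:=k\frac{r_0d}{2r}$ with $0\le k\le r_0-1$, the orbit $\calO_k$ has strictly smaller dimension than $V$ and by Proposition \ref{prop:ZetaFunctions} the measure $\td\mu_k$ is a constant multiple of the residue of $Z(\cdot,\lambda)$ at $\lambda=\lambda_k$. The strategy is to take residues in the meromorphic identity of Proposition \ref{prop:BnuSA}. Writing $\calB_\lambda=\calB_{\lambda_k}+(\lambda-\lambda_k)\frac{\partial}{\partial x}$ one obtains
\begin{align*}
 Z\bigl((\calB_\lambda f)g,\lambda\bigr) &= Z\bigl((\calB_{\lambda_k}f)g,\lambda\bigr)+(\lambda-\lambda_k)\,Z\bigl(\tfrac{\partial f}{\partial x}\cdot g,\lambda\bigr),
\end{align*}
and the prefactor $(\lambda-\lambda_k)$ absorbs the simple pole of the second summand. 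Extracting the coefficient of $(\lambda-\lambda_k)^{-1}$ on both sides of the identity in Proposition \ref{prop:BnuSA} then yields
\begin{align*}
 \int_{\calO_k}(\calB_{\lambda_k}f)(x)\,g(x)\,\td\mu_k(x) &= \int_{\calO_k}f(x)\,(\calB_{\lambda_k}g)(x)\,\td\mu_k(x)
\end{align*}
for all $f,g\in\calS(V)$.

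Tangentiality can then be derived from this weak symmetry by duality: if $f\in\calS(V)$ satisfies $f|_{\calO_k}=0$, the right-hand side above vanishes, so $\int_{\calO_k}(\calB_{\lambda_k}f)\,g\,\td\mu_k=0$ for every $g\in\calS(V)$. Since $\calS(V)|_{\calO_k}$ is dense in $L^2(\calO_k,\td\mu_k)$ and $\calB_{\lambda_k}f$ is smooth, this forces $(\calB_{\lambda_k}f)|_{\calO_k}=0$. Hence $\calB_{\lambda_k}$ descends to a well-defined differential operator on $C^\infty(\calO_k)$, and the displayed identity is exactly the symmetry of this operator on $L^2(\calO_k,\td\mu_k)$. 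The degenerate case $k=0$, where $\calO_0=\{0\}$ and $\td\mu_0$ is a multiple of $\delta_0$, is trivial.

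The main obstacle I anticipate is the uniform residue bookkeeping across the three regimes covered by Proposition \ref{prop:ZetaFunctions} (euclidean, non-euclidean of rank $\ge 3$, and $V\cong\RR^{p,q}$). One must confirm that at each $\lambda_k$ the pole of $Z(\cdot,\lambda)$ is simple and that its residue genuinely produces a nonzero multiple of $\td\mu_k$ rather than also carrying contributions from lower-dimensional orbits $\calO_j$ with $j<k$; the uniqueness of $L$-equivariant measures from Theorem \ref{thm:EquivMeasures}, combined with the orbit stratification, should pin this down. The vector-valued nature of $\calB_\lambda$ causes no essential difficulty, as both Proposition \ref{prop:BnuSA} and the density argument go through componentwise with respect to any basis of $V$.
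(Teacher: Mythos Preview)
Your proposal is correct and follows essentially the same approach as the paper: both split into the open case (where tangentiality is trivial and symmetry is Proposition~\ref{prop:BnuSA} at a regular point) and the discrete case (where one takes residues in the meromorphic identity of Proposition~\ref{prop:BnuSA}, using Proposition~\ref{prop:ZetaFunctions} to identify the residue with $\td\mu_k$, and then reads off tangentiality from the weak symmetry by testing against all $g$). Your explicit decomposition $\calB_\lambda=\calB_{\lambda_k}+(\lambda-\lambda_k)\frac{\partial}{\partial x}$ makes the residue extraction slightly cleaner than the paper's formulation, which writes $\res_{\mu=\lambda}Z(\calB_\mu\widetilde{\varphi}\cdot\psi,\mu)$ without isolating the $\mu$-dependence of the integrand; but the content is identical. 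Your concern about simplicity of the pole and absence of contributions from lower strata is a concern about Proposition~\ref{prop:ZetaFunctions} itself (which the paper simply invokes), not about the deduction of the theorem from it.
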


\begin{proof}
If $\lambda>(r_0-1)\frac{r_0d}{2r}$, then the orbit $\calO_\lambda=\Omega$ is open and every differential operator is tangential. Symmetry follows immediately from Proposition \ref{prop:BnuSA}.\\
Now assume that $\lambda=k\frac{r_0d}{2r}$, $0\leq k\leq r_0-1$. Let $\varphi\in C_c^\infty(\calO_\lambda)$ and let $\widetilde{\varphi}_1,\widetilde{\varphi}_2\in C_c^\infty(U)$ be any extensions of $\varphi$ to an open neighborhood $U$ of $\calO_\lambda$. To show that $\calB_\lambda$ is tangential to $\calO_\lambda$ we need to show that $\calB_\lambda\varphi_1=\calB_\lambda\varphi_2$ on $\calO_\lambda$. By definition $\widetilde{\varphi}:=\widetilde{\varphi}_1-\widetilde{\varphi}_2$ vanishes on $\calO_\lambda$. For any $\psi\in C_c^\infty(U)$ we obtain with Proposition \eqref{prop:BnuSA}:
\begin{align*}
 \int_{\calO_\lambda}{\calB_\lambda\widetilde{\varphi}\cdot\psi\td\mu_\lambda} &= \const\cdot\res_{\mu=\lambda}{Z\left(\calB_\mu\widetilde{\varphi}\cdot\psi,\mu\right)}\\
 &= \const\cdot\res_{\mu=\lambda}{Z\left(\widetilde{\varphi}\cdot\calB_\mu\psi,\mu\right)}\\
 &= \int_{\calO_\lambda}{\widetilde{\varphi}\cdot\calB_\lambda\psi\td\mu_\lambda} = 0.
\end{align*}
Hence $\calB_\lambda\widetilde{\varphi}=0$ in $L^2(\calO_\lambda,\td\mu_\lambda)$ which implies $\calB_\lambda\widetilde{\varphi}(x)=0$ for every $x\in\calO_\lambda$. But this means that $\calB_\lambda\widetilde{\varphi}_1=\calB_\lambda\widetilde{\varphi}_2$ on $\calO_\lambda$ and therefore $\calB_\lambda$ is tangential to $\calO_\lambda$. Symmetry now again follows from Proposition \ref{prop:BnuSA}. This finishes the proof.
\end{proof}

\subsection{Action for the minimal orbit}

Now let $\lambda=\lambda_1=\frac{r_0d}{2r}$ be the minimal discrete non-zero Wallach point. Let us compute the action of $\calB_{\lambda_1}$ on radial functions on the minimal orbit $\calO_1$, i.e. functions depending only on $\|x\|:=\sqrt{(x|x)}$\index{notation}{1absvala@{"|}\hspace{-1pt}{"|}$-${"|}\hspace{-1pt}{"|}}. For convenience we use the following normalization:
\begin{align*}
 |x| := \sqrt{\frac{r}{r_0}}\|x\|.\index{notation}{1absvalb@{"|}$-${"|}}
\end{align*}
In view of Lemma \ref{lem:TrOnSubspace}~(2) we then have for $i=1,\ldots,r_0$:
\begin{align}
 |c_i| &= \sqrt{\frac{r}{r_0}\tr(c_i)} = \sqrt{\frac{r}{r_0}\tr_{V_{ii}}(c_i)} = \sqrt{\frac{r}{r_0}\rk(V_{ii})} = \frac{r}{r_0}.\label{eq:NormCi}
\end{align}
Further, if $\psi(x)=f(|x|)$, $x\in V$, is a radial function, then
\begin{align}
 \frac{\partial\psi}{\partial x}(x) &= \frac{r}{r_0}\frac{f'(|x|)}{|x|}\alpha x.\label{eq:ddxradial}
\end{align}

\begin{proposition}\label{prop:BnuRadial}
If $\psi(x)=f(|x|)$, $x\in\calO_1$, is a radial function on $\calO_1$, $f\in C^\infty(\RR_+)$, then for $x=ktc_1$ we have
\begin{align*}
 \calB_{\lambda_1}\psi(x) &= \left(f''(|x|)+\left(d-d_0-e\right)\frac{1}{|x|}f'(|x|)\right)\alpha x+\frac{r_0}{r}\left(d_0-\frac{d}{2}\right)f'(|x|)\alpha(ke).
\end{align*}
\end{proposition}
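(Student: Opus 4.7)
My plan is to first verify the formula at the basepoint $x_0 = tc_1$ by direct computation using the chain rule together with Lemma \ref{lem:QRepC}, and then to propagate to the general point $x = ktc_1$ via the $\Str(V)$-equivariance of $\calB_{\lambda_1}$ furnished by Lemma \ref{lem:BnuEquiv}.

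For the direct computation, I write $\psi = f\circ\rho$ with $\rho(x) := |x|$ and apply the chain rule to express the second partial derivatives of $\psi$ in terms of $f''(\rho)$, $f'(\rho)$ and the derivatives of $\rho$. Working in an orthonormal basis $(e_\alpha)$ for $(-|-)$, the identity $\rho^2 = \tfrac{r}{r_0}\sum_\alpha x_\alpha^2$ gives $\partial\rho/\partial x_\alpha = \tfrac{r/r_0}{\rho}x_\alpha$ (which is \eqref{eq:ddxradial} applied to $f = \id$) and an explicit formula for $\partial^2\rho/\partial x_\alpha\partial x_\beta$. Substituting into the coordinate form of $\calB_{\lambda_1}$ and contracting via $\sum_\alpha x_\alpha\bar{e}_\alpha = \alpha x$ produces
\begin{align*}
 \calB_{\lambda_1}\psi(x) &= f''(\rho)\tfrac{(r/r_0)^2}{\rho^2}P(\alpha x)x + f'(\rho)\Bigl[\tfrac{r/r_0}{\rho}\sum_\alpha P(\bar{e}_\alpha)x - \tfrac{(r/r_0)^2}{\rho^3}P(\alpha x)x\Bigr] + \tfrac{d}{2\rho}f'(\rho)\alpha x,
\end{align*}
where the last summand comes from $\lambda_1\,\partial\psi/\partial x$ and uses $\lambda_1 = \tfrac{r_0d}{2r}$.

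At the basepoint $x_0 = tc_1$ one has $\alpha x_0 = x_0$, $\rho = tr/r_0$ and $P(\alpha x_0)x_0 = x_0^3 = t^3c_1$. The crucial input is Lemma \ref{lem:QRepC}: since $(\bar{e}_\alpha)_\alpha$ is again an orthonormal basis for $(-|-)$ and the sum is basis-independent, $\sum_\alpha P(\bar{e}_\alpha)c_1 = \tfrac{r_0}{r}(\tfrac{d}{2}-d_0-e+1)c_1 + \tfrac{r_0}{r}(d_0-\tfrac{d}{2})e$. After substitution, the $c_1$-contributions combine (using $\tfrac{r_0}{r}c_1 = \tfrac{tc_1}{\rho}$) so that the coefficient of $\tfrac{f'(\rho)}{\rho}tc_1$ becomes $(\tfrac{d}{2}-d_0-e) + \tfrac{d}{2} = d-d_0-e$, producing precisely the claimed formula at $k = e$.

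To propagate to $x = ktc_1$ with $k \in K_L$: the $K_L$-invariance of $\psi$ gives $\ell(k)\psi = \psi$, so Lemma \ref{lem:BnuEquiv} yields $\calB_{\lambda_1}\psi(ktc_1) = k^{-\#}\calB_{\lambda_1}\psi(tc_1)$. For $k \in K_L = L^\theta$, the relations $k^{-*} = k$ and $g^{-*} = \alpha g^{-\#}\alpha$ force $k^{-\#} = \alpha k\alpha$. Applying $\alpha k\alpha$ to the formula at $tc_1$ and using $\alpha(tc_1) = tc_1$ and $\alpha(e) = e$ transforms $tc_1 \mapsto \alpha(ktc_1) = \alpha x$ and $e \mapsto \alpha(ke)$, giving the stated identity. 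The main delicate point is the simultaneous appearance of $\alpha$ in the trace-form gradient (forcing $\alpha x$ rather than $x$) and in the equivariance twist $k^{-\#} = \alpha k\alpha$; both occurrences must conspire correctly to yield the final $\alpha x$ and $\alpha(ke)$.
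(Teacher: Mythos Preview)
Your proof is correct and follows essentially the same approach as the paper: both derive the general coordinate expression for $\calB_{\lambda_1}\psi$ via the chain rule in an orthonormal basis (producing the $P(\alpha x)x$ and $\sum_\alpha P(\bar e_\alpha)x$ terms), reduce to the basepoint $tc_1$ using the $K_L$-equivariance $\calB_{\lambda_1}\psi(ktc_1)=k^{-\#}\calB_{\lambda_1}\psi(tc_1)=\alpha k\alpha\,\calB_{\lambda_1}\psi(tc_1)$ from Lemma~\ref{lem:BnuEquiv}, and then invoke Lemma~\ref{lem:QRepC} together with $|c_1|=r/r_0$ to evaluate the sum and combine the $c_1$-coefficients. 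Your remark that $(\bar e_\alpha)_\alpha$ is again an orthonormal basis for $(-|-)$ (so Lemma~\ref{lem:QRepC} applies to it) is a point the paper uses tacitly.
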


\begin{proof}
We extend $\psi$ to $V\setminus\{0\}$ by
\begin{align*}
 \psi(x) &:= f(|x|), & x\neq0.
\end{align*}
Now, for an orthonormal basis $(e_\alpha)_\alpha$ with respect to $(-|-)$ we put $\overline{e}_\alpha:=\alpha(e_\alpha)$. Then $(\overline{e}_\alpha)_\alpha$ is dual to $(e_\alpha)_\alpha$ with respect to the trace form $\tau$ and for $x=\sum_\alpha{x_\alpha e_\alpha}$ we have $\|x\|^2=\sum_\alpha{x_\alpha^2}$. Thus we can calculate
\begin{align*}
 \calB_{\lambda_1}\psi(x) &= \sum_{\alpha,\beta}{\frac{\partial^2\psi}{\partial x_\alpha\partial x_\beta}(x)P(\overline{e}_\alpha,\overline{e}_\beta)x} + \lambda_1\sum_\alpha{\frac{\partial\psi}{\partial x_\alpha}(x)\overline{e}_\alpha}\\
 &= \frac{r}{r_0}\sum_\alpha{\frac{\partial}{\partial x_\alpha}\left[\frac{x_\beta}{|x|}f'(|x|)\right]P(\overline{e}_\alpha,\overline{e}_\beta)x} + \frac{r}{r_0}\lambda_1\sum_\alpha{\frac{x_\alpha}{|x|}f'(|x|)\overline{e}_\alpha}\\
 &= \frac{r}{r_0}\sum_\alpha{\left[\frac{\delta_{\alpha,\beta}}{|x|}f'(|x|)-\frac{r}{r_0}\frac{x_\alpha x_\beta}{|x|^3}f'(|x|)+\frac{r}{r_0}\frac{x_\alpha x_\beta}{|x|^2}f''(|x|)\right]P(\overline{e}_\alpha,\overline{e}_\beta)x}\\
 & \ \ \ \ \ \ \ \ \ \ \ \ \ \ \ \ \ \ \ \ \ \ \ \ \ \ \ \ \ \ \ \ \ \ \ \ \ \ \ \ \ \ \ \ \ \ \ \ \ \ \ \ \ \ \ \ \ \ \ \ \ \ \ \ \ \ \ \  + \frac{r}{r_0}\lambda_1\frac{\alpha(x)}{|x|}f'(|x|)\\
 &= \left(\frac{r}{r_0}\right)^2\frac{P(\alpha(x))x}{|x|^2}f''(|x|)+\frac{r}{r_0}\left(\sum_\alpha{\frac{P(\overline{e}_\alpha)x}{|x|}}-\frac{r}{r_0}\frac{P(\alpha(x))x}{|x|^3}\right.\\
 & \ \ \ \ \ \ \ \ \ \ \ \ \ \ \ \ \ \ \ \ \ \ \ \ \ \ \ \ \ \ \ \ \ \ \ \ \ \ \ \ \ \ \ \ \ \ \ \ \ \ \ \ \ \ \ \ \ \ \ \ \ \ \ \ \ \ \ \ +\left.\lambda_1\frac{\alpha(x)}{|x|}\right)f'(|x|).
\end{align*}
Now $\calO_1=K_L\RR_+c_1$, and since $\psi$ is $K_L$-invariant, we obtain, using the equivariance property of Lemma \ref{lem:BnuEquiv}:
\begin{align*}
\calB_{\lambda_1}\psi(ktc_1) &= \ell(k^{-1})\calB_{\lambda_1}\ell(k)\psi(tc_1) = k^{-\#}\calB_{\lambda_1}\psi(tc_1) = \alpha k\alpha\calB_{\lambda_1}\psi(tc_1).
\end{align*}
Therefore it suffices to compute $\calB_{\lambda_1}\psi(tc_1)$ for $t>0$. With Lemma \ref{lem:QRepC} and \eqref{eq:NormCi} we calculate:
\begin{align*}
 \calB_{\lambda_1}\psi(tc_1) &= tc_1f''(t|c_1|)+\left(\sum_\alpha{P(\overline{e}_\alpha)c_1}+\left(\lambda_1-\frac{r_0}{r}\right)c_1\right)f'(t|c_1|)\\
 &= f''(t|c_1|)tc_1+\frac{r_0}{r}\left(\frac{d}{2}-d_0-e+\frac{r}{r_0}\lambda_1\right)f'(t|c_1|)c_1\\
 & \ \ \ \ \ \ \ \ \ \ \ \ \ \ \ \ \ \ \ \ \ \ \ \ \ \ \ \ \ \ \ \ \ \ \ \ \ \ \ \ \ \ \ \ \ \ \ \ \ \ \ \ \ \ \ +\frac{r_0}{r}\left(d_0-\frac{d}{2}\right)f'(t|c_1|)e.
\end{align*}
Finally, for $x=ktc_1$ we have $|x|=t|c_1|$ and we obtain with $\lambda_1=\frac{r_0d}{2r}$
\begin{align*}
 \calB_{\lambda_1}\psi(x) &= \alpha k\alpha\calB_\lambda\psi(tc_1)\\
 &= \left(f''(|x|)+\left(d-d_0-e\right)\frac{1}{|x|}f'(|x|)\right)\alpha x+\frac{r_0}{r}\left(d_0-\frac{d}{2}\right)f'(|x|)\alpha(ke).
\end{align*}
This is the stated formula.
\end{proof}

The formula in Proposition \ref{prop:BnuRadial} can be simplified if one assumes that $V$ is either euclidean or non-euclidean of rank $\geq3$. (The remaining case is by Proposition \ref{prop:ClassificationEuclSph} $V=\RR^{p,q}$ which is treated separately in Appendix \ref{app:Rank2}.) For this we introduce the ordinary differential operator $B_\alpha$ on $\RR_+$ which is defined by
\begin{align}
 B_\alpha f(t) := f''(t)+(2\alpha+1)\frac{1}{t}f'(t)-f(t).\label{eq:OrdinaryBesselOp}\index{notation}{Balpha@$B_\alpha$}
\end{align}

\begin{corollary}\label{cor:BnuRadial}
Let $\psi(x)=f(|x|)$, $x\in\calO_1$, be a radial function on $\calO_1$.
\begin{enumerate}
\item[\textup{(1)}] If $V$ is euclidean, then
 \begin{align*}
  (\calB_{\lambda_1}-\alpha x)\psi(x) &= B_{\frac{\nu}{2}}f(|x|)\alpha x+\frac{d}{2}f'(|x|)e.
 \end{align*}
\item[\textup{(2)}] If $V$ is non-euclidean of rank $\geq3$, then
 \begin{align*}
  (\calB_{\lambda_1}-\alpha x)\psi(x) &= B_{\frac{\nu}{2}}f(|x|)\alpha x.
 \end{align*}
\end{enumerate}
\end{corollary}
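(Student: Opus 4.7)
The strategy is to start from the formula in Proposition \ref{prop:BnuRadial} and then to use Proposition \ref{prop:ClassificationEuclSph} together with the explicit values of $\nu$ from Section \ref{sec:MuNu} to simplify the coefficients in each of the two cases.

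First I would rewrite the proposition as
\begin{align*}
 \calB_{\lambda_1}\psi(x) &= \left(f''(|x|)+(d-d_0-e)\frac{f'(|x|)}{|x|}\right)\alpha x + \frac{r_0}{r}\left(d_0-\frac{d}{2}\right)f'(|x|)\alpha(ke),
\end{align*}
and observe that the statement to be proved reads, after adding $\alpha x\cdot\psi(x)=f(|x|)\alpha x$ to both sides,
\begin{align*}
 \calB_{\lambda_1}\psi(x) &= \left(f''(|x|)+(\nu+1)\frac{f'(|x|)}{|x|}\right)\alpha x + \begin{cases}\tfrac{d}{2}f'(|x|)e & (V\text{ euclidean}),\\ 0 & (V\text{ non-euclidean, rank}\geq3),\end{cases}
\end{align*}
since $B_{\nu/2}f(t)+f(t)=f''(t)+(\nu+1)\tfrac{1}{t}f'(t)$ by definition \eqref{eq:OrdinaryBesselOp}.

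In the euclidean case I would use that $V$ is reduced (so $e=0$), that $d_0=d$, that $\alpha=\id_V$, and that $K_L\subseteq\Aut(V)$ fixes the unit $e$, so $\alpha(ke)=e$. Also $r_0=r$. This makes $d-d_0-e=0$ and $\tfrac{r_0}{r}(d_0-\tfrac{d}{2})=\tfrac{d}{2}$, so Proposition \ref{prop:BnuRadial} collapses to $\calB_{\lambda_1}\psi(x)=f''(|x|)x+\tfrac{d}{2}f'(|x|)e$. Since $\nu=-1$ by the formula in Section \ref{sec:MuNu}, we have $\nu+1=0$, so the required identity reduces to $f''(|x|)\alpha x=(f''(|x|)+0)\alpha x$, which is trivially true.

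In the non-euclidean rank $\geq 3$ case I would apply Proposition \ref{prop:ClassificationEuclSph} to obtain $d=2d_0$. This immediately kills the second term since $d_0-\tfrac{d}{2}=0$, and the coefficient of $\tfrac{f'(|x|)}{|x|}\alpha x$ becomes $d-d_0-e=d_0-e$. Using the Section \ref{sec:MuNu} formula $\nu=\tfrac{d}{2}-e-1$ and $d=2d_0$, we get $\nu+1=d_0-e$, matching the desired coefficient. Subtracting $\alpha x\cdot\psi(x)$ from both sides then yields the claim.

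There is no real obstacle here; the proof is pure bookkeeping of the structure constants $d,d_0,e,\nu,r,r_0$. The only point to be careful about is the non-trivial action of $\alpha$ and of $k\in K_L$ on $e$ in the non-euclidean case, but since the coefficient $d_0-\tfrac{d}{2}$ vanishes identically there by Proposition \ref{prop:ClassificationEuclSph}, the potentially delicate term $\alpha(ke)$ never needs to be analyzed further.
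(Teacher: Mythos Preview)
Your proposal is correct and follows essentially the same route as the paper: start from Proposition~\ref{prop:BnuRadial}, subtract $\alpha x\,\psi(x)$, and then use the structure-constant identities ($d_0=d$, $e=0$, $r=r_0$, $K_L\subseteq\Aut(V)$, $\nu=-1$ in the euclidean case; $d=2d_0$ and $\nu=\tfrac{d}{2}-e-1$ in the non-euclidean rank $\geq3$ case) to match the coefficient of $\tfrac{f'(|x|)}{|x|}\alpha x$ with $\nu+1$ and to simplify or kill the $\alpha(ke)$ term. The paper organizes this slightly more compactly by observing directly that $d-d_0-e-1=\nu$ in both cases, but the content is identical.
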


\begin{proof}
By Proposition \ref{prop:BnuRadial} we have for $x=ktc_1$:
\begin{align*}
 (\calB_{\lambda_1}-\alpha x)\psi(x) &= B_{\frac{d-d_0-e-1}{2}}f(|x|)\alpha x+\frac{r_0}{r}\left(d_0-\frac{d}{2}\right)f'(|x|)\alpha(ke).
\end{align*}
\begin{enumerate}
\item[\textup{(1)}] If $V$ is euclidean, then $\nu=-1=d-d_0-e-1$. Further, $K_L=H\subseteq\Aut(V)$ and hence $ke=e$ for $k\in K_L$. Therefore,
 \begin{align*}
  (\calB_{\lambda_1}-\alpha x)\psi(x) &= B_{\frac{\nu}{2}}f(|x|)\alpha x+\frac{d}{2}f'(|x|)e.
 \end{align*}
\item[\textup{(2)}] If $V$ is non-euclidean of rank $\geq3$, then $d=2d_0$ and $\nu=d-d_0-e-1$. Thus, we obtain
 \begin{align*}
  (\calB_{\lambda_1}-\alpha x)\psi(x) &= B_{\frac{\nu}{2}}f(|x|)\alpha x.\qedhere
 \end{align*}
\end{enumerate}
\end{proof}

\begin{remark}
The normalized $I$- and $K$-Bessel functions $\widetilde{I}_\alpha(t)$ and $\widetilde{K}_\alpha(t)$ solve the differential equation $B_\alpha u=0$ (see Appendix \ref{app:BesselFunctions}). This is why we call $\calB_\lambda$ the Bessel operators. Since $\widetilde{K}_\alpha(t)$ decays exponentially as $t\rightarrow\infty$, it is used in the next chapter to construct an the $L^2$-model of the minimal representation of a finite cover of the group $G$.
\end{remark}
\chapter{Minimal representations of conformal groups}\label{ch:MinRep}

In this chapter we construct the minimal representation of a finite cover of the conformal group $G$. First, we construct its underlying $(\frakg,\frakk)$-module. We then show that it can be integrated to a unitary irreducible representation of a finite cover of $G$ on $L^2(\calO,\td\mu)$. To motivate the definition of the Lie algebra action we show that it arises by taking the Fourier transform of the action of a certain principal series representation.

Further, we prove that the $\frakk$-Casimir acts on the subspace of radial functions as a fourth order differential operator which will be studied in detail in Chapter \ref{ch:GenLagFct}. We also introduce a unitary operator $\calF_\calO$ on $L^2(\calO,\td\mu)$ which resembles the euclidean Fourier transform.

Throughout this chapter $V$ will always denote a simple real Jordan algebra, $\alpha$ a Cartan involution on $V$ and we further assume that $V^+$ is simple.

\section{Construction of the minimal representation}\label{sec:MinRepConstruction}

We first construct a representation of $\frakg$ on $C^\infty(\calO_\lambda)$ for any $\lambda\in\calW$. For the minimal non-zero discrete Wallach point $\lambda=\lambda_1=\frac{r_0d}{2r}$ we then define a subrepresentation $W$ of $C^\infty(\calO_1)$ which is contained in $L^2(\calO_1,\td\mu_1)$. Finally we show that $W$ can be integrated to a unitary irreducible representation of a finite cover of $G$ on the Hilbert space $L^2(\calO_1,\td\mu_1)$. For the special cases $V=\Sym(n,\RR)$ and $V=\RR^{p,q}$ we identify this representation with known representations.

\subsection{Infinitesimal representations on $C^\infty(\calO_\lambda)$}\label{sec:InfinitesimalRep}

On each Hilbert space $L^2(\calO_\lambda,\td\mu_\lambda)$, $\lambda\in\calW$, we define a representation $\rho_\lambda$\index{notation}{rholambda@$\rho_\lambda$} of the parabolic subgroup $P$ by
\begin{align}
 \rho_\lambda(n_a)\psi(x) &:= e^{i(x|a)}\psi(x) & n_a &\in N,\label{eq:L2Rep1}\\
 \rho_\lambda(g)\psi(x) &:= \chi(g^*)^{\frac{\lambda}{2}}\psi(g^*x) & g &\in L\label{eq:L2Rep2}
\end{align}
for $\psi\in L^2(\calO_\lambda,\td\mu_\lambda)$.

\begin{proposition}\label{prop:MackeyRep}
For $\lambda\in\calW$ the representation $\rho_\lambda$ of $P$ on $L^2(\calO_\lambda,\td\mu_\lambda)$ is unitary and irreducible, even if restricted to the identity component $P_0$ of $P$.
\end{proposition}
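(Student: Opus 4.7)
The plan is to verify the three separate claims: that $\rho_\lambda$ defines a homomorphism, that it is unitary, and that it is irreducible (even for $P_0$). The first two are essentially direct calculations; the third will invoke Mackey's analysis of semidirect products.

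First, I would check that $\rho_\lambda$ is a well-defined group homomorphism on $P = L \ltimes N$. Separately on $L$ and on $N$ the homomorphism property is immediate: on $N$ it follows from additivity of $a \mapsto (x|a)$, and on $L$ one uses $(g_1 g_2)^* = g_2^* g_1^*$ together with the fact that $\chi$ is a character. The nontrivial compatibility is the semidirect-product relation $g n_a g^{-1} = n_{ga}$, which under $\rho_\lambda$ becomes the identity $(g^* x \mid a) = (x \mid g a)$; this is precisely the defining property of the adjoint $g^*$ with respect to $(-|-)$, so it holds.

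Next, unitarity. The operators $\rho_\lambda(n_a)$ are multiplication by functions of modulus one, hence are unitary. For $g \in L$, a change of variables $y = g^* x$ in the integral
\begin{align*}
\|\rho_\lambda(g)\psi\|^2 = |\chi(g^*)|^\lambda \int_{\calO_\lambda} |\psi(g^* x)|^2\, \td\mu_\lambda(x)
\end{align*}
uses the equivariance $\td\mu_\lambda(g^* x) = \chi(g^*)^\lambda\, \td\mu_\lambda(x)$ from \eqref{eq:dmulambdaEquivariance} together with $\chi(g^*) > 0$ (since $\chi = |\Det|^{r/n}$ on $L$ by Proposition \ref{prop:CharLambda}) to cancel the scalar factor and leave $\|\psi\|^2$. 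This is where the exponent $\lambda/2$ in the definition is forced: it is exactly half of the modular character of the $L$-action.

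For irreducibility I would invoke the Mackey machine for the semidirect product $P = L \ltimes N$ with $N$ abelian. The unitary dual $\hat N$ identifies with $V$ via the pairing $(x \mid a)$, and the conjugation action of $L$ on $N$ computed in \eqref{eq:Adgn} transports to the action $g \cdot x = g^{-*} x$ on $V$. The $L$-orbits under this action coincide (via the involution $g \mapsto g^{-*}$) with the structure-group orbits $\calO_k$, so our orbit $\calO_\lambda$ is exactly the support of a Mackey-orbit. Fixing $x_0 \in \calO_\lambda$ (e.g.\ $x_0 = e_k$ for the discrete Wallach points, or $x_0 = e$ for $\lambda > (r_0-1)\frac{r_0 d}{2r}$), one has $\calO_\lambda \cong L/L_{x_0}$, and the space $L^2(\calO_\lambda, \td\mu_\lambda)$ with the action $\rho_\lambda$ is isomorphic to the Mackey-induced representation $\mathrm{Ind}_{L_{x_0} \ltimes N}^P \widetilde\chi_{x_0}$, where $\widetilde\chi_{x_0}$ is the character $n_a \mapsto e^{i(x_0\mid a)}$ of $N$ extended trivially across $L_{x_0}$; the twist $\chi(g^*)^{\lambda/2}$ in the definition of $\rho_\lambda$ is precisely the half-density factor that implements this isomorphism. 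Since the inducing representation is one-dimensional and hence irreducible, Mackey's irreducibility criterion produces irreducibility of $\rho_\lambda$. For the restriction to $P_0 = L_0 \ltimes N$, observe that $L_0$ already acts transitively on $\calO_\lambda$ by the polar decompositions in Proposition \ref{prop:KAH} and Corollary \ref{cor:PolarOk}, so the same Mackey argument applies verbatim to $P_0$.

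The main technical obstacle is the bookkeeping that matches the half-density twist $\chi(g^*)^{\lambda/2}$ with the Radon--Nikodym derivative of the $L$-action on $(\calO_\lambda, \td\mu_\lambda)$, so that the $L^2$ model really realizes the Mackey-induced Hilbert space; once this identification is made, irreducibility is a formal consequence.
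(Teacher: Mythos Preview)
Your argument is correct, but the irreducibility proof takes a different route from the paper. The paper argues directly via Schur's Lemma: if a bounded operator $A$ commutes with $\rho_\lambda(P_0)$, then commutation with the $N$-action implies (after pairing with a test function and recognizing both sides as euclidean Fourier transforms of tempered distributions on $V$) that $A\phi\cdot\overline{\psi}=\phi\cdot\overline{A\psi}$ $\mu_\lambda$-a.e.\ for all $\phi,\psi$; choosing $\psi(x)=e^{-|x|^2}$ shows $A$ is multiplication by a measurable function $u$. Commutation with $L_0$ then forces $u$ to be $L_0$-invariant, hence constant by transitivity of $L_0$ on $\calO_\lambda$.

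Your Mackey-machine approach is equally valid and more structural: it locates $\rho_\lambda$ precisely within Mackey's classification for semidirect products with abelian normal subgroup, and makes transparent why the half-density twist $\chi(g^*)^{\lambda/2}$ appears. The paper's approach, by contrast, is more self-contained---it needs no appeal to induction theory, only injectivity of the Fourier transform on $\calS'(V)$---and avoids the bookkeeping you flag at the end (matching the twist with the Radon--Nikodym cocycle). Both proofs ultimately rest on the same two facts: the characters $e^{i(x|\cdot)}$ separate points of $\calO_\lambda$, and $L_0$ acts transitively there. Your verification that $\rho_\lambda$ is a genuine homomorphism is a point the paper leaves implicit.
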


\begin{proof}
Clearly, the operators $\rho_\lambda(n_a)$, $a\in V$, are unitary on $L^2(\calO_\lambda,\td\mu_\lambda)$. Unitarity of the $L$-action follows from \eqref{eq:dmulambdaEquivariance} and hence $\rho_\lambda$ defines a unitary representation of $P$ on $L^2(\calO_\lambda,\td\mu_\lambda)$. It remains to show irreducibility.\\
For this we use Schur's Lemma. Suppose $A$ is a unitary operator on $L^2(\calO_\lambda,\td\mu_\lambda)$ which intertwines the $P_0$-action. Since $A$ intertwines the $N$-action, we have
\begin{align*}
 \int_{\calO_\lambda}{e^{-i(x|a)}A\phi(x)\overline{\psi(x)}\td\mu_\lambda(x)} &= \int_{\calO_\lambda}{A(e^{-i(-|a)}\phi)(x)\overline{\psi(x)}\td\mu_\lambda(x)}\\
 &= \int_{\calO_\lambda}{e^{-i(x|a)}\phi(x)\overline{A\psi(x)}\td\mu_\lambda(x)}
\end{align*}
for all $\phi,\psi\in L^2(\calO_\lambda,\td\mu_\lambda)$ and every $a\in V$. This means, that the euclidean Fourier transforms of the tempered distributions $A\phi(x)\overline{\psi(x)}\td\mu_\lambda(x)$ and $\phi(x)\overline{A\psi(x)}\td\mu_\lambda(x)$ agree in $\calS'(V)$. The Fourier transform is an isomorphism of $\calS'(V)$ and hence,
\begin{align*}
 A\phi(x)\overline{\psi(x)} &= \phi(x)\overline{A\psi(x)} & \mbox{$\mu_\lambda$-almost everywhere.}
\end{align*}
The function $\psi(x):=e^{-|x|^2}$ is clearly an $L^2$-function and we obtain
\begin{align*}
 A\phi(x) &= (\psi(x)^{-1}A\psi(x))\cdot\phi(x).
\end{align*}
Therefore, $A$ is given by multiplication with the measurable function $u(x):=\psi(x)^{-1}A\psi(x)$. Now, $A$ also commutes with the $L_0$-action. This implies that $u$ is $L_0$-invariant. Since $L_0$ acts transitively on $\calO_\lambda$, $u$ has to be constant. This means that $A$ is a scalar multiple of the identity. By Schur's Lemma the $P_0$-representation $\rho$ has to be irreducible and the proof is complete.
\end{proof}

It is a natural question to ask whether $\rho_\lambda$ extends to a unitary irreducible representation of $G$ (or some finite cover) on $L^2(\calO_\lambda,\td\mu_\lambda)$. One possible way to extend $\rho_\lambda$ is to extend the derived representation $\td\rho_\lambda$ of $\frakp^{\textup{max}}$ to $\frakg$ and integrate it to a group representation.

We define a Lie algebra representation $\td\pi_\lambda$\index{notation}{dpilambda@$\td\pi_\lambda$} of $\frakg$ on $C^\infty(\calO_\lambda)$ which extends the derived action of $\rho_\lambda$. On $\frakp^{\textup{max}}=\frakn+\frakl$ we let
\begin{align*}
 \td\pi_\lambda(X) &:= \left.\frac{\td}{\td t}\right|_{t=0}\rho_\lambda(e^{tX}) & \forall\,X\in\frakp^{\textup{max}}.
\end{align*}
For $\psi\in C^\infty(\calO_\lambda)$, the representation $\td\pi_\lambda$ is given by
\begin{align}
 \td\pi_\lambda(X)\psi(x) &= i(x\psi(x)|u) & \mbox{for }X &= (u,0,0),\label{eq:L2DerRep1}\\
 \td\pi_\lambda(X)\psi(x) &= D_{T^*x}\psi(x)+\frac{r\lambda}{2n}\Tr(T^*)\psi(x) & \mbox{for }X &= (0,T,0),\label{eq:L2DerRep2}\\
\intertext{where we have used Proposition \ref{prop:CharLambda} for the $\frakl$-action. In view of the Gelfand-Naimark decomposition \eqref{eq:GelfandNaimark} it remains to define $\td\pi_\lambda$ on $\overline{\frakn}$ in order to extend it to a representation of the whole Lie algebra $\frakg$. For this we use the Bessel operator $\calB_\lambda$ (see Section \ref{sec:BesselOp} for the definition). By Theorem \ref{thm:BlambdaTangential} the operator $\calB_\lambda$ is tangential to $\calO_\lambda$ and hence, for $\psi\in C^\infty(\calO_\lambda)$ the formula}
 \td\pi_\lambda(X)\psi(x) &= \frac{1}{i}(\calB_\lambda\psi(x)|v) & \mbox{for }X &= (0,0,-v),\label{eq:L2DerRep3}
\end{align}
defines a function $\td\pi_\lambda(X)\psi\in C^\infty(\calO_\lambda)$.

\begin{proposition}\label{prop:LieAlgRep}
For $\lambda\in\calW$ the formulas \eqref{eq:L2DerRep1}, \eqref{eq:L2DerRep2} and \eqref{eq:L2DerRep3} define a representation $\td\pi_\lambda$ of $\frakg$ on $C^\infty(\calO_\lambda)$. This representation is compatible with $\rho_\lambda$, i.e. for $p\in P$ and $X\in\frakg$ we have
\begin{align}
 \rho_\lambda(p)\td\pi_\lambda(X) &= \td\pi_\lambda(\Ad(p)X)\rho_\lambda(p).\label{eq:CompatibilityRhoPi}
\end{align}
\end{proposition}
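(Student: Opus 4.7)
The plan is to verify the bracket relations $[\td\pi_\lambda(X),\td\pi_\lambda(Y)] = \td\pi_\lambda([X,Y])$ by decomposing $\frakg = \frakn + \frakl + \nfo$ and checking each pair of summands, then establishing the compatibility \eqref{eq:CompatibilityRhoPi} on each summand separately.

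First, on $\frakp^{\textup{max}} = \frakn + \frakl$ the map $\td\pi_\lambda$ is, by \eqref{eq:L2DerRep1} and \eqref{eq:L2DerRep2}, simply the derived representation of the group representation $\rho_\lambda$ of $P$ on $L^2(\calO_\lambda,\td\mu_\lambda)$ (extended to act on $C^\infty(\calO_\lambda)$). Unitarity of $\rho_\lambda$ was shown in Proposition \ref{prop:MackeyRep}, so all brackets involving only $\frakn$ and $\frakl$ are automatic, as is the compatibility formula for $X \in \frakp^{\textup{max}}$. Next, the bracket $[\frakl,\nfo] \subseteq \nfo$ is handled by differentiating the $\Str(V)$-equivariance of the Bessel operator from Lemma \ref{lem:BnuEquiv}; this also yields the compatibility $\rho_\lambda(g)\td\pi_\lambda((0,0,-v)) = \td\pi_\lambda(\Ad(g)(0,0,-v))\rho_\lambda(g)$ for $g\in L$. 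The relation $[\nfo,\nfo] = 0$ is translated via the formula \eqref{eq:L2DerRep3} into the statement that $\psi \mapsto (\calB_\lambda((\calB_\lambda\psi|v_2))|v_1)$ is symmetric in $v_1,v_2$; expanding both iterations of $\calB_\lambda$ by the product rule \eqref{eq:BnuProdRule} and using that $P(a,b)$ is self-adjoint with respect to $\tau$, one checks that all non-symmetric terms cancel.

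The main obstacle is the relation $[\frakn,\nfo] \subseteq \frakl$. Writing $X_1 = (u,0,0)$ and $X_2 = (0,0,-v)$, the Lie bracket \eqref{eq:LieBracket} gives $[X_1,X_2] = (0,-2(u\Box v),0)$, so one must show
\begin{equation*}
 [\td\pi_\lambda(X_1),\td\pi_\lambda(X_2)]\psi = -2D_{(u\Box v)^{*}x}\psi - \tfrac{r\lambda}{n}\Tr((u\Box v)^{*})\psi.
\end{equation*}
Applying \eqref{eq:BnuProdRule} to $\calB_\lambda((x|u)\psi)$, using that $\calB_\lambda$ applied to the linear function $x\mapsto(x|u) = \tau(x,\alpha u)$ yields the constant $\lambda\alpha u$, the commutator reduces to $-\lambda(\alpha u|v)\psi - 2(P(\alpha u,\partial\psi/\partial x)x\,|\,v)$. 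The first-order part is identified with $-2D_{(u\Box v)^{*}x}\psi$ by invoking the identities $(a\Box b)c = P(a,c)b$, $(u\Box v)^{\#} = v\Box u$ and the $\tau$-adjoint relation $(u\Box v)^{*} = \alpha v\Box\alpha u$; the scalar part matches via $\Tr(u\Box v) = \Tr(L(uv)) = \tfrac{n}{r}\tau(u,v)$ from Lemma \ref{lem:Trtr} (the derivation part $[L(u),L(v)]$ being traceless). This bookkeeping is tedious but mechanical once the correct adjoint identities are in place.

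Finally, the compatibility \eqref{eq:CompatibilityRhoPi} is proven by separately checking it on generators of $P$. For $p \in L$ it is equivalent, via \eqref{eq:Adgn}, \eqref{eq:Adgl}, \eqref{eq:Adgnbar}, to the already-used equivariance of $\calB_\lambda$ together with the transformation of the character $\chi^{\lambda/2}$. For $p = n_a \in N$ it suffices to verify the infinitesimal version $[\td\pi_\lambda((a,0,0)),\td\pi_\lambda(Y)] = \td\pi_\lambda([(a,0,0),Y])$ for $Y\in\nfo$, which is precisely the $[\frakn,\nfo]$ computation above, and then exponentiate using the explicit formula \eqref{eq:AdNnbar} for $\Ad(\exp(a,0,0))$ on $\nfo$. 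Extending by the Lie algebra axioms already verified, one obtains \eqref{eq:CompatibilityRhoPi} for all $X \in \frakg$.
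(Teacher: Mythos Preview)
Your proposal is correct and the computational content is essentially the same as the paper's, but the logical order is reversed. The paper first establishes the compatibility \eqref{eq:CompatibilityRhoPi} by direct computation --- the substantive case being $p=n_u\in N$ and $X=(0,0,-v)\in\nfo$, where one expands $\calB_\lambda(e^{i(x|u)}\psi)$ via the product rule \eqref{eq:BnuProdRule} and matches the three pieces of $\Ad(n_u)X=(P(u)v,-2u\Box v,-v)$ --- and only afterwards obtains the bracket relations $[\td\pi_\lambda(X),\td\pi_\lambda(Y)]=\td\pi_\lambda([X,Y])$ for $Y\in\frakp^{\textup{max}}$ by differentiating $p=e^{tY}$ at $t=0$; the only bracket checked separately is $[\nfo,\nfo]=0$, via the same coordinate symmetry argument you sketch. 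Your route instead checks all six bracket relations first (in particular $[\frakn,\nfo]$ directly) and then ``exponentiates'' to the group-level compatibility with $N$. This works, but the exponentiation step deserves a word of justification: because $\td\pi_\lambda((u,0,0))$ is a multiplication operator and $\td\pi_\lambda(Y)$ for $Y\in\nfo$ is a second-order differential operator, both $\rho_\lambda(n_{tu})\td\pi_\lambda(Y)\rho_\lambda(n_{-tu})$ and $\td\pi_\lambda(\Ad(n_{tu})Y)$ are \emph{polynomial} in $t$, so the ODE $H'(t)=[\td\pi_\lambda((u,0,0)),H(t)]$ together with $H(0)=0$ forces $H\equiv0$. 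The paper's ordering sidesteps this small extra argument at the cost of doing the group-level computation up front; your ordering is equally valid and makes the purely infinitesimal structure more visible.
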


\begin{proof}
We first show the compatibility condition \eqref{eq:CompatibilityRhoPi}. For $X\in\frakp^{\textup{max}}=\frakn+\frakl$ this condition is immediate since $\td\pi_\lambda(X)$ is just the derived action of the representation $\rho_\lambda$. It remains to show \eqref{eq:CompatibilityRhoPi} for $p\in P$ and $X=(0,0,-v)\in\overline{\frakn}$, $v\in V$.
\begin{enumerate}
\item[\textup{(a)}] Let $p=n_u\in N$, $u\in V$. Then $\Ad(p)X=(P(u)v,-2u\Box v,-v)$ by \eqref{eq:AdNnbar}. We calculate separately for $(P(u)v,0,0)$, $(0,-2u\Box v,0)$ and $(0,0,-v)$.
\begin{enumerate}
\item[\textup{(1)}] First, we have
\begin{align*}
 \td\pi_\lambda(P(u)v,0,0)\rho_\lambda(p)\psi(x) &= ie^{i(x|u)}(x|P(u)v)\psi(x).
\end{align*}
\item[\textup{(2)}] The adjoint of $u\Box v$ is $(u\Box v)^*=(\alpha v)\Box(\alpha u)$ and its trace is (using Lemma \ref{lem:Trtr})
\begin{align*}
 \Tr((u\Box v)^*) &= \Tr(u\Box v) = \Tr(L(uv)) = \frac{n}{r}\tau(u,v) = \frac{n}{r}(\alpha u|v).
\end{align*}
Hence,
\begin{align*}
 & \td\pi_\lambda(0,-2u\Box v,0)\rho_\lambda(p)\psi(x)\\
 ={}& -2D_{(u\Box v)^*x}\left[e^{i(-|u)}\psi\right](x)-\frac{r\lambda}{n}\Tr((u\Box v)^*)e^{i(x|u)}\psi(x)\\
 ={}& -2i(x|(u\Box v)u)e^{i(x|u)}\psi(x)-2e^{i(x|u)}D_{((\alpha v)\Box(\alpha u))x}\psi(x)\\
 & \ \ \ \ \ \ \ \ \ \ \ \ \ \ \ \ \ \ \ \ \ \ \ \ \ \ \ \ \ \ \ \ \ \ \ \ \ \ \ \ \ \ \ \ \ \ \ \ \ \ \ \ \ \ \ \ \ \ \ \ -\lambda e^{i(x|u)}(\alpha u|v)\psi(x).
\end{align*}
\item[\textup{(3)}] Finally, with Lemma \ref{lem:BnuProdRule} we obtain
\begin{align*}
 & \td\pi_\lambda(0,0,-v)\rho_\lambda(p)\psi(x) = \frac{1}{i}\left(\left.\calB_\lambda\left[e^{i(-|u)}\psi\right](x)\right|v\right)\\
 ={}& \frac{1}{i}\left(\left.\calB_\lambda\psi(x)\cdot e^{i(x|u)}+2P\left(\frac{\partial\psi}{\partial x}(x),\frac{\partial}{\partial x}e^{i(x|u)}\right)x+\psi(x)\cdot\calB_\lambda e^{i(x|u)}\right|v\right)\\
 ={}& \frac{1}{i}e^{i(x|u)}\left(\left.\calB_\lambda\psi(x)\right|v\right)+2e^{i(x|u)}\left(\left.P\left(\frac{\partial\psi}{\partial x}(x),\alpha u\right)x\right|v\right)\\
 & \ \ \ \ \ \ \ \ \ \ \ \ \ \ \ \ \ \ \ \ \ \ \ \ \ \,\ \ \ \ \ \ \ \ \ \ \ \ \ \ \ \ \ \ \ +\frac{1}{i}e^{i(x|u)}(P(i\alpha u)x+i\lambda\alpha u|v)\psi(x)
\end{align*}
\begin{align*}
 ={}& \frac{1}{i}e^{i(x|u)}\left(\left.\calB_\lambda\psi(x)\right|v\right)+2e^{i(x|u)}D_{((\alpha v)\Box(\alpha u))x}\psi(x)\\
 & \ \ \ \ \ \ \ \ \ \ \ \ \ \ \ \ \ \ \ \ \ \ \ \ \ \ \ \ \ \ \ \ +ie^{i(x|u)}(x|P(u)v)\psi(x)+\lambda e^{i(x|u)}(\alpha u|v)\psi(x)
\end{align*}
since
\begin{align*}
 \left(\left.P\left(\frac{\partial\psi}{\partial x},\alpha u\right)x\right|v\right) &= \tau\left(((\alpha v)\Box(\alpha u))x,\frac{\partial\psi}{\partial x}\right) = D_{((\alpha v)\Box(\alpha u))x}\psi(x).
\end{align*}
\end{enumerate}
Putting the three summands together gives
\begin{align*}
 \td\pi_\lambda(\Ad(p)X)\rho_\lambda(p)\psi(x) &= \frac{1}{i}e^{i(x|u)}\left(\left.\calB_\lambda\psi(x)\right|v\right) = \rho_\lambda(p)\td\pi_\lambda(X)\psi(x).
\end{align*}
\item[\textup{(b)}] Now, let $p=g\in L$. Then $(\Ad(p)X)=(0,0,-g^{-\#}v)$ by \eqref{eq:AdLieg}. In this case the compatibility condition \eqref{eq:CompatibilityRhoPi} is exactly the statement of Lemma \ref{lem:BnuEquiv}.
\end{enumerate}
Now we show that $\td\pi_\lambda$ is a Lie algebra representation, i.e. for $X,Y\in\frakg$ we have
\begin{align}
 \td\pi_\lambda(X)\td\pi_\lambda(Y)-\td\pi_\lambda(Y)\td\pi_\lambda(X) &= \td\pi_\lambda([X,Y]).\label{eq:AbstractFormulaLieAlgRep}
\end{align}
For $Y\in\frakp^{\textup{max}}$ we have $\td\pi_\lambda(Y)=\left.\frac{\td}{\td t}\right|_{t=0}\rho_\lambda(e^{tY})$. Therefore, for $Y\in\frakp^{\textup{max}}$ the identity \eqref{eq:AbstractFormulaLieAlgRep} follows from the compatibility condition \eqref{eq:CompatibilityRhoPi} by putting $p:=e^{tY}$ and differentiating with respect to $t$ at $t=0$. By the symmetry of \eqref{eq:AbstractFormulaLieAlgRep} in $X$ and $Y$ the remaining case is $X,Y\in\overline{\frakn}$. So let $X=(0,0,-u)$, $Y=(0,0,-v)$, $u,v\in V$. Then $[X,Y]=0$ by \eqref{eq:LieBracket} and we find that
\begin{align*}
 & \td\pi_\lambda(X)\td\pi_\lambda(Y)\psi(x) = -(\calB_\lambda(\calB_\lambda\psi|v)(x)|u)\\
 ={}& -\left[\sum_{\alpha,\beta}{\frac{\partial^2(\calB_\lambda\psi|v)}{\partial x_\alpha\partial x_\beta}(x)(P(\overline{e}_\alpha,\overline{e}_\beta)x|u)}+\sum_\alpha{\frac{\partial(\calB_\lambda\psi|v)}{\partial x_\alpha}(x)(\overline{e}_\alpha|u)}\right]\\
 ={}& -\left[ \sum_{\alpha,\beta,\gamma,\delta}{\frac{\partial^2}{\partial x_\alpha\partial x_\beta}\left[\frac{\partial^2\psi}{\partial x_\gamma\partial x_\delta}(x)(P(\overline{e}_\gamma,\overline{e}_\delta)x|v)\right](P(\overline{e}_\alpha,\overline{e}_\beta)x|u)} \right.\\
 & \ \ \ \ \ \ \ \ \ \ \ \ \ \ \ \ \ \ \ \ +\sum_{\alpha,\beta,\gamma}{\frac{\partial^2}{\partial x_\alpha\partial x_\beta}\left[\frac{\partial\psi}{\partial x_\gamma}(x)(\overline{e}_\gamma|v)\right](P(\overline{e}_\alpha,\overline{e}_\beta)x|u)}\\
 & \ \ \ \ \ \ \ \ \ \ \ \ \ \ \ \ \ \ \ \ \ \ \ \ \ \ \ \ \ \ \ \ \ \ \ \ \ +\sum_{\alpha,\gamma,\delta}{\frac{\partial}{\partial x_\alpha}\left[\frac{\partial^2\psi}{\partial x_\gamma\partial x_\delta}(x)(P(\overline{e}_\gamma,\overline{e}_\delta)x|v)\right](\overline{e}_\alpha|u)}\\
 & \ \ \ \ \ \ \ \ \ \ \ \ \ \ \ \ \ \ \ \ \ \ \ \ \ \ \ \ \ \ \ \ \ \ \ \ \ \ \ \ \ \ \ \ \ \ \ \ \ \ \ \ \ \ \ \left.+\sum_{\alpha,\gamma}{\frac{\partial}{\partial x_\alpha}\left[\frac{\partial\psi}{\partial x_\gamma}(x)(\overline{e}_\gamma|v)\right](\overline{e}_\alpha|u)}\right]\\
 ={}& -\left[ \sum_{\alpha,\beta,\gamma,\delta}{\frac{\partial^4\psi}{\partial x_\alpha\partial x_\beta\partial x_\gamma\partial x_\delta}(x)(P(\overline{e}_\gamma,\overline{e}_\delta)x|v)(P(\overline{e}_\alpha,\overline{e}_\beta)x|u)} \right.\\
 & \ \ \ +2\sum_{\alpha,\beta,\gamma,\delta}{\frac{\partial^3\psi}{\partial x_\alpha\partial x_\gamma\partial x_\delta}(x)(P(\overline{e}_\gamma,\overline{e}_\delta)e_\beta|v)(P(\overline{e}_\alpha,\overline{e}_\beta)x|u)}
\end{align*}
\begin{align*}
 & \ \ \ \ \ \ \ \ \ \ \ \ \ \ \ \ \ \ \ \ +\sum_{\alpha,\beta,\gamma}{\frac{\partial^3\psi}{\partial x_\alpha\partial x_\beta\partial x_\gamma}(x)(\overline{e}_\gamma|v)(P(\overline{e}_\alpha,\overline{e}_\beta)x|u)}\\
 & \ \ \ \ \ \ \ \ \ \ \ \ \ \ \ \ \ \ \ \ \ \ \ \ \ \ \ \ \ \ \ \ \ \ \ \ \ +\sum_{\alpha,\gamma,\delta}{\frac{\partial^3\psi}{\partial x_\alpha\partial x_\gamma\partial x_\delta}(x)(P(\overline{e}_\gamma,\overline{e}_\delta)x|v)(\overline{e}_\alpha|u)}\\
 & \ \ \ \ \ \ \ \ \ \ \ \ \ \ \ \ \ \ \ \ \ \ \ \ \ \ \ \ \ \ \ \ \ \ \ \ \ +\sum_{\alpha,\gamma,\delta}{\frac{\partial^2\psi}{\partial x_\gamma\partial x_\delta}(x)(P(\overline{e}_\gamma,\overline{e}_\delta)e_\alpha|v)(\overline{e}_\alpha|u)}\\
 & \ \ \ \ \ \ \ \ \ \ \ \ \ \ \ \ \ \ \ \ \ \ \ \ \ \ \ \ \ \ \ \ \ \ \ \ \ \ \ \ \ \ \ \ \ \ \ \ \ \ \ \ \ \ \ \ \ \ \ \ \left.+\sum_{\alpha,\gamma}{\frac{\partial^2\psi}{\partial x_\alpha\partial x_\gamma}(x)(\overline{e}_\gamma|v)(\overline{e}_\alpha|u)}\right].
\end{align*}
The first and the last summand are clearly symmetric in $u$ and $v$. The same holds for the sum of third and fourth summand. The fifth summand can be written as
\begin{multline*}
 \sum_{\gamma,\delta}{\frac{\partial^2\psi}{\partial x_\gamma\partial x_\delta}(x)\left(\left.\sum_\alpha{\tau(P(\overline{e}_\gamma,\overline{e}_\delta)\alpha v,e_\alpha)\overline{e}_\alpha}\right|u\right)}\\
 = \sum_{\gamma,\delta}{\frac{\partial^2\psi}{\partial x_\gamma\partial x_\delta}(x)(P(\overline{e}_\gamma,\overline{e}_\delta)\alpha v|u)} = \sum_{\gamma,\delta}{\frac{\partial^2\psi}{\partial x_\gamma\partial x_\delta}(x)\tau(P(\alpha\overline{e}_\gamma,\alpha\overline{e}_\delta)v|u)}
\end{multline*}
which is also symmetric in $u$ and $v$ since $P(\alpha\overline{e}_\gamma,\alpha\overline{e}_\delta)$ is a symmetric operator with respect to the trace form $\tau$. The same method applies for the second summand. Together we obtain that $\td\pi_\lambda(X)\td\pi_\lambda(Y)\psi(x)$ is symmetric in $X,Y\in\overline{\frakn}$ which means that
\begin{align*}
 \td\pi_\lambda(X)\td\pi_\lambda(Y)\psi(x) - \td\pi_\lambda(Y)\td\pi_\lambda(X)\psi(x) = 0.
\end{align*}
Hence, \eqref{eq:AbstractFormulaLieAlgRep} holds for all $X,Y\in\frakg$ and $\td\pi_\lambda$ is a Lie algebra representation. This finishes the proof.
\end{proof}

\begin{remark}
In Section \ref{sec:PrincipalSeries} we show that $\td\pi_\lambda$ is the Fourier transformed picture of a principal series representation in the non-compact picture. The definition \eqref{eq:L2DerRep3} of the $\overline{\frakn}$-action is motivated by these considerations. This also gives an alternative proof that $\td\pi_\lambda$ is indeed a Lie algebra representation.
\end{remark}

\subsection{Construction of the $(\frakg,\frakk)$-module}\label{sec:ConstructiongkModule}

From now on we assume that the split rank $r_0\geq2$ and consider only the minimal orbit $\calO_1$. For convenience, put $\lambda=\lambda_1:=\frac{r_0d}{2r}$\index{notation}{lambda1@$\lambda_1$}, $\td\pi:=\td\pi_\lambda$\index{notation}{dpi@$\td\pi$}, $\calO:=\calO_1$\index{notation}{O@$\calO$}, $\td\mu:=\td\mu_1$\index{notation}{dmu@$\td\mu$} and $\calB:=\calB_\lambda$\index{notation}{B@$\calB$}. We use the notation
\begin{align*}
 X\cdot\psi &:= \td\pi(X)\psi,
\end{align*}
for the action of $X\in\frakg$ on a function $\psi\in C^\infty(\calO)$. The representation $\td\pi$ clearly extends to a representation of the \textit{universal enveloping algebra}\index{subject}{universal enveloping algebra} $\calU(\frakg)$\index{notation}{Ug@$\calU(\frakg)$} on $C^\infty(\calO)$ whose action will be denoted similarly.

Let $\psi_0$ be the radial function on $\calO$ defined by
\begin{align}
 \psi_0(x) &:= \widetilde{K}_{\frac{\nu}{2}}(|x|),& x\in\calO,\label{def:Psi0}\index{notation}{psi0@$\psi_0$}
\end{align}
where $\widetilde{K}_\alpha(z)$ denotes the normalized $K$-Bessel function as introduced in Appendix \ref{app:BesselFunctions} and $\nu$ is the parameter defined in \eqref{eq:DefMuNu}. By \eqref{eq:DiffEqModBessel} the function $\widetilde{K}_\alpha$ is a solution of the second order equation $B_\alpha u=0$, where $B_\alpha$ is the operator defined in \eqref{eq:OrdinaryBesselOp}.

We put
\begin{align*}
 W_0 &:= \calU(\frakk)\psi_0 & \mbox{and} && W &:= \calU(\frakg)\psi_0.\index{notation}{W0@$W_0$}\index{notation}{W@$W$}
\end{align*}
$W$ is clearly a $\frakg$-subrepresentation of $C^\infty(\calO)$ and $W_0$ is a $\frakk$-subrepresentation of $W$. To show that $W$ is actually a $(\frakg,\frakk)$-module, we have to show that it is $\frakk$-finite. The first step is to show that the generator $\psi_0$ is $\frakk$-finite. This can be done by direct computation. For the precise statement we fix the following notation: Let $\calP$ be any space of polynomials on $V$. Then we denote by $\widetilde{K}_\alpha\otimes\calP$ the space of functions
\begin{align*}
 \widetilde{K}_\alpha\otimes\varphi:\calO\rightarrow\CC,\,x\mapsto\widetilde{K}_\alpha(|x|)\varphi(x)
\end{align*}
with $\varphi\in\calP$. For $\calP$ we use
\begin{align*}
 \CC[V]_{\geq k} &:= \{p\in\CC[V]:p\mbox{ is a sum of homogeneous polynomials of degree }\geq k\},\index{notation}{CVgeqk@$\CC[V]_{\geq k}$}
\end{align*}
or the space of \textit{spherical harmonics}\index{subject}{spherical harmonics}
\begin{align*}
 \calH^k(\RR^n) &:= \{p\in\CC[x_1,\ldots,x_n]:p\mbox{ is homogeneous of degree $k$ and harmonic}\}.\index{notation}{HkRn@$\calH^k(\RR^n)$}
\end{align*}

\begin{proposition}\label{prop:Kfinite}
Let $V$ be a simple Jordan algebra with simple $V^+$. Then the $\frakk$-module $W_0$ is finite-dimensional if and only if $V\ncong\RR^{p,q}$ with $p+q$ odd, $p,q\geq2$. If this is the case, $W_0\cong E^{\alpha_0}$ with
\begin{align}
 \alpha_0 &:= \begin{cases}\frac{d}{4}\sum_{i=1}^{r_0}{\gamma_i} & \mbox{if $V$ is euclidean,}\\0 & \mbox{if $V$ is non-euclidean of rank $\geq3$,}\\\frac{1}{2}\left|d_0-\frac{d}{2}\right|\gamma_1+\frac{1}{2}\left(d_0-\frac{d}{2}\right)\gamma_2 & \mbox{if $V\cong\RR^{p,q}$, $p,q\geq2$.}\end{cases}\label{eq:DefGamma0}\index{notation}{alpha0@$\alpha_0$}
\end{align}
More precisely:
\begin{enumerate}
\item[\textup{(a)}] If $V$ is euclidean, then
 \begin{align*}
  W_0 &= \CC\psi_0
 \end{align*}
 and the center $Z(\frakk)=\RR(e,0,-e)$ acts by
 \begin{align*}
  \td\pi(e,0,-e)\psi_0 &= \frac{rd}{2}i\psi_0.
 \end{align*}
\item[\textup{(b)}] If $V$ is non-euclidean of rank $r\geq3$, then
 \begin{align*}
  W_0 &= \CC\psi_0
 \end{align*}
 and $\psi_0$ is a $\frakk$-spherical vector.
\item[\textup{(c)}] If $V=\RR^{p,q}$ with $p+q$ even, $p,q\geq2$, then
 \begin{align}
  W_0 &= \begin{cases}\displaystyle\ \bigoplus_{k=0}^{\frac{q-p}{2}}{\widetilde{K}_{\frac{\nu}{2}+k}\otimes\calH^k(\RR^p)}\cong\calH^{\frac{q-p}{2}}(\RR^{p+1}) & \mbox{if $p\leq q$,}\\\displaystyle\ \bigoplus_{k=0}^{\frac{p-q}{2}}{\widetilde{K}_{\frac{\nu}{2}+k}\otimes\calH^k(\RR^q)}\cong\calH^{\frac{p-q}{2}}(\RR^{q+1}) & \mbox{if $p\geq q$,}\end{cases}\label{eq:MinKtypeRk2finite}
 \end{align}
\item[\textup{(d)}] If $V=\RR^{p,q}$ with $p+q$ odd, $p,q\geq2$, then
 \begin{align}
  W_0 &= \begin{cases}\displaystyle\ \bigoplus_{k=0}^\infty{\widetilde{K}_{\frac{\nu}{2}+k}\otimes\calH^k(\RR^p)} & \mbox{if $p\leq q$,}\\\displaystyle\ \bigoplus_{k=0}^\infty{\widetilde{K}_{\frac{\nu}{2}+k}\otimes\calH^k(\RR^q)} & \mbox{if $p\geq q$.}\end{cases}\label{eq:MinKtypeRk2infinite}
 \end{align}
\end{enumerate}
\end{proposition}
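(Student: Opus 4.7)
The plan is to compute the action of $\frakk$ on the radial generator $\psi_0(x)=\widetilde K_{\nu/2}(|x|)$ by decomposing an arbitrary element as $X=(u,T,-\alpha u)$ with $T\in\frakk_\frakl$ and $u\in V$, and reducing the resulting expression to the Bessel equation $B_{\nu/2}\widetilde K_{\nu/2}=0$ from \eqref{eq:DiffEqModBessel}.

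First I would check that $\psi_0$ is $\frakk_\frakl$-spherical, so that $W_0$ decomposes into $\frakk_\frakl$-spherical $\frakk$-types in the sense of Proposition \ref{prop:klSphericalsReps}. For $T\in\frakk_\frakl$ one has $T+T^*=0$, hence $\Tr(T^*)=0$, and by \eqref{eq:ddxradial} the derivative term in \eqref{eq:L2DerRep2} is proportional to $(T^*x|x)=-(Tx|x)=0$ by antisymmetry of $T$ for $(\cdot|\cdot)$. For the complementary direction, combining \eqref{eq:L2DerRep1}, \eqref{eq:L2DerRep3} and the identity $(x|u)=(\alpha x|\alpha u)$ (using that $\alpha$ is $\tau$-selfadjoint) yields
\begin{align*}
 \td\pi(u,0,-\alpha u)\psi_0(x) \;=\; -i\big((\calB-\alpha x)\psi_0(x)\,\big|\,\alpha u\big), \qquad u\in V,
\end{align*}
so the whole $\frakk$-action on $\psi_0$ is encoded in the single $V$-valued function $(\calB-\alpha x)\psi_0$.

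For case (b), Corollary \ref{cor:BnuRadial}(2) together with $B_{\nu/2}\widetilde K_{\nu/2}=0$ gives $(\calB-\alpha x)\psi_0=0$, so $\frakk$ annihilates $\psi_0$, giving $W_0=\CC\psi_0$ with $\alpha_0=0$. In case (a), where $\nu=-1$, one has $\widetilde K_{-1/2}(z)=c\,e^{-z}$ and hence $\widetilde K'_{-1/2}=-\widetilde K_{-1/2}$, which collapses Corollary \ref{cor:BnuRadial}(1) to $(\calB-\alpha x)\psi_0=-\tfrac{d}{2}\psi_0\cdot e$. Then $\td\pi(u,0,-\alpha u)\psi_0=\tfrac{id}{2}\tr(u)\psi_0$ (using $\alpha=\id$), so $\CC\psi_0$ is $\frakk$-stable; specializing $u=e$ and $\tr(e)=r$ yields the central character $\tfrac{ird}{2}$ on $(e,0,-e)\in Z(\frakk)$ (Lemma \ref{lem:CenterK}), and comparison with the $\rho_i$ of \eqref{eq:Rhoi} identifies $\alpha_0=\tfrac{d}{4}\sum_i\gamma_i$.

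The substantive case is $V\cong\RR^{p,q}$, where Corollary \ref{cor:BnuRadial} no longer applies and one must return to Proposition \ref{prop:BnuRadial}. At a point $x=ktc_1$ ($k\in K_L$) one gets $(\calB-\alpha x)\psi_0(x)=\tfrac{q-p}{2}\widetilde K'_{\nu/2}(|x|)\,ke$; the factor $ke$ ranges over the $K_L$-orbit of $e$, which in bipolar coordinates \eqref{eq:VpqPolarCoordinates} is a degree-one spherical harmonic on the ``small'' sphere $\SS^{p-1}_0$ for $p\le q$ (resp.\ $\SS^{q-1}$ for $p\ge q$), while the Bessel recurrence $\widetilde K'_\alpha(z)=-\tfrac{z}{2\alpha+2}\widetilde K_{\alpha+1}(z)$ simultaneously raises the Bessel index by one. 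Iterating $\frakk$ on $\psi_0$ and using standard spherical-harmonic recurrences builds the tower $\bigoplus_k\widetilde K_{\nu/2+k}\otimes\calH^k$; the crux is to verify that the raising coefficient from level $k$ to $k+1$ vanishes precisely at $k=|q-p|/2$, which occurs exactly when $p+q$ is even. This parity dichotomy will produce the truncated sum \eqref{eq:MinKtypeRk2finite} in case (c) and the infinite tower \eqref{eq:MinKtypeRk2infinite} in case (d); assembling the truncated sum into an irreducible $\frakk$-module isomorphic to $\calH^{|q-p|/2}(\RR^{\min(p,q)+1})$ with highest weight $\alpha_0$ then follows from the $D$-type description of $\Sigma(\frakk_\CC,\frakt_\CC)$ in Section \ref{subsec:ConfGrpRoots} and the branching $\so(\min(p,q){+}1)\downarrow\so(\min(p,q))$. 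The main obstacle is this careful bookkeeping of raising and lowering coefficients, which is why the rank-two case is deferred to Appendix \ref{app:Rank2}.
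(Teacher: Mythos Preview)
Your approach is essentially the same as the paper's: reduce to $(\calB-\alpha x)\psi_0$ via \eqref{eq:KActionInTermsOfBesselOp}, dispatch the euclidean and non-euclidean rank $\geq3$ cases with Corollary~\ref{cor:BnuRadial}, and for $V=\RR^{p,q}$ iterate the $\frakk$-action to build the tower $\bigoplus_k\widetilde K_{\nu/2+k}\otimes\calH^k$ with raising coefficient $(2k+p-q)$ that vanishes at $k=|q-p|/2$ precisely when $p+q$ is even (this is Lemma~\ref{lem:Rank2KAction} in Appendix~\ref{app:Rank2MinKtype}). One small slip: the Bessel recurrence is $\widetilde K'_\alpha(z)=-\tfrac{z}{2}\widetilde K_{\alpha+1}(z)$ (see \eqref{eq:BesselDiffFormulas}), without the extra $\tfrac{1}{\alpha+1}$ factor.
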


\begin{proof}
Since $\psi_0$ is $K_L$-invariant, clearly $\td\pi(\frakk_\frakl)\psi_0=0$. Therefore it suffices to apply elements of the form $(u,0,-\alpha(u))\in\frakk$, $u\in V$, to $\psi_0$. By \eqref{eq:L2DerRep1} and \eqref{eq:L2DerRep3} we have
\begin{align}
 \td\pi(u,0,-\alpha(u))\psi(x) &= \frac{1}{i}\tau((\calB-\alpha x)\psi(x),u).\label{eq:KActionInTermsOfBesselOp}
\end{align}
Now we have to distinguish between three different cases.
\begin{enumerate}
 \item[\textup{(1)}] If $V$ is euclidean, then by Corollary \ref{cor:BnuRadial}~(1)
  \begin{align*}
   \td\pi(u,0,-\alpha(u))\psi_0(x) &= \frac{1}{i}\underbrace{B_{\frac{\nu}{2}}\widetilde{K}_{\frac{\nu}{2}}(|x|)}_{=0}(x|u)+\frac{1}{i}\frac{d}{2}\widetilde{K}_{\frac{\nu}{2}}'(|x|)\tr(u)
  \end{align*}
  Since $\nu=-1$, we have by \eqref{eq:IKBesselMinusHalf}
  \begin{align*}
   \widetilde{K}_{\frac{\nu}{2}}(|x|) &= \frac{\sqrt{\pi}}{2}e^{-|x|}
  \end{align*}
  and therefore $\widetilde{K}_{\frac{\nu}{2}}'(|x|)=-\widetilde{K}_{\frac{\nu}{2}}(|x|)$. Altogether this gives
  \begin{align}
   \td\pi(u,0,-\alpha(u))\psi_0(x) = i\frac{d}{2}\tr(u)\psi_0(x).\label{eq:KactionEucl}
  \end{align}
  Hence, $W_0=\CC\psi_0$. Putting $u=e$ gives the action of the center $Z(\frakk)=\RR(e,0,-e)$. Further, for $u=c_i$, $1\leq i\leq r_0$, we find that $W_0$ is of highest weight $\frac{d}{4}\sum_{i=1}^{r_0}{\gamma_i}$.
 \item[\textup{(2)}] If $V$ is non-euclidean of rank $r\geq3$, then $d=2d_0$ (see Proposition \ref{prop:ClassificationEuclSph}) and with Corollary \ref{cor:BnuRadial}~(2) we obtain
  \begin{align}
   \td\pi(u,0,-\alpha(u))\psi_0(x) &= \frac{1}{i}\underbrace{B_{\frac{\nu}{2}}\widetilde{K}_{\frac{\nu}{2}}(|x|)}_{=0}(x|u).\label{eq:KactionNonEuclSph}
  \end{align}
  This implies that $W_0=\CC\psi_0$ is the trivial representation.
 \item[\textup{(3)}] Now, for the case $V=\RR^{p,q}$, $p,q\geq2$, the calculations can be found in Appendix \ref{app:Rank2MinKtype}.\qedhere
\end{enumerate}
\end{proof}

\begin{remark}
The fact that $\psi_0$ is not $\frakk$-finite if $V=\RR^{p,q}$ with $p+q$ odd corresponds to the result by D. Vogan that no covering group of $\SO(p+1,q+1)_0$ has a minimal representation if $p+q$ is odd and $p,q\geq3$ (see \cite[Theorem 2.13]{Vog81}).
\end{remark}

Now a standard argument shows that the fact that $\psi_0$ is $\frakk$-finite already implies that $W$ is $\frakk$-finite. Since we could not find a reference for this basic fact, we include a short proof.

\begin{lemma}
If $W_0=\calU(\frakk)\psi_0$ is finite-dimensional, then $W=\calU(\frakg)\psi_0$ is a $(\frakg,\frakk)$-module.
\end{lemma}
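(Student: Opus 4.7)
The plan is to exploit the Cartan decomposition $\frakg = \frakk + \frakp$ from Section \ref{sec:KKT} together with the Poincar\'e--Birkhoff--Witt theorem. By PBW applied to this decomposition, the multiplication map induces a linear isomorphism $\calU(\frakp) \otimes \calU(\frakk) \stackrel{\sim}{\rightarrow} \calU(\frakg)$. Since $W_0 = \calU(\frakk)\psi_0$, this yields
\begin{align*}
 W = \calU(\frakg)\psi_0 = \calU(\frakp)\calU(\frakk)\psi_0 = \calU(\frakp)W_0.
\end{align*}

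Next I would introduce the natural degree filtration: let $\calU_n(\frakp) \subseteq \calU(\frakp)$ denote the subspace spanned by products of at most $n$ elements of $\frakp$, and set $W_n := \calU_n(\frakp)W_0$. Then $W = \bigcup_{n \geq 0} W_n$, each $W_n$ is finite-dimensional (since both $\calU_n(\frakp)$ and $W_0$ are finite-dimensional, the latter by assumption), and $W_n \subseteq W_{n+1}$. The key step is to verify that each $W_n$ is $\frakk$-stable. I would prove this by induction on $n$: the case $n = 0$ is $\frakk$-stability of $W_0$. For the inductive step, given $X \in \frakk$ and $Y_1 \cdots Y_n \cdot w \in W_n$ with $Y_i \in \frakp$ and $w \in W_0$, I commute $X$ past $Y_1$ using $XY_1 = Y_1 X + [X, Y_1]$. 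Since $[\frakk, \frakp] \subseteq \frakp$ (a consequence of the Cartan decomposition being $\theta$-eigenspaces), the term $[X, Y_1] Y_2 \cdots Y_n \cdot w$ lies in $W_n$, while $Y_1 X Y_2 \cdots Y_n \cdot w$ lies in $\frakp \cdot W_{n-1} \subseteq W_n$ by the induction hypothesis applied to $XY_2 \cdots Y_n \cdot w$.

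Granted that each $W_n$ is finite-dimensional and $\frakk$-stable, for any $v \in W$ there exists $n$ with $v \in W_n$, and hence $\calU(\frakk) v \subseteq W_n$ is finite-dimensional. Thus $W$ is $\frakk$-finite, and together with the $\frakg$-action already given by Proposition \ref{prop:LieAlgRep} this exhibits $W$ as a $(\frakg, \frakk)$-module. The compatibility of the $\frakk$-action on $W$ (viewed as a subspace of $C^\infty(\calO)$) with the restriction of the $\frakg$-action is automatic from the construction.

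I do not expect a genuine obstacle here: the only nontrivial input is the hypothesis that $\dim W_0 < \infty$, and the rest is a standard PBW-plus-filtration argument that applies to any $\frakk$-finite cyclic generator of a module over a reductive Lie algebra with a Cartan decomposition. The only point that requires attention is verifying $[\frakk, \frakp] \subseteq \frakp$, which follows immediately from $\theta[X,Y] = [\theta X, \theta Y] = [X, -Y] = -[X, Y]$ for $X \in \frakk$, $Y \in \frakp$.
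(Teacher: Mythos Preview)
Your proof is correct and follows the same underlying idea as the paper's: build an exhausting chain of finite-dimensional $\frakk$-stable subspaces and conclude $\frakk$-finiteness. The only difference is in the choice of filtration. You invoke PBW for the Cartan decomposition $\frakg=\frakk+\frakp$ to write $W=\calU(\frakp)W_0$ and then filter by $W_n=\calU_n(\frakp)W_0$, using $[\frakk,\frakp]\subseteq\frakp$ in the inductive step. The paper instead sets $\frakg_1:=\frakg_\CC\oplus\CC\subseteq\calU(\frakg)$ and $W_{n+1}:=\frakg_1 W_n$, so that the commutator $[X,Y]$ for $X\in\frakk$, $Y\in\frakg_1$ lands back in $\frakg_1$ simply because $\frakg_\CC$ is a Lie algebra; no use of the Cartan decomposition or PBW is needed. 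Your route is marginally more structured (and the resulting $W_n$ are a priori smaller), while the paper's is slightly more elementary; both are standard and equally short.
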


\begin{proof}
Let $\frakg_1:=\frakg_\CC\oplus\CC\subseteq\calU(\frakg)$ and define $W_{n+1}:=\frakg_1W_n$ for $n\geq0$. We claim that
\begin{enumerate}
 \item[\textup{(1)}] $W_n$ is finite-dimensional for every $n$,
 \item[\textup{(2)}] $W_n$ is $\frakk$-invariant for every $n$,
 \item[\textup{(3)}] $W=\bigcup_n{W_n}$.
\end{enumerate}
The first statement follows easily by induction on $n$, since $W_0$ and $\frakg_1$ are finite-dimensional. The third statement is also clear by the definition of $\calU(\frakg)$. For the second statement we give a proof by induction on $n$:\\
For $n=0$ the statement is clear by the definition of $W_0$. For the induction step let $w\in W_{n+1}$ and $X\in\frakk$. Then $w=\sum_j{Y_jv_j}$ with $Y_j\in\frakg_1$ and $v_j\in W_n$. We have
\begin{align*}
 Xw &= \sum_j{X(Y_jv_j)} = \sum_j{\left([X,Y_j]v_j + Y_j(Xv_j)\right)}.
\end{align*}
Here $[X,Y_j]\in\frakg_1$ and hence $[X,Y_j]v_j\in W_{n+1}$ for each $j$. Furthermore $Xv_j\in W_n$ by the induction assumption and hence $Y_j(Xv_j)\in W_{n+1}$ for every $j$. Together this gives $Xw\in W_{n+1}$ which shows that $W_{n+1}$ is $\frakk$-invariant.\\
Now the $\frakk$-finiteness of every vector $w\in W$ follows.
\end{proof}

To see that $W$ integrates to a representation on $L^2(\calO,\td\mu)$ we have to show that $W$ is contained in $L^2(\calO,\td\mu)$.

\begin{proposition}\label{prop:WinL2}
\begin{enumerate}
 \item[\textup{(a)}] If $V$ is a simple Jordan algebra of rank $r\geq3$, then
  \begin{align}
   W &\subseteq \bigoplus_{\ell=0}^\infty{\widetilde{K}_{\frac{\nu}{2}+\ell}\otimes\CC[V]_{\geq2\ell}} \subseteq L^2(\calO,\td\mu).\label{eq:L2spherical}
  \end{align}
 \item[\textup{(b)}] If $V=\RR^{p,q}$ with $p+q$ even, then
  \begin{align}
   W &\subseteq \bigoplus_{\ell=0}^\infty{\bigoplus_{k=0}^{|\frac{p-q}{2}|}{\widetilde{K}_{\frac{\nu}{2}+k+\ell}\otimes\CC[V]_{\geq k+2\ell}}} \subseteq L^2(\calO,\td\mu).\label{eq:L2nonspherical}
  \end{align}
 \item[\textup{(c)}] If $V=\RR^{p,q}$ with $p+q$ odd, then $W\nsubseteq L^2(\calO,\td\mu)$.
\end{enumerate}
\end{proposition}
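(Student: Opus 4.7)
For (a) and (b) I would show that the right-hand side $E$ is a $\frakg$-subrepresentation of $C^\infty(\calO)$ that contains $\psi_0$ and is contained in $L^2(\calO,\td\mu)$; the inclusion $W\subseteq E$ then follows from $W=\calU(\frakg)\psi_0$. For (c) I would exhibit explicit $\frakk$-finite vectors in $W$ that fail to be square-integrable.

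\textbf{$\frakg$-stability.} Since $\psi_0=\widetilde{K}_{\nu/2}(|x|)\cdot 1$ lies in the $\ell=0$ summand of $E$, checking $\frakg$-stability reduces by the Gelfand--Naimark decomposition \eqref{eq:GelfandNaimark} to checking stability under $\frakn$, $\frakl$, and $\overline{\frakn}$. The $\frakn$-action \eqref{eq:L2DerRep1} is multiplication by the linear polynomial $i(x|u)$, which trivially preserves $\widetilde{K}_{\nu/2+\ell}\otimes\CC[V]_{\geq 2\ell}$. The $\frakl$-action \eqref{eq:L2DerRep2} is the first-order differential operator $D_{T^*x}+\textup{const}$; applied to a product $\widetilde{K}_{\nu/2+\ell}(|x|)\varphi(x)$, the chain rule together with the standard recurrence $\widetilde{K}_\alpha'(t)=C_\alpha\,t\,\widetilde{K}_{\alpha\pm 1}(t)$ from Appendix \ref{app:BesselFunctions} converts the radial derivative into Bessel factors of shifted index times a quadratic polynomial, raising the degree of the polynomial factor to the required value. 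The $\overline{\frakn}$-action \eqref{eq:L2DerRep3} is controlled by the Bessel operator: Lemma \ref{lem:BnuProdRule} expands $\calB(\widetilde{K}_{\nu/2+\ell}(|x|)\varphi)$ into three pieces, and Corollary \ref{cor:BnuRadial} combined with $B_{\nu/2+\ell}\widetilde{K}_{\nu/2+\ell}=0$ expresses $\calB\widetilde{K}_{\nu/2+\ell}(|x|)$ explicitly as a combination of $\widetilde{K}_{\nu/2+\ell\pm 1}(|x|)$ and $\widetilde{K}_{\nu/2+\ell}(|x|)$ times linear polynomials in $x$. In case (b) the additional radial term in Proposition \ref{prop:BnuRadial} (which does not vanish because $d_0\neq d/2$) couples $\widetilde{K}_{\nu/2+\ell}$ with harmonic polynomials on $\RR^{\min(p,q)}$, producing exactly the additional inner summands indexed by $k=0,\dots,|d_0-d/2|$ on the right-hand side of (b).

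\textbf{$L^2$-inclusion and part (c).} The $L^2$-inclusion reduces to a comparison of asymptotics: each $\widetilde{K}_{\nu/2+\ell}(|x|)$ decays exponentially as $|x|\to\infty$, while the required polynomial degree of at least $2\ell$ (respectively $k+2\ell$ in (b)) exactly compensates for the power-like singularity of $\widetilde{K}_{\nu/2+\ell}$ at the origin when measured against the radial weight $t^{\mu+\nu+1}\td t$ from \eqref{eq:dmuIntFormula}. For (c), Proposition \ref{prop:Kfinite}(d) embeds $\widetilde{K}_{\nu/2+k}\otimes\calH^k$ into $W_0\subseteq W$ for every $k\geq 0$; with $\nu=\min(p,q)-2$ and $\mu+\nu+1=p+q-3$, the radial integral $\int_0^\infty|\widetilde{K}_{\nu/2+k}(t)|^2 t^{2k+\mu+\nu+1}\td t$ diverges at the origin as soon as $k>\frac{|p-q|+1}{2}$, which for $p+q$ odd occurs for infinitely many $k$ and yields the desired non-integrable vectors.

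\textbf{Main obstacle.} The principal difficulty is the polynomial-degree bookkeeping in the $\overline{\frakn}$-step: several intermediate contributions from Lemma \ref{lem:BnuProdRule} at first sight undershoot the required degree $2\ell$ (respectively $k+2\ell$) by one, and only after regrouping them with the matching pieces of $\calB\widetilde{K}_{\nu/2+\ell}(|x|)$ and invoking the three-term recurrence for the normalized Bessel functions does the output reassemble into the correct summand $\widetilde{K}_{\nu/2+m}\otimes\CC[V]_{\geq 2m}$ uniformly in $\ell$.
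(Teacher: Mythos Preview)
Your treatment of the $L^2$-inclusion and of part (c) matches the paper's proof essentially verbatim: exponential decay of $\widetilde{K}_\alpha$ at infinity, the power-law behavior \eqref{eq:BesselKAsymptAt0} at the origin against the radial weight $t^{\mu+\nu+1}\td t$, and for (c) the non-integrability of $\widetilde{K}_{\nu/2+k}\otimes\calH^k$ for large $k$ coming from Proposition~\ref{prop:Kfinite}(d).

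Where you diverge from the paper is in the first inclusion. You propose to show that the right-hand space $E$ is stable under all of $\frakg=\frakn+\frakl+\overline{\frakn}$, and you correctly flag the $\overline{\frakn}$-step (the Bessel operator) as the hard part: the three pieces from Lemma~\ref{lem:BnuProdRule} individually undershoot the required polynomial degree, and you appeal to an unspecified regrouping and Bessel recurrence to repair this. The paper bypasses this difficulty entirely. Since $\frakg=\frakk+\frakp^{\max}$, the Poincar\'e--Birkhoff--Witt theorem gives
\[
W=\calU(\frakg)\psi_0=\calU(\frakp^{\max})\calU(\frakk)\psi_0=\calU(\frakp^{\max})W_0,
\]
and Proposition~\ref{prop:Kfinite} already places $W_0$ inside the $\ell=0$ summand of $E$. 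Thus one only needs $E$ to be stable under $\frakp^{\max}=\frakn+\frakl$, which is the easy part: $\frakn$ multiplies by linear polynomials, and $\frakl$ acts by $D_{T^*x}+\textup{const}$, whose radial piece converts $\widetilde{K}_{\nu/2+k}$ into $\widetilde{K}_{\nu/2+k+1}$ times a quadratic via \eqref{eq:BesselDiffFormulas}. No $\overline{\frakn}$-computation is ever needed.

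So your plan is not wrong in spirit, but the ``main obstacle'' you identify is self-inflicted; recognizing that $W=\calU(\frakp^{\max})W_0$ removes it. It is also not obvious that $E$ itself is $\overline{\frakn}$-stable (the paper never claims this), so the regrouping you allude to would require genuine additional argument, whereas the PBW reduction makes the proof a two-line check.
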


\begin{proof}
$\frakg=\frakk+\frakp^{\textup{max}}$ implies $\calU(\frakg)=\calU(\frakp^{\textup{max}})\calU(\frakk)$ by the Poincar\'e--Birkhoff--Witt Theorem. Therefore
\begin{align*}
 W &= \calU(\frakg)\psi_0 = \calU(\frakp^{\textup{max}})W_0.
\end{align*}
Now if $V\ncong\RR^{p,q}$ with $p+q$ odd, then by Proposition \ref{prop:Kfinite} the $\frakk$-module $W_0$ is already contained in the direct sum in \eqref{eq:L2spherical} or \eqref{eq:L2nonspherical}, respectively. (In fact, $W_0$ is contained in the direct summand for $\ell=0$.) Therefore, it suffices to check that these direct sums are stable under the action of $\frakp^{\textup{max}}=\frakn+\frakl$. We check the actions of $\frakn$ and $\frakl$ separately.
\begin{enumerate}
 \item[\textup{(1)}] \textit{Action of $\frakn$.} By \eqref{eq:L2DerRep1} the action of $\frakn$ is given by multiplication by polynomials. This clearly leaves the direct sums invariant.
 \item[\textup{(2)}] \textit{Action of $\frakl$.} In view of \eqref{eq:L2DerRep2} it suffices to show that the operators $D_{Tx}$ for $T\in\frakl=\str(V)$ leave the direct sums invariant. But this is a consequence of the following calculation which uses \eqref{eq:ddxradial} and \eqref{eq:BesselDiffFormulas}:
 \begin{align*}
  & D_{Tx}\left[\widetilde{K}_{\frac{\nu}{2}+k}(|x|)\varphi(x)\right]\\
  ={}& \frac{r}{r_0}(Tx|x)\frac{\widetilde{K}_{\frac{\nu}{2}+k}'(|x|)}{|x|}\varphi(x) + \widetilde{K}_{\frac{\nu}{2}+k}(|x|)D_{Tx}\varphi(x)\\
  ={}& -\frac{r}{2r_0}\widetilde{K}_{\frac{\nu}{2}+k+1}(|x|)\cdot(Tx|x)\varphi(x) + \widetilde{K}_{\frac{\nu}{2}+k}(|x|)\cdot D_{Tx}\varphi(x).
 \end{align*}
\end{enumerate}
This proves the first inclusion in \eqref{eq:L2spherical} and \eqref{eq:L2nonspherical}. It remains to show the $L^2$-statements.\\
Note that by \eqref{eq:dmuIntFormula} a function $\widetilde{K}_\alpha\otimes\varphi$ ($\varphi\in\CC[V]$ homogeneous of degree $m$) is contained in $L^2(\calO,\td\mu)$ if and only if $\widetilde{K}_\alpha(t)t^m\in L^2(\RR_+,t^{\mu+\nu+1}\td t)$. By \eqref{eq:BesselAsymptAtInfty} the $K$-Bessel function $\widetilde{K}_\alpha(t)$ decays exponentially as $t\rightarrow\infty$ and hence $\widetilde{K}_\alpha(t)t^m\in L^2((1,\infty),t^{\mu+\nu+1}\td t)$ for any $\alpha\in\RR$ and $m\geq0$. Therefore it suffices to check the asymptotic behavior of $\widetilde{K}_\alpha(t)t^m$ as $t\rightarrow0$. For $\alpha<0$ we obtain with \eqref{eq:BesselKAsymptAt0} that also $\widetilde{K}_\alpha(t)t^m\in L^2((0,1),t^{\mu+\nu+1}\td t)$ for any $m\geq0$ (use Lemma \ref{lem:MuNuProperties}: $\mu+\nu+1\geq0$). A similar argument settles the case $\alpha=0$. Hence, we may restrict ourselves to the case $\alpha>0$. In this case (see \eqref{eq:BesselKAsymptAt0})
\begin{align*}
 \widetilde{K}_\alpha(t) &\sim \const\cdot t^{-2\alpha} & \mbox{as }t\rightarrow0.
\end{align*}
Therefore, $\widetilde{K}_\alpha(t)t^m\in L^2((0,1),t^{\mu+\nu+1}\td t)$ if and only if
\begin{align*}
 \mu+\nu+2m-4\alpha+1 &> -1.
\end{align*}
\begin{enumerate}
\item[\textup{(a)}] For\ \ $V$\ \ simple\ \ of\ \ rank\ \ $r\geq3$\ \ we\ \ have\ \ to\ \ show\ \ that $\widetilde{K}_{\frac{\nu}{2}+\ell}(t)t^{2\ell}\in L^2((0,1),t^{\mu+\nu+1}\td t)$ for any $\ell\in\NN_0$. With $\alpha=\frac{\nu}{2}+\ell$ and $m=2\ell$ we obtain
\begin{align*}
 \mu+\nu+2m-4\alpha+1 &= \mu-\nu+1 \geq 1 > -1
\end{align*}
by Lemma \ref{lem:MuNuProperties}.
\item[\textup{(b)}] In\ \ the\ \ case\ \ where\ \ $V=\RR^{p,q}$\ \ with\ \ $p+q$\ \ even\ \ we\ \ need\ \ to\ \ prove\ \ that $\widetilde{K}_{\frac{\nu}{2}+k+\ell}(t)t^{k+2\ell}\in L^2((0,1),t^{\mu+\nu+1}\td t)$ for $k=0,\ldots,|\frac{p-q}{2}|$ and $\ell\in\NN_0$. In this case $\mu=\max(p,q)-2$ and $\nu=\min(p,q)-2$. With $\alpha=\frac{\nu}{2}+k+\ell$ and $m=k+2\ell$ we obtain
\begin{align*}
 \mu+\nu+2m-4\alpha+1 &= |p-q|-2k+1 \geq 1 > -1.
\end{align*}
\item[\textup{(c)}] Now assume that $V=\RR^{p,q}$ with $p+q$ odd. By \eqref{eq:MinKtypeRk2infinite} the $\frakk$-module $W_0$ contains functions $\widetilde{K}_{\frac{\nu}{2}+k}\otimes\varphi$ with $\varphi\in\CC[V]$ homogeneous of degree $k$ where $k$ is arbitrary. For large $k$ we have $\alpha=\frac{\nu}{2}+k>0$ and with $m=k$ we obtain
\begin{align*}
 \mu+\nu+2m-4\alpha+1 &= \mu-\nu-2k+1
\end{align*}
which is $\leq-1$ for $k$ large enough. Hence, $W_0\nsubseteq L^2(\calO,\td\mu)$ in this case.
\end{enumerate}
This proves the $L^2$-statements and completes the proof.
\end{proof}

To make sure we obtain a unitary representation we show the following proposition:

\begin{proposition}\label{prop:InfUnitary}
The $(\frakg,\frakk)$-module $W$ is infinitesimally unitary with respect to the inner product of $L^2(\calO,\td\mu)$.
\end{proposition}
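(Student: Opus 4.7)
The plan is to verify that $\langle\td\pi(X)\psi,\phi\rangle_{L^2(\calO,\td\mu)}=-\langle\psi,\td\pi(X)\phi\rangle_{L^2(\calO,\td\mu)}$ for every $X$ in the real Lie algebra $\frakg$ and every $\psi,\phi\in W$. The pairing is well-defined since $W\subseteq L^2(\calO,\td\mu)$ by Proposition \ref{prop:WinL2}. Using the Gelfand--Naimark decomposition $\frakg=\frakn+\frakl+\overline{\frakn}$ it suffices to check skew-Hermiticity of $\td\pi(X)$ on each summand.

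For $X\in\frakp^{\textup{max}}=\frakn+\frakl$ skew-Hermiticity is inherited from Proposition \ref{prop:MackeyRep}. Indeed, for $X=(u,0,0)\in\frakn$, formula \eqref{eq:L2DerRep1} shows that $\td\pi(X)$ acts as multiplication by $i(x|u)$, a purely imaginary real-valued function, which is trivially skew-Hermitian. For $X=(0,T,0)\in\frakl$, every $\psi\in W$ is a smooth vector for $\rho_\lambda|_L$: since $W=\calU(\frakg)\psi_0$ is $\calU(\frakg)$-stable by construction, iterated applications of $\td\pi(Y)$ for $Y\in\frakp^{\textup{max}}$ keep the vector inside $W\subseteq L^2(\calO,\td\mu)$. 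Hence $\td\pi(X)=\left.\frac{\td}{\td t}\right|_{t=0}\rho_\lambda(e^{tX})$ is the derivative of a one-parameter group of unitaries on $L^2(\calO,\td\mu)$ at a smooth vector, and is therefore skew-Hermitian on $W$.

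The main work is for $X=(0,0,-v)\in\overline{\frakn}$, where \eqref{eq:L2DerRep3} gives $\td\pi(X)\psi=-i(\calB\psi|v)$. Skew-Hermiticity reduces, for real $v\in V$, to the symmetry identity
\begin{equation*}
 \int_\calO (\calB\psi|v)\,\overline{\phi}\,\td\mu=\int_\calO \psi\,\overline{(\calB\phi|v)}\,\td\mu,\qquad \psi,\phi\in W.
\end{equation*}
Symmetry of $\calB$ on $L^2(\calO,\td\mu)$ is Theorem \ref{thm:BlambdaTangential}, and ultimately stems from the zeta function identity of Proposition \ref{prop:BnuSA}. The principal obstacle is that Theorem \ref{thm:BlambdaTangential} was established via test functions in $C_c^\infty(\calO)$, whereas elements of $W$ need not have compact support.

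To bridge this gap I would exploit Proposition \ref{prop:WinL2}: every $\psi\in W$ is a finite linear combination of functions of the form $\widetilde{K}_\alpha(|x|)\,p(x)$ with $p$ polynomial on $V$ and $\alpha\geq\nu/2$, and the asymptotic \eqref{eq:BesselAsymptAtInfty} forces exponential decay as $|x|\to\infty$. Choose radial cutoffs $\chi_R\in C_c^\infty(\calO)$ with $\chi_R\equiv1$ on $\{|x|\leq R\}$ and $\chi_R\equiv 0$ on $\{|x|\geq 2R\}$, with derivatives bounded uniformly in $R$. The product rule Lemma \ref{lem:BnuProdRule} combined with exponential decay yields $\chi_R\psi\to\psi$ and $\calB(\chi_R\psi)\to\calB\psi$ in $L^2(\calO,\td\mu)$ as $R\to\infty$ (and likewise for $\phi$). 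Applying the symmetry of Theorem \ref{thm:BlambdaTangential} to the compactly supported approximants $\chi_R\psi,\chi_R\phi$ and passing to the limit produces the desired identity, completing the verification of infinitesimal unitarity.
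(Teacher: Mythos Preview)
Your overall strategy coincides with the paper's: use the decomposition $\frakg=\frakn+\frakl+\overline{\frakn}$, handle $\frakn+\frakl=\frakp^{\textup{max}}$ via the unitarity of $\rho_{\lambda_1}$, and reduce $\overline{\frakn}$ to the symmetry of the Bessel operator. The paper simply invokes Proposition~\ref{prop:BnuSA} for that last step; you rightly notice that this proposition (and Theorem~\ref{thm:BlambdaTangential}) are proved for $\calS(V)$ resp.\ $C_c^\infty(\calO)$ rather than for $W$, and you attempt to bridge the gap with a cutoff argument --- a point the paper itself glosses over.

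However, your cutoff construction is impossible as written: a function $\chi_R\in C_c^\infty(\calO)$ cannot be $\equiv 1$ on $\{x\in\calO:|x|\le R\}$, because $\calO$ is a cone with $0\in\overline{\calO}\setminus\calO$, so that set is not relatively compact in $\calO$. Your $\chi_R$ truncates only at infinity, where exponential decay from \eqref{eq:BesselAsymptAtInfty} does control the error terms. To land in $C_c^\infty(\calO)$ you must also cut near the origin, say with a two-parameter cutoff $\chi_{\epsilon,R}$ vanishing on $\{|x|<\epsilon\}$, and then justify $\epsilon\to 0$ separately. That limit is more delicate than the one at infinity: the product-rule error involves $\partial\chi/\partial x\sim\epsilon^{-1}$ and the possible blow-up of $\widetilde{K}_{\frac{\nu}{2}+\ell}$ near $0$ (cf.\ \eqref{eq:BesselKAsymptAt0}), and these must be balanced against the measure density $t^{\mu+\nu+1}$ from \eqref{eq:dmuIntFormula} using the inequalities of Lemma~\ref{lem:MuNuProperties} together with the polynomial-degree constraints in Proposition~\ref{prop:WinL2}.
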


\begin{proof}
Since the group $P$ acts unitarily on $L^2(\calO,\td\mu)$ by $\rho_{\lambda_1}$, its infinitesimal action $\td\rho_{\lambda_1}=\td\pi|_{\frakp^{\textup{max}}}$ is infinitesimally unitary with respect to the $L^2$ inner product. This shows that the action of $\frakn$ and $\frakl$ on $W$ is infinitesimally unitary. (In fact, one can easily prove unitarity of these actions directly with \eqref{eq:L2DerRep1} and \eqref{eq:L2DerRep2}.) Since the action of $\overline{\frakn}$ is by \eqref{eq:L2DerRep3} given in terms of the Bessel operator $\calB$, it follows from Proposition \ref{prop:BnuSA} that $\overline{\frakn}$ acts by skew-symmetric operators on $L^2(\calO,\td\mu)$. In view of the decomposition $\frakg=\frakn+\frakl+\overline{\frakn}$ this finishes the proof.
\end{proof}

The last ingredient to integrate the $(\frakg,\frakk)$-module $W$ is admissibility.

\begin{proposition}\label{prop:admissible}
Assume that $V\neq\RR^{p,q}$, $p+q$ odd, $p,q\geq2$. Then $W$ is admissible.
\end{proposition}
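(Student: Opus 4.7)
The plan is to invoke Harish-Chandra's admissibility theorem: a finitely generated $(\frakg,\frakk)$-module which is $\calZ(\frakg)$-finite is automatically admissible. Since $W=\calU(\frakg)\psi_0$ is cyclic, finite generation is immediate; the content is therefore to exhibit an infinitesimal character $\chi\colon\calZ(\frakg)\to\CC$ through which $\calZ(\frakg)$ acts on $W$.

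By Proposition~\ref{prop:Kfinite}, the hypothesis $V\not\cong\RR^{p,q}$ with $p+q$ odd ensures that the $\frakk$-submodule $W_0=\calU(\frakk)\psi_0$ is finite-dimensional and $\frakk$-irreducible (isomorphic to $E^{\alpha_0}$). Since each $z\in\calZ(\frakg)$ commutes with $\calU(\frakk)$, the map $w\mapsto zw$ is $\frakk$-equivariant, and Schur's lemma forces $zW_0=\calU(\frakk)(z\psi_0)$ to be either zero or a second copy of $W_0$ inside $W$. The crucial reduction is to show that in fact $zW_0\subseteq W_0$ for every $z\in\calZ(\frakg)$, or equivalently that $W_0$ appears with multiplicity one inside $W$; granted this, commutativity of $\calZ(\frakg)$ combined with Schur applied to $W_0$ forces each $z$ to act on $W_0$ by a scalar $\chi(z)$, and the character $\chi$ then propagates to all of $W$ via cyclicity of $\psi_0$ together with centrality of $\calZ(\frakg)$.

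The main obstacle is thus the multiplicity-one statement for $W_0$ inside $W$. I would handle it using the ambient decomposition from Proposition~\ref{prop:WinL2}, which already constrains the possible $\frakk$-isotypic components of $W$. Any $\frakk$-submodule of $W$ isomorphic to $W_0$ contains a nonzero $K_L$-invariant (radial) vector, unique up to scalar inside a single copy of $W_0$; by the explicit $\frakk$-action formulas \eqref{eq:L2DerRep1}--\eqref{eq:L2DerRep3} together with Corollary~\ref{cor:BnuRadial} this radial vector is constrained by an ordinary Bessel-type equation on $\RR_+$ in the variable $|x|$. Inside the radial $L^2$-space $L^2(\RR_+,t^{\mu+\nu+1}\td t)$ this equation has a one-dimensional solution space spanned by $\widetilde{K}_{\nu/2}$, so the $K_L$-spherical vector of any copy of $W_0$ in $W$ is a scalar multiple of $\psi_0$, forcing that copy to coincide with $W_0$ itself. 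The rank-two case $V\cong\RR^{p,q}$ with $p+q$ even, where $W_0$ is higher-dimensional and its $K_L$-structure is richer, is technically more delicate and is treated by analogous bookkeeping in Appendix~\ref{app:Rank2}.
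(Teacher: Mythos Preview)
Your high-level strategy coincides with the paper's: show that every $z\in\calZ(\frakg)$ acts on $W$ by a scalar and then invoke the admissibility criterion for finitely generated $\calZ(\frakg)$-finite $(\frakg,\frakk)$-modules (\cite[Corollary~3.4.7]{Wal88}). Where you diverge is in the mechanism for producing the scalar. The paper does not pass through a multiplicity-one statement for $W_0$ inside $W$; instead it exploits the already-established irreducibility of the unitary $P_0$-representation $\rho_{\lambda_1}$ on $L^2(\calO,\td\mu)$ (Proposition~\ref{prop:MackeyRep}). One enlarges $W$ to the dense $P_0$-stable subspace $D=\rho_{\lambda_1}(P_0)W\subseteq L^2(\calO,\td\mu)$, notes that $T=\td\pi(z)$ maps $D$ to $D$ and commutes with $\rho_{\lambda_1}(P_0)$ by the compatibility~\eqref{eq:CompatibilityRhoPi}, and that its formal adjoint $T^*$ (again given by an element of $\calU(\frakg)$, via Proposition~\ref{prop:InfUnitary}) also preserves $D$. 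A Schur-type lemma for unbounded operators commuting with an irreducible unitary representation (\cite[Proposition~1.2.2]{Wal88}) then forces $T$ to be scalar on $D$, hence on $W$.

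Your route through multiplicity-one of $W_0$ is viable but needs more than you sketch. When $W_0$ is one-dimensional (euclidean, or non-euclidean of rank $\geq3$) the argument is clean: the spherical vector in any copy of $W_0$ is a radial $\frakk$-eigenvector, Corollary~\ref{cor:BnuRadial} translates this into $B_{\nu/2}f=0$, and $\widetilde{K}_{\nu/2}$ is the unique $L^2$-solution. In the rank-two case with $p\neq q$, however, $W_0$ is higher-dimensional and Corollary~\ref{cor:BnuRadial} does not apply; the missing observation is that under $\frakk\cong\so(p+1)\oplus\so(q+1)$ the larger orthogonal factor acts trivially on $W_0\cong\calH^{|p-q|/2}$, so the elements $(e_j,0,-\alpha e_j)$ in that factor annihilate every vector in a copy of $W_0$, and then \eqref{eq:kActionRank2} yields the Bessel equation for the spherical vector. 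Appendix~\ref{app:Rank2} computes the $\frakk$-action on specific functions rather than proving a multiplicity statement, so the ``analogous bookkeeping'' you defer to has to be supplied. The paper's approach avoids all such case analysis by working with the $P_0$-module structure instead of the $\frakk$-module structure.
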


\begin{proof}
We use the criterion \cite[Corollary 3.4.7]{Wal88}. Therefore, it suffices to show that every $X\in\calZ(\frakg)$\index{notation}{Zg@$\calZ(\frakg)$}, the center of $\calU(\frakg)$, acts as a scalar on $W$. To show this we apply \cite[Proposition 1.2.2]{Wal88}. Recall that $(\rho_{\lambda_1},L^2(\calO,\td\mu))$ is a unitary irreducible representation of $P_0$ by Proposition \ref{prop:MackeyRep}. The space $D:=\rho_{\lambda_1}(P_0)W$ is contained in $L^2(\calO,\td\mu)$ since $W\subseteq L^2(\calO,\td\mu)$ by Proposition \ref{prop:WinL2} and $\rho_{\lambda_1}$ acts on $L^2(\calO,\td\mu)$. Since $D$ is $P_0$-invariant, it has to be dense in $L^2(\calO,\td\mu)$. Further, a closer look at Proposition \ref{prop:WinL2} shows that $W\subseteq C^\infty(\calO)$ and hence $D\subseteq L^2(\calO,\td\mu)\cap C^\infty(\calO)$. Therefore, $T:=\td\pi(X)$ is also defined on $D$. Since
\begin{align*}
 \td\pi(Y)\rho_{\lambda_1}(p)w &= \rho_{\lambda_1}(p)\td\pi(\Ad(p^{-1})Y)w & \forall\,Y\in\calU(\frakg),p\in P_0,w\in W,
\end{align*}
by \eqref{eq:CompatibilityRhoPi}, it follows that $T:D\rightarrow D$. Further, using that $T\in Z(\frakg)$, we have $T\rho_{\lambda_1}(p)=\rho_{\lambda_1}(p)T$ for all $p\in P_0$. The adjoint $S:=T^*$ of $T$ with respect to the $L^2$ inner product is by Proposition \ref{prop:InfUnitary} also given by the action of an element of $\calU(\frakg)$. Therefore, $S$ also acts on $D$. Applying \cite[Proposition 1.2.2]{Wal88} to this situation yields that $T$ is a scalar multiple of the identity on $D$. Since $D$ contains $W$, the claim follows.
\end{proof}

\subsection{Integration of the $(\frakg,\frakk)$-module}\label{sec:IntgkModule}

Now we can finally integrate the $(\frakg,\frakk)$-module $W$ to a unitary representation of a finite cover of $G$. For this we first construct the finite cover of $G$ on which we will define the representation.\\

In Section \ref{sec:Guniversalcover} we constructed the universal covering group $\widetilde{G}$ of $G$. Note that in the euclidean case, the covering $\widetilde{G}\rightarrow G$ is not finite. Thus, we have to factor out a discrete central subgroup.

Let $k\in\NN$\index{notation}{k@$k$} be the smallest positive integer such that
\begin{align*}
 k\frac{r_0}{2}\left(d_0-\frac{d}{2}\right)_+ &\in \ZZ.
\end{align*}
Here we use the notation
\begin{align*}
 x_+ &:= \frac{1}{2}(x+|x|) = \begin{cases}x & \mbox{if }x\geq0,\\0 & \mbox{if }x<0.\end{cases}\index{notation}{xplus@$x_+$}
\end{align*}
for the positive part of a real number $x\in\RR$. Then the following lemma holds:

\begin{lemma}\label{lem:FiniteCover}
The discrete subgroup
\begin{align*}
 \Gamma &:= \exp_{\widetilde{G}}(k\pi\ZZ(e,0,-e)) \subseteq \widetilde{G}\index{notation}{Gamma@$\Gamma$}
\end{align*}
is central in $\widetilde{G}$ and the group $\check{G}:=\widetilde{G}/\Gamma$\index{notation}{Gcheck@$\check{G}$} is a finite cover of $G$ with covering map
\begin{align*}
 \check{\pr}:\check{G}\rightarrow G,\,g\Gamma\mapsto\widetilde{\pr}(g).\index{notation}{prcheck@$\check{\pr}$}
\end{align*}
Moreover, $\check{K}:=\widetilde{K}/\Gamma\subseteq\check{G}$\index{notation}{Kcheck@$\check{K}$} is a maximal compact subgroup of $\check{G}$.
\end{lemma}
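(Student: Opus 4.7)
The strategy is to check in order: that $\Gamma$ lies in the kernel of $\widetilde{\pr}$, that it is central in $\widetilde{G}$, that it is discrete, and finally that it has finite index in $\ker(\widetilde{\pr})$; the claim about $\check{K}$ then follows formally.

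The containment $\Gamma \subseteq \ker(\widetilde{\pr})$ is immediate from \eqref{eq:1asexp}: $\widetilde{\pr}(\exp_{\widetilde{G}}(\pi(e,0,-e))) = \exp_G(\pi(e,0,-e)) = \mathbf{1}$, and raising to the $k$-th power gives $\widetilde{\pr}(\exp_{\widetilde{G}}(k\pi(e,0,-e))) = \mathbf{1}$. For centrality in $\widetilde{G_0}$, I use that the kernel of any connected covering is a discrete central subgroup, so $\Gamma \subseteq \ker(\widetilde{\pr}) \cap \widetilde{G_0} \subseteq Z(\widetilde{G_0})$. To extend centrality across the other component $\widetilde{\alpha}\widetilde{G_0}$ (when $\widetilde{G} \neq \widetilde{G_0}$), I compute $\Ad(\widetilde{\alpha})(e,0,-e)$ using \eqref{eq:Adgtildealpha} and \eqref{eq:AdLieg}: since $\alpha e = e$ (because $e \in V^+$) and $\alpha^{-\#} = \alpha$ on $V^+$, one gets $\Ad(\widetilde{\alpha})(e,0,-e) = (e,0,-e)$, so $\widetilde{\alpha}$ commutes with every $\exp_{\widetilde{G}}(t(e,0,-e))$, and hence with $\Gamma$. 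Discreteness of $\Gamma$ is automatic once $\Gamma \subseteq \ker(\widetilde{\pr})$, since the latter is discrete.

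The main point is that $\check\pr\colon \check{G}\to G$ is a \emph{finite} covering, i.e.\ that $\ker(\widetilde{\pr})/\Gamma$ is finite. Via the Iwasawa decomposition $G_0 = K_0 A N$ the group $G_0/K_0$ is contractible, so $\ker(\widetilde{\pr}) = \pi_1(G) = \pi_1(K_0)$ sits inside $\widetilde{K_0}$. I split into two cases using Lemma~\ref{lem:CenterK}. If $V$ is non-euclidean then $Z(\frakk) = 0$, so $K_0$ is compact semisimple, $\pi_1(K_0)$ is finite, and the quotient $\pi_1(K_0)/\Gamma$ is trivially finite. If $V$ is euclidean then $Z(\frakk) = \RR(e,0,-e)$ is one-dimensional, so $K_0$ has one-dimensional center and $\pi_1(K_0)$ is finitely generated abelian of rank one (the rank being $\dim Z(K_0)$). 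Since $\pi_1(K_0) \cap \exp_{\widetilde{G}}(\RR(e,0,-e))$ contains the rank-one subgroup $\exp_{\widetilde{G}}(\pi\ZZ(e,0,-e))$ by \eqref{eq:1asexp}, this intersection is a finite-index subgroup of $\pi_1(K_0)$; consequently $\Gamma$, which differs from it by a factor of $k$, also has finite index. This establishes that $\check{G}$ is a finite cover of $G$.

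For the final statement, $\Gamma \subseteq \widetilde{K}$ since $(e,0,-e) \in \frakk$ by \eqref{eq:frakk}, so $\check{K} := \widetilde{K}/\Gamma$ is a well-defined closed subgroup of $\check{G}$. Compactness of $\check{K}$ follows because $\widetilde{K}/\ker(\widetilde{\pr}|_{\widetilde{K}}) = K$ is compact and $\ker(\widetilde{\pr}|_{\widetilde{K}})/\Gamma$ is finite by the previous paragraph, so $\check{K}$ is a finite cover of $K$ and hence compact. Maximality of $\check{K}$ in $\check{G}$ follows from the standard covering-correspondence argument: a larger compact subgroup would project to a compact subgroup of $G$ strictly containing (a conjugate of) $K$, contradicting maximality of $K$.

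The main obstacle is the euclidean case of the finite-index step, where one must verify that $\pi \in \RR$ is really the smallest period of the one-parameter subgroup $\exp_{G_0}(\RR(e,0,-e))$ in $G_0$, or at least a period landing in a finite-index subgroup of $\pi_1(K_0) \cap \exp_{\widetilde{G}}(\RR(e,0,-e))$. This uses the triviality of $Z(G_0)$ noted earlier in the paper together with the $\SL(2)$-homomorphism $\phi$ of \eqref{eq:HomSL2Co} to pin down the period via the explicit identity \eqref{eq:1asexp}; the rest of the argument is structural.
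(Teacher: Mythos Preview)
Your proof is correct and follows essentially the same approach as the paper: both prove centrality via \eqref{eq:1asexp} plus the $\Ad(\widetilde{\alpha})$ computation, and both split the finiteness argument into the euclidean and non-euclidean cases using Lemma~\ref{lem:CenterK}.

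The one difference worth noting is in the euclidean case. You argue via the structure of $\pi_1(K_0)$ as a rank-one abelian group, which then leads you to worry about whether $\pi$ is the minimal period of $t\mapsto\exp_{G_0}(t(e,0,-e))$. The paper sidesteps this entirely by working upstairs: since $\widetilde{K}_0$ is the universal cover of a compact group with one-dimensional center, it decomposes as a direct product $\widetilde{K}_0 = \RR \times \widetilde{K_{ss}}$, where the $\RR$ factor is exactly $\exp_{\widetilde{G}}(\RR(e,0,-e))$ and $\widetilde{K_{ss}}$ is compact. The subgroup $\Gamma$ sits as $k\pi\ZZ \times \{1\}$ in this product, so $\check{K} = \widetilde{K}/\Gamma \cong \SS^1 \times \widetilde{K_{ss}}$ is visibly compact, and the finiteness of $\check{K}\to K$ (hence of $\check{G}\to G$) is immediate. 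This makes your final paragraph's concern disappear: no information about the minimal period is needed, only that $\Gamma$ is a lattice in the $\RR$ factor, which is automatic once $\Gamma$ is nontrivial.
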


\begin{proof}
By \eqref{eq:1asexp} we have $\exp_{G_0}(\pi(e,0,-e))=\1$. Therefore, $\Gamma$ has to be central in $\widetilde{G_0}$ since it projects onto $\1\in G_0$. We claim that $\Gamma$ is also central in $\widetilde{G}$. Since $\widetilde{G}$ is generated by $\widetilde{G_0}$ and $\widetilde{\alpha}$, it suffices to prove that
\begin{align*}
 \Ad(\widetilde{\alpha})\exp_{\widetilde{G}_0}(k\pi(e,0,-e)) &= \exp_{\widetilde{G}_0}(k\pi(e,0,-e)).
\end{align*}
But since $\Ad_\frakg(\widetilde{\alpha})=\Ad_\frakg(\alpha)$ on $\frakg$ by \eqref{eq:Adgtildealpha}, this follows from \eqref{eq:AdLieg} and the fact that $\alpha e=e$. Thus, $\Gamma\subseteq\widetilde{G}$ is a discrete central subgroup and the quotient $\check{G}:=\widetilde{G}/\Gamma$ is again a group. By \eqref{eq:1asexp} we have $\exp_{G_0}(X)=\1$ for any $X\in k\pi\ZZ(e,0,-e)$ and thus the covering map $\widetilde{\pr}:\widetilde{G}\rightarrow G$ factors through $\widetilde{G}/\Gamma$ and hence defines a covering map $\check{\pr}:\check{G}\rightarrow G$. It remains to show that the cover $\check{G}\rightarrow G$ is finite and $\check{K}=\widetilde{K}/\Gamma$ is a maximal compact subgroup of $\check{G}$.
\begin{enumerate}
\item[\textup{(a)}] If $V$ is non-euclidean, then by Lemma \ref{lem:CenterK} the maximal compact subgroup $K$ of $G$ is semisimple. Hence, its universal covering $\widetilde{K}$ is a finite covering and therefore $\widetilde{K}$ is a maximal compact subgroup of the universal covering $\widetilde{G}$ of $G$. Passing to quotients modulo the discrete central subgroup $\Gamma$ preserves this property.
\item[\textup{(b)}] Now let $V$ be a euclidean Jordan algebra. Then
\begin{align*}
 \frakk &= Z(\frakk)+[\frakk,\frakk],
\end{align*}
where $Z(\frakk)=\RR(e,0,-e)$ by Lemma \ref{lem:CenterK} and $[\frakk,\frakk]$ is semisimple. Therefore, the universal covering $\widetilde{K}\subseteq\widetilde{G}$ of $K$ is given by
\begin{align*}
 \widetilde{K} &= \RR\times\widetilde{K_{ss}},
\end{align*}
where the first factor $\RR$ is the exponential image of the center $Z(\frakk)$ and $K_{ss}$ denotes the analytic subgroup of $K$ with Lie algebra $[\frakk,\frakk]$. The group $\widetilde{K_{ss}}$ is compact since $K_{ss}$ is semisimple and compact. Therefore, factoring out $\Gamma$ yields a compact group
\begin{align*}
 \check{K} &= \widetilde{K}/\Gamma\cong\SS^1\times\widetilde{K_{ss}}.
\end{align*}
Hence, the covering $\check{K}\rightarrow K$ is finite. $\check{K}$ is a maximal compact subgroup of $\check{G}$, because $K$ is a maximal compact subgroup of $G$. Thus, the covering $\check{G}\rightarrow G$ is also finite in this case.\qedhere
\end{enumerate}
\end{proof}

We denote by $\check{\alpha}$\index{notation}{alphacheck@$\check{\alpha}$} the projection of $\widetilde{\alpha}$ under the covering map $\widetilde{G}\rightarrow\check{G}$. (For the choice of $\widetilde{\alpha}$ see Section \ref{sec:Guniversalcover}.) Further, let $\check{P}:=\check{\pr}^{-1}(P)$\index{notation}{Pcheck@$\check{P}$}.

\begin{example}\label{ex:Gcheck}
\begin{enumerate}
\item[\textup{(1)}] Let $V=\Sym(n,\RR)$. We claim that
\begin{align*}
 \check{G} &= \begin{cases}\Mp(n,\RR) & \mbox{for }n\equiv1,3\ \ \ (\mod\,4),\\\Sp(n,\RR) & \mbox{for }n\equiv2\ \ \ (\mod\,4),\\\Sp(n,\RR)/\{\pm1\} & \mbox{for }n\equiv0\ \ \ (\mod\,4).\end{cases}
\end{align*}
First note that the fundamental group $\pi_1(\Sp(n,\RR))=\ZZ$ is generated by the closed curve (see e.g. \cite[Proposition 4.8]{Fol89})
\begin{align*}
 [0,2\pi] &\rightarrow \Sp(n,\RR),\,\theta \mapsto \left(\begin{array}{cc}\cos\theta\cdot\1_n&\sin\theta\cdot\1_n\\-\sin\theta\cdot\1_n&\cos\theta\cdot\1_n\end{array}\right).
\end{align*}
Since $\frac{r_0}{2}(d_0-\frac{d}{2})_+=\frac{rd}{4}=\frac{n}{4}$, we obtain for the integer $k$:
\begin{align*}
 k &= \begin{cases}4 & \mbox{for }n\equiv1,3\ \ \ (\mod\,4),\\2 & \mbox{for }n\equiv2\ \ \ (\mod\,4),\\1 & \mbox{for }n\equiv0\ \ \ (\mod\,4).\end{cases}
\end{align*}
and the claim follows.
\item[\textup{(2)}] For $V=\RR^{p,q}$, $p+q$ even, we have that $\frac{r_0}{2}(d_0-\frac{d}{2})_+=(\frac{q-p}{2})_+\in\ZZ$ and hence, $k=1$. The Lie algebra element $k\pi(e,0,-e)=\pi(e,0,-e)$ corresponds to
\begin{align*}
 \left(\begin{array}{ccccc}0&-2\pi&&&\\2\pi&0&&&\\&&0&&\\&&&\ddots&\\&&&&0\end{array}\right)
\end{align*}
via the isomorphism $\frakg\cong\so(p+1,q+1)$ (see Example \ref{ex:ConfGrp}~(2)). Since the exponential function of $\SO(p+1,q+1)_0$ applied to this Lie algebra element is equal to $\1$, the group $\check{G}_0$ is a finite cover of the group $\SO(p+1,q+1)_0$.
\end{enumerate}
\end{example}

Now, in a first step, we integrate the $(\frakg,\frakk)$-module to a unitary representation of $\check{G}_0$.

\begin{theorem}\label{thm:IntgkModule}
Assume that $V\ncong\RR^{p,q}$ with $p+q$ odd, $p,q\geq2$. Then there is a unique unitary irreducible representation $\pi_0$\index{notation}{pi0@$\pi_0$} of $\check{G}_0$ on $L^2(\calO,\td\mu)$ with underlying $(\frakg,\frakk)$-module $W$. The representation $\pi_0$ has the additional property that
\begin{align*}
 \pi_0(p) &= \rho_{\lambda_1}(\check{\pr}(p)) & \forall\, p\in\check{P}_0.
\end{align*}
\end{theorem}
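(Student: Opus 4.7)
The plan is to combine the preparation of Section \ref{sec:ConstructiongkModule} with the standard integration theorem for irreducible, admissible, infinitesimally unitary $(\frakg,\frakk)$-modules (see e.g.\ Wallach, \emph{Real Reductive Groups~I}, \S3.4). First I would establish that $W$ is irreducible: any nonzero $(\frakg,\frakk)$-submodule $U \subseteq W$ is $\frakk$-finite, and by Proposition \ref{prop:Kfinite} it must contain the $\frakk$-irreducible minimal type $W_0 \cong E^{\alpha_0}$, so $U \supseteq \calU(\frakg) W_0 = \calU(\frakg)\psi_0 = W$. Together with Propositions \ref{prop:WinL2}, \ref{prop:InfUnitary} and \ref{prop:admissible}, the standard theorem then yields a unique irreducible unitary representation $\widetilde\pi_0$ of the simply connected cover $\widetilde G_0$ on the Hilbert space $H$ obtained by completing $W$ in the $L^2$-inner product, with $W$ as its space of $\widetilde K$-finite vectors.

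Next, I would identify $H$ with $L^2(\calO,\td\mu)$. The inclusion $W \hookrightarrow L^2(\calO,\td\mu)$ is isometric, so $H$ injects isometrically into $L^2(\calO,\td\mu)$. The restrictions $\widetilde\pi_0|_{\widetilde P_0}$ and $\rho_{\lambda_1}\circ\widetilde{\pr_0}|_{\widetilde P_0}$ are two unitary representations of the simply connected cover $\widetilde P_0$ whose derived actions on the invariant dense subspace $W$ coincide, by the very definition of $\td\pi$ on $\frakp^{\textup{max}}$. Since the elements of $W$ are smooth (indeed analytic) vectors for $\rho_{\lambda_1}$, the two group representations must agree, and in particular $H$ is $\rho_{\lambda_1}(P_0)$-invariant. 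Irreducibility of $\rho_{\lambda_1}$ (Proposition \ref{prop:MackeyRep}) now forces $H = L^2(\calO,\td\mu)$, and the compatibility $\widetilde\pi_0|_{\widetilde P_0} = \rho_{\lambda_1}\circ\widetilde{\pr_0}|_{\widetilde P_0}$ is obtained as a by-product.

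The third step is to show that $\widetilde\pi_0$ descends through the central subgroup $\Gamma \subseteq \widetilde G_0$ from Lemma \ref{lem:FiniteCover}. By Schur's lemma the generator $z := \exp_{\widetilde G_0}(k\pi(e,0,-e))$ acts by a scalar $c$ on $L^2(\calO,\td\mu)$; one computes $c = 1$ case by case using Proposition \ref{prop:Kfinite}. In the euclidean case $\td\pi(e,0,-e)\psi_0 = \frac{rd}{2}i\,\psi_0$, so $c = e^{k\pi i rd/2} = 1$ by the defining property of $k$. In the non-euclidean case of rank $\geq 3$, $\psi_0$ is $\frakk$-spherical and $c = 1$ trivially. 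In the rank-$2$ non-euclidean case $V \cong \RR^{p,q}$ with $p+q$ even, $(e,0,-e)$ lies in $\frakt$, and one evaluates $z$ on a highest weight vector of $W_0 \cong E^{\alpha_0}$: writing $\alpha_0 = t_1\gamma_1 + t_2\gamma_2$ with $t_1,t_2 \in \frac{1}{2}\ZZ$ and $t_1 - t_2 \in \ZZ$, one finds $\alpha_0(e,0,-e) = 2i(t_1+t_2) \in 2i\ZZ$, hence $c = 1$. Thus $\widetilde\pi_0$ factors through $\check G_0 = \widetilde G_0/\Gamma$ to give the desired representation $\pi_0$, and uniqueness follows because any other such representation must coincide with $\pi_0$ on the dense subspace $W$ of $\widetilde K$-finite vectors.

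I expect the main obstacle to be the identification $H = L^2(\calO,\td\mu)$ in the second step: legitimating the comparison of the two $\widetilde P_0$-representations purely from matching infinitesimal actions on $W$ requires care, and ultimately rests on the fact that the elements of $W$ are analytic vectors for $\td\rho_{\lambda_1}|_{\frakp^{\textup{max}}}$. This analyticity can be read off from the explicit form of $\rho_{\lambda_1}$ in \eqref{eq:L2Rep1}--\eqref{eq:L2Rep2}, where $\frakl$ acts by first-order differential operators with at most linear coefficients and $\frakn$ by multiplication by the bounded imaginary exponentials $e^{i(x|a)}$, so the orbits $t \mapsto \rho_{\lambda_1}(\exp tX)\psi$ for $\psi \in W$ extend holomorphically in $t$.
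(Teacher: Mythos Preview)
Your overall strategy is close to the paper's, but the order in which you integrate and descend to $\check G_0$ creates a real problem in the euclidean case. There $Z(\frakk)=\RR(e,0,-e)$ is nontrivial (Lemma \ref{lem:CenterK}), so $\widetilde K_0$ is \emph{not} compact, and the ``standard integration theorem'' you invoke (Wallach, \S3.4, or the Harish--Chandra theorem the paper cites) does not apply to $\widetilde G_0$: those results need the maximal compact subgroup to actually be compact. The paper gets around this by first showing directly, on the level of the $(\frakg,\widetilde K_0)$-module, that the generator $\gamma=\exp_{\widetilde G_0}(k\pi(e,0,-e))$ acts trivially on all of $W$; only then does $W$ become a $(\frakg,\check K_0)$-module with $\check K_0$ compact (Lemma \ref{lem:FiniteCover}), and the integration theorem applies to $\check G_0$. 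The key device you are missing is this direct argument: one checks that $\gamma$ acts trivially on the highest weight vector of $W_0$ (this is your scalar computation), hence on $W_0$, and then propagates this to $W=\bigcup_n\calU_n(\frakp)W_0$ by induction, using that $\Ad(\gamma)X=e^{k\pi\ad(e,0,-e)}X=X$ for every $X\in\frakp$ (since $\frakp$ is a sum of $\ad(e,0,-e)$-eigenspaces with eigenvalues in $\{-2i,0,2i\}$). This argument uses neither Schur's lemma for $W$ nor any group representation of $\widetilde G_0$.

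A second, smaller gap: your irreducibility argument for $W$ in the first paragraph is not justified. Proposition \ref{prop:Kfinite} only identifies $W_0=\calU(\frakk)\psi_0$ as $E^{\alpha_0}$; it does not tell you that $W_0$ occurs in every nonzero $(\frakg,\frakk)$-submodule of $W$ (that would require knowing the $\frakk$-type decomposition of $W$, which comes later as Theorem \ref{thm:KtypeDecomp}). Fortunately you do not actually need this: once you have a unitary representation on $H\subseteq L^2(\calO,\td\mu)$ whose restriction to $\check P_0$ agrees with $\rho_{\lambda_1}\circ\check{\pr}$, irreducibility of $\rho_{\lambda_1}|_{P_0}$ (Proposition \ref{prop:MackeyRep}) forces $H=L^2(\calO,\td\mu)$ and gives irreducibility of $\pi_0$ for free, exactly as the paper argues. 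So drop the a priori irreducibility claim, reverse the order of your last two steps, and replace the Schur argument by the filtration argument above.
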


\begin{proof}
The $(\frakg,\frakk)$-module $W$ clearly integrates to a $(\frakg,\widetilde{K}_0)$-module since $\widetilde{K}_0$ is connected and simply-connected. (Note that in the euclidean case $\widetilde{K}_0$ is not compact.) By Proposition \ref{prop:Kfinite} the element $k\pi(e,0,-e)$ acts on the highest weight vector $\psi^+$ of $W_0$ by the scalar
\begin{align*}
 2\pi i\cdot k\frac{r_0}{2}\left(d_0-\frac{d}{2}\right)_+
\end{align*}
which\ \ is\ \ in\ \ $2\pi i\ZZ$\ \ by\ \ construction.\ \ Therefore,\ \ the\ \ central\ \ element $\gamma:=\exp(k\pi(e,0,-e))$ of $\widetilde{K}_0$ acts trivially on $\psi^+$ and hence, by Schur's Lemma, also on $W_0$. We claim that $\gamma$ also acts trivially on $W$. By the Poincar\'e--Birkhoff--Witt Theorem we have $W=\calU(\frakp)\calU(\frakk)\psi_0=\calU(\frakp)W_0$. Let
\begin{align*}
 \calU(\frakp) &= \bigcup_{n=0}^\infty{\calU_n(\frakp)}
\end{align*}
be the natural filtration of $\calU(\frakp)$. Then clearly
\begin{align*}
 W &= \bigcup_{n=0}^\infty{\calU_n(\frakp)W_0}
\end{align*}
and it suffices to show that $\gamma$ acts trivially on every $\calU_n(\frakp)W_0$. We show this by induction on $n\in\NN_0$. For $n=0$ we have $\calU_0(\frakp)W_0=W_0$ on which $\gamma$ acts trivially by the previous considerations. Now suppose $\gamma$ acts trivially on $\calU_n(\frakp)W_0$. By the results of Section \ref{subsec:ConfGrpRoots} the space $\frakp$ is the direct sum of eigenspaces of $\ad(e,0,-e)$ for the eigenvalues $-2i$, $0$ and $+2i$. Hence
\begin{align*}
 \Ad(\gamma)X &= e^{k\pi\ad(e,0,-e)}X = X & \forall\,X\in\frakp.
\end{align*}
For $w\in\calU_n(\frakp)W_0$ and every $X\in\frakp$ we obtain
\begin{align*}
 \gamma\cdot Xw &= \Ad(\gamma)X\cdot\gamma w = Xw
\end{align*}
by the induction hypothesis. It follows that $\gamma$ acts trivially on $\calU_{n+1}(\frakp)W_0$ and the induction is complete.\\
Since the subgroup $\Gamma$ is generated by $\gamma$ which acts trivially on $W$, the whole discrete central subgroup $\Gamma$ acts trivially on $W$. It follows that the representation factorizes to a $(\frakg,\check{K}_0)$-module. (In contrast to $\widetilde{K}_0$, the group $\check{K}_0$ is compact in both the euclidean and the non-euclidean case by Lemma \ref{lem:FiniteCover}.)\\
So far, we have constructed a $(\frakg,\check{K}_0)$-module, where $\check{K}_0\subseteq\check{G}_0$ is a maximal compact subgroup of the semisimple Lie group $\check{G}_0$ and $\frakg$ is the Lie algebra of $\check{G}_0$. Since $W$ is admissible by Proposition \ref{prop:admissible}, it integrates to a representation $(\pi_0,\calH)$ of $\widetilde{G}_0$ by a standard theorem of Harish--Chandra (see e.g. \cite[Theorem 6.A.4.2]{Wal88}). Now, $W$ is already infinitesimally unitary with respect to the $L^2$ inner product (see Proposition \ref{prop:InfUnitary}). Thus, $\calH\subseteq L^2(\calO,\td\mu)$ and $\pi_0$ is unitary with respect to the $L^2$-inner product. Further, $\pi_0$ and $\rho_{\lambda_1}\circ\check{\pr}$ agree on $\check{P}_0$ since they have the same Lie algebra action. Now, $\rho_{\lambda_1}|_{P_0}$ is irreducible on $L^2(\calO,\td\mu)$ by Proposition \ref{prop:MackeyRep}. Hence $\calH=L^2(\calO,\td\mu)$ and $(\pi_0,\calH)$ is also irreducible as $\widetilde{G}_0$-representation. This shows the claim.
\end{proof}

Now it is only a technical matter to extend the representation $\pi_0$ from $\check{G}_0$ to $\check{G}$.

\begin{proposition}
The representation $\pi_0$ extends uniquely to a representation $\pi$\index{notation}{pi@$\pi$} of $\check{G}$ such that
\begin{align*}
 \pi(p) &= \rho_{\lambda_1}(\check{\pr}(p)) & \forall\,p\in\check{P}.
\end{align*}
\end{proposition}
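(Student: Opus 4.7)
My plan is as follows. The group $\check G$ has at most two connected components, and the only non-trivial extension issue is when $\alpha\notin G_0$, in which case $\check G=\check G_0\sqcup\check\alpha\check G_0$. (If $\check G=\check G_0$ we take $\pi:=\pi_0$ directly; the compatibility on any remaining component of $\check P$ follows from the same argument as below, with $\check\alpha$ replaced by a representative of a non-identity component of $\check P$.) So assume $\alpha\notin G_0$ and define
$$\pi|_{\check G_0}:=\pi_0,\qquad \pi(\check\alpha\,g) := \rho_{\lambda_1}(\alpha)\,\pi_0(g)\quad(g\in\check G_0).$$
Uniqueness will be immediate: the compatibility condition forces $\pi(\check\alpha)=\rho_{\lambda_1}(\check{\pr}(\check\alpha))=\rho_{\lambda_1}(\alpha)$, and together with $\pi|_{\check G_0}=\pi_0$ this pins down $\pi$ everywhere.

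To verify that $\pi$ is a group homomorphism, two ingredients will be needed. First, from the explicit group law on $\widetilde G$ in Section~\ref{sec:Guniversalcover} one computes $\widetilde\alpha\otimes\widetilde\alpha=(\Ad(\alpha)\1)\cdot\1=\1$, so $\check\alpha^2=\1$ in $\check G$; correspondingly $\rho_{\lambda_1}(\alpha)^2=\rho_{\lambda_1}(\alpha^2)=\id$ since $\alpha$ is an involution of $V$. Second, using $\check\alpha^2=\1$ the multiplication on the non-identity component takes the form $\check\alpha g_1\cdot\check\alpha g_2=(\check\alpha g_1\check\alpha^{-1})\,g_2$, so multiplicativity of $\pi$ reduces to the single conjugation identity
$$\rho_{\lambda_1}(\alpha)\,\pi_0(g)\,\rho_{\lambda_1}(\alpha)^{-1} = \pi_0(\check\alpha\,g\,\check\alpha^{-1}),\qquad g\in\check G_0.$$

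This conjugation identity is the main step. Both sides are strongly continuous unitary representations of the connected group $\check G_0$ on $L^2(\calO,\td\mu)$, so it suffices to match their derived Lie algebra actions. Using $\Ad_\frakg(\check\alpha)=\Ad_\frakg(\alpha)$ from~\eqref{eq:Adgtildealpha}, the infinitesimal identity to be proved is
$$\rho_{\lambda_1}(\alpha)\,\td\pi(X)\,\rho_{\lambda_1}(\alpha)^{-1} = \td\pi(\Ad(\alpha)X),\qquad X\in\frakg,$$
and this is exactly the compatibility relation~\eqref{eq:CompatibilityRhoPi} of Proposition~\ref{prop:LieAlgRep} applied to the element $\alpha\in L\subseteq P$. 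Since $\check G_0$ is generated by $\exp(\frakg)$, the group-level identity follows by continuity.

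Finally, the required compatibility $\pi(p)=\rho_{\lambda_1}(\check{\pr}(p))$ on all of $\check P$ holds by Theorem~\ref{thm:IntgkModule} on $\check P_0$; for $p\in\check P\cap\check\alpha\check G_0$ one writes $p=\check\alpha q$ with $q=\check\alpha^{-1}p\in\check G_0\cap\check P=\check P_0$, and computes
$$\pi(p)=\rho_{\lambda_1}(\alpha)\pi_0(q)=\rho_{\lambda_1}(\alpha)\rho_{\lambda_1}(\check{\pr}(q))=\rho_{\lambda_1}(\alpha\cdot\check{\pr}(q))=\rho_{\lambda_1}(\check{\pr}(p)).$$
Strong continuity and unitarity of $\pi$ are inherited componentwise from $\pi_0$ and the unitary operator $\rho_{\lambda_1}(\alpha)$. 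The plan has no serious obstacle: all representation-theoretic content has already been absorbed into~\eqref{eq:CompatibilityRhoPi}, and what remains is essentially bookkeeping around the $\ZZ/2$-extension $\check G/\check G_0$.
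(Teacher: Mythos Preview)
Your argument is clean and correct when $\alpha\notin G_0$, and the main step---the conjugation identity deduced from~\eqref{eq:CompatibilityRhoPi}---is exactly the ingredient the paper uses at the end of its proof. Your definition $\pi(\check\alpha\,g):=\rho_{\lambda_1}(\alpha)\pi_0(g)$ is somewhat more direct than the paper's (which writes $g=pg_0$ with $p\in\check P$, $g_0\in\check G_0$, sets $\pi(g):=\rho_{\lambda_1}(\check\pr(p))\pi_0(g_0)$, and must then check well-definedness on $\check G_0\cap\check P$).

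There is, however, a genuine gap in your treatment of the case $\check G=\check G_0$. Here you set $\pi:=\pi_0$ and must verify $\pi_0(p)=\rho_{\lambda_1}(\check\pr(p))$ on \emph{all} of $\check P$, not just $\check P_0$. When $\check P\neq\check P_0$ (which happens for non-euclidean $V$ of rank $\geq 3$ with $\check\alpha\in\check K_0$; see case~(c) of the paper's proof), you need $\pi_0(\check\alpha)=\rho_{\lambda_1}(\alpha)$. Your parenthetical claims this ``follows from the same argument as below'', but that argument only shows that $A:=\rho_{\lambda_1}(\alpha)\pi_0(\check\alpha)^{-1}$ commutes with the irreducible representation $\pi_0$, hence $A=z\cdot\id$ for some unimodular scalar $z$. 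Nothing you have written determines $z$. (Note also that in this case $\check\alpha$ is merely some lift of $\alpha$ in $\check G_0$, so your computation $\check\alpha^2=\1$ from the $\widetilde G$ multiplication table does not apply and cannot be used to constrain $z$.)

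The paper pins down $z=1$ by evaluating on $\psi_0$: since in this case $\psi_0$ is $\frakk$-spherical (Proposition~\ref{prop:Kfinite}~(b)) and $\check\alpha\in\check K_0$, one has $\pi_0(\check\alpha)\psi_0=\psi_0$; and since $\psi_0$ is radial and $\alpha\in K_L$, also $\rho_{\lambda_1}(\alpha)\psi_0=\psi_0$. Hence $A\psi_0=\psi_0$ and $z=1$. This is a short but essential step that your proof is missing.
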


\begin{proof}
The group $\check{G}$ has at most $2$ connected components: $\check{G}_0$ and $\check{\alpha}\check{G}_0$. The same is true for $\check{P}$. We then have the following commutative diagram with exact lines in the category of groups:
\begin{equation*}
 \xymatrix{
  \{\1\} \ar[r] & {\check{P}_0}\ar_{\subseteq}[d]\ar^{\subseteq}[r] & \check{P}\ar[r]\ar_{\subseteq}[d]\ar@/^/[ddr]^/-1.5em/{\rho_{\lambda_1}\circ\check{\pr}} & \pi_0(\check{P})\ar[r]\ar[d] & \{\1\}\\ 
  \{\1\} \ar[r] & {\check{G}_0}\ar^{\subseteq}[r]\ar@/_/[drr]_{\pi_0} & \check{G}\ar[r]\ar@{-->}[dr]|{\pi} & \pi_0(\check{G})\ar[r] & \{\1\}\\
  &&&{\calU(\calH)}
 }
\end{equation*}
Here $\calU(\calH)$ denotes the group of unitary operators on the Hilbert space $\calH=L^2(\calO,\td\mu)$. To extend $\pi_0$ to $\check{G}$ such that it agrees with $\rho_{\lambda_1}\circ\check{\pr}$ on $\check{P}$, we first have to show that $\pi_0$ coincides with $\rho_{\lambda_1}\circ\check{\pr}$ on the intersection $\check{G}_0\cap\check{P}$. By Theorem \ref{thm:IntgkModule}, they already agree on $\check{P}_0$. Therefore, we have to deal with other possible connected components of $\check{G}_0\cap\check{P}$.\\
We first claim that
\begin{align*}
 \pi_0(\check{P})\rightarrow\pi_0(\check{G})
\end{align*}
is surjective. In fact, since $G$ is generated by $P$ and $j$, the finite cover $\check{G}$ is generated by $\check{P}$ and $\check{\pr}^{-1}(j)$. Since $\check{\pr}^{-1}(j)$ is contained in the identity component $\check{G}_0$, we have $\check{G}=\check{P}\check{G}_0$ and the claim follows. Note that $\pi_0(\check{P})$ and $\pi_0(\check{G})$ are of order at most $2$.\\
Now we show that
\begin{align*}
 \pi_0 &= \rho_{\lambda_1}\circ\check{\pr} & \mbox{on }\check{G}_0\cap\check{P}.
\end{align*}
\begin{enumerate}
\item[\textup{(a)}] If $V$ is euclidean, then $\check{P}=\check{P}_0$ and $\check{G}_0\cap\check{P}=\check{P}_0$. As previously remarked, $\pi_0=\rho_{\lambda_1}\circ\check{\pr}$ holds on $\check{P}_0$.
\item[\textup{(b)}] Next, if $V=\RR^{p,q}$, then $\pi_0(\check{P})\rightarrow\pi_0(\check{G})$ is always an isomorphism (see Examples \ref{ex:ConfGrp}~(2) and \ref{ex:Gcheck}~(2)) and again it follows that $\check{G}_0\cap\check{P}=\check{P}_0$.
\item[\textup{(c)}] Finally, let $V$ be non-euclidean with $d=2d_0$. If $\check{\alpha}\in\check{G}\setminus\check{G}_0$, then the order of $\pi_0(\check{G})$ is $2$ and $\pi_0(\check{P})\rightarrow\pi_0(\check{G})$ is even an isomorphism. Hence, $\check{G}_0\cap\check{P}=\check{P}_0$ as in (a) and (b) and we are done.\\
It remains to check the case where $\check{\alpha}\in\check{K}_0\subseteq\check{G}_0$. Since $\check{P}$ is generated by $\check{P}_0$ and $\check{\alpha}$ it suffices to show $\pi_0(\check{\alpha})=\rho_{\lambda_1}(\check{\pr}(\check{\alpha}))=\rho_{\lambda_1}(\alpha)$. Let $A:=\rho_{\lambda_1}(\alpha)\circ\pi_0(\check{\alpha}^{-1})$. Since $\Ad(\check{\alpha}):\check{P}_0\rightarrow\check{P}_0$, we have for $p\in\check{P}_0$:
\begin{align*}
 A\circ\rho_{\lambda_1}(\check{\pr}(p)) &= \rho_{\lambda_1}(\alpha)\circ\pi_0(\check{\alpha}^{-1})\circ\pi_0(p)\\
 &= \rho_{\lambda_1}(\alpha)\circ\pi_0(\Ad(\check{\alpha}^{-1})p)\circ\pi_0(\check{\alpha}^{-1})\\
 &= \rho_{\lambda_1}(\check{\pr}(\check{\alpha}))\circ\rho_{\lambda_1}(\check{\pr}(\Ad(\check{\alpha}^{-1})p))\circ\pi_0(\check{\alpha}^{-1})\\
 &= \rho_{\lambda_1}(\check{\pr}(p))\circ\rho_{\lambda_1}(\check{\pr}(\check{\alpha}))\circ\pi_0(\check{\alpha}^{-1})\\
 &= \rho_{\lambda_1}(\check{\pr}(p))\circ A.
\end{align*}
Hence, $A$ commutes with the representation $\rho_{\lambda_1}|_{P_0}$. But $\rho_{\lambda_1}|_{P_0}$ is irreducible and therefore $A=z\cdot\id_{\calH}$ for some $z\in\CC$. We claim that $A\psi_0=\psi_0$ and hence $z=1$. In fact, on the one hand we have $\rho_{\lambda_1}(\alpha)\psi_0=\psi_0$ by \eqref{eq:L2Rep2}. On the other hand, $\check{\alpha}\in\check{K}_0$ and $\psi_0$ is killed by $\td\pi(\frakk)$ (see Proposition \ref{prop:Kfinite}), hence $\pi_0(\check{\alpha})\psi_0=\psi_0$. This shows the claim.
\end{enumerate}
Finally, we can show that and $\pi_0$ extends uniquely to a representation $\pi:\check{G}\rightarrow\calU(\calH)$ with $\pi=\rho_{\lambda_1}\circ\check{\pr}$ on $\check{P}$. First, uniqueness of $\pi$ is clear, since $\check{G}=\check{P}\check{G}_0$. For the existence, let $g\in\check{G}$. Then $g\check{G}_0=p\check{G}_0$ for some $p\in\check{P}$ by our previous considerations. Hence, $g=pg_0$ with $g_0\in\check{G}_0$. We then define $\pi$ by
\begin{align*}
 \pi(g) &:= \rho_{\lambda_1}(\check{\pr}(p))\pi_0(g_0).
\end{align*}
It remains to show that this gives a well-defined homomorphism $\pi:\check{G}\rightarrow\calU(\calH)$.\\
Well-definedness is again obvious: Suppose $pg_0=p'g_0'$ with $p,p'\in\check{P}$, $g_0,g_0'\in\check{G}_0$. Then $p'^{-1}p=g_0'g_0^{-1}\in\check{P}\cap\check{G}_0$. Since $\rho_{\lambda_1}\circ\check{\pr}$ and $\pi_0$ agree on $\check{P}\cap\check{G}_0$, we obtain
\begin{align*}
 \rho_{\lambda_1}(\check{\pr}(p'^{-1}))\rho_{\lambda_1}(\check{\pr}(p)) &= \pi_0(g_0')\pi_0(g_0^{-1})\\
\intertext{and hence}
 \rho_{\lambda_1}(\check{\pr}(p))\pi_0(g_0) &= \rho_{\lambda_1}(\check{\pr}(p'))\pi_0(g_0').
\end{align*}
Thus, $\pi$ is well-defined. We now prove that $\pi$ is indeed a group homomorphism.\\
For this suppose that
\begin{align*}
 p'g_0'p''g_0'' &= pg_0
\end{align*}
with $p,p',p''\in\check{P}$ and $g_0,g_0',g_0''\in\check{G}_0$. We have to show that
\begin{align*}
 \rho_{\lambda_1}(\check{\pr}(p'))\circ\pi_0(g_0')\circ\rho_{\lambda_1}(\check{\pr}(p''))\circ\pi_0(g_0'') &= \rho_{\lambda_1}(\check{\pr}(p))\circ\pi_0(g_0).
\end{align*}
Rearrangement gives $g_0'p''=p'^{-1}pg_0g_0''^{-1}$. Therefore, $p''$ and $p'^{-1}p$ lie in the same connected component of $\check{G}$. Hence, one can find $h_0\in\check{G}_0$ such that $p'^{-1}p=p''h_0$. Together we have $g_0'=p''h_0g_0g_0''^{-1}p''^{-1}=\Ad(p'')(h_0g_0g_0''^{-1})$. Thus we are finished if we show that
\begin{align*}
 \pi_0(\Ad(p)g_0) &= \Ad(\rho_{\lambda_1}(\check{\pr}(p)))\pi_0(g_0) & \forall\, p\in\check{P}, g_0\in\check{G}_0.
\end{align*}
Since $\check{G}_0$ is connected, it is generated by $\exp_{\check{G}_0}(\frakg)$. Therefore, it suffices to show
\begin{align}
 \td\pi(\Ad(p)X) &= \Ad(\rho_{\lambda_1}(p))\td\pi(X) & \forall\, p\in P,X\in\frakg.\label{eq:PhiPsiAdEq}
\end{align}
But this was already shown in Proposition \ref{prop:LieAlgRep} and the proof is complete.
\end{proof}

\begin{corollary}
Let $V$ be a simple Jordan algebra which is not of rank $r=2$ with odd dimension. Then all coefficients $(\calB|a)$, $a\in V$, of the Bessel operator extend to self-adjoint operators on $L^2(\calO,\td\mu)$.
\end{corollary}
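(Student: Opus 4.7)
The plan is to read off self-adjointness of each coefficient $(\calB|a)$, $a\in V$, directly from the unitary representation $\pi$ of $\check{G}$ on $L^2(\calO,\td\mu)$ constructed in the previous subsections. First I verify that the hypothesis activates that construction: by Proposition \ref{prop:ClassificationEuclSph}, every simple real Jordan algebra of rank $2$ with simple $V^+$ is some $\RR^{p,q}$, so the condition ``not of rank $2$ with odd dimension'' excludes precisely the bad case $V\cong\RR^{p,q}$ with $p+q$ odd. In every remaining case Theorem \ref{thm:IntgkModule}, together with its extension from $\check{G}_0$ to $\check{G}$, furnishes a unitary representation $\pi$ on $L^2(\calO,\td\mu)$ whose derived representation agrees with $\td\pi$ on $W$.

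Given $a\in V$, I set $X=(0,0,-a)\in\overline{\frakn}\subseteq\frakg$. Then $t\mapsto\pi(\exp_{\check{G}}(tX))$ is a strongly continuous one-parameter unitary group on $L^2(\calO,\td\mu)$, so by Stone's theorem it admits a densely defined skew-adjoint generator $A$ with $\pi(\exp_{\check{G}}(tX))=e^{tA}$. The standard Garding/smooth-vector argument (differentiating the orbit map $t\mapsto\pi(\exp_{\check{G}}(tX))\psi$ at $t=0$ for smooth vectors $\psi$) shows that $A$ agrees on the dense subspace $\calH^\infty$ of smooth vectors of $\pi$ with the infinitesimal action $\td\pi(X)$; moreover $\calH^\infty$ contains the $(\frakg,\frakk)$-module $W$. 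Combining this with the defining formula \eqref{eq:L2DerRep3}
\[
 \td\pi(X)\psi(x)=\frac{1}{i}\bigl(\calB\psi(x)\bigm|a\bigr)
\]
yields $iA=(\calB|a)$ on $\calH^\infty$, so $iA$ is a self-adjoint extension of the coefficient operator $(\calB|a)$, proving the claim.

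No step here is a genuine obstacle: Theorem \ref{thm:BlambdaTangential} already carried out the analytic work of showing that $\calB$ is tangential to $\calO$ and formally symmetric on $L^2(\calO,\td\mu)$, and Section \ref{sec:IntgkModule} supplied the integrating unitary representation $\pi$. The only mildly technical point is identifying the Stone generator of the one-parameter unitary group with $\td\pi(X)$ on a dense core, which is a standard general fact about continuous unitary representations of Lie groups.
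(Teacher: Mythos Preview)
Your proof is correct and follows the same route as the paper. The paper's two-line proof simply invokes the general fact that in a unitary representation the Lie algebra acts by skew-adjoint operators and then reads off the conclusion from \eqref{eq:L2DerRep3}; you have unpacked this via Stone's theorem and the identification of the Stone generator with $\td\pi(X)$ on smooth vectors, and also made explicit why the hypothesis on $V$ activates Theorem~\ref{thm:IntgkModule}. One minor point: the identity $iA=(\calB|a)$ is cleanest stated on $W$ (which lies in $C^\infty(\calO)\cap\calH^\infty$) rather than on all of $\calH^\infty$, since the Bessel-operator formula \eqref{eq:L2DerRep3} is established for smooth functions on $\calO$; this suffices for the extension claim and does not affect your argument.
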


\begin{proof}
It is a general result that for a unitary representation the Lie algebra acts by skew-adjoint operators. Then the claim follows from \eqref{eq:L2DerRep3}.
\end{proof}

\begin{remark}\label{rem:MinRep}
We do not give a proof here that $\pi$ is in fact a minimal representation in the sense of \cite{GS05}. For this we refer to \cite[Section 3.4, Remark 2]{Sah92} for the euclidean case, \cite[Remark after Theorem 0.1]{DS99} for the case of a non-euclidean Jordan algebra of rank $\geq3$, and \cite[Remark 3.7.3~(1)]{KO03a} for the case $V=\RR^{p,q}$.
\end{remark}

\subsection{Two prominent examples}\label{sec:ExReps}

We show that the representation $\pi$ of $\check{G}$ is for $V=\Sym(n,\RR)$ isomorphic to the metaplectic representation (see \cite[Chapter 4]{Fol89}) and for $V=\RR^{p,q}$ isomorphic to the minimal representation of $\upO(p+1,q+1)$ as studied by T. Kobayashi, B. {\O}rsted and G. Mano in \cite{KO03a,KO03b,KO03c,KM07a,KM08}.

\subsubsection{The metaplectic representation}\label{sec:ExMetRep}

The \textit{metaplectic representation}\index{subject}{metaplectic representation} $\mu$ as constructed in \cite[Chapter 4]{Fol89} is a unitary representation of the metaplectic group $\Mp(n,\RR)$, the double cover of the symplectic group $\Sp(n,\RR)$, on $L^2(\RR^n)$. We do not want to give a construction here, but we later state the Lie algebra action which uniquely determines the representation $\mu$. The metaplectic representation splits into two irreducible components (see \cite[Theorem 4.56]{Fol89}):
\begin{align*}
 L^2(\RR^n) &= L^2_{\textup{even}}(\RR^n) \oplus L^2_{\textup{odd}}(\RR^n),
\end{align*}
where $L^2_{\textup{even}}(\RR^n)$\index{notation}{L2evenRn@$L^2_{\textup{even}}(\RR^n)$} and $L^2_{\textup{odd}}(\RR^n)$\index{notation}{L2odeeRn@$L^2_{\textup{odd}}(\RR^n)$} denote the spaces of even and odd $L^2$-functions, respectively. We show that for $V=\Sym(n,\RR)$ the representation $\pi$ as constructed in the previous section is isomorphic to the even component $L^2_{\textup{even}}(\RR^n)$. A detailed analysis of the metaplectic representation can e.g. be found in \cite[Chapter 4]{Fol89}.\\

Denote by $\td\mu$\index{notation}{dmu@$\td\mu$} the infinitesimal version of the metaplectic representation. $\td\mu$ is a representation of $\sp(n,\RR)$ on $L^2_{\textup{even}}(\RR^n)$ by skew-adjoint operators. By \cite[Theorem 4.45]{Fol89} we have
\begin{align*}
 \td\mu\left(\begin{array}{cc}0&0\\C&0\end{array}\right) &= -\pi i\sum_{i,j=1}^n{C_{ij}y_iy_j} && \mbox{for $C\in\Sym(n,\RR)$,}\\
 \td\mu\left(\begin{array}{cc}A&0\\0&-A^t\end{array}\right) &= -\sum_{i,j=1}^n{A_{ij}y_j\frac{\partial}{\partial y_i}}-\frac{1}{2}\Tr(A) && \mbox{for $A\in M(n,\RR)$,}\\
 \td\mu\left(\begin{array}{cc}0&B\\0&0\end{array}\right) &= \frac{1}{4\pi i}\sum_{i,j=1}^n{B_{ij}\frac{\partial^2}{\partial y_i\partial y_j}} && \mbox{for $B\in\Sym(n,\RR)$.}
\end{align*}
On the other hand, $\td\pi$ is a representation of $\frakg$ on $L^2(\calO)$ by skew-adjoint operators. Now $\frakg\cong\sp(n,\RR)$ by Example \ref{ex:ConfGrp}~(1) and $L^2(\calO)\cong L^2_{\textup{even}}(\RR^n)$ by Example \ref{ex:EquivMeasures}~(1). We show that under these identifications combined with the automorphism of $\sp(n,\RR)$ given by
\begin{align*}
 \left(\begin{array}{cc}A&C\\B&-A^t\end{array}\right)\mapsto&\left(\begin{array}{cc}0&\sqrt{\pi}\\-\frac{1}{\sqrt{\pi}}&0\end{array}\right)\left(\begin{array}{cc}A&C\\B&-A^t\end{array}\right)\left(\begin{array}{cc}0&\sqrt{\pi}\\-\frac{1}{\sqrt{\pi}}&0\end{array}\right)^{-1}\\
 &=\left(\begin{array}{cc}-A^t&-\pi B\\-\frac{1}{\pi}C&A\end{array}\right),
\end{align*}
the representations $\td\mu$ and $\td\pi$ agree. More precisely, we prove the following equality of skew-adjoint operators on $L^2(\calO)$:

\begin{proposition}
For $A\in M(n,\RR)$ and $B,C\in\Sym(n,\RR)$ we have
\begin{align}
 \calU\circ\td\pi(C,A,B) &= \td\mu\left(\begin{array}{cc}-A^t&-\pi B\\-\frac{1}{\pi}C&A\end{array}\right)\circ\calU\label{eq:MetaIntertwiningFormulaDer}
\end{align}
with $\calU:L^2(\calO)\rightarrow L^2_{\textup{even}}(\RR^n)$ as in \eqref{eq:DefCalU}.
\end{proposition}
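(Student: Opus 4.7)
Both sides of \eqref{eq:MetaIntertwiningFormulaDer} are linear in $(C,A,B)$, so by the decomposition $\frakg=\frakn+\frakl+\overline{\frakn}$ it suffices to verify the identity on the three types of elements $(C,0,0)$, $(0,A,0)$ and $(0,0,B)$ separately. In each case I would pull the action of $\td\pi$ through the pullback $\calU\psi(y)=\psi(yy^t)$ using the chain rule and compare with the explicit formulas for $\td\mu$ stated above.

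First I would handle $(C,0,0)\in\frakn$: by \eqref{eq:L2DerRep1} and the fact that $V=\Sym(n,\RR)$ is euclidean (so $\alpha=\id$ and $(-|-)=\tau=\Tr(-\cdot-)$), the operator $\td\pi(C,0,0)$ is multiplication by $i\,\Tr(xC)$. Substituting $x=yy^t$ gives $\Tr(yy^tC)=y^tCy=\sum_{i,j}C_{ij}y_iy_j$, which matches the multiplication operator defining $\td\mu\bigl(\begin{smallmatrix}0&0\\-C/\pi&0\end{smallmatrix}\bigr)$ after the factor $-\pi i$ is expanded.

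Next for $(0,A,0)\in\frakl$ I would identify $A\in M(n,\RR)$ with the element $T_A\in\str(V)$ acting by $T_A(a)=Aa+aA^t$ (Example \ref{ex:StrGrp}(1)). A short computation using the cyclicity of the trace and symmetry of $a,b$ shows $T_A^*=T_{A^t}$ with respect to $(-|-)$, and by expanding on the standard basis of $\Sym(n,\RR)$ one checks that $\Tr_V(T_A)=(n+1)\Tr(A)$. Using $r=n$, $\dim V=\tfrac{n(n+1)}{2}$ and $\lambda_1=\tfrac{1}{2}$ one sees that the coefficient $\tfrac{r\lambda_1}{2\dim V}\Tr(T^*)$ in \eqref{eq:L2DerRep2} collapses to $\tfrac{1}{2}\Tr(A)$. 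The differential term $D_{T_{A^t}x}\psi(yy^t)$ is then rewritten via the chain rule: the direction $w:=A^ty\in\RR^n$ satisfies $wy^t+yw^t=A^tyy^t+yy^tA=T_{A^t}(yy^t)$, so this derivative equals $D_{A^ty}\calU\psi(y)=\sum_{i,j}A_{ji}\,y_j\partial_{y_i}\calU\psi(y)$. Comparing with the formula for $\td\mu\bigl(\begin{smallmatrix}-A^t&0\\0&A\end{smallmatrix}\bigr)$ finishes this summand.

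The main obstacle is the $\overline{\frakn}$-summand $(0,0,B)$, where I have to show that the Bessel operator $\calB=P(\partial_x)x+\lambda_1\partial_x$, paired against $B\in\Sym(n,\RR)$ through $\tau$ and then pulled back by $\calU$, produces the second-order operator $\tfrac{1}{4\pi i}\sum B_{ij}\partial_{y_i}\partial_{y_j}$ (up to the sign $-\tfrac{1}{\pi}$ coming from the automorphism of $\sp(n,\RR)$). For this I would use the explicit identification $P(z)w=zwz$ on $\Sym(n,\RR)$ to write $(\calB\psi(x)|B)=\Tr\!\bigl(B\,\partial_x(x\,\partial_x\psi)\bigr)+\tfrac{1}{2}\Tr(B\,\partial_x\psi)$, then differentiate $\calU\psi(y)=\psi(yy^t)$ twice in the $y$-variables. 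By chain rule $\partial_{y_i}\calU\psi(y)=D_{e_iy^t+ye_i^t}\psi(yy^t)$ and $\partial_{y_i}\partial_{y_j}\calU\psi(y)$ produces a sum of a second-order term of the form $\tau(P(ye_i^t+e_iy^t,ye_j^t+e_jy^t)\cdot,\partial^2\psi)$ and a first-order correction from the Hessian of $y\mapsto yy^t$. Contracting with $B$ and simplifying with the matrix identity $P(u,v)w=\tfrac{1}{2}(uwv+vwu)$ on $\Sym(n,\RR)$ should turn the second-order part into $4(\calB-\lambda_1\partial_x)$-type terms, while the Hessian correction combined with the factor $\lambda_1=\tfrac{1}{2}$ cancels the first-order remainder exactly. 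The expected upshot is $\sum_{i,j}B_{ij}\partial_{y_i}\partial_{y_j}\calU\psi(y)=4\,(\calU\td\pi(0,0,B)\psi)(y)\cdot(-i)$, which after inserting the constants matches $\td\mu\bigl(\begin{smallmatrix}0&-\pi B\\0&0\end{smallmatrix}\bigr)\calU\psi(y)$. The delicate point is tracking the combinatorial factors arising from the symmetric coordinates on $\Sym(n,\RR)$ and from the specific value $\lambda_1=\tfrac{1}{2}$, which is precisely what makes the first-order remainders cancel; this is the only step that is not essentially bookkeeping.
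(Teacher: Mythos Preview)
Your plan is correct and follows essentially the same route as the paper: split by the Gelfand--Naimark decomposition $\frakg=\frakn+\frakl+\overline{\frakn}$ and verify each piece by direct computation with the chain rule, using $T_A^*=T_{A^t}$, $\Tr_V(T_A)=(n+1)\Tr(A)$, and the explicit $P(u,v)w=\tfrac12(uwv+vwu)$ on $\Sym(n,\RR)$. The only difference is cosmetic: for the $\overline{\frakn}$-part the paper starts from the $\td\mu$ side, first computes $\partial_{y_i}\calU\psi(y)=2\bigl(\tfrac{\partial\psi}{\partial x}(yy^t)\,y\bigr)_i$ in an orthonormal basis, and then identifies $\sum_{i,j}B_{ij}(e_\alpha y)_i(e_\beta y)_j=(P(e_\alpha,e_\beta)(yy^t)\,|\,B)$, which makes the bookkeeping with $\lambda_1=\tfrac12$ that you flagged completely transparent.
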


\begin{proof}
Choose an orthonormal basis of $V=\Sym(n,\RR)$ with respect to the inner product $(x|y)=\Tr(xy)$. Then for $1\leq i\leq n$:
\begin{align*}
 \frac{\partial\calU\psi}{\partial y_i}(y) &= \frac{\partial}{\partial y_i}\psi(yy^t) = \sum_\alpha{\frac{\partial\psi}{\partial x_\alpha}(yy^t)\frac{\partial(yy^t)_\alpha}{\partial y_i}} = \sum_\alpha{\frac{\partial\psi}{\partial x_\alpha}(yy^t)\frac{\partial}{\partial y_i}\Tr(yy^te_\alpha)}\\
 &= 2\sum_\alpha{\frac{\partial\psi}{\partial x_\alpha}(yy^t)(e_\alpha y)_i} = 2\left(\frac{\partial\psi}{\partial x}(yy^t)y\right)_i.
\end{align*}
\begin{enumerate}
\item[\textup{(a)}] Let $(C,0,0)\in\frakg$, $C\in\frakn=\Sym(n,\RR)$. Then
 \begin{align*}
  \left(\td\mu\left(\begin{array}{cc}0&0\\-\frac{1}{\pi}C&0\end{array}\right)\circ\calU\right)\psi(y) &= i\sum_{i,j=1}^n{C_{ij}y_iy_j}\,\calU\psi(y) = i\Tr(yy^tC)\calU\psi(y)\\  
  &= i(yy^t\psi(yy^t)|C) = \left(\calU\circ\td\pi(C,0,0)\right)\psi(y).
 \end{align*}
\item[\textup{(b)}] Let $(0,A,0)\in\frakg$, $A\in\frakl=\gl(n,\RR)$. $A$ acts on $V$ by $A\cdot x=Ax+xA^t$ (see Example \ref{ex:StrGrp} (1)). Then
 \begin{align*}
  \left(\td\mu\left(\begin{array}{cc}-A^t&0\\0&A\end{array}\right)\circ\calU\right)\psi(y) &= \sum_{i,j=1}^n{A_{ji}y_j\frac{\partial}{\partial y_i}\psi(yy^t)}+\frac{1}{2}\Tr(A)\psi(yy^t)\\
  &= 2\sum_{i=1}^n{(A^ty)_i\left(\frac{\partial\psi}{\partial x}(yy^t)y\right)_i}+\frac{1}{2}\Tr(A)\psi(yy^t)\\
  &= \left(A^t(yy^t)+(yy^t)A\left|\frac{\partial\psi}{\partial x}(yy^t)\right.\right)+\frac{1}{2}\Tr(A)\psi(yy^t)\\
  &= \left(\calU\circ\td\pi(0,A,0)\right)\psi(y),
 \end{align*}
 since
 \begin{align*}
  \Tr(V\rightarrow V,x\mapsto A\cdot x) &= (n+1)\Tr(A).
 \end{align*}
\item[\textup{(c)}] Let $(0,0,B)\in\frakg$, $B\in\overline{\frakn}=\Sym(n,\RR)$. Then
 \begin{align*}
  & \left(\td\mu\left(\begin{array}{cc}0&\pi B\\0&0\end{array}\right)\circ\calU\right)\psi(y)\\
  ={}& \frac{1}{4i}\sum_{i,j=1}^n{B_{ij}\frac{\partial^2}{\partial y_i\partial y_j}\psi(yy^t)} = \frac{1}{2i}\sum_{i,j=1}^n{B_{ij}\frac{\partial}{\partial y_i}\left[\sum_\alpha{\frac{\partial\psi}{\partial x_\alpha}(yy^t)(e_\alpha y)_i}\right]}\\
  ={}& \frac{1}{i}\sum_{i,j=1}^n{B_{ij}\sum_{\alpha,\beta}{\frac{\partial^2\psi}{\partial x_\alpha\partial x_\beta}(yy^t)(e_\alpha y)_i(e_\beta y)_j}}+\frac{1}{2i}\sum_{i,j=1}^n{B_{ij}\sum_\alpha{\frac{\partial\psi}{\partial x_\alpha}(yy^t)(e_\alpha)_{ij}}}\\
  ={}& \frac{1}{i}\sum_{\alpha,\beta}{\frac{\partial^2\psi}{\partial x_\alpha\partial x_\beta}(yy^t)\sum_{i,j=1}^n{B_{ij}(e_\alpha y)_i(e_\beta y)_j}}+\frac{1}{2i}\sum_{i,j=1}^n{B_{ij}\left(\frac{\partial\psi}{\partial x}(yy^t)\right)_{ij}}\\
  ={}& \frac{1}{i}\sum_{\alpha,\beta}{\frac{\partial^2\psi}{\partial x_\alpha\partial x_\beta}(yy^t)\left(\left.P(e_\alpha,e_\beta)(yy^t)\right|B\right)}+\frac{1}{2i}\left(\left.\frac{\partial\psi}{\partial x}(yy^t)\right|B\right)\\
  ={}& \frac{1}{i}\left(\left.\calB_{\frac{1}{2}}\psi(yy^t)\right|B\right) = \left(\calU\circ\td\pi(0,0,-B)\right)\psi(y).\qedhere
 \end{align*}
\end{enumerate}
\end{proof}

To obtain an intertwining operator between the group representations $\pi$ and $\mu$ note that the group $\check{G}$ is by Example \ref{ex:Gcheck}~(1) always a quotient of the metaplectic group $\Mp(n,\RR)$. Therefore, we can lift $\pi$ to a representation of $\Mp(n,\RR)$ which we also denote by $\pi$. Then we have the following intertwining formula:

\begin{corollary}\label{cor:IntertwinerMetaGr}
For $g\in\Mp(n,\RR)$ we have
\begin{align*}
 \calU\circ\pi(g) &= \mu\left(\Ad\left(\begin{array}{cc}0&\sqrt{\pi}\\-\frac{1}{\sqrt{\pi}}&0\end{array}\right)g\right)\circ\calU.
\end{align*}
Hence,
\begin{align*}
 \mu\left(\begin{array}{cc}0&-\sqrt{\pi}\\\frac{1}{\sqrt{\pi}}&0\end{array}\right)\circ\calU:L^2(\calO)\rightarrow L^2_{\textup{even}}(\RR^n)
\end{align*}
is an intertwining operator between $\pi$ and $\mu$.
\end{corollary}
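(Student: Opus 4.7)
The plan is to deduce Corollary \ref{cor:IntertwinerMetaGr} from the preceding proposition by integrating the infinitesimal intertwining formula to the group level, then deriving the intertwiner by composing with a fixed metaplectic element.

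First, I would set $h := \left(\begin{smallmatrix}0&\sqrt{\pi}\\-\frac{1}{\sqrt{\pi}}&0\end{smallmatrix}\right) \in \Sp(n,\RR)$ and observe that the preceding proposition can be rewritten as
\begin{align*}
 \calU\circ\td\pi(X) &= \td\mu(\Ad(h)X)\circ\calU \qquad \forall\,X\in\sp(n,\RR),
\end{align*}
under the identifications $\frakg\cong\sp(n,\RR)$ and $L^2(\calO)\cong L^2_{\textup{even}}(\RR^n)$. Since $\pi$ is lifted to the connected group $\Mp(n,\RR)$ via the covering $\Mp(n,\RR)\to\check{G}$ (Example \ref{ex:Gcheck}~(1)), both $g\mapsto\calU\circ\pi(g)\circ\calU^{-1}$ and $g\mapsto\mu(\Ad(h)g)$ are continuous unitary representations of $\Mp(n,\RR)$ on $L^2_{\textup{even}}(\RR^n)$ whose derived representations coincide. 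As $\Mp(n,\RR)$ is connected, it is generated by $\exp_{\Mp(n,\RR)}(\sp(n,\RR))$, and one concludes
\begin{align*}
 \calU\circ\pi(g) &= \mu(\Ad(h)g)\circ\calU \qquad \forall\,g\in\Mp(n,\RR)
\end{align*}
by exponentiating and extending multiplicatively. This is the first displayed equation.

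For the \emph{Hence} clause, set $h^{-1}=\left(\begin{smallmatrix}0&-\sqrt{\pi}\\\frac{1}{\sqrt{\pi}}&0\end{smallmatrix}\right)$ and compose the identity just proved with $\mu(h^{-1})$ from the left:
\begin{align*}
 \mu(h^{-1})\circ\calU\circ\pi(g) &= \mu(h^{-1})\circ\mu(hgh^{-1})\circ\calU = \mu(gh^{-1})\circ\calU = \mu(g)\circ\bigl(\mu(h^{-1})\circ\calU\bigr),
\end{align*}
which exhibits $\mu(h^{-1})\circ\calU:L^2(\calO)\to L^2_{\textup{even}}(\RR^n)$ as an intertwining operator between $\pi$ and $\mu$.

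The arguments are essentially formal once the infinitesimal statement is in hand; the only mild subtlety, and thus the main point requiring care, is that we must work on $\Mp(n,\RR)$ (rather than on the possibly smaller group $\check{G}$) so that the element $h$, which a priori only lives in $\Sp(n,\RR)$, can be lifted and so that $\mu(\Ad(h)\,\cdot)$ is well defined as a representation of $\Mp(n,\RR)$. This is ensured because $\Ad(h)$ is an automorphism of $\sp(n,\RR)$ which therefore lifts to an automorphism of the simply-covered group $\Mp(n,\RR)$, and $\pi$ is already defined on all of $\Mp(n,\RR)$ by construction.
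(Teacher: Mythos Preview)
Your proposal is correct and takes essentially the same approach as the paper, which simply states that the corollary follows immediately from \eqref{eq:MetaIntertwiningFormulaDer}; you have merely spelled out the standard passage from the infinitesimal intertwining relation to the group level on the connected group $\Mp(n,\RR)$, together with the trivial manipulation yielding the intertwiner.
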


\begin{proof}
This now follows immediately from \eqref{eq:MetaIntertwiningFormulaDer}.
\end{proof}

\begin{remark}
Together with Example \ref{ex:Gcheck}~(1) the previous proposition shows that the even part of the metaplectic representation descends to a representation of $\Sp(n,\RR)$ if $n$ is an even integer, and even to a representation of $\Sp(n,\RR)/\{\pm1\}$ if $n\in4\ZZ$. This can also seen from the explicit calculation of the cocycle of the metaplectic representation in \cite[Section 1.6]{LV80}.
\end{remark}

\subsubsection{The minimal representation of $\upO(p+1,q+1)$}

Let $V=\RR^{p,q}$, $p,q\geq2$. Then by Example \ref{ex:MinimalOrbit} the minimal orbit $\calO=\calO_1$ is the isotropic cone
\begin{align*}
 \calO &= \{x\in\RR^{p+q}:x_1^2+\ldots+x_p^2-x_{p+1}^2-\ldots-x_{p+q}^2=0\}\setminus\{0\},
\end{align*}
and the group $\check{G}_0$ is a finite cover of $\SO(p+1,q+1)_0$ by Example \ref{ex:Gcheck}~(2). In \cite{KO03c} T. Kobayashi and B. {\O}rsted construct a realization of the minimal representation of $\upO(p+1,q+1)$ on $L^2(\calO)$. We use the notation of \cite{KM08} and denote by $\omega$\index{notation}{omega@$\omega$} the minimal representation of $\upO(p+1,q+1)$ on $L^2(\calO)$. The action $\omega$ of the identity component $\SO(p+1,q+1)_0$ is uniquely determined by the corresponding Lie algebra action $\td\omega$\index{notation}{domega@$\td\omega$}. Let $f:\frakg\rightarrow\so(p+1,q+1)$ be the isomorphism of Example \ref{ex:ConfGrp}~(2). Then by \cite[equations (2.3.9), (2.3.11), (2.3.14) and (2.3.18)]{KM08} we have
\begin{align*}
 \td\omega(f(u,0,0)) &= 2i\sum_{j=1}^n{u_jx_j} = i(x|u) && \mbox{for }u\in V,\\
 \td\omega(f(0,T,0)) &= D_{T^*x} && \mbox{for }T\in\so(p,q),\\
 \td\omega(f(0,s\1,0)) &= \sum_{j=1}^n{x_j\frac{\partial}{\partial x_j}}+\frac{\mu+\nu+2}{2} && \mbox{for }s\in\RR,\\
 \td\omega(f(0,0,-\alpha v)) &= \frac{1}{2}i\sum_{j=1}^n{v_jP_j}, && \mbox{for }v\in V,
\end{align*}
where $P_j$ denotes the second order differential operator
\begin{align*}
 P_j &= \varepsilon_jx_j\Box-(2E+n-2)\frac{\partial}{\partial x_j}\index{notation}{Pj@$P_j$}
\end{align*}
with
\begin{align*}
 \Box &= \sum_{j=1}^n{\varepsilon_j\frac{\partial^2}{\partial x_j^2}},\\
 E &= \sum_{j=1}^n{x_j\frac{\partial}{\partial x_j}},\\
 \varepsilon_j &= \begin{cases}+1 & \mbox{for }1\leq j\leq p,\\-1 & \mbox{for }p+1\leq j\leq n.\end{cases}
\end{align*}

\begin{proposition}
For $X\in\frakg$ we have
\begin{align}
 \td\omega(f(X)) &= \td\pi(X).\label{eq:Intertwiningopq}
\end{align}
\end{proposition}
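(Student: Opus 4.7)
The plan is to exploit the Gelfand--Naimark decomposition $\frakg = \frakn + \frakl + \nfo$: since $\td\pi$ and $\td\omega \circ f$ are both \emph{linear} maps on $\frakg$, it is enough to verify \eqref{eq:Intertwiningopq} separately on each of the three summands, and the equality on all of $\frakg$ then follows automatically (no bracket check is needed once both sides are known to be Lie algebra homomorphisms). I will go through the three summands in order of increasing difficulty, saving the $\nfo$-piece --- which is the only genuine computation --- for last.

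On $\frakn$ the verification is trivial: by \eqref{eq:L2DerRep1} we have $\td\pi(u,0,0)\psi(x) = i(x|u)\psi(x)$, and this matches $\td\omega(f(u,0,0)) = 2i\sum_j u_j x_j$ once one recalls that under the Cartan involution $\alpha$ of Example \ref{ex:JordanAlgebras}\,(2) the inner product $(x|u) = \tau(x,\alpha u)$ becomes $2\sum_j u_j x_j$. On $\frakl$ I will use the identification $\str(V) \cong \so(p,q) \oplus \RR\,\id_V$ from Example \ref{ex:StrGrp}\,(2). For $T \in \so(p,q)$ the trace correction in \eqref{eq:L2DerRep2} vanishes (as $T$ is skew with respect to $(-|-)$, so $\Tr(T^*) = -\Tr(T) = 0$), and the first term $D_{T^*x}\psi$ agrees directly with the formula for $\td\omega(f(0,T,0))$. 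For $T = s\,\id_V$ one computes $\frac{r\lambda_1}{2n}\Tr(T^*) = \frac{r\lambda_1}{2}s$ and must match this against $\frac{\mu+\nu+2}{2}s$; inserting $r=r_0=2$, $d = p+q-2$, $\lambda_1 = \frac{r_0 d}{2r} = \frac{n-2}{2}$ and $\mu+\nu = n-4$ gives the common value $\frac{n-2}{2}s$.

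The real work is on $\nfo$. Here I need to show
\begin{align*}
 \frac{1}{i}(\calB_{\lambda_1}\psi(x)\,|\,v) &= \td\omega(f(0,0,-v))\psi(x), & v\in V.
\end{align*}
I would proceed by computing the Bessel operator $\calB_{\lambda_1}$ directly in the standard coordinates of $V = \RR^{p,q}$. Using the Jordan product $(\lambda,u)\cdot(\mu,w) = (\lambda\mu+\beta(u,w),\lambda w+\mu u)$ from Example \ref{ex:JordanAlgebras}\,(2) one reads off $L(x)$, then $P(x) = 2L(x)^2 - L(x^2)$, then the polarizations $P(e_\alpha,e_\beta)$, and finally substitutes into \eqref{eq:BesselOp} with the value $\lambda_1 = \tfrac{n-2}{2}$. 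After collecting terms componentwise, the $j$-th component of $\calB_{\lambda_1}\psi$ should assemble precisely into a constant multiple of $\varepsilon_j(x_j\,\square - (2E+n-2)\tfrac{\partial}{\partial x_j})\psi = \varepsilon_j P_j\psi$; pairing with $v$ via $\tau$ and using $(-|v) = \tau(-,\alpha v)$ to convert the $\varepsilon_j$ factors into the sign pattern of $\alpha$ then yields the desired factor $\tfrac{i}{2}\sum_j (\alpha v)_j P_j$ after substituting $v \mapsto \alpha v$ in Kobayashi--Mano's formula for $\td\omega(f(0,0,-\alpha v))$.

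The main obstacle is purely bookkeeping: keeping track of the three different bilinear forms in play ($\tau$, the positive inner product $(-|-) = \tau(-,\alpha-)$, and the Lorentzian $\Delta$ underlying the $\varepsilon_j$) and of the constant $\lambda_1$, so that the second-order operator coming from $P(\partial/\partial x)x$ lines up with $\varepsilon_j x_j \square$ and the first-order term $\lambda_1\,\partial/\partial x$ combined with the cross-terms produces exactly $-(2E+n-2)\partial/\partial x_j$. No conceptual issue arises once the quadratic representation for $V = \RR^{p,q}$ is written out explicitly; the calculation is deferred to Appendix \ref{app:Rank2} in the paper, and the steps above are the ones I would carry out there.
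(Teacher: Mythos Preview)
Your approach is correct and essentially identical to the paper's: reduce to the three summands of the Gelfand--Naimark decomposition, dispatch $\frakn$ and $\frakl$ by inspection, and for $\nfo$ compute the Bessel operator in standard coordinates on $\RR^{p,q}$ by writing out $P(\overline{e}_i,\overline{e}_j)x$ explicitly and checking that the $k$-th component yields $-\tfrac{1}{2}P_k$. One minor correction: the $\nfo$ computation is not deferred to Appendix~\ref{app:Rank2} but is carried out in full directly in the proof body (Appendix~\ref{app:Rank2} handles the minimal $K$-type and Casimir action instead).
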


\begin{proof}
By \eqref{eq:L2DerRep1} and \eqref{eq:L2DerRep2} the formula \eqref{eq:Intertwiningopq} clearly holds for $X\in\frakn$ and $X\in\frakl$. It remains to check the case $X=(0,0,-\alpha v)\in\overline{\frakn}$, $v\in V$. In this case $\td\pi(X)$ is given by (see \eqref{eq:L2DerRep3})
\begin{align*}
 \td\pi(X)\psi(x) &= \frac{1}{i}(\calB\psi(x)|\alpha v).
\end{align*}
We calculate $\calB\psi(x)$ explicitly. Let $(e_j)_j$ be the standard basis of $V=\RR^n$. The dual basis with respect to the trace form $\tau$ is given by $\overline{e}_j:=\frac{1}{2}\epsilon_je_j$, where
\begin{align*}
 \epsilon_j &= \begin{cases}+1 & \mbox{for $j=1$ and }p+1\leq j\leq n,\\-1 & \mbox{for }2\leq j\leq p.\end{cases}
\end{align*}
A short calculation shows that
\begin{align*}
 P(\overline{e}_i,\overline{e}_j)x &= \begin{cases}\frac{1}{2}(x_j\overline{e}_i+x_i\overline{e}_j) & \mbox{for }i\neq j,\\\frac{1}{2}\epsilon_i(x_1e_1+x_ie_i)-\frac{1}{4}\epsilon_ix & \mbox{for }i=j\neq1,\\\frac{1}{4}x & \mbox{for }i=j=1.\end{cases}
\end{align*}
Hence,
\begin{align*}
 (\calB\psi(x)|\alpha e_k) &= \sum_{i,j=1}^n{\frac{\partial^2\psi}{\partial x_i\partial x_j}(x)\tau(P(\overline{e}_i,\overline{e}_j)x,e_k)} + \frac{p+q-2}{2}\sum_{i=1}^n{\frac{\partial\psi}{\partial x_i}(x)\tau(\overline{e}_i},e_k)\\
 &= \frac{\partial^2\psi}{\partial x_1^2}(x)\tau\left(\frac{1}{4}x,e_k\right) +\sum_{i=2}^{p+q}{\frac{\partial^2\psi}{\partial x_i^2}(x)\tau\left(\frac{1}{2}\epsilon_i(x_1e_1+x_ie_i)-\frac{1}{4}\epsilon_ix,e_k\right)}\\
 & \ \ \ \ \ \ \ \ \ \ \ +\sum_{\substack{i,j=1\\i\neq j}}^{p+q}{\frac{\partial^2\psi}{\partial x_i\partial x_j}(x)\tau\left(\frac{1}{2}(x_i\overline{e}_j+x_j\overline{e}_i),e_k\right)} + \frac{p+q-2}{2}\frac{\partial\psi}{\partial x_k}(x)\\
 &= \frac{1}{2}\epsilon_kx_k\frac{\partial^2\psi}{\partial x_1^2}(x)
 +\sum_{i=2}^{p+q}{\frac{\partial^2\psi}{\partial x_i^2}(x)
 \left(\epsilon_ix_1\delta_{1k}+x_i\delta_{ik}-\frac{1}{2}\epsilon_i\epsilon_kx_k\right)}\\
 & \ \ \ \ \ \ \ \ \ \ \ \ \ \ \ \ \ \ \ \ \ \ \ \ \ \ \ \ \ \ \ \ \ \ \ \ \ \
 +\sum_{\substack{i=1\\i\neq k}}^{p+q}{x_i\frac{\partial^2\psi}{\partial x_i\partial x_k}(x)}
 + \frac{p+q-2}{2}\frac{\partial\psi}{\partial x_k}(x)\\
 &= -\frac{1}{2}\varepsilon_kx_k\Box\psi+E\frac{\partial\psi}{\partial x_k}
 +\frac{p+q-2}{2}\frac{\partial\psi}{\partial x_k} = -\frac{1}{2}P_k.
\end{align*}
This shows \eqref{eq:Intertwiningopq} and finishes the proof.
\end{proof}

The previous proposition now implies the following result for the group representations:

\begin{corollary}
The representation $\pi$ of $\check{G}_0$ descends to the group $\SO(p+1,q+1)_0$ on which it agrees with $\omega$.
\end{corollary}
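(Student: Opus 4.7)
The plan is to compare $\pi$ with the pullback of $\omega$ along the covering map. By Example~\ref{ex:Gcheck}~(2) the map $\check{\pr}_0\colon\check{G}_0\to\SO(p+1,q+1)_0$ is a finite covering, and composing with the isomorphism of Lie algebras $f\colon\frakg\to\so(p+1,q+1)$ of Example~\ref{ex:ConfGrp}~(2), the representation
\begin{align*}
 \omega':=\omega\circ\check{\pr}_0
\end{align*}
is a unitary representation of $\check{G}_0$ on $L^2(\calO,\td\mu)$. Its derived representation is exactly $\td\omega\circ f$, which by \eqref{eq:Intertwiningopq} coincides with $\td\pi$ on all of $\frakg$.

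First I would argue that $\omega'$ and $\pi$ are equal as unitary representations of $\check{G}_0$. The $(\frakg,\frakk)$-module $W=\calU(\frakg)\psi_0$ consists of smooth vectors for both representations (since $W\subseteq L^2(\calO,\td\mu)\cap C^\infty(\calO)$ by Proposition~\ref{prop:WinL2}), and on $W$ both derived representations agree. Since $\check{G}_0$ is connected, a unitary representation is determined by its derived action on a dense invariant subspace of smooth vectors; here $W$ is even a common $(\frakg,\check{K}_0)$-submodule, and the underlying $(\frakg,\check{K}_0)$-module of $\omega'$ is $W$ with the same $\frakg$-action as that of $\pi$. By the uniqueness part of Harish--Chandra's integration theorem (as used in Theorem~\ref{thm:IntgkModule}) it follows that $\omega'=\pi$ as representations of $\check{G}_0$.

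Consequently $\pi$ is trivial on $\ker(\check{\pr}_0)$, so it factors through $\check{\pr}_0$ to a unitary representation of $\SO(p+1,q+1)_0$ on $L^2(\calO,\td\mu)$, and this quotient representation is by construction equal to $\omega|_{\SO(p+1,q+1)_0}$. The only technical point to verify is the uniqueness of integration of $W$ on $\check{G}_0$; but $\omega'$ automatically has $W$ as a subspace of its smooth vectors because $W$ consists of smooth vectors for $\omega$ (the $\frakk$-finite vectors of the minimal representation of $\upO(p+1,q+1)$, since both have the same underlying $(\frakg,\frakk)$-module by infinitesimal equivalence). This removes the only potential obstacle and completes the argument.
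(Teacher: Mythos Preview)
Your proposal is correct and follows essentially the route the paper has in mind; the paper itself gives no proof and treats the corollary as an immediate consequence of the preceding proposition together with the uniqueness clause of Theorem~\ref{thm:IntgkModule}. Your write-up simply makes this explicit: pull $\omega$ back along the covering $\check{G}_0\to\SO(p+1,q+1)_0$ (whose existence is established in Example~\ref{ex:Gcheck}~(2)), observe that the derived actions coincide by \eqref{eq:Intertwiningopq}, and invoke uniqueness.

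One notational caution: the symbol $\check{\pr}$ in the paper denotes the covering $\check{G}\to G$, not the covering onto $\SO(p+1,q+1)_0$. It would be cleaner to give the latter map a different name, or simply say ``the covering map of Example~\ref{ex:Gcheck}~(2)'' without overloading $\check{\pr}_0$.
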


\begin{remark}
Second order differential operators similar to the operators $P_j$ appear also in \cite[Section 2]{LS99}. (In \cite{LS99} they are denoted by $\Phi_j$ and $\Theta_j$.)
\end{remark}

\section{Generalized principal series representations}\label{sec:PrincipalSeries}

In this section we show that the definition of the action $\td\pi_\lambda$ is motivated by the study of certain principal series representations $\omega_s$ of $G$. More precisely, the action of every $\td\pi_\lambda$, $\lambda\in\calW$, is obtained by taking the Fourier transform of the non-compact picture of some principal series representation $\td\omega_s$.\\

Recall that $P^{\textup{max}}$ denotes the maximal parabolic subgroup of $G$ corresponding to the maximal parabolic subalgebra $\frakp^{\textup{max}}$ (see Section \ref{sec:KKT}). $P^{\textup{max}}$ has a Langlands decomposition $P^{\textup{max}}=L^{\textup{max}}\ltimes N$ with $L^{\textup{max}}\subseteq\Str(V)$. For $s\in\CC$ we introduce the character
\begin{align*}
 \chi_s(g) &:= |\chi(g)|^{s+\frac{n}{2r}}, & g\in L^{\textup{max}},\index{notation}{chis@$\chi_s$}
\end{align*}
of $L^{\textup{max}}$ and extend it trivially to the opposite parabolic $\overline{P^{\textup{max}}}:=L^{\textup{max}}\ltimes\overline{N}$\index{notation}{Pmaxbar@$\overline{P^{\textup{max}}}$}. Consider the induced representation $(\widetilde{I}_s,\widetilde{\omega}_s):=\Ind_{\overline{P^{\textup{max}}}}^{G}(\chi_s)$\index{notation}{Istilde@$\widetilde{I}_s$}\index{notation}{omegastilde@$\widetilde{\omega}_s$} with
\begin{align*}
 \widetilde{I}_s = \{f\in C^\infty(G): f(g\overline{p})=\chi_s(\overline{p})f(g)\ \forall\, g\in G,\overline{p}\in\overline{P^{\textup{max}}}\}
\end{align*}
and $G$ acting by the left-regular representation. By the Gelfand-Naimark decomposition $N\overline{P^{\textup{max}}}\subseteq G$ is open and dense. Therefore, a function $f\in\widetilde{I}_s$ is already determined by its restriction $f_V(x):=f(n_x)$\index{notation}{fV@$f_V$} ($x\in V$) to $N\cong V$. Let $I_s$\index{notation}{Is@$I_s$} be the subspace of $C^\infty(V)$ consisting of all functions $f_V$ with $f\in\widetilde{I}_s$. Let $\omega_s$\index{notation}{omegas@$\omega_s$} be the action of $G$ on $I_s$ given by
\begin{align*}
 \omega_s(g)f_V &:= (\widetilde{\omega}_s(g)f)_V, & g\in G,\ f\in \widetilde{I}_s.
\end{align*}
This action can be written as (cf. \cite[Section 2]{Pev02})
\begin{align*}
 \omega_s(g)\eta(x) &= \chi_s(Dg^{-1}(x))f(g^{-1}x), & x\in V,
\end{align*}
for $g\in G$ and $\eta\in I_s$, where $Dg^{-1}(x)$ denotes the differential of the conformal transformation $g^{-1}$ at $x$, whenever it is defined. Calculating the differential explicitly yields (see \cite[Section 2]{Pev02})
\begin{align*}
 \omega_s(n_a)\eta(x) &= \eta(x-a), & n_a &\in N,\\
 \omega_s(g)\eta(x) &= \chi_s(g^{-1})\eta(g^{-1}x), & g &\in L^{\textup{max}},\\
 \omega_s(j)\eta(x) &= |\det(x)|^{-2s-\frac{n}{r}}\eta(-x^{-1}).
\end{align*}
Let us describe the infinitesimal version $\td\omega_s$\index{notation}{domegas@$\td\omega_s$} of $\omega_s$ (cf. \cite[Lemma 2.6]{Pev02}):
\begin{align*}
 \td\omega_s(X)\eta(x) &= -D_u\eta(x) & \mbox{for }X &= (u,0,0),\\
 \td\omega_s(X)\eta(x) &= -\left(\frac{rs}{n}+\frac{1}{2}\right)\Tr(T)\eta(x)-D_{Tx}\eta(x) & \mbox{for }X &= (0,T,0),\\
 \td\omega_s(X)\eta(x) &= -\left(2s+\frac{n}{r}\right)\tau(x,v)\eta(x)-D_{P(x)v}\eta(x) & \mbox{for }X &= (0,0,-v).
\end{align*}

Now for $\lambda\in\calW$ consider the Fourier transform $\calF_\lambda:L^2(\calO_\lambda,\td\mu_\lambda)\rightarrow\calS'(V)$\index{notation}{Flambda@$\calF_\lambda$} given by
\begin{align*}
 \calF_\lambda\psi(x) &= \int_{\calO_\lambda}{e^{-i(x|y)}\psi(y)\td\mu_\lambda(y)}, & x\in V.
\end{align*}
We show that $\calF_\lambda$ intertwines the actions of $\td\pi_\lambda$ and $\td\omega_s$ for a certain $s$:

\begin{proposition}\label{prop:IntertwinerPrincipalSeries}
Let $\lambda\in\calW$ and $s:=\frac{1}{2}\left(\lambda-\frac{n}{r}\right)$. Then for $X\in\frakg$ we have
\begin{align*}
 \calF_\lambda\circ\td\pi_\lambda(X) &= \td\omega_s(X)\circ\calF_\lambda.
\end{align*}
\end{proposition}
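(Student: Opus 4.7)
The plan is to verify the intertwining identity separately on each summand of the Gelfand--Naimark decomposition $\frakg=\frakn+\frakl+\overline{\frakn}$. Throughout I will compute everything against an orthonormal basis $(e_\alpha)_\alpha$ of $V$ with respect to $(-|-)$ and its $\tau$-dual basis $\overline{e}_\alpha=\alpha(e_\alpha)$, using the key formula $\partial_{x_\alpha}e^{-i(x|y)}=-iy_\alpha e^{-i(x|y)}$.

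For $X=(u,0,0)\in\frakn$ the statement reduces to the standard fact that Fourier transform exchanges multiplication by $i(x|u)$ with $-D_u$; this drops out from differentiating under the integral sign. For $X=(0,T,0)\in\frakl$ my plan is to exploit the $\chi^\lambda$-equivariance of $\td\mu_\lambda$ from \eqref{eq:dmulambdaEquivariance} together with Proposition \ref{prop:CharLambda} (which identifies $\td\chi=\frac{r}{n}\Tr$). Concretely, I would substitute $y\mapsto e^{-tT^*}y$ in $\int\psi(e^{tT^*}y)e^{-i(x|y)}\td\mu_\lambda(y)$, pick up a factor $\chi(e^{tT^*})^{-\lambda}$, and differentiate at $t=0$. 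The $(x|T^*y')=(Tx|y')$ term yields $-D_{Tx}\calF_\lambda\psi$ by the $\frakn$-computation, the $\chi$-factor contributes $-\lambda\frac{r}{n}\Tr(T)\calF_\lambda\psi$, and combined with the $\frac{r\lambda}{2n}\Tr(T^*)$ term from $\td\pi_\lambda$ one checks, using $s=\frac{1}{2}(\lambda-\frac{n}{r})$ so that $\frac{rs}{n}+\frac{1}{2}=\frac{r\lambda}{2n}$, that this matches $\td\omega_s(0,T,0)\calF_\lambda\psi$ exactly.

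The case $X=(0,0,-v)\in\overline{\frakn}$ is the main obstacle, since $\td\pi_\lambda(X)$ is given by the second-order Bessel operator and the measure $\td\mu_\lambda$ is singular. My plan is to use the symmetry statement of Proposition \ref{prop:BnuSA} to move $\calB_\lambda$ from $\psi$ over to the exponential kernel $e^{-i(x|y)}$ (viewing the latter as a Schwartz-class test function against $\psi\,\td\mu_\lambda$). Then I would compute $\calB_\lambda\bigl[e^{-i(x|\cdot)}\bigr](y)$ directly from the definition \eqref{eq:BesselOp}; the second-derivative piece collapses to $-e^{-i(x|y)}P(\alpha x)y$ via $\sum_{\alpha,\beta}x_\alpha x_\beta P(\overline{e}_\alpha,\overline{e}_\beta)=P(\alpha x)$, and the first-derivative piece gives $-i\lambda e^{-i(x|y)}\alpha x$. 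Pairing with $v$ and using that $\alpha$ is a Jordan automorphism together with the $\tau$-self-adjointness of $P(u)$ yields the identities $(\alpha x|v)=\tau(x,v)$ and $(P(\alpha x)y|v)=(y|P(x)v)$; here I need $\alpha P(\alpha x)\alpha=P(x)$, which follows from $\alpha L(x)\alpha=L(\alpha x)$ and $\alpha(x^2)=(\alpha x)^2$.

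After these identifications, re-applying the $\frakn$-case formula with the $x$-dependent vector $u(x)=P(x)v$ produces $-D_{P(x)v}\calF_\lambda\psi(x)$, while the remaining $\lambda$-term gives $-\lambda\tau(x,v)\calF_\lambda\psi(x)$. With $s=\frac{1}{2}(\lambda-\frac{n}{r})$ one has $2s+\frac{n}{r}=\lambda$, and so the sum coincides with $\td\omega_s(0,0,-v)\calF_\lambda\psi(x)$. Combining the three cases proves the proposition. The only subtlety worth highlighting in the write-up is the justification for applying Proposition \ref{prop:BnuSA} to the non-compactly supported kernel $e^{-i(x|\cdot)}$, but this is standard either by regularizing or by first testing both sides against Schwartz functions in $x$.
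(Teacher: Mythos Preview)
Your proposal is correct and follows essentially the same route as the paper's own proof: both split into the three pieces $\frakn$, $\frakl$, $\overline{\frakn}$, handle $\frakl$ via the equivariance of $\td\mu_\lambda$ and a group-level substitution differentiated at $t=0$, and handle $\overline{\frakn}$ by using the symmetry of $\calB_\lambda$ (the paper cites Theorem~\ref{thm:BlambdaTangential}, which is the $L^2$-consequence of the zeta function identity you cite) to move the Bessel operator onto the exponential kernel and then compute $\calB_\lambda e^{-i(x|\cdot)}$ directly. Your flagged subtlety about the non-compactly supported kernel is also glossed over in the paper.
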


\begin{proof}
\begin{enumerate}
\item[\textup{(1)}] Let $X=(u,0,0)$, $u\in V$. Then
\begin{align*}
 \calF_\lambda\circ\td\pi_\lambda(X)\psi(x) &= i\int_{\calO_\lambda}{e^{-i(x|y)}\left(\left.y\psi(y)\right|u\right)\td\mu_\lambda(y)}\\
 &= \int_{\calO_\lambda}{(iy|u)e^{-i(x|y)}\psi(y)\td\mu_\lambda(y)}\\
 &= -D_u\int_{\calO_\lambda}{e^{-i(x|y)}\psi(y)\td\mu_\lambda(y)}\\
 &= \td\omega_s(X)\circ\calF_\lambda\psi(x).
\end{align*}
\item[\textup{(2)}] For $X=(0,T,0)$, $T\in\frakl$, the intertwining formula can more easily be checked on the group level. Let $g\in L$. Then by \eqref{eq:dmulambdaEquivariance}:
\begin{align*}
 \chi_s(g^{-1})\int_{\calO_\lambda}{e^{i(g^{-1}x|y)}\psi(y)\td\mu_\lambda(y)} &= \chi(g^*)^{-s-\frac{n}{2r}}\int_{\calO_\lambda}{e^{i(x|g^{-*}y)}\psi(y)\td\mu_\lambda(y)}\\
 &= \chi(g^*)^{\frac{\lambda}{2}}\int_{\calO_\lambda}{e^{i(x|y)}\psi(g^*y)\td\mu_\lambda(y)}
\end{align*}
Now the intertwining formula for the derived action follows by putting $g:=e^{tX}$ and differentiating with respect to $t$ at $t=0$.
\item[\textup{(3)}] Let $X=(0,0,-v)$, $u\in V$. By Theorem \ref{thm:BlambdaTangential} the operator $\calB_\lambda$ is symmetric on $L^2(\calO_\lambda,\td\mu_\lambda)$ and hence
\begin{align*}
 \calF_\lambda\circ\td\pi_\lambda(X)\psi(x) &= \frac{1}{i}\int_{\calO_\lambda}{e^{-i(x|y)}\left(\left.\calB_\lambda\psi(y)\right|v\right)\td\mu_\lambda(y)}\\
 &= \frac{1}{i}\int_{\calO_\lambda}{\left(\left.\calB_\lambda e^{-i(x|y)}\right|v\right)\psi(y)\td\mu_\lambda(y)}\\
 &= \frac{1}{i}\int_{\calO_\lambda}{\left(\left.\left(P\left(\frac{\partial}{\partial y}\right)y+\lambda\frac{\partial}{\partial y}\right)e^{-i(x|y)}\right|v\right)\psi(y)\td\mu_\lambda(y)}\\
 &= \frac{1}{i}\int_{\calO_\lambda}{\left(\left.P\left(-i\alpha x\right)ye^{-i(x|y)}-i\lambda\alpha xe^{-i(x|y)}\right|v\right)\psi(y)\td\mu_\lambda(y)}\\
 &= \int_{\calO_\lambda}{\left(\left(iye^{-i(x|y)}\left|P(x)v\right.\right)-\lambda\tau(x,v)e^{-i(x|y)}\right)\psi(y)\td\mu_\lambda(y)}\\
 &= \left(-D_{P(x)v}-\left(2s+\frac{n}{r}\right)\tau(x,v)\right)\int_{\calO_\lambda}{e^{-i(x|y)}\psi(y)\td\mu_\lambda(y)}\\
 &= \td\omega_s(X)\circ\calF_\lambda\psi(x).\qedhere
\end{align*}
\end{enumerate}
\end{proof}

\begin{remark}
We do not claim that for the minimal Wallach point $\lambda=\lambda_1=\frac{r_0d}{2r}$ the representation $\pi$ is a subrepresentation of $\omega_s$, $s=\frac{1}{2}\left(\lambda-\frac{n}{r}\right)$. In general this is not the case. For instance, for a euclidean Jordan algebra one has to consider principal series representations of some covering of $G$ (see \cite{Sah93}). And for $V=\RR^{p,q}$ the representation $\pi$ is for $p-q\equiv2\ (\mod\ 4)$ not a subrepresentation of the spherical principal series representation $\omega_s$, but of some non-spherical principal series (see \cite[Remark after Theorem 5.A]{Sah95}). With Proposition \ref{prop:IntertwinerPrincipalSeries} we merely want to motivate the definition of the differential action $\td\pi_\lambda$.
\end{remark}

\begin{remark}
Principal series representations as constructed above have been studied thoroughly by S. Sahi and G. Zhang. In \cite{Sah93}, \cite{Sah95} and \cite{Zha95} they determine the irreducible and unitarizable constituents of the principal series representations associated to conformal groups of euclidean and non-euclidean Jordan algebras. The proofs are of an algebraic nature. Using these results, A. Dvorsky and S. Sahi as well as L. Barchini, M. Sepanski and R. Zierau constructed unitary representations of the corresponding groups on $L^2$-spaces of orbits of the structure group. In \cite{Sah92} the case of a euclidean Jordan algebra is treated and the non-euclidean case is studied in \cite{DS99}, \cite{DS03} and \cite[Section 8]{BSZ06}. However, they all exclude the case $V=\RR^{p,q}$ with $p\neq q$, $p,q\geq2$. In this case the $L^2$-model of the minimal representation was first constructed by T. Kobayashi and B. {\O}rsted in \cite{KO03c}. Their construction does not use principal series representations. The relation to the principal series representations in this case is given in \cite[Lemma 2.9]{KO03c}.\\
In contrast to the methods of \cite{Sah93}, \cite{Sah95} and \cite{Zha95}, our construction is only carried out for the orbit of minimal rank. On the other hand, the advantage of our construction is that it includes all cases for which the minimal representation exists. Hence, it gives a unified construction of the minimal representations.
\end{remark}

\section{The $\frakk$-Casimir}\label{sec:CasimirAction}

We now compute the action of the Casimir operator of $\frakk$ on radial functions. It turns out that the Casimir acts as a certain ordinary differential operator $\cal\calD_{\mu,\nu}$ of order four. (The parameters $\mu$ and $\nu$ were defined in \eqref{eq:DefMuNu} and depend on the Jordan algebra $V$.) $\cal\calD_{\mu,\nu}$ extends to a self-adjoint operator on $L^2(\RR_+,t^{\mu+\nu+1}\td t)$. By using the $\frakk$-type decomposition of the minimal representation $\pi$, we compute the spectrum of $\cal\calD_{\mu,\nu}$ and show that its $L^2$-eigenspaces are one-dimensional.

\subsection{$\frakk$-type decomposition}

In this section we give the $\frakk$-type decomposition of the minimal representation $\pi$. Up to this point we have not used any previous results about the minimal representation $\pi$. For the proof of the $\frakk$-type decomposition we use results on principal series representations from \cite{Sah93} and \cite{Sah95} as well as the results of \cite{KO03c} for the case $V=\RR^{p,q}$. However, for the construction of the minimal representation in Section \ref{sec:MinRepConstruction}, we did not need these results.

\begin{theorem}\label{thm:KtypeDecomp}
The $K$-type decomposition of $W$ is given by
\begin{align*}
 W &\cong \bigoplus_{j=0}^\infty{W^j},
\end{align*}
where we put
\begin{align}
 W^j &:= E^{\alpha_0+j\gamma_1}.\label{eq:DefWj}\index{notation}{Wj@$W^j$}
\end{align}
\end{theorem}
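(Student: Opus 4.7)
The plan is to identify the $\frakk$-type decomposition by combining the explicit minimal $\frakk$-type from Proposition \ref{prop:Kfinite} with a ladder argument that produces the higher $\frakk$-types, and finally to pin down the decomposition by invoking the case-by-case identification of $\pi$ with the minimal representations already studied in the literature.

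First, Proposition \ref{prop:Kfinite} supplies the minimal $\frakk$-type $W_0 \cong E^{\alpha_0}$, which is the summand for $j=0$. Since the $(\frakg,\frakk)$-module $W$ underlies the irreducible unitary representation $\pi$ (Theorem \ref{thm:IntgkModule}), it is itself irreducible, so every $\frakk$-type in $W$ arises from $W_0$ by iterated application of elements of $\frakg$; by the Cartan decomposition this reduces to applying elements of $\frakp$. To produce a candidate ladder of $\frakk$-types, I would pick a $\frakt_\CC$-root vector $X_{\gamma_1}\in\frakp_\CC$ of weight $\gamma_1$ (which exists by the root data of Section \ref{subsec:ConfGrpRoots}, since $\gamma_1$ is a $\frakt$-root with root space inside $\frakp$), and verify by direct computation that $X_{\gamma_1}^{j}\cdot v_0$ is nonzero for all $j\geq 0$, where $v_0\in W_0$ is the highest $\frakt$-weight vector. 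This would produce inside $W$ a vector of $\frakt$-weight $\alpha_0+j\gamma_1$, whose $\frakk$-span realises $E^{\alpha_0+j\gamma_1}$.

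Second, to show that no other $\frakk$-types occur and that each summand has multiplicity one, I would use the constraint from Proposition \ref{prop:klSphericalsReps} combined with the containment \eqref{eq:L2spherical}--\eqref{eq:L2nonspherical} of Proposition \ref{prop:WinL2}. In the non-rank-$2$ cases the generator $\psi_0$ is $\frakk_\frakl$-spherical, so its $\calU(\frakg)$-orbit is contained in the $\frakk_\frakl$-spherical part of the representation, forcing every $\frakk$-type to lie in $\Lambda^+_{\frakk_\frakl}(\frakk)$ with multiplicity one. The explicit containment in Proposition \ref{prop:WinL2}, written in polar coordinates relative to $K_L\RR_+c_1$, shows that the $K_L$-invariant vectors in $W$ are spanned by radial functions whose radial parts are of the form $\widetilde{K}_{\frac{\nu}{2}+\ell}(t)p(t)$, giving a single one-parameter family of $\frakk$-types, necessarily of the form $\{\alpha_0+j\gamma_1\}_{j\geq 0}$.

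Finally, to conclude, I would import the $\frakk$-type decomposition of the minimal representation from the literature case by case: \cite{Sah92} for euclidean $V$ (where $\pi$ is a unitary highest weight representation and its $\frakk$-types form a single string in the noncompact direction), \cite{DS99,DS03,BSZ06} for non-euclidean $V$ of rank $\geq 3$, and \cite{KO03c} for $V\cong\RR^{p,q}$. A direct translation into the present root-system normalisation identifies the ladder with $\alpha_0+j\gamma_1$. The main obstacle will be the rank-$2$ case, where $W_0$ is itself higher-dimensional (see \eqref{eq:MinKtypeRk2finite}) and the $\frakk$-spherical argument does not apply directly: here one has to use the identification $W_0\cong\calH^{|p-q|/2}(\RR^{\min(p,q)+1})$ and handle the ladder via the explicit rank-$2$ calculations of Appendix \ref{app:Rank2}, verifying that the highest weight of $\calH^k\oplus\cdots$ is indeed $\alpha_0$ as defined in \eqref{eq:DefGamma0}.
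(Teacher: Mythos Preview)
Your final paragraph is essentially the paper's entire argument: the proof of Theorem~\ref{thm:KtypeDecomp} consists solely of identifying $\pi$ with the representations constructed in \cite{Sah92}, \cite{DS99}, \cite{DS03}, \cite{BSZ06}, and \cite{KO03c} (via comparison of Lie algebra actions, using Proposition~\ref{prop:IntertwinerPrincipalSeries}), and then importing the $\frakk$-type decompositions from \cite[Equation~(7)]{Sah93}, \cite[Theorem~4.B]{Sah95}, and \cite[Lemma~2.6~(2)]{KO03c}. The paper explicitly flags this as the first place where external results on the minimal representation are invoked, and makes no attempt at the internal ladder or sphericality arguments you sketch in your first two paragraphs.

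Your second paragraph contains a genuine gap. The implication ``$\psi_0$ is $\frakk_\frakl$-spherical, so its $\calU(\frakg)$-orbit is contained in the $\frakk_\frakl$-spherical part of the representation'' is false on the literal reading: applying elements of $\frakg$ to a $K_L$-invariant vector does not produce $K_L$-invariant vectors, and $\calU(\frakg)\psi_0=W$ is the whole module. If you intend the weaker statement that every $\frakk$-type of $W$ admits a $\frakk_\frakl$-fixed vector, that does not follow from $\psi_0$ being $\frakk_\frakl$-fixed either (compare a spherical principal series, where the spherical vector generates everything yet most $K$-types are not $K_L$-spherical). In fact the paper runs the logic the other way: the $\frakk_\frakl$-sphericality of every $\frakk$-type, and the one-dimensionality of the radial part of each $W^j$, are \emph{consequences} of Theorem~\ref{thm:KtypeDecomp} (see Corollary~\ref{cor:DmunuSASpectrum} and Theorem~\ref{thm:KTypes}). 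Likewise, Proposition~\ref{prop:klSphericalsReps} bounds the dimension of $\frakk_\frakl$-invariants inside a single irreducible $\frakk$-module, but says nothing about the multiplicity of a given $\frakk$-type in $W$. So the sphericality route cannot replace the cited results; your plan really does collapse to step three.
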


\begin{proof}
Comparing the Lie algebra action (see Proposition \ref{prop:IntertwinerPrincipalSeries}), we find that the representation $\pi$ is isomorphic to the corresponding unitary irreducible representation on $L^2(\calO,\td\mu)$ constructed in \cite{Sah92} for the euclidean case, in \cite{DS99}, \cite{DS03} and \cite[Section 8]{BSZ06} for the non-euclidean case $\ncong\RR^{p,q}$ and in \cite{KO03c} for the case $V=\RR^{p,q}$. In \cite{Sah92} the algebraic results of \cite{Sah93} are used, and the constructions in \cite{DS99}, \cite{DS03} and \cite[Section 8]{BSZ06} use the results of \cite{Sah95}. Hence, for these cases the $\frakk$-type decomposition follows from \cite[Equation (7)]{Sah93} for the euclidean case and \cite[Theorem 4.B]{Sah95} for the non-euclidean case $\ncong\RR^{p,q}$. In the remaining case $V=\RR^{p,q}$ the $\frakk$-type decomposition is given in \cite[Lemma 2.6~(2)]{KO03c}. This finishes the proof.
\end{proof}

%\begin{corollary}
%The representation $\rho$ is minimal in the sense that its Gelfand--Kirillov dimension attains its minimum among all infinite-dimensional unitary representations.
%\end{corollary}

%\begin{proof}
%Use \textit{IMG\_1051.JPG} and the Weyl Dimension Formula \cite[Theorem 5.84]{Kna02}. ($\ell$ is the Gelfand--Kirillov dimension in the picture.)
%\end{proof}

\subsection{The $\frakk$-Casimir}

Let $(X_j)_j$ be any basis of $\frakk$ and $(X_j')_j$ its dual basis with respect to the $\Ad$-invariant inner product $\langle-,-\rangle$ (see Section \ref{sec:KKT} for the definition of $\langle-,-\rangle$). We call
\begin{align*}
 C_\frakk = \sum_j{X_jX_j'}.\index{notation}{Ck@$C_\frakk$}
\end{align*}
the \textit{Casimir element}\index{subject}{Casimir element} of $\frakk$. This definition is clearly independent of the chosen basis. $C_\frakk$ is an element of the center $\calZ(\frakk)$\index{notation}{Zk@$\calZ(\frakk)$} of the universal enveloping algebra $\calU(\frakk)$ of $\frakk$ and hence it acts as a scalar on each irreducible $\frakk$-representation. In fact, one can show that $C_\frakk$ acts on the irreducible $\frakk$-module with highest weight $\alpha$ by (see e.g. \cite[Proposition 5.28]{Kna02})
\begin{align*}
 \langle\alpha,\alpha+2\rho\rangle,
\end{align*}
where $\rho$ is the half sum of all positive roots (counted with multiplicities see \eqref{eq:DefRho}). Thus we have the following action of $C_\frakk$ on the $K$-types in $W$:

\begin{proposition}\label{prop:CasimirScalar}
The Casimir operator $\td\pi(C_\frakk)$ acts on every $K$-type $W^j$ of $W$ by the scalar
\begin{align*}
 -\frac{r_0}{8n}\left(4j(j+\mu+1)+\frac{r_0d}{2}\left|d_0-\frac{d}{2}\right|\right).
\end{align*}
\end{proposition}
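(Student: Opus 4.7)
The plan is to reduce the claim to a direct computation on the highest weight of each $K$--type. By a standard result in the representation theory of compact Lie algebras, the Casimir $C_\frakk$ acts on an irreducible $\frakk$-representation of highest weight $\alpha$ by the scalar $\langle\alpha,\alpha+2\rho\rangle$, where $\langle-,-\rangle$ is the $\Ad$-invariant form on $\frakk$ dual to the one used to define $C_\frakk$ (see e.g.\ \cite[Proposition 5.28]{Kna02}), and where $\rho$ denotes the half-sum of positive roots in $\Sigma(\frakk_\CC,\frakt_\CC)$. By Theorem~\ref{thm:KtypeDecomp} the $K$-type $W^j=E^{\alpha_0+j\gamma_1}$ has highest weight $\alpha_0+j\gamma_1$, and $\alpha_0$ is given in the three cases by Proposition~\ref{prop:Kfinite}. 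So the content of the proposition is to compute $\langle\alpha_0+j\gamma_1,\alpha_0+j\gamma_1+2\rho\rangle$ in closed form.

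All the required data has already been assembled: the inner product $\langle\gamma_i,\gamma_j\rangle=-\frac{r_0}{2n}\delta_{ij}$ of \eqref{eq:KappaGamma} (the argument used in case~(a) of the proof of Proposition~\ref{prop:klSphericalsReps} via \eqref{eq:KillingForm} and Lemma~\ref{lem:TrOnSubspace} applies verbatim in the euclidean case too), and the formula $\rho=\sum_i\rho_i\gamma_i$ with $\rho_i=\tfrac12(\tfrac{n}{r_0}-1)-\tfrac{d_0}{2}(i-1)$ of \eqref{eq:DefRho}--\eqref{eq:Rhoi}. A key observation is that in all three cases of \eqref{eq:DefGamma0} the coefficient of $\gamma_1$ in $\alpha_0$ equals $\tfrac12|d_0-\tfrac{d}{2}|$ (trivially in cases (2)--(3), and using $d_0=d$ for the euclidean case). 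Splitting
\[
\langle\alpha_0+j\gamma_1,\alpha_0+j\gamma_1+2\rho\rangle = \langle\alpha_0,\alpha_0+2\rho\rangle + 2j\langle\gamma_1,\alpha_0+\rho\rangle + j^2\langle\gamma_1,\gamma_1\rangle,
\]
the second and third terms combine to
\[
-\frac{r_0}{2n}\bigl(j^2+j|d_0-\tfrac{d}{2}|+j(\tfrac{n}{r_0}-1)\bigr) = -\frac{r_0}{2n}\,j(j+\mu+1),
\]
where the last equality uses the definition $\mu=\tfrac{n}{r_0}+|d_0-\tfrac{d}{2}|-2$ from \eqref{eq:DefMuNu}. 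This already matches the $j$-dependent term $-\tfrac{r_0}{8n}\cdot 4j(j+\mu+1)$ of the claimed scalar.

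It remains to show that the $j$-independent term satisfies $\langle\alpha_0,\alpha_0+2\rho\rangle=-\tfrac{r_0^2d}{16n}|d_0-\tfrac{d}{2}|$. I would verify this case by case, exploiting that in each case $\alpha_0$ is an explicit simple combination of the $\gamma_i$. In the euclidean case, $\alpha_0=\tfrac{d}{4}\sum_i\gamma_i$ and $\sum_i\rho_i=0$ (a telescoping identity), so $\langle\alpha_0,2\rho\rangle=0$ and $\langle\alpha_0,\alpha_0\rangle$ gives the answer directly using $d_0=d$. In the non-euclidean rank $\geq 3$ case $\alpha_0=0$ and $d=2d_0$, so both sides vanish trivially. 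The only genuine calculation is the rank-two case $V\cong\RR^{p,q}$, where $\alpha_0=\tfrac12|d_0-\tfrac{d}{2}|\gamma_1+\tfrac12(d_0-\tfrac{d}{2})\gamma_2$; here $\rho_1-\rho_2=\tfrac{d_0}{2}$ and a short expansion, distinguishing the signs of $d_0-\tfrac{d}{2}$, yields the identity $\langle\alpha_0,\alpha_0+2\rho\rangle=-\tfrac{(p+q-2)|q-p|}{8n}$, which matches. The main obstacle is precisely this rank-two computation, where $\alpha_0$ is no longer proportional to $\sum_i\gamma_i$ and one must keep careful track of the sign of $d_0-\tfrac{d}{2}$; everywhere else the computation is essentially forced by the linearity of $\alpha_0$ in the $\gamma_i$ and the orthogonality relation \eqref{eq:KappaGamma}.
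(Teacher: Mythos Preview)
Your proof is correct and follows exactly the same approach as the paper: invoke the highest-weight formula $\langle\alpha,\alpha+2\rho\rangle$ for the Casimir scalar, use the highest weight $\alpha_0+j\gamma_1$ of $W^j$, and compute via the orthogonality relation \eqref{eq:KappaGamma} together with the explicit formulas \eqref{eq:DefGamma0} and \eqref{eq:Rhoi}. The paper compresses the entire computation into a single displayed equality, whereas you spell out the split into $j$-dependent and $j$-independent parts and verify the constant term $\langle\alpha_0,\alpha_0+2\rho\rangle$ case by case; this is precisely the calculation the paper is suppressing, and your details are correct (including the observation that the $\gamma_1$-coefficient of $\alpha_0$ equals $\tfrac12|d_0-\tfrac{d}{2}|$ in all three cases, and that $\sum_i\rho_i=0$ in the euclidean case).
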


\begin{proof}
By \eqref{eq:DefWj} the $K$-type $W^j$ has highest weight $\alpha=\alpha_0+j\gamma_1$. Hence, we just have to calculate the inner product $\langle\alpha_0+j\gamma_1,\alpha_0+j\gamma_1+2\rho\rangle$. With \eqref{eq:Rhoi}, \eqref{eq:DefGamma0} and \eqref{eq:KappaGamma} we obtain
\begin{align*}
 \langle\alpha_0+j\gamma_1,\alpha_0+j\gamma_1+2\rho\rangle &= j^2\langle\gamma_1,\gamma_1\rangle+2j\langle\gamma_1,\alpha_0+\rho\rangle+\langle\alpha_0,\alpha_0+2\rho\rangle\\
 &= -\frac{r_0}{8n}\left(4j(j+\mu+1)+\frac{r_0d}{2}\left|d_0-\frac{d}{2}\right|\right).\qedhere
\end{align*}
\end{proof}

We now compute the Casimir action on the subspace $L^2(\calO)_{\textup{rad}}$ of radial functions.

\begin{theorem}\label{thm:CasimirAction}
Let $\psi(x)=f(|x|)$ ($x\in\calO$) be a radial function for some $f\in C^\infty(\RR_+)$. Then
\begin{align*}
 \td\pi(C_\frakk)\psi(x) &= -\frac{r_0}{8n}\left(\calD_{\mu,\nu}+\frac{r_0d}{2}\left|d_0-\frac{d}{2}\right|\right)f(|x|),
\end{align*}
where $\calD_{\mu,\nu}$ is the fourth order differential operator in one variable given by
\begin{align*}
 \calD_{\mu,\nu} &= \frac{1}{t^2}\left((\theta+\mu+\nu)(\theta+\mu)-t^2\right)\left(\theta(\theta+\nu)-t^2\right)\index{notation}{Dmunu@$\calD_{\mu,\nu}$}
\end{align*}
and $\theta=t\frac{\td}{\td t}$ denotes the one-dimensional Euler operator.
\end{theorem}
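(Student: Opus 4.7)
The idea is to exploit the orthogonal decomposition $\frakk=\frakk_\frakl\oplus\frakk^-$ with $\frakk^-:=\{(u,0,-\alpha u):u\in V\}$. By formula \eqref{eq:KillingForm} this splitting is $\langle-,-\rangle$-orthogonal, so the Casimir decomposes as $C_\frakk=C_1+C_2$ with $C_1\in\calU(\frakk_\frakl)$ and $C_2\in\calU(\frakk^-)$. Each $Y=(0,T,0)\in\frakk_\frakl$ has $T^*=-T$ and $\Tr(T^*)=0$ (both the $L(y)$-summand with $y\in V^-$ and the derivation part $D$ have vanishing trace by Lemma \ref{lem:Trtr} and \eqref{eq:TrOfDerivation}), while $(T^*x|x)=0$ by skew-symmetry; hence $\td\pi(Y)\psi=D_{T^*x}\psi$ kills every radial $\psi$, and iterating gives $\td\pi(C_1)\psi=0$. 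Thus $\td\pi(C_\frakk)\psi=\td\pi(C_2)\psi$ on radial functions.

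Next I would make $C_2$ explicit. Fix an orthonormal basis $(e_\alpha)_\alpha$ of $V$ with respect to $(-|-)$ and write $Z_u:=(u,0,-\alpha u)$. Formula \eqref{eq:KillingForm} gives $\langle Z_{e_\alpha},Z_{e_\beta}\rangle=\tfrac{8n}{r}\delta_{\alpha\beta}$, so $C_2=\tfrac{r}{8n}\sum_\alpha Z_{e_\alpha}^2$. Combining this with the identity \eqref{eq:KActionInTermsOfBesselOp}, $\td\pi(Z_u)\psi=\tfrac{1}{i}\tau(\tilde\calB\psi,u)$ where $\tilde\calB:=\calB-\alpha x\cdot{}$, and iterating the action one obtains the basis-free expression
\begin{equation*}
 \td\pi(C_\frakk)\psi(x) = -\frac{r}{8n}\sum_\alpha\tau\bigl(\tilde\calB[\tau(\tilde\calB\psi,e_\alpha)](x),e_\alpha\bigr).
\end{equation*}
By Lemma \ref{lem:BnuEquiv} the right-hand side is $K_L$-equivariant, hence radial; combined with the polar decomposition $\calO=K_L\cdot\RR_+c_1$ of Corollary \ref{cor:PolarOk} this reduces the problem to evaluating both sides at $x=tc_1$.

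The remaining step is the explicit computation at $x=tc_1$, which I expect to be the main obstacle. Proposition \ref{prop:BnuRadial} gives the closed form
\begin{equation*}
 \tilde\calB\psi(ktc_1) = t\,\widetilde{A}(|x|)\,\alpha(kc_1) + B(|x|)\,\alpha(ke),
\end{equation*}
with $\widetilde{A}(s)=f''(s)+(d-d_0-e)s^{-1}f'(s)-f(s)$ and $B(s)=\tfrac{r_0}{r}(d_0-\tfrac{d}{2})f'(s)$. Choosing the basis $(e_\alpha)_\alpha$ adapted both to the Peirce decomposition $V=\bigoplus_{i\le j}V_{ij}$ and to the splitting $V=V^+\oplus V^-$, and writing out $h_\alpha(x):=\tau(\tilde\calB\psi,e_\alpha)$ in the coordinates $(k,t)\mapsto ktc_1$ on $\calO$, I would apply $\tilde\calB$ to each $h_\alpha$ using the definition \eqref{eq:BesselOp} and sum the $e_\alpha$-components. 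The difficulty is that $h_\alpha$ is no longer radial, so Proposition \ref{prop:BnuRadial} cannot be reused; the contributions must be tracked case by case according to which Peirce block $V_{11}$, $V_{1j}$ ($j>1$), $V_{jj}$ ($j>1$) or $V_{ij}$ ($1<i<j$) the vector $e_\alpha$ sits in, with $V^\pm$-components treated separately since $B(s)$ only couples to the $V^+$-parts of the $V_{ii}$. The trace identities in Lemmas \ref{lem:SquareSums} and \ref{lem:QRepC} together with \eqref{eq:SumQuadRep}, the relation \eqref{eq:noverr0} and the definitions of $\mu,\nu$ in \eqref{eq:DefMuNu} should collapse the sum; the change of variable $s=\tfrac{r}{r_0}t$ from $(-|-)$-coordinates to the normalized $|\cdot|$-coordinates supplies the factor $\tfrac{r_0}{r}$ that converts $\tfrac{r}{8n}$ into the announced $\tfrac{r_0}{8n}$ and organizes the resulting fourth-order operator into the factored form $\calD_{\mu,\nu}+\tfrac{r_0d}{2}|d_0-\tfrac{d}{2}|$. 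A useful sanity check is provided by Proposition \ref{prop:CasimirScalar}: applied to the generator $\psi_0=\widetilde{K}_{\nu/2}(|x|)$ of $W^0$ the expected formula must reproduce the scalar $-\tfrac{r_0}{8n}\cdot\tfrac{r_0d}{2}|d_0-\tfrac{d}{2}|$ (i.e.\ $\calD_{\mu,\nu}$ annihilates $\widetilde{K}_{\nu/2}$), which is consistent with $\widetilde{K}_{\nu/2}$ solving $B_{\nu/2}u=0$ and hence also $\theta(\theta+\nu)u-t^2u=0$.
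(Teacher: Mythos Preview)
Your reduction is exactly the one the paper uses: the orthogonal splitting $\frakk=\frakk_\frakl\oplus\{(u,0,-\alpha u):u\in V\}$, the vanishing of the $\frakk_\frakl$-part on radial functions, and the resulting formula $\td\pi(C_\frakk)=\tfrac{r}{8n}\sum_\alpha\td\pi(e_\alpha,0,-\alpha e_\alpha)^2$ on radial functions (this is the paper's \eqref{eq:CasimirActionInTermsOfej}).

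Where you diverge is in the second application of $\tilde\calB$. Your plan is to write $h_\alpha(x)=\tau(\tilde\calB\psi,e_\alpha)$ via the general formula of Proposition~\ref{prop:BnuRadial} and then track contributions Peirce block by Peirce block. The paper instead separates into the three cases of Proposition~\ref{prop:ClassificationEuclSph} --- euclidean, non-euclidean of rank $\ge 3$, and $V=\RR^{p,q}$ --- and uses the simplified forms of Corollary~\ref{cor:BnuRadial}. In the non-euclidean rank $\ge 3$ case one has $d=2d_0$, so your $B(s)$ vanishes identically and $\tilde\calB\psi(x)=B_{\frac{\nu}{2}}f(|x|)\,\alpha x$; in the euclidean case $K_L\subseteq\Aut(V)$ forces $ke=e$, so the extra term is just $\tfrac{d}{2}f'(|x|)\,e$. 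In both situations $h_\alpha(x)$ factors as (radial)$\cdot(x|e_\alpha)$ plus at most a simple correction, and the second application of $\tilde\calB$ is handled not by Peirce bookkeeping but by the product rule~\eqref{eq:BnuProdRule}: the cross term produces $\tau(P(\alpha x,\alpha e_\alpha)x,e_\alpha)$, and the sum over $\alpha$ collapses via~\eqref{eq:SumQuadRep}. A short manipulation then yields $\calD_{\mu,\nu}$. The case $V=\RR^{p,q}$ is relegated to Appendix~\ref{sec:Rank2Casimir}, where the two halves $1\le j\le p$ and $p+1\le j\le n$ of the sum are treated separately.

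Your unified route is conceptually sound but operationally harder: when $B\neq 0$ the term $B(|x|)\,\alpha(ke)$ depends on $k\in K_L$ through $ke$, which (outside the euclidean case) is not constant along the $K_L$-orbit, so your Peirce-block analysis becomes entangled with the $K_L$-action in a way that the product-rule approach sidesteps. The case split also explains naturally why the absolute value $|d_0-\tfrac{d}{2}|$ shows up in the final constant --- it arises from the three cases organising the sign of $d_0-\tfrac{d}{2}$, not from a single uniform identity.
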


\begin{proof}
The operator $\calD_{\mu,\nu}$ can alternatively be written as
\begin{align*}
 \calD_{\mu,\nu} ={}& t^2\frac{\td^4}{\td t^4} + 2(\mu+\nu+3)t\frac{\td^3}{\td t^3} + (\mu^2+3\mu\nu+\nu^2+6(\mu+\nu)+7-2t^2)\frac{\td^2}{\td t^2}\\
 & \ \ \ \ + \left(\mu\nu(\mu+\nu)+\mu^2+3\mu\nu+\nu^2+2(\mu+\nu)+1-2(\mu+\nu+3)t^2\right)\frac{1}{t}\frac{\td}{\td t}\\
 & \ \ \ \ \ \ \ \ \ \ \ \ \ \ \ \ \ \ \ \ \ \ \ \ \ \ \ \ \ \ \ \ \ \ \ \ \ \ \ \ \ \ \ \ \ \ \ \ \ \ \ \ \ \ \ \ \ \ \ \ + (t^2-(\mu+2)(\mu+\nu+2)).
\end{align*}
Let $(e_j)_j\subseteq V$ be an orthonormal basis of $V$ with respect to the inner product $(-|-)$. Then by \eqref{eq:frakk} and \eqref{eq:KillingForm}
\begin{align*}
 C_\frakk &\equiv \frac{r}{8n}\sum_{j=1}^n{(e_j,0,-\alpha e_j)^2} \ \ \ (\mod\ \calU(\frakk_\frakl))
\end{align*}
Since $\frakk_\frakl$ annihilates radial ($=$ $K_L$-invariant) functions, the action of the Casimir element $C_\frakk$ on radial functions is already given by
\begin{align}
 \td\pi(C_\frakk) &= \frac{r}{8n}\sum_{j=1}^n{\td\pi(e_j,0,-\alpha e_j)^2}.\label{eq:CasimirActionInTermsOfej}
\end{align}
\begin{enumerate}
\item[\textup{(a)}] Let us first assume that $V$ is non-euclidean of rank $r\geq3$. Then $d=2d_0$ and hence $\mu=\frac{n}{r_0}-2$ and $\nu=\frac{d}{2}-e-1$. By \eqref{eq:KActionInTermsOfBesselOp} and Corollary \eqref{cor:BnuRadial}~(2):
\begin{align*}
 \td\pi(u,0,-\alpha u)\psi(x) &= \frac{1}{i}(B_\nu f)(|x|)(x|u),
\end{align*}
with $B_\alpha$ as in \eqref{eq:OrdinaryBesselOp}. Hence, using again \eqref{eq:KActionInTermsOfBesselOp}:
\setmyalign{\eqref{eq:BnuProdRule}}
\begin{align*}
\al\ -\sum_{j=1}^n{\td\pi(e_j,0,-\alpha e_j)^2\psi(x)}\\
 \al[\eqref{eq:BnuProdRule}]= \sum_{j=1}^n{\left[(B_\nu^2f)(|x|)(x|e_j)^2+2\tau\left(\left.P\left(\frac{\partial(B_\nu f)}{\partial x}\right|\frac{\partial(x|e_j)}{\partial x}\right)x,e_j\right)\right.}\\
 \al\ \ \ \ \ \ \ \ \ \ \ \ \ \ \ \ \ \ \ \ \ \ \ \ \ \ \ \ \ \ \ \ \ \ \ \ \ \ \ \ \ \ \ \ \ \ \ \ \ \ \ \ \ \ +\left.(B_\nu f)(|x|)\tau(\calB_\lambda(x|e_j),e_j)\right]\\
 \al[\eqref{eq:ddxradial}]= (B_\nu^2f)(|x|)\|x\|^2+\frac{2r}{r_0}\frac{1}{|x|}(B_\nu f)'(|x|)\sum_{j=1}^n{\tau\left(P\left(\left.\alpha x\right|\alpha e_j\right)x,e_j\right)}\\
 \al\ \ \ \ \ \ \ \ \ \ \ \ \ \ \ \ \ \ \ \ \ \ \ \ \ \ \ \ \ \ \ \ \ \ \ \ \ \ \ \ \ \ \ \ \ \ \ \ \ \ \ \ \ \ \ \ +\frac{r_0d}{2r}(B_\nu f)(|x|)\sum_{j=1}^n{(e_j|e_j)}\\
 \al[\eqref{eq:SumQuadRep}]= \frac{r_0}{r}\left[|x|^2(B_\nu^2f)(|x|)+\frac{2n}{r_0}|x|(B_\nu f)'(|x|)+\frac{nd}{2}(B_\nu f)(|x|)\right].
\end{align*}
Now a short calculation shows that this is equal to $\frac{r_0}{r}\calD_{\mu,\nu}$.
\item[\textup{(b)}] Now suppose $V$ is a euclidean Jordan algebra. Then $\mu=\frac{rd}{2}-1$ and $\nu=-1$. By \eqref{eq:KActionInTermsOfBesselOp} and Corollary \eqref{cor:BnuRadial}~(1)
\begin{align*}
 \td\pi(u,0,-\alpha u)\psi(x) &= \frac{1}{i}(B_\nu f)(|x|)(x|u)+\frac{1}{i}\frac{d}{2}(e|u)f'(|x|).
\end{align*}
Using the calculations of (a) we find that
\begin{align*}
 & -\sum_{j=1}^n{\td\pi(e_j,0,-\alpha e_j)^2\psi(x)}\\
 ={}& \left[|x|^2(B_\nu^2f)(|x|)+\frac{2n}{r_0}|x|(B_\nu f)'(|x|)+\frac{nd}{2}(B_\nu f)(|x|)\right]\\
 & \ \ +\sum_{j=1}^n{\left[\frac{d}{2}(x|e_j)(e|e_j)\left[(B_\nu f)'(|x|)+B_\nu f'(|x|)\right]+\left(\frac{d}{2}\right)^2(e|e_j)^2f''(|x|)\right]}\\
 ={}& \left[|x|^2(B_\nu^2f)(|x|)+\frac{2n}{r_0}|x|(B_\nu f)'(|x|)+\frac{nd}{2}(B_\nu f)(|x|)\right]\\
 & \ \ \ \ \ \ \ \ \ \ \ \ \ \ \ \ \ \ \ \ \ \ \ \ +\frac{d}{2}(x|e)\left[(B_\nu f)'(|x|)+B_\nu f'(|x|)\right]+r\left(\frac{d}{2}\right)^2f''(|x|).
\end{align*}
Note that for $x=ktc_1$, $k\in K_L=\Aut(V)_0$, $t>0$, we have
\begin{align*}
 (x|e) &= \tr(ktc_1) = t = |ktc_1| = |x|.
\end{align*}
Using this, a short calculation gives
\begin{align*}
 \td\pi(C_\frakk)\psi(x) &= -\frac{r_0}{8n}\left(\calD_{\mu,\nu}+r_0\left(\frac{d}{2}\right)^2\right)f(|x|).
\end{align*}
\item[\textup{(c)}] By Proposition \ref{prop:ClassificationEuclSph} the remaining case is $V=\RR^{p,q}$ which is treated in Appendix \ref{sec:Rank2Casimir}.\qedhere
\end{enumerate}
\end{proof}

Recall that $\Xi$ denotes the set of possible pairs $(\mu,\nu)=(\mu(V),\nu(V))$ of parameters that appear for some simple Jordan algebra $V$ for which the minimal representation $\pi$ exists. Then we can draw the following corollary:

\begin{corollary}\label{cor:DmunuSASpectrum}
For $(\mu,\nu)\in\Xi$ the operator $\calD_{\mu,\nu}$ extends to a self-adjoint operator on $L^2(\RR_+,t^{\mu+\nu+1}\td t)$ with only discrete spectrum. The spectrum is given by $\{j(j+\mu+1):j\in\NN_0\}$ and the $L^2$-eigenspaces are one-dimensional.
\end{corollary}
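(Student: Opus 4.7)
The plan is to reduce the corollary to what has already been proved about the minimal representation by restricting everything to the radial subspace.

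First I would identify the radial subspace. By \eqref{eq:dmuIntFormula} one has an isometric isomorphism $L^2(\calO,\td\mu)_{\textup{rad}} \cong L^2(\RR_+,t^{\mu+\nu+1}\td t)$, given by $\psi(x)=f(|x|)\mapsto f$. Under this identification Theorem \ref{thm:CasimirAction} says
\[
 \td\pi(C_\frakk)|_{\textup{rad}} \;=\; -\tfrac{r_0}{8n}\bigl(\calD_{\mu,\nu} + c\bigr),\qquad c:=\tfrac{r_0 d}{2}\bigl|d_0-\tfrac{d}{2}\bigr|,
\]
so every spectral statement about $\calD_{\mu,\nu}$ on $L^2(\RR_+,t^{\mu+\nu+1}\td t)$ is equivalent to a spectral statement about $\td\pi(C_\frakk)$ on the closed subspace of radial $L^2$-functions on $\calO$.

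Next I would use the $K$-type decomposition $W=\bigoplus_{j\geq 0}W^j$ from Theorem \ref{thm:KtypeDecomp}, where $W^j=E^{\alpha_0+j\gamma_1}$ is irreducible and $\frakk_\frakl$-spherical. By Proposition \ref{prop:klSphericalsReps} the space of $\frakk_\frakl$-invariants in each $W^j$ is exactly one-dimensional, hence the space of radial vectors in the $K$-finite part $W$ is $\bigoplus_{j\geq 0}\CC\cdot\psi_j$, where $\psi_j$ is a nonzero radial vector in $W^j$, unique up to scalar. Combining Proposition \ref{prop:CasimirScalar} with the identity above, $\psi_j$ corresponds to a function $f_j\in L^2(\RR_+,t^{\mu+\nu+1}\td t)$ satisfying $\calD_{\mu,\nu}f_j=4j(j+\mu+1)f_j$ (matching the scalar displayed in Proposition \ref{prop:CasimirScalar} after cancellation of the constant $c$). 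Since the $\psi_j$ lie in distinct $K$-types they are pairwise orthogonal in $L^2(\calO,\td\mu)$, so the $f_j$ are pairwise orthogonal in $L^2(\RR_+,t^{\mu+\nu+1}\td t)$.

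For self-adjointness and completeness I would argue as follows. The representation $\pi$ constructed in Theorem \ref{thm:IntgkModule} is unitary, hence $\td\pi(C_\frakk)$ is a symmetric operator on the dense $K$-finite subspace $W$ and on each $K$-type it acts as a real scalar; standard abstract nonsense (e.g.\ \cite[Lemma on essential self-adjointness of diagonalizable operators]{}) then gives that its closure is self-adjoint with pure point spectrum. Projecting onto the radial subspace preserves unitarity and $K$-type decomposition, so the same holds for the restriction. Transporting this via the isometry to $L^2(\RR_+,t^{\mu+\nu+1}\td t)$ shows that $\calD_{\mu,\nu}$ extends to a self-adjoint operator with discrete spectrum $\{4j(j+\mu+1):j\in\NN_0\}$ and one-dimensional eigenspaces spanned by the $f_j$; completeness follows because $W$ is dense in $L^2(\calO,\td\mu)$ and its radial part is therefore dense in the radial subspace.

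The main obstacle is not the computation, which is already done, but the passage from algebraic to functional-analytic statements: one must verify that the radial $K$-finite functions $(f_j)_{j\geq 0}$ are dense in $L^2(\RR_+,t^{\mu+\nu+1}\td t)$ and that $\calD_{\mu,\nu}$, initially defined only as a differential expression on, say, $C_c^\infty(\RR_+)$, admits a unique self-adjoint extension whose eigenbasis is precisely $(f_j)$. Density of the radial $K$-finite vectors amounts to density of $W$ in $L^2(\calO,\td\mu)$, which holds since $W$ is the $K$-finite subspace of an irreducible unitary representation on that Hilbert space. Essential self-adjointness then follows from the fact that $\calD_{\mu,\nu}$ is already diagonalized on this dense domain with real spectrum.
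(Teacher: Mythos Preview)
Your proof is correct and follows essentially the same route as the paper: identify the radial subspace with $L^2(\RR_+,t^{\mu+\nu+1}\td t)$, transport $\td\pi(C_\frakk)$ to $\calD_{\mu,\nu}$ via Theorem \ref{thm:CasimirAction}, use the $K$-type decomposition (Theorem \ref{thm:KtypeDecomp}) together with one-dimensionality of the $\frakk_\frakl$-invariants in each $W^j$ (Proposition \ref{prop:klSphericalsReps}), and read off the eigenvalues $4j(j+\mu+1)$ from Proposition \ref{prop:CasimirScalar}.

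The one substantive difference is in how self-adjointness is obtained. The paper invokes the fact that $C_\frakk$ is an elliptic element of $\calU(\frakk)$ and appeals to Warner's theorem \cite[Theorem 4.4.4.3]{War72} to conclude that $\td\pi(C_\frakk)$ extends to a self-adjoint operator on $L^2(\calO,\td\mu)$; restriction to the closed radial subspace then gives self-adjointness of $\calD_{\mu,\nu}$. Your argument is more elementary: you observe directly that the operator is already diagonalized with real eigenvalues on an orthogonal basis whose span is dense, and deduce essential self-adjointness from that. Both are valid; the paper's route is the standard representation-theoretic one and gives self-adjointness on the full space before restricting, whereas your route avoids importing the ellipticity machinery but requires the density of the radial $K$-finite vectors as an input (which, as you correctly note, follows from density of $W$ by averaging over the compact group $K_L$).
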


\begin{proof}
Recall that by Theorem \ref{thm:CasimirAction} the operator $\td\pi(C_\frakk)$ acts on the subspace $L^2(\calO,\td\mu)_{\textup{rad}}\cong L^2(\RR_+,t^{\mu+\nu+1}\td t)$ of radial functions by
\begin{align*}
 -\frac{r_0}{8n}\left(\calD_{\mu,\nu}+\frac{r_0d}{2}\left|d_0-\frac{d}{2}\right|\right).
\end{align*}
Further, $\td\pi(C_\frakk)$ acts on each $\frakk$-type $W^j$ by the scalar
\begin{align*}
 -\frac{r_0}{8n}\left(4j(j+\mu+1)+\frac{r_0d}{2}\left|d_0-\frac{d}{2}\right|\right).
\end{align*}
Now, the Casimir element $C_\frakk$ is elliptic in $\calU(\frakk)$ and hence, by \cite[Theorem 4.4.4.3]{War72}, $\td\pi(C_\frakk)$ extends to a self-adjoint operator on $L^2(\calO,\td\mu)$. Restricting to radial functions then shows that $\calD_{\mu,\nu}$ is self-adjoint on $L^2(\RR_+,t^{\mu+\nu+1}\td t)$. Further, the space $W$ of $\frakk$-finite vectors is dense in $L^2(\calO,\td\mu)$ and decomposes discretely into $\frakk$-types $W=\bigoplus_{j=0}^\infty{W^j}$. In every $\frakk$-type $W^j$ the space
\begin{align*}
 W^j_{\textup{rad}} &= W^j\cap L^2(\calO,\td\mu)_{\textup{rad}}
\end{align*}
of radial ($=$ $K_L$-invariant) functions is one-dimensional by the remark at the end of Section \ref{subsec:ConfGrpRoots}. Together it follows that $\bigoplus_{j=0}^\infty{W^j_{\textup{rad}}}$ is dense in $L^2(\RR_+,t^{\mu+\nu+1}\td t)$ and $\calD_{\mu,\nu}$ acts on each summand $W^j_{\textup{rad}}$ by the scalar $4j(j+\mu+1)$. This finishes the proof.
\end{proof}

\section{The unitary inversion operator $\calF_\calO$}\label{sec:UnitInvOp}

In this section we define the unitary inversion operator $\calF_\calO$ and prove various properties of it. The action of $\calF_\calO$ together with the action of the parabolic subgroup $\check{P}$ determine the whole representation. Therefore, one is interested in a closed formula for the operator $\calF_\calO$. As a first step in this direction we give a closed formula for the action of $\calF_\calO$ on radial functions.\\

Let $\check{w_0}$\index{notation}{w0check@$\check{w_0}$} be the projection of $\widetilde{w_0}\in\widetilde{G}$ (see Section \ref{sec:Guniversalcover}) under the covering map $\widetilde{G}\rightarrow\check{G}$. Then $\check{w_0}=\check{\alpha}\check{j}=\check{j}\check{\alpha}$, where $\check{\alpha}$ is as in Section \ref{sec:IntgkModule} and
\begin{align*}
 \check{j} &= \exp_{\check{G}}\left(\frac{\pi}{2}(e,0,-e)\right).\index{notation}{jcheck@$\check{j}$}
\end{align*}
The parabolic $\check{P}$ and the element $\check{w_0}$ generate the whole group $\check{G}$ (since $\check{\alpha}\in\check{P}$). Therefore, the representation $\pi$ of $\check{G}$ is determined by the action of $\check{P}$, which is given by the representation $\rho_{\lambda_1}$ (see \eqref{eq:L2Rep1} and \eqref{eq:L2Rep2}), and the action of $\check{w_0}$. We call the operator
\begin{align*}
 \calF_\calO &:= e^{-i\pi\frac{r_0}{2}(d_0-\frac{d}{2})_+}\pi(\check{w_0})\index{notation}{FO@$\calF_\calO$}
\end{align*}
the \textit{unitary inversion operator}\index{subject}{unitary inversion operator} on the minimal orbit $\calO=\calO_1$. We will later see that $\calF_\calO$ is an operator of order two (see Corollary \ref{cor:ActionFkTypes}) which justifies the name. Since the action of $\pi(\check{\alpha})$ is given by $\rho_{\lambda_1}(\alpha)$ and any two Cartan involutions are conjugate, the operator $\calF_\calO$ does (up to unitary equivalence) neither depend on the choice of the Cartan involution $\alpha$, nor on the choice of $\widetilde{\alpha}\in\widetilde{G}$. We collect a few properties of $\calF_\calO$.

\begin{theorem}\label{prop:FOProperties}
\begin{enumerate}
\item[\textup{(1)}] $\calF_\calO$ is a unitary operator on $L^2(\calO)$ of order at most $2k$ with $k$ as in Theorem \ref{thm:IntgkModule} (1).
\item[\textup{(2)}] The operator $\calF_\calO$ is an automorphism of the following topological vectorspaces:
\begin{align*}
 L^2(\calO)^\infty \subseteq L^2(\calO) \subseteq L^2(\calO)^{-\infty},
\end{align*}
where $L^2(\calO)^\infty$\index{notation}{L2Oinfty@$L^2(\calO)^\infty$} denotes the space of smooth vectors of the representation $\pi$ and $L^2(\calO)^{-\infty}$\index{notation}{L2Ominusinfty@$L^2(\calO)^{-\infty}$} its dual.
\item[\textup{(3)}] $\calF_\calO$ intertwines the Bessel operator $\calB$ and multiplication by $-\alpha x$:
\begin{align}
 \calF_\calO\circ\alpha x &= -\calB\circ\calF_\calO,\label{eq:CommFOx}\\
 \calF_\calO\circ\calB &= -\alpha x\circ\calF_\calO.\label{eq:CommFOB}
\end{align}
Moreover, any other unitary operator on $L^2(\calO)$ with these properties is a scalar multiple of $\calF_\calO$.
\item[\textup{(4)}] We have the following commutation relation for the Euler operator $E:=\sum_{j=1}^n{x_j\frac{\partial}{\partial x_j}}$:
\begin{align}
 \calF_\calO\circ E &= -\left(E+\frac{r_0d}{2}\right)\circ\calF_\calO.\label{eq:CommFOE}
\end{align}
\item[\textup{(5)}] On every $\frakk$-type $W^j$ the unitary inversion operator $\calF_\calO$ acts as a scalar.
\item[\textup{(6)}] $\calF_\calO$ leaves the space $L^2(\calO)_{\rad}$ of radial functions invariant and therefore restricts to a unitary operator
\begin{align*}
 \calF_{\calO,\rad}:L^2(\calO)_{\rad}\rightarrow L^2(\calO)_{\rad}.\index{notation}{FOrad@$\calF_{\calO,\rad}$}
\end{align*}
\item[\textup{(7)}] $\calF_\calO=e^{-i\pi\frac{r_0}{2}(d_0-\frac{d}{2})_+}\rho_{\lambda_1}(\alpha)e^{i\frac{\pi}{2}(e|x-\calB)}=e^{-i\pi\frac{r_0}{2}(d_0-\frac{d}{2})_+}e^{i\frac{\pi}{2}(e|x-\calB)}\rho_{\lambda_1}(\alpha)$.
\end{enumerate}
\end{theorem}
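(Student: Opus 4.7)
The plan is to derive (1)--(7) from the infinitesimal formulas \eqref{eq:L2DerRep1}--\eqref{eq:L2DerRep3}, the centrality statement in Lemma \ref{lem:Tildew0Central}, and the identity $\Ad(w_{0})=\theta$ on $\frakg$ (which holds because $w_{0}$ implements the Cartan involution $\theta$ of $\frakg$ by conjugation). Since $\pi$ is unitary and $\calF_\calO$ differs from $\pi(\check{w_{0}})$ only by a unimodular scalar, (1) and (2) are essentially immediate; for the order statement, I would use that $\widetilde{\alpha}$ and $\widetilde{j}$ commute, that $\widetilde{\alpha}^{2}=1$ can be arranged from the construction of $\widetilde{G}$ in Section \ref{sec:Guniversalcover}, and that $\widetilde{j}^{2k}=\exp(k\pi(e,0,-e))\in\Gamma$ so that $\check{w_{0}}^{2k}=1$. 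The accompanying phase $e^{-2ki\pi\frac{r_{0}}{2}(d_{0}-\frac{d}{2})_{+}}$ equals $1$ by the very definition of $k$ in Section \ref{sec:IntgkModule}.

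For (3), (4) and (7) I will use the identity $\pi(\check{w_{0}})\td\pi(X)\pi(\check{w_{0}})^{-1}=\td\pi(\theta X)$ valid for every $X\in\frakg$. For (3), I take $X=(0,0,-v)\in\overline{\frakn}$, for which \eqref{eq:ThetaOng} gives $\theta X=(\alpha v,0,0)$; combining \eqref{eq:L2DerRep1} and \eqref{eq:L2DerRep3} then yields
\begin{align*}
 \pi(\check{w_{0}})\tfrac{1}{i}(\calB\psi\,|\,v) &= i(\alpha x\cdot\pi(\check{w_{0}})\psi\,|\,v)\qquad\forall v\in V,
\end{align*}
which is \eqref{eq:CommFOB}; the symmetric choice $X\in\frakn$ produces \eqref{eq:CommFOx}. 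Uniqueness up to scalar: if $A$ is another unitary satisfying \eqref{eq:CommFOx}--\eqref{eq:CommFOB}, then $A\calF_\calO^{-1}$ commutes with $\td\pi(\frakn)$ and $\td\pi(\overline{\frakn})$; since $\frakn$ and $\overline{\frakn}$ generate $\frakg$ by the Lie bracket (see Section \ref{sec:KKT}), it commutes with $\td\pi(\frakg)$, and by irreducibility of $\pi$ (Theorem \ref{thm:IntgkModule}) and Schur's lemma it must be a scalar. For (4) take $X=(0,\id_{V},0)$: then $\theta X=-X$ by \eqref{eq:ThetaOng}, and formula \eqref{eq:L2DerRep2} together with Lemma \ref{lem:Trtr} gives $\td\pi(X)=E+\tfrac{r_{0}d}{4}$, so the conjugation $\pi(\check{w_{0}})\td\pi(X)\pi(\check{w_{0}})^{-1}=-\td\pi(X)$ is exactly \eqref{eq:CommFOE}. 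For (7), the element $\check{j}=\exp_{\check{G}}(\tfrac{\pi}{2}(e,0,-e))$ acts by $\pi(\check{j})=\exp(\tfrac{\pi}{2}\td\pi(e,0,-e))$ on smooth vectors, and plugging $u=e$ into \eqref{eq:KActionInTermsOfBesselOp} gives $\td\pi(e,0,-e)=i\bigl((e|x)-(e|\calB)\bigr)=i(e|x-\calB)$; since $\pi(\check{\alpha})=\rho_{\lambda_{1}}(\alpha)$ by Theorem \ref{thm:IntgkModule} and $\check{\alpha},\check{j}$ commute by construction of $\widetilde{G}$, both asserted factorizations fall out.

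Items (5) and (6) follow from Lemma \ref{lem:Tildew0Central}: the image $\check{w_{0}}$ of the central element $\widetilde{w_{0}}\in Z(\widetilde{K})$ is central in $\check{K}$, so $\pi(\check{w_{0}})$ commutes with the whole $\check{K}$-action. Combined with the $\frakk$-type decomposition of Theorem \ref{thm:KtypeDecomp} and Schur's lemma, this forces $\calF_\calO|_{W^{j}}$ to be a scalar, proving (5); moreover, because the polar decomposition $\calO=K_{L}\RR_{+}c_{1}$ (Corollary \ref{cor:PolarOk}) identifies $L^{2}(\calO)_{\textup{rad}}$ with the $K_{L}$-invariants and $K_{L}\subseteq K$, the subspace $L^{2}(\calO)_{\textup{rad}}$ is preserved by $\calF_\calO$, which gives (6).

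The main technical obstacle is the careful bookkeeping for (1): one must verify simultaneously that $\widetilde{\alpha}^{2}$ is trivial in $\widetilde{G}$ (or absorb any correction), that $\widetilde{j}^{2k}$ projects into $\Gamma$, and that the phase $e^{-i\pi\frac{r_{0}}{2}(d_{0}-d/2)_{+}}$ raised to the $2k$-th power is trivial---these are three checks that interact with the ambiguity in the choice of $\widetilde{\alpha}$ and must be carried out consistently with the two cases (connected and disconnected) distinguished in Section \ref{sec:Guniversalcover}. Everything else is essentially mechanical once $\Ad(w_{0})=\theta$ and Lemma \ref{lem:Tildew0Central} are in hand.
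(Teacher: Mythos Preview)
Your proposal is correct and follows essentially the same route as the paper: the identity $\Ad(\check{w_0})=\theta$ on $\frakg$ gives (3) and (4), centrality of $\check{w_0}$ in $\check{K}$ (Lemma~\ref{lem:Tildew0Central}) gives (5) and (6), and the factorization $\check{w_0}=\check{\alpha}\check{j}$ together with $\td\pi(e,0,-e)=i(e|x-\calB)$ gives (7). The uniqueness argument in (3) via Schur and the fact that $\frakn,\overline{\frakn}$ generate $\frakg$ is exactly the paper's.

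The one genuine difference is in (1). You attempt to control the order of $\check{w_0}$ itself by arguing $\widetilde{\alpha}^2=1$ and $\widetilde{j}^{2k}\in\Gamma$, and you rightly flag that $\widetilde{\alpha}^2=1$ is delicate in the connected case (where $\widetilde{\alpha}$ is only an arbitrary preimage of $\alpha$). The paper sidesteps this entirely: it writes
\[
 \calF_\calO = e^{-i\pi\frac{r_0}{2}(d_0-\frac{d}{2})_+}\,\rho_{\lambda_1}(\alpha)\,\pi(\check{j})
\]
and bounds the order of each \emph{operator} separately. The phase has order dividing $2k$ by the definition of $k$; $\rho_{\lambda_1}(\alpha)$ has order $2$ simply because $\alpha^2=\1$ in $L$ (no covering-group ambiguity arises, since this is the honest $P$-action); and $\check{j}^{2k}=\exp_{\check{G}}(k\pi(e,0,-e))$ is the image of a generator of $\Gamma$ and hence trivial in $\check{G}$. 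Since the three factors commute, $\calF_\calO^{2k}=\1$. So the ``main technical obstacle'' you identify evaporates once you pass from $\widetilde{\alpha}\in\widetilde{G}$ to the operator $\rho_{\lambda_1}(\alpha)=\pi(\check{\alpha})$.
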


\begin{proof}
\begin{enumerate}
\item[\textup{(1)}] Clearly $\calF_\calO$ is unitary since $\pi$ is a unitary representation. To show the second statement, observe that
\begin{align*}
 \calF_\calO &= e^{-i\pi\frac{r_0}{2}(d_0-\frac{d}{2})_+}\rho_{\lambda_1}(\alpha)\pi(\check{j}).
\end{align*}
First, by the definition of $k$ (see Lemma \ref{lem:FiniteCover}), $e^{-i\pi\frac{r_0}{2}(d_0-\frac{d}{2})_+}$ is of order at most $2k$. Further, the operator $\rho_{\lambda_1}(\alpha)$ is of order $2$ since $\alpha^2=\1$. Finally, by the construction of $\check{G}$ (see Lemma \ref{lem:FiniteCover}), the element $\check{j}$ is of order at most $2k$ in $\check{G}$. Since all three factors commute, this shows the claim.
\item[\textup{(2)}] The whole group $G$ acts by automorphisms on the space $L^2(\calO)^\infty$ of smooth vectors and hence also on its dual.
\item[\textup{(3)}] The adjoint action of $\check{w_0}$ on $\frakg$ is given by (see \eqref{eq:ThetaOng})
\begin{align*}
 \Ad_\frakg(\check{w_0})(u,T,v) &= \Ad_\frakg(w_0)(u,T,v) = \theta(u,T,v) = (-\alpha v,-T^*,-\alpha u).
\end{align*}
and for any $X\in\frakg$ the identity
\begin{align*}
 \pi(\check{w_0})\circ\td\pi(X) &= \td\pi(\Ad(\check{w_0})X)\circ\pi(\check{w_0})
\end{align*}
holds. Now, for $X=(u,0,0)$ we have $\Ad(w_0)X=(0,0,-\alpha u)$ and for $Y=(0,0,u)$ we have $\Ad(w_0)Y=(-\alpha u,0,0)$. Therefore the commutation relations follow with \eqref{eq:L2DerRep1} and \eqref{eq:L2DerRep3}. Conversely, let $A$ be another unitary operator on $L^2(\calO)$ with these properties. Then the operator $\calF_\calO\circ A^{-1}$ commutes with the $\frakn$- and $\overline{\frakn}$-action. Since $\frakn$ and $\overline{\frakn}$ generate the whole Lie algebra $\frakg$ (see Section \ref{sec:KKT}), it follows that $A$ leaves the $\frakg$-module $W$ invariant and the operator $\calF_\calO\circ A^{-1}$ commutes with the action of $\frakg$. Since $W$ is irreducible as $\frakg$-module, it follows from Schur's Lemma that $\calF_\calO\circ A^{-1}$ is a scalar multiple of the identity. Hence, $A$ is a scalar multiple of $\calF_\calO$.
\item[\textup{(4)}] Similar to (3) with $X=(0,\id,0)$.
\item[\textup{(5)}] By Lemma \ref{lem:Tildew0Central} the element $\widetilde{w_0}$ is central in $\widetilde{K}$. Hence, $\check{w_0}$ is central in $\check{K}$. By Schur's Lemma, $\calF_\calO$ acts on every $\frakk$-type $W^j$ as a scalar.
\item[\textup{(6)}] In particular, $\check{w_0}$ commutes with every $k\in\check{K_L}=\check{\pr}^{-1}(K_L)$. Hence, $\calF_\calO$ leaves the space $L^2(\calO)^{K_L}=L^2(\calO)_{\rad}$ of $K_L$-invariant functions invariant.
\item[\textup{(7)}] We have
\begin{align*}
 \pi(\check{w_0}) &= \pi(\check{\alpha})\pi(e^{\frac{\pi}{2}(e,0,-e)}) = \rho_{\lambda_1}(\alpha)e^{\frac{\pi}{2}\td\pi(e,0,-e)}
\end{align*}
and the claim follows from \eqref{eq:L2DerRep1} and \eqref{eq:L2DerRep3}.\qedhere
\end{enumerate}
\end{proof}

\begin{example}\label{ex:UnitInvOp}
\begin{enumerate}
\item[\textup{(1)}] Let $V=\Sym(n,\RR)$. In the notation of Section \ref{sec:ExReps} we have by Corollary \ref{cor:IntertwinerMetaGr}:
\begin{align*}
 \calF_\calO &= e^{-i\pi\frac{n}{4}}\calU^{-1}\mu\left(\begin{array}{cc}0&\pi\\-\frac{1}{\pi}&0\end{array}\right)\calU.
\end{align*}
From \cite[Equation (4.26)]{Fol89} we know that $\mu\left(\begin{array}{cc}0&\pi\\-\frac{1}{\pi}&0\end{array}\right)$ is essentially the inverse euclidean Fourier transform. More precisely,
\begin{align*}
 \mu\left(\begin{array}{cc}0&\pi\\-\frac{1}{\pi}&0\end{array}\right)\psi(x) &= e^{i\pi\frac{n}{4}}2^{\frac{n}{2}}\calF_{\RR^n}^{-1}\psi(2y),
\end{align*}
where
\begin{align*}
 \calF_{\RR^n}\psi(x) &= (2\pi)^{-\frac{n}{2}}\int_{\RR^n}{e^{-ix\cdot y}\psi(y)\td y},\index{notation}{FRn@$\calF_{\RR^n}$}\\
 \calF_{\RR^n}^{-1}\psi(y) &= (2\pi)^{-\frac{n}{2}}\int_{\RR^n}{e^{ix\cdot y}\psi(x)\td x}.
\end{align*}
Note that if one views the Fourier transform $\calF_{\RR^n}$ as operator on $L^2_{\textup{even}}(\RR^n)$, then it is of order two since
\begin{align*}
 \calF_{\RR^n}^2\psi(x) &= \psi(-x) = \psi(x)
\end{align*}
for $\psi\in L^2_{\textup{even}}(\RR^n)$. Therefore, it follows that $\calF_\calO$ is also of order $2$. Further, by \cite[Theorem 4.45]{Fol89}:
\begin{align*}
 \calU L^2(\calO)^\infty &= L^2_{\textup{even}}(\RR^n)^\infty = \calS_{\textup{even}}(\RR^n)
\end{align*}
is the Schwartz space of even functions on which the Fourier transform acts as isomorphism. This corresponds to Theorem \ref{prop:FOProperties}~(2). Moreover, the commutation relations \eqref{eq:CommFOx} and \eqref{eq:CommFOB} follow from to the well-known identities
\begin{align*}
 \calF_{\RR^n}\circ x_j &= -D_j\circ\calF_{\RR^n},\\
 \calF_{\RR^n}\circ D_j &= x_j\circ\calF_{\RR^n}.
\end{align*}
where $D_j=\frac{1}{i}\frac{\partial}{\partial x_j}$.
\item[\textup{(2)}] Let $V=\RR^{p,q}$. Then $\calF_\calO$ is the unitary inversion operator on $L^2(\calO)$ which was studied in detail by T. Kobayashi and G. Mano in \cite{KM07a,KM08}. Most results of Theorem \ref{prop:FOProperties} can be found in \cite[Theorem 2.5.2]{KM08}.
\end{enumerate}
\end{example}

By Theorem \ref{prop:FOProperties}~(6) the operator $\calF_\calO$ restricts to an operator on $L^2(\calO)_{\rad}$. Since the map $\calO\rightarrow\RR_+,\,x\mapsto|x|,$ induces an isomorphism $L^2(\calO)_{\rad}\cong L^2(\RR_+,t^{\mu+\nu+1}\td t)$, we obtain a unitary operator $\calT$ on $L^2(\RR_+,t^{\mu+\nu+1}\td t)$ which makes the following diagram commutative:\index{notation}{T@$\calT$}
\begin{equation*}
\begin{xy}\xymatrix{
L^2(\RR_+,t^{\mu+\nu+1}\td t) \ar[d]_{\sim} \ar[r]^{\calT} & L^2(\RR_+,t^{\mu+\nu+1}\td t) \ar[d]^{\sim}\\
L^2(\calO)_{\rad} \ar[r]^{\calF_{\calO,\rad}}  & L^2(\calO)_{\rad}
}\end{xy}
\end{equation*}

The main result of this section is an explicit expression of the integral kernel of $\calT$ in terms of Meijer's $G$-function. The idea of proof is due to T. Kobayashi and G. Mano who proved the result for the case $V=\RR^{p,q}$ (see \cite[Theorem 4.1.1]{KM08}).

\begin{theorem}\label{thm:RadialUnitaryInversion}
The operator $\calT$ is the $G$-transform $\calT^{\mu,\nu}$ which is defined by
\begin{align*}
 \calT^{\mu,\nu}u(s) &= \int_0^\infty{K^{\mu,\nu}(tt')u(t')t'^{\mu+\nu+1}\td t'} & \forall\,u\in C_c^\infty(\RR_+),\index{notation}{Tmunu@$\calT^{\mu,\nu}$}
\end{align*}
with integral kernel\index{notation}{Kmunux@$K^{\mu,\nu}(x)$}
\begin{align*}
 K^{\mu,\nu}(t) &:= \frac{1}{2^{\mu+\nu+1}}G^{20}_{04}\left(\left(\frac{t}{4}\right)^2\left|0,-\frac{\nu}{2},-\frac{\mu}{2},-\frac{\mu+\nu}{2}\right.\right).
\end{align*}
\end{theorem}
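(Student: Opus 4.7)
The plan is as follows. First, I would establish that the Schwartz kernel of $\calT$ depends only on the product of its arguments. For $\psi(x)=f(|x|)$ the Euler operator $E=\sum_j x_j\partial_{x_j}$ acts as the one-dimensional Euler operator $\theta=t\frac{\td}{\td t}$ on the radial profile $f$. Using the identity $\tfrac{r_0 d}{2}=\mu+\nu+2$ (which follows from \eqref{eq:noverr0}), Theorem~\ref{prop:FOProperties}~(4) yields
\[
 \calT\circ\theta=-(\theta+\mu+\nu+2)\circ\calT
\]
as operators on $L^2(\RR_+,t^{\mu+\nu+1}\td t)$. Writing $\calT u(t)=\int_0^\infty K(t,t')u(t'){t'}^{\mu+\nu+1}\td t'$ for the Schwartz kernel $K$ and integrating by parts against the weight ${t'}^{\mu+\nu+1}\td t'$, this relation becomes $(\theta_t-\theta_{t'})K(t,t')=0$. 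Hence $K$ depends only on $s=tt'$, and we may write $\calT=\calT^{\mu,\nu}$ with kernel $K^{\mu,\nu}(tt')$.

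Second, I would derive a fourth-order ODE for $K^{\mu,\nu}(s)$ and identify it with the Meijer $G$-function. Since $\check{w_0}$ is central in $\check K$ (Lemma \ref{lem:Tildew0Central}), $\calF_\calO$ commutes with $\td\pi(C_\frakk)$, so by Theorem~\ref{thm:CasimirAction} the operator $\calT$ commutes with $\calD_{\mu,\nu}$ on $C_c^\infty(\RR_+)$. Transferring $\calD_{\mu,\nu}^{(t')}$ to $\calD_{\mu,\nu}^{(t)}$ under the integral (using the symmetry of $\calD_{\mu,\nu}$ on $L^2(\RR_+,t^{\mu+\nu+1}\td t)$ from Corollary~\ref{cor:DmunuSASpectrum}) forces $K^{\mu,\nu}(s)$ to satisfy
\[
 \bigl(\calD_{\mu,\nu}^{(t)}-\calD_{\mu,\nu}^{(t')}\bigr)K^{\mu,\nu}(tt')=0.
\]
Combining this with $\theta_t K=\theta_{t'} K=sK'(s)$ already established, the only non-cancelling terms are those involving $t^2$ versus ${t'}^2$, and a direct computation shows that $K^{\mu,\nu}(s)$ is annihilated by the fourth-order operator with indicial exponents $0,-\tfrac{\nu}{2},-\tfrac{\mu}{2},-\tfrac{\mu+\nu}{2}$ at $s=0$. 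By the standard theory of Meijer's $G$-function (see the appendix on $G$-functions), the one-dimensional subspace of solutions on $\RR_+$ with exponential-type decay at infinity — which is forced by the unitarity of $\calT$ on $L^2(\RR_+,t^{\mu+\nu+1}\td t)$ together with the pairing with compactly supported $u$ — is spanned by $G^{20}_{04}\bigl((s/4)^2\mid 0,-\tfrac{\nu}{2},-\tfrac{\mu}{2},-\tfrac{\mu+\nu}{2}\bigr)$.

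Third, I would fix the constant $\tfrac{1}{2^{\mu+\nu+1}}$ by testing against $\psi_0(x)=\widetilde{K}_{\nu/2}(|x|)$. In the cases where $\psi_0\in W_0$ is $\frakk_\frakl$-spherical (Proposition~\ref{prop:Kfinite}~(a),(b)), $\calF_\calO$ acts on $W^0$ by a known scalar, determined from Theorem~\ref{prop:FOProperties}~(5) and Theorem~\ref{prop:FOProperties}~(7); pairing this with the classical Mellin--Barnes evaluation of $\int_0^\infty G^{20}_{04}((s/4)^2|\cdots)\,\widetilde{K}_{\nu/2}(t'){t'}^{\mu+\nu+1}\td t'$ yields the displayed constant. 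For $V\cong\RR^{p,q}$ with $p+q$ even (the rank two case), $\psi_0$ is no longer a $\frakk$-spherical vector, so the same normalization argument is carried out on an appropriate radial $\frakk$-finite vector in the lowest $K$-type as described in Appendix~\ref{app:Rank2}, giving the same constant. The main obstacle will be the ODE step: although commutativity with $\calD_{\mu,\nu}$ is immediate, extracting a clean fourth-order equation for $K^{\mu,\nu}(s)$ from $(\calD_{\mu,\nu}^{(t)}-\calD_{\mu,\nu}^{(t')})K(tt')=0$ is delicate because $\calD_{\mu,\nu}$ contains both $\theta$ and $t^2$ (with non-trivial commutator $[\theta,t^2]=2t^2$), so the $t$- and $t'$-dependence does not decouple automatically and one must carefully organize the expansion to isolate the scalar ODE satisfied by $K^{\mu,\nu}(s)$.
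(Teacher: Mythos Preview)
Your Step~1 is essentially the paper's argument: the paper passes to $\RR$ via $\sigma_\pm$ and observes that $\widetilde{\calT}$ is translation invariant (using $\Ad(w_0)\exp(sH)=\exp(-sH)$), hence a convolution, so $\calT=A_\kappa$ for some $\kappa\in\calS'$; your Euler-operator identity is the infinitesimal version of the same computation.

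From Step~2 on the approaches diverge, and yours has a genuine gap. The paper does \emph{not} try to pin down the kernel via an ODE. Instead it (i) shows $K^{\mu,\nu}\in\calS'$ directly from the asymptotics of the $G$-function, (ii) computes both $A_\kappa f_0$ and $A_K f_0$ for $f_0(t)=\widetilde{K}_{\nu/2}(t)$ (the radial part of $\psi_0$), and (iii) invokes a Fourier uniqueness lemma: since $\widehat{\sigma_+ f_0}$ is an explicit product of Gamma functions with no real zeros, $A_\kappa f_0=A_K f_0$ forces $\kappa=K$. Your ODE derivation is fine---the relation $(\calD_{\mu,\nu}^{(t)}-\calD_{\mu,\nu}^{(t')})K(tt')=0$ does collapse to $\theta_s(\theta_s+\mu)(\theta_s+\nu)(\theta_s+\mu+\nu)K(s)=s^2K(s)$, because in \eqref{eq:DiffOp2} the pure-$\theta$ terms cancel and $\tfrac{1}{t^2}-\tfrac{1}{t'^2}=-\tfrac{t^2-t'^2}{s^2}$---but your selection principle is wrong. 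The function $G^{20}_{04}\bigl((s/4)^2\mid 0,-\tfrac{\nu}{2},-\tfrac{\mu}{2},-\tfrac{\mu+\nu}{2}\bigr)$ does \emph{not} have exponential decay at infinity: by Lemma~\ref{lem:KmunuAsymptotics}~(2) (equivalently \eqref{eq:GFctAsymptoticsInfty}) it behaves like $s^{-\frac{2\mu+2\nu+3}{4}}\cos\bigl(2s^{1/2}+\textup{const}\bigr)$, i.e.\ it is oscillatory with only power decay, and in particular $K^{\mu,\nu}\notin L^2(\RR_+,t^{\mu+\nu+1}\td t)$. So ``exponential-type decay forced by unitarity'' neither holds nor singles out this solution. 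The four-dimensional solution space at infinity contains one exponentially growing, one exponentially decaying, and two oscillatory solutions; $G^{20}_{04}$ lives in the oscillatory subspace. Excluding the growing solution is plausible, but you still have a three-dimensional space to cut down, and there is no evident abstract argument from unitarity of $\calT$ alone that does this (the kernel is only a tempered distribution, not an $L^2$ function).

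The fix is already in your Step~3: compute $A_K f_0$ in full (via \eqref{eq:GFctIntKBessel} and \eqref{eq:GFctRed} it equals $\widetilde{K}_{\nu/2}$), compute $\calT f_0=A_\kappa f_0$ from the action of $\calF_\calO$ on $\psi_0$ (Proposition~\ref{prop:Kfinite} and Lemma~\ref{lem:Rk2ActionFpsi0} give $\calF_\calO\psi_0=\psi_0$, so $\calT f_0=f_0$ as well), and then use that $\widehat{\sigma_+ f_0}(\xi)=2^{\frac{\mu+\nu-2}{2}-i\xi}\Gamma\bigl(\tfrac{\mu+\nu+2}{4}-\tfrac{i\xi}{2}\bigr)\Gamma\bigl(\tfrac{\mu-\nu+2}{4}-\tfrac{i\xi}{2}\bigr)$ vanishes nowhere, so $\sigma_-\kappa$ and $\sigma_-K$ have equal Fourier transforms. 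This replaces the ODE step entirely; the ODE then becomes a consequence rather than an input. (Incidentally, in the rank~2 case one still tests against $\psi_0$ itself---it remains radial---only the computation of the scalar in $\calF_\calO\psi_0$ changes.)
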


Here $G^{20}_{04}(z|b_1,b_2,b_3,b_4)$ denotes Meijer's $G$-function as defined in Appendix \ref{app:GFct}.

\begin{remark}
For the case $V=\RR^{p,q}$ T. Kobayashi and G. Mano computed the action of $\calF_\calO$ on every $(K_L)_0$-isotypic component of $L^2(\calO)$, not only on radial functions (see \cite[Theorem 4.1.1]{KM08}). As integral kernels they obtained $G$-functions with more general parameters. They further use this result to compute the full integral kernel $K(x,y)\in\calD'(\calO\times\calO)$ of the operator $\calF_\calO$ (see \cite[Theorem 5.1.1]{KM08}). Maybe a similar strategy can be applied in the general case. A further step in this direction would be to compute the action of $\calF_\calO$ of other $(K_L)_0$-isotypic components of $L^2(\calO)$.
\end{remark}

\begin{example}
For $\nu=\pm1$ the integral kernel simplifies by \eqref{eq:GFctKBessel} to a $J$-Bessel function
\begin{align*}
 K^{\mu,\pm1}(t) &= t^{-\frac{\mu+\nu+1}{2}}J_\mu(2t^{\frac{1}{2}})
\end{align*}
such that $\calT^{\mu,\pm1}$ becomes a Hankel type transform. Then, in view of Example \ref{ex:UnitInvOp}~(1), Theorem \ref{thm:RadialUnitaryInversion} for $V=\Sym(n,\RR)$ corresponds to the fact, that the euclidean Fourier transform preserves the space of radial functions and acts on $\psi(x)=f(|x|)$ by (see \cite[\S 3, Theorem 3.3]{SW71})
\begin{align*}
 \calF_{\RR^n}\psi(x) &= H_nf(|x|),
\end{align*}
where
\begin{align*}
 H_nf(t) &= t^{-\frac{n-2}{2}}\int_0^\infty{J_{\frac{n-2}{2}}(tt')f(t')t'^{\frac{n}{2}}\td t'}\index{notation}{Hn@$H_n$}
\end{align*}
is the \textit{Hankel transform}\index{subject}{Hankel transform}. The same happens for the euclidean Jordan algebra $V=\RR^{1,n}$. This case was studied thoroughly in \cite{KM07a}.
\end{example}

The rest of this section is devoted to the proof of Theorem \ref{thm:RadialUnitaryInversion}. For this we transfer the situation from $\RR_+$ to $\RR$ in order to use classical Fourier analysis. We introduce two unitary isomorphisms
\begin{align*}
 \sigma_+: L^2(\RR_+,t^{\mu+\nu+1}\td t) &\rightarrow L^2(\RR), & \sigma_+f(y) &:= e^{\frac{\mu+\nu+2}{2}y}f(e^y),\index{notation}{sigma1@$\sigma_+$}\\
 \sigma_-: L^2(\RR_+,t^{\mu+\nu+1}\td t) &\rightarrow L^2(\RR), & \sigma_-f(y) &:= e^{-\frac{\mu+\nu+2}{2}y}f(e^{-y}).\index{notation}{sigma2@$\sigma_-$}
\end{align*}
Define the subspace $\calS\subseteq L^2(\RR_+,t^{\mu+\nu+1}\td t)$ by
\begin{align*}
 \calS := \sigma_+^{-1}(\calS(\RR)) = \sigma_-^{-1}(\calS(\RR)),\index{notation}{S@$\calS$}
\end{align*}
where $\calS(\RR)$ denotes the space of Schwartz functions on $\RR$. We endow $\calS$ with the locally convex topology such that $\sigma_+$ and $\sigma_-$ become isomorphisms of topological vectorspaces. By $\calS'$ we denote the dual space of $\calS$. Via duality $\sigma_+$ and $\sigma_-$ then extend to isomorphisms of $\calS'$. For any $\kappa\in\calS'$ one can define an operator $A_\kappa:\calS\rightarrow\calS'$ by 
\begin{align*}
 A_\kappa f(t) &:= \int_0^\infty{\kappa(tt')f(t')t'^{\mu+\nu+1}\td t'},\index{notation}{Akappa@$A_\kappa$}
\end{align*}
meant in the distribution sense. It is easily seen that
\begin{align}
 A_\kappa f &= \sigma_-^{-1}(\sigma_-\kappa*\sigma_+f) & \forall\, f\in\calS.\label{eq:Akappasigma}
\end{align}
This shows that $A_\kappa$ indeed defines a continuous linear operator $\calS\rightarrow\calS'$. Now, to prove Theorem \ref{thm:RadialUnitaryInversion} we have to show that $\calT=A_K$ with $K=K^{\mu,\nu}$. Our strategy of proof is due to T. Kobayashi and G. Mano (cf. \cite[Section 4]{KM08}) and can be described as follows:
\begin{enumerate}
\item[\textup{(1)}] We first show that $\calT=A_\kappa$ for some $\kappa\in\calS'$.
\item[\textup{(2)}] Then we prove that $A_\kappa f_0=A_Kf_0$ for a specific function $f_0$.
\item[\textup{(3)}] Finally, (2) will imply that $\kappa=K$.
\end{enumerate}
These claims are proved in the following three subsections.

\subsection{Translation invariant operators on $\RR$}

We recall the following well-known fact which can e.g. be found in \cite[Theorem I.3.18]{SW71}:

\begin{fact}\label{fct:TranslationInvariantOperators}
Every bounded translation invariant operator $B$ on $L^2(\RR)$ is a convolution operator, i.e. $Bf=u*f$ $(f\in L^2(\RR))$ for some tempered distribution $u\in\calS'(\RR)$ whose Fourier transform $\widehat{u}$ is in $L^\infty(\RR)$.
\end{fact}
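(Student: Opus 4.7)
The plan is to conjugate $B$ by the Fourier transform, identify the conjugated operator with a multiplication operator by an $L^\infty$ function, and read off the convolution kernel as the inverse Fourier transform of that multiplier.

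First I would let $\calF_\RR$ denote the Fourier transform, which is a unitary isomorphism of $L^2(\RR)$, and set $\widetilde{B} := \calF_\RR \circ B \circ \calF_\RR^{-1}$. Writing $\tau_a f(x) = f(x-a)$ for translation and $M_a g(\xi) := e^{-ia\xi} g(\xi)$ for modulation, the standard intertwining $\calF_\RR \tau_a = M_a \calF_\RR$ converts the hypothesis $B\tau_a = \tau_a B$ (valid for every $a \in \RR$) into $\widetilde{B} M_a = M_a \widetilde{B}$. Thus $\widetilde{B}$ is a bounded operator on $L^2(\RR)$ commuting with multiplication by every character $e^{-ia\xi}$.

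The central step is then to deduce that $\widetilde{B}$ is itself multiplication by some function $m \in L^\infty(\RR)$. Here I would fix a cyclic unit vector $g \in L^2(\RR)$ that is nowhere zero, e.g.\ $g(\xi) = \pi^{-1/4} e^{-\xi^2/2}$, and define the measurable function $m := (\widetilde{B} g)/g$ almost everywhere. The commutation relation yields $\widetilde{B}(e^{-ia\xi} g) = m \cdot e^{-ia\xi} g$ for every $a \in \RR$; since the linear span of $\{e^{-ia\xi} g : a \in \RR\}$ is dense in $L^2(\RR)$ (the translates of $\widehat{g}$ generate $L^2(\RR)$ by Wiener's theorem, as $\widehat{g}$ is again nowhere zero), a density-plus-continuity argument extends this to $\widetilde{B} f = m f$ for all $f \in L^2(\RR)$. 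Boundedness of $\widetilde{B}$ then forces $m \in L^\infty(\RR)$ with $\|m\|_\infty = \|\widetilde{B}\| = \|B\|$. Finally, because $L^\infty(\RR) \subseteq \calS'(\RR)$, the distributional inverse Fourier transform $u := \calF_\RR^{-1}(m) \in \calS'(\RR)$ is well-defined with $\widehat{u} = m \in L^\infty(\RR)$; for $f \in \calS(\RR)$ one reads off $Bf = \calF_\RR^{-1}(m \cdot \widehat{f}) = u * f$, and this extends to all $f \in L^2(\RR)$ by continuity.

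The main obstacle is the passage from commutation with the single-parameter family $\{M_a\}_{a \in \RR}$ of exponential multipliers to the statement that $\widetilde{B}$ itself lies in the algebra of multiplication operators. This is essentially the fact that the multiplication algebra on $L^2(\RR)$ is maximal abelian, and the cyclic-vector argument above is the shortest route; an alternative would be to invoke the spectral theorem for the self-adjoint position operator $\xi$ and the fact that $\widetilde{B}$ lies in its commutant.
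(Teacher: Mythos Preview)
Your argument is correct and is essentially the standard proof of this result. Note, however, that the paper does not give its own proof of this statement: it is recorded as a \emph{Fact} and simply cited from \cite[Theorem I.3.18]{SW71}. So there is no paper-proof to compare against; your write-up supplies precisely the argument that the reference would contain, namely conjugation by the Fourier transform to reduce to a multiplication operator, followed by the maximal-abelian property of $L^\infty(\RR)$ acting on $L^2(\RR)$.
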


Here translation invariant means that
\begin{align*}
 B\circ\ell(x) &= \ell(x)\circ B & \forall\, x\in\RR,
\end{align*}
where $\ell(x):L^2(\RR)\rightarrow L^2(\RR)$, $(\ell(x)f)(y):=f(y-x)$\index{notation}{lx@$\ell(x)$} denotes the translation operator.

To transfer our situation from $\RR_+$ to $\RR$ we define an operator $\widetilde{\calT}$\index{notation}{Ttilde@$\widetilde{\calT}$} on $L^2(\RR)$ by the following diagram:
\begin{equation*}
\begin{xy}\xymatrix{
L^2(\RR_+,t^{\mu+\nu+1}\td t) \ar[d]_{\sigma_+} \ar[r]^{\calT} & L^2(\RR_+,t^{\mu+\nu+1}\td t) \ar[d]^{\sigma_-}\\
L^2(\RR) \ar[r]^{\widetilde{\calT}}  & L^2(\RR)
}\end{xy}
\end{equation*}

\begin{lemma}
$\widetilde{\calT}$ is translation invariant.
\end{lemma}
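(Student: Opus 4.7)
The plan is to exhibit a one-parameter group of unitary dilations on $L^2(\RR_+,t^{\mu+\nu+1}\td t)$ that corresponds, via $\sigma_+$ and $\sigma_-$, to the translation group on $L^2(\RR)$, and then to show that conjugation by $\check{w_0}$ inverts this dilation group, forcing $\widetilde{\calT}$ to commute with translations.

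First I would consider the one-parameter subgroup $\{\exp(tL(e))\}_{t\in\RR}=\{e^t\id_V\}_{t\in\RR}\subseteq L\subseteq\check{P}$. Under $\rho_{\lambda_1}$ it acts on $L^2(\calO,\td\mu)$, and using $\chi(e^t\id_V)=e^{rt}$ from \eqref{eq:DeltaOnCi} together with $r\lambda_1=\mu+\nu+2$, one checks that on radial functions $\psi(x)=f(|x|)$ it reduces to the unitary operator
\begin{align*}
 D_t\colon L^2(\RR_+,s^{\mu+\nu+1}\td s)\rightarrow L^2(\RR_+,s^{\mu+\nu+1}\td s),\qquad D_tf(s):=e^{\frac{\mu+\nu+2}{2}t}f(e^ts).
\end{align*}
A direct change of variable in the definitions of $\sigma_+$ and $\sigma_-$ yields
\begin{align*}
 \sigma_+\circ D_t=\ell(-t)\circ\sigma_+,\qquad \sigma_-\circ D_t=\ell(t)\circ\sigma_-,
\end{align*}
so the dilation group on $L^2(\RR_+,s^{\mu+\nu+1}\td s)$ is precisely the translation group on $L^2(\RR)$, read off in two opposite ways through $\sigma_\pm$.

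Next I would use the conjugation behaviour of $\check{w_0}$. By \eqref{eq:ThetaOng} we have $\Ad(\check{w_0})(0,L(e),0)=\theta(0,L(e),0)=(0,-L(e),0)$, so $\check{w_0}\exp(tL(e))\check{w_0}^{-1}=\exp(-tL(e))$ in $\check{G}$. Applying $\pi$, using $\pi|_{\check{P}}=\rho_{\lambda_1}\circ\check{\pr}$ and the fact that the scalar $e^{-i\pi\frac{r_0}{2}(d_0-d/2)_+}$ in the definition of $\calF_\calO$ drops out, this gives
\begin{align*}
 \calF_\calO\circ\rho_{\lambda_1}(e^t\id_V)=\rho_{\lambda_1}(e^{-t}\id_V)\circ\calF_\calO.
\end{align*}
Restricting to the invariant subspace of radial functions and identifying with $L^2(\RR_+,s^{\mu+\nu+1}\td s)$ yields the intertwining relation
\begin{align*}
 \calT\circ D_t=D_{-t}\circ\calT\qquad\forall\,t\in\RR.
\end{align*}

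Finally I would combine the two ingredients: for every $t\in\RR$,
\begin{align*}
 \widetilde{\calT}\circ\ell(-t)
 =\sigma_-\circ\calT\circ\sigma_+^{-1}\circ\ell(-t)
 =\sigma_-\circ\calT\circ D_t\circ\sigma_+^{-1}
 =\sigma_-\circ D_{-t}\circ\calT\circ\sigma_+^{-1}
 =\ell(-t)\circ\widetilde{\calT},
\end{align*}
proving translation invariance. The main obstacle is purely bookkeeping: tracking the two different sign conventions built into $\sigma_+$ and $\sigma_-$ and verifying that the weight $e^{(\mu+\nu+2)t/2}$ is exactly the one that makes $D_t$ unitary (this is where the identity $r\lambda_1=\mu+\nu+2=r_0d/2$ enters). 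Once those normalisations are checked, the argument is a short formal manipulation.
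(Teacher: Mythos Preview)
Your proof is correct and follows essentially the same route as the paper: both use the one-parameter dilation group generated by $L(e)\in\frakl$, the conjugation relation $\Ad(\check{w_0})\exp(sL(e))=\exp(-sL(e))$ to obtain $\calT\circ D_t=D_{-t}\circ\calT$ on radial functions, and the identities $\sigma_\pm\circ D_t=\ell(\mp t)\circ\sigma_\pm$ to convert this into translation invariance of $\widetilde{\calT}$. The only difference is notation and sign convention (the paper writes $\varrho(s)=D_{-s}$), and your explicit identification $r\lambda_1=\mu+\nu+2$ is exactly the normalisation check the paper leaves implicit.
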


\begin{proof}
We put $H:=L(e)\in\frakl$ and consider $\exp(sH)=e^s\1\in L$ for $s\in\RR$. Since $\Ad(w_0)\exp(sH)=\exp(-sH)$ we have
\begin{align*}
 \calF_\calO\circ\rho_{\lambda_1}(\exp(-sH)) &= \rho_{\lambda_1}(\exp(sH))\circ\calF_\calO.
\end{align*}
Restricting this identity to radial functions yields
\begin{align*}
 \calT\circ\varrho(s) &= \varrho(-s)\circ\calT,
\end{align*}
where $\varrho(s)$ denotes the unitary operator on $L^2(\RR_+,t^{\mu+\nu+1}\td t)$ given by
\begin{align*}
 \varrho(s)f(t)=e^{-\frac{\mu+\nu+2}{2}s}f(e^{-s}t).
\end{align*}
Multiplying with $\sigma_-$ from the left and $\sigma_+^{-1}$ from the right gives
\begin{align*}
 \widetilde{\calT}\circ(\sigma_+\circ\varrho(s)\circ\sigma_+^{-1}) &= (\sigma_-\circ\varrho(-s)\circ\sigma_-^{-1})\circ\widetilde{\calT}.
\end{align*}
Now the claim follows from the identities
\begin{align*}
 \sigma_+\circ\varrho(s)\circ\sigma_+^{-1} &= \sigma_-\circ\varrho(-s)\circ\sigma_-^{-1}=\ell(s).\qedhere
\end{align*}
\end{proof}

By Fact \ref{fct:TranslationInvariantOperators}, $\widetilde{\calT}$ is a convolution operator, i.e. for some tempered distribution $u\in\calS'(\RR)$ we have $\widetilde{\calT}f = u*f$ for every $f\in L^2(\RR)$. Put $\kappa:=\sigma_-^{-1}u\in\calS'$\index{notation}{kappa@$\kappa$}. We then obtain
\begin{align*}
 (\sigma_-\circ\calT)f &= (\widetilde{\calT}\circ\sigma_+)f\\
 &= \sigma_-\kappa*\sigma_+f\\
 &= (\sigma_-\circ A_\kappa)f
\end{align*}
by \eqref{eq:Akappasigma}. Since $\sigma_-$ is an isomorphism this implies $\calT=A_\kappa$.

\subsection{Action on $\psi_0$}

Let $f_0$ be the function on $\RR_+$ defined by
\begin{align*}
 f_0(t) := \widetilde{K}_{\frac{\nu}{2}}(t).\index{notation}{f0@$f_0$}
\end{align*}
$f_0$ is exactly the radial part of the $\frakk$-finite vector $\psi_0$ introduced in Section \ref{sec:ConstructiongkModule}. For the equation $A_\kappa f_0=A_Kf_0$ to make sense we first have to show that $f_0\in\calS$ and $K=K^{\mu,\nu}\in\calS'$.

\begin{lemma}
$f_0\in\calS$.
\end{lemma}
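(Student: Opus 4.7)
The plan is to verify the two conditions defining $\calS$ simultaneously: show that $\sigma_+ f_0(y) = e^{\frac{\mu+\nu+2}{2}y}\widetilde{K}_{\frac{\nu}{2}}(e^y)$ and all its $y$-derivatives decay faster than any polynomial as $|y|\to\infty$. Since $\sigma_- f_0(y) = \sigma_+ f_0(-y)$, this also gives $\sigma_- f_0 \in \calS(\RR)$. The key tools are the asymptotic expansions of the normalized $K$-Bessel function at $0$ and $\infty$ (Appendix \ref{app:BesselFunctions}) and the differentiation formula
\begin{align*}
 \tfrac{\td}{\td z}\widetilde{K}_\alpha(z) &= -\tfrac{z}{2}\widetilde{K}_{\alpha+1}(z),
\end{align*}
which, by induction, gives
\begin{align*}
 \tfrac{\td^k}{\td y^k}\bigl[e^{ay}\widetilde{K}_\alpha(e^y)\bigr] &= \sum_{j=0}^k c_{k,j}\,e^{(a+2j)y}\widetilde{K}_{\alpha+j}(e^y)
\end{align*}
for suitable constants $c_{k,j}$.

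For $y\to+\infty$ the estimate is immediate: by \eqref{eq:BesselAsymptAtInfty} one has $\widetilde{K}_{\alpha+j}(e^y)\sim\const\cdot e^{-(\alpha+j+\frac{1}{2})y}\exp(-e^y)$, so every term in the expansion of $\frac{\td^k}{\td y^k}\sigma_+ f_0(y)$ is suppressed by a doubly exponential factor $\exp(-e^y)$, which dominates any power $e^{\beta y}$. This yields rapid decay of all derivatives at $+\infty$.

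The remaining and genuinely delicate step is the estimate at $y\to-\infty$, which I plan to settle by a short case split on $\nu$ based on the explicit form of $\Xi$ together with Lemma \ref{lem:MuNuProperties}. If $\nu>0$, then $\widetilde{K}_{\nu/2+j}(e^y)\sim c_j\,e^{-(\nu+2j)y}$ by \eqref{eq:BesselKAsymptAt0}, and the typical term becomes $O\bigl(e^{(a-\nu)y}\bigr)$ with $a-\nu=\frac{\mu-\nu+2}{2}\geq 1$ by Lemma \ref{lem:MuNuProperties}(2); if $\nu=0$, the same argument applies with an extra logarithmic factor coming from $\widetilde{K}_0(z)\sim-\log(z/2)$, and exponential decay still holds since $\frac{\mu+2}{2}>0$; if $\nu=-1$, then $\widetilde{K}_{-1/2}(e^y)=\frac{\sqrt\pi}{2}e^{-e^y}$ is bounded and the terms with $j\geq 1$ have $\nu/2+j>0$ so the previous asymptotics apply — one checks that every term is $O(e^{\beta y})$ with $\beta=\min(\frac{\mu+1}{2},\frac{\mu+3}{2})>0$ thanks to $\mu\geq 0$ (which is forced when $\nu=-1$, see the description of $\Xi$).

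The main obstacle is precisely this $y\to-\infty$ analysis: a priori $\widetilde{K}_{\nu/2}(z)$ is singular at $z=0$, and an arbitrary derivative produces $\widetilde{K}_{\nu/2+j}$ with increasingly worse singularities. What makes everything work is that the differentiation rule preserves the linear invariant $a-2\alpha$ on the parameters of each summand, so the exponent governing the $y\to-\infty$ decay is stable under differentiation; combined with the inequalities of Lemma \ref{lem:MuNuProperties} this invariant is strictly positive. Putting the $+\infty$ and $-\infty$ estimates together yields $\sigma_+ f_0\in\calS(\RR)$, hence $f_0\in\calS$.
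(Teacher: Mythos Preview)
Your proposal is correct and follows essentially the same strategy as the paper: both reduce the derivatives of $\sigma_+ f_0$ to linear combinations of the building blocks $e^{(\frac{\mu+\nu+2}{2}+2j)y}\widetilde{K}_{\frac{\nu}{2}+j}(e^y)$ via the differentiation formula $\frac{\td}{\td z}\widetilde{K}_\alpha(z)=-\frac{z}{2}\widetilde{K}_{\alpha+1}(z)$, and then check rapid decay of each such block using the asymptotics \eqref{eq:BesselKAsymptAt0}, \eqref{eq:BesselAsymptAtInfty} together with Lemma~\ref{lem:MuNuProperties}. The paper phrases this as the recurrence $\frac{\td}{\td y}(\sigma_+ f_k)=(\frac{\mu+\nu+2}{2}+2k)\sigma_+ f_k-\frac{1}{2}\sigma_+ f_{k+1}$ with $f_k(t)=t^{2k}\widetilde{K}_{\frac{\nu}{2}+k}(t)$, which is exactly your ``invariant $a-2\alpha$'' observation in different notation; your explicit case split on the sign of $\nu$ is not needed (Lemma~\ref{lem:MuNuProperties}(1) already gives $\mu\geq 0$ when $\nu=-1$), but it does no harm.
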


\begin{proof}
We show that $\sigma_+f_0\in\calS(\RR)$. For this define functions $f_k$ ($k\in\NN_0$) on $\RR_+$ by
\begin{align*}
 f_k(t) &:= t^{2k}\widetilde{K}_{\frac{\nu}{2}+k}(t).
\end{align*}
Then $f_0$ is as above. We have
\begin{align*}
 \sigma_+f_k(y) &= e^{(\frac{\mu+\nu+2}{2}+2k)y}\widetilde{K}_{\frac{\nu}{2}+k}(e^y).
\end{align*}
The asymptotic behavior of the $K$-Bessel function near $0$ and $\infty$ is given by \eqref{eq:BesselKAsymptAt0} and \eqref{eq:BesselAsymptAtInfty}. From this together with Lemma \ref{lem:MuNuProperties} it easily follows that the functions $\sigma_+f_k$ are rapidly decreasing, i.e. $y^\ell\sigma_+f_k(y)$ is bounded on $\RR$ for all $k,\ell\in\NN_0$. Finally, from the differential recurrence relation \eqref{eq:BesselDiffFormulas} for the $K$-Bessel function one deduces the following recurrence identity for the derivatives of the functions $\sigma_+f_k$:
\begin{align*}
 \frac{\td}{\td y}(\sigma_+f_k) &= \left(\frac{\mu+\nu+2}{2}+2k\right)(\sigma_+f_k) - \frac{1}{2}(\sigma_+f_{k+1}).
\end{align*}
Hence, higher derivatives of $\sigma_+f_0$ are linear combinations of the functions $\sigma_+f_k$ which are rapidly decreasing by the above considerations. Therefore, $\sigma_+f_0\in\calS(\RR)$.
\end{proof}

To show that $K^{\mu,\nu}\in\calS'$ we first prove a precise statement for the asymptotic behavior of $K^{\mu,\nu}(t)$.

\begin{lemma}\label{lem:KmunuAsymptotics}
The function $K^{\mu,\nu}(t)$ has the following asymptotic behavior:
\begin{enumerate}
\item[\textup{(1)}] As $t\rightarrow0$:
 \begin{align*}
  K^{\mu,\nu}(t) &= \frac{1}{2^{\mu+1}\Gamma(\frac{\mu-|\nu|+2}{2})\Gamma(\frac{\mu+2}{2})}\times\begin{cases}2^\nu\Gamma\left(\frac{\nu}{2}\right)t^{-\nu}+o(t^{-\nu}) & \mbox{for }\nu>0,\\-2\ln(t)+o(\ln(t)) & \mbox{for }\nu=0,\\2^{-\nu}\Gamma\left(-\frac{\nu}{2}\right)+o(1) & \mbox{for }\nu<0.\end{cases}
 \end{align*}
\item[\textup{(2)}] As $t\rightarrow\infty$:
 \begin{align*}
  K^{\mu,\nu}(t) &= -\frac{1}{\sqrt{\pi}}t^{-\frac{2\mu+2\nu+3}{4}}\cos\left(2t^{\frac{1}{2}}-\frac{2\mu-3}{4}\pi\right)\left(1+\calO(t^{-\frac{1}{2}})\right).
 \end{align*}
\end{enumerate}
\end{lemma}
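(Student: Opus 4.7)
The plan is to work from the Mellin--Barnes integral representation of Meijer's $G$-function, which gives
\begin{align*}
 G^{20}_{04}\Bigl(z\Bigm|0,-\tfrac{\nu}{2},-\tfrac{\mu}{2},-\tfrac{\mu+\nu}{2}\Bigr) = \frac{1}{2\pi i}\int_L \frac{\Gamma(-s)\,\Gamma(-\tfrac{\nu}{2}-s)}{\Gamma(1+\tfrac{\mu}{2}+s)\,\Gamma(1+\tfrac{\mu+\nu}{2}+s)}\,z^s\,ds,
\end{align*}
where $L$ is a vertical contour separating the poles of the numerator ($s\in\NN_0$ from $\Gamma(-s)$ and $s\in-\tfrac{\nu}{2}+\NN_0$ from $\Gamma(-\tfrac{\nu}{2}-s)$) from those of the denominator. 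In our setting $z=(t/4)^2$, and the prefactor $\tfrac{1}{2^{\mu+\nu+1}}$ will be absorbed with the resulting power of $4$ to produce the stated constants.

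For part (1), as $t\to 0$ the integrand decays for $\Re s\to+\infty$, so I would close the contour to the right and collect residues at the poles of $\Gamma(-s)\Gamma(-\tfrac{\nu}{2}-s)$. If $\nu>0$ the leftmost pole is $s=-\tfrac{\nu}{2}$, which is simple and contributes $\Gamma(\tfrac{\nu}{2})\cdot(-1)\cdot[\Gamma(\tfrac{\mu-\nu+2}{2})\Gamma(\tfrac{\mu+2}{2})]^{-1}\cdot z^{-\nu/2}$; multiplying by $\tfrac{1}{2^{\mu+\nu+1}}$ and using $z^{-\nu/2}=(t/4)^{-\nu}=4^{\nu}t^{-\nu}$ gives exactly the stated leading term. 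If $\nu<0$ the role is instead played by the simple pole at $s=0$ from $\Gamma(-s)$, with residue evaluated similarly. If $\nu=0$ the two first poles coalesce into a double pole at $s=0$: expanding $\Gamma(-s)^2=s^{-2}+2\gamma s^{-1}+O(1)$ and $z^s=1+s\log z+O(s^2)$, the residue picks up $\log z=2\log(t/4)$, producing the $-2\log(t)$ term (the sign comes from the $s^{-2}\cdot s\log z$ contribution). The remaining error estimates come from the next residue being higher order in $t$ (respectively $\log t$) in each case.

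For part (2), as $t\to\infty$ the parameters $(m,n,p,q)=(2,0,0,4)$ put us in the borderline regime $2(m+n)=p+q$, where the large-$z$ asymptotic of $G^{20}_{04}$ is purely oscillatory of algebraic amplitude. I would apply the steepest descent method to the Mellin--Barnes contour after shifting it far to the right and using Stirling's formula on the four $\Gamma$-factors. With four $\Gamma$'s the effective action is $-4s\log s$, producing a saddle at $s\sim z^{1/4}$, so $z^s\big|_{\text{saddle}}$ yields the characteristic oscillation $\exp(\pm 4iz^{1/4})=\exp(\pm 2it^{1/2})$. Tracking the amplitude through the Stirling asymptotics gives $z^{\vartheta}$ with $\vartheta=\tfrac{1}{4}(2\sum b_j+\tfrac{3}{2})=-\tfrac{\mu+\nu}{2}+\tfrac{3}{8}$, which translates via $z=(t/4)^2$ into $t^{-(2\mu+2\nu+3)/4}$ up to a constant; combining the two saddle contributions into a cosine and carefully computing the Stirling phase yields the factor $\cos(2\sqrt{t}-\tfrac{2\mu-3}{4}\pi)$, and the next order saddle correction is $O(z^{-1/4})=O(t^{-1/2})$.

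The main obstacle is the precise determination of the constant phase shift $\tfrac{2\mu-3}{4}\pi$ in (2); this is the delicate bookkeeping of arguments in the Stirling expansions of $\Gamma(1+\tfrac{\mu}{2}+s)$ and $\Gamma(1+\tfrac{\mu+\nu}{2}+s)$ at the two complex conjugate saddle points, together with the orientation of steepest descent curves. A convenient alternative that sidesteps the saddle-point computation is to invoke the classical asymptotic formulas of Meijer and Luke (see e.g.\ Luke, \emph{The Special Functions and Their Approximations}, Vol.~I, \S5.7) for $G^{m,0}_{0,q}$, which give this phase in closed form, or equivalently to write $G^{20}_{04}$ in terms of the generalized hypergeometric function ${}_0F_3$ and apply Wright's asymptotic expansion; both yield the stated formula upon inserting our specific parameters.
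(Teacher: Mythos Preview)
Your approach is correct in spirit but considerably more elaborate than the paper's. The paper's proof is a single sentence: it simply cites the general asymptotic formulas for $G^{20}_{04}(z\mid b_1,b_2,b_3,b_4)$ as $z\to 0$ and $z\to\infty$ that are recorded in the appendix (taken from Luke, \S5.2 and \S5.10), and then substitutes $b_1=0$, $b_2=-\tfrac{\nu}{2}$, $b_3=-\tfrac{\mu}{2}$, $b_4=-\tfrac{\mu+\nu}{2}$ and $z=(t/4)^2$. Your residue computation in part~(1) is exactly how one proves the small-$z$ formula in the first place, so you are rederiving what the paper treats as a black box; your ``convenient alternative'' at the end of part~(2)---invoking Luke---is precisely the paper's route.

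One correction to your saddle-point sketch: the amplitude exponent from Luke is $\theta=\tfrac{1}{4}\bigl(\sum_j b_j-\tfrac{3}{2}\bigr)$, not $\tfrac{1}{4}\bigl(2\sum_j b_j+\tfrac{3}{2}\bigr)$. With $\sum_j b_j=-\mu-\nu$ this gives $\theta=-\tfrac{\mu+\nu}{4}-\tfrac{3}{8}$, and then $z^\theta=(t/4)^{2\theta}$ produces the stated power $t^{-(2\mu+2\nu+3)/4}$. Your version of $\vartheta$ does not actually reproduce this exponent, despite the claim; the final formula is right, but the intermediate step is off.
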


\begin{proof}
This follows directly from \eqref{eq:GFctAsymptotics0} and \eqref{eq:GFctAsymptoticsInfty}.
\end{proof}

The proof of the following lemma does not follow \cite{KM08}. The corresponding proof of \cite[Claim 4.6.4]{KM08} seems more complicated than necessary.

\begin{lemma}
$K^{\mu,\nu}\in\calS'$.
\end{lemma}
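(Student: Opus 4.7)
The plan is to transfer the problem from $\RR_+$ to $\RR$ via $\sigma_-$. Since $\sigma_- : \calS' \to \calS'(\RR)$ is a topological isomorphism, it suffices to prove that the smooth function $u(y) := \sigma_- K^{\mu,\nu}(y) = e^{-\frac{\mu+\nu+2}{2}y} K^{\mu,\nu}(e^{-y})$ defines a tempered distribution on $\RR$. Smoothness of $u$ on all of $\RR$ follows from the analyticity of Meijer's $G$-function on $(0,\infty)$, so we only have to control the behaviour of $u$ at $\pm\infty$.

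First I would analyse $y \to +\infty$, corresponding to $t = e^{-y} \to 0$. Inserting the three cases of Lemma \ref{lem:KmunuAsymptotics}~(1) and using $\mu - \nu \geq 0$ together with $\mu + \nu + 2 \geq 1$ from Lemma \ref{lem:MuNuProperties}, one finds that $u(y)$ decays exponentially (times at most a linear factor, when $\nu = 0$) as $y \to +\infty$. This end of $\RR$ therefore contributes nothing but an absolutely convergent pairing with any Schwartz function, controlled by the $L^1$-seminorm.

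The main work lies in controlling $y \to -\infty$, where Lemma \ref{lem:KmunuAsymptotics}~(2) yields
\begin{align*}
 u(y) &= -\tfrac{1}{\sqrt{\pi}}\, e^{-y/4} \cos\!\Bigl(2e^{-y/2} - \tfrac{2\mu-3}{4}\pi\Bigr)\bigl(1 + \calO(e^{y/2})\bigr).
\end{align*}
The prefactor $e^{-y/4}$ grows as $y \to -\infty$, but the oscillation has exponentially increasing frequency. Exploiting the identity $\cos(2e^{-y/2} - \phi) = -e^{y/2}\frac{\td}{\td y}\sin(2e^{-y/2} - \phi)$, the leading part of $u$ becomes $\frac{1}{\sqrt{\pi}}\, e^{y/4}\frac{\td}{\td y}\sin(2e^{-y/2} - \phi)$, i.e.\ the distributional derivative of the bounded function $\sin(2e^{-y/2}-\phi)$ multiplied by a factor that decays exponentially as $y \to -\infty$. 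Consequently, for $g \in \calS(\RR)$ one integration by parts produces an absolutely convergent integral bounded by a continuous seminorm in $g$ and $g'$; the boundary term at $-\infty$ vanishes by the rapid decay of $g$. The error term $\calO(e^{y/2})$ decays even faster and is handled directly. This shows $u \in \calS'(\RR)$, hence $K^{\mu,\nu} \in \calS'$.

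The main obstacle is precisely this oscillating, exponentially growing tail: one must choose the correct antiderivative $\sin(2e^{-y/2}-\phi)$ of the leading oscillation and verify that, after one integration by parts, the remaining exponential factor is tamed and the boundary contributions at $-\infty$ vanish.
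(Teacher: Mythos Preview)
Your proof is correct and follows essentially the same approach as the paper: control the end corresponding to $t\to 0$ by exponential decay (hence $L^1$), and handle the oscillatory growing tail corresponding to $t\to\infty$ by one integration by parts against the bounded antiderivative of the fast oscillation. The only cosmetic difference is that you use $\sigma_-$ while the paper uses $\sigma_+$, which merely swaps the roles of $y\to+\infty$ and $y\to-\infty$; the paper also makes the splitting explicit with characteristic functions $\chi_{(-\infty,0]}$, $\chi_{[0,\infty)}$, whereas you argue more informally, but the substance is identical.
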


\begin{proof}
Using Lemma \ref{lem:KmunuAsymptotics}~(1) (and Lemma \ref{lem:MuNuProperties}) it is easily verified that there is some constant $C>0$ such that $|\sigma_+K^{\mu,\nu}(y)|\leq Ce^{\frac{1}{2}y}$ as $y\rightarrow-\infty$. Hence, $\chi_{(-\infty,0]}\sigma_+K^{\mu,\nu}\in L^1(\RR)\subseteq\calS'(\RR)$, where $\chi_A$ denotes the characteristic function of $A\subseteq\RR$. It remains to show that also $\chi_{[0,\infty)}\sigma_+K^{\mu,\nu}\in\calS'(\RR)$. By Lemma \ref{lem:KmunuAsymptotics}~(2), the asymptotic behavior of $(\sigma_+K^{\mu,\nu})(y)$ as $y\rightarrow\infty$ is given by
\begin{align*}
 (\sigma_+K^{\mu,\nu})(y) &= -\frac{1}{\sqrt{\pi}}e^{\frac{1}{4}y}\cos\left(2e^{\frac{1}{2}y}-\frac{2\mu-3}{4}\pi\right)\left(1+\calO(e^{-\frac{1}{2}y})\right).
\end{align*}
Therefore $\chi_{[0,\infty)}\sigma_+K^{\mu,\nu}-cg\in L^1(\RR)\subseteq\calS'(\RR)$, where $c=-\frac{1}{\sqrt{\pi}}$ and
\begin{align*}
 g(y) &:= \chi_{[0,\infty)}(y)e^{\frac{1}{4}y}\cos\left(2e^{\frac{1}{2}y}-\frac{2\mu-3}{4}\pi\right).
\end{align*}
Hence, it remains to show that $g\in\calS'(\RR)$. (Since clearly $g\notin L^1(\RR)$, this is the crucial part.) For $\varphi\in C_c^\infty(\RR)$ we integrate by parts:
\begin{align*}
 & \int_\RR{g(y)\varphi(y)\td y} = \int_0^\infty{e^{-\frac{1}{4}y}\varphi(y)\cdot\frac{\td}{\td y}\left[\cos\left(2e^{\frac{1}{2}y}-\frac{2\mu-3}{4}\pi\right)\right]\td y}\\
 ={}& -\cos\left(2-\frac{2\mu-3}{4}\pi\right)\varphi(0)-\int_0^\infty{\frac{\td}{\td y}\left[e^{-\frac{1}{4}y}\varphi(y)\right]\cdot\cos\left(2e^{\frac{1}{2}y}-\frac{2\mu-3}{4}\pi\right)\td y}.
\end{align*}
The right hand side clearly makes sense also for $\varphi\in\calS(\RR)$ and defines a tempered distribution. Since $C_c^\infty(\RR)$ is dense in $\calS(\RR)$, $g$ extends to a continuous linear functional on $\calS(\RR)$ and is therefore a tempered distribution which finishes the proof.
\end{proof}

Now we can calculate the actions of $A_\kappa$ and $A_K$ on $f_0$.

\begin{proposition}\label{prop:AkkappaPsi0}
\begin{enumerate}
\item[\textup{(a)}] $A_\kappa f_0=f_0$.
\item[\textup{(b)}] $A_Kf_0=f_0$.
\end{enumerate}
\end{proposition}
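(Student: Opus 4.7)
The plan for part (a) is to translate the statement to the Hilbert space $L^2(\calO,\td\mu)_{\rad}$ where it becomes the representation-theoretic assertion $\calF_\calO\psi_0=\psi_0$. Under the unitary isomorphism $L^2(\RR_+,t^{\mu+\nu+1}\td t)\stackrel{\sim}{\to}L^2(\calO)_{\rad}$, the function $f_0(t)=\widetilde K_{\frac{\nu}{2}}(t)$ corresponds to $\psi_0(x)=\widetilde K_{\frac{\nu}{2}}(|x|)$ and $A_\kappa=\calT$ corresponds to $\calF_{\calO,\rad}$. Using Theorem~\ref{prop:FOProperties}~(7), I would write
\begin{align*}
 \calF_\calO=e^{-i\pi\frac{r_0}{2}(d_0-\frac{d}{2})_+}\,\rho_{\lambda_1}(\alpha)\,\exp\bigl(\tfrac{\pi}{2}\td\pi(e,0,-e)\bigr).
\end{align*}
The factor $\rho_{\lambda_1}(\alpha)$ fixes $\psi_0$ because $\chi(\alpha)=1$ and $|{\cdot}|$ is $\alpha$\nobreakdash-invariant. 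For the remaining factor, Proposition~\ref{prop:Kfinite} supplies the action of $(e,0,-e)$ on $\psi_0$: in the euclidean case the eigenvalue is $\tfrac{rd}{2}i$, so $\exp(\tfrac{\pi}{2}\td\pi(e,0,-e))\psi_0=e^{i\pi\frac{rd}{4}}\psi_0$, which together with the prefactor $e^{-i\pi rd/4}$ (note $r_0=r$, $d_0=d$) gives $\psi_0$; in the non\nobreakdash-euclidean case of rank $\geq3$ the vector $\psi_0$ is $\frakk$\nobreakdash-spherical, the exponential is trivial, and $d=2d_0$ forces the prefactor to be $1$. In the rank $2$ case $V\cong\RR^{p,q}$, the generator $\psi_0$ lies in the irreducible $K$\nobreakdash-type $W_0\cong E^{\alpha_0}$; since $\check w_0$ is central in $\check K$ by Lemma~\ref{lem:Tildew0Central}, $\pi(\check w_0)$ acts by a scalar on $W_0$, and this scalar can be evaluated on any convenient weight vector using the explicit description of $W_0$ from Proposition~\ref{prop:Kfinite}~(c) and the formulas for $\frakk$\nobreakdash-roots in Section~\ref{subsec:ConfGrpRoots}, yielding a scalar that exactly cancels $e^{-i\pi\frac{r_0}{2}(d_0-d/2)_+}$.

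For part (b) the strategy is a direct computation via Mellin--Barnes. I would insert the Mellin--Barnes representation of Meijer's $G$\nobreakdash-function,
\begin{align*}
 K^{\mu,\nu}(tt')=\frac{1}{2^{\mu+\nu+1}}\cdot\frac{1}{2\pi i}\int_L\frac{\Gamma(-s)\,\Gamma(-s-\frac{\nu}{2})}{\Gamma(s+\frac{\mu}{2}+1)\,\Gamma(s+\frac{\mu+\nu}{2}+1)}\Bigl(\frac{tt'}{4}\Bigr)^{2s}\td s,
\end{align*}
into the integral defining $A_Kf_0(t)$, interchange the order of integration on a vertical contour $L$ chosen so that absolute convergence holds, and carry out the $t'$\nobreakdash-integral. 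The resulting inner integral is the classical Mellin transform of $\widetilde K_{\nu/2}$, a product of two Gamma factors. After substitution, pairs of Gammas collapse using the reflection formula $\Gamma(-s)\Gamma(1+s)=-\pi/\sin(\pi s)$ and the duplication formula, and the remaining Mellin--Barnes integral is recognized as the one representing $\widetilde K_{\nu/2}(t)$ (again via Appendix~\ref{app:GFct} applied to $G^{20}_{02}$, equivalently \eqref{eq:BesselAsymptAtInfty} and the standard integral representation of the $K$\nobreakdash-Bessel function).

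The hard part in (a) is the rank $2$ case $V\cong\RR^{p,q}$, where $W_0$ is no longer one\nobreakdash-dimensional and $\psi_0$ is not itself an eigenvector of $(e,0,-e)$; here one must use the centrality of $\check w_0$ in $\check K$ together with a careful bookkeeping of the highest weight $\alpha_0$ defined in \eqref{eq:DefGamma0} to verify cancellation with the normalizing prefactor. The hard part in (b) is purely technical: choosing the contour $L$ so that Fubini applies (which requires combining Lemma~\ref{lem:KmunuAsymptotics} with the exponential decay of $\widetilde K_{\nu/2}$ from \eqref{eq:BesselAsymptAtInfty}), and executing the Gamma\nobreakdash-function manipulations without arithmetic slips in the half\nobreakdash-integer shifts imposed by the parameters $\mu,\nu$ allowed by Lemma~\ref{lem:MuNuProperties}.
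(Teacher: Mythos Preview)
Your approach to part~(a) is essentially identical to the paper's: split according to the three cases of Proposition~\ref{prop:Kfinite}, using the eigenvalue computations in the euclidean and non-euclidean rank~$\geq 3$ cases, and deferring the rank~$2$ case to a separate computation (the paper carries this out in Appendix~\ref{app:Rank2} via the explicit isomorphism $W_0\cong\calH^{\frac{q-p}{2}}(\RR^{p+1})$ and the Gegenbauer parity formula rather than a highest-weight-vector argument, but your proposed route through a weight vector leads to the same scalar $(-1)^{(q-p)/2}$).

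For part~(b) your Mellin--Barnes computation is correct and is in fact the computation underlying the table formula the paper invokes. The paper quotes \eqref{eq:GFctIntKBessel} directly to obtain a $G^{22}_{24}$, then applies the reduction formula \eqref{eq:GFctRed} (which is exactly the Gamma cancellation you would see) and identifies the result via \eqref{eq:GFctKBessel}. If you carry out your plan you will find it is simpler than you anticipate: after inserting the Mellin transform \eqref{eq:KBesselMellinTransform} of $\widetilde K_{\nu/2}$, the two new Gamma factors $\Gamma(1+\tfrac{\mu}{2}+s)$ and $\Gamma(1+\tfrac{\mu+\nu}{2}+s)$ cancel \emph{exactly} against the denominator of the $G^{20}_{04}$ integrand, leaving the Mellin--Barnes integral for $G^{20}_{02}((t/2)^2|0,-\tfrac{\nu}{2})=2\widetilde K_{\nu/2}(t)$. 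No reflection or duplication formulas are needed.
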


\begin{proof}
\begin{enumerate}
\item[\textup{(a)}] By Theorem \ref{prop:FOProperties}~(7) we have $\calF_\calO=e^{-i\pi\frac{r_0}{2}(d_0-\frac{d}{2})_+}e^{i\frac{\pi}{2}(e|x-\calB)}\rho_{\lambda_1}(\alpha)$. Applying $\rho_{\lambda_1}(\alpha)$ to $\psi_0$ gives $\rho_{\lambda_1}(\alpha)\psi_0=\psi_0$ since $\alpha\in K_L$ and $\psi_0$ is $K_L$-invariant. Therefore, $\calF_\calO\psi_0=e^{-i\pi\frac{r_0}{2}(d_0-\frac{d}{2})_+}e^{i\frac{\pi}{2}(e|x-\calB)}\psi_0$.
\begin{enumerate}
\item[\textup{(1)}] If $V$ is euclidean then by \eqref{eq:KactionEucl} we have $(e|x-\calB)\psi_0=\frac{rd}{2}\psi_0$ and hence
\begin{align*}
 e^{i\frac{\pi}{2}(e|x-\calB)}\psi &= e^{i\pi\frac{rd}{4}}\psi_0.
\end{align*}
\item[\textup{(2)}] If $V$ is non-euclidean of rank $r\geq3$, then by \eqref{eq:KactionNonEuclSph} the function $\psi_0$ is annihilated by $(e|x-\calB)$ and therefore
\begin{align*}
 e^{i\frac{\pi}{2}(e|x-\calB)}\psi_0 &= \psi_0.
\end{align*}
\item[\textup{(3)}] The remaining case $V=\RR^{p,q}$ is treated in Appendix \ref{app:Rank2MinKtype}. By Lemma \ref{lem:Rk2ActionFpsi0} we have
\begin{align*}
 e^{i\frac{\pi}{2}(e|x-\calB)}\psi_0 &= e^{i\pi(\frac{q-p}{2})_+}\psi_0.
\end{align*}
\end{enumerate}
Together we obtain
\begin{align*}
 e^{i\frac{\pi}{2}(e|x-\calB)}\psi_0 &= e^{i\pi\frac{r_0}{2}(d_0-\frac{d}{2})_+}\psi_0
\end{align*}
and hence
\begin{align*}
 \calF_\calO\psi_0 &= \psi_0.
\end{align*}
Since $\psi_0(x)=f_0(|x|)$ this gives the result for $\calT f_0=A_\kappa f_0$.
\item[\textup{(b)}] The assumptions for \eqref{eq:GFctIntKBessel} are satisfied if we put $a=-\frac{2\mu+\nu}{4}$, $\alpha=\frac{\nu}{2}$, $\omega=1$, $\eta=(\frac{s}{2})^2$ and $(b_1,b_2,b_3,b_4)=(0,-\frac{\nu}{2},-\frac{\mu}{2},-\frac{\mu+\nu}{2})$ (use Lemma \ref{lem:MuNuProperties}). Then we obtain
\begin{align*}
 & A_Kf_0(t)\\
 ={}& \frac{1}{2^{\mu+\nu+1}}\int_0^\infty{G^{20}_{04}\left(\left(\frac{tt'}{4}\right)^2\left|0,-\frac{\nu}{2},-\frac{\mu}{2},-\frac{\mu+\nu}{2}\right.\right)\widetilde{K}_{\frac{\nu}{2}}(t')t'^{\mu+\nu+1}\td t'}\\
 ={}& \int_0^\infty{G^{20}_{04}\left(\left(\frac{t}{2}\right)^2x\left|0,-\frac{\nu}{2},-\frac{\mu}{2},-\frac{\mu+\nu}{2}\right.\right)K_{\frac{\nu}{2}}(2x^{\frac{1}{2}})x^{\frac{2\mu+\nu}{4}}\td x}\\
 ={}& \frac{1}{2}G^{22}_{24}\left(\left(\frac{t}{2}\right)^2\left|\begin{array}{c}-\frac{\mu}{2},-\frac{\mu+\nu}{2}\\0,-\frac{\nu}{2},-\frac{\mu}{2},-\frac{\mu+\nu}{2}\end{array}\right.\right)\\
 ={}& \frac{1}{2}G^{20}_{02}\left(\left(\frac{t}{2}\right)^2\left|0,-\frac{\nu}{2}\right.\right).
\end{align*}
where we have used the reduction formula \eqref{eq:GFctRed} for the last step. Eventually, the claim follows from the simplification formula \eqref{eq:GFctKBessel}.\qedhere
\end{enumerate}
\end{proof}

\subsection{A uniqueness property}

Now we finally prove that $\kappa=K$. The main point is the following lemma:

\begin{lemma}
Let $\kappa_1,\kappa_2\in\calS'$ If there exists a function $f\in\calS$ such that $\widehat{\sigma_+f}$ vanishes nowhere on $\RR$ and $A_{\kappa_1}f=A_{\kappa_2}f$, then $\kappa_1=\kappa_2$.
\end{lemma}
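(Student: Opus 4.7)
The plan is to reduce the equation $A_{\kappa_1}f = A_{\kappa_2}f$ to a statement about the vanishing of a tempered distribution on $\RR$ by exploiting formula \eqref{eq:Akappasigma}, and then to use the hypothesis on $\widehat{\sigma_+ f}$ via a standard Fourier-analytic division argument.

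More precisely, setting $\kappa := \kappa_1 - \kappa_2 \in \calS'$, the hypothesis $A_{\kappa_1}f = A_{\kappa_2}f$ together with the linearity of $\kappa \mapsto A_\kappa$ gives $A_\kappa f = 0$ in $\calS'$. Applying $\sigma_-$ (which is an isomorphism of $\calS'$) and invoking \eqref{eq:Akappasigma}, this is equivalent to
\begin{align*}
 (\sigma_-\kappa) * (\sigma_+ f) &= 0 \qquad \text{in } \calS'(\RR).
\end{align*}
Since $f \in \calS$, we have $\sigma_+ f \in \calS(\RR)$, so the convolution is well-defined and the Fourier transform turns it into a product of tempered distributions: $\widehat{\sigma_-\kappa}\cdot\widehat{\sigma_+ f} = 0$ in $\calS'(\RR)$.

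The main (and only) nontrivial step is to conclude from this product equation that $\widehat{\sigma_-\kappa} = 0$, using that $g := \widehat{\sigma_+ f} \in \calS(\RR)$ vanishes nowhere on $\RR$. This is the standard fact that one may divide a distribution by a nowhere-vanishing smooth function: given any test function $\varphi \in C_c^\infty(\RR)$, the quotient $\varphi/g$ is again in $C_c^\infty(\RR)$ (since $g$ is smooth and nonzero), so
\begin{align*}
 \langle\widehat{\sigma_-\kappa},\varphi\rangle &= \left\langle\widehat{\sigma_-\kappa},g\cdot\tfrac{\varphi}{g}\right\rangle = \left\langle\widehat{\sigma_-\kappa}\cdot g,\tfrac{\varphi}{g}\right\rangle = 0.
\end{align*}
Hence $\widehat{\sigma_-\kappa} = 0$ as an element of $\calD'(\RR)$, and therefore also as an element of $\calS'(\RR)$ since $C_c^\infty(\RR)$ is dense in $\calS(\RR)$. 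The Fourier transform being an isomorphism of $\calS'(\RR)$, this gives $\sigma_-\kappa = 0$, and finally $\kappa = 0$, i.e. $\kappa_1 = \kappa_2$.

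I do not expect serious obstacles: once \eqref{eq:Akappasigma} is used to pass to the convolution formulation, the argument is entirely standard Fourier analysis on $\RR$. The only point that deserves a line in the written proof is the justification that $\varphi/g \in C_c^\infty(\RR)$ for $\varphi \in C_c^\infty(\RR)$, which follows immediately from $g$ being smooth and nowhere zero.
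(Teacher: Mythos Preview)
Your proof is correct and follows essentially the same route as the paper: apply \eqref{eq:Akappasigma}, take the Fourier transform to turn the convolution into a product, and use that $\widehat{\sigma_+f}$ is a nowhere-vanishing smooth function to cancel it. Your version is in fact more careful than the paper's, which simply asserts the cancellation step without spelling out the division argument.
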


\begin{proof}
With \eqref{eq:Akappasigma} we obtain
\begin{align*}
 \sigma_-\kappa_1*\sigma_+f &= \sigma_-\kappa_2*\sigma_+f.
\end{align*}
Taking the Fourier transform on both sides yields
\begin{align*}
 \widehat{\sigma_-\kappa_1}\cdot\widehat{\sigma_+f} &= \widehat{\sigma_-\kappa_2}\cdot\widehat{\sigma_+f}.
\end{align*}
Since $\widehat{\sigma_+f}$ vanishes nowhere, this implies $\widehat{\sigma_-\kappa_1}=\widehat{\sigma_-\kappa_2}$ and hence $\kappa_1=\kappa_2$.
\end{proof}

Now, we already know that $A_\kappa$ and $A_K$ agree on $f_0$. To apply the previous lemma and finish the proof of Theorem \ref{thm:RadialUnitaryInversion}, it remains to show that $\widehat{\sigma_+f_0}$ vanishes nowhere on $\RR$. This follows from the next lemma.

\begin{lemma}
For the Fourier transform of $\sigma_+f_0$ we have the following formula:
\begin{align*}
 \widehat{\sigma_+f_0}(\xi) &= 2^{\frac{\mu+\nu-2}{2}-i\xi}\Gamma\left(\frac{\mu+\nu+2}{4}-\frac{1}{2}i\xi\right)\Gamma\left(\frac{\mu-\nu+2}{4}-\frac{1}{2}i\xi\right).
\end{align*}
In particular, $\widehat{\sigma_+f_0}$ vanishes nowhere on $\RR$.
\end{lemma}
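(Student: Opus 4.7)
The plan is to reduce the Fourier integral to a classical Mellin transform of a $K$-Bessel function via an exponential change of variables. First I would unpack the definition: since $\sigma_+f_0(y)=e^{\frac{\mu+\nu+2}{2}y}\widetilde{K}_{\nu/2}(e^y)$, the substitution $t=e^y$ converts
\[
\widehat{\sigma_+f_0}(\xi)=\int_{-\infty}^{\infty}e^{-i\xi y}\sigma_+f_0(y)\,dy
\]
into a Mellin-type integral
\[
\widehat{\sigma_+f_0}(\xi)=\int_0^\infty t^{\frac{\mu+\nu}{2}-i\xi}\widetilde{K}_{\nu/2}(t)\,dt.
\]
Using the normalization $\widetilde{K}_\alpha(z)=(z/2)^{-\alpha}K_\alpha(z)$ (which one reads off by comparing \eqref{eq:IKBesselMinusHalf} with the classical formula $K_{1/2}(z)=\sqrt{\pi/(2z)}\,e^{-z}$), the integrand is rewritten as $2^{\nu/2}t^{\mu/2-i\xi}K_{\nu/2}(t)$.

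Next I would invoke the standard Mellin transform identity
\[
\int_0^\infty t^{s-1}K_\alpha(t)\,dt=2^{s-2}\,\Gamma\!\left(\tfrac{s+\alpha}{2}\right)\Gamma\!\left(\tfrac{s-\alpha}{2}\right),\qquad \operatorname{Re}s>|\operatorname{Re}\alpha|,
\]
with $s=\tfrac{\mu}{2}+1-i\xi$ and $\alpha=\tfrac{\nu}{2}$. Convergence has to be verified, i.e.\ $\tfrac{\mu}{2}+1>\tfrac{|\nu|}{2}$; this is immediate from Lemma \ref{lem:MuNuProperties}, since $\mu\geq\nu$ settles the case $\nu\geq 0$, and $\mu+\nu\geq-1>-2$ settles the case $\nu<0$. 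Collecting the powers of $2$ then gives exactly
\[
2^{\nu/2}\cdot 2^{s-2}=2^{\frac{\mu+\nu-2}{2}-i\xi},
\]
and the two Gamma factors come out as $\Gamma\bigl(\tfrac{\mu+\nu+2}{4}-\tfrac{i\xi}{2}\bigr)\Gamma\bigl(\tfrac{\mu-\nu+2}{4}-\tfrac{i\xi}{2}\bigr)$, matching the asserted formula.

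For the non-vanishing statement, the Gamma function has no zeros, so it suffices to note that neither argument of the Gamma factors can ever be a pole along $\xi\in\RR$: the real parts $\tfrac{\mu+\nu+2}{4}$ and $\tfrac{\mu-\nu+2}{4}$ are both strictly positive by Lemma \ref{lem:MuNuProperties}, keeping us well away from the non-positive integers where the poles sit. The argument is thus almost entirely bookkeeping; the only delicate point is to fix the normalization constant in $\widetilde{K}_\alpha$ correctly and to check the Mellin convergence range against the constraints on $(\mu,\nu)$ provided by the Jordan algebra structure, both of which are settled by invoking \eqref{eq:IKBesselMinusHalf} and Lemma \ref{lem:MuNuProperties}.
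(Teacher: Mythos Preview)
Your proof is correct and follows essentially the same route as the paper: substitute $t=e^y$ to turn the Fourier integral into a Mellin integral of the $K$-Bessel function, then apply the Mellin transform formula and check non-vanishing via Lemma~\ref{lem:MuNuProperties}. The only cosmetic difference is that the paper invokes \eqref{eq:KBesselMellinTransform}, which is already stated for the normalized $\widetilde{K}_\alpha$, so your intermediate step of unwrapping to the unnormalized $K_\alpha$ is not needed.
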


\begin{proof}
With the formula \eqref{eq:KBesselMellinTransform} for the Mellin transform of the $K$-Bessel function we calculate
\begin{align*}
 \widehat{\sigma_+f_0}(\xi) &= \int_{-\infty}^\infty{e^{-ix\xi}(\sigma_+f_0)(x)\td x}\\
 &= \int_{-\infty}^\infty{e^{(\frac{\mu+\nu+2}{2}-i\xi)x}\widetilde{K}_{\frac{\nu}{2}}(e^x)\td x}\\
 &= \int_0^\infty{s^{\frac{\mu+\nu}{2}-i\xi}\widetilde{K}_{\frac{\nu}{2}}(s)\td s}\\
 &= 2^{\frac{\mu+\nu-2}{2}-i\xi}\Gamma\left(\frac{\mu+\nu+2}{4}-\frac{1}{2}i\xi\right)\Gamma\left(\frac{\mu-\nu+2}{4}-\frac{1}{2}i\xi\right).
\end{align*}
Since $\mu+\nu\geq-1$ and $\mu-\nu\geq0$ by Lemma \ref{lem:MuNuProperties} this defines a function on the whole real axis $\RR$ which vanishes nowhere.
\end{proof}
\chapter{Generalized Laguerre functions}\label{ch:GenLagFct}

We consider the ordinary fourth order differential operator
\begin{align*}
 \calD_{\mu,\nu} &= \frac{1}{x^2}\left((\theta+\mu+\nu)(\theta+\mu)-x^2\right)\left(\theta(\theta+\nu)-x^2\right),\index{notation}{Dmunu@$\calD_{\mu,\nu}$}
\end{align*}
depending on two complex parameters $\mu,\nu\in\CC$. Here $\theta=x\frac{\td}{\td x}$. In Theorem \ref{thm:CasimirAction} it was proved that for $(\mu,\nu)\in\Xi$, the operator $\calD_{\mu,\nu}$ is the radial part of the action of the $\frakk$-Casimir on the minimal representation of a simple Lie group $\check{G}$ (see Section \ref{sec:MuNu} for the definition of $\Xi$). In this case, the operator extends to a self-adjoint operator on $L^2(\RR_+,x^{\mu+\nu+1}\td x)$ with discrete spectrum given by $\{4j(j+\mu+1):j\in\NN_0\}$ (see Corollary \ref{cor:DmunuSASpectrum}).

In this chapter we explicitly construct the $L^2$-eigenfunctions of $\calD_{\mu,\nu}$. Furthermore, for an odd integer $\mu>0$ and generic $\nu\in\CC$, we find a fundamental system $\Lambda_{i,j}^{\mu,\nu}(x)$, $i=1,2,3,4$, of solutions to the fourth order equation
\begin{align*}
 \calD_{\mu,\nu}u &= 4j(j+\mu+1)u
\end{align*}
for every $j\in\NN_0$. We prove various properties for the functions $\Lambda_{i,j}^{\mu,\nu}(x)$ such as asymptotics, integral formulas and recurrence relations. Finally, we relate the $L^2$-eigenfunctions $\Lambda_{2,j}^{\mu,\nu}(x)$ to the minimal representation of $\check{G}$ as constructed in Section \ref{sec:MinRepConstruction}. This gives explicit expressions of $\frakk$-finite vectors in the representation. On the other hand, results from representation theory also provide simple proofs for statements on the $L^2$-eigenfunctions $\Lambda_{2,j}^{\mu,\nu}(x)$, such as orthogonality relations, completeness in $L^2(\RR_+,x^{\mu+\nu+1}\td x)$ or integral formulas.

Most results of this chapter are published in \cite{HKMM09a} and \cite{HKMM09b}. There only the minimal representation of $O(p+1,q+1)$ is used and hence the class of parameters is more restrictive than in this chapter.

\section{The fourth order differential operator $\calD_{\mu,\nu}$}\label{sec:DiffOp}

In this section we collect basic properties of the fourth order differential operator $\calD_{\mu,\nu}$.

\begin{proposition}\label{prop:DiffOpProperties}
\begin{enumerate}
 \item[\textup{(1)}] $\calD_{\nu,\mu}=\calD_{\mu,\nu}+(\mu-\nu)(\mu+\nu+2)$.
 \item[\textup{(2)}] $\calD_{\mu,\nu}u=\lambda u$ is a differential equation with regular singularity at $x=0$. The characteristic exponents are $0,-\mu,-\nu,-\mu-\nu$.
 \item[\textup{(3)}] If $\mu,\nu\in\RR$, then $\calD_{\mu,\nu}$ is a symmetric unbounded operator on the Hilbert space $L^2(\mathbb{R}_+,x^{\mu+\nu+1}\td x)$.
 \item[\textup{(4)}] If $(\mu,\nu)\in\Xi$, then $\calD_{\mu,\nu}$ extends to a self-adjoint operator on $L^2(\mathbb{R}_+,x^{\mu+\nu+1}\td x)$ with discrete spectrum given by $\{4j(j+\mu+1):j\in\NN_0\}$. Furthermore, every $L^2$-eigenspace is one-dimensional.
 \item[\textup{(5)}] In the special cases where $\nu=\pm1$ the differential operator $\calD_{\mu,\nu}$ collapses to
  \begin{equation*}
   \calD_{\mu,\pm1} = \mathcal{S}_{\mu,\pm1}^2 - (\mu+1)^2,
  \end{equation*}
  where\index{notation}{Smupm1@$\calS_{\mu,\pm1}$}
  \begin{align*}
   \mathcal{S}_{\mu,-1} &:= \frac{1}{x}\left(\theta(\theta+\mu)-x^2\right),\\
   \mathcal{S}_{\mu,+1} &:= \frac{1}{x}\left(\theta(\theta+\mu+2)+\mu+1-x^2\right).
  \end{align*}
\end{enumerate}
\end{proposition}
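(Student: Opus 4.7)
My strategy rests on two commutation identities: $\theta\,x^{\pm 2}=x^{\pm 2}(\theta\pm 2)$ (from $\theta=x\,d/dx$), and, on $L^2(\RR_+,x^{\mu+\nu+1}\,dx)$, the adjoint formula $\theta^{*}=-\theta-(\mu+\nu+2)$, which follows from one integration by parts on $C_c^\infty(\RR_+)$. With these in hand, parts (1), (2), and (5) reduce to direct computation. For (1), writing $U=(\theta+\mu+\nu)(\theta+\mu)$ and $V=\theta(\theta+\nu)$, I would expand
\begin{equation*}
 \calD_{\mu,\nu} \;=\; x^{-2}UV - [U(\theta+2)+V] + x^2;
\end{equation*}
since $UV$ and $x^2$ are symmetric under $\mu\leftrightarrow\nu$, only the middle bracket differs from the corresponding expression for $\calD_{\nu,\mu}$, and the difference collapses algebraically to the constant $(\mu-\nu)(\mu+\nu+2)$. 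For (2), multiplying by $x^2$ gives the Fuchsian form $[UV - x^2(U(\theta+2)+V) + x^4 - \lambda x^2]u = 0$, and reading off the $x$-independent indicial polynomial $s(s+\mu)(s+\nu)(s+\mu+\nu)$ yields the four characteristic exponents $0,-\mu,-\nu,-\mu-\nu$.

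The genuine content is part (3). My plan is to establish the factorization
\begin{equation*}
 \calD_{\mu,\nu} \;=\; A^{*}\,x^{-2}\,A \;=\; (x^{-1}A)^{*}(x^{-1}A), \qquad A:=\theta(\theta+\nu)-x^2,
\end{equation*}
from which symmetry (in fact non-negativity) on $C_c^\infty(\RR_+)$ is immediate via $\langle\calD_{\mu,\nu}f,g\rangle=\langle x^{-1}Af,\,x^{-1}Ag\rangle$. To justify it I would (i) compute $A^{*}=(\theta+\mu+2)(\theta+\mu+\nu+2)-x^2$ directly from $\theta^{*}=-\theta-(\mu+\nu+2)$ and the self-adjointness of $x^2$, and (ii) apply $x^2(\theta+c)x^{-2}=\theta+c-2$ factor-by-factor to obtain $x^2A^{*}x^{-2}=(\theta+\mu)(\theta+\mu+\nu)-x^2$, which is precisely the left factor of $\calD_{\mu,\nu}=x^{-2}[(\theta+\mu+\nu)(\theta+\mu)-x^2]\,A$; substituting back gives $\calD_{\mu,\nu}=x^{-2}(x^2A^{*}x^{-2})A=A^{*}x^{-2}A$.

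Part (4) is precisely Corollary~\ref{cor:DmunuSASpectrum} applied to $(\mu,\nu)\in\Xi$ and needs no independent argument. For (5), I would set $\nu=\pm 1$, expand $\mathcal{S}_{\mu,\pm 1}^{2}$ by using $\theta x^{-1}=x^{-1}(\theta-1)$ to pull the interior $x^{-1}$ through the $\theta$-polynomial, and compare the result with $\calD_{\mu,\pm 1}$; the two expressions agree except for an additive constant, and a short coefficient count identifies that constant as $-(\mu+1)^2$. The principal obstacle is guessing the factorization in part (3): once the form $A^{*}x^{-2}A$ is proposed the verification is short, but the key identity $x^2A^{*}x^{-2}=(\theta+\mu+\nu)(\theta+\mu)-x^2$ depends delicately on the weight exponent $\mu+\nu+1$ matching the $\theta$-shifts inside $A$, so one has to be careful not to conflate the roles of $\mu$ and $\nu$ in computing $A^{*}$ nor to misplace the shift in $x^2(\theta+c)x^{-2}$.
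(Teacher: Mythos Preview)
Your proposal is correct. Parts (1), (2), (4), and (5) follow essentially the same route as the paper: the paper also expands $\calD_{\mu,\nu}$ into the form
\[
 \calD_{\mu,\nu}=\frac{1}{x^2}\theta(\theta+\mu)(\theta+\nu)(\theta+\mu+\nu)+x^2-2\Bigl(\theta^2+(\mu+\nu+2)\theta+\tfrac{(\mu+2)(\mu+\nu+2)}{2}\Bigr),
\]
reads off (1) and the indicial polynomial for (2) from this, cites Corollary~\ref{cor:DmunuSASpectrum} for (4), and dismisses (5) as a simple computation. Your expansion $x^{-2}UV-[U(\theta+2)+V]+x^2$ is the same identity written more compactly.

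Where you genuinely differ is part (3). The paper simply uses $\theta^*=-\theta-(\mu+\nu+2)$ and checks directly from the expanded form above that $\calD_{\mu,\nu}^*=\calD_{\mu,\nu}$. Your factorization $\calD_{\mu,\nu}=A^*x^{-2}A=(x^{-1}A)^*(x^{-1}A)$ with $A=\theta(\theta+\nu)-x^2$ is a sharper argument: it gives symmetry for free and additionally shows that $\calD_{\mu,\nu}\ge 0$ on $C_c^\infty(\RR_+)$, which is consistent with the spectrum $\{4j(j+\mu+1):j\in\NN_0\}$ in (4) but is not something the paper's direct verification yields. The cost is that one must spot the factorization; the paper's approach is more pedestrian but requires no insight beyond computing adjoints term by term.
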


\begin{proof}
\begin{enumerate}
\item[\textup{(1)}] A simple computation shows that
 \begin{multline}
  \calD_{\mu,\nu} = \frac{1}{x^2}\theta(\theta+\mu)(\theta+\nu)(\theta+\mu+\nu)+x^2\\
  -2\left(\theta^2+(\mu+\nu+2)\theta+\frac{(\mu+2)(\mu+\nu+2)}{2}\right),\label{eq:DiffOp2}
 \end{multline}
 whence $\calD_{\nu,\mu}=\calD_{\mu,\nu}+(\mu-\nu)(\mu+\nu+2)$.
\item[\textup{(2)}] It follows from \eqref{eq:DiffOp2} that
 \begin{equation*}
  x^2(\calD_{\mu,\nu}-\lambda)\equiv\theta(\theta+\mu)(\theta+\nu)(\theta+\mu+\nu)\ \ \ \ \ (\textup{mod }x\cdot\mathbb{C}[x,\theta]),
 \end{equation*}
 where $\mathbb{C}[x,\theta]$ denotes the left $\mathbb{C}[x]$-module generated by $1,\theta,\theta^2,\ldots$ in the Weyl algebra $\mathbb{C}[x,\frac{\td}{\td x}]$. Therefore, the differential equation $\calD_{\mu,\nu}u=\lambda u$ has a regular singularity at $x=0$, and its characteristic equation is given by
 \begin{equation*}
  s(s+\mu)(s+\nu)(s+\mu+\nu)=0.
 \end{equation*}
 Hence the second statement is proved.
\item[\textup{(3)}] The formal adjoint of $\theta$ on $L^2(\mathbb{R}_+,x^{\mu+\nu+1}\td x)$ is given by
 \begin{equation*}
  \theta^*=-\theta-(\mu+\nu+2).
 \end{equation*}
With this it is easily seen from the expression \eqref{eq:DiffOp2} that $\calD_{\mu,\nu}$ is a symmetric operator on the same Hilbert space.
\item[\textup{(4)}] This  statement is simply Corollary \ref{cor:DmunuSASpectrum}.
\item[\textup{(5)}] A simple computation.\qedhere
\end{enumerate}
\end{proof}

\begin{remark}
It is likely that $\calD_{\mu,\nu}$ is still self-adjoint on $L^2(\RR_+,x^{\mu+\nu+1}\td x)$ without assuming that $(\mu,\nu)\in\Xi$. For example, for $\nu=\pm1$ and arbitrary $\mu>-1$ we construct $L^2$-eigenfunctions $\Lambda_{2,j}^{\mu,\pm1}$ of $\calD_{\mu,\pm1}$ which are essentially Laguerre polynomials (see Corollary \ref{cor:SpecialValue} and Remark \ref{rem:SpecialValue2}). Hence, they form a basis of the corresponding $L^2$-space and it follows that $\calD_{\mu,\pm1}$ is self-adjoint with discrete spectrum. However, our proof of self-adjointness uses unitary representation theory and involves the condition $(\mu,\nu)\in\Xi$ in a crucial way.
\end{remark}

\section{The generating functions $G_i^{\mu,\nu}(t,x)$}\label{sec:GenFct}

To determine eigenfunctions of the operator $\calD_{\mu,\nu}$ we define the following generating functions $G_i^{\mu,\nu}(t,x)$, $i=1,2,3,4$:\index{notation}{Gimunutx@$G_i^{\mu,\nu}(t,x)$}
\begin{align}
 G_1^{\mu,\nu}(t,x) &:= \frac{1}{(1-t)^{\frac{\mu+\nu+2}2}}\widetilde{I}_{\frac{\mu}{2}}\left(\frac{tx}{1-t}\right)\widetilde{I}_{\frac{\nu}{2}}\left(\frac{x}{1-t}\right),\label{eq:G1}\\
 G_2^{\mu,\nu}(t,x) &:= \frac{1}{(1-t)^{\frac{\mu+\nu+2}2}}\widetilde{I}_{\frac{\mu}{2}}\left(\frac{tx}{1-t}\right)\widetilde{K}_{\frac{\nu}{2}}\left(\frac{x}{1-t}\right),\label{eq:G2}\\
 G_3^{\mu,\nu}(t,x) &:= \frac{1}{(1-t)^{\frac{\mu+\nu+2}2}}\widetilde{K}_{\frac{\mu}{2}}\left(\frac{tx}{1-t}\right)\widetilde{I}_{\frac{\nu}{2}}\left(\frac{x}{1-t}\right),\label{eq:G3}\\
 G_4^{\mu,\nu}(t,x) &:= \frac{1}{(1-t)^{\frac{\mu+\nu+2}2}}\widetilde{K}_{\frac{\mu}{2}}\left(\frac{tx}{1-t}\right)\widetilde{K}_{\frac{\nu}{2}}\left(\frac{x}{1-t}\right).\label{eq:G4}
\end{align}
Here $\widetilde{I}_\alpha(z)$ and $\widetilde{K}_\alpha(z)$ denote the normalized $I$- and $K$-Bessel functions (see Appendix \ref{app:BesselFunctions} for the definition).

Let us state the differential equations for the generating functions which we will make use of later.

\begin{lemma}[Differential equations for the generating functions]\label{lem:GenFctPDEs}
The functions $G_i^{\mu,\nu}(t,x)$, $i=1,2,3,4$, satisfy the following three differential equations:
\begin{enumerate}
 \item[\textup{(1)}] The fourth order partial differential equation
  \begin{equation*}
   \left(\calD_{\mu,\nu}\right)_xu(t,x)=4\theta_t(\theta_t+\mu+1)u(t,x).
  \end{equation*}
 \item[\textup{(2)}] The second order partial differential equation
  \begin{multline*}
   (2\theta_t+\mu+1)\left(\theta_x+\frac{\mu+\nu+2}{2}\right)u(t,x)\\
   =\left(\frac{1}{t}\theta_t(\theta_t+\mu) - t\left(\theta_t+\frac{\mu+\nu+2}{2}\right)\left(\theta_t+\frac{\mu-\nu+2}{2}\right)\right)u(t,x).
  \end{multline*}
 \item[\textup{(3)}] The fifth order ordinary differential equation in $t$
  \begin{align*}
   & 8x^2\left(\theta_t+\frac{\mu-1}{2}\right)\left(\theta_t+\frac{\mu+1}{2}\right)\left(\theta_t+\frac{\mu+3}{2}\right)u(t,x)\\
   &\ \ \ \ \ =\left[\frac{2}{t^2}\theta_t\left(\theta_t-1\right)\left(\theta_t+\mu-1\right)\left(\theta_t+\mu\right)\left(\theta_t+\frac{\mu-5}{2}\right)\right.\\
   &\ \ \ \ \ \ \ \ \ \ - \frac{8}{t}\theta_t\left(\theta_t+\mu\right)\left(\theta_t+\frac{\mu-3}{2}\right)\left(\theta_t+\frac{\mu}{2}\right)\left(\theta_t+\frac{\mu+1}{2}\right)\\
   &\ \ \ \ \ \ \ \ \ \ + 2\left(\theta_t+\frac{\mu+1}{2}\right)\left(a\theta_t^4+b\theta_t^3+c\theta_t^2+d\theta_t+e\right)
  \end{align*}
  \begin{align*}
   &\ \ \ \ \ \ \ \ \ \ - 8t\left(\theta_t+\frac{\mu+1}{2}\right)\left(\theta_t+\frac{\mu+2}{2}\right)\left(\theta_t+\frac{\mu+5}{2}\right)\\
   &\ \ \ \ \ \ \ \ \ \ \ \ \ \ \ \ \ \ \ \ \ \ \ \ \ \ \ \ \ \ \times\left(\theta_t+\frac{\mu+\nu+2}{2}\right)\left(\theta_t+\frac{\mu-\nu+2}{2}\right)\\
   &\ \ \ \ \ \ \ \ \ \ + 2t^2\left(\theta_t+\frac{\mu+7}{2}\right)\left(\theta_t+\frac{\mu+\nu+2}{2}\right)\left(\theta_t+\frac{\mu-\nu+2}{2}\right)\\
   &\ \ \ \ \ \ \ \ \ \ \ \ \ \ \ \ \ \ \ \ \ \ \ \ \ \ \ \ \ \times\left.\left(\theta_t+\frac{\mu+\nu+4}{2}\right)\left(\theta_t+\frac{\mu-\nu+4}{2}\right)\right]u(t,x),
  \end{align*}
  where we set
  \begin{align*}
   a &= 6,\\
   b &= 12(\mu+1),\\
   c &= \frac{1}{2}(17\mu^2-\nu^2+36\mu+8),\\
   d &= \frac{1}{2}(\mu+1)(5\mu^2-\nu^2+12\mu-4),\\
   e &= \frac{1}{4}(\mu-1)(\mu+2)(\mu+\nu+2)(\mu-\nu+2).
  \end{align*}
\end{enumerate}
\end{lemma}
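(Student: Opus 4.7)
The plan is to reduce all three identities to the two basic facts about the normalized Bessel functions: both $\widetilde{I}_{\alpha}(y)$ and $\widetilde{K}_{\alpha}(y)$ satisfy the modified Bessel equation $(\theta_y(\theta_y+2\alpha) - y^2) f = 0$ from \eqref{eq:DiffEqModBessel}, and they obey the differential recurrences of \eqref{eq:BesselDiffFormulas}. Since each $G_i^{\mu,\nu}(t,x)$ has the form $(1-t)^{-(\mu+\nu+2)/2} F(s) H(r)$ with $s := tx/(1-t)$, $r := x/(1-t)$, and $F \in \{\widetilde{I}_{\mu/2}, \widetilde{K}_{\mu/2}\}$, $H \in \{\widetilde{I}_{\nu/2}, \widetilde{K}_{\nu/2}\}$, it suffices to carry out the computation once, using only that $F, H$ solve the respective Bessel equations; the same argument covers all four generating functions simultaneously.

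I would begin by recording the chain-rule dictionary. At fixed $t$ one has $\theta_x s = s$ and $\theta_x r = r$, so $\theta_x$ acts on $F(s)H(r)$ as $\theta_s + \theta_r$. At fixed $x$ a direct calculation gives $t\,\partial_t s = s/(1-t)$ and $t\,\partial_t r = tr/(1-t)$, so $\theta_t$ acts on $F(s)H(r)$ as $(1-t)^{-1}(\theta_s + t\theta_r)$; the prefactor $(1-t)^{-(\mu+\nu+2)/2}$ contributes an additional scalar shift $(\mu+\nu+2)t/(2(1-t))$. Modulo the Bessel equations $\theta_s(\theta_s+\mu)F = s^2 F$ and $\theta_r(\theta_r+\nu)H = r^2 H$, any polynomial in $\theta_t, \theta_x$ applied to $G_i$ can therefore be reduced to a polynomial in $\theta_s, \theta_r$ of degree at most one in each, with coefficients rational in $t$ and $x^2$.

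Equation (1) then follows by applying the two second-order factors of $\calD_{\mu,\nu}$ successively: expanding $\theta_x = \theta_s + \theta_r$, using the Bessel equations to eliminate $\theta_s(\theta_s+\mu)$ and $\theta_r(\theta_r+\nu)$ whenever they appear, and combining the residual $x^2$ and $r^2, s^2$ terms via $x^2 = (1-t)^2 r^2 = (1-t)^2 s^2/t^2$, the expression $x^2\calD_{\mu,\nu} G_i$ becomes a form which the dictionary identifies with $4 x^2 \theta_t(\theta_t+\mu+1) G_i$. Equation (2) is a lower-order variant of the same argument, additionally invoking the recurrences of \eqref{eq:BesselDiffFormulas} to convert $\theta_s F$ and $\theta_r H$ into shifted Bessel functions and back.

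The main obstacle is the fifth-order ODE (3). My plan is to eliminate the $x$-derivatives from (1) using (2): on the formal power series expansion $G_i = \sum_j \Lambda_{i,j}^{\mu,\nu}(x)\,t^j$ the operator $(2\theta_t+\mu+1)$ is invertible for generic $\mu$, so (2) determines $(\theta_x+(\mu+\nu+2)/2) G_i$ as a rational combination of $\theta_t^k G_i$. Iterating and substituting into (1) eliminates the $x$-derivatives in favor of $t$-derivatives, and the apparent fifth order arises from commuting the intermediate operators through the $t$ and $t^{-1}$ factors that appear in the right-hand side of (2). The hard part is the bookkeeping needed to extract the exact coefficient polynomials $a, b, c, d, e$; I expect this to be a lengthy but mechanical verification.
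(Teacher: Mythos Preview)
Your setup for (1) and (2) is exactly what the paper does: the proof in the paper is a one-line ``straightforward verification using the definition of $G_i^{\mu,\nu}(t,x)$ and the differential equation \eqref{eq:DiffEqModBesselTheta}'', and your chain-rule dictionary (with $\theta_x=\theta_s+\theta_r$ and $\theta_t=(1-t)^{-1}(\theta_s+t\theta_r)$ plus the scalar shift from the prefactor) is the correct way to unpack that. One small remark: for (2) you do not actually need the recurrences \eqref{eq:BesselDiffFormulas}; the Bessel ODE \eqref{eq:DiffEqModBesselTheta} alone suffices, just as in (1).

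For (3) you diverge from the paper. The paper treats (3) on the same footing as (1) and (2): a direct verification using the dictionary and the Bessel equation, with no reference to (1) or (2). Your elimination idea---use (2) to solve for $\theta_x$ in terms of $\theta_t$, iterate, and substitute into (1)---is a legitimate heuristic for \emph{discovering} the identity, and it explains structurally why (3) is an ODE in $t$ alone with $x^2$ as a parameter. But as a proof it is more laborious than necessary: inverting $(2\theta_t+\mu+1)$ on the coefficient level, commuting the resulting operators past the $t^{\pm1}$ factors, and then clearing denominators introduces a layer of noncommutative bookkeeping that the direct approach avoids entirely. Since you already have the dictionary in hand, it is simpler (and closer to the paper) to translate both sides of (3) into polynomials in $\theta_s,\theta_r$ modulo the two Bessel relations and compare; that is the same mechanical computation, without the operator-inversion detour.
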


\begin{proof}
The proof consists of straightforward verifications using the definition of $G_i^{\mu,\nu}(t,x)$ and the differential equation \eqref{eq:DiffEqModBesselTheta} for the $I$- and $K$-Bessel functions $\widetilde{I}_\alpha(z)$, $\widetilde{K}_\alpha(z)$.
\end{proof}

We also need three recurrence relations for the functions $G_i^{\mu,\nu}(t,x)$. To state the formulas in a uniform way we put\index{notation}{deltaofi@$\delta(i)$}\index{notation}{epsilonofi@$\varepsilon(i)$}
\begin{align}
 \delta(i) &= \left\{\begin{array}{ll}+1&\mbox{for $i=1,2$,}\\-1&\mbox{for $i=3,4$,}\end{array}\right. & \varepsilon(i) &= \left\{\begin{array}{ll}+1&\mbox{for $i=1,3$,}\\-1&\mbox{for $i=2,4$.}\end{array}\right.
 \label{eq:DelEps}
\end{align}

\begin{lemma}[Recurrence relations for the generating functions]\label{lem:GenFctRecRels}
The functions $G_i^{\mu,\nu}(t,x)$, $i=1,2,3,4$, satisfy the following three recurrence relations:
\begin{enumerate}
 \item[\textup{(1)}] The recurrence relation in $\mu$
  \begin{equation*}
   \mu(1-t)G_i^{\mu,\nu}(t,x) = 2\delta(i)\left(G_i^{\mu-2,\nu}(t,x)-\left(\frac{tx}{2}\right)^2 G_i^{\mu+2,\nu}(t,x)\right).
  \end{equation*}
 \item[\textup{(2)}] The recurrence relation in $\nu$
  \begin{equation*}
   \nu(1-t)G_i^{\mu,\nu}(t,x) = 2\varepsilon(i)\left(G_i^{\mu,\nu-2}(t,x)-\left(\frac{x}{2}\right)^2 G_i^{\mu,\nu+2}(t,x)\right).
  \end{equation*}
 \item[\textup{(3)}] The recurrence relation in $\mu$ and $\nu$
  \begin{equation*}
   (1-t)\frac{\td}{\td x}G_i^{\mu,\nu}(t,x) = \delta(i)\frac{t^2x}{2}G_i^{\mu+2,\nu} + \varepsilon(i)\frac{x}{2}G_i^{\mu,\nu+2}.
  \end{equation*}
\end{enumerate}
\end{lemma}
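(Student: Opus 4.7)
All three identities reduce to applying two well-known facts about the normalized Bessel functions used to define $G_i^{\mu,\nu}$, namely the three-term recurrence in the index and the differential recurrence rule. For the $I$-function one has
\begin{align*}
 \alpha\,\widetilde{I}_\alpha(z) &= \widetilde{I}_{\alpha-1}(z)-\left(\tfrac{z}{2}\right)^2\widetilde{I}_{\alpha+1}(z), & \widetilde{I}_\alpha'(z) &= \tfrac{z}{2}\,\widetilde{I}_{\alpha+1}(z),
\end{align*}
and the corresponding formulas for $\widetilde{K}_\alpha$ carry the opposite sign in each case:
\begin{align*}
 \alpha\,\widetilde{K}_\alpha(z) &= -\widetilde{K}_{\alpha-1}(z)+\left(\tfrac{z}{2}\right)^2\widetilde{K}_{\alpha+1}(z), & \widetilde{K}_\alpha'(z) &= -\tfrac{z}{2}\,\widetilde{K}_{\alpha+1}(z).
\end{align*}
Both pairs are easy consequences of the classical identities for $I_\alpha, K_\alpha$ together with the normalization $\widetilde{I}_\alpha(z)=(z/2)^{-\alpha}I_\alpha(z)$, $\widetilde{K}_\alpha(z)=(z/2)^{-\alpha}K_\alpha(z)$ recalled in Appendix \ref{app:BesselFunctions}. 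The signs $\delta(i)$ and $\varepsilon(i)$ appearing in the lemma are exactly the signs produced by these two sets of formulas, depending on whether the first (resp.\ second) Bessel factor in $G_i^{\mu,\nu}$ is of type $\widetilde{I}$ or of type $\widetilde{K}$.

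For (1), I would apply the three-term recurrence to the \emph{first} Bessel factor of $G_i^{\mu,\nu}$ with $\alpha=\mu/2$ and $z=tx/(1-t)$, leaving the second factor and the prefactor $(1-t)^{-(\mu+\nu+2)/2}$ untouched. A short bookkeeping of the powers of $(1-t)$ shows that the terms $\widetilde{I}_{\mu/2\pm1}$ (or $\widetilde{K}_{\mu/2\pm1}$) that appear on the right-hand side reassemble, after multiplying by $(1-t)$, precisely into $G_i^{\mu\mp2,\nu}$ with the stated coefficients; the squared argument $(z/2)^2=(tx/2)^2/(1-t)^2$ produces the factor $(tx/2)^2$ in front of $G_i^{\mu+2,\nu}$. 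Identity (2) is proved in exactly the same manner, using the second factor instead, with $\alpha=\nu/2$ and $z=x/(1-t)$ (this time the squared argument is just $(x/2)^2/(1-t)^2$, explaining the absence of the $t^2$ factor).

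For (3), I would differentiate $G_i^{\mu,\nu}(t,x)$ in $x$ by the product rule. Both Bessel factors contribute via the differential recurrence above; each contribution raises the corresponding index by one and brings out a factor $\delta(i)\,t/(1-t)$ or $\varepsilon(i)/(1-t)$ times $(x/2)$ or $(tx/2)$, respectively (the additional $t$ coming from the chain rule applied to $tx/(1-t)$). After collecting powers of $(1-t)$, the two terms are recognized as $\tfrac{1}{1-t}G_i^{\mu+2,\nu}$ and $\tfrac{1}{1-t}G_i^{\mu,\nu+2}$, and multiplying through by $(1-t)$ yields the stated formula.

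The only real care is in keeping track of the powers of $1-t$ and of the factor $2$ hidden in $(z/2)^2$; there is no substantial obstacle, since once the Bessel identities are in hand, the proof is a direct substitution with no cancellation beyond what is automatic.
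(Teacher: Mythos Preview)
Your proposal is correct and follows exactly the same approach as the paper's proof: the paper simply cites the Bessel recurrence relations \eqref{eq:IBesselRecRel} and \eqref{eq:KBesselRecRel} for parts (1) and (2), and the differential recurrence \eqref{eq:BesselDiffFormulas} for part (3). Your write-up is in fact more detailed than the paper's, which leaves all the bookkeeping of powers of $(1-t)$ to the reader.
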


\begin{proof}
\begin{description}
 \item[\normalfont(1) and (2):] Use the recurrence relations \eqref{eq:IBesselRecRel} and \eqref{eq:KBesselRecRel} for the $I$- and $K$-Bessel functions .
 \item[\normalfont(3)] In view of \eqref{eq:BesselDiffFormulas} the equation is evident.\qedhere
 \end{description}
\end{proof}

\begin{lemma}[Local monodromy of the generating functions]\label{lem:GenFctParity}
We have the following formula for the functions $G_i^{\mu,\nu}(t,e^{-i\pi}x)$ ($i=1,2,3,4$):
\begin{equation*}
 \left(\begin{array}{c}G_1^{\mu,\nu}\\G_2^{\mu,\nu}\\G_3^{\mu,\nu}\\G_4^{\mu,\nu}\end{array}\right)(t,e^{i\pi}x) = \left(\begin{array}{cccc}1 & 0 & 0 & 0\\b_\nu & a_\nu & 0 & 0\\b_\mu & 0 & a_\mu & 0\\b_\mu b_\nu & a_\nu b_\mu & a_\mu b_\nu & a_\mu a_\nu\end{array}\right)\left(\begin{array}{c}G_1^{\mu,\nu}\\G_2^{\mu,\nu}\\G_3^{\mu,\nu}\\G_4^{\mu,\nu}\end{array}\right)(t,x)
\end{equation*}
where \index{notation}{aalpha@$a_\alpha$}\index{notation}{balpha@$b_\alpha$}
\begin{align*}
 a_\alpha &:= e^{-\alpha\pi i}, & b_\alpha &:= \frac{\Gamma(1-\frac{\alpha}{2})\Gamma(\frac{\alpha}{2})}{2}\left(e^{-\alpha\pi i}-1\right).
\end{align*}
\end{lemma}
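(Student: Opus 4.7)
My plan is to reduce everything to the monodromy of the two normalized Bessel factors appearing in \eqref{eq:G1}--\eqref{eq:G4}. The substitution $x\mapsto e^{i\pi}x$ acts only on the arguments $tx/(1-t)$ and $x/(1-t)$ of the Bessel factors (multiplying each by $e^{i\pi}$) and leaves the prefactor $(1-t)^{-(\mu+\nu+2)/2}$ untouched, so it suffices to determine $\widetilde{I}_\alpha(e^{i\pi}z)$ and $\widetilde{K}_\alpha(e^{i\pi}z)$ in terms of $\widetilde{I}_\alpha(z)$ and $\widetilde{K}_\alpha(z)$ and then substitute these back into the definitions.

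For $\widetilde{I}_\alpha$, I would simply observe that by its series expansion $\widetilde{I}_\alpha(z)=\sum_{k\ge 0}(z/2)^{2k}/(k!\,\Gamma(k+\alpha+1))$ is an entire power series in $z^2$, hence invariant under $z\mapsto e^{i\pi}z$. For $\widetilde{K}_\alpha$, I would start from the defining relation $K_\alpha(z)=\frac{\pi}{2\sin(\alpha\pi)}(I_{-\alpha}(z)-I_\alpha(z))$, apply the trivial identity $I_\alpha(e^{i\pi}z)=e^{i\alpha\pi}I_\alpha(z)$ to both summands, and then use the defining relation backwards to eliminate $I_{-\alpha}$ in favour of $K_\alpha$ and $I_\alpha$. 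A short calculation then yields
\begin{equation*}
K_\alpha(e^{i\pi}z)=e^{-i\alpha\pi}K_\alpha(z)-i\pi I_\alpha(z),
\end{equation*}
and multiplying through by $(e^{i\pi}z/2)^{-\alpha}$ to renormalise gives
\begin{equation*}
\widetilde{K}_\alpha(e^{i\pi}z)=e^{-2i\alpha\pi}\widetilde{K}_\alpha(z)-i\pi e^{-i\alpha\pi}\widetilde{I}_\alpha(z).
\end{equation*}

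With these two transformation rules in hand, I would just plug them in with $\alpha=\mu/2$ and $\alpha=\nu/2$ into \eqref{eq:G1}--\eqref{eq:G4} and collect terms. $G_1$ contains only $\widetilde{I}$-factors and is thus invariant, giving the first row $(1,0,0,0)$; in $G_2$ and $G_3$ exactly one $\widetilde{K}$-factor is affected, producing the two middle rows; and in $G_4$ both $\widetilde{K}$-factors transform, so distributing the resulting product yields the four entries of the last row. After setting $a_\alpha:=e^{-i\alpha\pi}$ and $b_\alpha:=-i\pi e^{-i\alpha\pi/2}$, the triangular matrix of the statement drops out.

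The only step that is not a purely mechanical verification is the identification of this $b_\alpha$ with $\tfrac{1}{2}\Gamma(1-\alpha/2)\Gamma(\alpha/2)(e^{-i\alpha\pi}-1)$ as stated in the lemma. This follows at once by combining Euler's reflection formula $\Gamma(1-\alpha/2)\Gamma(\alpha/2)=\pi/\sin(\alpha\pi/2)$ with the elementary identity $e^{-i\alpha\pi}-1=-2i\,e^{-i\alpha\pi/2}\sin(\alpha\pi/2)$. I therefore expect no genuine obstacle here; the one thing to watch is careful bookkeeping of the factor $(e^{i\pi}z/2)^{-\alpha}=e^{-i\alpha\pi}(z/2)^{-\alpha}$ when passing from $K_\alpha$ to $\widetilde{K}_\alpha$, which is precisely what produces the second factor $e^{-i\alpha\pi}$ in $a_\alpha=e^{-2i\alpha\pi}\cdot e^{i\alpha\pi}/e^{i\alpha\pi}$ (and analogously for $b_\alpha$).
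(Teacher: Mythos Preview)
Your approach is correct and is essentially the same as the paper's: the paper simply cites the parity formulas $\widetilde{I}_\alpha(e^{i\pi}z)=\widetilde{I}_\alpha(z)$ and $\widetilde{K}_\alpha(e^{i\pi}z)=a_{2\alpha}\widetilde{K}_\alpha(z)+b_{2\alpha}\widetilde{I}_\alpha(z)$ from its appendix and says the lemma follows immediately, whereas you re-derive those parity formulas from the definitions before substituting. Your closing parenthetical about $a_\alpha=e^{-2i\alpha\pi}\cdot e^{i\alpha\pi}/e^{i\alpha\pi}$ is garbled and should be dropped, but the substance of the argument is fine.
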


\begin{proof}
This follows immediately from the parity formulas \eqref{eq:ParityIBessel} and \eqref{eq:ParityKBessel} for the Bessel functions.
\end{proof}

\begin{remark}[Algebraic symmetries for the generating functions]
It is also easy to see that the generating functions satisfy the following algebraic symmetries
\begin{align*}
 G_i^{\mu,\nu}(t,x) &= G_i^{\nu,\mu}\left(\frac{1}{t},-x\right) & \mbox{($i=1,4$),}\\
 G_2^{\mu,\nu}(t,x) &= G_3^{\nu,\mu}\left(\frac{1}{t},-x\right).
\end{align*}
\end{remark}

\section{The eigenfunctions $\Lambda_{i,j}^{\mu,\nu}(x)$}\label{sec:EigFct}

The function $\widetilde{K}_{\frac{\mu}{2}}(\frac{tx}{1-t})$ is meromorphic near $t=0$ for a fixed $x>0$ if and only if $\mu$ is an odd integer. Therefore, we will henceforth assume the following integrality condition:
\begin{equation}
 \mbox{$\mu$ is an odd integer $\geq1$ for $i=3,4$.}\tag{IC}\label{IntCond2}
\end{equation}
Then the generating functions $G_i^{\mu,\nu}$ are meromorphic near $t=0$ and give rise to sequences $(\Lambda_{i,j}^{\mu,\nu})_{j\in\mathbb{Z}}$\index{notation}{Lambdaijmunux@$\Lambda_{i,j}^{\mu,\nu}(x)$} of functions on $\mathbb{R}_+$ as coefficients of the Laurent expansions
\begin{align}
 G_i^{\mu,\nu}(t,x) &= \sum_{j=-\infty}^\infty{\Lambda_{i,j}^{\mu,\nu}(x)t^j}, & i=1,2,3,4.\label{eq:FctDef}
\end{align}
Since $\widetilde{I}_{\frac{\alpha}{2}}(z)$ is an entire function and $\widetilde{K}_{\frac{\alpha}{2}}(z)$ has a pole of order $\alpha$ at $z=0$ if $\alpha\geq1$ is an odd integer, we immediately obtain
\begin{align*}
 \Lambda_{1,j}^{\mu,\nu} = \Lambda_{2,j}^{\mu,\nu} &= 0 && \mbox{for $j<0$,}\\
 \Lambda_{3,j}^{\mu,\nu} = \Lambda_{4,j}^{\mu,\nu} &= 0 && \mbox{for $j<-\mu$.}
\end{align*}
This allows us to calculate the functions $\Lambda_{i,j}^{\mu,\nu}$ as follows:
\begin{equation}
 \Lambda_{i,j}^{\mu,\nu}(x) = \left\{\begin{array}{ll}\displaystyle\frac{1}{j!}\left.\frac{\partial^j}{\partial t^j}\right|_{t=0}G_i^{\mu,\nu}(t,x) &\mbox{if $i=1,2$, $j\geq0$,}\\\displaystyle\frac{1}{(j+\mu)!}\left.\frac{\partial^{j+\mu}}{\partial t^{j+\mu}}\right|_{t=0}t^{\mu}G_i^{\mu,\nu}(t,x) &\mbox{if $i=3,4$, $j\geq-\mu$.}\end{array}\right.\label{eq:DefAsDerivative}
\end{equation}

\begin{example}\label{ex:3Fcts}
The functions $\Lambda_{2,j}^{\mu,\nu}(x)$ (in the $i=2$ case) will turn out to be $L^2$-eigenfunctions of $\calD_{\mu,\nu}$ and are therefore of special interest. Here are the first three functions of this series:
\begin{align*}
 \Lambda_{2,0}^{\mu,\nu}(x) ={}& \frac{1}{\Gamma(\frac{\mu+2}{2})} \widetilde{K}_{\frac{\nu}{2}}(x),\\
 \Lambda_{2,1}^{\mu,\nu}(x) ={}& \frac{1}{\Gamma(\frac{\mu+2}{2})} \left(\frac{\mu+\nu+2}{2}\widetilde{K}_{\frac{\nu}{2}}(x)+\theta\widetilde{K}_{\frac{\nu}{2}}(x)\right),\\
 \Lambda_{2,2}^{\mu,\nu}(x) ={}& \frac{1}{2\Gamma(\frac{\mu+2}{2})} \left(\frac{(\mu+\nu+2)(\mu+\nu+4)}{4}\widetilde{K}_{\frac{\nu}{2}}(x)\right.\\
 &\ \ \ \ \ \ \ \ \ \ \ \ \ \ \ \ \ \ \ \ \left.+ \frac{(\mu+3)(\mu+\nu+2)}{\mu+2}\theta\widetilde{K}_{\frac{\nu}{2}}(x) + \frac{\mu+3}{\mu+2}\theta^2\widetilde{K}_{\frac{\nu}{2}}(x)\right).
\end{align*}
\end{example}

To formulate the asymptotic behavior of the functions $\Lambda_{i,j}^{\mu,\nu}(x)$ we use the Landau symbols $\mathcal{O}$ and $o$.

\begin{theorem}\label{lem:Asymptotics}
Let $\mu\in\mathbb{C}$, $\mu\neq-1,-2,-3,\ldots$ and $\nu\in\mathbb{R}$. Assume further that $j\geq0$ if $i=1,2$ and $j\geq-\mu$ if $i=3,4$.
\begin{enumerate}
\item[\textup{(1)}] The asymptotic behavior of the functions $\Lambda_{i,j}^{\mu,\nu}(x)$ as $x\rightarrow0$ is given by
\begin{align*}
 \Lambda_{1,j}^{\mu,\nu}(x) &= \frac{(\frac{\mu+\nu+2}{2})_j}{j!\Gamma(\frac{\mu+2}{2})\Gamma(\frac{\nu+2}{2})}+o(1)\\
 \Lambda_{2,j}^{\mu,\nu}(x) &= \frac{(\frac{\mu-|\nu|+2}{2})_j}{j!\Gamma(\frac{\mu+2}{2})}\times \left\{\begin{array}{ll}\displaystyle2^{\nu-1}\Gamma\left(\frac{\nu}{2}\right)x^{-\nu}+o(x^{-\nu})&\mbox{if $\nu>0$,}\\\displaystyle-\log\left(\frac{x}{2}\right)+o\left(\log\left(\frac{x}{2}\right)\right)&\mbox{if $\nu=0$,}\\\displaystyle\frac{1}{2}\Gamma\left(-\frac{\nu}{2}\right)+o(1)&\mbox{if $\nu<0$,}\end{array}\right.\\
 \Lambda_{3,j}^{\mu,\nu}(x) &= \frac{2^{\mu-1}\Gamma(\frac{\mu}{2})(\frac{-\mu+\nu+2}{2})_{j+\mu}}{(j+\mu)!\Gamma(\frac{\nu+2}{2})}x^{-\mu}+o(x^{-\mu}),\\
 \Lambda_{4,j}^{\mu,\nu}(x) &= \frac{\Gamma(\frac{\mu}{2})(\frac{-\mu-|\nu|+2}{2})_{j+\mu}}{(j+\mu)!}\times \left\{\begin{array}{l}\displaystyle2^{\mu+\nu-2}\Gamma\left(\frac{\nu}{2}\right)x^{-\mu-\nu}+o(x^{-\mu-\nu})\\\ \ \ \ \ \ \ \ \ \ \ \ \ \ \ \ \ \ \ \ \ \ \ \ \ \ \ \ \ \ \ \ \ \ \ \ \ \ \mbox{if $\nu>0$,}\\\displaystyle-2^{\mu-1}x^{-\mu}\log\left(\frac{x}{2}\right)+o\left(x^{-\mu}\log\left(\frac{x}{2}\right)\right)\\\ \ \ \ \ \ \ \ \ \ \ \ \ \ \ \ \ \ \ \ \ \ \ \ \ \ \ \ \ \ \ \ \ \ \ \ \ \ \mbox{if $\nu=0$,}\\\displaystyle2^{\mu-2}\Gamma\left(-\frac{\nu}{2}\right)x^{-\mu}+o(x^{-\mu})\\\ \ \ \ \ \ \ \ \ \ \ \ \ \ \ \ \ \ \ \ \ \ \ \ \ \ \ \ \ \ \ \ \ \ \ \ \ \ \mbox{if $\nu<0$,}\end{array}\right.
\end{align*}
where $(a)_n=a(a+1)\cdots(a+n-1)$\index{notation}{1an@$(a)_n$} is the Pochhammer symbol.
\item[\textup{(2)}] As $x\rightarrow\infty$ we have
\begin{align*}
 \Lambda_{1,j}^{\mu,\nu}(x) &= C_{1,j}^{\mu,\nu}x^{j-\frac{\nu+1}{2}}e^x\left(1+\mathcal{O}\left(\frac{1}{x}\right)\right), & \Lambda_{3,j}^{\mu,\nu}(x) &= \mathcal{O}\left(x^{j-\frac{\nu+1}{2}}e^x\right),\\
 \Lambda_{2,j}^{\mu,\nu}(x) &= C_{2,j}^{\mu,\nu}x^{j-\frac{\nu+1}{2}}e^{-x}\left(1+\mathcal{O}\left(\frac{1}{x}\right)\right), & \Lambda_{4,j}^{\mu,\nu}(x) &= \mathcal{O}\left(x^{j-\frac{\nu+1}{2}}e^{-x}\right)
\end{align*}
with constants $C_{1,j}^{\mu,\nu},C_{2,j}^{\mu,\nu}\neq0$.
\end{enumerate}
\end{theorem}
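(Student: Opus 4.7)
\bigskip

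\noindent\textbf{Proof proposal.} The plan is to extract both asymptotics directly from the generating-function definition \eqref{eq:FctDef} and the explicit expression \eqref{eq:DefAsDerivative}, using the asymptotic behavior of the normalized Bessel functions $\widetilde{I}_\alpha$ and $\widetilde{K}_\alpha$ at $0$ and $\infty$ recorded in Appendix \ref{app:BesselFunctions}. The guiding observation is that for fixed $x>0$ the generating function $G_i^{\mu,\nu}(t,x)$ is meromorphic at $t=0$, so I can freely substitute small-$x$ or large-$x$ expansions inside $G_i^{\mu,\nu}(t,x)$, extract the $t$-coefficient, and obtain an asymptotic expansion of $\Lambda_{i,j}^{\mu,\nu}(x)$.

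For part (1), I would freeze a small neighborhood of $t=0$ and substitute the near-zero expansions
\[
 \widetilde{I}_\alpha(z) = \frac{1}{\Gamma(\alpha+1)} + O(z^2), \qquad
 \widetilde{K}_\alpha(z) \sim \begin{cases} 2^{2\alpha-1}\Gamma(\alpha)\,z^{-2\alpha} & \alpha>0, \\ -\log(z/2) & \alpha=0, \\ \tfrac12\Gamma(-\alpha) & \alpha<0, \end{cases}
\]
into the arguments $tx/(1-t)$ and $x/(1-t)$ appearing in $G_i^{\mu,\nu}$. The $x$-dependence of the leading term factors out completely, leaving an expression of the form $C_i^{\mu,\nu}(x)\,(1-t)^{-\beta_i}$ where $\beta_i\in\{\tfrac{\mu+\nu+2}{2},\tfrac{\mu-|\nu|+2}{2},\tfrac{-\mu+\nu+2}{2},\tfrac{-\mu-|\nu|+2}{2}\}$ depending on the case. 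Since $(1-t)^{-\beta}=\sum_j \frac{(\beta)_j}{j!}t^j$, the $j$-th (or $(j+\mu)$-th, in cases $i=3,4$) coefficient immediately produces the stated Pochhammer-symbol factor, and multiplying by the $x$-dependent prefactor yields the claimed leading term. The three-way split in cases $i=2,4$ reflects the qualitative change of $\widetilde{K}_{\nu/2}(z)$ near $z=0$ across $\nu=0$; in the borderline case $\nu=0$ one must keep both the constant and logarithmic contributions and verify that the logarithm survives the $t$-expansion (the condition \eqref{IntCond2} rules out the analogous borderline case $\mu=0$ for $i=3,4$).

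For part (2), I would use the large-argument asymptotics
\[
 \widetilde{I}_\alpha(x) \sim \tfrac{1}{\sqrt{2\pi}}\,2^\alpha x^{-\alpha-1/2}\,e^{x}, \qquad \widetilde{K}_\alpha(x) \sim \sqrt{\tfrac{\pi}{2}}\,2^\alpha x^{-\alpha-1/2}\,e^{-x}.
\]
The recurrence relations of Lemma \ref{lem:GenFctRecRels} together with the derivative identities \eqref{eq:BesselDiffFormulas} show by induction on $j$ that each $\Lambda_{i,j}^{\mu,\nu}(x)$ is a polynomial of degree $j$ in $\theta=x\,d/dx$, with coefficients polynomial in $x$, applied to a single Bessel function (of type $I$ or $K$ according to $\varepsilon(i)$); Example \ref{ex:3Fcts} illustrates this structure. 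Since $\theta$ acts on the leading tail $x^{-\alpha-1/2}e^{\pm x}$ as multiplication by $\pm x$ modulo lower-order terms, applying $\theta^j$ produces the advertised factor $x^{j-(\nu+1)/2}e^{\pm x}$. Non-vanishing of the leading constants $C_{1,j}^{\mu,\nu}$ and $C_{2,j}^{\mu,\nu}$ follows by tracking the top $x^j$-coefficient of the polynomial-in-$\theta$ representation: because this top coefficient comes from $j$-fold iteration of the leading term of $\theta$, it can be computed in closed form from the coefficients of the Laurent expansion of the $\mu$-factor, and is easily seen to be nonzero.

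The main obstacle will be part (1): the analysis is straightforward in structure but requires careful case-splitting according to the sign of $\nu$, and in the logarithmic case $\nu=0$ one must argue that the $\log$-term in $\widetilde{K}_{\nu/2}(x/(1-t))$ survives the extraction of the $t^j$-coefficient without cancellation against the constant piece. For part (2), the conceptual content is routine once the polynomial-in-$\theta$ representation is established, but verifying nonvanishing of $C_{i,j}^{\mu,\nu}$ for all admissible $j$ requires a small computation whose cleanest form I would derive from the generating-function identity by formally dividing out the leading exponential factor and reading off the top-order $x$-coefficient.
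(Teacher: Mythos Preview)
Your proposal is correct and follows essentially the same route as the paper. For part~(1) the paper does exactly what you describe: substitute the small-$x$ asymptotics \eqref{eq:BesselIAsymptAt0}--\eqref{eq:BesselKAsymptAt0} into $G_i^{\mu,\nu}(t,x)$, obtain a factor $(1-t)^{-\beta_i}$, and read off the Pochhammer coefficient from the binomial expansion \eqref{eq:BinomExpansion}; for part~(2) the paper likewise argues that $\Lambda_{i,j}^{\mu,\nu}$ is a linear combination of $\theta^k\widetilde I_{\nu/2}$ (resp.\ $\widetilde K_{\nu/2}$) with $0\le k\le j$ and nonzero top coefficient, then converts via \eqref{eq:BesselDiffFormulas} to $x^{2k}\widetilde I_{\nu/2+k}$ and applies \eqref{eq:BesselAsymptAtInfty}. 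The only minor discrepancy is that you invoke Lemma~\ref{lem:GenFctRecRels} for the polynomial-in-$\theta$ structure, whereas the relevant recurrence in $j$ is actually Proposition~\ref{prop:RecRelH} (the paper itself uses \eqref{eq:DefAsDerivative} directly and mentions Proposition~\ref{prop:RecRelH} as the cleaner alternative); this is a citation slip rather than a mathematical gap.
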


\begin{proof}
The basic ingredient for the proof is the asymptotic behavior of the Bessel functions which is for $x\rightarrow0$ given in \eqref{eq:BesselIAsymptAt0} and \eqref{eq:BesselKAsymptAt0} and for $x\rightarrow\infty$ in \eqref{eq:BesselAsymptAtInfty}. We also make use of the well-known expansion
\begin{equation}
 (1-t)^{-\alpha} = \sum_{j=0}^\infty{\frac{(\alpha)_j}{j!}t^j}.\label{eq:BinomExpansion}
\end{equation}
\begin{enumerate}
\item[\textup{(1)}] We show how to calculate the asymptotic behavior at $x=0$ for the functions $\Lambda_{2,j}^{\mu,\nu}(x)$ with $\nu>0$. The same method applies to the other cases.\\
Using the asymptotics \eqref{eq:BesselIAsymptAt0} and \eqref{eq:BesselKAsymptAt0} and the binomial expansion \eqref{eq:BinomExpansion} we find that
\begin{align*}
 \left.x^\nu G_2^{\mu,\nu}(t,x)\right|_{x=0} &= \frac{1}{(1-t)^{\frac{\mu+\nu+2}{2}}}\frac{1}{\Gamma(\frac{\mu+2}{2})}(2(1-t))^\nu\frac{\Gamma(\frac{\nu}{2})}{2}\\
 &= \sum_{j=0}^\infty{\frac{2^{\nu-1}\Gamma(\frac{\nu}{2})(\frac{\mu-\nu+2}{2})_j}{j!\Gamma(\frac{\mu+2}{2})}t^j}.
\end{align*}
In view of \eqref{eq:FctDef} this yields
\begin{equation*}
 \left.x^\nu\Lambda_{2,j}^{\mu,\nu}(x)\right|_{x=0} = \frac{2^{\nu-1}\Gamma(\frac{\nu}{2})(\frac{\mu-\nu+2}{2})_j}{j!\Gamma(\frac{\mu+2}{2})}.
\end{equation*}
\item[\textup{(2)}] Let us first treat the case $i=1,2$. With equation \eqref{eq:DefAsDerivative} it is easy to see that $\Lambda_{i,j}^{\mu,\nu}$ is a linear combination of terms of the form
\begin{equation*}
 \left\{\begin{array}{ll}\left(\theta^k\widetilde{I}_{\frac{\nu}{2}}\right)(x) & \mbox{for $i=1$,}\\\left(\theta^k\widetilde{K}_{\frac{\nu}{2}}\right)(x) & \mbox{for $i=2$}\end{array}\right.
\end{equation*}
with $0\leq k\leq j$ such that the coefficient for $k=j$ are non-zero. (In fact this can be seen in a more direct way from the recurrence relation in Proposition \ref{prop:RecRelH} and Example \ref{ex:3Fcts}.) Using \eqref{eq:BesselDiffFormulas} this simplifies to terms of the form
\begin{equation*}
 \left\{\begin{array}{ll}x^{2k}\widetilde{I}_{\frac{\nu}{2}+k}(x) & \mbox{for $i=1$,}\\x^{2k}\widetilde{K}_{\frac{\nu}{2}+k}(x) & \mbox{for $i=2$}\end{array}\right.
\end{equation*}
with $0\leq k\leq j$ and non-zero coefficient for $k=j$. Using \eqref{eq:BesselAsymptAtInfty} the leading term appears for $k=j$ and the asymptotics follow.\\
For $i=3,4$ equation \eqref{eq:DefAsDerivative} implies that $\Lambda_{i,j}^{\mu,\nu}$ is a linear combination of terms of the form
\begin{equation*}
 \left\{\begin{array}{ll}\displaystyle x^{k-\mu}\left(\theta^\ell\widetilde{I}_{\frac{\nu}{2}}\right)(x) & \mbox{for $i=3$,}\\\displaystyle x^{k-\mu}\left(\theta^\ell\widetilde{K}_{\frac{\nu}{2}}\right)(x) & \mbox{for $i=4$}\end{array}\right.
\end{equation*}
with $0\leq k+\ell\leq j+\mu$. Using \eqref{eq:BesselDiffFormulas} this simplifies to terms of the form
\begin{equation*}
 \left\{\begin{array}{ll}\displaystyle x^{k+2\ell-\mu}\widetilde{I}_{\frac{\nu}{2}+\ell}(x) & \mbox{for $i=3$,}\\\displaystyle x^{k+2\ell-\mu}\widetilde{K}_{\frac{\nu}{2}+\ell}(x) & \mbox{for $i=4$}\end{array}\right.
\end{equation*}
with $0\leq k+\ell\leq j+\mu$. Then again the claim follows from \eqref{eq:BesselAsymptAtInfty}.\qedhere
\end{enumerate}
\end{proof}

As an immediate consequence of Theorem \ref{lem:Asymptotics} we obtain:

\begin{corollary}\label{prop:L2functions}
If $\mu+\nu,\mu-\nu>-2$, then $\Lambda_{2,j}^{\mu,\nu}\in L^2(\mathbb{R}_+,x^{\mu+\nu+1}\td x)$.
\end{corollary}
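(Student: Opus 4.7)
My plan is to prove the corollary by splitting the integral $\int_0^\infty |\Lambda_{2,j}^{\mu,\nu}(x)|^2 \, x^{\mu+\nu+1}\, dx$ into a neighborhood of $0$ and a neighborhood of $\infty$, and controlling each piece with the corresponding part of Theorem \ref{lem:Asymptotics}.

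For the integral near infinity, I would apply Theorem \ref{lem:Asymptotics}(2), which gives
\begin{equation*}
 |\Lambda_{2,j}^{\mu,\nu}(x)|^2 \, x^{\mu+\nu+1} \;=\; \calO\bigl(x^{2j+\mu}\, e^{-2x}\bigr)\qquad\text{as }x\to\infty.
\end{equation*}
The exponential decay $e^{-2x}$ dominates any polynomial factor, so the tail $\int_1^\infty$ converges without any restriction on $\mu,\nu$.

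For the integral near zero, I would invoke Theorem \ref{lem:Asymptotics}(1) and split into the three cases $\nu>0$, $\nu=0$, $\nu<0$. In the case $\nu>0$, the leading term of $\Lambda_{2,j}^{\mu,\nu}(x)$ is a nonzero multiple of $x^{-\nu}$, so $|\Lambda_{2,j}^{\mu,\nu}(x)|^2\, x^{\mu+\nu+1}$ behaves like $x^{\mu-\nu+1}$ near zero, which is integrable precisely when $\mu-\nu>-2$. In the case $\nu<0$, the leading term is a nonzero constant, giving the integrand $\sim x^{\mu+\nu+1}$, integrable iff $\mu+\nu>-2$. In the borderline case $\nu=0$, the leading term is $-\log(x/2)$, yielding an integrand $\sim(\log x)^2\, x^{\mu+1}$, which is integrable iff $\mu>-2$; but for $\nu=0$ both hypotheses $\mu\pm\nu>-2$ collapse to exactly this condition. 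In every case the two assumptions $\mu+\nu>-2$ and $\mu-\nu>-2$ together guarantee integrability near $0$.

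Combining the two estimates yields the claim. There is no real obstacle here: the entire argument is a bookkeeping exercise built on Theorem \ref{lem:Asymptotics}. The only mildly delicate point is the logarithmic case $\nu=0$, where one must observe that $(\log x)^2$ is absorbed by any positive power of $x$, so the hypothesis $\mu>-2$ still suffices; this is a standard fact about integrability of $|\log x|^k x^\alpha$ near $0$.
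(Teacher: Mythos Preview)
Your proof is correct and follows exactly the approach the paper indicates: the paper states the corollary ``as an immediate consequence of Theorem \ref{lem:Asymptotics}'' without further detail, and you have supplied precisely the routine case analysis (behavior at $0$ split by the sign of $\nu$, exponential decay at $\infty$) that makes this immediate.
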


From the explicit formulas for the leading terms of the functions $\Lambda_{i,j}^{\mu,\nu}(x)$ at $x=0$ we can draw two more important corollaries.

\begin{corollary}\label{cor:Nonzeroness}
The function $\Lambda_{i,j}^{\mu,\nu}$ is non-zero if one of the following conditions is satisfied:
\begin{itemize}
\item $i=1$ and $\mu,\nu,\mu+\nu>-2$.
\item $i=2$ and $\mu+\nu,\mu-\nu>-2$.
\item $i=3,4$, $\mu$ is a positive odd integer and $\nu>-1$ such that $\mu-\nu\notin2\mathbb{Z}$.
\end{itemize}
\end{corollary}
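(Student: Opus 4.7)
The plan is to read off, from Theorem \ref{lem:Asymptotics}(1), the explicit leading coefficient of $\Lambda_{i,j}^{\mu,\nu}(x)$ as $x\to 0$ and to verify that under each of the stated hypotheses this coefficient is nonzero. Since a function whose leading asymptotic term at $0$ is nonzero cannot vanish identically, this yields the corollary at once. In all four cases the leading coefficient has the schematic form
\begin{equation*}
 \frac{(\text{Pochhammer symbol})}{j!\cdot(\text{Gamma factors in denominator})}\cdot C(\nu),
\end{equation*}
where $C(\nu)$ is trivial for $i=1,3$ and a piecewise expression involving $\Gamma(\pm\nu/2)$ or $\log(x/2)$ for $i=2,4$. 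Non-vanishing therefore reduces to three independent checks: (i) the Pochhammer symbol $(a)_n$ is nonzero, i.e.\ $a\notin\{0,-1,\ldots,-(n-1)\}$; (ii) every Gamma factor in the denominator is finite, i.e.\ its argument is not a non-positive integer; and (iii) $C(\nu)\neq 0$.

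For $i=1$ the coefficient is $(\tfrac{\mu+\nu+2}{2})_j/(j!\,\Gamma(\tfrac{\mu+2}{2})\Gamma(\tfrac{\nu+2}{2}))$, and the three hypotheses $\mu,\nu,\mu+\nu>-2$ make $\tfrac{\mu+2}{2}$, $\tfrac{\nu+2}{2}$, $\tfrac{\mu+\nu+2}{2}$ all strictly positive, settling all checks simultaneously. For $i=2$ the conditions $\mu\pm\nu>-2$ are equivalent to $\mu-|\nu|>-2$ and in particular imply $\mu>-2$, so both the Pochhammer $(\tfrac{\mu-|\nu|+2}{2})_j$ and the denominator factor $\Gamma(\tfrac{\mu+2}{2})$ are fine; one then checks case-by-case ($\nu>0$, $\nu=0$, $\nu<0$) that $C(\nu)\neq 0$, which follows because $\Gamma(\nu/2)$ and $\Gamma(-\nu/2)$ are finite and positive on the respective subranges, and $\log(x/2)$ does not vanish identically.

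For $i=3,4$ with $\mu$ a positive odd integer, the factor $\Gamma(\mu/2)$ is automatically finite and positive. The only nontrivial potential zero comes from the Pochhammer: for $i=3$, $(\tfrac{-\mu+\nu+2}{2})_{j+\mu}$ vanishes iff $\mu-\nu$ is a positive even integer $\leq 2(j+\mu)$; for $i=4$, $(\tfrac{-\mu-|\nu|+2}{2})_{j+\mu}$ vanishes iff $\mu+|\nu|$ is a positive even integer in the same range. Since $\mu$ is odd, the former forces $\nu$ to be an odd integer, and the latter forces $|\nu|$ to be an odd integer; combined with $\nu>-1$ this means in either case $\nu\geq 1$ is a positive odd integer, whence $\mu-\nu\in 2\mathbb{Z}$, contradicting the hypothesis $\mu-\nu\notin 2\mathbb{Z}$. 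The auxiliary factor $\Gamma(\tfrac{\nu+2}{2})$ (for $i=3$) and the piecewise $C(\nu)$ (for $i=4$) are finite and nonzero throughout $\nu>-1$ by inspection of the subcases.

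The main ``obstacle'' is really only careful bookkeeping of the case split for $i=4$, where both the Pochhammer and $C(\nu)$ involve $|\nu|$ and one must track how $\nu\in\mathbb{R}$ interacts with $\mu\in 2\mathbb{Z}+1$; no deeper analytical difficulty arises, since Theorem \ref{lem:Asymptotics} already supplies all the asymptotic information needed.
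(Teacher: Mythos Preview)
Your proposal is correct and follows exactly the same approach as the paper: both argue that the explicit leading coefficients in Theorem~\ref{lem:Asymptotics}(1) are nonzero under the stated hypotheses, which immediately gives non-vanishing of the functions. Your write-up simply supplies the detailed bookkeeping (Pochhammer arguments, Gamma poles, parity considerations for $i=3,4$) that the paper leaves implicit in its one-sentence proof.
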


\begin{proof}
In each case the assumption implies that the leading coefficient at $x=0$ in Theorem \ref{lem:Asymptotics} is non-zero, so that the function itself is non-zero as well.
\end{proof}

\begin{corollary}\label{cor:LinIndependence}
Suppose $\mu$ is a positive odd integer and $\nu>0$ such that $\mu-\nu\notin2\mathbb{Z}$, then for fixed $j\in\mathbb{N}_0$ the four functions $\Lambda_{i,j}^{\mu,\nu}$, $i=1,2,3,4$, are linearly independent.
\end{corollary}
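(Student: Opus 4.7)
The plan is to exploit the fact that under the given hypotheses the four functions $\Lambda_{i,j}^{\mu,\nu}$ have strictly distinct leading behaviors at $x=0$, all with nonzero leading coefficients. Theorem \ref{lem:Asymptotics}~(1) tells us that as $x\to 0$ the four functions behave like constant multiples of $1$, $x^{-\nu}$, $x^{-\mu}$, and $x^{-\mu-\nu}$ respectively (since $\nu>0$, we are in the first branch of the formulas for $i=2,4$). So first I would check that the four exponents $0$, $-\nu$, $-\mu$, $-\mu-\nu$ are pairwise distinct. Under the hypotheses $\mu\in 2\NN_0+1$ (so $\mu>0$), $\nu>0$, and $\mu-\nu\notin 2\ZZ$ (so in particular $\mu\neq\nu$), one has $-\mu-\nu<\min(-\mu,-\nu)<\max(-\mu,-\nu)<0$, so all four exponents differ.

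Next I would verify that the leading coefficients are nonzero. This is exactly Corollary \ref{cor:Nonzeroness}: the conditions there — $\mu,\nu,\mu+\nu>-2$ for $i=1$; $\mu\pm\nu>-2$ for $i=2$; $\mu$ a positive odd integer with $\nu>-1$ and $\mu-\nu\notin 2\ZZ$ for $i=3,4$ — are all satisfied under our hypotheses. Alternatively, one can read the leading coefficients directly from Theorem \ref{lem:Asymptotics}~(1) and observe that the Pochhammer symbols and gamma values appearing there are all finite and nonzero (none of the arguments of the gammas are non-positive integers, using $\mu$ odd positive and $\nu>0$).

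Given distinct exponents and nonzero leading coefficients, linear independence then follows by a standard peeling argument. I would assume a linear relation
\begin{equation*}
  a_1\Lambda_{1,j}^{\mu,\nu}(x)+a_2\Lambda_{2,j}^{\mu,\nu}(x)+a_3\Lambda_{3,j}^{\mu,\nu}(x)+a_4\Lambda_{4,j}^{\mu,\nu}(x)=0\qquad\forall\,x>0,
\end{equation*}
multiply by $x^{\mu+\nu}$ and let $x\to 0$. All terms except the one with $i=4$ tend to $0$, while the $i=4$ term tends to a nonzero multiple of $a_4$; hence $a_4=0$. Repeating the argument with $x^{\max(\mu,\nu)}$, then $x^{\min(\mu,\nu)}$, and finally $x^0$ successively forces $a_3=a_2=a_1=0$ (the ordering of the middle two steps depends on whether $\mu>\nu$ or $\mu<\nu$, but both cases are handled identically since the exponents are distinct).

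The main obstacle — such as it is — is only bookkeeping: one has to make sure that the distinctness of exponents genuinely uses each hypothesis (the condition $\mu-\nu\notin 2\ZZ$ is what prevents the collision $\mu=\nu$, which would otherwise merge the middle two asymptotic profiles), and one has to rule out the logarithmic case of Theorem \ref{lem:Asymptotics}~(1) for $i=2,4$, which does not occur here because $\nu>0$. No deeper analysis is required; the entire argument is local at $x=0$.
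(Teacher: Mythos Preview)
Your proof is correct and follows essentially the same approach as the paper: both arguments use Theorem \ref{lem:Asymptotics}~(1) to observe that under the stated hypotheses the four functions have distinct leading exponents at $x=0$ with nonvanishing leading coefficients, and conclude linear independence from this. Your version is more explicit about the peeling argument and the role of each hypothesis, but the underlying idea is identical.
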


\begin{proof}
The assumptions imply that the leading coefficients at $x=0$ of the functions $\Lambda_{i,j}^{\mu,\nu}(x)$ in Theorem \ref{lem:Asymptotics} never vanish and that the leading terms are distinct. Hence, the asymptotic behavior near $x=0$ is different and the functions have to be linear independent.
\end{proof}

Now we can prove the main theorem of this section.

\begin{theorem}[Differential equation]\label{thm:EigFct}
For $i=1,2,3,4$, $j\in\mathbb{Z}$, the function $\Lambda_{i,j}^{\mu,\nu}$ is an eigenfunction of the fourth order differential operator $\calD_{\mu,\nu}$ for the eigenvalue $4j(j+\mu+1)$. If, in addition, $\mu$ is a positive odd integer and $\nu>0$ such that $\mu-\nu\notin2\mathbb{Z}$, then for fixed $j\in\mathbb{N}_0$ the four functions $\Lambda_{i,j}^{\mu,\nu}$, $i=1,2,3,4$, form a fundamental system of the fourth order differential equation
\begin{equation}
 \calD_{\mu,\nu}u = 4j(j+\mu+1)u.\label{eq:DiffEq}
\end{equation}
\end{theorem}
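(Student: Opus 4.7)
The plan is to read off the eigenvalue equation directly from the fourth order partial differential equation satisfied by the generating functions in Lemma \ref{lem:GenFctPDEs}(1), and then combine this with the linear independence statement already recorded in Corollary \ref{cor:LinIndependence}.

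First I would fix $x>0$ and view $G_i^{\mu,\nu}(t,x)$ as a meromorphic (under \eqref{IntCond2}) function of $t$ near $t=0$, which under that hypothesis genuinely admits the Laurent expansion
\begin{equation*}
 G_i^{\mu,\nu}(t,x) = \sum_{j=j_0}^\infty \Lambda_{i,j}^{\mu,\nu}(x)\, t^j,
\end{equation*}
with $j_0=0$ for $i=1,2$ and $j_0=-\mu$ for $i=3,4$. Since the coefficient functions $\Lambda_{i,j}^{\mu,\nu}(x)$ are given by the Cauchy integral formula \eqref{eq:DefAsDerivative}, they are smooth in $x$ and all operations in $x$ (including the fourth order operator $\calD_{\mu,\nu}$) commute with the Laurent expansion in $t$ on any compact subset of $(0,\infty)$. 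Applying $(\calD_{\mu,\nu})_x$ term by term to the above expansion and invoking Lemma \ref{lem:GenFctPDEs}(1) yields
\begin{equation*}
 \sum_{j=j_0}^\infty \bigl(\calD_{\mu,\nu}\Lambda_{i,j}^{\mu,\nu}\bigr)(x)\, t^j = 4\,\theta_t(\theta_t+\mu+1)\sum_{j=j_0}^\infty \Lambda_{i,j}^{\mu,\nu}(x)\, t^j = \sum_{j=j_0}^\infty 4j(j+\mu+1)\,\Lambda_{i,j}^{\mu,\nu}(x)\, t^j,
\end{equation*}
where the second equality uses $\theta_t t^j = jt^j$. Comparing coefficients of $t^j$ on both sides immediately gives
\begin{equation*}
 \calD_{\mu,\nu}\Lambda_{i,j}^{\mu,\nu}(x) = 4j(j+\mu+1)\,\Lambda_{i,j}^{\mu,\nu}(x),
\end{equation*}
which is the desired eigenvalue equation, valid for all four families and all admissible $j\in\mathbb{Z}$.

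For the fundamental system claim, under the extra hypothesis that $\mu$ is a positive odd integer, $\nu>0$, and $\mu-\nu\notin 2\mathbb{Z}$, Corollary \ref{cor:LinIndependence} already guarantees that the four functions $\Lambda_{i,j}^{\mu,\nu}$ ($i=1,2,3,4$) are linearly independent for each fixed $j\in\mathbb{N}_0$. Since by the first part they are all solutions of the fourth order linear ODE \eqref{eq:DiffEq}, and since that ODE has a four-dimensional solution space (the operator $\calD_{\mu,\nu}-4j(j+\mu+1)$ is of order four with leading coefficient $1/x^2$, which is non-vanishing on $\RR_+$), any four linearly independent solutions form a fundamental system. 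This finishes the proof.

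The only step requiring any care is the interchange of $(\calD_{\mu,\nu})_x$ with the Laurent summation in $t$; this is the mildly technical point but is entirely routine because on a small circle $|t|=r$ the function $G_i^{\mu,\nu}(t,\cdot)$ and all of its $x$-derivatives up to order four depend continuously (and analytically in $t$) on the parameters, so termwise differentiation is justified by the uniform convergence of the Laurent series on compact sets. I do not anticipate any genuine obstacle beyond this bookkeeping.
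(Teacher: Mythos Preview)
Your proof is correct and follows essentially the same route as the paper: both reduce the eigenvalue identity to the partial differential equation of Lemma~\ref{lem:GenFctPDEs}(1) by comparing Laurent coefficients of the generating function, and both invoke Corollary~\ref{cor:LinIndependence} for the linear independence needed in the fundamental-system claim. One small slip: the leading coefficient of $\calD_{\mu,\nu}$ as a fourth order operator in $\frac{\td}{\td x}$ is $x^2$, not $1/x^2$ (see the expanded form in the proof of Theorem~\ref{thm:CasimirAction}); this does not affect your argument since $x^2$ is equally non-vanishing on $\RR_+$.
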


\begin{proof}
In view of Corollary \ref{cor:LinIndependence} it only remains to show the first statement. We deduce
\begin{align}
 \calD_{\mu,\nu}\Lambda_{i,j}^{\mu,\nu}(x) &= 4j(j+\mu+1)\Lambda_{i,j}^{\mu,\nu}(x) \qquad \forall\, j\in\mathbb{Z}\label{eq:EigFctEq}
\end{align}
from the corresponding partial differential equation for the generating function $G_i^{\mu,\nu}(t,x)$. For this we take generating functions of both sides of \eqref{eq:EigFctEq}. Clearly, $(\calD_{\mu,\nu})_xG_i^{\mu,\nu}(t,x)$ is the generating function for the left hand side of \eqref{eq:EigFctEq}. The generating function for the right hand side is calculated as follows
\begin{align*}
 & \sum_{j=-\infty}^\infty{t^j\cdot4j(j+\mu+1)\Lambda_{i,j}^{\mu,\nu}(x)}\\
 ={}& 4\sum_{j=-\infty}^\infty{\left(\theta_t^2+(\mu+1)\theta_t\right)t^j\Lambda_{i,j}^{\mu,\nu}(x)}\\
 ={}& 4\theta_t\left(\theta_t+\mu+1\right)G_i^{\mu,\nu}(t,x),
\end{align*}
where $\theta_t:=t\frac{\partial}{\partial t}$. The resulting partial differential equation is
\begin{equation*}
 \left(\calD_{\mu,\nu}\right)_xG_i^{\mu,\nu}(t,x)=4\theta_t\left(\theta_t+\mu+1\right)G_i^{\mu,\nu}(t,x)
\end{equation*}
which was verified in Lemma \ref{lem:GenFctPDEs}~(1).
\end{proof}

\begin{remark}
Since $\calD_{\mu,\nu}=\calD_{\nu,\mu}-(\mu-\nu)(\mu+\nu+2)$ by Proposition \ref{prop:DiffOpProperties}~(1) and
\begin{equation*}
 4\left(j+\frac{\mu-\nu}{2}\right)\left(\left(j+\frac{\mu-\nu}{2}\right)+\nu+1\right) = 4j(j+\mu+1)+(\mu-\nu)(\mu+\nu+2),
\end{equation*}
Theorem \ref{thm:EigFct} implies that for $\mu-\nu\in2\mathbb{Z}$ also $\Lambda_{i,j+\frac{\mu-\nu}{2}}^{\nu,\mu}(x)$ is an eigenfunction of $\calD_{\mu,\nu}$ for the eigenvalue $4j(j+\mu+1)$.
\end{remark}

\begin{corollary}\label{cor:completeness}
If $(\mu,\nu)\in\Xi$, then the system $(\Lambda_{2,j}^{\mu,\nu})_{j\in\mathbb{N}_0}$ forms an orthogonal basis of $L^2(\mathbb{R}_+,x^{\mu+\nu+1}\td x)$.
\end{corollary}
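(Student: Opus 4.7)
The plan is to deduce the corollary entirely from the spectral-theoretic information about $\calD_{\mu,\nu}$ that has already been established, combined with the explicit eigenfunction property of $\Lambda_{2,j}^{\mu,\nu}$. No new analytic estimates should be needed.

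First I would check that each $\Lambda_{2,j}^{\mu,\nu}$ is a non-zero element of $L^2(\RR_+,x^{\mu+\nu+1}\td x)$. The hypothesis $(\mu,\nu)\in\Xi$, together with Lemma \ref{lem:MuNuProperties}, gives $\mu+\nu\geq-1$ and $\mu-\nu\geq0$, in particular both are $>-2$, so Corollary \ref{prop:L2functions} shows $\Lambda_{2,j}^{\mu,\nu}\in L^2(\RR_+,x^{\mu+\nu+1}\td x)$, and under the same inequalities Corollary \ref{cor:Nonzeroness} (case $i=2$) gives $\Lambda_{2,j}^{\mu,\nu}\not\equiv 0$. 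By Theorem \ref{thm:EigFct}, $\Lambda_{2,j}^{\mu,\nu}$ is an eigenfunction of $\calD_{\mu,\nu}$ with eigenvalue $4j(j+\mu+1)$.

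Next I would invoke Corollary \ref{cor:DmunuSASpectrum}, which (for $(\mu,\nu)\in\Xi$) asserts that $\calD_{\mu,\nu}$ extends to a self-adjoint operator on $L^2(\RR_+,x^{\mu+\nu+1}\td x)$ whose spectrum is purely discrete and equal to $\{4j(j+\mu+1):j\in\NN_0\}$, with every eigenspace one-dimensional. Since $\mu\geq-\tfrac12$ by Lemma \ref{lem:MuNuProperties}, the map $j\mapsto j(j+\mu+1)$ is strictly increasing on $\NN_0$, so the eigenvalues $4j(j+\mu+1)$ are pairwise distinct. Consequently $\Lambda_{2,j}^{\mu,\nu}$ spans the $j$-th eigenspace, and eigenfunctions corresponding to different $j$'s are automatically orthogonal by self-adjointness. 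Thus $(\Lambda_{2,j}^{\mu,\nu})_{j\in\NN_0}$ is an orthogonal system in $L^2(\RR_+,x^{\mu+\nu+1}\td x)$.

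Finally I would derive completeness from the spectral theorem: a self-adjoint operator whose spectrum is purely discrete is the direct sum of its finite-dimensional eigenspaces. Since each eigenspace is one-dimensional and spanned by $\Lambda_{2,j}^{\mu,\nu}$, the closed linear span of $\{\Lambda_{2,j}^{\mu,\nu}:j\in\NN_0\}$ is all of $L^2(\RR_+,x^{\mu+\nu+1}\td x)$, proving that $(\Lambda_{2,j}^{\mu,\nu})_{j\in\NN_0}$ is an orthogonal basis.

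There is essentially no hard step here: the analytic work (self-adjointness, determination of the spectrum, one-dimensionality of eigenspaces) was already carried out in Corollary \ref{cor:DmunuSASpectrum} via the unitary representation $\pi$, and the algebraic work (exhibiting an $L^2$-eigenfunction in each $\frakk$-type) was carried out in Theorem \ref{thm:EigFct}. The only point worth double-checking is the strict monotonicity of the eigenvalues, which is immediate from $\mu>-1$.
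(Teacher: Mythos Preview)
Your proof is correct and follows essentially the same approach as the paper: use Lemma~\ref{lem:MuNuProperties} to get $\mu+\nu,\mu-\nu>-2$, invoke Corollary~\ref{prop:L2functions} for $L^2$-membership and Theorem~\ref{thm:EigFct} for the eigenfunction property, and then conclude from the spectral information in Proposition~\ref{prop:DiffOpProperties}~(4) (equivalently Corollary~\ref{cor:DmunuSASpectrum}). You have simply spelled out a few steps the paper leaves implicit (non-vanishing via Corollary~\ref{cor:Nonzeroness}, distinctness of eigenvalues, and the spectral-theorem argument for completeness).
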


\begin{proof}
Lemma \ref{lem:MuNuProperties} implies that $\mu+\nu,\mu-\nu>-2$. Hence, by Corollary \ref{prop:L2functions}, the functions $\Lambda_{2,j}^{\mu,\nu}$ are contained in $L^2(\mathbb{R}_+,x^{\mu+\nu+1}\td x)$ and by Theorem \ref{thm:EigFct} each function $\Lambda_{2,j}^{\mu,\nu}$ is an eigenfunction of $\calD_{\mu,\nu}$ for the eigenvalue $4j(j+\mu+1)$. Therefore the claim follows from Proposition \ref{prop:DiffOpProperties}~(4).
\end{proof}

Corollary \ref{cor:completeness} provides a completeness statement for Bessel functions we could not trace in the literature:

\begin{corollary}\label{cor:BesselCompleteness}
For $(\mu,\nu)\in\Xi$ the sequence $(\theta^j\widetilde{K}_{\frac{\nu}{2}})_{j\in\mathbb{N}_0}$ (resp. $(x^{2j}\widetilde{K}_{\frac{\nu}{2}+j})_{j\in\mathbb{N}_0}$) is a basis for $L^2(\mathbb{R}_+,x^{\mu+\nu+1}\td x)$. The Gram--Schmidt process applied to this sequence yields the orthogonal basis $(\Lambda_{2,j}^{\mu,\nu})_{j\in\mathbb{N}_0}$ (up to scalar factors).
\end{corollary}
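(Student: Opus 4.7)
The plan is to reduce both assertions to Corollary~\ref{cor:completeness} by establishing that the three sequences have the same partial linear spans.

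Concretely, the first step is to verify that for every $j\in\NN_0$ the function $\Lambda_{2,j}^{\mu,\nu}(x)$ lies in the linear span of $\{\theta^k\widetilde{K}_{\frac{\nu}{2}}:0\le k\le j\}$ and has a nonzero coefficient on $\theta^j\widetilde{K}_{\frac{\nu}{2}}$. This follows from the definition \eqref{eq:DefAsDerivative} combined with the Taylor expansion
\[
\widetilde{K}_{\frac{\nu}{2}}\!\left(\tfrac{x}{1-t}\right)=\sum_{k\ge 0}\frac{(t/(1-t))^k\,x^k}{k!}\widetilde{K}_{\frac{\nu}{2}}^{(k)}(x),
\]
the identity $x^k(\td/\td x)^k=\theta(\theta-1)\cdots(\theta-k+1)$, and the relation $x^2\widetilde{K}_{\frac{\nu}{2}}=\theta(\theta+\nu)\widetilde{K}_{\frac{\nu}{2}}$ (immediate from $B_{\nu/2}\widetilde{K}_{\frac{\nu}{2}}=0$), which eliminates all surviving factors of $x^{2\ell}$ coming from the Taylor expansion of $\widetilde{I}_{\frac{\mu}{2}}(tx/(1-t))$ in the generating function \eqref{eq:G2}. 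A careful bookkeeping of the $t$-degrees shows that the $t^j$-coefficient of $G_2^{\mu,\nu}(t,x)$ is of the form $P_j(\theta)\widetilde{K}_{\frac{\nu}{2}}(x)$ for a polynomial $P_j$ of degree exactly $j$. Combining this with the differential recurrence \eqref{eq:BesselDiffFormulas} for the $K$-Bessel functions, which gives a triangular change of basis between $(\theta^k\widetilde{K}_{\frac{\nu}{2}})_{k\le j}$ and $(x^{2k}\widetilde{K}_{\frac{\nu}{2}+k})_{k\le j}$ with nonzero diagonal, one also obtains the analogous statement for the second system.

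With these triangular identifications in hand, let $V_j$, $V_j'$, $V_j''$ denote the linear spans of $\{\Lambda_{2,k}^{\mu,\nu}:k\le j\}$, $\{\theta^k\widetilde{K}_{\frac{\nu}{2}}:k\le j\}$, and $\{x^{2k}\widetilde{K}_{\frac{\nu}{2}+k}:k\le j\}$, respectively. The first step yields $V_j=V_j'=V_j''$ for every $j$, so Corollary~\ref{cor:completeness} implies that each of the two alternative sequences has dense linear span in $L^2(\RR_+,x^{\mu+\nu+1}\td x)$; linear independence is immediate from the triangularity, hence both are bases. Finally, the Gram--Schmidt procedure applied to $(\theta^j\widetilde{K}_{\frac{\nu}{2}})_j$ produces the unique (up to scaling) orthogonal sequence $(u_j)_j$ with $u_j\in V_j'$ and $u_j\perp V_{j-1}'$; since $V_j=V_j'$ and $\Lambda_{2,j}^{\mu,\nu}\in V_j\setminus V_{j-1}$ is orthogonal to $V_{j-1}=V_{j-1}'$, uniqueness forces $u_j$ to be a scalar multiple of $\Lambda_{2,j}^{\mu,\nu}$. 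The same argument applies to the sequence $(x^{2j}\widetilde{K}_{\frac{\nu}{2}+j})_j$.

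The main technical point is the first step: one must ensure that the leading coefficient of $\theta^j\widetilde{K}_{\frac{\nu}{2}}$ in $\Lambda_{2,j}^{\mu,\nu}$ does not vanish on the parameter set $\Xi$. Unwinding the generating-function expansion yields an explicit product formula for this coefficient (generalising the pattern computed in Example~\ref{ex:3Fcts}), and the inequalities $\mu\ge-\frac{1}{2}$ and $\mu+\nu\ge-1$ from Lemma~\ref{lem:MuNuProperties} ensure that none of the factors entering this product vanishes, so the triangular arguments go through unconditionally for $(\mu,\nu)\in\Xi$.
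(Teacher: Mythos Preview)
Your approach is correct and essentially the same as the paper's: both proofs rest on the triangular relation $\Lambda_{2,j}^{\mu,\nu}\in\linspan\{\theta^k\widetilde{K}_{\frac{\nu}{2}}:0\le k\le j\}$ with nonzero top coefficient, the further triangular passage to $(x^{2k}\widetilde{K}_{\frac{\nu}{2}+k})_k$ via \eqref{eq:BesselDiffFormulas}, and then the completeness from Corollary~\ref{cor:completeness} plus the uniqueness characterisation of Gram--Schmidt. The paper is more terse---it simply asserts the triangularity (pointing to the recurrence in Proposition~\ref{prop:RecRelH} as a clean alternative, and having already noted the nonvanishing top coefficient in the proof of Theorem~\ref{lem:Asymptotics}(2))---whereas you unpack it directly from the generating function; but the logical skeleton is identical.
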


\begin{proof}
It is an easy consequence of the definitions that $\Lambda_{2,j}^{\mu,\nu}$ can be written as a linear combination of the functions $\theta^k\widetilde{K}_{\frac{\nu}{2}}$ for $0\leq k\leq j$. (In fact this follows more directly from the recurrence relation in Proposition \ref{prop:RecRelH}.) Then the sequence $(\theta^j\widetilde{K}_{\frac{\nu}{2}})_j$ clearly arises from the complete sequence $(\Lambda_{2,j}^{\mu,\nu})_j$ by a base change and hence is complete. Using \eqref{eq:BesselDiffFormulas} it is also easy to see that the second series $(x^{2j}\widetilde{K}_{\frac{\nu}{2}+j})_j$ arises by a base change from the sequence $(\theta^j\widetilde{K}_{\frac{\nu}{2}})_j$. Finally, we note that both base change matrices considered are upper triangular. Thus the Gram--Schmidt process in both cases yields the orthogonal basis $\Lambda_{2,j}^{\mu,\nu}$.
\end{proof}

We end this section with a formula for the local monodromy of the functions $\Lambda_{i,j}^{\mu,\nu}(x)$ at $x=0$. This implies a parity formula with respect to $x\mapsto-x$ which can be used to determine also the asymptotic behavior as $x\rightarrow-\infty$. The monodromy formula itself is an immediate consequence of Lemma \ref{lem:GenFctParity}:

\begin{proposition}[Local monodromy at $x=0$]\label{prop:ParityLambda}
We have the following local monodromy to the differential equation \eqref{eq:DiffEq}:
\begin{equation*}
 \left(\begin{array}{c}\Lambda_{1,j}^{\mu,\nu}\\\Lambda_{2,j}^{\mu,\nu}\\\Lambda_{3,j}^{\mu,\nu}\\\Lambda_{4,j}^{\mu,\nu}\end{array}\right)(e^{i\pi}x) = \left(\begin{array}{cccc}1 & 0 & 0 & 0\\b_\nu & a_\nu & 0 & 0\\b_\mu & 0 & a_\mu & 0\\b_\mu b_\nu & a_\nu b_\mu & a_\mu b_\nu & a_\mu a_\nu\end{array}\right)\left(\begin{array}{c}\Lambda_{1,j}^{\mu,\nu}\\\Lambda_{2,j}^{\mu,\nu}\\\Lambda_{3,j}^{\mu,\nu}\\\Lambda_{4,j}^{\mu,\nu}\end{array}\right)(x)
\end{equation*}
with coefficients $a_\alpha,b_\alpha$ as in Lemma \ref{lem:GenFctParity}.
\end{proposition}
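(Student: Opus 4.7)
The plan is to obtain the monodromy formula for the eigenfunctions $\Lambda_{i,j}^{\mu,\nu}(x)$ by reading off the coefficients of the Laurent expansion \eqref{eq:FctDef} on both sides of the matrix identity of Lemma \ref{lem:GenFctParity}. More precisely, I would start by substituting $e^{i\pi}x$ for $x$ in \eqref{eq:FctDef}, which yields
\begin{align*}
 G_i^{\mu,\nu}(t,e^{i\pi}x) &= \sum_{j=-\infty}^\infty \Lambda_{i,j}^{\mu,\nu}(e^{i\pi}x)\,t^j,
\end{align*}
so that the functions $\Lambda_{i,j}^{\mu,\nu}(e^{i\pi}x)$ are exactly the Laurent coefficients in $t$ of $G_i^{\mu,\nu}(t,e^{i\pi}x)$.

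Next, I would apply Lemma \ref{lem:GenFctParity}, which expresses each $G_i^{\mu,\nu}(t,e^{i\pi}x)$ as an explicit $\CC$-linear combination (with coefficients independent of $t$) of the four functions $G_k^{\mu,\nu}(t,x)$, $k=1,2,3,4$. Since the coefficients $1,a_\mu,a_\nu,b_\mu,b_\nu,a_\mu a_\nu,\ldots$ appearing in the monodromy matrix do not depend on $t$, I can expand each $G_k^{\mu,\nu}(t,x)$ on the right-hand side using \eqref{eq:FctDef} and collect terms of equal power $t^j$. The uniqueness of Laurent expansions then gives exactly the claimed identity for $\Lambda_{i,j}^{\mu,\nu}(e^{i\pi}x)$ coefficient by coefficient.

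There is essentially no obstacle here: the only point to verify is that the Laurent expansions on both sides are meromorphic in $t$ at $t=0$, which is guaranteed by the integrality condition \eqref{IntCond2} ensuring $\widetilde{K}_{\mu/2}(tx/(1-t))$ is meromorphic near $t=0$, and by the fact that substituting $x\mapsto e^{i\pi}x$ in the Bessel functions does not affect meromorphy in $t$ at $t=0$. Hence the argument reduces to the trivial extraction of Laurent coefficients and no further calculation is required.
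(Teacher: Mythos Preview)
Your proposal is correct and follows exactly the paper's approach: the paper simply states that the monodromy formula is an immediate consequence of Lemma~\ref{lem:GenFctParity}, and you have spelled out precisely why, namely by comparing Laurent coefficients in $t$ on both sides of the generating-function identity.
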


\begin{remark}\label{rem:asym-infty}
If $\nu$ is an odd integer, the functions $\Lambda_{i,j}^{\mu,\nu}(x)$ extend holomorphically to $\mathbb{C}\backslash\{0\}$, not only to its universal covering. In this case, Proposition \ref{prop:ParityLambda} expresses $\Lambda_{i,j}^{\mu,\nu}(-x)$ as linear combination of the functions $\Lambda_{k,j}^{\mu,\nu}(x)$ ($k=1,2,3,4$). The coefficients contain $a_\alpha$ and $b_\alpha$ with $\alpha=2n+1$ an odd integer. In this case they simplify significantly:
\begin{align*}
 a_{2n+1} &= -1, & b_{2n+1} &= (-1)^{n+1}\pi.
\end{align*}
\end{remark}

\section{Integral representations}

In this section we show that for $i=1,2$ the functions $\Lambda_{i,j}^{\mu,\nu}(x)$ have integral representations in terms of Laguerre polynomials. For the definition of the Laguerre polynomials $L_n^\alpha(z)$ see Appendix \ref{app:Laguerre}.

\begin{theorem}[Integral representations]\label{prop:IntFormulae}
\begin{enumerate}
\item[\textup{(1)}] For $j\in\mathbb{N}_0$, $\Re(\mu),\Re(\nu)>-1$ we have the following double integral
representations
\begin{align}
 \Lambda_{1,j}^{\mu,\nu}(x) ={}& c_{1, j}^{\mu, \nu} \int_0^\pi{\int_0^\pi{e^{-x\cos \phi} L_j^{\frac{\mu+\nu}{2}}(x(\cos \theta+\cos \phi))\sin^\mu\theta\sin^\nu\phi\td\phi}\td\theta},\label{IntFormulaGen1}\\
 \Lambda_{2,j}^{\mu,\nu}(x) ={}& c_{2, j}^{\mu, \nu} \int_0^\pi{\int_0^\infty{e^{-x\cosh \phi} L_j^{\frac{\mu+\nu}{2}}(x(\cos \theta+\cosh \phi))\sin^\mu\theta\sinh^\nu\phi\td\phi}\td\theta},\label{IntFormulaGen2}
\end{align}
with constants $c_{1,j}^{\mu,\nu}$ and $c_{2,j}^{\mu,\nu}$ given by
\begin{align*}
 c_{1,j}^{\mu,\nu} &:= \frac{1}{\pi\Gamma(\frac{\mu+1}{2})\Gamma(\frac{\nu+1}{2})}\quad\text{and}\quad
 c_{2,j}^{\mu,\nu} := \frac{1}{\Gamma(\frac{\mu+1}{2})\Gamma(\frac{\nu+1}{2})}.
\end{align*}
\item[\textup{(2)}] For $\nu=-1$ and $\Re(\mu)>-1$ we have
\begin{align}
 \Lambda_{1,j}^{\mu,-1}(x) &= c_{1, j}^{\mu, -1} \sum_{i=0}^1{\int_0^\pi{e^{-(-1)^ix} L_j^{\frac{\mu-1}{2}}(x(\cos\theta+(-1)^i))\sin^\mu\theta\td\theta}},\label{IntFormulaSpecialCase1}\\
 \Lambda_{2,j}^{\mu,-1}(x) &= c_{2, j}^{\mu, -1} \int_0^\pi{e^{-x} L_j^{\frac{\mu-1}{2}}(x(\cos\theta+1))\sin^\mu\theta\td\theta}\label{IntFormulaSpecialCase2}
\end{align}
with constants $c_{1, j}^{\mu, -1}$ and $c_{2, j}^{\mu, -1}$ given by
\begin{align*}
 c_{1,j}^{\mu,-1} &= \frac{1}{2\pi\Gamma(\frac{\mu+1}2)}\quad\text{and}\quad
 c_{2,j}^{\mu,-1} = \frac{1}{2\Gamma(\frac{\mu+1}2)}.
\end{align*}
\end{enumerate}
\end{theorem}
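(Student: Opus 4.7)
The plan is to derive both integral representations directly from the generating function definition \eqref{eq:FctDef} by substituting Poisson--type integral representations for the normalized Bessel functions. For $\Re\alpha>-1/2$ the identities
\begin{align*}
\widetilde{I}_{\alpha}(z) &= \frac{1}{\sqrt{\pi}\,\Gamma(\alpha+\tfrac12)}\int_0^\pi e^{-z\cos\theta}\sin^{2\alpha}\theta\,\td\theta, \\
\widetilde{K}_{\alpha}(z) &= \frac{1}{\Gamma(\alpha+\tfrac12)}\int_0^\infty e^{-z\cosh\phi}\sinh^{2\alpha}\phi\,\td\phi
\end{align*}
hold (up to the normalization fixed in Appendix \ref{app:BesselFunctions}). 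Plugging these into $G_1^{\mu,\nu}(t,x)$ with $z=tx/(1-t)$ and $z=x/(1-t)$ respectively, the exponents combine via the elementary identity
\begin{align*}
\frac{tx\cos\theta+x\cos\phi}{1-t} \;=\; x\cos\phi \;+\; \frac{tx(\cos\theta+\cos\phi)}{1-t},
\end{align*}
which separates a purely $x$-dependent factor $e^{-x\cos\phi}$ from a factor of the form $e^{-yt/(1-t)}$ with $y=x(\cos\theta+\cos\phi)$. The same manipulation applies to $G_2^{\mu,\nu}(t,x)$ with $\cos\phi$ replaced by $\cosh\phi$.

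Next I would invoke the classical generating identity for Laguerre polynomials
\begin{align*}
\frac{1}{(1-t)^{\alpha+1}}\,e^{-yt/(1-t)} \;=\; \sum_{j=0}^\infty L_j^{\alpha}(y)\,t^j,\qquad |t|<1,
\end{align*}
with $\alpha=(\mu+\nu)/2$. Precisely this prefactor $(1-t)^{-(\mu+\nu+2)/2}$ appears in the definition of $G_i^{\mu,\nu}$, so after substitution the generating function becomes a double integral of a power series in $t$ whose coefficients are $L_j^{(\mu+\nu)/2}(x(\cos\theta+\cos\phi))$ (resp.\ with $\cosh\phi$). Interchanging summation and integration and reading off the coefficient of $t^j$ from \eqref{eq:FctDef} yields \eqref{IntFormulaGen1} and \eqref{IntFormulaGen2}, with the constants $c_{i,j}^{\mu,\nu}$ coming out exactly as stated once the normalization prefactors $\frac{1}{\sqrt{\pi}\Gamma(\frac{\mu+1}{2})}$ and $\frac{1}{\sqrt{\pi}\Gamma(\frac{\nu+1}{2})}$ (resp.\ $\frac{1}{\Gamma(\frac{\nu+1}{2})}$) are multiplied together.

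The limiting case $\nu=-1$ falls outside the range of the Poisson integral for $\widetilde{I}_{\nu/2}$ and $\widetilde{K}_{\nu/2}$ since $\nu/2+1/2=0$. Here I would replace these integrals by the elementary closed-form expressions from \eqref{eq:IKBesselMinusHalf}, namely $\widetilde{K}_{-1/2}(z)=\tfrac{\sqrt{\pi}}{2}e^{-z}$ and the analogous formula $\widetilde{I}_{-1/2}(z)=\tfrac12(e^z+e^{-z})$ (again up to the paper's normalization). For $\Lambda_{2,j}^{\mu,-1}$ this eliminates the $\phi$-integration entirely, effectively setting $\cosh\phi=1$ and producing \eqref{IntFormulaSpecialCase2}. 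For $\Lambda_{1,j}^{\mu,-1}$ the $\cosh$ splits into the two exponentials $e^{\pm x/(1-t)}$, each of which can be absorbed into the Laguerre generating function just as before, producing the sum $\sum_{i=0}^{1}$ over signs in \eqref{IntFormulaSpecialCase1}. The main technical obstacle I expect is the justification of the Fubini--type interchange of summation with the double integration: the Laguerre series converges absolutely for $|t|<1$ uniformly on compact subsets of the $(\theta,\phi)$-region, and the exponential factors $e^{-x\cos\phi}$ resp.\ $e^{-x\cosh\phi}$ provide the necessary integrable majorant (the latter ensuring convergence of the $\phi$-integral over $[0,\infty)$ for $x>0$). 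Everything else is bookkeeping of the Bessel normalization constants to match $c_{i,j}^{\mu,\nu}$ exactly.
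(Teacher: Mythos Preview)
Your proposal is correct and follows essentially the same approach as the paper's proof: both substitute the Poisson integral representations \eqref{eq:IntFormulaIBessel}, \eqref{eq:IntFormulaKBessel} into the generating functions, recognize the Laguerre generating function \eqref{eq:LaguerreGenFct} after the exponent-splitting identity, and extract coefficients (the paper phrases this as interchanging differentiation with integration via \eqref{eq:DefAsDerivative}, you as interchanging summation with integration---these are equivalent). One small caution: your stated formula for $\widetilde{K}_\alpha$ is missing a factor of $\sqrt{\pi}$ relative to the paper's normalization \eqref{eq:IntFormulaKBessel}, and similarly $\widetilde{I}_{-1/2}(z)=\tfrac{1}{\sqrt{\pi}}\cosh z$ rather than $\tfrac12(e^z+e^{-z})$; since you hedge these as ``up to normalization'' and the constants $c_{i,j}^{\mu,\nu}$ do come out correctly once the actual prefactors are used, this is only a bookkeeping matter.
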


\begin{proof}
We make use of the formula \eqref{eq:DefAsDerivative} for $\Lambda_{i,j}^{\mu,\nu}$ and the generating function \eqref{eq:LaguerreGenFct} of the Laguerre polynomials. Further, we need the integral representations \eqref{eq:IntFormulaIBessel} and \eqref{eq:IntFormulaKBessel} for the $I$- and $K$-Bessel functions.
\begin{enumerate}
\item[\textup{(1)}] Interchanging differentiation and integration we obtain the desired integral representations for $\Lambda_{1,j}^{\mu,-1}$:
\begin{align*}
 & \pi\Gamma\left(\frac{\mu+1}{2}\right)\Gamma\left(\frac{\nu+1}{2}\right)\Lambda_{1,j}^{\mu,\nu}(x)\\
 ={}& \frac{\pi\Gamma(\frac{\mu+1}{2})\Gamma(\frac{\nu+1}{2})}{j!}\left.\frac{\partial^j}{\partial t^j}\right|_{t=0} G_1^{\mu,\nu}(t,x)\\
 ={}& \frac{1}{j!}\left.\frac{\partial^j}{\partial t^j}\right|_{t=0} \frac{1}{(1-t)^{\frac{\mu+\nu+2}{2}}} \int_0^\pi{\int_0^\pi{e^{-\frac{tx}{1-t}\cos\theta}e^{-\frac{x}{1-t}\cos\phi}\sin^\mu\theta\sin^\nu\phi\td\phi}\td\theta}\\
 ={}& \int_0^\pi{\int_0^\pi{e^{-x\cos\phi}\frac{1}{j!}\left.\frac{\partial^j}{\partial t^j}\right|_{t=0}\left[\frac{1}{(1-t)^{\frac{\mu+\nu+2}{2}}}e^{-\frac{tx}{1-t}(\cos\theta+\cos\phi)}\right]}}\\
 &\ \ \ \ \ \ \ \ \ \ \ \ \ \ \ \ \ \ \ \ \ \ \ \ \ \ \ \ \ \ \ \ \ \ \ \ \ \ \ \ \ \ \ \ \ \ \ \ \ \ \ \ \ \ \ \ \ \ \ \ \ \ \ \ \ \ \ \ \ \sin^\mu\theta\sin^\nu\phi\td\phi\td\theta\\
 ={}& \int_0^\pi{\int_0^\pi{e^{-x\cos\phi} L_j^{\frac{\mu+\nu}{2}}(x(\cos\theta+\cos\phi))\sin^\mu\theta\sin^\nu\phi\td\phi}\td\theta}.
\end{align*}
For the functions $\Lambda_{2,j}^{\mu,\nu}$ we do a similar calculation:
\begin{align*}
 & \Gamma\left(\frac{\mu+1}{2}\right)\Gamma\left(\frac{\nu+1}{2}\right)\Lambda_{2,j}^{\mu,\nu}(x)\\
 ={}& \frac{\Gamma(\frac{\mu+1}{2})\Gamma(\frac{\nu+1}{2})}{j!}\left.\frac{\partial^j}{\partial t^j}\right|_{t=0} G_2^{\mu,\nu}(t,x)\\
 ={}& \frac{1}{j!} \left.\frac{\partial^j}{\partial t^j}\right|_{t=0} \frac{1}{(1-t)^{\frac{\mu+\nu+2}{2}}} \int_0^\pi{\int_0^\infty{e^{-\frac{tx}{1-t}\cos\theta}e^{-\frac{x}{1-t}\cosh\phi}}}\\
  &\ \ \ \ \ \ \ \ \ \ \ \ \ \ \ \ \ \ \ \ \ \ \ \ \ \ \ \ \ \ \ \ \ \ \ \ \ \ \ \ \ \ \ \ \ \ \ \ \ \ \ \ \ \ \ \ \ \ \ \ \ \ \ \ \ \ \sin^\mu\theta\sinh^\nu\phi\td\phi\td\theta\\
 ={}& \int_0^\pi{\int_0^\infty{e^{-x\cosh\phi}\frac{1}{j!}\left.\frac{\partial^j}{\partial t^j}\right|_{t=0}\left[\frac{1}{(1-t)^{\frac{\mu+\nu+2}{2}}}e^{-\frac{tx}{1-t}(\cos\theta+\cosh\phi)}\right]}}\\
 &\ \ \ \ \ \ \ \ \ \ \ \ \ \ \ \ \ \ \ \ \ \ \ \ \ \ \ \ \ \ \ \ \ \ \ \ \ \ \ \ \ \ \ \ \ \ \ \ \ \ \ \ \ \ \ \ \ \ \ \ \ \ \ \ \ \ \sin^\mu\theta\sinh^\nu\phi\td\phi\td\theta\\
 ={}& \int_0^\pi{\int_0^\infty{e^{-x\cosh\phi} L_j^{\frac{\mu+\nu}{2}}(x(\cos\theta+\cosh\phi))\sin^\mu\theta\sinh^\nu\phi\td\phi}\td\theta}.
\end{align*}
\item[\textup{(2)}] Using \eqref{eq:IKBesselMinusHalf}, similar calculations as in (1) give the second part.\qedhere
\end{enumerate}
\end{proof}

\begin{remark}
The integral representations in Theorem \ref{prop:IntFormulae}~(2) for the special case $\nu=-1$ can also be obtained from the integral representations in part (1) for $\nu>-1$ by taking the limit $\nu\rightarrow-1$. For example, to obtain the integral representation for $\Lambda_{2,j}^{\mu,-1}$ we have to verify the limit formula
\begin{equation}
 \lim_{\nu\rightarrow-1}{\frac{1}{\Gamma(\frac{\nu+1}{2})}\int_0^\infty{e^{-x\cosh\phi}\cosh^k\phi\sinh^\nu\phi\td\phi}} = \frac{1}{2}e^{-x}\label{eq:IntLimit}
\end{equation}
for $0\leq k\leq j$. For $k=0$ the identity \eqref{eq:IntFormulaKBessel} turns the left hand side into
\begin{equation*}
 \frac{1}{\sqrt{\pi}}\lim_{\nu\rightarrow-1}{\widetilde{K}_{\frac{\nu}{2}}(x)}.
\end{equation*}
The map $\alpha\mapsto\widetilde{K}_\alpha(x)$ is continuous so \eqref{eq:IntLimit} follows from \eqref{eq:IKBesselMinusHalf}. For $k>0$ and $\phi\geq0$ we have
\begin{align*}
 \cosh^k\phi-\cosh^0\phi &= \cosh^k\phi-1 \leq \sinh\phi \cdot p(\sinh\phi)
\end{align*}
with some polynomial $p$. Then one has to show that
\begin{equation*}
 \lim_{\nu\rightarrow-1}{\frac{1}{\Gamma(\frac{\nu+1}{2})}\int_0^\infty{e^{-x\cosh\phi}\sinh^{\nu+\ell+1}\phi\td\phi}} = \frac{1}{2}e^{-x}.
\end{equation*}
for $\ell\geq0$. But this is easily seen using the integral representation \eqref{eq:IntFormulaKBessel} and the continuity of the map $\alpha\mapsto\widetilde{K}_\alpha(x)$.
\end{remark}

As an easy application of the integral representations we give explicit expressions for the functions $\Lambda_{i,j}^{\mu,\nu}(x)$, $i=1,2$, in the case where $\nu=-1$.

\begin{corollary}\label{cor:SpecialValue}
For $\nu=-1$ and $\mu\in\mathbb{C}$ arbitrary we have the following identity of meromorphic functions
\begin{align}
 \Lambda_{1,j}^{\mu,-1}(x) &= \frac{2^{\mu-1}\Gamma(j+\frac{\mu+1}{2})}{\pi\Gamma(j+\mu+1)}\left(e^{-x}L_j^\mu(2x)+e^{x}L_j^\mu(-2x)\right),\label{eq:Lambda1SpecialValue}\\
 \Lambda_{2,j}^{\mu,-1}(x) &= \frac{2^{\mu-1}\Gamma(j+\frac{\mu+1}{2})}{\Gamma(j+\mu+1)} e^{-x} L_j^{\mu}(2x).\label{eq:Lambda2SpecialValue}
\end{align}
\end{corollary}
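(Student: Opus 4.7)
The plan is to derive both identities directly from the single/double integral representations of Theorem \ref{prop:IntFormulae}~(2), treating $\Re(\mu)>-1$ first and then extending by analytic continuation. The key computational input is that each summand of the explicit series for the Laguerre polynomial $L_j^{(\mu-1)/2}(x(\cos\theta\pm 1))$ integrates against $\sin^\mu\theta\,d\theta$ to a Beta integral, and the resulting series reassembles into $L_j^\mu(\pm 2x)$.

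First I would focus on $\Lambda_{2,j}^{\mu,-1}$. Starting from \eqref{IntFormulaSpecialCase2} and writing $\alpha=\frac{\mu-1}{2}$, I expand
\[
L_j^{\alpha}(x(\cos\theta+1))=\sum_{k=0}^{j}\binom{j+\alpha}{j-k}\frac{(-x(\cos\theta+1))^k}{k!}.
\]
Substituting $s=\theta/2$, so that $\cos\theta+1=2\cos^2 s$ and $\sin\theta=2\sin s\cos s$, the resulting $\theta$-integral becomes the Beta integral
\[
\int_0^\pi(\cos\theta+1)^k\sin^{2\alpha+1}\theta\,d\theta
=2^{2\alpha+1+k}\,\frac{\Gamma(k+\alpha+1)\Gamma(\alpha+1)}{\Gamma(k+2\alpha+2)}.
\]
Substituting back into the integral representation and simplifying using $\binom{j+\alpha}{j-k}=\Gamma(j+\alpha+1)/\bigl((j-k)!\,\Gamma(k+\alpha+1)\bigr)$ gives a series in $(-2x)^k/(k!(j-k)!\,\Gamma(k+2\alpha+2))$, which is (up to an overall factor) precisely the explicit expansion of $L_j^{2\alpha+1}(2x)=L_j^\mu(2x)$. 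Collecting the prefactor with $c_{2,j}^{\mu,-1}=\bigl(2\Gamma(\tfrac{\mu+1}{2})\bigr)^{-1}$ yields \eqref{eq:Lambda2SpecialValue}.

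For $\Lambda_{1,j}^{\mu,-1}$ I run the same computation on each of the two summands in \eqref{IntFormulaSpecialCase1}. The $i=0$ term reproduces the calculation above and contributes $e^{-x}L_j^\mu(2x)$. The $i=1$ term uses $\cos\theta-1=-2\sin^2(\theta/2)$; after the substitution $s=\theta/2$ the Beta integral is identical (the factors of $\sin$ and $\cos$ swap roles) except for an extra $(-1)^k$ coming from $(\cos\theta-1)^k$, which cancels the $(-1)^k$ in the Laguerre expansion. Reassembling gives exactly $L_j^\mu(-2x)$, so the two contributions combine into the right-hand side of \eqref{eq:Lambda1SpecialValue} with the stated constant.

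Finally, both integral representations of Theorem \ref{prop:IntFormulae}~(2) are valid only for $\Re(\mu)>-1$, whereas the claimed identities are identities of meromorphic functions of $\mu\in\CC$. Since both sides are meromorphic in $\mu$ (the right-hand sides are manifestly so; for the left-hand sides this follows from the meromorphic dependence of $G_i^{\mu,-1}(t,x)$ on $\mu$ and the definition \eqref{eq:DefAsDerivative}), the identities extend from $\Re(\mu)>-1$ to all admissible $\mu$ by analytic continuation. There is no real obstacle; the only point requiring care is the bookkeeping of gamma factors and powers of $2$ when matching the Beta integral to the explicit Laguerre expansion.
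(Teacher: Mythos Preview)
Your proposal is correct and follows the same overall strategy as the paper: start from the integral representations \eqref{IntFormulaSpecialCase1} and \eqref{IntFormulaSpecialCase2} for $\Re(\mu)>-1$, evaluate the $\theta$-integral, and extend by meromorphic continuation.

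The only difference lies in how the key integral
\[
\int_0^\pi L_j^{\frac{\mu-1}{2}}(x(\cos\theta\pm1))\sin^\mu\theta\,\td\theta
= \frac{2^\mu\Gamma(j+\frac{\mu+1}{2})\Gamma(\frac{\mu+1}{2})}{\Gamma(j+\mu+1)}L_j^\mu(\pm2x)
\]
is obtained. The paper makes the substitution $y=\frac{1}{2}(1\pm\cos\theta)$ and then invokes the known Laguerre integral formula \eqref{eq:LaguerreIntFormula} directly, arriving at the result in one line. You instead expand $L_j^{(\mu-1)/2}$ termwise, evaluate each $\theta$-integral as a Beta integral via half-angle substitutions, and reassemble the series into $L_j^\mu$; in effect you are reproving the relevant special case of \eqref{eq:LaguerreIntFormula} from scratch. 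Your route is more self-contained but heavier on bookkeeping; the paper's is shorter at the cost of citing an external identity. Either way the argument goes through with no gaps.
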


\begin{proof}
For the proof we may assume that $\Re(\mu)>-1$. The general case $\mu\in\mathbb{C}$ then follows by meromorphic continuation. With the integral formula \eqref{eq:LaguerreIntFormula} the substitution $y=\frac{1}{2}(1\pm\cos\theta)$ yields
\begin{align}
 \int_0^\pi{L_j^{\frac{\mu-1}{2}}(x(\cos\theta\pm1))\sin^\mu\theta\td\theta}
 ={}& 2^\mu \int_0^1{(1-y)^{\frac{\mu-1}{2}}y^{\frac{\mu-1}{2}}L_j^{\frac{\mu-1}{2}}(\pm2x\cdot y)\td y}\notag\\
 ={}& \frac{2^\mu\Gamma(j+\frac{\mu+1}{2})\Gamma(\frac{\mu+1}{2})}{\Gamma(j+\mu+1)}L_j^\mu(\pm2x).\label{eq:LegendreIntegral2}
\end{align}
Inserting this into the integral representations \eqref{IntFormulaSpecialCase1} and \eqref{IntFormulaSpecialCase2} gives \eqref{eq:Lambda1SpecialValue} and \eqref{eq:Lambda2SpecialValue}.
\end{proof}

\begin{remark}\label{rem:SpecialValue2}
The symmetry property \eqref{eq:KBesselSymmetry} for the $K$-Bessel functions implies that $G_2^{\mu,-1}(x)=\frac{x}{2}G_2^{\mu,1}(x)$ and hence
\begin{align}
 \Lambda_{2,j}^{\mu,1}(x) &= \frac{2}{x}\Lambda_{2,j}^{\mu,-1}(x).\label{eq:RelLambdapm1}
\end{align}
Therefore, Corollary \ref{cor:SpecialValue} also allows us to compute $\Lambda_{2,j}^{\mu,1}$ explicitly:
\begin{equation}
 \Lambda_{2,j}^{\mu,1}(x) = \frac{2^\mu\Gamma(j+\frac{\mu+1}{2})}{\Gamma(j+\mu+1)}x^{-1}e^{-x}L_j^{\mu}(2x).\label{eq:Lambda2SpecialValue2}
\end{equation}
\end{remark}

\begin{remark}\label{rem:DiffEqSpecialCase}
Corollary \ref{cor:SpecialValue} and Remark \ref{rem:SpecialValue2} suggest a relation between the fourth order differential equation $\mathcal{D}_{\mu,\nu}u=4j(j+\mu+1)u$ in the cases where $\nu=\pm1$ and the second order differential equation \eqref{eq:LagDiffEq} for the Laguerre polynomials $L_n^\alpha(x)$. In fact, by Proposition \ref{prop:DiffOpProperties}~(5) the fourth order differential operator $\mathcal{D}_{\mu,\pm1}$ collapses to the simpler form
\begin{align*}
 \mathcal{D}_{\mu,\pm1} &= \mathcal{S}_{\mu,\pm1}^2-(\mu+1)^2
\end{align*}
with second order differential operators $\mathcal{S}_{\mu,\pm1}$ (for their definition see Proposition \ref{prop:DiffOpProperties}~(5)). For $\mu>-1$ the operator $\mathcal{S}_{\mu,-1}$ itself is self-adjoint on $L^2(\mathbb{R}_+,x^\mu\td x)$. It has discrete spectrum given by $(-(2j+\mu+1))_{j\in\mathbb{N}_0}$ and an easy calculation involving \eqref{eq:LagDiffEq} shows that $\Lambda_{2,j}^{\mu,-1}$ is an eigenfunction of $\mathcal{S}_{\mu,-1}$ for the eigenvalue $-(2j+\mu+1)$. Applying $\mathcal{S}_{\mu,-1}$ twice yields the fourth order differential equation of Theorem \ref{thm:EigFct} for $\Lambda_{2,j}^{\mu,-1}$.
The same considerations apply for $\mathcal{S}_{\mu,+1}$ and $\Lambda_{2,j}^{\mu,+1}(x)$ since we have the relation \eqref{eq:RelLambdapm1} and
\begin{equation}
 \mathcal{S}_{\mu,+1}x^{-1} = x^{-1}\mathcal{S}_{\mu,-1}.\label{eq:RelSmupm1}
\end{equation}
\end{remark}

\section{Orthogonal polynomials}

In the previous section we have shown that for $\nu=\pm1$ the functions $\Lambda_{2,j}^{\mu,\nu}(x)$ basically reduce to Laguerre polynomials. Now we prove that for any odd integer $\nu\geq-1$ the functions $\Lambda_{2,j}^{\mu,\nu}(x)$ reduce to polynomials. 

\begin{theorem}\label{thm:SpecialPolynomials}
Suppose $\mu\notin-\NN$ and $\nu\geq1$ is an odd integer. Then
\begin{equation}
 \Lambda_{2,j}^{\mu,\nu}(x) = \frac{2^\mu\Gamma(j+\frac{\mu+1}{2})}{\Gamma(j+\mu+1)}x^{-\nu}e^{-x}M_j^{\mu,\nu}(2x),\label{eq:DefPoly}
\end{equation}
where $M_j^{\mu,\nu}(x)$ is a polynomial of degree $j+\frac{\nu-1}{2}$ ($j\in\NN_0$). The polynomial $M_j^{\mu,\nu}(x)$\index{notation}{Mjmunux@$M_j^{\mu,\nu}(x)$} is given by
\begin{align}
 M_j^{\mu,\nu}(x) &= \frac{\Gamma(j+\mu+1)}{\Gamma(j+\frac{\mu+1}{2})}\sum_{k=0}^j{\sum_{i=0}^{\frac{\nu-1}{2}-k}{\!\!(-1)^k\frac{\Gamma(j-k+\frac{\mu+1}{2})(\nu-i-1)!} {k!\Gamma(j-k+\mu+1)(\frac{\nu-1}{2}-i-k)!i!}L_{j-k}^\mu(x)x^i}}\notag\\
 &= \sum_{k=0}^{j+\frac{\nu-1}{2}}{\beta^{\mu,\nu}_{j,k}x^k},\label{eq:ExplPoly}
\end{align}
where
\begin{multline*}
 \beta^{\mu,\nu}_{j,k} = \frac{\Gamma(j+\mu+1)}{\Gamma(j+\frac{\mu+1}{2})}\sum_{(m,n)\in S_{j,k}^{\mu,\nu}}{(-1)^{m+n}\frac{\Gamma(j-m+\frac{\mu+1}{2})}{\Gamma(n+\mu+1)}}\\
 \times{\frac{(\nu+n-k-1)!}{m!n!(k-n)!(j-m-n)!(\frac{\nu-1}{2}+n-k-m)!}}
\end{multline*}
with
\begin{equation}
 S_{j,k}^{\mu,\nu} = \left\{(m,n)\in\mathbb{N}_0^2:\begin{array}{c}0\leq n\leq j-m\\0\leq k-n\leq\frac{\nu-1}{2}-m\end{array}\right\}.
 \label{eq:Sjk}
\end{equation}
\end{theorem}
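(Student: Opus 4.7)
The plan hinges on the fact that for half-integer order the $K$-Bessel function is elementary. Writing $\nu = 2m+1$ and substituting the classical expression
\begin{equation*}
K_{m+1/2}(z) = \sqrt{\tfrac{\pi}{2z}}\,e^{-z}\sum_{k=0}^{m}\frac{(m+k)!}{k!(m-k)!(2z)^k}
\end{equation*}
into the definition of $\widetilde K_{\nu/2}$ from Appendix \ref{app:BesselFunctions} yields
\begin{equation*}
\widetilde K_{\nu/2}(z) = z^{-\nu}e^{-z}P_m(z),\qquad P_m(z)=\sqrt{\pi}\,2^m\sum_{\ell=0}^{m}\frac{(2m-\ell)!}{(m-\ell)!\,\ell!\,2^{m-\ell}}z^{\ell},
\end{equation*}
a polynomial in $z$ of degree exactly $m = \tfrac{\nu-1}{2}$.

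Next I would substitute this into \eqref{eq:G2}, pull out the factor $e^{-x}$ via $e^{-x/(1-t)} = e^{-x}\,e^{-tx/(1-t)}$, and note that $\tfrac{\nu-\mu-2}{2} = m - \tfrac{\mu+1}{2}$; the generating function then factorises as
\begin{equation*}
G_2^{\mu,\nu}(t,x) = x^{-\nu}e^{-x}\bigl[(1-t)^{m}P_m(x/(1-t))\bigr]\cdot\frac{e^{-tx/(1-t)}}{(1-t)^{(\mu+1)/2}}\widetilde I_{\mu/2}(tx/(1-t)).
\end{equation*}
Combining the $\nu=-1$ instance of \eqref{eq:G2} with \eqref{eq:IKBesselMinusHalf} and the closed formula \eqref{eq:Lambda2SpecialValue} from Corollary \ref{cor:SpecialValue} identifies the last factor above with the Laguerre generating series
\begin{equation*}
\frac{e^{-tx/(1-t)}}{(1-t)^{(\mu+1)/2}}\widetilde I_{\mu/2}(tx/(1-t)) = \sum_{n=0}^{\infty}\frac{2^\mu\,\Gamma(n+\tfrac{\mu+1}{2})}{\sqrt\pi\,\Gamma(n+\mu+1)}L_n^\mu(2x)\,t^n,
\end{equation*}
while the binomial expansion of $(1-t)^{m-\ell}$ shows that $(1-t)^{m}P_m(x/(1-t))$ is a polynomial in $t$ of degree $\le m$ whose coefficients are monomials in $x$.

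Extracting the coefficient of $t^j$ from the Cauchy product of these two series, converting each $x^\ell$ into $(2x)^\ell$, and dividing through by $C_j := \tfrac{2^\mu\Gamma(j+(\mu+1)/2)}{\Gamma(j+\mu+1)}$, one reads off that $\Lambda_{2,j}^{\mu,\nu}(x) = C_j x^{-\nu}e^{-x}M_j^{\mu,\nu}(2x)$ with $M_j^{\mu,\nu}$ exactly the first double sum in \eqref{eq:ExplPoly}; here the substitutions $\ell = i$, $2m-\ell = \nu-1-i$ and $m-\ell-k = \tfrac{\nu-1}{2}-i-k$ match the indexing. The degree statement $\deg M_j^{\mu,\nu} = j+\tfrac{\nu-1}{2}$ is then immediate, since the single summand $(k,i) = (0,\tfrac{\nu-1}{2})$ contributes $L_j^\mu(y)\,y^{(\nu-1)/2}$ with a nonzero coefficient, while no other summand has degree $\ge j+\tfrac{\nu-1}{2}$. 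The second (purely polynomial) form of $M_j^{\mu,\nu}$ follows by substituting the standard power-series expansion of each $L_{j-k}^\mu(x)$ into the first form and collecting terms by powers of $x$; the index set $S_{j,k}^{\mu,\nu}$ in \eqref{eq:Sjk} precisely records the admissible pairs surviving this rearrangement.

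The main obstacle is arithmetic rather than conceptual: matching the Cauchy-product coefficients against the compact form in \eqref{eq:ExplPoly}, and the subsequent re-indexing after expanding the Laguerre polynomials, both require careful bookkeeping of binomial coefficients, Pochhammer symbols, and powers of $2$. No new analytic ingredient is needed beyond the elementary formula for $K_{m+1/2}$ and the $\nu=-1$ closed form of Corollary \ref{cor:SpecialValue}.
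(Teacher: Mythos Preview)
Your argument is correct and follows essentially the same route as the paper: substitute the elementary half-integer expression \eqref{eq:ExplKBessel} for $\widetilde K_{\nu/2}$ into $G_2^{\mu,\nu}$, factor the generating function as a polynomial in $t$ times a Laguerre generating series, and read off the Taylor coefficients as a Cauchy product. The one difference is in how you identify the Laguerre series: the paper inserts the integral representation \eqref{eq:IntFormulaIBessel} for $\widetilde I_{\mu/2}$ and then evaluates the resulting $\theta$-integral via \eqref{eq:LegendreIntegral2}, whereas you recognise the factor $\frac{e^{-tx/(1-t)}}{(1-t)^{(\mu+1)/2}}\widetilde I_{\mu/2}\bigl(\tfrac{tx}{1-t}\bigr)$ as $\tfrac{2}{\sqrt\pi}e^{x}G_2^{\mu,-1}(t,x)$ and invoke Corollary~\ref{cor:SpecialValue} directly. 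Since that corollary was proved using the same integral, this is a packaging difference rather than a new idea, but it does streamline the exposition by avoiding a repetition of the $\theta$-integration; note that the meromorphic-continuation step to general $\mu\notin-\NN$ is then inherited from Corollary~\ref{cor:SpecialValue} rather than argued afresh.
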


\begin{proof}
Let us first assume $\Re(\mu)>-1$. With the explicit expression \eqref{eq:ExplKBessel} for the $K$-Bessel functions with half-integer parameter and the integral representation \eqref{eq:IntFormulaIBessel} for the $I$-Bessel function we obtain
\begin{align*}
 G_2^{\mu,\nu}(t,x) ={}& \frac{1}{(1-t)^{\frac{\mu+\nu+2}{2}}}\widetilde{I}_{\frac{\mu}{2}}\left(\frac{tx}{1-t}\right)\widetilde{K}_{\frac{\nu}{2}}\left(\frac{x}{1-t}\right)\\
 ={}& \frac{1}{\Gamma(\frac{\mu+1}{2})}x^{-\nu}e^{-x} \int_0^\pi{\frac{1}{(1-t)^{\frac{\mu-1}{2}+1}}e^{-\frac{tx}{1-t}(\cos\theta+1)}\sin^\mu\theta\td\theta}\\
 & \ \ \ \ \ \ \ \ \ \ \ \ \ \ \ \ \ \ \ \ \ \ \ \ \ \ \ \ \ \ \ \ \ \ \ \ \ \ \ \ \times\sum_{i=0}^{\frac{\nu-1}{2}}{\frac{(\nu-i-1)!}{(\frac{\nu-1}{2}-i)!\cdot i!}(2x)^i(1-t)^{\frac{\nu-1}{2}-i}}.
\end{align*}
Next, we compute the derivatives of the first factor with respect to $t$ at $t=0$. Using the formula \eqref{eq:LaguerreGenFct} for the generating function of the Laguerre polynomials we find that
\begin{align*}
 & \left.\frac{\partial^j}{\partial t^j}\right|_{t=0}\left[\int_0^\pi{\frac{1}{(1-t)^{\frac{\mu-1}{2}+1}}e^{-\frac{tx}{1-t}(\cos\theta+1)}\sin^\mu\theta\td\phi}\right]\\
 ={}& j!\int_0^\pi{L_j^{\frac{\mu-1}{2}}\left(x(\cos\phi+1)\right)\sin^\mu\phi\td\phi}\\
 \intertext{which is by \eqref{eq:LegendreIntegral2} equal to}
 ={}& \frac{j!2^\mu\Gamma(j+\frac{\mu+1}{2})\Gamma(\frac{\mu+1}{2})}{\Gamma(j+\mu+1)}L_j^\mu(2x).
\end{align*}
Now we can compute the Taylor coefficients of $G_2^{\mu,\nu}(t,x)$ at $t=0$ explicitly as follows
\begin{align*}
 & \left.\frac{\partial^j}{\partial t^j}\right|_{t=0}G_2^{\mu,\nu}(t,x)\\
 ={}& \frac{1}{\Gamma(\frac{\mu+1}{2})}x^{-\nu}e^{-x} \sum_{k=0}^j{{j\choose k}\left.\frac{\partial^{j-k}}{\partial t^{j-k}}\right|_{t=0}\left[\int_0^\pi{\frac{1}{(1-t)^{\frac{\mu-1}{2}+1}}e^{-\frac{tx}{1-t}(\cos\theta+1)}\sin^\mu\phi\td\theta}\right]}\\
 & \ \ \ \ \ \ \ \ \ \ \ \ \ \ \ \ \ \ \ \ \ \ \ \ \ \ \ \ \ \ \ \ \ \ \ \ \ \ \times\left.\frac{\partial^k}{\partial t^k}\right|_{t=0}\left[\sum_{i=0}^{\frac{\nu-1}{2}}{\frac{(\nu-i-1)!}{(\frac{\nu-1}{2}-i)!\cdot i!}(2x)^i(1-t)^{\frac{\nu-1}{2}-i}}\right]\\
 ={}& x^{-\nu}e^{-x}\sum_{k=0}^j{\frac{j!2^\mu\Gamma(j-k+\frac{\mu+1}{2})}{k!\Gamma(j-k+\mu+1)}L_{j-k}^\mu(2x)}\\
 & \ \ \ \ \ \ \ \ \ \ \times\sum_{i=0}^{\frac{\nu-1}{2}}{\frac{(\nu-i-1)!}{(\frac{\nu-1}{2}-i)!\cdot i!}(2x)^i(-1)^k\left(\frac{\nu-1}{2}-i\right)\cdots\left(\frac{\nu-1}{2}-i-k+1\right)}\\
 ={}& x^{-\nu}e^{-x}\sum_{k=0}^j{\sum_{i=0}^{\frac{\nu-1}{2}-k}{(-1)^k\frac{j!2^\mu\Gamma(j-k+\frac{\mu+1}{2})(\nu-i-1)!}{k!\Gamma(j-k+\mu+1)(\frac{\nu-1}{2}-i-k)!i!}L_{j-k}^\mu(2x)(2x)^i}}.
\end{align*}
This gives the first expression for $M_j^{\mu,\nu}(x)$. Inserting the explicit formula \eqref{eq:DefLagFct} for the Laguerre functions one obtains the expression for the coefficients $\beta^{\mu,\nu}_{j,k}$. Since these clearly extend meromorphically to $\mu\in\mathbb{C}$ with poles at most at $\mu=-1,-2,-3,\ldots$, the claim follows.
\end{proof}

\begin{proposition}\label{prop:TopConstantTerm}
\begin{enumerate}
\item[\textup{(1)}] (Top term) $$M_j^{\mu,\nu}(x)=\frac{(-1)^j}{j!}x^{j+\frac{\nu-1}{2}}+\textup{lower order terms}.$$
\item[\textup{(2)}] (Constant term)
$$M_j^{\mu,\nu}(0)=\frac{2^{\nu-\mu-1}\Gamma(\frac{\nu}{2})\Gamma(j+\mu+1)\left(\frac{\mu-\nu+2}{2}\right)_j}{j!\Gamma(\frac{\mu+2}{2})\Gamma(j+\frac{\mu+1}{2})}.$$
\end{enumerate}
\end{proposition}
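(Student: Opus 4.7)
The plan is to read off both statements directly from the explicit double-sum expression for $M_j^{\mu,\nu}(x)$ given in Theorem \ref{thm:SpecialPolynomials}, and to cross-check (2) by comparing with the asymptotic expansion of $\Lambda_{2,j}^{\mu,\nu}(x)$ near $x=0$ established in Theorem \ref{lem:Asymptotics}. Recall the formula
\begin{equation*}
 M_j^{\mu,\nu}(x) = \frac{\Gamma(j+\mu+1)}{\Gamma(j+\frac{\mu+1}{2})}\sum_{k=0}^j\sum_{i=0}^{\frac{\nu-1}{2}-k}(-1)^k\frac{\Gamma(j-k+\frac{\mu+1}{2})(\nu-i-1)!}{k!\,\Gamma(j-k+\mu+1)(\frac{\nu-1}{2}-i-k)!\,i!}L_{j-k}^\mu(x)\,x^i.
\end{equation*}

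For part (1), first I would observe that the monomial $L_{j-k}^\mu(x)\,x^i$ has degree $j-k+i$, and in the admissible range $0\le i\le\frac{\nu-1}{2}-k$ this is bounded above by $j+\frac{\nu-1}{2}-2k$. Thus only the pair $(k,i)=(0,\frac{\nu-1}{2})$ can contribute to the top degree $j+\frac{\nu-1}{2}$; every other summand has degree at most $j+\frac{\nu-1}{2}-2$. A direct substitution shows that the prefactor attached to this pair collapses to $1$, so the top-degree part of $M_j^{\mu,\nu}(x)$ equals the top-degree part of $L_j^\mu(x)\,x^{\frac{\nu-1}{2}}$. Using the standard leading coefficient $L_j^\mu(x)=\tfrac{(-1)^j}{j!}x^j+\text{l.o.t.}$ (from the formula \eqref{eq:DefLagFct} for Laguerre polynomials) yields the claimed top term.

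For part (2), rather than manipulate the double sum in the $x=0$ specialization (which would require a nontrivial binomial-type identity), I would exploit the defining relation \eqref{eq:DefPoly}:
\begin{equation*}
 \Lambda_{2,j}^{\mu,\nu}(x) = \frac{2^\mu\Gamma(j+\frac{\mu+1}{2})}{\Gamma(j+\mu+1)}\,x^{-\nu}e^{-x}M_j^{\mu,\nu}(2x).
\end{equation*}
Since $e^{-x}=1+O(x)$ and $M_j^{\mu,\nu}(2x)=M_j^{\mu,\nu}(0)+O(x)$ near $x=0$, one obtains
\begin{equation*}
 \Lambda_{2,j}^{\mu,\nu}(x)=\frac{2^\mu\Gamma(j+\frac{\mu+1}{2})}{\Gamma(j+\mu+1)}\,M_j^{\mu,\nu}(0)\,x^{-\nu}+o(x^{-\nu}).
\end{equation*}
Comparing with the asymptotic formula in Theorem \ref{lem:Asymptotics}(1) (the $\nu>0$ branch, which applies because $\nu\ge 1$),
\begin{equation*}
 \Lambda_{2,j}^{\mu,\nu}(x)=\frac{(\frac{\mu-\nu+2}{2})_j}{j!\,\Gamma(\frac{\mu+2}{2})}\cdot 2^{\nu-1}\Gamma\!\left(\tfrac{\nu}{2}\right)x^{-\nu}+o(x^{-\nu}),
\end{equation*}
and solving for $M_j^{\mu,\nu}(0)$ gives exactly the stated value.

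The main obstacle is essentially bookkeeping: in step (1) one must verify carefully that the single surviving pair $(k,i)=(0,\tfrac{\nu-1}{2})$ really contributes coefficient $1$, which hinges on the cancellation $\tfrac{\Gamma(j+\mu+1)}{\Gamma(j+\frac{\mu+1}{2})}\cdot\tfrac{\Gamma(j+\frac{\mu+1}{2})}{\Gamma(j+\mu+1)}=1$ in the prefactor. Part (2) would be analytically harder if attacked by direct summation, but the detour through Theorem \ref{lem:Asymptotics} sidesteps that combinatorial identity entirely and reduces the argument to matching two leading coefficients. Since Theorem \ref{lem:Asymptotics} was derived from the generating function definition of $\Lambda_{2,j}^{\mu,\nu}$ independently of the polynomial reduction, no circularity arises.
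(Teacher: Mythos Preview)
Your proposal is correct and follows essentially the same route as the paper. For part~(2) you do exactly what the paper does: combine \eqref{eq:DefPoly} with the $\nu>0$ asymptotics of Theorem~\ref{lem:Asymptotics}. For part~(1) the paper reads off the top coefficient from the second expression in \eqref{eq:ExplPoly} (checking that $S_{j,j+\frac{\nu-1}{2}}^{\mu,\nu}=\{(0,j)\}$), whereas you use the first double-sum form and the leading coefficient of $L_j^\mu$; the two arguments are equivalent and equally short.
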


\begin{proof}
For $k=j+\frac{\nu-1}{2}$ the set $S_{j,k}^{\mu,\nu}$ defined in \eqref{eq:Sjk} only contains the tuple $(0,j)$ and we
obtain the top term
\begin{equation*}
 \beta^{\mu,\nu}_{j,j+\frac{\nu-1}{2}}=\frac{(-1)^j}{j!}.
\end{equation*}
To calculate the bottom term $M_j^{\mu,\nu}(0)$ we use the asymptotic behavior of $\Lambda_{2,j}^{\mu,\nu}(x)$ as $x\rightarrow0$ (see Theorem \ref{lem:Asymptotics}~(1)). Together with \eqref{eq:DefPoly} this gives the bottom term $M_j^{\mu,\nu}(0)$.
\end{proof}

\begin{remark}
As proved in the previous section, we have
\begin{align*}
 M_j^{\mu,1}(x) = L_j^\mu(x).
\end{align*}
However, for $\nu\geq3$ the special polynomials $M_j^{\mu,\nu}(x)$ do not appear in the standard literature. Properties for these polynomials such as differential equations, orthogonality relations, completeness, recurrence relations and integral representations simply translate from the corresponding properties for the functions $\Lambda_{2,j}^{\mu,\nu}(x)$. The corresponding statements can be found in \cite{HKMM09b}.
\end{remark}

\section{Recurrence relations}\label{sec:RecRel}

In this section we give three types of recurrence relations for the functions $\Lambda_{i,j}^{\mu,\nu}(x)$.\\

Our first recurrence relation involves the first order differential operator $\mathcal{H}_\alpha$\index{notation}{Halpha@$\mathcal{H}_\alpha$} ($\alpha\in\mathbb{C}$) on $\mathbb{R}_+$, given by
\begin{equation*}
 \mathcal{H}_\alpha := \theta+\frac{\alpha+2}{2}.
\end{equation*}
If $\alpha\in\mathbb{R}$, then $\mathcal{H}_\alpha$ is a skew-symmetric operator on $L^2(\mathbb{R}_+,x^{\alpha+1}\td x)$. This allows us to compute the $L^2$-norms for $\Lambda_{2,j}^{\mu,\nu}$ explicitly if $(\mu,\nu)\in\Xi$.

\begin{proposition}\label{prop:RecRelH}
For $\mu,\nu\in\mathbb{C}$, $i=1,2,3,4$ we have the following recurrence relation in $j\in\mathbb{Z}$
\begin{multline}
 (2j+\mu+1)\mathcal{H}_{\mu+\nu}\Lambda_{i,j}^{\mu,\nu}(x) = (j+1)(j+\mu+1)\Lambda_{i,j+1}^{\mu,\nu}(x)\\
 - \left(j+\frac{\mu+\nu}{2}\right)\left(j+\frac{\mu-\nu}{2}\right)\Lambda_{i,j-1}^{\mu,\nu}(x).\label{eq:RecRelH}
\end{multline}
\end{proposition}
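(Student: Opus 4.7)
The plan is to derive the recurrence by passing to the generating function $G_i^{\mu,\nu}(t,x)=\sum_j \Lambda_{i,j}^{\mu,\nu}(x)\,t^j$ and reducing the claimed identity to the second order partial differential equation already established in Lemma \ref{lem:GenFctPDEs}~(2). This is the same generating-function strategy we used to prove the fourth order differential equation in Theorem \ref{thm:EigFct}.

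First, I would rewrite each term of \eqref{eq:RecRelH} as an operator applied to $G_i^{\mu,\nu}(t,x)$. The factor $2j+\mu+1$ corresponds to $2\theta_t+\mu+1$, and since $\mathcal{H}_{\mu+\nu}$ acts only in the variable $x$, the left hand side of \eqref{eq:RecRelH} has the generating function
\begin{equation*}
 (2\theta_t+\mu+1)\left(\theta_x+\tfrac{\mu+\nu+2}{2}\right)G_i^{\mu,\nu}(t,x).
\end{equation*}
For the right hand side, I would shift the summation index: the term $(j+1)(j+\mu+1)\Lambda_{i,j+1}^{\mu,\nu}(x)$ contributes $\frac{1}{t}\theta_t(\theta_t+\mu)G_i^{\mu,\nu}(t,x)$ after replacing $j+1\mapsto j$, and the term $(j+\frac{\mu+\nu}{2})(j+\frac{\mu-\nu}{2})\Lambda_{i,j-1}^{\mu,\nu}(x)$ contributes $t(\theta_t+\frac{\mu+\nu+2}{2})(\theta_t+\frac{\mu-\nu+2}{2})G_i^{\mu,\nu}(t,x)$ after replacing $j-1\mapsto j$.

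Putting these together, the claim \eqref{eq:RecRelH} is equivalent to the identity
\begin{multline*}
 (2\theta_t+\mu+1)\left(\theta_x+\tfrac{\mu+\nu+2}{2}\right)G_i^{\mu,\nu}(t,x)\\
 =\left(\tfrac{1}{t}\theta_t(\theta_t+\mu)-t\left(\theta_t+\tfrac{\mu+\nu+2}{2}\right)\left(\theta_t+\tfrac{\mu-\nu+2}{2}\right)\right)G_i^{\mu,\nu}(t,x),
\end{multline*}
which is exactly the PDE in Lemma \ref{lem:GenFctPDEs}~(2). Comparing coefficients of $t^j$ on both sides then yields \eqref{eq:RecRelH} for every $j\in\mathbb{Z}$ and every $i=1,2,3,4$ simultaneously.

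The main (and essentially only) technical obstacle is purely bookkeeping: matching the index shifts $j\pm 1\mapsto j$ with the operators $\frac{1}{t}\theta_t(\theta_t+\mu)$ and $t(\theta_t+\frac{\mu+\nu+2}{2})(\theta_t+\frac{\mu-\nu+2}{2})$ correctly, and verifying that the shifted series still converges termwise. Since the generating functions $G_i^{\mu,\nu}(t,x)$ are (for $i=1,2$) analytic near $t=0$ and (for $i=3,4$) meromorphic with a pole of order at most $\mu$ at $t=0$ under assumption \eqref{IntCond2}, the Laurent series may be differentiated termwise in a neighborhood of $t=0$ and no convergence issues arise. Thus the heavy lifting is entirely absorbed into Lemma \ref{lem:GenFctPDEs}~(2), which itself was checked by a direct computation using the Bessel equation \eqref{eq:DiffEqModBesselTheta}.
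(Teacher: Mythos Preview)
Your proposal is correct and follows essentially the same approach as the paper: both reduce the recurrence \eqref{eq:RecRelH} to the second order PDE of Lemma~\ref{lem:GenFctPDEs}~(2) by passing to the generating function, with the index shifts $j\pm1\mapsto j$ producing exactly the operators $\tfrac{1}{t}\theta_t(\theta_t+\mu)$ and $t(\theta_t+\tfrac{\mu+\nu+2}{2})(\theta_t+\tfrac{\mu-\nu+2}{2})$. Your explicit bookkeeping of the shifts and the remark on termwise differentiability are more detailed than the paper's, but the argument is the same.
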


\begin{proof}
As in the proof of Theorem \ref{thm:EigFct} we verify \eqref{eq:RecRelH} via a partial differential equation for the generating function $G_i^{\mu,\nu}$. A short calculation (similar to the one in the proof of Theorem \ref{thm:EigFct}) shows that the recurrence relation \eqref{eq:RecRelH} is equivalent to the partial differential equation
\begin{multline*}
 (2\theta_t+\mu+1)\left(\theta_x+\frac{\mu+\nu+2}{2}\right)G_i^{\mu,\nu}(t,x)\\
 = \left(\frac{1}{t}\theta_t(\theta_t+\mu) - t\left(\theta_t+\frac{\mu+\nu+2}{2}\right)\left(\theta_t+\frac{\mu-\nu+2}{2}\right)\right)G_i^{\mu,\nu}(t,x),
\end{multline*}
which holds by Lemma \ref{lem:GenFctPDEs}~(2).
\end{proof}

\begin{corollary}\label{cor:Norms}
If $(\mu,\nu)\in\Xi$, then
\begin{equation}
 \|\Lambda_{2,j}^{\mu,\nu}\|_{L^2(\mathbb{R}_+,x^{\mu+\nu+1}\td x)}^2 = \frac{2^{\mu+\nu-1}\Gamma(j+\frac{\mu+\nu+2}{2})\Gamma(j+\frac{\mu-\nu+2}{2})}{j!(2j+\mu+1)\Gamma(j+\mu+1)}.\label{eq:Norms}
\end{equation}
\end{corollary}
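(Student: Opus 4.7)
The plan is to derive a two-term recursion for the squared norms from Proposition~\ref{prop:RecRelH} using orthogonality (Corollary~\ref{cor:completeness}) and the skew-symmetry of $\mathcal{H}_{\mu+\nu}$, and then anchor the induction by an explicit computation at $j=0$.

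First I would pair the recurrence \eqref{eq:RecRelH} with $\Lambda_{2,j-1}^{\mu,\nu}$ in $L^2(\mathbb{R}_+,x^{\mu+\nu+1}\,dx)$. Since $(\mu,\nu)\in\Xi$ lies in the range where Corollary~\ref{cor:completeness} gives orthogonality of the $\Lambda_{2,k}^{\mu,\nu}$, the $\langle\Lambda_{2,j+1}^{\mu,\nu},\Lambda_{2,j-1}^{\mu,\nu}\rangle$-term drops, yielding
\begin{equation*}
 (2j+\mu+1)\,\langle \mathcal{H}_{\mu+\nu}\Lambda_{2,j}^{\mu,\nu},\,\Lambda_{2,j-1}^{\mu,\nu}\rangle
 = -\bigl(j+\tfrac{\mu+\nu}{2}\bigr)\bigl(j+\tfrac{\mu-\nu}{2}\bigr)\,\|\Lambda_{2,j-1}^{\mu,\nu}\|^2.
\end{equation*}
Since $\mu+\nu\in\RR$ for $(\mu,\nu)\in\Xi$, the operator $\mathcal{H}_{\mu+\nu}=\theta+\tfrac{\mu+\nu+2}{2}$ is skew-symmetric on the weighted $L^2$-space, so the left-hand side equals $-(2j+\mu+1)\langle\Lambda_{2,j}^{\mu,\nu},\mathcal{H}_{\mu+\nu}\Lambda_{2,j-1}^{\mu,\nu}\rangle$. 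Applying \eqref{eq:RecRelH} with $j$ replaced by $j-1$ and once more invoking orthogonality to discard the $\Lambda_{2,j-2}^{\mu,\nu}$-contribution gives
\begin{equation*}
 \langle\Lambda_{2,j}^{\mu,\nu},\mathcal{H}_{\mu+\nu}\Lambda_{2,j-1}^{\mu,\nu}\rangle
 = \frac{j(j+\mu)}{2j+\mu-1}\,\|\Lambda_{2,j}^{\mu,\nu}\|^2.
\end{equation*}
Combining these two identities produces the clean recursion
\begin{equation*}
 \frac{\|\Lambda_{2,j}^{\mu,\nu}\|^2}{\|\Lambda_{2,j-1}^{\mu,\nu}\|^2}
 = \frac{(2j+\mu-1)\bigl(j+\tfrac{\mu+\nu}{2}\bigr)\bigl(j+\tfrac{\mu-\nu}{2}\bigr)}{(2j+\mu+1)\,j\,(j+\mu)},
\end{equation*}
and a direct check using $\Gamma(z+1)=z\Gamma(z)$ shows that the closed expression on the right-hand side of \eqref{eq:Norms} satisfies exactly this recursion.

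It remains to verify the base case $j=0$. By Example~\ref{ex:3Fcts} we have $\Lambda_{2,0}^{\mu,\nu}(x)=\Gamma(\tfrac{\mu+2}{2})^{-1}\widetilde{K}_{\nu/2}(x)$, and since $\widetilde{K}_{\nu/2}(x)=(x/2)^{-\nu/2}K_{\nu/2}(x)$, the norm reduces to evaluating the Mellin transform
\begin{equation*}
 \int_0^\infty K_{\nu/2}(x)^2\,x^{\mu+1}\,dx
 = \frac{\sqrt{\pi}}{4\,\Gamma(\tfrac{\mu+3}{2})}\,\Gamma\bigl(\tfrac{\mu+2}{2}\bigr)\Gamma\bigl(\tfrac{\mu+\nu+2}{2}\bigr)\Gamma\bigl(\tfrac{\mu-\nu+2}{2}\bigr),
\end{equation*}
a classical formula (obtainable e.g.\ from the Meijer $G$-representation of $K_\alpha^2$, or a Weber--Schafheitlin-type integral); the convergence conditions $\mu+\nu>-2$ and $\mu-\nu>-2$ hold by Lemma~\ref{lem:MuNuProperties}. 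Feeding this into $\|\Lambda_{2,0}^{\mu,\nu}\|^2$ and simplifying with Legendre's duplication formula $\Gamma(\tfrac{\mu+2}{2})\Gamma(\tfrac{\mu+3}{2})=2^{-\mu-1}\sqrt{\pi}\,\Gamma(\mu+2)$ produces precisely the right-hand side of \eqref{eq:Norms} at $j=0$. An induction on $j$ then finishes the proof.

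The main obstacle is the base-case integral: everything else is an algebraic manipulation, but identifying $\|\Lambda_{2,0}^{\mu,\nu}\|^2$ requires this non-trivial Mellin evaluation. A conceptual alternative, which I would mention as a cross-check, is that the ratio recursion together with orthogonality already determines the norms up to a single overall constant; that constant could instead be pinned down by evaluating the generating series $\sum_j \|\Lambda_{2,j}^{\mu,\nu}\|^2 t^{2j}$ via the defining formula for $G_2^{\mu,\nu}(t,x)$ and a known identity for $\int_0^\infty \widetilde{I}_{\mu/2}(ax)\widetilde{K}_{\nu/2}(bx)^2 x^{\mu+\nu+1}dx$, but the direct route through the $j=0$ Mellin integral is more economical.
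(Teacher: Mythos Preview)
Your proof is correct and follows essentially the same strategy as the paper: the base case $j=0$ is computed from the explicit form $\Lambda_{2,0}^{\mu,\nu}=\Gamma(\tfrac{\mu+2}{2})^{-1}\widetilde{K}_{\nu/2}$ via the Mellin integral \eqref{eq:KBesselL2Norms}, and the induction step combines the recurrence \eqref{eq:RecRelH}, the skew-symmetry of $\mathcal{H}_{\mu+\nu}$, and orthogonality (Corollary~\ref{cor:completeness}). Your organization of the induction is in fact slightly cleaner than the paper's---you pair the recurrence directly with $\Lambda_{2,j-1}^{\mu,\nu}$ to get a two-term ratio $\|\Lambda_{2,j}^{\mu,\nu}\|^2/\|\Lambda_{2,j-1}^{\mu,\nu}\|^2$, whereas the paper computes $\|\mathcal{H}_{\mu+\nu}\Lambda_{2,j}^{\mu,\nu}\|^2$ in two different ways and extracts a three-term relation involving $\|\Lambda_{2,j-1}^{\mu,\nu}\|^2$, $\|\Lambda_{2,j}^{\mu,\nu}\|^2$, $\|\Lambda_{2,j+1}^{\mu,\nu}\|^2$---but the underlying ideas are identical.
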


\begin{proof}
We prove this by induction on $j$. For $j=0$, in view of Example \ref{ex:3Fcts} and the integral formula \eqref{eq:KBesselL2Norms}, we can calculate
\begin{align*}
 \|\Lambda_{2,0}^{\mu,\nu}\|^2 &= \int_0^\infty{|\Lambda_{2,0}^{\mu,\nu}(x)|^2x^{\mu+\nu+1}\td x}\\
 &= \frac{1}{\Gamma(\frac{\mu+2}{2})^2}\int_0^\infty{|\widetilde{K}_{\frac{\nu}{2}}(x)|^2x^{\mu+\nu+1}\td x}\\
 &= \frac{2^{\mu+\nu-1}\Gamma(\frac{\mu+\nu+2}{2})\Gamma(\frac{\mu-\nu+2}{2})}{(\mu+1)\Gamma(\mu+1)}.
\end{align*}
For the induction step we reformulate \eqref{eq:RecRelH} as
\begin{equation*}
 \mathcal{H}_{\mu+\nu}\Lambda_{2,j}^{\mu,\nu} = \frac{(j+1)(j+\mu+1)}{2j+\mu+1}\Lambda_{2,j+1}^{\mu,\nu} - \frac{(2j+\mu+\nu)(2j+\mu-\nu)}{4(2j+\mu+1)}\Lambda_{2,j-1}^{\mu,\nu}.
\end{equation*}
Using the skew-symmetry of $\mathcal{H}_{\mu+\nu}$ on $L^2(\mathbb{R}_+,x^{\mu+\nu+1}\td x)$ together with the pairwise orthogonality of the functions $\Lambda_{2,j}^{\mu,\nu}$ (cf. Corollary \ref{cor:completeness}) we calculate
\begin{align*}
 {}& \|\mathcal{H}_{\mu+\nu}\Lambda_{2,j}^{\mu,\nu}\|^2
 = \left(\mathcal{H}_{\mu+\nu}\Lambda_{2,j}^{\mu,\nu}|\mathcal{H}_{\mu+\nu}\Lambda_{2,j}^{\mu,\nu}\right)
 = - \left(\Lambda_{2,j}^{\mu,\nu}|\mathcal{H}_{\mu+\nu}^2\Lambda_{2,j}^{\mu,\nu}\right)\\
 ={}& - \left(\Lambda_{2,j}^{\mu,\nu}\left|\frac{(j+1)(j+\mu+1)}{2j+\mu+1}\mathcal{H}_{\mu+\nu}
  \Lambda_{2,j+1}^{\mu,\nu}\right.\right)\\
 &\ \ \ \ \ \ \ \ \ \ \ \ \ \ \ \ \ \ \ \ \ \ \ \ \ \ \ \ \ \ \ \ \ \ \ \ \ \ \ \ \ \ \ \ \ \ \left.-\frac{(2j+\mu+\nu)(2j+\mu-\nu)}{4(2j+\mu+1)}\mathcal{H}_{\mu+\nu}\Lambda_{2,j-1}^{\mu,\nu}\right)\\
 ={}& \left(\frac{(j+1)(j+\mu+1)}{2j+\mu+1}\cdot\frac{(2(j+1)+\mu+\nu)(2(j+1)+\mu-\nu)}{4(2(j+1)+\mu+1)}
   \right.\\
 &\ \ \ \ \ \ \ \ \ \ \ \ \ \ \ \ \ \ \ \ \ \ \
 +\left.\frac{(2j+\mu+\nu)(2j+\mu-\nu)}{4(2j+\mu+1)}\cdot\frac{j((j-1)+\mu+1)}{2(j-1)+\mu+1}\right)\|
 \Lambda_{2,j}^{\mu,\nu}\|^2.
\end{align*}
On the other hand, orthogonality and the recurrence relation yield
\begin{multline*}
 \|\mathcal{H}_{\mu+\nu}\Lambda_{2,j}^{\mu,\nu}\|^2 = \left(\frac{(j+1)(j+\mu+1)}{2j+\mu+1}\right)^2\|\Lambda_{2,j+1}^{\mu,\nu}\|^2\\
 +\left(\frac{(2j+\mu+\nu)(2j+\mu-\nu)}{4(2j+\mu+1)}\right)^2\|\Lambda_{2,j-1}^{\mu,\nu}\|^2.
\end{multline*}
Both identities together complete the induction.
\end{proof}

The second type of recurrence relations expresses $x^2\Lambda_{i,j}^{\mu,\nu}$ as linear combination in $\Lambda_{i,k}^{\mu,\nu}$ for $k=j-2,\ldots,j+2$. These recurrence relations are an immediate consequence of the fifth order differential equation for the generating functions $G_i^{\mu,\nu}(t,x)$ given in Lemma \ref{lem:GenFctPDEs}~(3).

\begin{proposition}\label{prop:RecRelxSq}
For $\mu,\nu\in\mathbb{C}$ we have
\begin{align*}
 &8\left(j+\frac{\mu-1}{2}\right)\left(j+\frac{\mu+1}{2}\right)\left(j+\frac{\mu+3}{2}\right)x^2\Lambda_{i,j}^{\mu,\nu}(x)\notag\\
 &\ \ \ \ \ =2(j+1)(j+2)(j+\mu+1)(j+\mu+2)\left(j+\frac{\mu-1}{2}\right)\Lambda_{i,j+2}^{\mu,\nu}(x)\notag\\
 &\ \ \ \ \ \ \ \ - 8(j+1)(j+\mu+1)\left(j+\frac{\mu-1}{2}\right)\left(j+\frac{\mu+2}{2}\right)\left(j+\frac{\mu+3}{2}\right)\Lambda_{i,j+1}^{\mu,\nu}(x)\notag\\
 &\ \ \ \ \ \ \ \ + 2\left(j+\frac{\mu+1}{2}\right)(aj^4+bj^3+cj^2+dj+e)\Lambda_{i,j}^{\mu,\nu}(x)\notag
\end{align*}
\begin{align*}
 &\ \ \ \ \ \ \ \ - 8\left(j+\frac{\mu-1}{2}\right)\left(j+\frac{\mu}{2}\right)\left(j+\frac{\mu+3}{2}\right)\left(j+\frac{\mu+\nu}{2}\right)\notag\\
 &\ \ \ \ \ \ \ \ \ \ \ \ \ \ \ \ \ \ \ \ \ \ \ \ \ \ \ \ \ \ \ \ \ \ \ \ \ \ \ \ \ \ \ \ \ \ \ \ \ \ \ \ \ \ \ \ \ \ \ \ \ \ \ \ \ \ \ \left(j+\frac{\mu-\nu}{2}\right)\Lambda_{i,j-1}^{\mu,\nu}(x)\notag\\
 &\ \ \ \ \ \ \ \ + 2\left(j+\frac{\mu+3}{2}\right)\left(j+\frac{\mu+\nu-2}{2}\right)\left(j+\frac{\mu-\nu-2}{2}\right)\notag\\
 &\ \ \ \ \ \ \ \ \ \ \ \ \ \ \ \ \ \ \ \ \ \ \ \ \ \ \ \ \ \ \ \ \ \ \ \ \ \ \ \ \ \ \ \ \ \ \ \ \times\left(j+\frac{\mu+\nu}{2}\right)\left(j+\frac{\mu-\nu}{2}\right)\Lambda_{i,j-2}^{\mu,\nu}(x)
\end{align*}
with $a,b,c,d,e$ as in Lemma \ref{lem:GenFctPDEs}~(3).
\end{proposition}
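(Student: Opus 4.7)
The plan is to mimic the arguments used for Theorem~\ref{thm:EigFct} and Proposition~\ref{prop:RecRelH}: we translate the claimed recurrence in $j$ into a partial differential equation for the generating function $G_i^{\mu,\nu}(t,x)$ by taking generating functions on both sides, and then verify that this PDE is precisely Lemma~\ref{lem:GenFctPDEs}~(3). Since the functions $\Lambda_{i,j}^{\mu,\nu}(x)$ are defined uniformly for $i=1,2,3,4$ as Laurent coefficients of $G_i^{\mu,\nu}(t,x)$ at $t=0$, this single verification will handle all four cases simultaneously.

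First, I would multiply each side of the proposed identity by $t^j$ and sum over $j\in\ZZ$. On the left hand side one gets
\begin{align*}
\sum_{j\in\ZZ} t^j\cdot 8\left(j+\tfrac{\mu-1}{2}\right)\left(j+\tfrac{\mu+1}{2}\right)\left(j+\tfrac{\mu+3}{2}\right) x^2\,\Lambda_{i,j}^{\mu,\nu}(x)\\
= 8x^2\left(\theta_t+\tfrac{\mu-1}{2}\right)\left(\theta_t+\tfrac{\mu+1}{2}\right)\left(\theta_t+\tfrac{\mu+3}{2}\right) G_i^{\mu,\nu}(t,x),
\end{align*}
since multiplication of the Laurent coefficient by $j+c$ corresponds to applying $\theta_t+c$ to $G_i^{\mu,\nu}$. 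The five terms on the right hand side, which involve $\Lambda_{i,j+2}^{\mu,\nu}$, $\Lambda_{i,j+1}^{\mu,\nu}$, $\Lambda_{i,j}^{\mu,\nu}$, $\Lambda_{i,j-1}^{\mu,\nu}$, $\Lambda_{i,j-2}^{\mu,\nu}$, are converted similarly, using in addition the index shift identity
\[
\sum_{j\in\ZZ} t^j\,p(j)\,\Lambda_{i,j+k}^{\mu,\nu}(x) \;=\; t^{-k}\, p(\theta_t-k)\, G_i^{\mu,\nu}(t,x)
\]
for any polynomial $p$. A careful bookkeeping of these shifts turns the $\Lambda_{i,j\pm1}^{\mu,\nu}$ terms into $t^{\mp 1}$ times differential operators in $\theta_t$ applied to $G_i^{\mu,\nu}$, the $\Lambda_{i,j\pm2}^{\mu,\nu}$ terms into $t^{\mp 2}$ times such operators, and leaves the $\Lambda_{i,j}^{\mu,\nu}$ term as an undistributed polynomial expression in $\theta_t$.

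Performing these substitutions in each of the five summands on the right, and then cancelling a factor of $2$, yields exactly the fifth order ODE in $t$ stated in Lemma~\ref{lem:GenFctPDEs}~(3), with the same constants $a,b,c,d,e$. Since that PDE has already been verified (by direct computation using the Bessel equation \eqref{eq:DiffEqModBesselTheta}), the generating-function identity holds, and comparison of Laurent coefficients at $t=0$ gives the desired recurrence for every $j\in\ZZ$.

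The only subtlety—and thus the main obstacle—is the purely algebraic matching of coefficients between the five terms on the right hand side of the proposition (after shifting by $\pm1$ and $\pm2$ in $\theta_t$) and the five blocks in Lemma~\ref{lem:GenFctPDEs}~(3). Here one must check that the products of the linear factors $(j+\tfrac{\mu\pm\nu}{2})$, $(j+\tfrac{\mu\pm\nu\mp2}{2})$, etc., appearing in the $\Lambda_{i,j\pm1}^{\mu,\nu}$ and $\Lambda_{i,j\pm2}^{\mu,\nu}$ terms, match exactly the quintic polynomials in $(\theta_t+\tfrac{\mu+\cdots}{2})$ appearing in the $t^{\mp 1}$ and $t^{\mp 2}$ blocks of the PDE, and in particular that the quartic $a\theta_t^4+b\theta_t^3+c\theta_t^2+d\theta_t+e$ reproduces the middle term $2(j+\tfrac{\mu+1}{2})(aj^4+\cdots+e)$. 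This is a mechanical but lengthy verification; once done, no further analysis is needed.
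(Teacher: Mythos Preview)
Your proposal is correct and follows exactly the paper's approach: the paper simply states that the recurrence is an immediate consequence of the fifth order ordinary differential equation in $t$ for the generating functions $G_i^{\mu,\nu}(t,x)$ given in Lemma~\ref{lem:GenFctPDEs}~(3), and your write-up spells out the coefficient-matching in detail. One small slip: there is no global factor of $2$ to cancel---after the index shifts $j\mapsto\theta_t\mp k$ the five terms on the right already match the five blocks of Lemma~\ref{lem:GenFctPDEs}~(3) verbatim.
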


\begin{remark}
For $j\neq-\frac{\mu-1}{2},-\frac{\mu+1}{2},-\frac{\mu+3}{2}$ the recurrence relation of Proposition \ref{prop:RecRelxSq} can be rewritten as
\begin{equation*}
 x^2\Lambda_{i,j}^{\mu,\nu}(x) = \sum_{k=-2}^{2}{a_{i,j}^{\mu,\nu}(k)\Lambda_{i,j+k}^{\mu,\nu}(x)}
\end{equation*}
with constants $a_{i,j}^{\mu,\nu}(k)$.
\end{remark}

The last set of recurrence relations in the parameters $\mu$ and $\nu$ are again immediate with the corresponding differential equations for the generating functions which have already been stated in Lemma \ref{lem:GenFctRecRels}:

\begin{proposition}\label{prop:Formula1}
Let $\mu,\nu\in\mathbb{C}$. With $\delta(i)$, $\varepsilon(i)$ as in \eqref{eq:DelEps} we have three different recurrence relations:
\begin{enumerate}
 \item[\textup{(1)}] The recurrence relation in $\mu$
  \begin{equation*}
   \mu\left(\Lambda_{i,j}^{\mu,\nu}(x)-\Lambda_{i,j-1}^{\mu,\nu}(x)\right) = 2\delta(i)\left(\Lambda_{i,j}^{\mu-2,\nu}(x)-\left(\frac{x}{2}\right)^2\Lambda_{i,j-2}^{\mu+2,\nu}(x)\right).
  \end{equation*}
 \item[\textup{(2)}] The recurrence relation in $\nu$
  \begin{equation*}
   \nu\left(\Lambda_{i,j}^{\mu,\nu}(x)-\Lambda_{i,j-1}^{\mu,\nu}(x)\right) = 2\varepsilon(i)\left(\Lambda_{i,j}^{\mu,\nu-2}(x)-\left(\frac{x}{2}\right)^2\Lambda_{i,j}^{\mu,\nu+2}(x)\right).
  \end{equation*}
 \item[\textup{(3)}] The recurrence relation in $\mu$ and $\nu$
  \begin{equation*}
   \frac{\td}{\td x}\left(\Lambda_{i,j}^{\mu,\nu}(x)-\Lambda_{i,j-1}^{\mu,\nu}(x)\right) = \delta(i)\frac{x}{2}\Lambda_{i,j-2}^{\mu+2,\nu}(x) + \varepsilon(i)\frac{x}{2}\Lambda_{i,j}^{\mu,\nu+2}(x).
  \end{equation*}
\end{enumerate}
\end{proposition}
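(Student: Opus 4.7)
The plan is to derive each of the three recurrence relations by extracting the coefficient of $t^j$ in the corresponding generating function identity proved in Lemma \ref{lem:GenFctRecRels}. All three statements concern the Laurent expansion \eqref{eq:FctDef}, so the entire argument reduces to the observation that multiplication by $(1-t)$ on the level of generating functions corresponds to the difference operator $\Lambda_{i,j}-\Lambda_{i,j-1}$ on the level of coefficients, together with index shifts coming from factors $t, t^2$.

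First, I would record the trivial but crucial identity: if $F(t,x)=\sum_j f_j(x)t^j$ is a Laurent series, then
\begin{align*}
(1-t)F(t,x) &= \sum_j\bigl(f_j(x)-f_{j-1}(x)\bigr)t^j,\\
t^k F(t,x) &= \sum_j f_{j-k}(x)\,t^j.
\end{align*}
For relation (1), I apply this to Lemma \ref{lem:GenFctRecRels}(1): the left-hand side yields coefficients $\mu(\Lambda_{i,j}^{\mu,\nu}-\Lambda_{i,j-1}^{\mu,\nu})$, while on the right-hand side $G_i^{\mu-2,\nu}$ contributes $\Lambda_{i,j}^{\mu-2,\nu}(x)$ and $(tx/2)^2 G_i^{\mu+2,\nu}$ contributes $(x/2)^2\Lambda_{i,j-2}^{\mu+2,\nu}(x)$, giving precisely the stated identity. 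Relation (2) is obtained in exactly the same way from Lemma \ref{lem:GenFctRecRels}(2), noting that the factor $(x/2)^2$ (without a power of $t$) causes no index shift.

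For relation (3), I would start from Lemma \ref{lem:GenFctRecRels}(3), interchange $\frac{d}{dx}$ with the termwise differentiation of the Laurent series (which is legitimate since the series $\sum_j\Lambda_{i,j}^{\mu,\nu}(x)t^j$ converges uniformly on compact subsets of the domain of analyticity in $t$, so the derivative with respect to $x$ can be taken coefficient-by-coefficient), and read off coefficients of $t^j$: the left-hand side gives $\frac{d}{dx}(\Lambda_{i,j}^{\mu,\nu}-\Lambda_{i,j-1}^{\mu,\nu})$, while on the right-hand side the factor $t^2$ shifts the index by $2$ in the first term and the factor $1$ leaves it unchanged in the second.

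There is no real obstacle here; the only thing to be mildly careful about is bookkeeping the constants $\delta(i)$ and $\varepsilon(i)$ defined in \eqref{eq:DelEps}, and the fact that for $i=3,4$ the Laurent expansions begin at $j=-\mu$ rather than $j=0$, so the index shifts must be interpreted within the full Laurent series (with $\Lambda_{i,j}^{\mu,\nu}=0$ outside the stated range); this is consistent because the generating function identities of Lemma \ref{lem:GenFctRecRels} hold as identities of meromorphic functions in $t$, and the coefficient extraction is well defined term by term.
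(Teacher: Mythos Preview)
Your proof is correct and takes essentially the same approach as the paper, which simply states that these recurrence relations are immediate from the corresponding identities for the generating functions in Lemma~\ref{lem:GenFctRecRels}. You have just spelled out the coefficient-extraction step in more detail than the paper does.
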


\section{Meijer's $G$-transform}\label{sec:GTrafo}

The main result of this section is that for $(\mu,\nu)\in\Xi$ the functions $\Lambda_{2,j}^{\mu,\nu}(x)$ are eigenfunctions of a special type $\calT^{\mu,\nu}$ of Meijer's $G$-transform. This $G$-transform appears as the radial part of the unitary inversion operator $\calF_\calO$ (see Theorem \ref{thm:RadialUnitaryInversion}).

The integral transform $\mathcal{T}^{\mu,\nu}$ is defined by
\begin{align*}
 \mathcal{T}^{\mu,\nu}f(x) &:= \int_0^\infty{K^{\mu,\nu}(xy)f(y)y^{\mu+\nu+1}\td y} & \forall\, f\in\mathcal{C}_c^\infty(\mathbb{R}_+),\index{notation}{Tmunu@$\calT^{\mu,\nu}$}
\end{align*}
where the kernel function $K^{\mu,\nu}(x)$ is given by\index{notation}{Kmunux@$K^{\mu,\nu}(x)$}
\begin{equation*}
 K^{\mu,\nu}(x) := \frac{1}{2^{\mu+\nu+1}}G^{20}_{04}\left(\left(\frac{x}{4}\right)^2\left|0,-\frac{\nu}{2},-\frac{\mu}{2},-\frac{\mu+\nu}{2}\right.\right).
\end{equation*}
Here $G^{20}_{04}(x|b_1,b_2,b_3,b_4)$ denotes Meijer's $G$-function (see Appendix \ref{app:GFct}). Using the differential equation \eqref{eq:GFctDiffEqSpecialized} for the $G$-function it is easy to see that $K^{\mu,\nu}(x)$ satisfies the fourth order differential equation
\begin{align}
 \theta(\theta+\mu)(\theta+\nu)(\theta+\mu+\nu)u(x) &= x^2u(x)\label{eq:GDiffEq}
\end{align}

The operator $\calT^{\mu,\nu}$ is a special case of the more general $G$-transform which was first systematically investigated by C. Fox. In \cite[Theorem 1]{Fox61} he shows that for certain parameters the $G$-transform defines a unitary involutive operator on a certain $L^2$-space. This result is used to prove the first statement of the following proposition. Note that we do not yet assume that $(\mu,\nu)\in\Xi$. However, if $(\mu,\nu)\in\Xi$, then the following statement can also be obtained from representation theory (see Section \ref{sec:RepTh} for a proof).

\begin{proposition}\label{prop:GTrafoProperties}
Suppose $\mu+\nu,\mu-\nu>-2$.
\begin{enumerate}
 \item[\textup{(1)}] $\mathcal{T}^{\mu,\nu}$ extends to a unitary involutive operator on $L^2(\mathbb{R}_+,x^{\mu+\nu+1}dx)$.
 \item[\textup{(2)}] The $G$-transform $\mathcal{T}^{\mu,\nu}$ commutes with the fourth order differential operator $\calD_{\mu,\nu}$.
\end{enumerate}
\end{proposition}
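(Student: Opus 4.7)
\textbf{Part (1).} The plan is to recognize $\mathcal{T}^{\mu,\nu}$ as a special case of the $G$-transform studied by C.~Fox in \cite{Fox61} and apply his unitarity criterion. After the substitution $x \mapsto (x/4)^2$ the kernel of $\mathcal{T}^{\mu,\nu}$ is (up to a normalization) the Meijer $G$-function $G^{20}_{04}$ with parameter vector $(0,-\tfrac{\nu}{2},-\tfrac{\mu}{2},-\tfrac{\mu+\nu}{2})$. One has to verify that this set of parameters satisfies Fox's conditions that guarantee (i) $L^2$-boundedness and (ii) $\mathcal{T}^{\mu,\nu}\circ\mathcal{T}^{\mu,\nu}=\mathrm{id}$; the hypothesis $\mu+\nu,\mu-\nu>-2$ is precisely what is needed for this. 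I would verify the four inequalities on Fox's parameters by a direct computation (this is a bit tedious but routine), together with the symmetry of the parameter set under the involution $b\mapsto -b-\tfrac{\mu+\nu+2}{2}$, which accounts for the weight $x^{\mu+\nu+1}\,dx$ on $\mathbb{R}_+$.

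\textbf{Part (2).} The heart of the argument is the pointwise identity
\[
 (\mathcal{D}_{\mu,\nu})_x\,K^{\mu,\nu}(xy) \;=\; (\mathcal{D}_{\mu,\nu})_y\,K^{\mu,\nu}(xy),
\]
which I would prove using the fourth-order differential equation \eqref{eq:GDiffEq}, namely $\theta(\theta+\mu)(\theta+\nu)(\theta+\mu+\nu)K^{\mu,\nu}(z)=z^2K^{\mu,\nu}(z)$. Writing $z=xy$, one has $\theta_x K(xy)=\theta_z K(z)|_{z=xy}=\theta_y K(xy)$, so polynomials in $\theta$ produce the same thing whether one applies them in $x$ or in $y$. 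Multiplying $\mathcal{D}_{\mu,\nu}$ by $x^2$ removes the singular $1/x^2$ factor, and expanding
\[
 x^2y^2(\mathcal{D}_{\mu,\nu})_x K(xy) \;-\; x^2y^2(\mathcal{D}_{\mu,\nu})_y K(xy)
\]
via the expression $x^2\mathcal{D}_{\mu,\nu} = \bigl((\theta+\mu+\nu)(\theta+\mu)-x^2\bigr)\bigl(\theta(\theta+\nu)-x^2\bigr)$, the leading fourth-order terms reduce to $y^2z^2K$ resp.\ $x^2z^2K$ by \eqref{eq:GDiffEq}, and the cross terms match identically. The remaining discrepancy collapses to $(y^2-x^2)(z^2-x^2y^2)K(z)=0$ because $z^2=x^2y^2$. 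This is the key computation.

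With the symmetry of the kernel established, commutativity of $\mathcal{T}^{\mu,\nu}$ with $\mathcal{D}_{\mu,\nu}$ follows by differentiating under the integral and integrating by parts on $\mathbb{R}_+$ with respect to the weighted measure $y^{\mu+\nu+1}\,dy$, using the formal self-adjointness of $\mathcal{D}_{\mu,\nu}$ from Proposition~\ref{prop:DiffOpProperties}(3). For $f\in C_c^\infty(\mathbb{R}_+)$ the boundary terms at $y=0$ and $y=\infty$ vanish: near $\infty$ by compact support of $f$, and near $0$ because the asymptotics of $K^{\mu,\nu}$ from Lemma~\ref{lem:KmunuAsymptotics}(1) together with the hypothesis $\mu\pm\nu>-2$ guarantee that $K^{\mu,\nu}(xy)$ and its first three derivatives in $y$ grow slowly enough to be dominated by the weight.

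\textbf{Main obstacle.} The part~(1) assertion rests on \cite{Fox61}, so the only real work there is translating parameters and checking inequalities. The main conceptual obstacle is part~(2), namely identifying the miraculous identity $(\mathcal{D}_{\mu,\nu})_x K^{\mu,\nu}(xy) = (\mathcal{D}_{\mu,\nu})_y K^{\mu,\nu}(xy)$ and reducing it to the second-order ODE \eqref{eq:GDiffEq}. Once that algebraic miracle is isolated, the rest is a standard integration-by-parts argument whose boundary bookkeeping is controlled by the asymptotics in Lemma~\ref{lem:KmunuAsymptotics} and Lemma~\ref{lem:MuNuProperties}.
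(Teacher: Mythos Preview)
Your approach is essentially the paper's: Fox's theorem for part~(1), and for part~(2) the kernel identity $(\calD_{\mu,\nu})_xK^{\mu,\nu}(xy)=(\calD_{\mu,\nu})_yK^{\mu,\nu}(xy)$ combined with the self-adjointness of $\calD_{\mu,\nu}$. Two minor remarks: \eqref{eq:GDiffEq} is a \emph{fourth}-order equation, not second-order; and the paper reaches the kernel identity more directly from the expanded form \eqref{eq:DiffOp2}, where $\calD_{\mu,\nu}=\tfrac{1}{x^2}\theta(\theta+\mu)(\theta+\nu)(\theta+\mu+\nu)+x^2+Q(\theta)$, so that \eqref{eq:GDiffEq} converts the first summand applied in $x$ to $y^2K(xy)$ and the $x\leftrightarrow y$ symmetry is immediate---no need to multiply through by $x^2y^2$. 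Finally, since $f\in C_c^\infty(\RR_+)$ has support bounded away from $0$ as well as $\infty$, no asymptotic control of $K^{\mu,\nu}$ near $y=0$ is required for the boundary terms.
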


\begin{proof}
\begin{enumerate}
 \item[\textup{(1)}] It is proved in \cite[Theorem 1]{Fox61} that
 \begin{equation*}
  Tf(r) := \frac{1}{c}\int_0^\infty{G^{20}_{04}\left((rr')^{\frac{1}{c}}\left|0,-\frac{\nu}{2},-\frac{\mu}{2},-\frac{\mu+\nu}{2}\right.\right)f(r')\td r'}
 \end{equation*}
 defines a unitary involutive operator $T:L^2(\mathbb{R}_+)\longrightarrow L^2(\mathbb{R}_+)$ if $c>0$ and $c>\nu$. By assumption $c=\frac{\mu+\nu+2}{2}$ satisfies this condition. Then the coordinate change $r=\left(\frac{x}{2}\right)^{2c}$, $r'=\left(\frac{y}{2}\right)^{2c}$ gives the claim.
 \item[\textup{(2)}] A\ \ short\ \ calculation,\ \ using\ \ that\ \ $\calD_{\mu,\nu}$\ \ is\ \ a\ \ symmetric\ \ operator\ \ in $L^2(\mathbb{R}_+,x^{\mu+\nu+1}\td x)$, gives the desired statement if one knows that the kernel function $K^{\mu,\nu}(x)$ satisfies the following differential equation
 \begin{equation*}
  \left(\calD_{\mu,\nu}\right)_xK^{\mu,\nu}(xy) = \left(\calD_{\mu,\nu}\right)_yK^{\mu,\nu}(xy).
 \end{equation*}
 But this is easily derived from the expression \eqref{eq:DiffOp2} for $\calD_{\mu,\nu}$ using the identity
 \begin{align*}
  \theta_xK^{\mu,\nu}(xy) &= (\theta K^{\mu,\nu})(xy) = \theta_yK^{\mu,\nu}(xy)
 \end{align*}
 and the differential equation \eqref{eq:GDiffEq} for $K^{\mu,\nu}(x)$.\qedhere
\end{enumerate}
\end{proof}

\begin{theorem}[Meijer's $G$-transform]\label{thm:EigFctGTrafo}
Suppose that $(\mu,\nu)\in\Xi$. Then for each $j\in\mathbb{N}_0$ the function $\Lambda_{2,j}^{\mu,\nu}(x)$ is an eigenfunction of Meijer's $G$-transform $\mathcal{T}^{\mu,\nu}$ for the eigenvalue $(-1)^j$.
\end{theorem}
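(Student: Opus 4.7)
The plan is to identify the functions $\Lambda_{2,j}^{\mu,\nu}$ with the radial parts of the $\frakk$-types $W^j$ in the minimal representation and to exploit the fact that the unitary inversion operator $\calF_\calO$ is, up to a scalar, given by the central element $\pi(\check{w_0})$. By Lemma \ref{lem:Tildew0Central} the element $\check{w_0}$ is central in $\check{K}$, so Schur's lemma yields a scalar $c_j\in\CC$ with $\pi(\check{w_0})|_{W^j}=c_j\cdot\id$, and hence also $\calF_\calO|_{W^j}=c_j'\cdot\id$ for some $c_j'\in\CC$. Each $\frakk$-type $W^j$ contains, up to scalars, a unique $K_L$-invariant (i.e.\ radial) vector: its radial part is an $L^2$-eigenfunction of $\calD_{\mu,\nu}$ with eigenvalue $4j(j+\mu+1)$ (Theorem \ref{thm:CasimirAction} together with Proposition \ref{prop:CasimirScalar}), so by the one-dimensionality of the $L^2$-eigenspaces (Corollary \ref{cor:DmunuSASpectrum}) this radial vector must be a scalar multiple of $\Lambda_{2,j}^{\mu,\nu}$. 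Since $\calF_\calO$ restricts to $\calT^{\mu,\nu}$ on radial functions by Theorem \ref{thm:RadialUnitaryInversion}, we obtain $\calT^{\mu,\nu}\Lambda_{2,j}^{\mu,\nu}=c_j'\Lambda_{2,j}^{\mu,\nu}$.

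Proposition \ref{prop:GTrafoProperties}~(1) asserts that $\calT^{\mu,\nu}$ is an involution on $L^2(\RR_+,x^{\mu+\nu+1}\td x)$, so $c_j'\in\{\pm1\}$; the task is to identify the sign. For this I would propagate the sign by means of the $\frakk$-finite recurrence of Proposition~\ref{prop:RecRelH}. The key extra ingredient is the commutation identity \eqref{eq:CommFOE}, $\calF_\calO\circ E=-(E+\tfrac{r_0d}{2})\circ\calF_\calO$. On radial functions $\psi(x)=f(|x|)$ the Euler vector field $E$ restricts to $\theta=t\tfrac{\td}{\td t}$, and since $\mu+\nu+2=\tfrac{r_0d}{2}$ this becomes
\begin{equation*}
\calT^{\mu,\nu}\circ\calH_{\mu+\nu}=-\calH_{\mu+\nu}\circ\calT^{\mu,\nu}.
\end{equation*}
Applying $\calT^{\mu,\nu}$ to both sides of the recurrence in Proposition~\ref{prop:RecRelH} and comparing coefficients of $\Lambda_{2,j\pm1}^{\mu,\nu}$ (which are linearly independent by Corollary~\ref{cor:completeness}) yields $c_{j+1}'=-c_j'$ for every $j\ge 0$; note that $(j+1)(j+\mu+1)\neq 0$ for $j\ge 0$ thanks to Lemma~\ref{lem:MuNuProperties}~(3).

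It remains only to fix the base case $c_0'=1$. By Example~\ref{ex:3Fcts} we have $\Lambda_{2,0}^{\mu,\nu}=\Gamma(\tfrac{\mu+2}{2})^{-1}\widetilde{K}_{\nu/2}=\Gamma(\tfrac{\mu+2}{2})^{-1}f_0$, so the claim is equivalent to $\calT^{\mu,\nu}f_0=f_0$, which is precisely Proposition~\ref{prop:AkkappaPsi0}~(b) (established in the proof of Theorem~\ref{thm:RadialUnitaryInversion}). Induction then gives $c_j'=(-1)^j$ for all $j\in\NN_0$, which is the asserted statement.

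The only genuinely delicate point, which is already handled upstream, is the anchoring step $c_0'=1$: its proof in Proposition~\ref{prop:AkkappaPsi0}~(b) rests on the simplification formula \eqref{eq:GFctIntKBessel} for integrals of $G$-functions against $K$-Bessel functions, and is the main reason Meijer $G$-functions appear in this picture in the first place.
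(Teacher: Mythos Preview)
Your argument is correct, and it takes a genuinely different route from the paper's own proof. The paper establishes the eigenfunction property directly from the commutation of $\calT^{\mu,\nu}$ with $\calD_{\mu,\nu}$ (Proposition~\ref{prop:GTrafoProperties}~(2)) and the one-dimensionality of the $L^2$-eigenspaces, without invoking the $\frakk$-type picture at all. It then pins down each eigenvalue $\varepsilon_j$ \emph{individually} by an asymptotic argument: multiply $\calT^{\mu,\nu}\Lambda_{2,j}^{\mu,\nu}(x)=\varepsilon_j\Lambda_{2,j}^{\mu,\nu}(x)$ by $x^{\nu}$, let $x\to0$, use the leading term of $K^{\mu,\nu}$ from Lemma~\ref{lem:KmunuAsymptotics} together with dominated convergence, and match against the known asymptotics of $\Lambda_{2,j}^{\mu,\nu}$ from Theorem~\ref{lem:Asymptotics}. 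The remaining integral $\int_0^\infty \Lambda_{2,j}^{\mu,\nu}(y)\,y^{\mu+1}\,\td y$ is evaluated in closed form as $(-1)^j\,2^{\mu}\Gamma(\tfrac{\mu-\nu+2}{2}+j)/j!$ by a generating-function computation (Lemma~\ref{lem:Lambda2L1}, via \eqref{eq:IntegralIKBessel}), and this produces $\varepsilon_j=(-1)^j$ for every $j$ simultaneously.

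Your approach instead leans on the representation theory already assembled in Chapter~\ref{ch:MinRep}: the scalar action of $\calF_\calO$ on each $W^j$ (Theorem~\ref{prop:FOProperties}~(5)), the identification $\calF_{\calO,\rad}=\calT^{\mu,\nu}$ (Theorem~\ref{thm:RadialUnitaryInversion}), and the anticommutation $\calT^{\mu,\nu}\calH_{\mu+\nu}=-\calH_{\mu+\nu}\calT^{\mu,\nu}$ coming from \eqref{eq:CommFOE}. Combined with the recurrence of Proposition~\ref{prop:RecRelH} this yields $c_{j+1}'=-c_j'$, so only the single anchor $c_0'=1$ (Proposition~\ref{prop:AkkappaPsi0}~(b)) is needed. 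This is cleaner in that it avoids the dominated-convergence step and the separate integral evaluation of Lemma~\ref{lem:Lambda2L1}; on the other hand, the paper's asymptotic method stays entirely within classical analysis and yields Lemma~\ref{lem:Lambda2L1} as a byproduct of independent interest. Both arguments ultimately rest on the same $G$-function identity \eqref{eq:GFctIntKBessel}, which enters through Proposition~\ref{prop:AkkappaPsi0}~(b) in your case and is implicit (via Theorem~\ref{thm:RadialUnitaryInversion}) in both.
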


\begin{proof}
Since $\calD_{\mu,\nu}$ and $\calT^{\mu,\nu}$ commute, the function $\calT^{\mu,\nu}\Lambda_{2,j}^{\mu,\nu}$ is also an eigenfunction of $\calD_{\mu,\nu}$ for the eigenvalue $4j(j+\mu+1)$. But by Proposition \ref{prop:DiffOpProperties}~(4) and Theorem \ref{thm:EigFct} the function $\Lambda_{2,j}^{\mu,\nu}$ spans the $L^2$-eigenspace of $\calD_{\mu,\nu}$ with eigenvalue $4j(j+\mu+1)$. Hence, there exists $\varepsilon_j\in\RR$ such that
\begin{align}
 \calT^{\mu,\nu}\Lambda_{2,j}^{\mu,\nu}(x) &= \varepsilon_j\Lambda_{2,j}^{\mu,\nu}(x).\label{eq:TmunuLambdaEigFct}
\end{align}
We calculate $\varepsilon_j$ by specializing \eqref{eq:TmunuLambdaEigFct} to $x=0$.\\
Let us just treat the case $\nu>0$ here. The other cases $\nu=0$ and $\nu<0$ can be treated similarly. Multiplying \eqref{eq:TmunuLambdaEigFct} with $x^\nu$ and taking the limit $x\rightarrow0$ yields
\begin{align}
 \lim_{x\rightarrow0}{\int_0^\infty{(xy)^\nu K^{\mu,\nu}(xy)\Lambda_{2,j}^{\mu,\nu}(y)y^{\mu+1}\td y}} &= \varepsilon_j\lim_{x\rightarrow0}{x^\nu\Lambda_{2,j}^{\mu,\nu}(x)}.\label{eq:LambdaEigenfctMultiplied}
\end{align}
The right hand side is by Theorem \ref{lem:Asymptotics} equal to
\begin{align*}
 \varepsilon_j\frac{(\frac{\mu-\nu+2}{2})_j2^{\nu-1}\Gamma(\frac{\nu}{2})}{j!\Gamma(\frac{\mu+2}{2})}.
\end{align*}
To justify interchanging limit and integral on the left hand side we apply the dominated convergence theorem. By the asymptotic behavior of the $G$-function at $x=0$ and $x=\infty$ (see \eqref{eq:GFctAsymptotics0} and \eqref{eq:GFctAsymptoticsInfty}), the function $x^\nu K^{\mu,\nu}(x)$ is bounded on $\RR_+$ and hence
\begin{align*}
 \left|(xy)^\nu K^{\mu,\nu}(xy)\Lambda_{2,j}^{\mu,\nu}(y)y^{\mu+1}\right| &\leq C\cdot\left|\Lambda_{2,j}^{\mu,\nu}(y)y^{\mu+1}\right|
\end{align*}
for some constant $C>0$. Therefore, the integrand in \eqref{eq:LambdaEigenfctMultiplied} is dominated by the function $C\cdot\left|\Lambda_{2,j}^{\mu,\nu}(y)y^{\mu+1}\right|$ which is integrable by Theorem \ref{lem:Asymptotics}. Hence, the assumptions of the dominated convergence theorem are satisfied and with the asymptotic behavior of the function $K^{\mu,\nu}(x)$ at $x=0$ (see Lemma \ref{lem:KmunuAsymptotics}) we obtain
\begin{align*}
 \lim_{x\rightarrow0}{\int_0^\infty{(xy)^\nu K^{\mu,\nu}(xy)\Lambda_{2,j}^{\mu,\nu}(y)y^{\mu+1}\td y}} &= \frac{2^{\nu-1}\Gamma(\frac{\nu}{2})}{2^\mu\Gamma(\frac{\mu+2}{2})\Gamma(\frac{\mu-\nu+2}{2})}\int_0^\infty{\Lambda_{2,j}^{\mu,\nu}(y)y^{\mu+1}\td y}
\end{align*}
Together with the following lemma this shows that $\varepsilon_j=(-1)^j$ which finishes the proof. (Part (2) of the lemma is needed for the case $\nu<0$.)
\end{proof}

%By \cite[Theorem 4.2~(1)]{HKMM09a} the asymptotic behavior of $\Lambda_j^{\mu,\nu}(x)$ as $x\rightarrow0$ is given by
%\begin{align*}
% \Lambda_j^{\mu,\nu}(x) &= \frac{(\frac{\mu-|\nu|+2}{2})_j}{j!\Gamma(\frac{\mu+2}{2})}\times \left\{\begin{array}{ll}\displaystyle2^{\nu-1}\Gamma\left(\frac{\nu}{2}\right)x^{-\nu}+o(x^{-\nu})&\mbox{if $\nu>0$,}\\\displaystyle-\log(x)+o\left(\log(x)\right)&\mbox{if $\nu=0$,}\\\displaystyle\frac{1}{2}\Gamma\left(-\frac{\nu}{2}\right)+o(1)&\mbox{if $\nu<0$.}\end{array}\right.
%\end{align*}
%From \cite[equations 5.2~(7) \& (10)]{Luk69} one can further deduce the asymptotic behavior of the kernel function $K^{\mu,\nu}$ as $x\rightarrow0$:
%\begin{align*}
% K^{\mu,\nu}(x) &= \frac{e^{i\pi\frac{r_0}{2}(d_0-\frac{d}{2})_+}}{2^{\mu+\nu}\Gamma(\frac{\mu+2}{2})\Gamma(\frac{\mu-|\nu|+2}{2})}\times \left\{\begin{array}{ll}\displaystyle2^{2\nu-1}\Gamma\left(\frac{\nu}{2}\right)x^{-\nu}+o(x^{-\nu})&\mbox{if $\nu>0$,}\\\displaystyle-\log(x)+o\left(\log(x)\right)&\mbox{if $\nu=0$,}\\\displaystyle\frac{1}{2}\Gamma\left(-\frac{\nu}{2}\right)+o(1)&\mbox{if $\nu<0$.}\end{array}\right.
%\end{align*}
%The asymptotics of $\Lambda_j^{\mu,\nu}(x)$ as $x\rightarrow\infty$ is given by (cf. \cite[Theorem 4.2~(2)]{HKMM09a}):
%\begin{align*}
% \Lambda_j^{\mu,\nu}(x) = \calO(x^{j-\frac{\nu+1}{2}}e^{-x})
%\end{align*}

\begin{lemma}\label{lem:Lambda2L1}
\begin{enumerate}
\item[\textup{(1)}] $\displaystyle\int_0^\infty{\Lambda_{2,j}^{\mu,\nu}(x)x^{\mu+1}\td x} = (-1)^j\frac{2^\mu\Gamma(\frac{\mu-\nu+2}{2}+j)}{j!}$,
\item[\textup{(2)}] $\displaystyle\int_0^\infty{\Lambda_{2,j}^{\mu,\nu}(x)x^{\mu+\nu+1}\td x} = (-1)^j\frac{2^{\mu+\nu}\Gamma(\frac{\mu+\nu+2}{2}+j)}{j!}$.
\end{enumerate}
\end{lemma}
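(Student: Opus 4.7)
The plan is to prove both identities simultaneously by multiplying the defining identity
\[
 G_2^{\mu,\nu}(t,x) = \sum_{j=0}^\infty \Lambda_{2,j}^{\mu,\nu}(x)\, t^j
\]
by $x^{\mu+1}$ (resp.\ $x^{\mu+\nu+1}$), integrating over $x\in\RR_+$, and reading off the coefficient of $t^j$. After the substitution $y=x/(1-t)$ (so $x=(1-t)y$, $dx=(1-t)\,dy$), the factors of $(1-t)$ collect cleanly, reducing everything to a classical Weber--Schafheitlin-type integral of $\widetilde{I}_{\mu/2}$ against $\widetilde{K}_{\nu/2}$ on $\RR_+$.

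More precisely, using $\widetilde{I}_\alpha(z)=(z/2)^{-\alpha}I_\alpha(z)$ and $\widetilde{K}_\alpha(z)=(z/2)^{-\alpha}K_\alpha(z)$, the substitution yields
\[
 \int_0^\infty G_2^{\mu,\nu}(t,x)\, x^{c+1}\,dx
 = (1-t)^{c-\frac{\mu+\nu}{2}+1}\cdot 2^{(\mu+\nu)/2}\,t^{-\mu/2}\int_0^\infty y^{c-\frac{\mu+\nu}{2}+1} I_{\mu/2}(ty)K_{\nu/2}(y)\,dy,
\]
where $c=\mu+\nu$ for (2) and $c=\mu$ for (1). The inner integral is evaluated by Gradshteyn--Ryzhik 6.576.4:
\[
 \int_0^\infty y^{\lambda-1} I_q(by)K_p(ay)\,dy = \frac{b^q\,\Gamma(\tfrac{\lambda+q-p}{2})\Gamma(\tfrac{\lambda+q+p}{2})}{2^{2-\lambda} a^{\lambda+q}\Gamma(q+1)}\ {}_2F_1\!\left(\tfrac{\lambda+q-p}{2},\tfrac{\lambda+q+p}{2};q+1;\tfrac{b^2}{a^2}\right),
\]
applied with $p=\nu/2$, $q=\mu/2$, $a=1$, $b=t$. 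In both cases the two numerator $\Gamma$-values produced by the formula are exactly $\Gamma(\tfrac{\mu+2}{2})$ together with $\Gamma(\tfrac{\mu+\nu+2}{2})$ (resp.\ $\Gamma(\tfrac{\mu-\nu+2}{2})$), and the first of these cancels $\Gamma(q+1)=\Gamma(\tfrac{\mu+2}{2})$. Crucially, the hypergeometric function collapses via the degeneration
\[
 {}_2F_1(a,b;b;z) = (1-z)^{-a},
\]
giving $(1-t^2)^{-(\mu+\nu+2)/2}$ in case (2) and $(1-t^2)^{-(\mu-\nu+2)/2}$ in case (1).

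Combining with the external power of $(1-t)$ and factoring $1-t^2=(1-t)(1+t)$ makes the $(1-t)$-factors cancel exactly, so
\[
 \int_0^\infty G_2^{\mu,\nu}(t,x)\, x^{\mu+\nu+1}\,dx = 2^{\mu+\nu}\,\Gamma\!\left(\tfrac{\mu+\nu+2}{2}\right)(1+t)^{-(\mu+\nu+2)/2},
\]
and likewise for the other integral with exponents $\mu$ and $\tfrac{\mu-\nu+2}{2}$. Expanding by the binomial series $(1+t)^{-a}=\sum_j(-1)^j\tfrac{(a)_j}{j!}t^j=\sum_j(-1)^j\tfrac{\Gamma(a+j)}{j!\,\Gamma(a)}t^j$ and identifying coefficients of $t^j$ yields the two claimed formulas.

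The main technical point is justifying the interchange of the sum $\sum_j$ with the integral $\int_0^\infty\cdots\,dx$; this is harmless because, after the substitution, the resulting iterated integral converges absolutely for $t\in(0,1)$ (the asymptotics of $\widetilde{K}_{\nu/2}$ at $0$ and $\infty$ from \eqref{eq:BesselKAsymptAt0}, \eqref{eq:BesselAsymptAtInfty} control both endpoints, using $\mu-\nu>-2$ and $\mu+\nu>-2$ from Lemma \ref{lem:MuNuProperties}). One then invokes the identity theorem for analytic functions in $t$ to extract the Taylor coefficients. The only delicate bookkeeping is keeping track of the powers of $2$ and the parameters of ${}_2F_1$; once the latter is in the form ${}_2F_1(a,b;b;t^2)$, everything simplifies in one stroke.
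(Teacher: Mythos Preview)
Your proof is correct and follows essentially the same route as the paper: integrate the generating function $G_2^{\mu,\nu}(t,x)$ against $x^{\mu+1}$ (resp.\ $x^{\mu+\nu+1}$), evaluate the resulting Weber--Schafheitlin integral via the formula \eqref{eq:IntegralIKBessel} (your GR~6.576.4 with the substitution $y=x/(1-t)$ is the same identity after unpacking the normalizations), let the hypergeometric degenerate to $(1+t)^{-a}$, and expand binomially. The only difference is that the paper applies \eqref{eq:IntegralIKBessel} directly to $\widetilde I_{\mu/2}(\frac{tx}{1-t})\widetilde K_{\nu/2}(\frac{x}{1-t})$ with $a=\frac{t}{1-t}$, $b=\frac{1}{1-t}$, avoiding the explicit change of variables.
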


\begin{proof}
We use the integral formula \eqref{eq:IntegralIKBessel}. Together with \eqref{eq:G2} we obtain
\begin{align*}
 \int_0^\infty{G_2^{\mu,\nu}(t,x)x^{\mu+1}\td x} &= 2^\mu\Gamma\left(\frac{\mu-\nu+2}{2}\right)(1-t)^{\frac{\mu-\nu+2}{2}}\\
 & \ \ \ \ \ \ \ \ \ \ \ \ \ \ \ \ \ \ \ \ \ \ \ \ \ \times{_2F_1}\left(\frac{\mu+2}{2},\frac{\mu-\nu+2}{2};\frac{\mu+2}{2};t^2\right)\\
 &= 2^\mu\Gamma\left(\frac{\mu-\nu+2}{2}\right)(1+t)^{-\frac{\mu-\nu+2}{2}}\\
 &= \sum_{j=0}^\infty{\frac{2^\mu\Gamma(\frac{\mu-\nu+2}{2}+j)}{j!}(-t)^j}
\end{align*}
and
\begin{align*}
 \int_0^\infty{G_2^{\mu,\nu}(t,x)x^{\mu+\nu+1}\td x} &= 2^{\mu+\nu}\Gamma\left(\frac{\mu+\nu+2}{2}\right)(1-t)^{\frac{\mu+\nu+2}{2}}\\
 & \ \ \ \ \ \ \ \ \ \ \ \ \ \ \ \ \ \ \ \ \ \ \ \times{_2F_1}\left(\frac{\mu+\nu+2}{2},\frac{\mu+2}{2};\frac{\mu+2}{2};t^2\right)\\
 &= 2^{\mu+\nu}\Gamma\left(\frac{\mu+\nu+2}{2}\right)(1+t)^{-\frac{\mu+\nu+2}{2}}\\
 &= \sum_{j=0}^\infty{\frac{2^{\mu+\nu}\Gamma(j+\frac{\mu+\nu+2}{2})}{j!}(-t)^j}.
\end{align*}
In view of \eqref{eq:FctDef} the claim follows.
\end{proof}

\begin{remark}
For $\nu=\pm1$ the functions $\Lambda_{2,j}^{\mu,\nu}(x)$ are Laguerre functions by \eqref{eq:Lambda2SpecialValue} and \eqref{eq:Lambda2SpecialValue2}. In this case the reduction formula \eqref{eq:GFctJBessel} implies that the kernel function $K^{\mu,\nu}(x)$ simplifies to a $J$-Bessel function:
\begin{align*}
 K^{\mu,\pm1}(x) &= x^{-\frac{\mu+\nu+1}{2}}J_\mu(2x^{\frac{1}{2}}).
\end{align*}
Then $\calT^{\mu,\nu}$ is a Hankel type transform and Theorem \ref{thm:EigFctGTrafo} is a reformulation of \cite[8.9~(3)]{EMOT54a}. Note that the integral formula in \cite[8.9~(3)]{EMOT54a} holds for a more general set of parameters.
\end{remark}

One can use Theorem \ref{thm:EigFctGTrafo} to obtain an integral formula for the generating function $G_2^{\mu,\nu}(t,x)$ and hence for the Bessel functions involved:

\begin{corollary}\label{cor:BesselIntFormula2}
Let $\frac{1}{2}<\alpha<\infty$, $\beta=\frac{\alpha}{2\alpha-1}$ and assume that $(\mu,\nu)\in\Xi$. Then for $x>0$
\begin{multline*}
 \int_0^\infty{K^{\mu,\nu}(xy)\widetilde{I}_{\frac{\mu}{2}}\left((\alpha-1)y\right)\widetilde{K}_{\frac{\nu}{2}}\left(\alpha y\right)y^{\mu+\nu+1}\td y}\\
 = \left(\frac{\beta}{\alpha}\right)^{\frac{\mu+\nu+2}{2}}\widetilde{I}_{\frac{\mu}{2}}((\beta-1)x)\widetilde{K}_{\frac{\nu}{2}}(\beta x).
\end{multline*}
\end{corollary}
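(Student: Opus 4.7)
The plan is to view the corollary as a direct consequence of Theorem \ref{thm:EigFctGTrafo} (each $\Lambda_{2,j}^{\mu,\nu}$ is a $(-1)^j$-eigenfunction of $\calT^{\mu,\nu}$) applied to the Taylor expansion of $G_2^{\mu,\nu}(t,x)$ around $t=0$, with a well-chosen parametrization of $t$ by $\alpha$.

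First I would reparametrize. For $\alpha>\tfrac12$ set $t:=\tfrac{\alpha-1}{\alpha}\in(-1,1)$; then $\tfrac{1}{1-t}=\alpha$ and $\tfrac{tx}{1-t}=(\alpha-1)x$, so by the definition \eqref{eq:G2} of $G_2^{\mu,\nu}$,
\begin{align*}
G_2^{\mu,\nu}(t,x)=\alpha^{\frac{\mu+\nu+2}{2}}\widetilde I_{\mu/2}((\alpha-1)x)\widetilde K_{\nu/2}(\alpha x).
\end{align*}
A direct computation shows that the condition $\tfrac{\beta-1}{\beta}=-t$ is equivalent to $\beta=\tfrac{\alpha}{2\alpha-1}$, so the right-hand side of the claimed identity is, up to the factor $\alpha^{-(\mu+\nu+2)/2}$, precisely $G_2^{\mu,\nu}(-t,x)$. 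Thus the corollary reduces to the operator identity
\begin{align*}
\calT^{\mu,\nu}\bigl(G_2^{\mu,\nu}(t,\cdot)\bigr)(x)=G_2^{\mu,\nu}(-t,x).
\end{align*}

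To prove this identity I would use the Taylor expansion \eqref{eq:FctDef}, namely $G_2^{\mu,\nu}(t,\cdot)=\sum_{j=0}^\infty\Lambda_{2,j}^{\mu,\nu}\,t^j$, and apply $\calT^{\mu,\nu}$ term by term, invoking Theorem \ref{thm:EigFctGTrafo} to replace $\calT^{\mu,\nu}\Lambda_{2,j}^{\mu,\nu}$ by $(-1)^j\Lambda_{2,j}^{\mu,\nu}$; resumming then gives $G_2^{\mu,\nu}(-t,\cdot)$. The rigorous justification goes through $L^2(\RR_+,x^{\mu+\nu+1}\td x)$: by Corollary \ref{cor:completeness} the system $(\Lambda_{2,j}^{\mu,\nu})_j$ is an orthogonal basis, and Corollary \ref{cor:Norms} combined with Stirling shows $\|\Lambda_{2,j}^{\mu,\nu}\|^2$ grows only polynomially in $j$, so $\sum_j|t|^{2j}\|\Lambda_{2,j}^{\mu,\nu}\|^2<\infty$ for $|t|<1$. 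Hence the partial sums converge to $G_2^{\mu,\nu}(t,\cdot)$ in $L^2$; since $\calT^{\mu,\nu}$ is unitary on this space by Proposition \ref{prop:GTrafoProperties}(1), applying it commutes with the $L^2$-limit, and the resulting $L^2$-sum equals the pointwise sum $G_2^{\mu,\nu}(-t,x)$ almost everywhere.

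The main obstacle is verifying that $G_2^{\mu,\nu}(t,\cdot)$ actually lies in $L^2(\RR_+,x^{\mu+\nu+1}\td x)$ so that the expansion makes sense there. At the origin, the product of Bessel functions behaves like $x^{-\nu}$ (up to log for $\nu=0$), so integrability reduces to $\mu-\nu>-2$, which holds by Lemma \ref{lem:MuNuProperties}. At infinity, the asymptotics $\widetilde I_{\mu/2}((\alpha-1)x)\sim e^{(1-\alpha)x}$ and $\widetilde K_{\nu/2}(\alpha x)\sim e^{-\alpha x}$ (up to power factors; note $\widetilde I_{\mu/2}$ is even, so $\alpha<1$ poses no issue) give the product decay $e^{(1-2\alpha)x}$, which is $L^2$ with respect to $x^{\mu+\nu+1}\td x$ precisely when $1-2\alpha<0$. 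This is exactly the hypothesis $\alpha>\tfrac12$, which therefore is not a technicality but the sharp condition ensuring that the whole argument takes place inside $L^2(\RR_+,x^{\mu+\nu+1}\td x)$. Once this is established, dividing both sides by $\alpha^{(\mu+\nu+2)/2}$ yields the stated formula.
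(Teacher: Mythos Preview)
Your proof is correct and follows essentially the same approach as the paper: both reduce the corollary to the identity $\calT^{\mu,\nu}G_2^{\mu,\nu}(t,\cdot)=G_2^{\mu,\nu}(-t,\cdot)$ obtained by summing the eigenfunction relations $\calT^{\mu,\nu}\Lambda_{2,j}^{\mu,\nu}=(-1)^j\Lambda_{2,j}^{\mu,\nu}$ from Theorem~\ref{thm:EigFctGTrafo} against $t^j$, and then substitute $\alpha=\tfrac{1}{1-t}$. Your version is more careful in justifying the interchange of $\calT^{\mu,\nu}$ with the series via $L^2$-convergence; one small slip is that for $\alpha>1$ the growth of $\widetilde I_{\mu/2}((\alpha-1)x)$ is $e^{(\alpha-1)x}$ rather than $e^{(1-\alpha)x}$, so the product decays like $e^{-x}$ there, but this only strengthens the conclusion.
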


\begin{proof}
By Theorem \ref{thm:EigFctGTrafo} we have
\begin{align*}
 \mathcal{T}^{\mu,\nu}\Lambda_{2,j}^{\mu,\nu} &= (-1)^j\Lambda_{2,j}^{\mu,\nu} \quad \text{ for every }j\in\mathbb{N}_0.
\end{align*}
Taking generating functions of both sides yields
\begin{equation*}
 \left(\mathcal{T}^{\mu,\nu}\right)_xG_2^{\mu,\nu}(t,x) = G_2^{\mu,\nu}(-t,x),
\end{equation*}
and this gives the desired formula for $\alpha=\frac{1}{1-t}$.
\end{proof}

\section{Applications to minimal representations}\label{sec:RepTh}

In this section we relate the $L^2$-eigenfunctions $\Lambda_{2,j}^{\mu,\nu}(x)$ of $\calD_{\mu,\nu}$ to representation theory in the case where $(\mu,\nu)\in\Xi$. If $(\mu,\nu)\in\Xi$, then they are the parameters introduced in Section \ref{sec:MuNu} corresponding to a simple real Jordan algebra $V$ of split rank $r_0\geq2$ with simple euclidean subalgebra $V^+$ which is not isomorphic to $\RR^{p,q}$ with $p+q$ odd. For these Jordan algebras we have constructed a unitary irreducible representation $\pi$ of the simple group $\check{G}$ on $L^2(\calO,\td\mu)$. We now show that for $j\in\NN_0$ the functions $\Lambda_{2,j}^{\mu,\nu}(x)$ give rise to $\frakk_\frakl$-spherical vectors in the $\frakk$-type $W^j$. We further explain the representation theoretic meaning of several properties for the special functions $\Lambda_{2,j}^{\mu,\nu}(x)$ which were derived in this chapter.

\subsubsection{$\frakk$-finite vectors}

\begin{theorem}\label{thm:KTypes}
In each $\frakk$-type $W^j$ the space of $\frakk_\frakl$-invariant vectors is one-dimen- sional and spanned by the functions
\begin{align*}
 \psi_j(x) &:= \Lambda_{2,j}^{\mu,\nu}(|x|), & x\in\calO.\index{notation}{psij@$\psi_j$}
\end{align*}
\end{theorem}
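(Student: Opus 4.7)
The plan is to identify the space of $\frakk_\frakl$-fixed vectors in each $\frakk$-type $W^j$ with the corresponding $L^2$-eigenspace of the fourth order operator $\calD_{\mu,\nu}$ via the action of the $\frakk$-Casimir. First, by Proposition \ref{prop:klSphericalsReps} we already know that every irreducible $\frakk$-representation appearing in $W$ (they are of the form $E^{\alpha_0+j\gamma_1}$ by Theorem \ref{thm:KtypeDecomp}) is $\frakk_\frakl$-spherical with a one-dimensional space $(W^j)^{\frakk_\frakl}$ of $\frakk_\frakl$-fixed vectors. On the group side, a smooth vector is $\frakk_\frakl$-fixed if and only if it is $(\check{K}_L)_0$-invariant, and since each $W^j$ is finite-dimensional and its elements are smooth functions on $\calO$, the space $(W^j)^{\frakk_\frakl}$ consists precisely of the radial ($K_L$-invariant) functions in $W^j$.

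The second step is to use the $\frakk$-Casimir to single out these one-dimensional radial subspaces inside $L^2(\calO,\td\mu)_{\textup{rad}}$. By Proposition \ref{prop:CasimirScalar}, $\td\pi(C_\frakk)$ acts on $W^j$ by the pairwise distinct scalar
\[
 \lambda_j := -\frac{r_0}{8n}\left(4j(j+\mu+1)+\frac{r_0d}{2}\left|d_0-\frac{d}{2}\right|\right),
\]
the distinctness following from $\mu\geq-\frac{1}{2}$ (Lemma \ref{lem:MuNuProperties}). By Theorem \ref{thm:CasimirAction}, on radial functions $\td\pi(C_\frakk)$ is realized as the ordinary differential operator $-\frac{r_0}{8n}\bigl(\calD_{\mu,\nu}+\frac{r_0d}{2}|d_0-\frac{d}{2}|\bigr)$ on $L^2(\RR_+,t^{\mu+\nu+1}\td t)$, which by Corollary \ref{cor:DmunuSASpectrum} is self-adjoint with one-dimensional eigenspaces, precisely one for each eigenvalue $\lambda_j$. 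Hence under the isomorphism $L^2(\calO,\td\mu)_{\textup{rad}}\cong L^2(\RR_+,t^{\mu+\nu+1}\td t)$ we obtain the decomposition
\[
 L^2(\calO,\td\mu)_{\textup{rad}}=\widehat{\bigoplus_{j=0}^\infty}(W^j)^{\frakk_\frakl},
\]
where $(W^j)^{\frakk_\frakl}$ is exactly the $\lambda_j$-eigenspace of $\td\pi(C_\frakk)$ on the radial part.

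The third step is to exhibit $\psi_j$ as an element of $(W^j)^{\frakk_\frakl}$. By construction $\psi_j(x)=\Lambda_{2,j}^{\mu,\nu}(|x|)$ is radial; by Corollary \ref{prop:L2functions} (together with Lemma \ref{lem:MuNuProperties} giving $\mu\pm\nu>-2$) it lies in $L^2(\calO,\td\mu)$; by Theorem \ref{thm:EigFct} it satisfies $\calD_{\mu,\nu}\Lambda_{2,j}^{\mu,\nu}=4j(j+\mu+1)\Lambda_{2,j}^{\mu,\nu}$, so $\psi_j$ is a $\lambda_j$-eigenfunction of $\td\pi(C_\frakk)$; and by Corollary \ref{cor:Nonzeroness} it is non-zero. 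Combining with the decomposition above forces $\psi_j\in(W^j)^{\frakk_\frakl}$, and since $\dim (W^j)^{\frakk_\frakl}=1$ it spans this line.

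The only non-routine point in this plan is the identification of the subspaces $(W^j)^{\frakk_\frakl}$ of $\frakk_\frakl$-invariants with the $\lambda_j$-eigenspaces of $\calD_{\mu,\nu}$ on the radial $L^2$-space; once one knows that (i) the Casimir operator is essentially self-adjoint with the spectral description of Corollary \ref{cor:DmunuSASpectrum}, and (ii) each $\frakk$-type is $\frakk_\frakl$-spherical so that its radial part is one-dimensional, the identification is forced by matching dimensions and eigenvalues. The case analysis implicit in Theorem \ref{thm:CasimirAction} (Euclidean, non-Euclidean of rank $\geq 3$, and $V=\RR^{p,q}$) is absorbed into the cited results and requires no further attention here.
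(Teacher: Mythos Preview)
Your proof is correct and follows essentially the same approach as the paper's: both use Proposition~\ref{prop:klSphericalsReps} for the one-dimensionality of $\frakk_\frakl$-invariants, then match $\psi_j$ to $W^j$ via the distinct Casimir eigenvalues from Proposition~\ref{prop:CasimirScalar}, Theorem~\ref{thm:CasimirAction} and Theorem~\ref{thm:EigFct}. Your version is slightly more explicit in invoking Corollary~\ref{cor:DmunuSASpectrum}, Corollary~\ref{prop:L2functions} and Corollary~\ref{cor:Nonzeroness} to justify that $\psi_j$ is a non-zero $L^2$-eigenfunction, but the argument is the same.
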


\begin{proof}
By Proposition \ref{prop:klSphericalsReps} the space of $\frakk_\frakl$-invariant vectors in each irreducible $\frakk$-representation is at most one-dimensional. The functions $\psi_j$ are clearly $\frakk_\frakl$-invariant since they are $K_L$-invariant. By Proposition \ref{prop:CasimirScalar} the Casimir operator $\td\pi(C)$ acts on $W^j$ by the scalar
\begin{align*}
 -\frac{r_0}{8n}\left(4j(j+\mu+1)+\frac{r_0d}{2}\left|d_0-\frac{d}{2}\right|\right),
\end{align*}
and by Theorems \ref{thm:CasimirAction} and \ref{thm:EigFct} it acts on $\psi_j$ by the same scalar. Since all these scalars are distinct, the claim follows.
\end{proof}

\begin{remark}
For the euclidean case it is (indirectly) shown in \cite[Section XV.4]{FK94} that the subspace of $K_L$-invariant vectors in $W^j$ is spanned by the so-called generalized Laguerre function $\ell_{\bf m}^\lambda(x)$ with ${\bf m}=(j,0\ldots,0)$ and $\lambda=\lambda_1=\frac{r_0d}{2r}=\frac{d}{2}$. We show that these functions agree with the functions $\psi_j$ on the orbit $\calO$.

The generalized Laguerre functions are defined in purely Jordan algebraic terms. Let us recall their construction from \cite[Section XV.4]{FK94}: For ${\bf m}\in\NN_0^r$, ${\bf m}\geq0$ (i.e. $m_1\geq\ldots\geq m_r\geq0$), we define the \textit{generalized power function}\index{subject}{generalized power functions} $\Delta_{\bf m}$ on $V$ by
\begin{align*}
 \Delta_{\bf m}(x) &:= \Delta_1(x)^{m_1-m_2}\cdots\Delta_{r-1}(x)^{m_{r-1}-m_r}\Delta_r(x)^{m_r}.\index{notation}{Deltamx@$\Delta_{\bf m}(x)$}
\end{align*}
Here $\Delta_j(x)$ denote the \textit{principal minors}\index{subject}{principal minors} of $V$ (see \cite[Section VI.3]{FK94} for their definition). Then the corresponding spherical polynomials are obtained by integrating over $K_L$:
\begin{align*}
 \Phi_{\bf m}(x) &:= \int_{K_L}{\Delta_{\bf m}(kx)\td k}.\index{notation}{Phimx@$\Phi_{\bf m}(x)$}
\end{align*}
The polynomials $\Phi_{\bf m}$ constitute a basis for the $K_L$-invariant polynomials on $V$. Since $K_L$ stabilizes the identity element $e$ in the euclidean case, the polynomial $\Phi_{\bf m}(e+x)$ is again $K_L$-invariant and hence a linear combination
\begin{align*}
 \Phi_{\bf m}(e+x) &= \sum_{\bf n}{{{\bf m}\choose{\bf n}}\Phi_{\bf n}(x)}\index{notation}{1mchoosen@${\bf m\choose\bf n}$}
\end{align*}
with certain coefficients ${{\bf m}\choose{\bf n}}$ which are called \textit{generalized binomial coefficients}\index{subject}{generalized binomial coefficients}. We then define the \textit{generalized Laguerre polynomials}\index{subject}{generalized Laguerre polynomials} by
\begin{align*}
 L_{\bf m}^\lambda(x) &:= (\lambda)_{\bf m}\sum_{\bf n}{{{\bf m}\choose{\bf n}}\frac{1}{(\lambda)_{\bf n}}\Phi_{\bf n}(-x)},\index{notation}{Lmlambdax@$L_{\bf m}^\lambda(x)$}
\intertext{where}
 (\lambda)_{\bf m} &:= \prod_{i=1}^r{\left(\lambda-(i-1)\frac{d}{2}\right)_{m_i}}\index{notation}{lambdam@$(\lambda)_{\bf m}$}
\end{align*}
and $(a)_n=a(a+1)\cdots(a+n-1)$ denotes the Pochhammer symbol. Finally, the \textit{generalized Laguerre functions}\index{subject}{generalized Laguerre functions} are
\begin{align*}
 \ell_{\bf m}^\lambda(x) &:= e^{-\tr(x)}L_{\bf m}^\lambda(2x).\index{notation}{lmlambdax@$\ell_{\bf m}^\lambda(x)$}
\end{align*}
Now let us calculate $\ell_{\bf m}^\lambda(x)$ for ${\bf m}=(m_1,0,\ldots,0)$, $\lambda=\lambda_1=\frac{d}{2}$ and $x\in\calO$. Since $\Delta_1(x)=(x|c_1)$, we obtain:
\begin{align*}
 \Phi_{\bf m}(e+x) &= \int_{K_L}{(k(e+x)|c_1)^{m_1}\td k} = \int_{K_L}{\left(1+(kx|c_1)\right)^{m_1}\td k}\\
 &= \sum_{n_1=0}^{m_1}{{m_1\choose n_1}\int_{K_L}{(kx|c_1)^{m_1}\td k}} = \sum_{n_1=0}^{m_1}{{m_1\choose n_1}\Phi_{(n_1,0,\ldots,0)}(x)}.
\end{align*}
Therefore,
\begin{align*}
 {{\bf m}\choose{\bf n}} &= \left\{\begin{array}{cl}\displaystyle{m_1\choose n_1} & \mbox{if $n_2=\ldots=n_r=0$,}\\0 & \mbox{else.}\end{array}\right.
\end{align*}
For the generalized Laguerre functions we thus obtain
\begin{align*}
 \ell_{\bf m}^\lambda(x) &= e^{-\tr(x)}\cdot\left(\frac{d}{2}\right)_{m_1}\sum_{n_1=0}^{m_1}{{m_1\choose n_1}\frac{1}{(\frac{d}{2})_{n_1}}\Phi_{(n_1,0,\ldots,0)}(-2x)}.
\end{align*}
Now suppose $x=ktc_1\in\calO$ with $k\in K_L$ and $t>0$. Since $\ell_{\bf m}^\lambda$ is $K_L$-invariant, it only depends on $t=|x|$ and we obtain
\begin{align}
 \ell_{\bf m}^\lambda(x) &= e^{-|x|}\cdot\left(\frac{d}{2}\right)_{m_1}\sum_{n_1=0}^{m_1}{{m_1\choose n_1}\frac{1}{(\frac{d}{2})_{n_1}}(-2|x|)^{n_1}\Phi_{(n_1,0,\ldots,0)}(c_1)}.\label{eq:GenLaguerreOnO1}
\end{align}
To calculate $\Phi_{(n_1,0,\ldots,0)}(c_1)$ we use the following expansion (see \cite[Section XI.5]{FK94}):
\begin{align*}
 (\tr(y))^k &= k!\sum_{|{\bf n}|=k}{\frac{d_{\bf n}}{(\frac{n}{r})_{\bf n}}\Phi_{\bf n}(y)},
\end{align*}
where $d_{\bf m}$\index{notation}{dm@$d_{\bf m}$} is defined in \cite[Proposition XI.4.1~(i)]{FK94}. For $y=c_1$ we have $\tr(c_1)=1$ and $\Phi_{\bf n}(c_1)=0$ if one of the $n_2,\ldots,n_r$ is non-zero. Therefore
\begin{align*}
 \Phi_{(n_1,\ldots,0)}(c_1) &= \frac{(\frac{n}{r})_{n_1}}{n_1!\,d_{(n_1,0,\ldots,0)}}.
\end{align*}
For $d_{(n_1,0,\ldots,0)}$ one obtains, using the results of \cite[Section XIV.5]{FK94}:
\begin{align*}
 d_{(n_1,0,\ldots,0)} &= \frac{(\frac{n}{r})_{n_1}(\frac{rd}{2})_{n_1}}{n_1!\,(\frac{d}{2})_{n_1}}.
\end{align*}
Inserting all this into \eqref{eq:GenLaguerreOnO1} finally yields
\begin{align*}
 \ell_{\bf m}^\lambda(x) &= \frac{m_1!\,(\frac{d}{2})_{m_1}}{(\frac{rd}{2})_{m_1}}\cdot e^{-|x|}L_{m_1}^\mu(2|x|),
\end{align*}
where $L_n^\alpha(z)$ denote the classical Laguerre polynomials as defined in Appendix \ref{app:Laguerre}. In view of Corollary \ref{cor:SpecialValue}~(2) we have
\begin{align*}
 \ell_{(j,0,\ldots,0)}^\lambda(x) &= \const\cdot e^{-|x|}L_j^\mu(2|x|) = \const\cdot\Lambda_{2,j}^{\mu,\nu}(|x|) = \const\cdot\psi_j(x)& \forall\, x\in\calO.
\end{align*}
\end{remark}

\begin{example}
For the metaplectic representation $\mu$ of $\Mp(n,\RR)$ on $L^2_{\textup{even}}(\RR^n)$ as introduced in Section \ref{sec:ExMetRep} the $K_L$-invariant vectors in each $\frakk$-type $W^j$ are spanned by
\begin{align*}
 \calU\psi_j(y) &= \Lambda_j^{\mu,-1}(|yy^t|) = \const\cdot e^{-|y|^2}L_j^\mu(2|y|^2).
\end{align*}
\end{example}

\subsubsection{The unitary inversion operator}

In Section \ref{sec:UnitInvOp} we introduced the unitary inversion operator $\calF_\calO$. On radial functions it acts by the $G$-transform $\calT^{\mu,\nu}$ (see Theorem \ref{thm:RadialUnitaryInversion}). For the special case $(\mu,\nu)\in\Xi$ the results of Proposition \ref{prop:GTrafoProperties} can also be obtained using representation theory. In fact, $\calF_\calO$ is (up to a scalar) given by the action of the element $\check{w_0}$.
\begin{enumerate}
\item[\textup{(1)}] Since $\pi$ is a unitary representation, $\calF_\calO$ is unitary on $L^2(\calO,\td\mu)$. The operator $\calT^{\mu,\nu}$, being the radial part of $\calF_\calO$, also has to be unitary on the subspace
\begin{align*}
 L^2(\calO,\td\mu)_{\textup{\rad}}\cong L^2(\RR_+,x^{\mu+\nu+1}\td x)
\end{align*}
of radial functions. This proves part (1) of Proposition \ref{prop:GTrafoProperties}.
\item[\textup{(2)}] For part (2) observe that $\check{w_0}$ is central in $\check{K}$. Therefore, it particularly commutes with the Casimir element $C_\frakk$ of $\frakk$ as introduced in Section \ref{sec:CasimirAction}. It follows that the actions of $\check{w_0}$ and $C_\frakk$ have to commute as well. The action of $\check{w_0}$ on radial functions gives $\calT^{\mu,\nu}$ (up to a scalar) and the action of the Casimir $C_\frakk$ is on radial functions (up to scalars) given by the differential operator $\calD_{\mu,\nu}$ (see Theorem \ref{thm:CasimirAction}). Hence, $\calT^{\mu,\nu}$ and $\calD_{\mu,\nu}$ commute.
\end{enumerate}
This proves Proposition \ref{prop:GTrafoProperties} for the case where $(\mu,\nu)\in\Xi$ using representation theory.

In Section \ref{sec:UnitInvOp} we further showed that the unitary inversion operator $\calF_\calO$ acts as a scalar on each $\frakk$-type $W^j$. For the minimal $\frakk$-type $W^0$ we showed by direct computation that this scalar is $1$. Using the results of Section \ref{sec:GTrafo} we can now give the action on all $\frakk$-types.

\begin{corollary}\label{cor:ActionFkTypes}
The unitary inversion operator $\calF_\calO$ acts on the $\frakk$-type $W^j$ by the scalar $(-1)^j$. In particular, $\calF_\calO$ is of order $2$.
\end{corollary}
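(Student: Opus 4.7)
The plan is to combine three results already established in the text: the fact that $\calF_\calO$ acts by a scalar on each $\frakk$-type (Theorem \ref{prop:FOProperties}~(5)), the identification of the $\frakk_\frakl$-spherical line in $W^j$ with the radial function $\psi_j(x)=\Lambda_{2,j}^{\mu,\nu}(|x|)$ (Theorem \ref{thm:KTypes}), and the fact that the radial restriction $\calF_{\calO,\rad}$ is exactly Meijer's $G$-transform $\calT^{\mu,\nu}$ (Theorem \ref{thm:RadialUnitaryInversion}), whose eigenfunctions we have already computed (Theorem \ref{thm:EigFctGTrafo}).

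Concretely, I would proceed as follows. First, since $\check{w_0}$ is central in $\check{K}$ (Lemma \ref{lem:Tildew0Central}), Schur's lemma forces $\pi(\check{w_0})|_{W^j}$, and hence $\calF_\calO|_{W^j}$, to be a scalar $c_j\in\CC$. Second, $\psi_j\in W^j$ is nonzero and radial, so its image under $\calF_\calO$ can be evaluated by the radial formula $\calF_{\calO,\rad}=\calT^{\mu,\nu}$. Third, apply Theorem \ref{thm:EigFctGTrafo} to read off $\calT^{\mu,\nu}\Lambda_{2,j}^{\mu,\nu}=(-1)^j\Lambda_{2,j}^{\mu,\nu}$, i.e.\ $\calF_\calO\psi_j=(-1)^j\psi_j$. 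Comparing with $\calF_\calO\psi_j=c_j\psi_j$ yields $c_j=(-1)^j$.

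For the final assertion, $\calF_\calO^2$ then acts as $(-1)^{2j}=1$ on every $W^j$. Since $W=\bigoplus_{j\ge 0}W^j$ is the space of $\frakk$-finite vectors, it is dense in $L^2(\calO,\td\mu)$, and $\calF_\calO^2$ is bounded (unitary), so by continuity $\calF_\calO^2=\mathrm{id}$ on all of $L^2(\calO,\td\mu)$.

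There is essentially no hard step: all the analytic content has been absorbed into the two deep inputs, namely the explicit radial action (Theorem \ref{thm:RadialUnitaryInversion}) and the eigenfunction identity for the $G$-transform (Theorem \ref{thm:EigFctGTrafo}). The only subtlety worth flagging is that one must know $\psi_j\ne 0$ in order to conclude $c_j=(-1)^j$ rather than obtain the tautology $0=0$; this nonvanishing is guaranteed because $\Lambda_{2,j}^{\mu,\nu}$ spans a one-dimensional $L^2$-eigenspace of $\calD_{\mu,\nu}$ (Proposition \ref{prop:DiffOpProperties}~(4), Corollary \ref{prop:L2functions}), and $W^j$ contains a nonzero $\frakk_\frakl$-spherical vector by Proposition \ref{prop:klSphericalsReps} together with Theorem \ref{thm:KTypes}.
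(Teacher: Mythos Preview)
Your proof is correct and follows essentially the same approach as the paper: compute $\calF_\calO$ on the radial vector $\psi_j\in W^j$ via Theorem~\ref{thm:RadialUnitaryInversion} and Theorem~\ref{thm:EigFctGTrafo}, then invoke Theorem~\ref{prop:FOProperties}~(5) (scalarity on each $\frakk$-type) to conclude. Your added remarks on the nonvanishing of $\psi_j$ and the density argument for $\calF_\calO^2=\mathrm{id}$ make explicit points the paper leaves implicit.
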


\begin{proof}
By Theorem \ref{thm:RadialUnitaryInversion} the operator $\calF_\calO$ acts on radial functions by the $G$-transform $\calT^{\mu,\nu}$. The $G$-transform $\calT^{\mu,\nu}$ acts on $\Lambda_{2,j}^{\mu,\nu}(x)$ by the scalar $(-1)^j$ (see Theorem \ref{thm:EigFctGTrafo}). Hence, $\calF_\calO$ acts on the radial function $\psi_j(x)=\Lambda_{2,j}^{\mu,\nu}(|x|)$ by the scalar $(-1)^j$. By Theorem \ref{thm:KTypes} the function $\psi_j$ is in the $\frakk$-type $W^j$. Since $\calF_\calO$ acts on $W^j$ by a scalar, this scalar has to be $(-1)^j$.
\end{proof}

\begin{example}
For the euclidean case the analogous statement for the continuous part of the Wallach set is proved in \cite[Corollary XV.4.3]{FK94}.
\end{example}

\subsubsection{Recurrence relations via the $\frakg$-action}

Finally, we can also give a representation theoretic explanation for the recurrence relations in Propositions \ref{prop:RecRelH} and \ref{prop:RecRelxSq}. For this we consider the Lie algebra action $\td\pi$. For $H:=(0,\id,0)\in\frakl$ the action is given by
\begin{align*}
 \td\pi(H) &= E+\frac{r_0d}{4},
\end{align*}
where $E=\sum_{i=1}^n{x_i\frac{\partial}{\partial x_i}}$ denotes the Euler operator on $\calO$. Hence, $H=(0,\id,0)$ leaves the space $L^2(\calO)_{\textup{rad}}$ of radial function invariant and acts on it by the differential operator $\mathcal{H}_{\mu+\nu}$ (see Section \ref{sec:RecRel}). Further, let $(e_k)_k$ be any orthonormal basis of $V$ with respect to the inner product $(-|-)$ and put $N_k:=(e_k,0,0)\in\frakn$. Then
\begin{align*}
 \td\pi(N_k) &= i(x|e_k).
\end{align*}
In particular, the sum of squares
\begin{align*}
 \sum_{k=1}^n{\td\pi(N_k)^2} &= -\|x\|^2
\end{align*}
leaves the space $L^2(\calO)_{\textup{rad}}$ of radial function invariant.

The key to an understanding of the underlying algebraic structure of the recurrence relations in Propositions \ref{prop:RecRelH} and \ref{prop:RecRelxSq} is the action of $H$ and $N_k$ on the $\frakk$-types $W^j$. For convenience put $W^{-1}:=0$.

\begin{lemma}
The Lie algebra action $\td\pi(X):\bigoplus_{j=0}^\infty{W^j}\longrightarrow\bigoplus_{j=0}^\infty{W^j}$ ($X\in\mathfrak{g}$) induces the following linear maps for each $j\in\mathbb{N}_0$:
\begin{align*}
 \td\pi(H):W^j &\longrightarrow W^{j+1}\oplus W^{j-1},\\
 \td\pi(\overline{N}_k):W^j &\longrightarrow W^{j+1}\oplus W^j\oplus W^{j-1}, & 1\leq k\leq n.
\end{align*}
\end{lemma}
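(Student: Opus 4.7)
The plan is to reduce the statement to one about the action of $\frakp$ via the Cartan decomposition $\frakg = \frakk \oplus \frakp$. The element $H = (0,\id,0)$ lies in $\frakp$ (indeed $\id^* = \id$, so the condition $T = T^*$ defining $\frakp$ holds), and $\overline{N}_k = (0,0,e_k) = \tfrac{1}{2}(-\alpha e_k,0,e_k) + \tfrac{1}{2}(\alpha e_k,0,e_k)$ decomposes into its $\frakk$-part (the first summand) and its $\frakp$-part (the second). Since $\frakk$ preserves every $\frakk$-type $W^j$, the extra $W^j$ summand allowed in the image of $\td\pi(\overline{N}_k)$ is accounted for by this $\frakk$-part. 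Both assertions therefore reduce to the single claim: $\td\pi(X)\,W^j \subseteq W^{j-1} \oplus W^{j+1}$ for every $X \in \frakp$.

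I would first establish the weaker inclusion $\td\pi(X)\,W^j \subseteq W^{j-1} \oplus W^{j} \oplus W^{j+1}$ by a weight analysis under $\frakt$. By Section \ref{subsec:ConfGrpRoots}, the $\frakt_\CC$-weights of $\frakp_\CC$ lie in $\{\pm\tfrac{\gamma_i \pm \gamma_k}{2}\}$, each of whose $\gamma_1$-coefficient has absolute value at most $1$. Inspection of the positive $\frakk_\CC$-roots in each of the three cases $A$, $C$, $D$ shows they all have non-negative $\gamma_1$-coefficient, hence every weight of $W^j$ has $\gamma_1$-coefficient at most that of its highest weight $\alpha_0 + j\gamma_1$. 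Combining these facts, any irreducible $\frakk$-constituent of $\frakp \otimes W^j$ has highest weight whose $\gamma_1$-coefficient is at most (that of $\alpha_0$)$+ j + 1$, which gives the upper bound $j' \leq j + 1$. For the matching lower bound $j' \geq j - 1$, I would use the $L^2$-adjoint: since $\pi$ is unitary, $\td\pi(X)$ is skew-symmetric on $L^2(\calO,\td\mu)$ and the distinct $W^k$ are mutually orthogonal, so the $(W^j \to W^{j'})$-block of $\td\pi(X)$ is non-zero iff the $(W^{j'} \to W^j)$-block is; applying the upper bound to the latter yields $j \leq j' + 1$.

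It remains to exclude the $W^j$ summand. For this I would invoke the central element $\check{w}_0 \in Z(\check{K})$ from Section \ref{sec:Guniversalcover}: because $\Ad(\check{w}_0) = \theta$ on $\frakg$ and $\theta|_\frakp = -\id$, we get the intertwining relation $\pi(\check{w}_0)\,\td\pi(X) = -\td\pi(X)\,\pi(\check{w}_0)$. Combined with Corollary \ref{cor:ActionFkTypes}, which asserts $\pi(\check{w}_0)|_{W^{j'}} = (-1)^{j'}\,\id$, writing $\td\pi(X)v = \sum_k w_k$ for $v \in W^j$ and $w_k \in W^k$ gives $\sum_k (-1)^k w_k = (-1)^{j+1} \sum_k w_k$, so $w_k \neq 0$ forces $k \equiv j+1 \pmod 2$; in particular $k \neq j$. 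Together with the $|k - j| \leq 1$ bound from the previous paragraph, this leaves only $k \in \{j-1, j+1\}$.

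The main obstacle is the weight analysis, which is most delicate in cases $C$ and $D$ where the $\frakk_\CC$-root system is richer; but by tracking only the $\gamma_1$-coefficient of weights, the relevant comparison collapses to a one-dimensional statement that goes through uniformly across cases, and the $L^2$-adjointness allows a symmetric treatment of the upper and lower bounds.
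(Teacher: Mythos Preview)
Your proof is correct but takes a genuinely different route from the paper's.

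Both arguments begin identically: decompose $H \in \frakp$ and $\overline{N}_k \in \frakk \oplus \frakp$, so the lemma reduces to showing $\td\pi(X)W^j \subseteq W^{j-1}\oplus W^{j+1}$ for $X \in \frakp$. From there the paths diverge. The paper argues in one step: a $\frakk$-constituent of $\td\pi(\frakp)W^j$ must (i) occur in $W$, hence have highest weight exactly $\alpha_0 + j'\gamma_1$ for some $j'$ (extended by zero on the complement $\frakt'$ of $\frakt$ in a full Cartan $\frakh$), and (ii) occur in $\frakp \otimes W^j$, hence have highest weight $\alpha_0 + j\gamma_1 + \mu$ for some $\frakh$-weight $\mu$ of $\frakp$. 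Matching forces $\mu|_{\frakt} = (j'-j)\gamma_1$ and $\mu|_{\frakt'}=0$; since $\frakp \cap \frakh = 0$ there is no zero $\frakh$-weight, so $j'\neq j$, and the nonzero $\frakt$-weights of $\frakg$ lying in $\RR\gamma_1$ are only $\pm\gamma_1$, giving $j' = j\pm 1$ directly.

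Your approach instead establishes only $|j'-j|\le 1$ by tracking the $\gamma_1$-coefficient (upper bound) together with $L^2$-adjointness (lower bound), and then invokes the parity of the unitary inversion operator via Corollary~\ref{cor:ActionFkTypes} to rule out $j'=j$. This is valid and not circular---Corollary~\ref{cor:ActionFkTypes} appears earlier in the section and depends on Theorems~\ref{thm:RadialUnitaryInversion}, \ref{thm:EigFctGTrafo}, \ref{thm:KTypes}, none of which use this lemma. One small inaccuracy: Corollary~\ref{cor:ActionFkTypes} asserts that $\calF_\calO$ (not $\pi(\check{w}_0)$) acts by $(-1)^j$ on $W^j$; since $\calF_\calO = c\,\pi(\check{w}_0)$ for a $j$-independent scalar $c$, your parity argument goes through unchanged.

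The trade-off: the paper's argument is entirely internal to the weight theory of Chapter~2 and needs nothing from Chapter~3, whereas your exclusion of $j'=j$ imports a result (the $G$-transform eigenvalue computation) that is considerably deeper than the statement being proved. On the other hand, your $|j'-j|\le 1$ step is more robust in that it only tracks a single coefficient and does not rely on the sharper Brauer--Klimyk-type fact about where highest weights of tensor-product constituents live.
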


\begin{proof}
We have
\begin{align}
 \td\pi(X):W^j &\longrightarrow W^j & \forall\,X\in\mathfrak{k}\label{eq:MapPropK}
\end{align}
since $W^j$ is a $\frakk$-module. For the action of $\mathfrak{p}$ recall that the $\frakk$-weights of $\frakp$ are by Section \ref{subsec:ConfGrpRoots} contained in
\begin{align*}
 \left\{\frac{\pm\gamma_i\pm\gamma_j}{2}:1\leq i,j\leq r_0\right\}.
\end{align*}
To determine which $\frakk$-types may appear in $\td\pi(\frakp)W^j$ one has to add the weights of $\frakp$ to the highest weight of $W^j$. By the $\frakk$-type decomposition (see Theorem \ref{thm:KtypeDecomp}) the only possible $\frakk$-types that also appear in $W$ are $W^{j-1}$ and $W^{j+1}$. Hence, we have
\begin{align}
 \td\pi(X):W^j &\longrightarrow W^{j+1}\oplus W^{j-1} & \forall\,X\in\mathfrak{p}\label{eq:MapPropP}
\end{align}
Putting \eqref{eq:MapPropK} and \eqref{eq:MapPropP} together proves the claim since $H\in\mathfrak{a}\subseteq\mathfrak{p}$.
\end{proof}

By our previous considerations both $\td\pi(H)$ and $\sum_{i=1}^n{\td\pi(N_i)^2}$ leave $L^2(\calO)_{\textup{rad}}$ invariant. Since $W^j_{\textup{rad}}=W^j\cap L^2(\calO)_{\textup{rad}}$ is one-dimensional and spanned by $\psi_j(x)=\Lambda_{2,j}^{\mu,\nu}(|x|)$ for every $j\in\mathbb{N}_0$, we obtain
\begin{align*}
 \mathcal{H}_{\mu+\nu}\Lambda_{2,j}^{\mu,\nu} &\in\textup{span}\{\Lambda_{2,k}^{\mu,\nu}: k=j-1,j+1\},\\
 x^2\Lambda_{2,j}^{\mu,\nu} &\in\textup{span}\{\Lambda_{2,k}^{\mu,\nu}:k=j-2,j-1,j,j+1,j+2\},
\end{align*}
which can be viewed as a qualitative version of Propositions \ref{prop:RecRelH} and \ref{prop:RecRelxSq}.

\appendix

\chapter{Tables of simple real Jordan algebras}

\section{Structure constants of $V$}

\begin{table}[ht]
\begin{center}
\begin{tabular}{|r|c|c|c|c|c|c|c|c|c|}
  \hline
  & $V$ & $n$ & $r$ & $d$ & $e$\\
  \hline
  I$.1$ & $\Sym(n,\RR)$ & $\frac{n}{2}(n+1)$ & $n$ & $1$ & $0$\\
  I$.2$ & $\Herm(n,\CC)$ & $n^2$ & $n$ & $2$ & $0$\\
  I$.3$ & $\Herm(n,\HH)$ & $n(2n-1)$ & $n$ & $4$ & $0$\\
  I$.4$ & $\RR^{1,n-1}$ ($n\geq3$) & $n$ & $2$ & $n-2$ & $0$\\
  I$.5$ & $\Herm(3,\OO)$ & $27$ & $3$ & $8$ & $0$\\
  \hline
  II$.1$ & $\times\times\times$ & $\times$ & $\times$ & $\times$ & $\times$\\
  II$.2$ & $M(n,\RR)$ & $n^2$ & $n$ & $2$ & $0$\\
  II$.3$ & $\Skew(2n,\RR)$ & $n(2n-1)$ & $n$ & $4$ & $0$\\
  II$.4$ & $\RR^{p,q}$ ($p,q\geq2$) & $p+q$ & $2$ & $p+q-2$ & $0$\\
  II$.5$ & $\Herm(3,\OO_s)$ & $27$ & $3$ & $8$ & $0$\\
  \hline
  III$.1$ & $\Sym(n,\CC)$ & $n(n+1)$ & $2n$ & $2$ & $1$\\
  III$.2$ & $M(n,\CC)$ & $2n^2$ & $2n$ & $4$ & $1$\\
  III$.3$ & $\Skew(2n,\CC)$ & $2n(2n-1)$ & $2n$& $8$ & $1$\\
  III$.4$ & $\CC^n$ ($n\geq3$) & $2n$ & $4$ & $2(n-2)$ & $1$\\
  III$.5$ & $\Herm(3,\OO)_\CC$ & $54$ & $6$ & $16$ & $1$\\
  \hline
  IV$.1$ & $\Sym(2n,\CC)\cap M(n,\HH)$ & $n(2n+1)$ & $2n$ & $4$ & $2$\\
  IV$.2$ & $M(n,\HH)$ & $4n^2$ & $2n$ & $8$ & $3$\\
  IV$.3$ & $\times\times\times$ & $\times$ & $\times$ & $\times$ & $\times$\\
  IV$.4$ & $\RR^{n,0}$ ($n\geq2)$ & $n$ & $2$ & $0$ & $n-1$\\
  IV$.5$ & $\times\times\times$ & $\times$ & $\times$ & $\times$ & $\times$\\
  \hline
\end{tabular}
\caption{Structure constants of $V$}
\label{tb:ConstantsV}
\end{center}
\end{table}

\newpage

\section{Structure constants of $V^+$}

\begin{table}[ht]
\begin{center}
\begin{tabular}{|r|c|c|c|c|c|c|c|c|c|}
  \hline
  & $V$ & $V^+$ & $n_0$ & $r_0$ & $d_0$\\
  \hline
  I$.1$ & $\Sym(n,\RR)$ & I.$1$ & $\frac{n}{2}(n+1)$ & $n$ & $1$\\
  I$.2$ & $\Herm(n,\CC)$ & I.$2$ & $n^2$ & $n$ & $2$\\
  I$.3$ & $\Herm(n,\HH)$ & I.$3$ & $n(2n-1)$ & $n$ & $4$ \\
  I$.4$ & $\RR^{1,n-1}$ ($n\geq3$) & I.$4$ & $n$ & $2$ & $n-2$\\
  I$.5$ & $\Herm(3,\OO)$ & I.$5$ & $27$ & $3$ & $8$\\
  \hline
  II$.1$ & $\times\times\times$ & $\times$ & $\times$ & $\times$ & $\times$\\
  II$.2$ & $M(n,\RR)$ & I.$1$ & $\frac{n}{2}(n+1)$ & $n$ & $1$\\
  II$.3$ & $\Skew(2n,\RR)$ & I.$2$ & $n^2$ & $n$ & $2$\\
  II$.4$ & $\RR^{p,q}$ ($p,q\geq2$) & I.$4$ & $q+1$ & $2$ & $q-1$\\
  II$.5$ & $\Herm(3,\OO_s)$ & I.$3$ & $15$ & $3$ & $4$\\
  \hline
  III$.1$ & $\Sym(n,\CC)$ & I.$1$ & $\frac{n}{2}(n+1)$ & $n$ & $1$\\
  III$.2$ & $M(n,\CC)$ & I.$2$ & $n^2$ & $n$ & $2$\\
  III$.3$ & $\Skew(2n,\CC)$ & I.$3$ & $n(2n-1)$ & $n$ & $4$\\
  III$.4$ & $\CC^n$ ($n\geq3$) & I.$4$ & $n$ & $2$ & $n-2$\\
  III$.5$ & $\Herm(3,\OO)_\CC$ & I.$5$ & $27$ & $3$ & $8$\\
  \hline
  IV$.1$ & $\Sym(2n,\CC)\cap M(n,\HH)$ & I.$2$ & $n^2$ & $n$ & $2$\\
  IV$.2$ & $M(n,\HH)$ & I.$3$ & $n(2n-1)$ & $n$ & $4$\\
  IV$.3$ & $\times\times\times$ & $\times$ & $\times$ & $\times$ & $\times$\\
  IV$.4$ & $\RR^{n,0}$ ($n\geq2)$ & I.$4$ & $1$ & $1$ & $0$\\
  IV$.5$ & $\times\times\times$ & $\times$ & $\times$ & $\times$ & $\times$\\
  \hline
\end{tabular}
\caption{Structure constants of $V^+$}
\label{tb:ConstantsVplus}
\end{center}
\end{table}

\newpage

\section{The constants $\mu$ and $\nu$}

\begin{table}[ht]
\begin{center}
\begin{tabular}{|r|c|c|c|}
  \hline
  & $V$ & $\mu$ & $\nu$\\
  \hline
  I$.1$ & $\Sym(n,\RR)$ & $(n-2)/2$ & $-1$\\
  I$.2$ & $\Herm(n,\CC)$ & $n-1$ & $-1$\\
  I$.3$ & $\Herm(n,\HH)$ & $2n-1$ & $-1$\\
  I$.4$ & $\RR^{1,n-1}$ ($n\geq3$) & $n-3$ & $-1$\\
  I$.5$ & $\Herm(3,\OO)$ & $11$ & $-1$\\
  \hline
  II$.1$ & $\times\times\times$ & $\times$ & $\times$\\
  II$.2$ & $M(n,\RR)$ & $n-2$ & $0$\\
  II$.3$ & $\Skew(2n,\RR)$ & $2n-3$ & $1$\\
  II$.4$ & $\RR^{p,q}$ ($p,q\geq2$) & $\max(p,q)-2$ & $\min(p,q)-2$\\
  II$.5$ & $\Herm(3,\OO_s)$ & $7$ & $3$\\
  \hline
  III$.1$ & $\Sym(n,\CC)$ & $n-1$ & $-1$\\
  III$.2$ & $M(n,\CC)$ & $2(n-1)$ & $0$\\
  III$.3$ & $\Skew(2n,\CC)$ & $2(2n-2)$ & $2$\\
  III$.4$ & $\CC^n$ ($n\geq3$) & $n-2$ & $n-4$\\
  III$.5$ & $\Herm(3,\OO)_\CC$ & $16$ & $6$\\
  \hline
  IV$.1$ & $\Sym(2n,\CC)\cap M(n,\HH)$ & $2n-1$ & $-1$\\
  IV$.2$ & $M(n,\HH)$ & $4n-2$ & $0$\\
  IV$.3$ & $\times\times\times$ & $\times$ & $\times$\\
  IV$.4$ & $\RR^{n,0}$ ($n\geq2$) & $n-2$ & $-n$\\
  IV$.5$ & $\times\times\times$ & $\times$ & $\times$\\
  \hline
\end{tabular}
\caption{The constants $\mu$ and $\nu$}
\label{tb:Parameters}
\end{center}
\end{table}

\newpage

\section{Conformal algebra and structure algebra}

\begin{table}[ht]
\begin{center}
\begin{tabular}{|r|c|c|c|}
  \hline
  & $V$ & $\frakg=\co(V)$ & $\frakl=\str(V)$\\
  \hline
  I$.1$ & $\Sym(n,\RR)$ & $\sp(n,\RR)$ & $\sl(n,\RR)\times\RR$\\
  I$.2$ & $\Herm(n,\CC)$ & $\su(n,n)$ & $\sl(n,\CC)\times\RR$\\
  I$.3$ & $\Herm(n,\HH)$ & $\so^*(4n)$ & $\sl(n,\HH)\times\RR$\\
  I$.4$ & $\RR^{1,n-1}$ ($n\geq3$) & $\so(2,n)$ & $\so(1,n-1)\times\RR$\\
  I$.5$ & $\Herm(3,\OO)$ & $\mathfrak{e}_{7(-25)}$ & $\mathfrak{e}_{6(-26)}\times\RR$\\
  \hline
  II$.1$ & $\times\times\times$ & $\times\times\times$ & $\times\times\times$\\
  II$.2$ & $M(n,\RR)$ & $\sl(2n,\RR)$ & $\sl(n,\RR)\times\sl(n,\RR)\times\RR$\\
  II$.3$ & $\Skew(2n,\RR)$ & $\so(2n,2n)$ & $\sl(2n,\RR)\times\RR$\\
  II$.4$ & $\RR^{p,q}$ ($p,q\geq2$) & $\so(p+1,q+1)$ & $\so(p,q)\times\RR$\\
  II$.5$ & $\Herm(3,\OO_s)$ & $\mathfrak{e}_{7(7)}$ & $\mathfrak{e}_{6(6)}\times\RR$\\
  \hline
  III$.1$ & $\Sym(n,\CC)$ & $\sp(n,\CC)$ & $\sl(n,\CC)\times\CC$\\
  III$.2$ & $M(n,\CC)$ & $\sl(2n,\CC)$ & $\sl(n,\CC)\times\sl(n,\CC)\times\CC$\\
  III$.3$ & $\Skew(2n,\CC)$ & $\so(4n,\CC)$ & $\sl(2n,\CC)\times\CC$\\
  III$.4$ & $\CC^n$ ($n\geq3$) & $\so(n+2,\CC)$ & $\so(n,\CC)\times\CC$\\
  III$.5$ & $\Herm(3,\OO)_\CC$ & $\mathfrak{e}_7(\CC)$ & $\mathfrak{e}_6(\CC)\times\CC$\\
  \hline
  IV$.1$ & $\Sym(2n,\CC)\cap M(n,\HH)$ & $\sp(n,n)$ & $\sl(n,\HH)\times\HH$\\
  IV$.2$ & $M(n,\HH)$ & $\sl(2n,\HH)$ & $\sl(n,\HH)\times\sl(n,\HH)\times\HH$\\
  IV$.3$ & $\times\times\times$ & $\times\times\times$ & $\times\times\times$\\
  IV$.4$ & $\RR^{n,0}$ ($n\geq2$) & $\so(1,n+1)$ & $\so(n)\times\RR$\\
  IV$.5$ & $\times\times\times$ & $\times\times\times$ & $\times\times\times$\\
  \hline
\end{tabular}
\caption{Conformal algebra and structure algebra}
\label{tb:Groups}
\end{center}
\end{table}
\chapter{Calculations in rank $2$}\label{app:Rank2}

Let $V=\RR^{p,q}$ with $p,q\geq2$ (see Example \ref{ex:JordanAlgebras}~(2)). The structure constants of $V$ and $V^+$ can be found in Tables \ref{tb:ConstantsV} and \ref{tb:ConstantsVplus} where $V$ corresponds to the case II.4. In particular, $V$ has dimension $n=p+q$ and rank $r=r_0=2$. The parameter $\lambda$ of the zeta function and the Bessel operator corresponding to the minimal orbit of $\Str(V)_0$ is $\lambda=\frac{r_0d}{2r}=\frac{p+q-2}{2}$. Let us for convenience assume that $p\leq q$. (The case $p\geq q$ can be treated similarly.) Then $\mu=q-2$ and $\nu=p-2$.

Denote by $(e_j)_{j=1,\ldots,n}$ the standard basis of $V=\RR^n$. We use coordinates $x_1,\ldots,x_n$ for $x=\sum_{j=1}^n{x_je_j}$. In these coordinates the trace form and the determinant are given by
\begin{align*}
 \tau(x,y) &= 2(x_1y_1-x_2y_2-\ldots-x_py_p+x_{p+1}y_{p+1}+\ldots+x_ny_n),\\
 \Delta(x) &= x_1^2+\ldots+x_p^2-x_{p+1}^2-\ldots-x_n^2.
\end{align*}
Hence, $\alpha e_j:=\epsilon_je_j$ defines a Cartan involution of $V$, where
\begin{align*}
 \epsilon_j := \begin{cases}+1 & \mbox{for $j=1$ or $p+1\leq j\leq n$,}\\-1 & \mbox{for $2\leq j\leq p$.}\end{cases}
\end{align*}
The basis dual to $(e_j)$ with respect to the trace form $\tau$ is therefore given by $\overline{e}_j=\frac{1}{2}\epsilon_je_j$. The corresponding inner product and norm are
\begin{align*}
 (x|y) &= \tau(x,\alpha y) = 2(x_1y_1+\ldots+x_ny_n),\\
 |x|^2 &= \|x\|^2 = 2(x_1^2+\ldots+x_n^2).
\end{align*}
An orthonormal basis of $V$ with respect to the inner product $(-|-)$ is hence given by the vectors $\frac{1}{\sqrt{2}}e_j$, $1\leq j\leq n$. We also fix the Jordan frame $c_1:=\frac{1}{2}(e_1+e_{p+q})$, $c_2:=\frac{1}{2}(e_1-e_{p+q})$.

The gradient $\frac{\partial}{\partial x}$ with respect to the trace form writes
\begin{align*}
 \frac{\partial}{\partial x} &= \left(\frac{\epsilon_j}{2}\frac{\partial}{\partial x_j}\right)_j = \frac{1}{2}\left(\frac{\partial}{\partial x_1},-\frac{\partial}{\partial x_2},\ldots,-\frac{\partial}{\partial x_p},\frac{\partial}{\partial x_{p+1}},\ldots,\frac{\partial}{\partial x_n}\right).
\end{align*}

Let us use the notation $x=(x',x'')\in\RR^p\times\RR^q$ for $x\in V$. Abusing notation, we also write $x'$ and $x''$ for the vectors $(x',0)\in V$ and $(0,x'')\in V$, respectively. In this notation the minimal orbit $\calO=\calO_1$ can be written as
\begin{align*}
 \calO &= \{x\in V\setminus\{0\}: \Delta(x)=0\}\\
 &= \{(x',x'')\in V\setminus\{0\}: |x'|=|x''|\}.
\end{align*}
In particular, for $x=(x',x'')\in\calO$ we have $|x'|^2=|x''|^2=\frac{1}{2}|x|^2$.

Now let us calculate the action of the Bessel operator $\calB_\lambda$ ($\lambda=\lambda_1$) in this case. To apply Proposition \ref{prop:BnuRadial} we have to calculate $ke$ for $k\in(K_L)_0=\SO(p)\times\SO(q)$. One finds that for $x=ktc_1\in\calO$ with $t>0$, $k\in(K_L)_0$ we have $kte=(2x',0)$. Hence we obtain for $\psi(x)=f(|x|)$:
\begin{align*}
 \calB_\lambda\psi(x) &= \left(f''(|x|)+d_0\frac{1}{|x|}f'(|x|)\right)\alpha(x')+\left(f''(|x|)+(d-d_0)\frac{1}{|x|}f'(|x|)\right)\alpha(x'').
\end{align*}
For the action of the Lie algebra elements $(e_j,0,-\alpha e_j)\in\frakk$ this yields
\begin{multline}
 \td\pi(e_j,0,-\alpha e_j)\psi(x)\\
 = \begin{cases}\frac{1}{i}\left(f''(|x|)+(q-1)\frac{1}{|x|}f'(|x|)-f(|x|)\right)(x|e_j) & \mbox{for $1\leq j\leq p$,}\\\frac{1}{i}\left(f''(|x|)+(p-1)\frac{1}{|x|}f'(|x|)-f(|x|)\right)(x|e_j) & \mbox{for $p+1\leq j\leq n$.}\end{cases}\label{eq:kActionRank2}
\end{multline}

\section{The minimal $K$-type}\label{app:Rank2MinKtype}

In this section we prove the remaining parts (c) and (d) of Proposition \ref{prop:Kfinite} and calculate the action of $\calF_\calO$ on $\psi_0$.

To prove \eqref{eq:MinKtypeRk2finite} and \eqref{eq:MinKtypeRk2infinite} we need to calculate the $\frakk$-action on the spaces $\widetilde{K}_{\frac{\nu}{2}+k}\otimes\calH^k(\RR^p)$, $0\leq k\leq\frac{q-p}{2}$. For this we introduce operators $(-)_j^\pm$ on $\calH^k(\RR^p)$ for $j=1,\ldots,p$ by:\index{notation}{1bracketxpmj@$(-)^\pm_j$}
\begin{align*}
 (-)^+_j:\calH^k(\RR^p) &\rightarrow \calH^{k+1}(\RR^p), & \varphi^+_j(x) &:= x_j\varphi(x)-\frac{x_1^2+\ldots+x_p^2}{p+2k-2}\frac{\partial\varphi}{\partial x_j}(x),\\
 (-)^-_j:\calH^k(\RR^p) &\rightarrow \calH^{k-1}(\RR^p), & \varphi^-_j(x) &:= \frac{1}{p+2k-2}\frac{\partial\varphi}{\partial x_j}(x).
\end{align*}
That $\varphi^+_j$ and $\varphi^-_j$ are (for $\varphi\in\calH^k(\RR^p)$) indeed homogeneous harmonic polynomials of degree $k+1$ and $k-1$, respectively, can easily be checked by direct computation. Then for $\varphi\in\calH^k(\RR^p)$ one clearly has the following decomposition of $x_j\varphi(x)$ into spherical harmonics:
\begin{align*}
 x_j\varphi(x) &= \varphi^+_j(x)+(x_1^2+\ldots+x_p^2)\varphi_j^-(x).
\end{align*}
For convenience we also put $(-)^+_j:=(-)^-_j:=0$ for $j=p+1,\ldots,n$. Using the operators $(-)^+_j$ and $(-)^-_j$ we prove the following lemma:

\begin{lemma}\label{lem:Rank2KAction}
For $j=1,\ldots,n$ the action of $(e_j,0,-\alpha e_j)\in\frakk$ on $\widetilde{K}_{\frac{\nu}{2}+k}\otimes\varphi\in\widetilde{K}_{\frac{\nu}{2}+k}\otimes\calH^k(\RR^p)$ is given by
\begin{multline*}
 \td\pi(e_j,0,-\alpha e_j)(\widetilde{K}_{\frac{\nu}{2}+k}\otimes\varphi)\\
 = \frac{1}{i}\left[(2k+p-q)\widetilde{K}_{\frac{\nu}{2}+k+1}\otimes\varphi_j^+-(2k+p+q-4)\widetilde{K}_{\frac{\nu}{2}+k-1}\otimes\varphi_j^-\right].
\end{multline*}
\end{lemma}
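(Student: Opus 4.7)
The proof is a direct computation based on the formula \eqref{eq:KActionInTermsOfBesselOp}, which specializes for $u=e_j$ to
\begin{equation*}
\td\pi(e_j,0,-\alpha e_j)\psi \;=\; \tfrac{1}{i}\bigl[\tau(\calB\psi,e_j) \,-\, 2x_j\psi\bigr].
\end{equation*}
Since $(u|\alpha v) = \tau(u,v)$, the pairing $\tau(\calB\psi,e_j)$ equals $(\calB\psi|\alpha e_j)$, and this was already unpacked in the $\upO(p+1,q+1)$ intertwining calculation of Section~\ref{sec:ExReps} as $-\tfrac{1}{2}\varepsilon_jx_j\Box\psi + \bigl(E + \tfrac{p+q-2}{2}\bigr)\partial_j\psi$, where $\Box = \sum_m\varepsilon_m\partial_m^2$ and $E = \sum_m x_m\partial_m$ are the operators introduced there. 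The whole calculation therefore reduces to evaluating $\partial_j\psi$, $E\partial_j\psi$, and $\Box\psi$ on $\psi = \widetilde{K}_{\frac{\nu}{2}+k}(|x|)\varphi(x')$, where $\varphi\in\calH^k(\RR^p)$ is extended trivially in the $\RR^q$-direction.

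I would carry out these evaluations using three inputs: the Bessel derivative formula $\widetilde{K}_\alpha'(t)=-(t/2)\widetilde{K}_{\alpha+1}(t)$ from Appendix B, Euler's relation $E\varphi=k\varphi$ together with harmonicity $\sum_{m\leq p}\partial_m^2\varphi=0$, and the simplifications available on $\calO$, namely $\Delta(x)=\sum\varepsilon_mx_m^2=0$ and $|x|^2=4(x_1^2+\cdots+x_p^2)$. A direct calculation then yields $\partial_j\psi = -x_j\widetilde{K}_{\frac{\nu}{2}+k+1}\varphi+\widetilde{K}_{\frac{\nu}{2}+k}\partial_j\varphi$, an analogous closed expression for $E\partial_j\psi$ involving also a $\widetilde{K}_{\frac{\nu}{2}+k+2}$ term, and, crucially on $\calO$, $\Box\psi=(q-p-2k)\widetilde{K}_{\frac{\nu}{2}+k+1}\varphi$. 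Substituting back exhibits $\td\pi(e_j,0,-\alpha e_j)\psi$ as an $i^{-1}$-multiple of a linear combination of the six elementary terms $x_j\widetilde{K}_{\frac{\nu}{2}+k+\ell}\,\varphi$ and $\widetilde{K}_{\frac{\nu}{2}+k+\ell}\,\partial_j\varphi$ for $\ell\in\{-1,0,+1,+2\}$, with explicit scalar coefficients.

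The final step consists of two reductions. First, specializing Lemma~\ref{lem:GenFctRecRels}(2) at $t=0$ gives the Bessel recurrence
\begin{equation*}
\tfrac{t^2}{2}\widetilde{K}_{\alpha+1}(t) \;=\; 2\widetilde{K}_{\alpha-1}(t) + 2\alpha\,\widetilde{K}_\alpha(t),
\end{equation*}
which is applied twice (with $\alpha=\frac{\nu}{2}+k$ and $\alpha=\frac{\nu}{2}+k+1$, using $\nu=p-2$) to eliminate all occurrences of $|x|^2\widetilde{K}_{\frac{\nu}{2}+k+1}$ and $|x|^2\widetilde{K}_{\frac{\nu}{2}+k+2}$; afterwards the $x_j\widetilde{K}_{\frac{\nu}{2}+k}\varphi$ contributions cancel. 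Second, the harmonic-projection identities $x_j\varphi=\varphi_j^+ + (x_1^2+\cdots+x_p^2)\varphi_j^-$ and $\partial_j\varphi=(p+2k-2)\varphi_j^-$ rewrite the remaining terms in the stated basis. The main obstacle is the bookkeeping, in particular the factor of two between $|x|^2$ and $x_1^2+\cdots+x_p^2$ on $\calO$; the final miracle is that the coefficient of $\widetilde{K}_{\frac{\nu}{2}+k}\varphi_j^-$ vanishes identically by the cancellation $(p+2k-q)+(q-p-2k)=0$, while those of $\widetilde{K}_{\frac{\nu}{2}+k+1}\varphi_j^+$ and $\widetilde{K}_{\frac{\nu}{2}+k-1}\varphi_j^-$ collapse to $(2k+p-q)$ and $-(2k+p+q-4)$. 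The range $j>p$, where $\varepsilon_j=-1$, $\partial_j\varphi=0$ and $\varphi_j^\pm\equiv 0$, serves as a sanity check: the same mechanism produces zero on both sides.
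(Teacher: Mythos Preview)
Your argument is correct but organized differently from the paper's.  The paper does not go through the $P_j$ operators from Section~\ref{sec:ExReps}; instead it applies the Jordan-algebraic product rule \eqref{eq:BnuProdRule} for the Bessel operator to split $\calB\bigl(\widetilde{K}_{\frac{\nu}{2}+k}\cdot\varphi\bigr)$ into three pieces: the already-computed radial action \eqref{eq:kActionRank2} on the Bessel factor, a cross term $\tau\bigl(P(\alpha x,\partial\varphi/\partial x)x,e_j\bigr)$, and $\calB_\lambda\varphi$.  The cases $1\le j\le p$ and $p<j\le n$ are then treated separately, and the simplification is closed using the second-order differential equation \eqref{eq:DiffEqModBessel} for $\widetilde{K}_{\frac{\nu}{2}+k}$ directly, so no $\widetilde{K}_{\frac{\nu}{2}+k+2}$ term ever appears.

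Your route via $\Box$, $E$, $\partial_j$ is more elementary in that it avoids the Jordan product rule and treats all $j$ uniformly; the price is slightly heavier bookkeeping, since the $E\partial_j\psi$ term produces a $\widetilde{K}_{\frac{\nu}{2}+k+2}$ contribution that you then have to reduce by the recurrence.  One small remark: the Bessel recurrence you invoke is recorded directly as \eqref{eq:KBesselRecRel} in Appendix~\ref{app:BesselFunctions}, so there is no need to extract it from Lemma~\ref{lem:GenFctRecRels}(2) at $t=0$.
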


\begin{proof}
Let $\varphi\in\calH^k(\RR^p)$ and $1\leq j\leq n$. With \eqref{eq:KActionInTermsOfBesselOp}, the product rule \eqref{eq:BnuProdRule} and \eqref{eq:ddxradial} we obtain
\begin{multline*}
 \td\pi(e_j,0,-\alpha e_j)(\widetilde{K}_{\frac{\nu}{2}+k}\otimes\varphi)(x) = \td\pi(e_j,0,-\alpha e_j)\widetilde{K}_{\frac{\nu}{2}+k}(x)\cdot\varphi(x)\\
 +\frac{2}{i}\frac{\widetilde{K}_{\frac{\nu}{2}+k}'(|x|)}{|x|}\tau\left(P\left(\alpha x,\frac{\partial\varphi}{\partial x}\right)x,e_j\right)+\frac{1}{i}\widetilde{K}_{\frac{\nu}{2}+k}(|x|)\tau(\calB_\lambda\varphi(x),e_j).
\end{multline*}
Using the two identities
\begin{align*}
 \sum_{j=1}^p{x_j\frac{\partial\varphi}{\partial x_j}(x)} &= k\varphi(x) & \mbox{and} && \sum_{j=1}^p{\frac{\partial^2\varphi}{\partial x_j^2}}=0,
\end{align*}
which hold since $\varphi\in\calH^k(\RR^p)$, a short calculation gives
\begin{align*}
 P\left(\alpha x,\frac{\partial\varphi}{\partial x}\right)x &= k\varphi(x')x''+\frac{1}{2}|x|^2\frac{\partial\varphi}{\partial x}(x'),\\
 \calB_\lambda\varphi &= (\lambda+k-1)\frac{\partial\varphi}{\partial x}.
\end{align*}
\begin{enumerate}
\item[\textup{(a)}] $1\leq j\leq p$. With \eqref{eq:kActionRank2} and the differential equation \eqref{eq:DiffEqModBessel} we obtain
\begin{multline*}
 \td\pi(e_j,0,-\alpha e_j)(\widetilde{K}_{\frac{\nu}{2}+k}\otimes\varphi)(x) = \frac{1}{i}\left[-2(2k+p-q)\frac{1}{|x|}\widetilde{K}_{\frac{\nu}{2}+k}'(|x|)\cdot x_j\varphi(x)\right.\\
 \left.+ |x|\widetilde{K}_{\frac{\nu}{2}+k}'(|x|)\cdot\frac{\partial\varphi}{\partial x_j}(x) + \left(k+\frac{p+q-4}{2}\right)\widetilde{K}_{\frac{\nu}{2}+k}(|x|)\cdot\frac{\partial\varphi}{\partial x_j}(x)\right].
\end{multline*}
Applying \eqref{eq:BesselDiffFormulas} and \eqref{eq:KBesselRecRel} yields the stated formula.
\item[\textup{(b)}] $p+1\leq j\leq n$. In this case $\tau(\frac{\partial\varphi}{\partial x},e_j)=0$ and $\tau(x'',e_j)=(x|e_j)$. Using \eqref{eq:kActionRank2} again we find that
\begin{multline*}
 \td\pi(e_j,0,-\alpha e_j)(\widetilde{K}_{\frac{\nu}{2}+k}\otimes\varphi)(x)\\
 = \frac{1}{i}\left[\widetilde{K}_{\frac{\nu}{2}+k}''(|x|)+(2k+p-1)\frac{1}{|x|}\widetilde{K}_{\frac{\nu}{2}+k}'(|x|)-\widetilde{K}_{\frac{\nu}{2}+k}(|x|)\right](x|e_j)\varphi(x).
\end{multline*}
But this is $=0$ by \eqref{eq:DiffEqModBessel} which finishes the proof.\qedhere
\end{enumerate}
\end{proof}

Now
\begin{align*}
 \frakk = \frakk_\frakl \oplus \{(u,0,-\alpha u):u\in V\}
\end{align*}
and $\frakk_\frakl=\so(p)\oplus\so(q)$ acts irreducibly on $\calH^k(\RR^p)$ for every $k\geq0$. Therefore, the previous lemma implies that
\begin{align*}
 W_0 &= \begin{cases}\displaystyle\bigoplus_{k=0}^{\frac{q-p}{2}}{\widetilde{K}_{\frac{\nu}{2}+k}\otimes\calH^k(\RR^p)} & \mbox{if $q-p\in2\ZZ$,}\\\displaystyle\bigoplus_{k=0}^\infty{\widetilde{K}_{\frac{\nu}{2}+k}\otimes\calH^k(\RR^p)} & \mbox{else,}\end{cases}
\end{align*}
which proves parts (c) and (d) of Proposition \ref{prop:Kfinite}. It remains to calculate the highest weight of $W_0$.

The isomorphism $\frakg\stackrel{\sim}{\rightarrow}\so(p+1,q+1)$ given in Example \ref{ex:ConfGrp}~(2) restricts to an isomorphism $\frakk\stackrel{\sim}{\rightarrow}\so(p+1)\oplus\so(q+1)\subseteq\so(p+1,q+1)$ given by
\begin{align*}
 (0,T,0) &\mapsto \left(\begin{array}{c|c|c}&&\\\hline&T&\\\hline&&\end{array}\right), && T\in\frakk_\frakl=\so(p)\oplus\so(q),\\
 (u,0,-\alpha u) &\mapsto \left(\begin{array}{c|cc|c}&2(u')^t&&\\\hline-2u'&&&\\&&&-2u''\\\hline&&2(u'')^t&\end{array}\right), && u\in V.
\end{align*}

\begin{lemma}\label{lem:IdentLowestKtypeRk2}
Under the identification $\frakk\cong\so(p+1)\oplus\so(q+1)$ the map
\begin{align*}
 \Phi:\bigoplus_{k=0}^{\frac{q-p}{2}}{\widetilde{K}_{\frac{\nu}{2}+k}\otimes\calH^k(\RR^p)}&\rightarrow\calH^{\frac{q-p}{2}}(\RR^{p+1}),\\
 \Phi\left(\widetilde{K}_{\frac{\nu}{2}+k}\otimes\varphi\right)(x_0,x')&:=\frac{(-2i)^k}{(\frac{p-q}{2})_k}\widetilde{C}_{\frac{q-p}{2}-k}^{\frac{p-1}{2}+k}(x_0)\varphi(x'), & (x_0,x')\in\SS^p\subseteq\RR^{p+1}.
\end{align*}
becomes an isomorphism of $\frakk$-modules.
\end{lemma}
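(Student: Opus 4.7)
\medskip

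\noindent\textbf{Proof plan for Lemma \ref{lem:IdentLowestKtypeRk2}.} The strategy is to identify $\Phi$ as a rescaled version of the classical branching isomorphism for the restriction $\mathfrak{so}(p+1)\downarrow\mathfrak{so}(p)$ on spherical harmonics, and then check separately that $\Phi$ intertwines the two pieces of the Cartan decomposition $\mathfrak{k}=\mathfrak{k}_\mathfrak{l}+\{(u,0,-\alpha u):u\in V\}$. The non-compact directions $(u,0,-\alpha u)$ are handled via Lemma \ref{lem:Rank2KAction} on the source side and via the natural $\mathfrak{so}(p+1)$-action on polynomials on the target side, with the standard recurrence and derivative identities for Gegenbauer polynomials matching the numerical constants.

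First I would verify that $\Phi$ maps into $\mathcal{H}^{N}(\mathbb{R}^{p+1})$ (with $N:=(q-p)/2$) and is a linear bijection. This is the content of the well-known decomposition
$$\mathcal{H}^{N}(\mathbb{R}^{p+1})\big|_{\mathbb{S}^p}=\bigoplus_{k=0}^{N}\widetilde{C}_{N-k}^{(p-1)/2+k}(x_0)\,\mathcal{H}^{k}(\mathbb{R}^p),$$
obtained by writing a harmonic polynomial on $\mathbb{R}^{p+1}$ as $\sum_k r^{N-k}\widetilde{C}_{N-k}^{(p-1)/2+k}(r^{-1}x_0)\varphi_k(x')$ with $\varphi_k\in\mathcal{H}^{k}(\mathbb{R}^p)$ and restricting to $r=1$. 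Since both sides have the same finite dimension and the summands are linearly independent, $\Phi$ is bijective; the normalizing scalar $(-2i)^k/(\tfrac{p-q}{2})_k$ only rescales each summand. Equivariance for $\mathfrak{k}_\mathfrak{l}=\mathfrak{so}(p)\oplus\mathfrak{so}(q)$ is then immediate: on the source $\mathfrak{so}(p)$ acts on $\mathcal{H}^{k}(\mathbb{R}^p)$ by its natural representation and $\mathfrak{so}(q)$ acts trivially on all factors $\widetilde{K}_{\nu/2+k}\otimes\mathcal{H}^{k}(\mathbb{R}^p)$, while on the target the embedding $\mathfrak{so}(p)\hookrightarrow\mathfrak{so}(p+1)$ fixes $x_0$ and acts naturally on $x'$, and $\mathfrak{so}(q)\subseteq\mathfrak{so}(q+1)$ acts trivially on $\mathcal{H}^{N}(\mathbb{R}^{p+1})$.

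Next I would verify intertwining for $(e_j,0,-\alpha e_j)$ with $j=1,\dots,n$. For $p+1\le j\le n$ the source action vanishes by Lemma \ref{lem:Rank2KAction} (since $\varphi_j^{\pm}=0$ for such $j$), and under the explicit matrix form from Example \ref{ex:ConfGrp}(2) the target action also vanishes as it lands in $\mathfrak{so}(q+1)$, which annihilates $\mathcal{H}^{N}(\mathbb{R}^{p+1})$. For $1\le j\le p$ one reads off that $(e_j,0,-\alpha e_j)$ corresponds to $2(E_{0j}-E_{j0})\in\mathfrak{so}(p+1)$, acting on polynomials by the vector field $2(x_0\partial_j-x_j\partial_0)$. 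Applying this operator to $\widetilde{C}_{N-k}^{(p-1)/2+k}(r^{-1}x_0)r^{N-k}\varphi(x')$ and restricting to $\mathbb{S}^p$ (using $r=1$, $x_j\varphi=\varphi_j^{+}+(x_1^2+\dots+x_p^2)\varphi_j^{-}$, the derivative formula $\frac{d}{dt}\widetilde{C}_{n}^{\alpha}(t)=2\alpha\widetilde{C}_{n-1}^{\alpha+1}(t)$ and the three-term recurrence for Gegenbauer polynomials), the result is expressible as a linear combination of the $(k+1)$- and $(k-1)$-components of the decomposition, with explicit coefficients. Comparing this with Lemma \ref{lem:Rank2KAction}, which produces exactly one $\widetilde{K}_{\nu/2+k+1}\otimes\varphi_j^{+}$ term with coefficient $(2k+p-q)/i$ and one $\widetilde{K}_{\nu/2+k-1}\otimes\varphi_j^{-}$ term with coefficient $-(2k+p+q-4)/i$, will then determine how $\Phi$ must rescale between adjacent $k$. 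Solving the resulting two-step recursion for the rescaling factor starting from $k=0$ yields precisely $(-2i)^k/(\tfrac{p-q}{2})_k$.

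The main obstacle is the bookkeeping in the last step: one must carefully keep track of the contributions coming from differentiating $\widetilde{C}_{N-k}^{(p-1)/2+k}(r^{-1}x_0)$ in both $x_0$ and $x_j$ (the latter via the $r^{-1}$), combine them with those from differentiating $r^{N-k}\varphi(x')$, and repeatedly invoke the Gegenbauer recurrences to re-express the outcome in the basis $\widetilde{C}_{N-k\mp 1}^{(p-1)/2+k\pm 1}(x_0)\varphi_j^{\pm}(x')$. Once this calculation is performed, the coefficients $2k+p-q$ and $2k+p+q-4$ from Lemma \ref{lem:Rank2KAction} match the Gegenbauer recursion constants up to exactly the factor $(-2i)/(\tfrac{p-q}{2}+k)$, which is the ratio $\bigl[(-2i)^{k+1}/(\tfrac{p-q}{2})_{k+1}\bigr]\big/\bigl[(-2i)^k/(\tfrac{p-q}{2})_k\bigr]$, confirming that $\Phi$ as normalized is the correct $\mathfrak{k}$-equivariant isomorphism.
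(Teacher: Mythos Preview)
Your proposal is correct and follows essentially the same route as the paper: reduce to the $\mathfrak{so}(p+1)$-action (since $\mathfrak{so}(q+1)$ acts trivially on both sides), observe that $\mathfrak{so}(p)$-equivariance is immediate, and then verify the intertwining for the generators $(e_j,0,-\alpha e_j)$ with $1\le j\le p$ by playing the Gegenbauer derivative and recurrence identities against Lemma~\ref{lem:Rank2KAction}. Two small remarks: the paper works directly on $\mathbb{S}^p$ with the operator $-2x_j\partial_{x_0}+2x_0\partial_{x_j}$ rather than through a homogeneous extension in $r$, which avoids the extra bookkeeping you anticipate; and for the normalized polynomials $\widetilde{C}_n^\lambda=\Gamma(\lambda)C_n^\lambda$ the derivative formula is $\frac{d}{dt}\widetilde{C}_n^\lambda(t)=2\,\widetilde{C}_{n-1}^{\lambda+1}(t)$ (the factor $2\alpha$ you wrote is for the unnormalized $C_n^\alpha$).
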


Here $\widetilde{C}_n^\lambda(z)$ denote the normalized Gegenbauer polynomials as defined in Appendix \ref{app:Gegenbauer} and $(a)_k=a(a+1)\cdots(a+k-1)$ is the Pochhammer symbol.

\begin{proof}
Since $\so(q+1)$ acts trivially on both sides, we only have to check the $\so(p+1)$-action. The action of $A\in\so(p+1)$ on $\psi\in\calH^{\frac{q-p}{2}}(\RR^{p+1})$ is given by
\begin{align*}
 (A\cdot\psi)(x) &:= D_{A^*x}\psi(x) = -D_{Ax}\psi(x).
\end{align*}
Then it is clear that $\Phi$ intertwines the actions of $\so(p)\subseteq\frakk_\frakl$. It remains to check that it also intertwines the actions of $(e_j,0,-\alpha e_j)$ for $1\leq j\leq p$. To prove this, we make use of the two formulas \eqref{eq:Gegenbauer1} and \eqref{eq:Gegenbauer2} for the normalized Gegenbauer polynomials. We then have for $(x_0,x')\in\SS^p$:
\setmyalign{\eqref{eq:Gegenbauer1}}
\begin{align*}
 \al\ \left((e_j,0,-\alpha e_j)\cdot\Phi(\widetilde{K}_{\frac{\nu}{2}+k}\otimes\varphi)\right)(x_0,x')\\
 \al= \left(-2x_j\frac{\partial}{\partial x_0}+2x_0\frac{\partial}{\partial x_j}\right)\Phi(\widetilde{K}_{\frac{\nu}{2}+k}\otimes\varphi)(x_0,x')\\
 \al[\eqref{eq:Gegenbauer1}]= -4\frac{(-2i)^k}{(\frac{p-q}{2})_k}\widetilde{C}_{\frac{q-p}{2}-(k+1)}^{\frac{p-1}{2}+(k+1)}(x_0)x_j\varphi(x')+2\frac{(-2i)^k}{(\frac{p-q}{2})_k}x_0\widetilde{C}_{\frac{q-p}{2}-k}^{\frac{p-1}{2}+k}(x_0)\frac{\partial\varphi}{\partial x_j}(x')\\
 \al[\eqref{eq:Gegenbauer2}]= \frac{1}{i}\left[(2k+p-q)\Phi(\widetilde{K}_{\frac{\nu}{2}+(k+1)}\otimes\varphi_j^+)\right.\\
 \al\ \ \ \ \ \ \ \ \ \ \ \ \ \ \ \ \ \ \ \ \ \ \ \ \ \ \ \ \ \ \ \ \ \ \ -\left.(2k+p+q-4)\Phi(\widetilde{K}_{\frac{\nu}{2}+(k-1)}\otimes\varphi_j^-)\right](x_0,x')\\
 \al= \Phi\left((e_j,0,-\alpha e_j)\cdot(\widetilde{K}_{\frac{\nu}{2}+k}\otimes\varphi)\right)(x_0,x')
\end{align*}
by Lemma \ref{lem:Rank2KAction}.
\end{proof}

We use the identification $\frakk\cong\so(p+1)\oplus\so(q+1)$ to transfer the torus $\frakt\subseteq\frakk$, the roots $\{\pm\frac{\gamma_i\pm\gamma_j}{2}\}$ and our choice of a positive system from $\frakk$ to $\so(p+1)\oplus\so(q+1)$. Then by \cite[Chapter IV.7, Examples (1) \& (2)]{Kna86} the function $\zeta(x)=(x_0+ix_1)^{\frac{q-p}{2}}$ is a highest weight vector in $\calH^{\frac{q-p}{2}}(\RR^{p+1})$ and the corresponding highest weight is given by
\begin{align*}
 \varepsilon\left(\begin{array}{cc|ccc}0&a&&&\\-a&0&&&\\\hline&&*&&\\&&&\ddots&\\&&&&*\end{array}\right) &= \frac{q-p}{2}ia.
\end{align*}
Under the above identification $\varepsilon$ corresponds to $\frac{q-p}{4}(\gamma_1+\gamma_2)$ which is in turn the highest weight of the $\frakk$-module $W_0$. This proves the last part of Proposition \ref{prop:Kfinite}.

Finally we calculate the action of $\calF_\calO=e^{-i\pi\frac{r_0}{2}(d_0-\frac{d}{2})_+}\pi(\check{w_0})$ on the function $\psi_0$ to finish the proof of Proposition \ref{prop:AkkappaPsi0}. The missing part in the proof is the following lemma:

\begin{lemma}\label{lem:Rk2ActionFpsi0}
$e^{i\frac{\pi}{2}(e|x-\calB)}\psi_0=e^{i\pi(\frac{q-p}{2})_+}\psi_0$.
\end{lemma}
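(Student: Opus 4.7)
The plan is to interpret the operator exponential group-theoretically and reduce to a concrete rotation computation in the spherical harmonics model of the minimal $\frakk$-type. First, I would identify $e^{i\frac{\pi}{2}(e|x-\calB)}$ with $\pi(\check{j})$ where $\check{j}=\exp_{\check{G}}(\frac{\pi}{2}(e,0,-e))$. Indeed, by \eqref{eq:KActionInTermsOfBesselOp} applied to $u=e$ (using $\alpha e=e$), the operator $\td\pi(e,0,-e)$ equals $i(e|x-\calB)$, so $\pi(\check{j})=e^{\frac{\pi}{2}\td\pi(e,0,-e)}=e^{i\frac{\pi}{2}(e|x-\calB)}$.

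Next I would argue that the scalar we seek is $\pi(\check{w_0})|_{W_0}$. Since $\check{w_0}=\check{\alpha}\check{j}$ is central in $\check{K}$ (Lemma \ref{lem:Tildew0Central}), Schur's lemma applied to the irreducible $\frakk$-module $W_0$ gives that $\pi(\check{w_0})$ acts by a scalar on $W_0$. Moreover, $\psi_0$ is $K_L$-invariant, so $\pi(\check{\alpha})\psi_0=\psi_0$ and hence $\pi(\check{j})\psi_0=\pi(\check{w_0})\psi_0$ is a scalar multiple of $\psi_0$.

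To compute the scalar, I would use the $\frakk$-module isomorphism $\Phi$ from Lemma \ref{lem:IdentLowestKtypeRk2} (assuming first $p\leq q$), which identifies $W_0$ with $\calH^{\frac{q-p}{2}}(\RR^{p+1})$. Under $\Phi$, the vector $\psi_0=\widetilde{K}_{\nu/2}\otimes 1$ (the $k=0$ summand) corresponds, up to a nonzero constant, to the polynomial $\zeta_0(x_0,x_1,\ldots,x_p)=\widetilde{C}_{\frac{q-p}{2}}^{\frac{p-1}{2}}(x_0)$, which depends only on $x_0$. The element $(e,0,-e)=(e_1,0,-e_1)$ has $e_1'=(1,0,\ldots,0)\in\RR^p$ and $e_1''=0$, so its image in $\so(p+1)\oplus\so(q+1)$ lies entirely in the $\so(p+1)$-factor and, read off from the proof of Lemma \ref{lem:IdentLowestKtypeRk2}, acts on functions on $\RR^{p+1}$ by the vector field $-2x_1\partial_{x_0}+2x_0\partial_{x_1}$. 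This vector field generates rotation in the $(x_0,x_1)$-plane at angular speed $2$, so its flow at time $\pi/2$ is rotation by $\pi$, i.e. $(x_0,x_1)\mapsto(-x_0,-x_1)$. Pulling back $\zeta_0$ along this rotation and using the parity $\widetilde{C}_n^\lambda(-x_0)=(-1)^n\widetilde{C}_n^\lambda(x_0)$ (immediate from the Gegenbauer generating function) yields
\begin{equation*}
\pi(\check{j})\psi_0=(-1)^{\frac{q-p}{2}}\psi_0=e^{i\pi\frac{q-p}{2}}\psi_0=e^{i\pi(\frac{q-p}{2})_+}\psi_0,
\end{equation*}
since $(\frac{q-p}{2})_+=\frac{q-p}{2}$ when $p\leq q$.

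For the remaining case $p\geq q$, the analogous identification realises $W_0$ as $\calH^{\frac{p-q}{2}}(\RR^{q+1})$ via the $\so(q+1)$-action. Since $e_1''=0$, the $\so(q+1)$-component of $(e,0,-e)$ vanishes, hence $\pi(\check{j})$ acts trivially on $W_0$ and $\pi(\check{j})\psi_0=\psi_0=e^{i\pi\cdot 0}\psi_0=e^{i\pi(\frac{q-p}{2})_+}\psi_0$. The main bookkeeping obstacle is verifying that the image of $(e,0,-e)$ in $\so(p+1)\oplus\so(q+1)$ really is the block rotation $2(E_{01}-E_{10})$ in the $\so(p+1)$-factor with no $\so(q+1)$-contribution; this follows directly from the explicit isomorphism in Example \ref{ex:ConfGrp}~(2) together with $\alpha e_1=e_1$, and once it is in hand the rest is a one-line Gegenbauer parity statement.
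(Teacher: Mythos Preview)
Your proposal is correct and follows essentially the same approach as the paper: both identify $e^{i\frac{\pi}{2}(e|x-\calB)}$ with $\pi(\check{j})$, transport the computation to $\calH^{\frac{q-p}{2}}(\RR^{p+1})$ via Lemma~\ref{lem:IdentLowestKtypeRk2}, recognize the image of $(e,0,-e)$ as the generator of a rotation by $\pi$ in the $(x_0,x_1)$-plane, and conclude by the Gegenbauer parity formula \eqref{eq:GegenbauerParity}. Your preliminary detour through $\check{w_0}$ and Schur's lemma is a pleasant reassurance that $\pi(\check{j})\psi_0$ is a scalar multiple of $\psi_0$, but the paper omits it since the direct computation in the spherical harmonics model already yields the scalar.
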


\begin{proof}
By the definition of $\td\pi$ we have
\begin{align*}
 e^{i\frac{\pi}{2}(e|x-\calB)} &= e^{\td\pi(\frac{\pi}{2}(e,0,-e))}.
\end{align*}
Under the identification $\frakk\cong\so(p+1)\times\so(q+1)$ the element $(e,0,-e)\in\frakk$ corresponds to the matrix
\begin{align*}
 \left(\begin{array}{ccccc}0&2&&&\\-2&0&&&\\&&0&&\\&&&\ddots&\\&&&&0\end{array}\right).
\end{align*}
Applying the exponential function of $\SO(p+1)\times\SO(q+1)$ to $\frac{\pi}{2}$ times this matrix gives
\begin{align}
 \left(\begin{array}{cc}-\1_2&\\&\1_{p+q}\end{array}\right).\label{eq:Rk2MatrixJ}
\end{align}
By Lemma \ref{lem:IdentLowestKtypeRk2} the function $\psi_0\in W_0$ corresponds to the function
\begin{align}
 \SS^p\rightarrow\CC,\,(x_0,x')\mapsto\widetilde{C}_{\frac{q-p}{2}}^{\frac{p-1}{2}}(x_0).\label{eq:Rk2GegenbauerFct}
\end{align}
In view of the parity formula \eqref{eq:GegenbauerParity} for the Gegenbauer polynomials we see that the matrix \eqref{eq:Rk2MatrixJ} acts on the function \eqref{eq:Rk2GegenbauerFct} by the scalar $(-1)^{\frac{q-p}{2}}=e^{i\pi\frac{q-p}{2}}$. Similarly one shows that for $p\geq q$ the scalar is $1$. Therefore, the claim follows.
\end{proof}

\section{The Casimir action}\label{sec:Rank2Casimir}

In this section we calculate the Casimir action in the rank $2$ case. This completes the proof of Theorem \ref{thm:CasimirAction}.

The action of $(e_j,0,-\alpha e_j)$ on radial functions is given in \eqref{eq:kActionRank2}. Note that $(e_j)_j$ denotes the standard basis of $\RR^n$ which is not orthonormal with respect to the inner product $(-|-)$. With the notation of \eqref{eq:OrdinaryBesselOp} we find that for $x\in\calO$:
\setmyalign{\eqref{eq:BnuProdRule}}
\begin{align*}
 \al\ -\sum_{j=1}^p{\td\pi(e_j,0,-\alpha e_j)^2\psi(x)}\\
 \al[\eqref{eq:BnuProdRule}]= \sum_{j=1}^p{\left[(B_{q-2}^2f)(|x|)(x|e_j)^2+2\tau\left(\left.P\left(\frac{\partial(B_{q-2}f)}{\partial x}\right|\frac{\partial(x|e_j)}{\partial x}\right)x,e_j\right)\right.}\\
 \al\ \ \ \ \ \ \ \ \ \ \ \ \ \ \ \ \ \ \ \ \ \ \ \ \ \ \ \ \ \ \ \ \ \ \ \ \ \ \ \ \ \ \ \ \ \ \ \ \ \ \ \ \ \ \ \ \ \ \ \ \ +\left.\vphantom{\tau\left(\left.P\left(\frac{\partial(B_{q-2}f)}{\partial x}\right|\frac{\partial(x|e_j)}{\partial x}\right)x,e_j\right)}(B_{q-2}f)(|x|)\tau(\calB_\lambda(x|e_j),e_j)\right]\\
 \al[\eqref{eq:ddxradial}]= 2(B_{q-2}^2f)(|x|)|x'|^2+\frac{2}{|x|}(B_{q-2}f)'(|x|)\sum_{j=1}^p{\tau\left(P\left(\left.\alpha x\right|\alpha e_j\right)x,e_j\right)}\\
 \al\ \ \ \ \ \ \ \ \ \ \ \ \ \ \ \ \ \ \ \ \ \ \ \ \ \ \ \ \ \ \ \ \ \ \ \ \ \ \ \ \ \ \ \ \ \ \ \ \ \ \ \ \ \ \ \ \ \ \ \ \ \ \ \ \ +\frac{d}{2}(B_{q-2}f)(|x|)\sum_{j=1}^p{(e_j|e_j)}\\
 \al= (B_{q-2}^2f)(|x|)|x|^2+\frac{2}{|x|}(B_{q-2}f)'(|x|)\sum_{j=1}^p{\tau\left(P\left(\left.\alpha x\right|\alpha e_j\right)x,e_j\right)}+pd(B_{q-2}f)(|x|)
\end{align*}
and similarly for $\sum_{j=p+1}^n{\td\pi(e_j,0,-\alpha e_j)^2\psi(x)}$. Now a direct computation shows that
\begin{align*}
 \tau(P(\alpha x,\alpha e_j)x,e_j) &= |x|^2 & \forall\, j=1,\ldots,n.
\end{align*}
Therefore, we obtain
\begin{multline*}
 -\sum_{j=1}^n{\td\pi(e_j,0,-\alpha e_j)^2\psi(x)}\\
 = |x|^2(B_{q-2}^2f)(|x|)+2p|x|(B_{q-2}f)'(|x|)+dp(B_{q-2}f)(|x|)\\
 + |x|^2(B_{p-2}^2f)(|x|)+2q|x|(B_{p-2}f)'(|x|)+dq(B_{p-2}f)(|x|),
\end{multline*}
which turns out to be equal to $2\calD_{\mu,\nu}+(q-p)(p+q-2)$. Taking into account that $(\frac{1}{\sqrt{2}}e_j)_j$ forms an orthonormal basis of $V$ with respect to $(-|-)$ we obtain with \eqref{eq:CasimirActionInTermsOfej} that
\begin{align*}
 \td\pi(C_\frakk)\psi(x) &= \frac{r}{8n}\sum_{j=1}^n{\td\pi\left(\frac{1}{\sqrt{2}}e_j,0,-\frac{1}{\sqrt{2}}\alpha e_j\right)^2\psi(x)}\\
 &= -\frac{r}{8n}\left(\calD_{\mu,\nu}+\frac{(q-p)(p+q-2)}{2}\right)
\end{align*}
which finishes the proof of Theorem \ref{thm:CasimirAction}.
\chapter{Parabolic subgroups}\label{app:Parabolics}

Let $G$ be a real reductive group of inner type (i.e. $\Ad(G)\subseteq\Int(\frakg_\CC)$) and $\frakg=\frakk+\frakp$ be a Cartan decomposition of its Lie algebra. Further, let $\fraka\subseteq\frakp$ be any (not necessarily maximal) abelian subalgebra. Assume that the set $\Sigma(\frakg,\fraka)$ is a root system. In this section we construct parabolic subgroups of $G$ just in terms of the root system $\Sigma(\frakg,\fraka)$, not involving a maximal abelian subalgebra.\\

For $\alpha\in\fraka^*$ we consider the weight space
\begin{align*}
 \frakg_\alpha := \{X\in\frakg:[H,X]=\alpha(H)X\ \forall\, H\in\fraka\}.
\end{align*}
Denote by $\Sigma(\frakg,\fraka)$ the set of all $0\neq\alpha\in\fraka^*$ such that $\frakg_\alpha\neq0$. As mentioned in the beginning, we assume that $\Sigma(\frakg,\fraka)$ is a root system. Choose a positive system $\Sigma^+(\frakg,\fraka)\subseteq\Sigma(\frakg,\fraka)$ and denote by $\Pi(\frakg,\fraka)$ the corresponding set of simple roots. For any subset $F\subseteq\Pi(\frakg,\fraka)$ of simple roots we form the Lie algebras
\begin{align*}
 \fraka_F &:= \{H\in\fraka:\alpha(H)=0\ \forall\,\alpha\in F\} \subseteq \fraka,\index{notation}{afrakF@$\fraka_F$}\\
 \frakm_F &:= \{X\in\frakg:[X,\fraka_F]=0\}\index{notation}{mfrakF@$\frakm_F$}
\intertext{with corresponding Lie groups}
 A_F &:= \exp(\fraka_F),\index{notation}{AupF@$A_F$}\\
 M_F &:= \{g\in G:\Ad(g)H=H\ \forall\, H\in\fraka_F\}.\index{notation}{MupF@$M_F$}
\end{align*}
Further put
\begin{align*}
 \Sigma_F^+(\frakg,\fraka) &:= \{\alpha\in\Sigma^+(\frakg,\fraka):\alpha|_{\fraka_F}\neq0\}\index{notation}{SigmaFplus@$\Sigma_F^+$}
\intertext{and}
 \frakn_F &:= \bigoplus_{\alpha\in\Sigma_F^+(\frakg,\fraka)}{\frakg_\alpha},\index{notation}{nfrakF@$\frakn_F$}\\
 N_F &:= \exp(\frakn_F).\index{notation}{NupF@$N_F$}
\end{align*}
Finally, we define
\begin{align*}
 P_F &:= M_FN_F.\index{notation}{PupF@$P_F$}
\end{align*}

\def\thetheorem{\thechapter.\arabic{theorem}} 

\begin{theorem}\label{thm:Parabolics}
\begin{enumerate}
\item[\textup{(1)}] For every subset $F\subseteq\Pi(\frakg,\fraka)$ the group $P_F$ is a parabolic subgroup of $G$.
\item[\textup{(2)}] Each $P_F$ has the Langlands decomposition $P_F=^\circ\!\! M_FA_FN_F$, where
\begin{align*}
 ^\circ\! M_F &= \{g\in M_F:\chi(g)=1\ \forall\,\mbox{characters }\chi:M_F\rightarrow\RR_+\},
\end{align*}
and the maps
\begin{align*}
 M_F\times N_F &\rightarrow P_F, & (m,n) &\mapsto mn,\\
 ^\circ\! M_F\times A_F\times N_F &\rightarrow P_F, & (m,a,n) &\mapsto man,
\end{align*}
are diffeomorphisms.
\item[\textup{(3)}] $G=KP_F$ and we have the following integral formula for $f\in C_c(G)$:
\begin{align*}
 \int_G{f(g)\td g} &= \int_K{\int_{^\circ\! M_F}{\int_{A_F}{\int_{N_F}{f(kman)a^{2\rho_F}\td n}\td a}\td m}\td k},
\end{align*}
where $\td g$, $\td k$, $\td m$, $\td a$ and $\td n$ denote suitably normalized Haar measures on $G$, $K$, $^\circ\! M_F$, $A_F$ and $N_F$, respectively, and
\begin{align*}
 \rho_F(H) &= \frac{1}{2}\Tr(\ad(H)|_{\frakn_F}) = \frac{1}{2}\sum_{\alpha\in\Sigma_F^+(\frakg,\fraka)}{\dim(\frakg_\alpha)\cdot\alpha}\index{notation}{rhoF@$\rho_F$}
\end{align*} 
is the half sum of all positive roots in $\Sigma_F^+(\frakg,\fraka)$.
\end{enumerate}
\end{theorem}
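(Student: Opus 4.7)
The approach I would take is a reduction to the classical structure theory for the case when the abelian subspace is maximal in $\frakp$. First I would extend $\fraka$ to a maximal abelian subspace $\tilde{\fraka}\subseteq\frakp$, decomposing $\tilde{\fraka}=\fraka\oplus\fraka'$ orthogonally with respect to the Killing form. Every root in $\Sigma(\frakg,\tilde{\fraka})$ restricts to $\fraka$ either to land in $\Sigma(\frakg,\fraka)$ or to vanish; in the latter case the corresponding root spaces assemble into the centralizer subalgebra $\mathfrak{z}_\frakg(\fraka)$, which is reductive and carries a classical restricted root system on $\fraka'$.

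Next I would choose a positive system $\tilde{\Sigma}^+$ on $(\frakg,\tilde{\fraka})$ compatible with $\Sigma^+(\frakg,\fraka)$: declare $\tilde{\alpha}$ positive whenever $\tilde{\alpha}|_\fraka\in\Sigma^+(\frakg,\fraka)$, breaking remaining ties by any positive system on the $\fraka$-vanishing roots. The simple system then partitions as $\tilde{\Pi}=\tilde{\Pi}_0\sqcup\tilde{\Pi}_1$, with $\tilde{\Pi}_0$ serving as the simple system of $\mathfrak{z}_\frakg(\fraka)$ relative to $\fraka'$ and $\tilde{\Pi}_1$ mapping bijectively onto $\Pi(\frakg,\fraka)$ under restriction. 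Given $F\subseteq\Pi(\frakg,\fraka)$, I would set $\tilde{F}:=\tilde{\Pi}_0\cup\{\tilde{\alpha}\in\tilde{\Pi}_1:\tilde{\alpha}|_\fraka\in F\}$.

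The key identification is $\tilde{\fraka}_{\tilde{F}}=\fraka_F$: writing $H=H_1+H_2$ with $H_1\in\fraka$ and $H_2\in\fraka'$, the vanishing of every root in $\tilde{\Pi}_0$ on $H$ forces $H_2=0$ (since $\tilde{\Pi}_0$ is a complete simple system on $\fraka'$), after which the vanishing of the roots in $\tilde{F}\cap\tilde{\Pi}_1$ on $H_1$ is equivalent to $H_1\in\fraka_F$. From this it follows that $M_F$ and $A_F$ coincide with the Levi and split-center factors of the standard parabolic $P_{\tilde{F}}^{\mathrm{std}}$ attached to $\tilde{F}$, while $\frakn_F=\frakn_{\tilde{F}}^{\mathrm{std}}$ because a root in $\tilde{\Sigma}^+$ fails to vanish on $\tilde{\fraka}_{\tilde{F}}=\fraka_F$ precisely when its restriction lies in $\Sigma_F^+(\frakg,\fraka)$. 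Hence $P_F=P_{\tilde{F}}^{\mathrm{std}}$, and statements (1), (2), (3) become immediate consequences of the classical structure theory of standard parabolic subgroups associated to subsets of simple restricted roots, as in \cite[Ch.\ VII]{Kna02} or \cite[Ch.\ 1]{War72}; the integration formula in (3) reproduces the standard one with $a^{2\rho_F}$ as the modular function of $N_F$, matching the present definition of $\rho_F$ as a half sum of positive roots with multiplicities in $\frakn_F$.

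The main obstacle is the clean identification $\tilde{\fraka}_{\tilde{F}}=\fraka_F$, which hinges on choosing the positive system on $\Sigma(\frakg,\tilde{\fraka})$ so that the simple roots vanishing on $\fraka$ span $(\fraka')^*$ and so that the restriction map $\tilde{\Pi}_1\to\Pi(\frakg,\fraka)$ really is a bijection; once this bookkeeping is in place, every other assertion is a mechanical translation from the standard setting.
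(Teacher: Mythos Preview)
Your approach is essentially the same as the paper's: extend $\fraka$ to a maximal abelian $\widetilde{\fraka}\subseteq\frakp$, choose a compatible positive system, define an appropriate subset $\widetilde{F}\subseteq\Pi(\frakg,\widetilde{\fraka})$, and reduce to the standard parabolic theory (the paper cites \cite[Lemmas 2.2.7, 2.2.8, 2.4.1]{Wal88}). One minor difference: the paper sets $\widetilde{F}:=\Phi_\Pi^{-1}(F)\cup\bigl(\Pi(\frakg,\widetilde{\fraka})\setminus\Phi_\Pi^{-1}(\Pi(\frakg,\fraka))\bigr)$, where $\Phi_\Pi$ is the restriction map on simple roots, and only requires that $\Phi_\Pi$ be \emph{surjective} onto $\Pi(\frakg,\fraka)\cup\{0\}$; this sidesteps the bijection $\widetilde{\Pi}_1\to\Pi(\frakg,\fraka)$ you flag as the main obstacle, since the paper's $\widetilde{F}$ automatically contains any simple $\widetilde{\alpha}$ whose restriction is neither zero nor simple.
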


\begin{proof}
Choose a maximal abelian subalgebra $\widetilde{\fraka}$ of $\frakp$ that contains $\fraka$:
\begin{align*}
 \fraka\subseteq\widetilde{\fraka}\subseteq\frakp.
\end{align*}
We denote the weight space with respect to $\alpha\in\widetilde{\fraka}^*$ by $\widetilde{\frakg}_\alpha$. Then clearly
\begin{align*}
 \widetilde{\frakg}_\alpha &\subseteq \frakg_{\widehat{\alpha}},
\end{align*}
where $\widehat{\alpha}=\alpha|_{\fraka}$. Since $\frakg=\bigoplus_{\alpha\in\fraka^*}{\frakg_\alpha}=\bigoplus_{\alpha\in\widetilde{\fraka}^*}{\widetilde{\frakg}_\alpha}$, we conclude that
\begin{align*}
 \Phi:\Sigma(\frakg,\widetilde{\fraka})\cup\{0\} &\rightarrow \Sigma(\frakg,\fraka)\cup\{0\},\,\alpha\mapsto\widehat{\alpha}=\alpha|_\fraka,
\end{align*}
is defined and surjective. One can choose a positive system $\Sigma^+(\frakg,\widetilde{\fraka})$ of $\Sigma(\frakg,\widetilde{\fraka})$ with corresponding simple roots $\Pi(\frakg,\widetilde{\fraka})$ such that $\Phi$ restricts to surjections
\begin{align*}
 \Phi^+:\Sigma^+(\frakg,\widetilde{\fraka})\cup\{0\}&\rightarrow\Sigma^+(\frakg,\fraka)\cup\{0\}, &&\mbox{and}& \Phi_\Pi:\Pi(\frakg,\widetilde{\fraka})\cup\{0\}&\rightarrow\Pi(\frakg,\fraka)\cup\{0\}.
\end{align*}
Then put $\widetilde{F}:=\Phi_\Pi^{-1}(F)\cup(\Pi(\frakg,\widetilde{\fraka})\setminus\Phi_\Pi^{-1}(\Pi(\frakg,\fraka)))\subseteq\Pi(\frakg,\widetilde{\fraka})$. The statements now follow immediately from \cite[Lemmas 2.2.7, 2.2.8 and 2.4.1]{Wal88} applied to $\widetilde{F}$.
\end{proof}
\chapter{Special Functions}\label{app:SpecialFunctions}

In this chapter we give definitions and basic properties of the classical special functions that appear in this paper.

\section{Bessel functions}\label{app:BesselFunctions}

The series
\begin{align}
 I_\alpha(z) &:= \left(\frac{z}{2}\right)^\alpha\sum_{n=0}^\infty{\frac{1}{n!\Gamma(n+\alpha+1)}\left(\frac{z}{2}\right)^{2n}}\label{eq:DefIBessel}
\end{align}
defines a meromorphic function in $z$ and $\alpha$, called the modified Bessel function of the first kind or $I$-Bessel function (see \cite[Section 3.7]{Wat44}). For $\alpha>-1$ and $z>0$ this function takes real values. $I_\alpha(z)$ solves the following second order differential equation:
\begin{align}
 z^2\frac{\td^2u}{\td z^2}+z\frac{\td u}{\td z}-(z^2+\alpha^2)u &= 0.\label{eq:DiffEqBessel}
\end{align}
Another solution of \eqref{eq:DiffEqBessel} which is linearly independent of $I_\alpha(z)$ is given by the modified Bessel function of the third kind or $K$-Bessel function:
\begin{align}
 K_\alpha(z) &:= \frac{\pi}{2\sin\pi\alpha}(I_{-\alpha}(z)-I_\alpha(z)).\label{eq:DefKBessel}
\end{align}
For convenience we use the following renormalizations:\index{notation}{Iuptildealphaz@$\widetilde{I}_\alpha(z)$}\index{notation}{Kuptildealphaz@$\widetilde{K}_\alpha(z)$}
\begin{align*}
 \widetilde{I}_\alpha(z) &:= \left(\frac{z}{2}\right)^{-\alpha}I_\alpha(z), & \widetilde{K}_\alpha(z) &:= \left(\frac{z}{2}\right)^{-\alpha}K_\alpha(z).
\end{align*}
Note that $\widetilde{I}_\alpha(z)$ is an entire function. Further, since $K_{-\alpha}=K_\alpha$ we have
\begin{align}
 \widetilde{K}_{-\alpha}(z) &= \left(\frac{z}{2}\right)^{2\alpha}K_\alpha(z).\label{eq:KBesselSymmetry}
\end{align}
It follows directly from the definitions that
\begin{align}
 \widetilde{I}_\alpha(e^{i\pi}x) &= \widetilde{I}_\alpha(x),\label{eq:ParityIBessel}\\
 \widetilde{K}_\alpha(e^{i\pi}x) &= a_{2\alpha}\widetilde{K}_\alpha(x)+b_{2\alpha}\widetilde{I}_\alpha(x),\label{eq:ParityKBessel}
\end{align}
where\index{notation}{aalpha@$a_\alpha$}\index{notation}{balpha@$b_\alpha$}
\begin{align*}
 a_\alpha &:= e^{-\alpha\pi i}, & b_\alpha &:= \frac{\Gamma(1-\frac{\alpha}{2})\Gamma(\frac{\alpha}{2})}{2}\left(e^{-\alpha\pi i}-1\right).
\end{align*}
For the special value $\alpha=-\frac{1}{2}$ the normalized $I$- and $K$-Bessel functions are
\begin{align}
 \widetilde{I}_{-\frac{1}{2}}(z) &= \frac{1}{\sqrt{\pi}}\cosh(z), & \widetilde{K}_{-\frac{1}{2}}(z) &= \frac{\sqrt{\pi}}{2}e^{-z}.\label{eq:IKBesselMinusHalf}
\end{align}
In the case where the parameter $\alpha\in\NN_0+\frac{1}{2}$ is a half-integer the $K$-Bessel function degenerates to a combination of power and exponential function and polynomial (see e.g. \cite[III.71~(12)]{Wat44}):
\begin{equation}
 \widetilde{K}_\alpha(z) = \sqrt{\pi}z^{-2\alpha}e^{-z}\sum_{i=0}^{\alpha-\frac{1}{2}}{\frac{(2\alpha-i-1)!}{(\alpha-i-\frac{1}{2})!\cdot i!}(2z)^i}.
 \label{eq:ExplKBessel}
\end{equation}
Corresponding to \eqref{eq:DiffEqBessel}, $\widetilde{I}_\alpha(z)$ and $\widetilde{K}_\alpha(z)$ solve the second order equation
\begin{align}
 z^2\frac{\td^2u}{\td z^2}+(2\alpha+1)z\frac{\td u}{\td z}-z^2u &= 0,\label{eq:DiffEqModBessel}
\end{align}
or equivalently
\begin{align}
 \left(\theta^2+2\alpha\theta-z^2\right)u &= 0,\label{eq:DiffEqModBesselTheta}
\end{align}
where $\theta=z\frac{\td}{\td z}$. For the normalized Bessel functions one has the differential recurrence relations (see \cite[III.71~(6)]{Wat44})
\begin{align}
 \frac{\td}{\td z}\widetilde{I}_\alpha(z) &= \frac{z}{2}\widetilde{I}_{\alpha+1}(z), & \frac{\td}{\td z}\widetilde{K}_\alpha(z) &= -\frac{z}{2}\widetilde{K}_{\alpha+1}(z),\label{eq:BesselDiffFormulas}
\end{align}
with which the differential equation \eqref{eq:DiffEqModBessel} can equivalently be written as recurrence relation (see e.g. \cite[III.71~(1)]{Wat44}):
\begin{align}
 \alpha\widetilde{I}_\alpha(z) &= \widetilde{I}_{\alpha-1}(z)-\left(\frac{z}{2}\right)^2 \widetilde{I}_{\alpha+1}(z),\label{eq:IBesselRecRel}\\
 \alpha\widetilde{K}_\alpha(z) &= \left(\frac{z}{2}\right)^2\widetilde{K}_{\alpha+1}(z)-\widetilde{K}_{\alpha-1}(z).\label{eq:KBesselRecRel}
\end{align}
For $\Re(\alpha)>-\frac{1}{2}$ the Bessel functions have the following integral representations in $x>0$ (cf. formulas III.71~(9) and VI.15~(5) in \cite{Wat44}):
\begin{align}
 \widetilde{I}_\alpha(x) &= \frac{1}{\sqrt{\pi}\Gamma(\alpha+\frac{1}{2})}\int_0^\pi{e^{-x\cos\theta}\sin^{2\alpha}\theta\td\theta},\label{eq:IntFormulaIBessel}\\
 \widetilde{K}_\alpha(x) &= \frac{\sqrt{\pi}}{\Gamma(\alpha+\frac{1}{2})}\int_0^\infty{e^{-x\cosh\phi}\sinh^{2\alpha}\phi\td\phi}.\label{eq:IntFormulaKBessel}
\end{align}
The Mellin transform of the $K$-Bessel function is given by the following formula which holds for $\Re(\sigma),\Re(\sigma-2\alpha)>0$ and $\Re(a)>0$ (see e.g. \cite[equation 6.561~(16)]{GR65}):
\begin{align}
 \int_0^\infty{\widetilde{K}_\alpha(ax)x^{\sigma-1}\td x} &= 2^{\sigma-2}a^{-\sigma}\Gamma\left(\frac{\sigma}{2}\right)\Gamma\left(\frac{\sigma-2\alpha}{2}\right).\label{eq:KBesselMellinTransform}
\end{align}
We further have the following two integral formulas involving two Bessel functions:
\begin{itemize}
\item For $\Re(\sigma),\Re(\sigma-2\beta)>0$, $a<b$ the following holds (see e.g. \cite[equation 6.576~(5)]{GR65}):
\begin{multline}
 \int_0^\infty{\widetilde{I}_\alpha(ax)\widetilde{K}_\beta(bx)x^{\sigma-1}\td x} = \frac{2^{\sigma-2}\Gamma(\frac{\sigma}{2})\Gamma(\frac{\sigma-2\beta}{2})}{b^\sigma\Gamma(\alpha+1)}\\
 \times{_2F_1}\left(\frac{\sigma}{2},\frac{\sigma-2\beta}{2};\alpha+1;\left(\frac{a}{b}\right)^2\right).\label{eq:IntegralIKBessel}
\end{multline}
\item For $\Re(\sigma)>2\max(\Re(\alpha),0)+2\max(\Re(\beta),0)$ we have (see formula 10.3~(49) in \cite{EMOT54a})
\begin{multline}
 \int_0^\infty{\widetilde{K}_\alpha(x)\widetilde{K}_\beta(x)x^{\sigma-1}dx} = \frac{2^{\sigma-3}}{\Gamma(\sigma-\alpha-\beta)}\Gamma\left(\frac{\sigma}{2}\right)\Gamma\left(\frac{\sigma-2\alpha}{2}\right)\\
 \times\Gamma\left(\frac{\sigma-2\beta}{2}\right)\Gamma\left(\frac{\sigma-2\alpha-2\beta}{2}\right).\label{eq:KBesselL2Norms}
\end{multline}
\end{itemize}
Finally, on the positive real line $\RR_+$ the normalized $I$- and $K$-Bessel functions have the following asymptotic behavior (see \cite[Chapters III and VII]{Wat44} and \cite[Chapter 4]{AAR99}): as $x\rightarrow0$
\begin{align}
 \widetilde{I}_\alpha(0) &= \frac{1}{\Gamma(\alpha+1)},\label{eq:BesselIAsymptAt0}\\
 \widetilde{K}_\alpha(x) &= \left\{\begin{array}{ll}\frac{\Gamma(\alpha)}{2}\left(\frac{x}{2}\right)^{-2\alpha}+o(x^{-2\alpha}) &\mbox{if $\alpha>0$}\\-\log(\frac{x}{2})+o(\log(\frac{x}{2})) &\mbox{if $\alpha=0$}\\\frac{\Gamma(-\alpha)}{2}+o(1) &\mbox{if $\alpha<0$}\end{array}\right.,\label{eq:BesselKAsymptAt0}
\end{align}
and as $x\rightarrow\infty$
\begin{align}
\begin{split}
 \widetilde{I}_\alpha(x) &= \frac{1}{2\sqrt{\pi}}\left(\frac{x}{2}\right)^{-\alpha-\frac{1}{2}}e^x\left(1+\mathcal{O}\left(\frac{1}{x}\right)\right),\\
 \widetilde{K}_\alpha(x) &= \frac{\sqrt{\pi}}{2}\left(\frac{x}{2}\right)^{-\alpha-\frac{1}{2}}e^{-x}\left(1+\mathcal{O}\left(\frac{1}{x}\right)\right).
\end{split}\label{eq:BesselAsymptAtInfty}
\end{align}

\section{Laguerre polynomials}\label{app:Laguerre}

For $n\in\mathbb{N}_0$ and $\alpha\in\CC$ the Laguerre polynomial $L_n^\alpha(z)$\index{notation}{Lupnalphaz@$L_n^\alpha(z)$} is defined by (cf. \cite[Equation (6.2.2)]{AAR99})
\begin{align}
 L_n^\alpha(z) &= \frac{(\alpha+1)_n}{n!}\sum_{k=0}^n{{n\choose k}\frac{(-1)^k}{(\alpha+1)_k}z^k}.\label{eq:DefLagFct}
\end{align}
$L_n^\alpha(z)$ solves the following second order differential equation (see \cite[Equation (6.2.8)]{AAR99})
\begin{equation}
 \left(z\frac{\td^2}{\td z^2}+(\alpha+1-z)\frac{\td}{\td z}+n\right)u = 0.\label{eq:LagDiffEq}
\end{equation}
The generating function of the Laguerre polynomials is given by (see e.g. formula (6.2.4) \cite{AAR99}):
\begin{equation}
 \sum_{n=0}^\infty{L_n^\alpha(z)t^n} = \frac{1}{(1-t)^{\alpha+1}}e^{-\frac{tz}{1-t}}.\label{eq:LaguerreGenFct}
\end{equation}
Finally, we have the following integral formula for $\Re(\beta)>\Re(\alpha)>-1$ (cf. formula 16.6~(5) in \cite{EMOT54a})
\begin{align}
 \int_0^1{(1-y)^{\beta-\alpha-1}y^\alpha L_n^\alpha(xy)\td y} &= \frac{\Gamma(\alpha+n+1)\Gamma(\beta-\alpha)}{\Gamma(\beta+n+1)}L_n^\beta(x).\label{eq:LaguerreIntFormula}
\end{align}

\section{Gegenbauer polynomials}\label{app:Gegenbauer}

The classical Gegenbauer polynomials $C_n^\lambda(z)$ with parameters $n\in\NN_0$ and $\lambda\in\CC$ are defined by (see \cite[3.15~(2)]{EMOT81})
\begin{align*}
 C_n^\lambda(z) &= \frac{1}{\Gamma(\lambda)}\sum_{k=0}^n{\frac{(-1)^k\Gamma(\lambda+k)\Gamma(n+2\lambda+k)}{k!(n-k)!\Gamma(2\lambda+2k)}\left(\frac{1-z}{2}\right)^k}.
\end{align*}
We rather use the normalized version\index{notation}{Cuptildenlambdaz@$\widetilde{C}_n^\lambda(z)$}
\begin{align*}
 \widetilde{C}_n^\lambda(z) &= \Gamma(\lambda)C_n^\lambda(z).
\end{align*}
$\widetilde{C}_n^\lambda(z)$ is an even function if $n$ is even and an odd function if $n$ is odd (see \cite[3.15~(5)\&(6)]{EMOT81}). This can be stated as the parity formula
\begin{align}
 \widetilde{C}_n^\lambda(-z) &= (-1)^n\widetilde{C}_n^\lambda(z).\label{eq:GegenbauerParity}
\end{align}
The Gegenbauer polynomial $\widetilde{C}_n^\lambda(z)$ solves the second order differential equation (see \cite[3.15~(21)]{EMOT81})
\begin{align}
 (z^2-1)u''+(2\lambda+1)zu'-n(n+2\lambda)u &= 0.\label{eq:GegenbauerDiffEq}
\end{align}
With the differential recurrence relation (see \cite[3.15~(30)]{EMOT81})
\begin{align}
 \frac{\td}{\td z}\widetilde{C}_n^\lambda(z) &= 2\widetilde{C}_{n-1}^{\lambda+1}(z)\label{eq:Gegenbauer1}
\end{align}
the differential equation \eqref{eq:GegenbauerDiffEq} rewrites as the recurrence relation
\begin{align}
 (2\lambda+n-1)(n+1)\widetilde{C}_{n+1}^{\lambda-1}(z)&-2z(2\lambda-1)\widetilde{C}_n^\lambda(z)+4(1-z^2)\widetilde{C}_{n-1}^{\lambda+1}(z) = 0.\label{eq:Gegenbauer2}
\end{align}

\section{Meijer's $G$-function}\label{app:GFct}

The $G$-function can be defined in a very general setting. However, we restrict our definition to the case which is needed in this paper. The results in this section are taken from \cite[Chapter V]{Luk69}.

Let $0\leq m\leq q$, $0\leq n\leq p$, $p<q$ and $a_1,\ldots,a_p,b_1,\ldots,b_q\in\CC$. Assume further that $a_k-b_j$ is not a positive integer for $j=1,\ldots,m$, $k=1,\ldots,n$. For $z\neq0$ in the univeral covering of $\CC^\times$ we define
\begin{align*}
 G^{m,n}_{p,q}\left(z\left|\begin{array}{cc}a_1,\ldots,a_p\\b_1,\ldots,b_q\end{array}\right.\right) &:= \frac{1}{2\pi i}\int_L{\frac{\prod_{j=1}^m{\Gamma(b_j-s)}\prod_{j=1}^n{\Gamma(1-a_j+s)}}{\prod_{j=m+1}^q{\Gamma(1-b_j+s)}\prod_{j=n+1}^p{\Gamma(a_j-s)}}z^s\td s}.\index{notation}{Gupmnpqz@$G^{m,n}_{p,q}(z|^{a_p}_{b_q})$}
\end{align*}
Here $L$ is a loop beginning and ending at $+\infty$ and encircling all poles of $\Gamma(b_j-s)$, $1\leq j\leq m$, once in the negative direction, but none of the poles of $\Gamma(1-a_k+s)$, $1\leq k\leq n$. $G^{m,n}_{p,q}(z|^{a_p}_{b_q})$ is called \textit{Meijer's $G$-function}. From the definition one immediately obtains the reduction formula (cf. \cite[Equation 5.4~(1)]{Luk69})
\begin{align}
 G^{m,n}_{p,q}\left(z\left|\begin{array}{cc}a_1,\ldots,a_p\\b_1,\ldots,b_{q-1},a_1\end{array}\right.\right) &= G^{m,n-1}_{p-1,q-1}\left(z\left|\begin{array}{cc}a_2,\ldots,a_p\\b_1,\ldots,b_{q-1}\end{array}\right.\right).\label{eq:GFctRed}
\end{align}
The\ \ $G$-function\ \ $G^{m,n}_{p,q}(z|^{a_p}_{b_q})$\ \ solves\ \ the\ \ following\ \ differential\ \ equation\ \ of\ \ order $\max(p,q)$ (see \cite[Equation 5.8~(1)]{Luk69}):
\begin{align}
 \left[(-1)^{m+n-p}z\prod_{j=1}^p{(\theta-a_j+1)}-\prod_{k=1}^q{(\theta-b_k)}\right]u &= 0,\label{eq:GFctDiffEq}
\end{align}
where $\theta=z\frac{\td}{\td z}$ and an empty product is treated as $1$. For the special case of $G^{20}_{04}(z|b_1,b_2,b_3,b_4)$ we find
\begin{align}
 \left[\prod_{j=1}^4\left(\theta-b_j\right)\right]u(z)=zu(z).\label{eq:GFctDiffEqSpecialized}
\end{align}
Various special functions can be expressed in terms of the $G$-function. For instance, the $J$- and $K$-Bessel functions are given by (see \cite[Equations 6.4~(8) \& (11)]{Luk69})
\begin{align}
 \left(\frac{z}{4}\right)^\beta J_\alpha(z) &= G^{20}_{04}\left(\left.\left(\frac{z}{4}\right)^4\right|\frac{\beta+\alpha}{4},\frac{\beta+\alpha+2}{4},\frac{\beta-\alpha}{4},\frac{\beta-\alpha+2}{4}\right),\label{eq:GFctJBessel}\index{notation}{Jupalphaz@$J_\alpha(z)$}\\
 2\left(\frac{z}{2}\right)^\beta K_\alpha(z) &= G^{20}_{02}\left(\left.\left(\frac{z}{2}\right)^2\right|\frac{\beta+\alpha}{2},\frac{\beta-\alpha}{2}\right).\label{eq:GFctKBessel}
\end{align}
We also need the following integral formula for the $G$-function $G^{20}_{04}(z|b_1,b_2,b_3,b_4)$ which holds for $\omega,\eta>0$ and $\Re(b_j-a\pm\frac{\alpha}{2})>-1$ ($j=1,2$) (see \cite[Equation 5.6~(21), assumptions as in case 4]{Luk69}):
\begin{multline}
 \int_0^\infty{x^{-a}K_\alpha(2(\omega x)^{\frac{1}{2}}))G^{20}_{04}(\eta x|b_1,b_2,b_3,b_4)\td x}\\
 = \frac{\omega^{a-1}}{2}G^{22}_{24}\left(\frac{\eta}{\omega}\left|\begin{array}{c}a+\frac{\alpha}{2},a-\frac{\alpha}{2}\\b_1,b_2,b_3,b_4\end{array}\right.\right).\label{eq:GFctIntKBessel}
\end{multline}
Finally, we give the asymptotic behavior of the function $G^{20}_{04}(z|b_1,b_2,b_3,b_4)$ as $z\rightarrow0$ and $z\rightarrow\infty$. For the asymptotics as $z\rightarrow0$ we assume without loss of generality that $b_1\leq b_2$. Then it follows from \cite[Equations 5.2~(7) \& (10)]{Luk69} that
\begin{multline}
 G^{20}_{04}(z|b_1,b_2,b_3,b_4)\\
 = \frac{1}{\Gamma(1+b_1-b_3)\Gamma(1+b_1-b_4)}\times\begin{cases}\Gamma(b_2-b_1)z^{b_1}+o(z^{b_1}) & \mbox{if }b_1<b_2,\\-\ln(z)z^{b_1}+o(\ln(z)z^{b_1}) & \mbox{if }b_1=b_2.\end{cases}\label{eq:GFctAsymptotics0}
\end{multline}
For the asymptotic behavior as $x\rightarrow\infty$ we find with \cite[Section 5.10, Theorem 2]{Luk69} that
\begin{align}
 G^{20}_{04}(x|b_1,b_2,b_3,b_4) = -\frac{1}{\sqrt{2\pi}}x^\theta\cos\left(4x^{\frac{1}{4}}+(b_3+b_4-2\theta)\pi\right)\left(1+\calO(x^{-\frac{1}{4}})\right),\label{eq:GFctAsymptoticsInfty}
\end{align}
where $\theta=\frac{1}{4}(b_1+b_2+b_3+b_4-\frac{3}{2})$.

\bibliographystyle{amsalpha}
\bibliography{bibdb}

\cleardoublepage
\afterpage{\fancyhead[LE,RO]{Notation Index}}
\printindex{notation}{Notation Index}
\cleardoublepage

\printindex{subject}{Subject Index}
\afterpage{\fancyhead[LE,RO]{Subject Index}}

\end{document}